\documentclass[12pt,letter]{amsart}
\usepackage[
    top=3cm, bottom=3cm, inner=3.1cm, outer=3.1cm,
    marginparwidth=2cm 
    ]{geometry}

\usepackage{amssymb,latexsym, amsmath, amsxtra, bbm, bm}
\usepackage[dvips]{graphics}
\usepackage{xypic}
\usepackage{verbatim}
\usepackage[abs]{overpic}
\usepackage{hyperref}
\usepackage{mathtools}

\allowdisplaybreaks[3]

\theoremstyle{plain}
        \newtheorem{theorem}{Theorem}[section]
        \newtheorem*{theorem*}{Theorem}
        \newtheorem*{conj*}{Conjecture}
        \newtheorem{lemma}[theorem]{Lemma}
        \newtheorem{prop}[theorem]{Proposition}
        \newtheorem{cor}[theorem]{Corollary}

        \newtheorem{thmx}{Theorem}

\theoremstyle{definition}
        \newtheorem{definition}[theorem]{Definition}
        \newtheorem{rem}[theorem]{Remark}
         \newtheorem{rems}[theorem]{Remarks}
         
         \newtheorem*{assumptions}{The Assumptions}

\theoremstyle{remark}

\numberwithin{equation}{section}
\numberwithin{theorem}{section}
\numberwithin{table}{section}
\numberwithin{figure}{section}

\providecommand{\defn}[1]{\emph{#1}}




\renewcommand{\leq}{\leqslant}

\renewcommand{\geq}{\geqslant}


\newcommand{\diam}  {\operatorname{diam}}

\newcommand{\inte}  {\operatorname{inte}}
\newcommand{\inter}  {\operatorname{int}}

\newcommand{\id} {\operatorname{id}}

\newcommand{\card} {\operatorname{card}}
\newcommand{\supp}{\operatorname{supp}}
%


\newcommand{\R}{\mathbb{R}}
\newcommand{\B}{\mathbb{B}}    
\newcommand{\C}{\mathbb{C}}      

\newcommand{\N}{\mathbb{N}}      
\newcommand{\Z}{\mathbb{Z}}      

\newcommand{\D}{\mathbb{D}}     
 
\renewcommand{\H}{\mathbb{H}}    
%
%
\providecommand{\abs}[1]{\lvert#1\rvert}
\newcommand{\Abs}[1]{\left\lvert#1\right\rvert}
\providecommand{\Absbig}[1]{\bigl\lvert#1\bigr\rvert}
\providecommand{\Absbigg}[1]{\biggl\lvert#1\biggr\rvert}
\providecommand{\AbsBig}[1]{\Bigl\lvert#1\Bigr\rvert}
\providecommand{\AbsBigg}[1]{\Biggl\lvert#1\Biggr\rvert}
\providecommand{\norm}[1]{\|#1\|}
\providecommand{\Norm}[1]{\left\|#1\right\|}
\providecommand{\Normbig}[1]{\bigl\|#1\bigr\|}
\providecommand{\Normbigg}[1]{\biggl\|#1\biggr\|}
\providecommand{\NormBig}[1]{\Bigl\|#1\Bigr\|}

\renewcommand{\:}{\colon}




\renewcommand{\b}{\mathfrak{b}}
\newcommand{\w}{\mathfrak{w}}
\renewcommand{\c}{\mathfrak{c}}

\newcommand{\I}{\mathbf{i}}

\newcommand{\crit}{\operatorname{crit}}
\newcommand{\post}{\operatorname{post}}

\newcommand{\LIP}{\operatorname{LIP}}



\newcommand{\CC}{\mathcal{C}}


 \newcommand{\DD}{\mathbf{D}}


\newcommand{\X} {\mathbf{X}}

\newcommand{\E} {\mathbf{E}}

\newcommand{\V} {\mathbf{V}}
 



\newcommand{\CCC}{C}


\newcommand{\PPP}{\mathcal{P}}


\newcommand{\MMM}{\mathcal{M}}


\newcommand{\Holder}[1] {\CCC^{0,#1}}

\newcommand{\Hseminorm}[2] {\Abs{#2}_{#1}}

\newcommand{\Hseminormbig}[2] {\Absbig{#2}_{#1}}

\newcommand{\Hseminormbigg}[2] {\Absbigg{#2}_{#1}}

\newcommand{\HseminormBig}[2] {\AbsBig{#2}_{#1}}


\newcommand{\Hnorm}[3] {\Norm{#2}_{\Holder{#1}{#3}}}

\newcommand{\Hnormbig}[3] {\Normbig{#2}_{\Holder{#1}{#3}}}




\newcommand{\NHnorm}[4] {\Norm{#3}^{[#2]}_{\Holder{#1}{#4}}}

\newcommand{\NHnormbig}[4] {\Normbig{#3}^{[#2]}_{\Holder{#1}{#4}}}


\newcommand{\NHnormBig}[4] {\NormBig{#3}^{[#2]}_{\Holder{#1}{#4}}}


\newcommand{\HseminormD}[2] {\Abs{#2}_{#1}}










\newcommand{\NHnormD}[3] {\Norm{#3}^{[#2]}_{#1}}





\newcommand{\vertiii}[1]{{\left\vert\kern-0.25ex\left\vert\kern-0.25ex\left\vert #1 
    \right\vert\kern-0.25ex\right\vert\kern-0.25ex\right\vert}}

\newcommand{\vertiiibig}[1]{{\bigl\vert\kern-0.25ex\bigl\vert\kern-0.25ex\bigl\vert #1 
    \bigr\vert\kern-0.25ex\bigr\vert\kern-0.25ex\bigr\vert}}
    
\newcommand{\vertiiiBig}[1]{{\Bigl\vert\kern-0.25ex\Bigl\vert\kern-0.25ex\Bigl\vert #1 
    \Bigr\vert\kern-0.25ex\Bigr\vert\kern-0.25ex\Bigr\vert}}

\newcommand{\OpHnormD}[2] {\vertiii{#2}_{#1}}
\newcommand{\OpHnormDbig}[2] {\vertiiibig{#2}_{#1}}

\newcommand{\NOpHnormD}[3] {\vertiii{#3}^{[#2]}_{#1}}
\newcommand{\NOpHnormDbig}[3] {\vertiiibig{#3}^{[#2]}_{#1}}

\newcommand{\RR}{\mathcal{L}}

\newcommand{\RRR}{\mathbbm{L}}

\newcommand{\MM}{\mathbbm{M}}

\newcommand{\wt}[1]{\widetilde{#1}}

\newcommand{\Arg}{\operatorname{Arg}}

\newcommand{\minus}{\scalebox{0.6}[0.6]{$-\!\!\;$}}


\newcommand{\circsmall}{\scalebox{0.6}[0.6]{$\circ$}}


\newcommand{\Orb}{\mathfrak{P}}

\newcommand{\ti}{\vartriangle}





\newcommand{\DS}{\mathfrak{D}}


\newcommand{\bigO}{\mathcal{O}}

\begin{document}
\title[Prime orbit theorems for expanding Thurston maps]{Prime orbit theorems for expanding Thurston maps:\\ Latt\`{e}s maps and split Ruelle operators}
\author{Zhiqiang~Li \and Tianyi~Zheng}

\address{Zhiqiang~Li, School of Mathematical Sciences \& Beijing International Center for Mathematical Research, Peking University, Beijing 100871, China}
\email{zli@math.pku.edu.cn}
\address{Tianyi~Zheng, Department of Mathematics, University of California, San Diego, San Diego, CA 92093--0112}
\email{tzheng2@math.ucsd.edu}

\subjclass[2020]{Primary: 37C30; Secondary: 37C35, 37F15, 37B05, 37D35}

\keywords{expanding Thurston map, postcritically-finite map, Latt\`{e}s map, prime orbit theorem, Ruelle zeta function, split Ruelle operator, Prime Number Theorem, dynamical zeta function, thermodynamic formalism, Ruelle operator, transfer operator, strong non-integrability, non-local integrability.}

\begin{abstract}
We obtain an analog of the prime number theorem for a class of branched covering maps on the $2$-sphere $S^2$ called expanding Thurston maps, which are topological models of some non-uniformly expanding rational maps without any smoothness or holomorphicity assumption. More precisely, we show that the number of primitive periodic orbits, ordered by a weight on each point induced by a non-constant (eventually) positive real-valued H\"{o}lder continuous function on $S^2$ satisfying the $\alpha$-strong non-integrability condition, is asymptotically the same as the well-known logarithmic integral, with an exponential error bound. In particular, our results apply to postcritically-finite rational maps for which the Julia set is the whole Riemann sphere. Moreover, a stronger result is obtained for Latt\`{e}s maps. 
\end{abstract}

\maketitle

\tableofcontents

\section{Introduction}

Complex dynamics is a vibrant field of dynamical systems, focusing on the study of iterations of polynomials and rational maps on the Riemann sphere $\widehat{\C}$. It is closely connected, via \emph{Sullivan's dictionary} \cite{Su85, Su83}, to geometric group theory, mainly concerning the study of Kleinian groups.

In complex dynamics, the lack of uniform expansion of a rational map arises from critical points in the Julia set. Rational maps for which each critical point is preperiodic (i.e., eventually periodic) are called \emph{postcritically-finite rational maps} or \emph{rational Thurston maps}. One natural class of non-uniformly expanding rational maps are called \emph{topological Collet--Eckmann maps}, whose basic dynamical properties have been studied extensively (see for example, \cite{PRLS03, PRL07, PRL11, RLS14}). In this paper, we focus on a subclass of topological Collet--Eckmann maps for which each critical point is preperiodic and the Julia set is the whole Riemann sphere. Actually, the most general version of our results is established for topological models of these maps, called \emph{expanding Thurston maps}. Thurston maps were studied by W.~P.~Thurston in his celebrated characterization theorem of postcritically-finite rational maps among such topological models \cite{DH93}. Thurston maps and Thurson's theorem, sometimes known as a fundamental theorem of complex dynamics, are indispensable tools in the modern theory of complex dynamics. Expanding Thurston maps were studied extensively by M.~Bonk, D.~Meyer \cite{BM10, BM17} and P.~Ha\"issinsky, K.~M.~Pilgrim \cite{HP09}.

The investigations of the growth rate of the number of periodic orbits (e.g.\ closed geodesics) have been a recurring theme in dynamics and geometry. 

Inspired by the seminal works of F.~Naud \cite{Na05} and H.~Oh, D.~Winter \cite{OW17} on the growth rate of periodic orbits, known as Prime Orbit Theorems, for hyperbolic (uniformly expanding) polynomials and rational maps, we establish in this paper the first Prime Orbit Theorems (to the best of our knowledge) with exponential error bounds in a non-uniformly expanding setting in complex dynamics. On the other side of Sullivan's dictionary, see related works \cite{MMO14, OW16, OP19}. For an earlier work on dynamical zeta functions for a class of sub-hyperbolic quadratic polynomials, see V.~Baladi, Y.~Jiang, and H.~H.~Rugh \cite{BJR02}. See also the related work of S.~Waddington \cite{Wad97} on strictly preperiodic points of hyperbolic rational maps and the recent work of M.~Pollicott and M.~Urba\'{n}ski \cite{PoU21} on periodic pairs and preimage points of many hyperbolic and parabolic systems.

Given a map  $f\: X \rightarrow X$ on a metric space $(X,d)$ and a function $\phi\: S^2 \rightarrow \R$,  we define the weighted length $l_{f,\phi} (\tau)$ of a primitive periodic orbit 
\begin{equation*}
\tau \coloneqq \bigl\{x, \, f(x), \,  \cdots, \,  f^{n-1}(x) \bigr\} \in \Orb(f)
\end{equation*}
as
\begin{equation}  \label{eqDefComplexLength}
l_{f,\phi} (\tau) \coloneqq \phi(x) + \phi(f(x)) + \cdots + \phi \bigl( f^{n-1}(x) \bigr).
\end{equation}
We denote by
\begin{equation}   \label{eqDefPiT}
\pi_{f,\phi}(T) \coloneqq \card \{ \tau \in \Orb(f)   :  l_{f,\phi}( \tau )  \leq T \}, \qquad   T>0,
\end{equation}
the number of primitive periodic orbits with weighted lengths up to $T$. Here $\Orb(f)$ denotes the set of all primitive periodic orbits of $f$ (see Section~\ref{sctNotation}). 

Note that the Prime Orbit Theorems in \cite{Na05, OW17} are established for the \emph{geometric potential} $\phi= \log \abs{f'}$. For hyperbolic rational maps, the Lipschitz continuity of the geometric potential plays a crucial role in \cite{Na05, OW17}. In our non-uniform expanding setting, critical points destroy the continuity of $\log\abs{f'}$. So we are left with two options to develop our theory, namely, considering 
\begin{enumerate}
\smallskip
\item[(a)] H\"older continuous $\phi$ or

\smallskip
\item[(b)] the geometric potential $\log\abs{f'}$.
\end{enumerate}
Despite the lack of H\"older continuity of $\log\abs{f'}$ in our setting, its value is closely related to the size of pull-backs of sets under backward iterations of the map $f$. This fact enables an investigation of the Prime Orbit Theorem in case (b), which will be studied in an upcoming series of separate works starting with \cite{LRL}. 

The current paper is the second of a series of three papers (together with \cite{LZ24a,LZ23c}) focusing on case (a), in which the incompatibility of H\"older continuity of $\phi$ and non-uniform expansion of $f$ calls for a close investigation of metric geometries associated to $f$.

Latt\`{e}s maps are rational Thurston maps with parabolic orbifolds (see Definition~\ref{defLattesMap}). They form a well-known class of rational maps. We first formulate our theorem for Latt\`{e}s maps. 

\begin{thmx}[Prime Orbit Theorem for Latt\`{e}s maps]   \label{thmLattesPOT}
Let $f\: \widehat{\C} \rightarrow \widehat{\C}$ be a Latt\`{e}s map on the Riemann sphere $\widehat{\C}$. Let $\phi \: \widehat{\C} \rightarrow \R$ be eventually positive and continuously differentiable. Then there exists a unique positive number $s_0>0$ with $P(f,-s_0 \phi) = 0$ and there exists $N_f\in\N$ depending only on $f$ such that the following statements are equivalent:
\begin{enumerate}
\smallskip
\item[(i)] $\phi$ is not cohomologous to a constant in the space $\CCC\bigl(\widehat{\C} \bigr)$ of real-valued continuous functions on $\widehat{\C}$.

\smallskip
\item[(ii)] For each $n\in \N$ with $n\geq N_f$, we have 
\begin{equation*}
\pi_{F,\Phi}(T) \sim \operatorname{Li}\bigl( e^{s_0 T} \bigr) \text{ as }  T \to + \infty,
\end{equation*}
where  $F\coloneqq f^n$ and $\Phi\coloneqq \sum_{i=0}^{n-1} \phi \circ f^i$.

\smallskip
\item[(iii)] For each $n\in \N$ with $n\geq N_f$, there exists a constant $\delta \in (0, s_0)$ such that
\begin{equation*}
\pi_{F,\Phi}(T) = \operatorname{Li}\bigl( e^{s_0 T} \bigr)  + \bigO \bigl( e^{(s_0 - \delta)T} \bigr) \text{ as } T \to + \infty,
\end{equation*}
where  $F\coloneqq f^n$ and $\Phi\coloneqq \sum_{i=0}^{n-1} \phi \circ f^i$.
\end{enumerate} 
Here $P(f,\cdot)$ denotes the topological pressure, and $\operatorname{Li} (y) \coloneqq \int_2^y\! \frac{1}{\log u} \,\mathrm{d} u$, $y>0$, is the \defn{Eulerian logarithmic integral function}.
\end{thmx}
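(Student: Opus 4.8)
The overall strategy follows the classical Tauberian route for prime orbit theorems, adapted to the thermodynamic formalism of expanding Thurston maps. The plan is to pass through an associated dynamical zeta function and a Dirichlet series, establish analytic continuation with no zeros on a vertical line (and, for (iii), in a strip), and then invoke a Tauberian theorem (Ikehara–Wiener type for (ii), and a quantitative version à la Graham–Vaaler or Dolgopyat-style contour shifts for (iii)). Concretely, I would first reduce to a high iterate $F = f^n$: since $f$ is expanding and Lattès, $F$ is again an expanding Thurston map, and for $n \geq N_f$ we may assume $F$ has no periodic critical points in a way that makes the visual metric well-behaved; the potential $\Phi = \sum_{i=0}^{n-1} \phi \circ f^i$ is then continuously differentiable, eventually positive, and satisfies $P(F, -s_0 \Phi) = 0$ with the same $s_0$ (by the standard scaling $P(f^n, S_n\psi) = n P(f,\psi)$). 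Existence and uniqueness of $s_0 > 0$ follows from strict monotonicity and continuity of $s \mapsto P(f, -s\phi)$ together with eventual positivity of $\phi$, exactly as in the hyperbolic case.

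The heart of the argument is the spectral analysis of the Ruelle (transfer) operator $\mathcal{L}_{-s\Phi + \text{twist}}$ acting on a suitable Hölder space, where the twist parameter runs over a vertical line $\operatorname{Re}(\xi) = s_0$. I would invoke the thermodynamic formalism for expanding Thurston maps (Bonk–Meyer, and the preceding paper \cite{LZhe23a} in the series): the real operator at $s_0$ has a simple leading eigenvalue $1$ with a positive eigenfunction and spectral gap, so the Dirichlet series $\sum_{\tau} \cdots$ extends meromorphically past $\operatorname{Re}(s) = s_0$ with a simple pole at $s_0$. The nonvanishing on the line $\operatorname{Re}(\xi) = s_0$ away from the pole is equivalent, via a standard argument, to the statement that $\Phi$ is not cohomologous to a constant plus a locally constant multiple of the winding — i.e., condition (i) — which gives (i) $\Rightarrow$ (ii). The reverse implications (ii) $\Rightarrow$ (i) and (iii) $\Rightarrow$ (i) are the easy directions: if $\phi$ (hence $\Phi$) is cohomologous to a constant $c$, then all weighted orbit lengths are essentially integer multiples of $nc$ (the periods lie in an arithmetic progression), so $\pi_{F,\Phi}(T)$ is a step function and cannot be asymptotic to the smooth function $\operatorname{Li}(e^{s_0 T})$, nor admit a power-saving error term.

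The main obstacle — and the reason this is a paper rather than an exercise — is upgrading the line-nonvanishing to a \emph{zero-free strip} $\operatorname{Re}(s) > s_0 - \delta$ with polynomial (or at least subexponential) growth of the resolvent in $\operatorname{Im}(s)$, which is what the exponential error term in (iii) requires. In the non-uniformly expanding setting this is where the Dolgopyat-type oscillatory-cancellation estimate must be run, and it needs a \emph{strong non-integrability / non-local integrability (NLI)} condition on $\Phi$ to control the cancellation in $\mathcal{L}^N_{-s\Phi + \xi \Phi}$ for large $|\operatorname{Im} \xi|$. For Lattès maps this is precisely where the extra structure is exploited: the map is (up to finite data) a quotient of an affine torus endomorphism, the visual metric is bi-Lipschitz to a flat metric, and the geometric potential is real-analytic, so the NLI condition can be verified by a direct computation with the linear model (this is the "split Ruelle operator" of the title — the operator decomposes along the two eigendirections of the affine map). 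Once the Dolgopyat estimate is in hand, a contour-shift argument (moving the line of integration in the Perron formula for $\sum_{\tau: l \le T} 1$ past $s_0$, picking up the residue $\operatorname{Li}(e^{s_0 T})$ and bounding the shifted integral by $O(e^{(s_0-\delta)T})$) yields (iii), and (iii) $\Rightarrow$ (ii) is trivial. I would organize the write-up as: (1) reduction to iterates and setup of $s_0$; (2) zeta/Dirichlet series and the dictionary to Ruelle operators; (3) the easy implications via the arithmetic-progression obstruction; (4) the spectral gap and line-nonvanishing giving (i) $\Rightarrow$ (ii); (5) the NLI verification for Lattès maps and the Dolgopyat estimate; (6) the contour shift giving (i) $\Rightarrow$ (iii).
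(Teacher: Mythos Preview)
Your overall architecture matches the paper's: reduce to an iterate, set up the dynamical zeta function, obtain (i)$\Rightarrow$(ii) from nonvanishing on the critical line, (i)$\Rightarrow$(iii) from a Dolgopyat estimate plus contour shift, and the reverse implications from a lattice obstruction. Two points deserve correction, one terminological and one substantive.

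First, the split Ruelle operator has nothing to do with eigendirections of the affine torus endomorphism. It is a device to cope with the fact that $S^2$ is connected: the usual $\RR_\psi$ does not behave well on characteristic functions of proper subsets, which are needed for Ruelle's estimate relating periodic points to iterated preimages (Proposition~\ref{propTelescoping}). The paper splits $S^2$ into the black and white $0$-tiles $X^0_\b, X^0_\w$ and defines $\RRR_\psi$ on $\Holder{\alpha}\bigl(\bigl(X^0_\b,d\bigr),\C\bigr)\times \Holder{\alpha}\bigl(\bigl(X^0_\w,d\bigr),\C\bigr)$; this is used for \emph{all} expanding Thurston maps in the paper, not just Latt\`es ones.

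Second, and this is the real gap: you conflate non-local integrability (NLI) with the condition the Dolgopyat argument actually needs, namely the \emph{$\alpha$-strong non-integrability} of Definition~\ref{defStrongNonIntegrability}. For a general expanding Thurston map, condition~(i) is equivalent only to NLI (Theorem~F of \cite{LZhe23a}), which suffices for (ii) but not for the exponential error term in (iii). The entire content of the Latt\`es section (Proposition~\ref{propLattes}) is that for Latt\`es $f$ and $\phi\in C^1$, NLI upgrades to $1$-strong non-integrability. The mechanism is not a computation in the linear model, nor anything about the geometric potential; it is metric. The canonical orbifold metric $\omega_f$ is simultaneously a \emph{visual} metric for $f$ and, away from $\post f$, locally bi-Lipschitz to the Euclidean metric. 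Consequently the temporal distance $z\mapsto\phi^{f,\CC}_{\xi,\eta}(u_0,z)$ is itself continuously differentiable (this is the Claim inside the proof of Proposition~\ref{propLattes}, proved by showing the defining series converges in $C^1$), and the mean value theorem then yields a \emph{lower} bound $\bigl|\phi^{f,\CC}_{\xi,\eta}(x,y)\bigr|\gtrsim d(x,y)$ on small tiles, matching the H\"older upper bound at the same exponent $\alpha=1$. That matching of exponents in (\ref{eqSNIBounds}) is exactly what strong non-integrability encodes and what the cancellation argument in Section~\ref{sctDolgopyat} requires. Once Proposition~\ref{propLattes} is in hand, Theorem~\ref{thmLattesPOT} follows immediately from Theorem~\ref{thmPrimeOrbitTheorem}; the paper's proof of Theorem~\ref{thmLattesPOT} itself is three lines.
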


See Definitions~\ref{defCohomologous} and~\ref{defEventuallyPositive} for the definitions of co-homology and eventually positive functions, respectively.

The implication (i)$\implies$(iii) relies crucially on some local properties of the metric geometry of Latt\`{e}s maps, and is not expected to hold (by the authors) in general. To establish the exponential error bound similar to that in (iii) for a class of more general rational Thurston maps, we impose a condition on the potential called \emph{$\alpha$-strong non-integrability condition} (Definition~\ref{defStrongNonIntegrability}), which turns out to be generic. The genericity of this condition will be the main theme of the third and last paper \cite{LZ23c} of the current series. An analog of this condition in the context of Anosov flows was first proposed by D.~Dolgopyat in his seminal work \cite{Do98}.

The following theorem is an immediate consequence of a more general result in Theorem~\ref{thmPrimeOrbitTheorem}.

\begin{thmx}[Prime Orbit Theorems for rational expanding Thurston maps]  \label{thmPrimeOrbitTheorem_rational}
Let $f\: \widehat{\C} \rightarrow \widehat{\C}$ be a postcritically-finite rational map without periodic critical points. Let $\sigma$ be the chordal metric on the Riemann sphere $\widehat{\C}$, and $\phi \: \widehat{\C} \rightarrow \R$ be an eventually positive real-valued H\"{o}lder continuous function. Then there exists a unique positive number $s_0>0$ with topological pressure $P(f,-s_0 \phi) = 0$ and there exists $N_f\in\N$ depending only on $f$ such that for each $n\in \N$ with $n\geq N_f$, the following statement holds for $F\coloneqq f^n$ and $\Phi\coloneqq \sum_{i=0}^{n-1} \phi \circ f^i$:

\begin{enumerate}
\smallskip
\item[(i)] $\pi_{F,\Phi}(T) \sim \operatorname{Li}\bigl( e^{s_0 T} \bigr)$ as $T \to + \infty$
if and only if $\phi$ is not cohomologous to a constant in $\CCC\bigl(\widehat{\C} \bigr)$.

\smallskip
\item[(ii)] Assume that $\phi$ satisfies the strong non-integrability condition (with respect to $f$ and a visual metric). Then there exists $\delta \in (0, s_0)$ such that
\begin{equation*}
	\pi_{F,\Phi}(T) = \operatorname{Li}\bigl( e^{s_0 T} \bigr)  + \bigO \bigl( e^{(s_0 - \delta)T} \bigr) \text{ as } T \to + \infty.
\end{equation*}
\end{enumerate}
\end{thmx}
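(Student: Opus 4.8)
The plan is to deduce Theorem~\ref{thmPrimeOrbitTheorem_rational} from the general Prime Orbit Theorem for expanding Thurston maps, Theorem~\ref{thmPrimeOrbitTheorem}, by checking that a postcritically-finite rational map $f\colon\widehat{\C}\to\widehat{\C}$ without periodic critical points falls within that framework. First I would recall that such an $f$ is precisely a rational Thurston map that is expanding, and that an expanding Thurston map realized by a rational map has Julia set equal to all of $\widehat{\C}$; I would then fix a visual metric $d$ for $f$ (with some expansion factor $\Lambda>1$). All the remaining data in the statement are dynamical and metric-independent: the topological pressure $P(f,\cdot)$, the set $\Orb(f)$ and the counting function $\pi_{F,\Phi}(T)$, the property of $\phi$ being eventually positive, and the property of $\phi$ being co-homologous to a constant in $\CCC\bigl(\widehat{\C}\bigr)$ all make sense without reference to a metric. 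In particular the number $s_0>0$ with $P(f,-s_0\phi)=0$ and the integer $N_f$ are the ones furnished by Theorem~\ref{thmPrimeOrbitTheorem}.

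The one point that does involve the metric is the hypothesis that $\phi$ be H\"older continuous: the statement uses the chordal metric $\sigma$, whereas Theorem~\ref{thmPrimeOrbitTheorem} is phrased in terms of a visual metric. To bridge this I would invoke the Bonk--Meyer characterization of rational expanding Thurston maps, which gives that the identity map $(\widehat{\C},\sigma)\to(\widehat{\C},d)$ is a quasisymmetry, as is its inverse. Since $(\widehat{\C},\sigma)$ and $(\widehat{\C},d)$ are both bounded and connected (hence uniformly perfect), such a quasisymmetry is H\"older continuous with a positive exponent, and so is its inverse; consequently $\phi$ is H\"older continuous with respect to $\sigma$ if and only if it is H\"older continuous with respect to $d$, albeit in general with a different exponent (only the implication ``$\sigma$-H\"older $\Rightarrow$ $d$-H\"older'' is actually needed). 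With this in hand, applying Theorem~\ref{thmPrimeOrbitTheorem} to $f$, the visual metric $d$, and the ($d$-)H\"older potential $\phi$ yields statement~(i), while statement~(ii) follows because the strong non-integrability condition in Theorem~\ref{thmPrimeOrbitTheorem_rational}(ii) is imposed with respect to $f$ and a visual metric, exactly matching the hypothesis needed for the exponential error term in Theorem~\ref{thmPrimeOrbitTheorem}.

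The substance of the argument is therefore entirely contained in Theorem~\ref{thmPrimeOrbitTheorem} --- which is expected to go via encoding the primitive periodic orbits of $F$ into a dynamical Dirichlet series (or Ruelle-type zeta function), relating it to iterates of the split Ruelle operator $\mathcal{L}_{-s\phi}$ on spaces of H\"older functions adapted to the black/white tile structure, establishing a meromorphic continuation with a simple pole at $s=s_0$, and then applying a Tauberian theorem; for (i) the input is non-vanishing of the continuation on $\operatorname{Re}(s)=s_0$ away from $s_0$ (the spectral reformulation of non-co-homology to a constant), and for (ii) a Dolgopyat-type oscillatory-cancellation estimate (this is where $\alpha$-strong non-integrability enters) producing a zero-free strip and an effective error term. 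Within the present reduction itself, the only genuinely delicate step is the H\"older-regularity transfer in the second paragraph: one must be sure that what is available is quasisymmetric, not merely topological, equivalence of $\sigma$ and $d$, and that the attendant change of H\"older exponent is harmless for the hypotheses of Theorem~\ref{thmPrimeOrbitTheorem}.
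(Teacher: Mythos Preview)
Your proposal is correct and matches the paper's approach exactly: the paper states that Theorem~\ref{thmPrimeOrbitTheorem_rational} is an immediate consequence of Theorem~\ref{thmPrimeOrbitTheorem} in view of Remark~\ref{rmChordalVisualQSEquiv}, which records precisely the quasisymmetric equivalence between the chordal and visual metrics and the resulting transfer of H\"older continuity (up to a change of exponent). Your identification of the one nontrivial step---the H\"older regularity transfer via quasisymmetry---is on point, and your third paragraph, while not part of the reduction itself, accurately summarizes the content of Theorem~\ref{thmPrimeOrbitTheorem}.
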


\begin{rem}
	In  the case where $\phi$ is cohomologous to a constant in $\CCC( S^2 )$, similar results as the ones in Theorem~\ref{thmPrimeOrbitTheorem_rational}~(i) and Theorem~\ref{thmPrimeOrbitTheorem}~(i) below also hold. See Theorems~B and~C of the first paper \cite{LZ24a} of this series.
\end{rem}

Our strategy to overcome the obstacles presented by the incompatibility of the non-uniform expansion of our rational maps and the H\"older continuity of the weight $\phi$ (e.g.\ (a)~the set of $\alpha$-H\"older continuous functions is not invariant under the Ruelle operator $\RR_\phi$, for each $\alpha\in(0,1]$; (b) the weakening of the regularity of the temporal distance compared to that of the potential)  is to investigate the metric geometry of various natural metrics associated to the dynamics such as visual metrics, the canonical orbifold metric, and the chordal metric. Such considerations lead us beyond conformal, or even smooth, dynamical settings and into the realm of topological dynamical systems. More precisely, we will work in the abstract setting of branched covering maps on the topological $2$-sphere $S^2$ (see Subsection~\ref{subsctThurstonMap}) without any smoothness assumptions. A \emph{Thurston map} is a postcritically-finite branched covering map on $S^2$. Thurston maps can be considered as topological models of the corresponding rational maps. 

Via Sullivan's dictionary, the counterpart of Thurston's theorem \cite{DH93} in the geometric group theory is Cannon's Conjecture \cite{Ca94}. This conjecture predicts that a Gromov hyperbolic group $G$ whose boundary at infinity $\partial_\infty G$ is a topological $2$-sphere admits a geometric action on the hyperbolic $3$-space. Gromov hyperbolic groups can be considered as metric-topological systems generalizing the conformal systems in the context of the geometric group theory, namely, convex-cocompact Kleinian groups. Inspired by Sullivan's dictionary and their interest in Cannon's Conjecture, M.~Bonk and D.~Meyer, along with others, studied a subclass of Thurston maps by imposing some additional condition of expansion. Roughly speaking, we say that a Thurston map is \emph{expanding} if for any two points $x, \, y\in S^2$, their preimages under iterations of the map get closer and closer. For each expanding Thurston map, we can equip the $2$-sphere $S^2$ with a natural class of metrics called \emph{visual metrics}. As the name suggests, these metrics are constructed in a similar fashion as the visual metrics on the boundary $\partial_\infty G$ of a Gromov hyperbolic group $G$. See Subsection~\ref{subsctThurstonMap} for a more detailed discussion on these notions. Various ergodic properties, including thermodynamic formalism, on which the current paper crucially relies, have been studied by the first-named author in \cite{Li17} (see also \cite{Li15, Li16, Li18}). Generalization of results in \cite{Li17} to the more general branched covering maps studied by P.~Ha\"issinsky, K.~M.~Pilgrim \cite{HP09} has drawn significant interest recently \cite{HRL19, DPTUZ19, LZheH23}. We believe that our ideas introduced in this paper can be used to establish Prime Orbit Theorems in their setting.

M.~Bonk, D.~Meyer \cite{BM10, BM17} and P.~Ha\"issinsky, K.~M.~Pilgrim \cite{HP09} proved that an expanding Thurston map is conjugate to a rational map if and only if the sphere $(S^2,d)$ equipped with a visual metric $d$ is quasisymmetrically equivalent to the Riemann sphere $\widehat\C$ equipped with the chordal metric. The quasisymmetry cannot be promoted to Lipschitz equivalence due to the non-uniform expansion of Thurston maps. There exist expanding Thurston maps not conjugate to rational Thurston maps (e.g.\ ones with periodic critical points). Our theorems below apply to all expanding Thurston maps, which form the most general setting in this series of papers.

\begin{thmx}[Prime Orbit Theorems for expanding Thurston maps]  \label{thmPrimeOrbitTheorem}
Let $f\: S^2 \rightarrow S^2$ be an expanding Thurston map, and $d$ be a visual metric on $S^2$ for $f$. Let $\phi \in \Holder{\alpha}(S^2,d)$ be an eventually positive real-valued H\"{o}lder continuous function with an exponent $\alpha\in (0,1]$. Denote by $s_0$ the unique positive number with topological pressure $P(f,-s_0 \phi) = 0$. Then there exists $N_f\in\N$ depending only on $f$ such that for each $n\in \N$ with $n\geq N_f$, the following statements hold for $F\coloneqq f^n$ and $\Phi\coloneqq \sum_{i=0}^{n-1} \phi \circ f^i$:

\begin{enumerate}
\smallskip
\item[(i)] $\pi_{F,\Phi}(T) \sim \operatorname{Li}\bigl( e^{s_0 T} \bigr)$ as $T \to + \infty$ if and only if $\phi$ is not cohomologous to a constant in the space $\CCC( S^2 )$ of real-valued continuous functions on $S^2$.

\smallskip
\item[(ii)] Assume that $\phi$ satisfies the $\alpha$-strong non-integrability condition. Then there exists a constant $\delta \in (0, s_0)$ such that
\begin{equation*}
	\pi_{F,\Phi}(T) = \operatorname{Li}\bigl( e^{s_0 T} \bigr)  + \bigO \bigl( e^{(s_0 - \delta)T} \bigr)  \text{ as }  T \to + \infty	.
\end{equation*}
\end{enumerate}
Here $\operatorname{Li}(\cdot)$ is the Eulerian logarithmic integral function defined in Theorem~\ref{thmLattesPOT}.
\end{thmx}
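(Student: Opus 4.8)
The plan is to follow the classical Tauberian route pioneered by Parry--Pollicott for counting periodic orbits, but adapted to the non-uniformly expanding setting of expanding Thurston maps via the thermodynamic formalism developed in \cite{Li17}. First I would encode the counting function $\pi_{F,\Phi}(T)$ in terms of a dynamical zeta function
\begin{equation*}
\zeta_{F,\Phi}(s) \coloneqq \prod_{\tau\in\Orb(F)} \bigl(1 - e^{-s\, l_{F,\Phi}(\tau)}\bigr)^{-1},
\end{equation*}
or rather its logarithmic derivative, and reduce the two asymptotic statements to analytic properties of $\zeta_{F,\Phi}$ near and to the left of the line $\RE s = s_0$. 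The normalization $P(f,-s_0\phi)=0$, together with uniqueness and strict monotonicity of $s\mapsto P(f,-s\phi)$ (which follows from $\phi$ being eventually positive, after passing to the iterate $F=f^n$ so that $\Phi$ is genuinely positive), guarantees that $s_0$ is the abscissa of convergence and that there is a simple pole at $s=s_0$ coming from the leading eigenvalue $e^{P(F,-s\Phi)}$ of the Ruelle operator $\RR_{-s\Phi}$ being $1$ exactly at $s=s_0$.

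The heart of the argument is a spectral analysis of the family of Ruelle operators $\RR_{-s\Phi}$ for $s$ in a neighborhood of the critical line. Since $\alpha$-H\"older functions are \emph{not} preserved by $\RR_\phi$ in this setting, I would work with a split Ruelle operator acting on a product of H\"older spaces over the two ``colors'' of tiles (the white and black $0$-tiles), exploiting the cell decompositions associated to $f$ and an invariant Jordan curve $\CC$ through $\post f$ — this is the device the paper's title advertises (``split Ruelle operators''). On the real line one gets a spectral gap from the standard Ruelle--Perron--Frobenius theory for expanding Thurston maps in \cite{Li17}, giving analyticity of $\zeta_{F,\Phi}$ in a half-plane $\RE s > s_0 - \eta$ except for the simple pole at $s_0$, which via the Wiener--Ikehara Tauberian theorem yields statement~(i): the asymptotic $\pi_{F,\Phi}(T)\sim\operatorname{Li}(e^{s_0T})$ holds precisely when there is no other pole on $\RE s = s_0$, and the absence of such poles is equivalent (by a lattice/periodicity argument on $\Phi$) to $\phi$ not being co-homologous to a constant.

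For statement~(ii), the exponential error term, the decisive input is Dolgopyat-type cancellation: one must show that for $|\RE s - s_0|$ small and $|\Im s| = b$ large, the iterated operators $\RR_{-s\Phi}^N$ (suitably normalized) have operator norm decaying like $\rho^N$ for some $\rho<1$ uniformly in $b$, when measured in a norm that interpolates the sup norm and a weighted H\"older seminorm scaled by $b$. This is where the $\alpha$-strong non-integrability condition (Definition~\ref{defStrongNonIntegrability}) enters: it provides the oscillation in the ``temporal distance'' function needed to run Dolgopyat's $L^2$ contraction scheme (choosing Dolgopyat cones, constructing the auxiliary bump functions, and executing the cancellation estimate on a carefully chosen subfamily of tiles). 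Once this contraction is in place, one obtains a zero-free strip $\RE s > s_0 - \delta$, $|\Im s| \geq$ const, with polynomial control on $\zeta_{F,\Phi}'/\zeta_{F,\Phi}$ there, and then a contour-shift argument plus the effective Perron/Tauberian estimate delivers the $\bigO(e^{(s_0-\delta)T})$ remainder.

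The main obstacle, and the reason the iterate $F = f^n$ with $n\geq N_f$ appears, is controlling the geometry of the tiles in a visual metric: the non-uniform expansion means tile diameters are only comparable to $\Lambda^{-m}$ up to multiplicative errors that degrade the naive H\"older bounds, and critical points force the split-operator formalism. Establishing the uniform Dolgopyat contraction in this irregular geometry — reconciling $\alpha$-H\"older regularity of $\Phi$ with the weaker regularity of the temporal distance, and arranging the combinatorics of pull-backs of tiles so that the strong non-integrability yields a quantitative non-concentration estimate — is the technically heaviest step and, I expect, the part requiring the most careful work; the reduction from the zeta function to the Tauberian conclusion, and the lattice argument for part~(i), are comparatively standard once the spectral bounds are in hand.
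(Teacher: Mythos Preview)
Your proposal is correct and follows essentially the same architecture as the paper: encode $\pi_{F,\Phi}$ via the dynamical zeta function, analyze the (split) Ruelle operators to control $\zeta_{F,\Phi}$ near and across $\RE s = s_0$, and conclude by the Tauberian argument of Pollicott--Sharp \cite{PS98}. One small correction: for statement~(i) the Ruelle--Perron--Frobenius spectral gap does not by itself give analytic continuation into a strip $\RE s > s_0 - \eta$; what it gives (combined with the analysis of peripheral eigenvalues of the complex operators $\RR_{-s\Phi}$) is a non-vanishing holomorphic extension only to the \emph{closed} half-plane $\RE s \geq s_0$ minus the simple pole at $s_0$, and this is already enough for Wiener--Ikehara --- the strip extension comes only in~(ii) via Dolgopyat.
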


Note that $\lim_{y\to+\infty}   \operatorname{Li}(y) / ( y / \log y )  = 1$, thus we also get $\pi_{F,\Phi}(T) \sim   e^{s_0 T} \big/ ( s_0 T)$ as $T\to + \infty$. 

We remark that our proofs can be modified to derive equidistribution of holonomies similar to the corresponding result in \cite{OW17}, but we choose to omit them in order to emphasize our new ideas and to limit the length of this paper.

In view of Remark~\ref{rmChordalVisualQSEquiv}, Theorem~\ref{thmPrimeOrbitTheorem_rational} is an immediate consequence of Theorem~\ref{thmPrimeOrbitTheorem}.

\begin{rem}    \label{rmNf}
The integer $N_f$ can be chosen as the minimum of $N(f,\wt{\CC})$ from Lemma~\ref{lmCexistsL} over all Jordan curves $\wt{\CC}$ with $\post f \subseteq \wt{\CC} \subseteq S^2$, in which case $N_f = 1$ if there exists a Jordan curve $\CC\subseteq S^2$ satisfying $f(\CC)\subseteq \CC$, $\post f\subseteq \CC$, and no $1$-tile in $\X^1(f,\CC)$ joins opposite sides of $\CC$ (see Definition~\ref{defJoinOppositeSides}). The same number $N_f$ is used in other results in this paper. We also remark that many properties of expanding Thurston maps $f$ can be established for $f$ after being verified first for $f^n$ for all $n\geq N_f$. However, some of the finer properties established for iterates of $f$ still remain open for the map $f$ itself; see for example, \cite{Me13, Me12}.
\end{rem}


Note that due to the lack of algebraic structure of expanding Thurston maps, even the fact that there are only countably many periodic points is not apparent from the definition (see \cite{Li16}). Without any algebraic, differential, or conformal structures, the main tools we rely on are from the interplay between the metric properties of various natural metrics and the combinatorial information on the iterated preimages of certain Jordan curves $\CC$ on $S^2$ (see Subsection~\ref{subsctThurstonMap}).

By well-known arguments of M.~Pollicott and R.~Sharp inspired from number theory \cite{PS98}, the counting result in Theorem~\ref{thmPrimeOrbitTheorem} follows from some quantitative information on the holomorphic extension of certain dynamical zeta function $\zeta_{F,\,\minus \Phi}$ defined as formal infinite products over periodic orbits. We briefly recall dynamical zeta functions and define the dynamical Dirichlet series in our context below. See Subsection~\ref{subsctDynZetaFn} for a more detailed discussion.

Let $f\: S^2\rightarrow S^2$ be an expanding Thurston map and $\psi\in\CCC(S^2,\C)$ be a complex-valued continuous function on $S^2$. We denote by the formal infinite product
\begin{equation*}  
\zeta_{f,\,\minus\psi} (s) \coloneqq 
\exp \Biggl( \sum_{n=1}^{+\infty} \frac{1}{n} \sum_{ x = f^n(x) } e^{-s S_n \psi(x)} \Biggr), \qquad s\in\C,
\end{equation*}
the \defn{dynamical zeta function} for the map $f$ and the \emph{potential} $\psi$. Here we write $S_n \psi (x)  \coloneqq \sum_{j=0}^{n-1} \psi(f^j(x))$ as defined in (\ref{eqDefSnPt}). We remark that $\zeta_{f,\,\minus\psi}$ is the Ruelle zeta function for the suspension flow over $f$ with roof function $\psi$ if $\psi$ is positive. We define the \emph{dynamical Dirichlet series} associated to $f$ and $\psi$ as the formal infinite product
\begin{equation*}  
\DS_{f,\,\minus\psi,\, \deg_f} (s) \coloneqq \exp \Biggl( \sum_{n=1}^{+\infty} \frac{1}{n} \sum_{x = f^n(x)} e^{-s S_n \psi(x)} \deg_{f^n}(x) \Biggr), \qquad s\in\C.
\end{equation*}
Here $\deg_{f^n}$ is the \emph{local degree} of $f^n$ at $x\in S^2$.

Note that if $f\: S^2 \rightarrow S^2$ is an expanding Thurston map, then so is $f^n$ for each $n\in\N$. 

Recall that a function is holomorphic on a set $A\subseteq \C$ if it is holomorphic on an open set containing $A$.

\begin{thmx}[Holomorphic extensions of dynamical Dirichlet series and zeta functions for expanding Thurston maps]  \label{thmZetaAnalExt_InvC}
Let $f\: S^2 \rightarrow S^2$ be an expanding Thurston map, and $d$ be a visual metric on $S^2$ for $f$. Fix $\alpha\in(0,1]$. Let $\phi \in \Holder{\alpha}(S^2,d)$ be an eventually positive real-valued H\"{o}lder continuous function that is not cohomologous to a constant in $\CCC( S^2 )$. Denote by $s_0$ the unique positive number with topological pressure $P(f,-s_0 \phi) = 0$.  
Then there exists $N_f\in\N$ depending only on $f$ such that for each $n\in \N$ with $n\geq N_f$, the following statements hold for $F\coloneqq f^n$ and $\Phi\coloneqq \sum_{i=0}^{n-1} \phi \circ f^i$:

\begin{enumerate}
\smallskip
\item[(i)] Both the dynamical zeta function $\zeta_{F,\,\minus \Phi} (s)$ and the dynamical Dirichlet series $\DS_{F,\,\minus \Phi,\,\deg_F} (s)$ converge on $\{s\in\C  :  \Re(s) > s_0 \}$ and extend to non-vanishing holomorphic functions on $\{s\in\C  :  \Re(s) \geq s_0\}$ except for the simple pole at $s=s_0$.

\smallskip
\item[(ii)] Assume in addition that $\phi$ satisfies the $\alpha$-strong non-integrability condition. Then there exists a constant $\epsilon_0 \in (0, s_0)$ such that both $\zeta_{F,\,\minus \Phi} (s)$ and $\DS_{F,\,\minus \Phi,\,\deg_F} (s)$ converge on $\{s\in\C  :  \Re(s) > s_0 \}$ and extend to non-vanishing holomorphic functions on $\{s\in\C  :  \Re(s) \geq s_0 - \epsilon_0 \}$ except for the simple pole at $s=s_0$. Moreover, for each $\epsilon >0$, there exist constants $C_\epsilon >0$, $a_\epsilon \in (0, \epsilon_0]$, and $b_\epsilon\geq  2 s_0+1$ such that
\begin{align}
     \exp\left( - C_\epsilon \abs{\Im(s)}^{2+\epsilon} \right) 
&\leq \abs{\zeta_{ F,\,\minus\Phi} (s) }
\leq \exp\left(   C_\epsilon \abs{\Im(s)}^{2+\epsilon} \right),  \label{eqZetaBound}\\
     \exp\left( - C_\epsilon \abs{\Im(s)}^{2+\epsilon} \right) 
&\leq \Absbig{ \DS_{ F,\,\minus\Phi,\,\deg_F} (s) }
\leq \exp\left(   C_\epsilon \abs{\Im(s)}^{2+\epsilon} \right) \label{eqWeightedZetaBound}
\end{align}
for all $s\in\C$ with $\abs{\Re(s) - s_0} < a_\epsilon$ and $\abs{\Im(s)}  \geq b_\epsilon$.
\end{enumerate}
\end{thmx}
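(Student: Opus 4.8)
The plan is to derive Theorem~\ref{thmZetaAnalExt_InvC} from the spectral theory of the Ruelle (transfer) operators $\RR_{-s\Phi}$ acting on a suitable Banach space of functions adapted to the visual metric, by expressing both $\zeta_{F,\,\minus\Phi}$ and $\DS_{F,\,\minus\Phi,\,\deg_F}$ in terms of Fredholm-type determinants of these operators. Since $F=f^n$ with $n\ge N_f$ and $F$ again expanding, one can choose an invariant Jordan curve $\CC$ with $\post F\subseteq\CC$, $F(\CC)\subseteq\CC$, and no $1$-tile joining opposite sides of $\CC$ (using Remark~\ref{rmNf} and Lemma~\ref{lmCexistsL}); this is what makes the tile combinatorics and hence the operator estimates work. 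The starting identity is the standard Ruelle-operator trace formula: for the weighted zeta function,
\begin{equation*}
\sum_{x=F^m(x)} e^{-s S_m\Phi(x)} \deg_{F^m}(x) = \operatorname{tr}\bigl(\RR^m_{-s\Phi}\bigr)
\end{equation*}
in an appropriate nuclear sense, so that $\DS_{F,\,\minus\Phi,\,\deg_F}(s) = 1/\det\bigl(I-\RR_{-s\Phi}\bigr)$ up to a non-vanishing holomorphic factor, and similarly $\zeta_{F,\,\minus\Phi}$ is a ratio of such determinants coming from the inclusion-exclusion over the post-critical set that relates counting periodic points with multiplicity $\deg_{F^m}(x)$ to counting them without. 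This reduces everything to: (1) holomorphy and the spectral gap of $s\mapsto\RR_{-s\Phi}$ near $\Re(s)=s_0$, and (2) polynomial-in-$|\Im(s)|$ bounds on the relevant determinants in the strip.

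For part (i), the key input is the thermodynamic formalism for expanding Thurston maps from \cite{Li17}: the operator $\RR_{-s_0\phi}$ (appropriately normalized, after passing to the iterate $F$) has a simple leading eigenvalue equal to $1$ with the rest of the spectrum strictly inside the unit disk, because $P(f,-s_0\phi)=0$. The simple pole of $\zeta_{F,\,\minus\Phi}$ and $\DS_{F,\,\minus\Phi,\,\deg_F}$ at $s=s_0$ comes from this simple eigenvalue crossing $1$; non-vanishing and holomorphy on $\{\Re(s)\ge s_0\}$ away from $s_0$ follow once one shows $\RR_{-s\Phi}$ has no eigenvalue equal to $1$ for $\Re(s)=s_0$, $\Im(s)\ne0$, which is exactly where the hypothesis that $\phi$ is \emph{not} co-homologous to a constant enters — if $1$ were an eigenvalue for some $s=s_0+\I b$, $b\ne0$, a standard argument (Perron–Frobenius/Ruelle eigenfunction comparison, as in Naud \cite{Na05}) would force $b\Phi$ to be co-homologous to a function with values in $2\pi\Z$, contradicting the non-cohomology assumption. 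One subtlety flagged in the introduction is that $\Holder{\alpha}(S^2,d)$ is not $\RR_\phi$-invariant, so the Banach space must be chosen more carefully (e.g.\ a space controlling Hölder oscillation along tiles at each level, as developed in the companion work \cite{LZhe23a}); establishing the requisite Lasota–Yorke / compactness estimates on that space, uniformly for $s$ in vertical strips, is the technical backbone.

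For part (ii), under the $\alpha$-strong non-integrability condition one has to push the spectral gap past $\Re(s)=s_0$ uniformly in $\Im(s)$. The mechanism is Dolgopyat's method (as in \cite{Do98}, adapted to the complex-dynamics setting following \cite{Na05, OW17}): for $|\Im(s)|=|b|$ large one constructs Dolgopyat-type "cancellation" operators and shows a contraction estimate $\bigl\Vert\RR^{N}_{-s\Phi}h\bigr\Vert \le \theta\Vert h\Vert$ with $\theta<1$ for $\Re(s)$ in a $b$-independent window $(s_0-\epsilon_0,s_0+\epsilon_0)$ and $N\asymp\log|b|$, using the oscillation of $b\Phi$ guaranteed by strong non-integrability to beat the triangle inequality on overlapping pull-backs. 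This yields the holomorphic non-vanishing extension to $\{\Re(s)\ge s_0-\epsilon_0\}$ minus the pole, and simultaneously the explicit bounds: $\log|\det(I-\RR_{-s\Phi})|$ is controlled by $\sum_k \log\bigl(1+|\lambda_k(s)|\bigr)$ over the eigenvalues, and the nuclearity/determinant estimates on the chosen Banach space give a bound of the form $\exp(C_\epsilon|b|^{2+\epsilon})$ on both the determinant and its reciprocal in the strip $|\Re(s)-s_0|<a_\epsilon$, $|\Im(s)|\ge b_\epsilon$ — the exponent $2+\epsilon$ (rather than $1+\epsilon$) reflecting the loss from non-uniform expansion and the $\log|b|$-many iterates needed in the Dolgopyat step. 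Translating back through the product formulas relating $\zeta_{F,\,\minus\Phi}$, $\DS_{F,\,\minus\Phi,\,\deg_F}$, and the determinants (the extra factors being entire and non-vanishing of at most comparable growth) gives \eqref{eqZetaBound} and \eqref{eqWeightedZetaBound}.

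The main obstacle I expect is the Dolgopyat-style contraction estimate in part (ii) on the \emph{non-$\RR_\phi$-invariant} Hölder space: one must simultaneously control Hölder seminorms that degrade under the operator and exploit the $\alpha$-strong non-integrability of $\Phi$ to get genuine oscillatory cancellation, all uniformly over vertical strips, and all while keeping the tile-combinatorial geometry (which only behaves well for the iterate $F=f^n$, $n\ge N_f$) under control. Part (i) is comparatively soft once the correct Banach space and the quasi-compactness of $\RR_{-s\Phi}$ are in hand; the delicate point there is only the "no eigenvalue $1$ off the real axis" dichotomy, which is the $\sim$ part of the theorem and is genuinely equivalent to the non-cohomology hypothesis.
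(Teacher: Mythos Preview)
Your high-level strategy --- spectral theory of transfer operators plus a Dolgopyat cancellation argument --- matches the paper's, but the implementation you sketch diverges from it at two essential points, and one of them is a genuine gap.

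The gap is the Fredholm-determinant formulation. You write $\sum_{x=F^m(x)} e^{-sS_m\Phi(x)}\deg_{F^m}(x) = \operatorname{tr}(\RR^m_{-s\Phi})$ ``in an appropriate nuclear sense'' and propose to read off $\DS_{F,\,\minus\Phi,\,\deg_F}(s)=1/\det(I-\RR_{-s\Phi})$. No such nuclearity is available here: the system is merely H\"older, not analytic or $C^\infty$; the operators are at best quasi-compact on the relevant spaces; and there is no trace class or regularized determinant to invoke. The determinant machinery of \cite{Ru90} and \cite{BJR02} needs analyticity, and your later bound $\log|\det(I-\RR_{-s\Phi})|\le\sum_k\log(1+|\lambda_k(s)|)$ presupposes exactly the kind of eigenvalue summability that is not established. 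The paper never writes a determinant and never claims a trace formula.

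What the paper actually does is different in two ways. First, the non-invariance of $\Holder{\alpha}(S^2,d)$ under $\RR_\phi$ is handled not by passing to an exotic Banach space on $S^2$ but by introducing the \emph{split Ruelle operator} $\RRR_\psi$ acting on the product $\Holder{\alpha}\bigl(X^0_\b,d\bigr)\times\Holder{\alpha}\bigl(X^0_\w,d\bigr)$ (Section~\ref{sctSplitRuelleOp}); this is the device that makes the basic inequalities, spectral gap, and Dolgopyat argument go through. Second, the link between periodic-orbit sums and operator iterates is made not by a trace identity but by a direct telescoping estimate (Proposition~\ref{propTelescoping}, after Ruelle): one bounds $\bigl|Z^{(n)}_{\sigma_{A_\ti},\,\minus\phi\circsmall\pi_\ti}(s)-\text{(explicit split-Ruelle sums)}\bigr|$ by a geometric series in the operator norms $\OpHnormD{\alpha}{\RRR_{\minus s\phi}^{n-m}}$, and the Dolgopyat contraction (Theorem~\ref{thmL2Shrinking}, Theorem~\ref{thmOpHolderNorm}) then controls those norms. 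This yields the bounds first for the \emph{symbolic} zeta function $\zeta_{\sigma_{A_\ti},\,\minus\phi\circsmall\pi_\ti}$ (Theorem~\ref{thmZetaAnalExt_SFT}); the passage from there to $\DS_{F,\,\minus\Phi,\,\deg_F}$ and then to $\zeta_{F,\,\minus\Phi}$ is carried out combinatorially in the companion paper \cite{LZhe23a}. Your proposal omits the symbolic intermediary entirely, but it is precisely on the subshift that the telescoping estimate takes its clean form.
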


In order to get information about $\zeta_{F,\,\minus \Phi}$, we need to investigate the zeta function $\zeta_{\sigma_{A_{\ti}},\,\minus \phi\circsmall\pi_{\ti}}$ of a symbolic model $\sigma_{A_{\ti}} \: \Sigma_{A_{\ti}}^+\rightarrow \Sigma_{A_{\ti}}^+$ of $F$.

\begin{thmx}[Holomorphic extensions of the symbolic zeta functions]  \label{thmZetaAnalExt_SFT}
Let $f\: S^2 \rightarrow S^2$ be an expanding Thurston map with a Jordan curve $\CC\subseteq S^2$ satisfying $f(\CC)\subseteq \CC$, $\post f\subseteq \CC$, and no $1$-tile in $\X^1(f,\CC)$ joins opposite sides of $\CC$. Let $d$ be a visual metric on $S^2$ for $f$. Fix $\alpha\in(0,1]$. Let $\phi \in \Holder{\alpha}(S^2,d)$ be an eventually positive real-valued H\"{o}lder continuous function that is not cohomologous to a constant in $\CCC( S^2 )$. Denote by $s_0$ the unique positive number with $P(f,-s_0 \phi) = 0$. Let $\bigl(\Sigma_{A_{\ti}}^+,\sigma_{A_{\ti}}\bigr)$ be the one-sided subshift of finite type associated to $f$ and $\CC$ defined in Proposition~\ref{propTileSFT}, and let $\pi_{\ti}\: \Sigma_{A_{\ti}}^+\rightarrow S^2$ be the factor map as defined in (\ref{eqDefTileSFTFactorMap}).

Then the dynamical zeta function $\zeta_{\sigma_{A_{\ti}},\,\minus \phi\circsmall\pi_{\ti}} (s)$ converges on the open half-plane $\{s\in\C  :  \Re(s) > s_0 \}$, and the following statements hold:
\begin{enumerate}
\smallskip
\item[(i)] The function $\zeta_{\sigma_{A_{\ti}},\,\minus \phi\circsmall\pi_{\ti}} (s)$ extends to a non-vanishing holomorphic function on the closed half-plane $\{s\in\C  :  \Re(s) \geq s_0 \}$ except for the simple pole at $s=s_0$. 

\smallskip
\item[(ii)] Assume in addition that $\phi$ satisfies the $\alpha$-strong non-integrability condition. Then there exists a constant $\wt\epsilon_0 \in (0, s_0)$ such that $\zeta_{\sigma_{A_{\ti}},\,\minus \phi\circsmall\pi_{\ti}} (s)$  extends to a non-vanishing holomorphic function on the closed half-plane $\{s\in\C  :  \Re(s) \geq s_0 - \wt\epsilon_0 \}$ except for the simple pole at $s=s_0$. Moreover, for each $\epsilon >0$, there exist constants $\wt{C}_\epsilon >0$, $\wt{a}_\epsilon \in (0,s_0)$, and $\wt{b}_\epsilon \geq 2 s_0 + 1$ such that
\begin{equation}   \label{eqZetaBound_SFT}
     \exp\bigl( - \wt{C}_\epsilon \abs{\Im(s)}^{2+\epsilon} \bigr) 
\leq \Absbig{ \zeta_{\sigma_{A_{\ti}},\,\minus \phi\circsmall\pi_{\ti}} (s) }
\leq \exp\bigl(   \wt{C}_\epsilon \abs{\Im(s)}^{2+\epsilon} \bigr) 
\end{equation}
for all $s\in\C$ with $\abs{\Re(s) - s_0} < \wt{a}_\epsilon$ and $\abs{\Im(s)}  \geq \wt{b}_\epsilon$.
\end{enumerate}
\end{thmx}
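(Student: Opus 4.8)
The plan is to realize the symbolic zeta function $\zeta_{\sigma_{A_{\ti}},\,\minus \phi\circsmall\pi_{\ti}}$ as a product of determinants of transfer (Ruelle) operators acting on a suitable Banach space of Hölder-type functions on the one-sided subshift of finite type $\Sigma_{A_{\ti}}^+$, and then to push through the Dolgopyat-style estimates that give the analytic continuation and the polynomial-in-$\Im(s)$ bounds. First I would set up the machinery: for $s\in\C$ let $\RR_{-s\phi\circsmall\pi_{\ti}}$ denote the Ruelle operator on $\Sigma_{A_{\ti}}^+$ associated to the potential $-s\phi\circ\pi_{\ti}$, and recall the standard identity expressing $\zeta_{\sigma_{A_{\ti}},\,\minus \phi\circsmall\pi_{\ti}}(s)$ in terms of $\prod_{x=\sigma_{A_{\ti}}^n(x)} \bigl(1 - \text{(local factors)}\bigr)$, which via the Atiyah--Bott--Ruelle trace formula for subshifts of finite type becomes a ratio of Fredholm-type expressions built from $\RR_{-s\phi\circsmall\pi_{\ti}}$. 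The key point is that the potential $\phi\circ\pi_{\ti}$, while only Hölder on $S^2$, becomes locally Hölder on $\Sigma_{A_{\ti}}^+$ with respect to the natural symbolic metric precisely because the tiles shrink at a geometric rate (the defining feature of visual metrics for expanding Thurston maps), so the transfer operator acts nicely on the corresponding symbolic Hölder space. The hypothesis that no $1$-tile joins opposite sides of $\CC$ is exactly what guarantees, via Proposition~\ref{propTileSFT}, that the subshift is mixing (topologically, with the primitivity needed), which is the analog of the eventual-positivity/primitivity hypothesis in Dolgopyat's and Naud's arguments.

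Next I would reduce both parts to spectral statements about $\RR_{-s\phi\circsmall\pi_{\ti}}$ on the line $\Re(s)=s_0$ and slightly to its left. For part~(i): the equality $P(f,-s_0\phi)=0$ translates (through the factor map and the pressure identity for SFTs, which I'd cite from the earlier thermodynamic-formalism development) into the statement that $\RR_{-s_0\phi\circsmall\pi_{\ti}}$ has spectral radius $1$ with a simple leading eigenvalue $1$ and no other spectrum on the unit circle — here the \emph{non}-cohomology-to-a-constant hypothesis on $\phi$ is what rules out other peripheral eigenvalues (this is the classical Livšic-type dichotomy: peripheral spectrum on the unit circle forces $\phi$ cohomologous to a constant, cf.\ the argument underlying Theorem~\ref{thmPrimeOrbitTheorem}(i)). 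Standard perturbation theory then gives a neighborhood of $s_0$ on the line where $(\mathrm{Id}-\RR_{-s\phi\circsmall\pi_{\ti}})$ is invertible except for a simple pole at $s=s_0$ coming from the leading eigenvalue crossing $1$; combined with the trace-formula representation this yields the meromorphic (indeed holomorphic-except-for-a-simple-pole) extension to $\{\Re(s)\geq s_0\}$ and non-vanishing. For part~(ii) one needs the extension across the line $\Re(s)=s_0$ into a strip $\{\Re(s)\geq s_0-\wt\epsilon_0\}$ together with the sub-exponential bounds~\eqref{eqZetaBound_SFT}; this is where the $\alpha$-strong non-integrability condition enters.

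The main obstacle, and the technical heart of the paper, is establishing a Dolgopyat-type contraction estimate for the iterated twisted transfer operators $\RR^{N}_{-s\phi\circsmall\pi_{\ti}}$ on an appropriate cone/norm (the norm combining sup-norm and a Lipschitz/Hölder seminorm weighted by $\abs{\Im(s)}$) when $\abs{\Im(s)}$ is large and $\Re(s)$ is close to $s_0$. Concretely, I would follow the Naud--Dolgopyat scheme: build a family of invariant cones of positive functions, show that oscillation of the phase $\Im(s)\, S_n(\phi\circ\pi_{\ti})$ across preimage branches — which is quantitatively controlled below by the $\alpha$-strong non-integrability condition (the symbolic analog of Dolgopyat's condition on the temporal distance / Sinai--Ruelle--Bowen cocycle) — forces genuine cancellation in $\RR^{N}_{-s\phi\circsmall\pi_{\ti}}$ for a well-chosen $N\asymp \log\abs{\Im(s)}$, giving a spectral-radius bound strictly below $1$ on the relevant $L^2$-type space, uniformly on the strip. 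The subtlety peculiar to the Thurston-map setting — and the reason the preceding sections on metric geometry are needed — is that the potential is only Hölder rather than Lipschitz, so the transfer operator does not preserve a fixed Lipschitz space; one must work with $\alpha$-Hölder seminorms throughout and carefully track how the non-integrability lower bound degrades under iteration, which is what ultimately produces the exponent $2+\epsilon$ (rather than a clean power) in~\eqref{eqZetaBound_SFT}. Once this $L^2$ contraction is in hand, a standard argument (bounding $\abs{\det(\mathrm{Id}-\RR^{N}_{-s\phi\circsmall\pi_{\ti}})}$ above and below by the operator-norm estimates, and converting the flat-trace/zeta expression into these determinants) upgrades it to the stated non-vanishing holomorphic extension and the two-sided bound~\eqref{eqZetaBound_SFT}, with $\wt\epsilon_0$, $\wt C_\epsilon$, $\wt a_\epsilon$, $\wt b_\epsilon$ read off from the quantitative constants in the contraction estimate.
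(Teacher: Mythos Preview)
Your high-level architecture---prove a Dolgopyat-type contraction for twisted transfer operators when $\abs{\Im(s)}$ is large, then convert this into bounds on the zeta function---is correct and matches the paper's strategy. Part~(i) can indeed be handled by classical Parry--Pollicott spectral theory on the symbolic side, and the paper simply cites the companion article \cite{LZhe23a} for it.

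For part~(ii), however, your proposed implementation diverges from the paper's in two essential respects. First, you propose to work with the symbolic transfer operator $\RR_{-s\phi\circ\pi_{\ti}}$ on a H\"older space over $\Sigma_{A_{\ti}}^+$; the paper instead carries out the entire Dolgopyat argument on $S^2$ via the \emph{split Ruelle operator} $\RRR_{-s\phi}$ acting on $\CCC(X^0_\b,\C)\times\CCC(X^0_\w,\C)$ (Section~\ref{sctSplitRuelleOp}), proving the $L^2$-contraction (Theorem~\ref{thmL2Shrinking}) and then bootstrapping to H\"older-norm decay (Theorem~\ref{thmOpHolderNorm}). Second, and more importantly, the bridge from operator estimates to zeta-function bounds is not a trace or determinant identity at all: the paper uses \emph{Ruelle's telescoping estimate} (Proposition~\ref{propTelescoping}), which bounds $Z^{(n)}_{\sigma_{A_{\ti}},\,\minus\phi\circsmall\pi_{\ti}}(s)$ directly in terms of $\OpHnormD{\alpha}{\RRR_{-s\phi}^{n-m}}$ applied against indicator functions $\mathbbm{1}_{X^m}$ of $m$-tiles. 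This is precisely why the split operator is introduced: $S^2$ is connected, so such indicators are discontinuous and the ordinary Ruelle operator cannot act on them; the split construction $\RR_{\psi,\c,E}^{(n)}$ is designed to make sense of these pieces. Your ``Fredholm-type determinant'' route is not available here in any literal sense (the operators are not trace-class on H\"older spaces), and the paper sidesteps this entirely.

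Your symbolic approach is not wrong in principle---on $\Sigma_{A_{\ti}}^+$ cylinder indicators are continuous, so the connectedness obstruction disappears and a standard Naud-style argument could run---but you would then have to recast the $\alpha$-strong non-integrability condition (Definition~\ref{defStrongNonIntegrability}, stated geometrically in terms of tiles and visual-metric distances on $S^2$) as a quantitative non-integrability condition on the symbolic potential $\phi\circ\pi_{\ti}$, and check that the cancellation mechanism survives the passage through $\pi_{\ti}$. The paper's choice to stay on $S^2$ keeps the hypothesis geometric and makes the link to the Latt\`es application in Section~\ref{sctLattes} direct; your route would buy a cleaner functional-analytic setting at the cost of that translation step.
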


We adapt D.~Dolgopyat's cancellation argument developed in his landmark work \cite{Do98} (building in part of work of Chernov \cite{Ch98})  and arguments of M.~Pollicott and R.~Sharp \cite{PS98} to establish a symbolic version of Theorem~\ref{thmZetaAnalExt_InvC} as stated in Theorem~\ref{thmZetaAnalExt_SFT}. The difficulties in adapting D.~Dolgopyat's machinery in our metric-topological setting are purely technical, but overcoming these difficulties in any context is the heart of the matter (see for example, \cite{Na05, OW17} as well as works on the decay of correlation and counting in \cite{Liv04, AGY06, OW16, OW17, BDL18}, etc.) We use the H\"older norm in the cancellation argument instead of the $\CCC^1$-norm used in \cite{Na05, OW17}. Another major technical difficulty comes from the fact that $S^2$ is connected and the usual Ruelle operator does not apply to characteristic functions on proper subsets of $S^2$, which is essential in Ruelle's estimate (see (\ref{eqTelescoping}) in Proposition~\ref{propTelescoping}). Our approach is to adjust the definition of the Ruelle operator and to introduce what we call the \emph{split Ruelle operator} (see Section~\ref{sctSplitRuelleOp}). Such an approach should be useful in establishing Prime Orbit Theorems in other contexts.

\smallskip

We will now give a brief description of the structure of this paper.

After fixing some notation in Section~\ref{sctNotation}, we give a review of basic definitions and results in Section~\ref{sctPreliminaries}. In Section~\ref{sctAssumptions}, we state the assumptions on some of the objects in this paper, which we are going to repeatedly refer to later as \emph{the Assumptions}. 
In Section~\ref{sctSplitRuelleOp}, we define the \emph{split Ruelle operator} $\RRR_{\minus s\phi}$ and study its properties including spectral gap.
Section~\ref{sctPreDolgopyat} contains arguments to bound the dynamical zeta function $\zeta_{\sigma_{A_{\ti}},\,\minus\phi\circsmall\pi_{\ti}}$ with the bounds of the operator norm of $\RRR_{\minus s\phi}$.
We provide a proof of Theorem~\ref{thmZetaAnalExt_InvC} in Subsection~\ref{subsctDynOnC_Reduction} to deduce the holomorphic extension of $\DS_{F,\,\minus \Phi,\,\deg_F}$ from that of $\zeta_{\sigma_{A_{\ti}},\,\minus\Phi\circsmall\pi_{\ti}}$, and ultimately to deduce  the holomorphic extension of $\zeta_{ F,\,\minus\Phi}$ from that of $\DS_{F,\,\minus \Phi,\,\deg_F}$.
In Section~\ref{sctDolgopyat}, we adapt the arguments of D.~Dolgopyat \cite{Do98} in our metric-topological setting aiming to prove Theorem~\ref{thmL2Shrinking} at the end of this section, consequently establishing Theorems~\ref{thmZetaAnalExt_InvC},~\ref{thmZetaAnalExt_SFT}, and~\ref{thmPrimeOrbitTheorem}.
Section~\ref{sctLattes} focuses on Latt\`{e}s maps (recalled in Definition~\ref{defLattesMap}). We include the proof of Theorem~\ref{thmLattesPOT} at the end of this section.

\subsection*{Acknowledgments} 
The first-named author is grateful to the Institute for Computational and Experimental Research in Mathematics (ICERM) at Brown University for the hospitality during his stay from February to May 2016, where he learned about this area of research while participating in the Semester Program ``Dimension and Dynamics'' as a research postdoctoral fellow. The authors want to express their gratitude to Mark~Pollicott for his introductory discussion on dynamical zeta functions and Prime Orbit Theorems in ICERM and many conversations since then, and to Hee~Oh for her beautiful talks on her work and helpful comments, while the first-named author also wants to thank Dennis~Sullivan for his encouragement to initiating this project, and Jianyu~Chen and  Polina~Vytnova for interesting discussions on related areas of research. Part of this work was done during the first-named author's stay at the Institute for Mathematical Sciences (IMS) at Stony Brook University as a postdoctoral fellow. He wants to thank IMS and his postdoctoral advisor Mikhail~Yu.~Lyubich for the great support and hospitality. The authors also would like to thank Gengrui Zhang for carefully reading the manuscript and pointing out several typos, and the anonymous referees for valuable suggestions to improve the paper. Z.~Li was partially supported by NSF DMS-1301602, DMS-1600519, NSFC Nos.~12101017, 12090010, 12090015, and BJNSF No.~1214021.

\section{Notation} \label{sctNotation}
Let $\C$ be the complex plane and $\widehat{\C}$ be the Riemann sphere. For each complex number $z\in \C$, we denote by $\Re(z)$ the real part of $z$, and by $\Im(z)$ the imaginary part of $z$. We denote by $\D$ the open unit disk $\D \coloneqq \{z\in\C  :  \abs{z}<1 \}$ on the complex plane $\C$. For each $a\in\R$, we denote by $\H_a$ the open (right) half-plane $\H_a\coloneqq \{ z\in\C  :  \Re(z) > a\}$ on $\C$, and by $\overline{\H}_a$ the closed (right) half-plane $\overline{\H}_a \coloneqq \{ z\in\C  :  \Re(z) \geq  a\}$. We follow the convention that $\N \coloneqq \{1, \, 2, \, 3, \, \dots\}$, $\N_0 \coloneqq \{0\} \cup \N$, and $\widehat{\N} \coloneqq \N\cup \{+\infty\}$, with the order relations $<$, $\leq$, $>$, $\geq$ defined in the obvious way. For $x\in\R$, we define $\lfloor x\rfloor$ as the greatest integer $\leq x$, and $\lceil x \rceil$ the smallest integer $\geq x$. As usual, the symbol $\log$ denotes the logarithm to the base $e$, and $\log_c$ the logarithm to the base $c$ for $c>0$. The symbol $\I$ stands for the imaginary unit in the complex plane $\C$. For each $z\in\C\setminus \{0\}$, we denote by $\Arg(z)$ the principle argument of $z$, i.e., the unique real number in $(-\pi,\pi]$ with the property that $\abs{z} e^{\I \Arg(z)} = z$. The cardinality of a set $A$ is denoted by $\card{A}$. 

Consider real-valued functions $u$, $v$, and $w$ on $(0,+\infty)$. We write $u(T) \sim v(T)$ as $T\to +\infty$ if $\lim_{T\to+\infty} \frac{u(T)}{v(T)} = 1$, and write $u(T) = v(T) + \bigO ( w(T) )$ as $T\to +\infty$ if $\limsup_{T\to+\infty} \Absbig{ \frac{u(T) - v(T) }{w(T)} } < +\infty$.

Let $g\: X\rightarrow Y$ be a map between two sets $X$ and $Y$. We denote the restriction of $g$ to a subset $Z$ of $X$ by $g|_Z$. Consider a map $f\: X\rightarrow X$ on a set $X$. We write $f^n$ for the $n$-th iterate of $f$, and $f^{-n} \coloneqq \left( f^n \right)^{-1}$, for $n\in\N$. We set $f^0 \coloneqq \id_X$, where the identity map $\id_X\: X\rightarrow X$ sends each $x\in X$ to $x$ itself. For each $n\in\N$, we denote by
\begin{equation} \label{eqDefSetPeriodicPts}
P_{n,f} \coloneqq \bigl\{ x\in X  :  f^n(x)=x, \,  f^k(x)\neq x,  \, k\in\{1, \, 2, \, \dots, \, n-1\} \bigr\}
\end{equation}
the \defn{set of periodic points of $f$ with periodic $n$}, and by
\begin{equation}  \label{eqDefSetPeriodicOrbits} 
\Orb(n,f) \coloneqq \bigl\{ \bigl\{f^i(x)   :  i\in \{0, \, 1, \, \dots, \, n-1\} \bigr\}  :  x\in P_{n,f}  \bigr\}
\end{equation}
the \defn{set of primitive periodic orbits of $f$ with period $n$}. The set of all primitive periodic orbits of $f$ is denoted by
\begin{equation} \label{eqDefSetAllPeriodicOrbits} 
\Orb(f) \coloneqq \bigcup_{n=1}^{+\infty}  \Orb(n,f).
\end{equation}

Given a complex-valued function $\varphi\: X\rightarrow \C$, we write
\begin{equation}    \label{eqDefSnPt}
S_n \varphi (x)  = S_{n}^f \varphi (x)  \coloneqq \sum_{j=0}^{n-1} \varphi(f^j(x)) 
\end{equation}
for $x\in X$ and $n\in\N_0$. The superscript $f$ is often omitted when the map $f$ is clear from the context. Note that when $n=0$, by definition, we always have $S_0 \varphi = 0$.

Let $(X,d)$ be a metric space. For subsets $A,B\subseteq X$, we set $d(A,B) \coloneqq \inf \{d(x,y) :  x\in A,\,y\in B\}$, and $d(A,x)=d(x,A) \coloneqq d(A,\{x\})$ for $x\in X$. For each subset $Y\subseteq X$, we denote the diameter of $Y$ by $\diam_d(Y) \coloneqq \sup\{d(x,y) : x, \, y\in Y\}$, the interior of $Y$ by $\inter Y$, and the characteristic function of $Y$ by $\mathbbm{1}_Y$, which maps each $x\in Y$ to $1\in\R$ and vanishes otherwise. We use the convention that $\mathbbm{1}=\mathbbm{1}_X$ when the space $X$ is clear from the context. For each $r>0$ and each $x\in X$, we denote the open (resp.\ closed) ball of radius $r$ centered at $x$ by $B_d(x, r)$ (resp.\ $\overline{B_d}(x,r)$). 

We set $\CCC(X)$ (resp.\ $B(X)$) to be the space of continuous (resp.\ bounded Borel) functions from $X$ to $\R$, $\MMM(X)$ the set of finite signed Borel measures, and $\PPP(X)$ the set of Borel probability measures on $X$. We denote by $\CCC(X,\C)$ (resp.\ $B(X,\C)$) the space of continuous (resp.\ bounded Borel) functions from $X$ to $\C$. We adopt the convention that unless specifically referring to $\C$, we only consider real-valued functions. If we do not specify otherwise, we equip $\CCC(X)$ and $\CCC(X,\C)$ with the uniform norm $\Norm{\cdot}_{\CCC^0(X)}$. For a continuous map $g\: X \rightarrow X$, $\MMM(X,g)$ is the set of $g$-invariant Borel probability measures on $X$. 

The space of real-valued (resp.\ complex-valued) H\"{o}lder continuous functions with an exponent $\alpha\in (0,1]$ on a compact metric space $(X,d)$ is denoted by $\Holder{\alpha}(X,d)$ (resp.\ $\Holder{\alpha}((X,d),\C)$). For each $\psi\in\Holder{\alpha}((X,d),\C)$, we denote
\begin{equation}   \label{eqDef|.|alpha}
\Hseminorm{\alpha,\,(X,d)}{\psi} \coloneqq \sup \{ \abs{\psi(x)- \psi(y)} / d(x,y)^\alpha  :  x, \, y\in X, \,x\neq y \},
\end{equation}
and for $b\in\R\setminus\{0\}$, the \emph{normalized H\"{o}lder norm} of $\psi$ is defined as
\begin{equation}  \label{eqDefNormalizedHolderNorm}
\NHnorm{\alpha}{b}{\psi}{(X,d)} \coloneqq \abs{b}^{-1} \Hseminorm{\alpha,\,(X,d)}{\psi}  + \Norm{\psi}_{\CCC^0(X)},
\end{equation}
while the standard H\"{o}lder norm of $\psi$ is denoted by
\begin{equation}    \label{eqDefHolderNorm}
\Hnorm{\alpha}{\psi}{(X,d)} \coloneqq \NHnorm{\alpha}{1}{\psi}{(X,d)} .
\end{equation}

For a Lipschitz map $g\: (X,d)\rightarrow (X,d)$ on a metric space $(X,d)$, we denote the Lipschitz constant by
\begin{equation}   \label{eqDefLipConst}
\LIP_d(g) \coloneqq \sup  \{ d(g(x),g(y)) / d(x,y)  :  x, \, y\in X \text{ with } x\neq y \}.
\end{equation}

\section{Preliminaries}  \label{sctPreliminaries}

\subsection{Thermodynamic formalism}  \label{subsctThermodynFormalism}

We first review some basic concepts from dynamical systems. We refer the reader to \cite[Subsection~3.1]{LZ24a} for more details and references.

Let $(X,d)$ be a compact metric space and $g\:X\rightarrow X$ a continuous map. For each real-valued continuous function $\phi\in\CCC(X)$, the \defn{measure-theoretic pressure} $P_\mu(g,\phi)$ of $g$ for a $g$-invariant Borel probability measure $\mu$ and the potential $\phi$ is
\begin{equation}  \label{eqDefMeasTheoPressure}
P_\mu(g,\phi) \coloneqq  h_\mu (g) + \int \! \phi \,\mathrm{d}\mu.
\end{equation}
Here $h_\mu(g)$ denotes the usual measure-theoretic entropy of $g$ for $\mu$.

By the Variational Principle (see for example, \cite[Theorem~3.4.1]{PrU10}), we have that for each $\phi\in\CCC(X)$, the \defn{topological pressure} $P(g,\phi)$ of $g$ with respect to the potential $\phi$ satisfies
\begin{equation}  \label{eqVPPressure}
P(g,\phi)=\sup\{P_\mu(g,\phi) : \mu\in \MMM(X,g)\}.
\end{equation}
In particular, when $\phi$ is the constant function $0$, the \defn{topological entropy} $h_{\operatorname{top}}(g)$ of $g$ satisfies
\begin{equation}  \label{eqVPEntropy}
h_{\operatorname{top}}(g)=\sup\{h_{\mu}(g) : \mu\in \MMM(X,g)\}.
\end{equation}
A measure $\mu$ that attains the supremum in (\ref{eqVPPressure}) is called an \defn{equilibrium state} for the map $g$ and the potential $\phi$. A measure $\mu$ that attains the supremum in (\ref{eqVPEntropy}) is called a \defn{measure of maximal entropy} of $g$.

\begin{definition}   \label{defCohomologous}
Let $g\: X\rightarrow X$ be a continuous map on a metric space $(X,d)$. Let $\mathcal{K} \subseteq \CCC(X,\C)$ be a subspace of the space $\CCC(X,\C)$ of complex-valued continuous functions on $X$. Two functions $\phi, \, \psi \in \CCC(X,\C)$ are said to be \defn{cohomologous (in $\mathcal{K}$)} if there exists $u\in\mathcal{K}$ such that $\phi-\psi = u\circ g - u$.
\end{definition}

\smallskip

One of the main tools in the study of the existence, uniqueness, and other properties of equilibrium states is the \emph{Ruelle operator}. We will postpone the discussion of the Ruelle operators of expanding Thurston maps to Subsection~\ref{subsctThurstonMap}.

\subsection{Thurston maps} \label{subsctThurstonMap}
In this subsection, we go over some key concepts and results on Thurston maps, and expanding Thurston maps in particular. For a more thorough treatment of the subject, we refer to \cite{BM17}.

Let $S^2$ denote an oriented topological $2$-sphere. A continuous map $f\:S^2\rightarrow S^2$ is called a \defn{branched covering map} on $S^2$ if for each point $x\in S^2$, there exists a positive integer $d\in \N$, open neighborhoods $U$ of $x$ and $V$ of $y=f(x)$, open neighborhoods $U'$ and $V'$ of $0$ in $\widehat{\C}$, and orientation-preserving homeomorphisms $\varphi\:U\rightarrow U'$ and $\eta\:V\rightarrow V'$ such that $\varphi(x)=0$, $\eta(y)=0$, and
\begin{equation*}
(\eta\circ f\circ\varphi^{-1})(z)=z^d
\end{equation*}
for each $z\in U'$. The positive integer $d$ above is called the \defn{local degree} of $f$ at $x$ and is denoted by $\deg_f (x)$. 


The \defn{degree} of $f$ is
\begin{equation}   \label{eqDeg=SumLocalDegree}
\deg f=\sum_{x\in f^{-1}(y)} \deg_f (x)
\end{equation}
for $y\in S^2$ and is independent of $y$. If $f\:S^2\rightarrow S^2$ and $g\:S^2\rightarrow S^2$ are two branched covering maps on $S^2$, then so is $f\circ g$, and
\begin{equation} \label{eqLocalDegreeProduct}
 \deg_{f\circsmall g}(x) = \deg_g(x)\deg_f(g(x)), \qquad \text{for each } x\in S^2,
\end{equation}   
and moreover, 
\begin{equation}  \label{eqDegreeProduct}
\deg(f\circ g) =  (\deg f)( \deg g).
\end{equation}

A point $x\in S^2$ is a \defn{critical point} of $f$ if $\deg_f(x) \geq 2$. The set of critical points of $f$ is denoted by $\crit f$. A point $y\in S^2$ is a \defn{postcritical point} of $f$ if $y = f^n(x)$ for some $x\in\crit f$ and $n\in\N$. The set of postcritical points of $f$ is denoted by $\post f$. Note that $\post f=\post f^n$ for all $n\in\N$.

\begin{definition} [Thurston maps] \label{defThurstonMap}
A Thurston map is a branched covering map $f\:S^2\rightarrow S^2$ on $S^2$ with $\deg f\geq 2$ and $\card(\post f)<+\infty$.
\end{definition}

We now recall the notation for cell decompositions of $S^2$ used in \cite{BM17} and \cite{Li17}. A \defn{cell of dimension $n$} in $S^2$, $n \in \{1, \, 2\}$, is a subset $c\subseteq S^2$ that is homeomorphic to the closed unit ball $\overline{\B^n}$ in $\R^n$. We define the \defn{boundary of $c$}, denoted by $\partial c$, to be the set of points corresponding to $\partial\B^n$ under such a homeomorphism between $c$ and $\overline{\B^n}$. The \defn{interior of $c$} is defined to be $\inte (c) = c \setminus \partial c$. For each point $x\in S^2$, the set $\{x\}$ is considered as a \defn{cell of dimension $0$} in $S^2$. For a cell $c$ of dimension $0$, we adopt the convention that $\partial c=\emptyset$ and $\inte (c) =c$. 

We record the following three definitions from \cite{BM17}.

\begin{definition}[Cell decompositions]\label{defcelldecomp}
Let $\DD$ be a collection of cells in $S^2$.  We say that $\DD$ is a \defn{cell decomposition of $S^2$} if the following conditions are satisfied:

\begin{itemize}

\smallskip
\item[(i)]
the union of all cells in $\DD$ is equal to $S^2$,

\smallskip
\item[(ii)] if $c\in \DD$, then $\partial c$ is a union of cells in $\DD$,

\smallskip
\item[(iii)] for $c_1, \, c_2 \in \DD$ with $c_1 \neq c_2$, we have $\inte (c_1) \cap \inte (c_2)= \emptyset$,  

\smallskip
\item[(iv)] every point in $S^2$ has a neighborhood that meets only finitely many cells in $\DD$.

\end{itemize}
\end{definition}

\begin{definition}[Refinements]\label{defrefine}
Let $\DD'$ and $\DD$ be two cell decompositions of $S^2$. We
say that $\DD'$ is a \defn{refinement} of $\DD$ if the following conditions are satisfied:
\begin{itemize}

\smallskip
\item[(i)] every cell $c\in \DD$ is the union of all cells $c'\in \DD'$ with $c'\subseteq c$,

\smallskip
\item[(ii)] for every cell $c'\in \DD'$ there exits a cell $c\in \DD$ with $c'\subseteq c$.

\end{itemize}
\end{definition}

\begin{definition}[Cellular maps and cellular Markov partitions]\label{defcellular}
Let $\DD'$ and $\DD$ be two cell decompositions of  $S^2$. We say that a continuous map $f \: S^2 \rightarrow S^2$ is \defn{cellular} for  $(\DD', \DD)$ if for every cell $c\in \DD'$, the restriction $f|_c$ of $f$ to $c$ is a homeomorphism of $c$ onto a cell in $\DD$. We say that $(\DD',\DD)$ is a \defn{cellular Markov partition} for $f$ if $f$ is cellular for $(\DD',\DD)$ and $\DD'$ is a refinement of $\DD$.
\end{definition}

Let $f\:S^2 \rightarrow S^2$ be a Thurston map, and $\CC\subseteq S^2$ be a Jordan curve containing $\post f$. Then the pair $f$ and $\CC$ induces natural cell decompositions $\DD^n(f,\CC)$ of $S^2$, for $n\in\N_0$, in the following way:

By the Jordan curve theorem, the set $S^2\setminus\CC$ has two connected components. We call the closure of one of them the \defn{white $0$-tile} for $(f,\CC)$, denoted by $X^0_\w$, and the closure of the other the \defn{black $0$-tile} for $(f,\CC)$, denoted by $X^0_\b$. The set of \defn{$0$-tiles} is $\X^0(f,\CC) \coloneqq \bigl\{ X_\b^0, \, X_\w^0 \bigr\}$. The set of \defn{$0$-vertices} is $\V^0(f,\CC) \coloneqq \post f$. We set $\overline\V^0(f,\CC) \coloneqq \{ \{x\}  :  x\in \V^0(f,\CC) \}$. The set of \defn{$0$-edges} $\E^0(f,\CC)$ is the set of the closures of the connected components of $\CC \setminus  \post f$. Then we get a cell decomposition 
\begin{equation*}
\DD^0(f,\CC) \coloneqq \X^0(f,\CC) \cup \E^0(f,\CC) \cup \overline\V^0(f,\CC)
\end{equation*}
of $S^2$ consisting of \emph{cells of level $0$}, or \defn{$0$-cells}.

We can recursively define unique cell decompositions $\DD^n(f,\CC)$, $n\in\N$, consisting of \defn{$n$-cells} such that $f$ is cellular for $(\DD^{n+1}(f,\CC),\DD^n(f,\CC))$. We refer to \cite[Lemma~5.12]{BM17} for more details. We denote by $\X^n(f,\CC)$ the set of $n$-cells of dimension 2, called \defn{$n$-tiles}; by $\E^n(f,\CC)$ the set of $n$-cells of dimension 1, called \defn{$n$-edges}; by $\overline\V^n(f,\CC)$ the set of $n$-cells of dimension 0; and by $\V^n(f,\CC)$ the set $\bigl\{x :  \{x\}\in \overline\V^n(f,\CC)\bigr\}$, called the set of \defn{$n$-vertices}. The \defn{$k$-skeleton}, for $k\in\{0, \, 1, \, 2\}$, of $\DD^n(f,\CC)$ is the union of all $n$-cells of dimension $k$ in this cell decomposition. 

We record Proposition~5.16 of \cite{BM17} here in order to summarize properties of the cell decompositions $\DD^n(f,\CC)$ defined above.

\begin{prop}[M.~Bonk \& D.~Meyer \cite{BM17}] \label{propCellDecomp}
Let $k, \, n\in \N_0$, let   $f\: S^2\rightarrow S^2$ be a Thurston map,  $\CC\subseteq S^2$ be a Jordan curve with $\post f \subseteq \CC$, and   $m=\card(\post f)$. 
 
\smallskip
\begin{itemize}

\smallskip
\item[(i)] The map  $f^k$ is cellular for $\bigl( \DD^{n+k}(f,\CC), \DD^n(f,\CC) \bigr)$. In particular, if  $c$ is any $(n+k)$-cell, then $f^k(c)$ is an $n$-cell, and $f^k|_c$ is a homeomorphism of $c$ onto $f^k(c)$.

\smallskip
\item[(ii)]  Let  $c$ be  an $n$-cell.  Then $f^{-k}(c)$ is equal to the union of all 
$(n+k)$-cells $c'$ with $f^k(c')=c$.

\smallskip
\item[(iii)] The $1$-skeleton of $\DD^n(f,\CC)$ is  equal to  $f^{-n}(\CC)$. The $0$-skeleton of $\DD^n(f,\CC)$ is the set $\V^n(f,\CC)=f^{-n}(\post f )$, and we have $\V^n(f,\CC) \subseteq \V^{n+k}(f,\CC)$. 

\smallskip
\item[(iv)] $\card(\X^n(f,\CC))=2(\deg f)^n$,  $\card(\E^n(f,\CC))=m(\deg f)^n$,  and $\card (\V^n(f,\CC)) \leq m (\deg f)^n$.

\smallskip
\item[(v)] The $n$-edges are precisely the closures of the connected components of $f^{-n}(\CC)\setminus f^{-n}(\post f )$. The $n$-tiles are precisely the closures of the connected components of $S^2\setminus f^{-n}(\CC)$.

\smallskip
\item[(vi)] Every $n$-tile  is an $m$-gon, i.e., the number of $n$-edges and the number of $n$-vertices contained in its boundary are equal to $m$.  

\smallskip
\item[(vii)] Let $F\coloneqq f^k$ be an iterate of $f$ with $k \in \N$. Then $\DD^n(F,\CC) = \DD^{nk}(f,\CC)$.
\end{itemize}
\end{prop}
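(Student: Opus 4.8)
The plan is to construct the cell decompositions $\DD^n(f,\CC)$ inductively as successive pull-backs of $\DD^0(f,\CC)$ under $f$, and then to read off each item by induction on $n$ from a single structural input: the hypothesis $\post f\subseteq\CC$ forces $f$ to behave like an unbranched covering map away from $\CC$. Concretely, since the critical values $f(\crit f)$ lie in $\post f\subseteq\CC$, the restriction $f\colon S^2\setminus f^{-1}(\CC)\to S^2\setminus\CC$ is a finite-sheeted covering map. I would record at the outset that $\post f$ is forward invariant, so $\post f\subseteq f^{-1}(\post f)$ and hence $\V^n(f,\CC)=f^{-n}(\post f)\subseteq f^{-n-k}(\post f)=\V^{n+k}(f,\CC)$; this is the inclusion in (iii), and it is essentially the only part requiring no inductive machinery.

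First I would treat the base case $n=1$. The set $S^2\setminus\CC$ is the disjoint union of the two open topological disks $\inte(X^0_\w)$ and $\inte(X^0_\b)$; each is simply connected, so each connected component of its preimage under $f$ maps homeomorphically onto it. Declaring the closures of these components to be the $1$-tiles, the closures of the components of $f^{-1}(\CC)\setminus f^{-1}(\post f)$ to be the $1$-edges, and the points of $f^{-1}(\post f)$ to be the $1$-vertices, I would verify that $\DD^1(f,\CC)$ is a cell decomposition refining $\DD^0(f,\CC)$ for which $f$ is cellular; here one uses the branched-cover normal form $z\mapsto z^d$, but only near points of $f^{-1}(\CC)$. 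This yields the $k=1$, $n=0$ instance of (i) together with (v) at level $1$.

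Next I would push this through by induction. Re-running the pull-back construction with $f^{-n}(\CC)$ in place of $\CC$ produces $\DD^{n+1}(f,\CC)$; since a composition of cellular maps is cellular, induction on $k$ gives (i): $f^k$ is cellular for $(\DD^{n+k}(f,\CC),\DD^n(f,\CC))$ and restricts to a homeomorphism on each $(n+k)$-cell, so $f^k(c)$ is exactly an $n$-cell (a homeomorphic image of a cell is a cell, and distinct cells of equal dimension have disjoint interiors). Item (v) for general $n$ is then the construction unwound, and (vii) follows from the \emph{uniqueness} of the pull-back: $\DD^n(f^k,\CC)$ and $\DD^{nk}(f,\CC)$ share the $1$-skeleton $(f^k)^{-n}(\CC)=f^{-nk}(\CC)$, hence have the same cells. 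For (ii), given an $n$-cell $c$, I would observe that $f^{-k}(c)$ is a union of $(n+k)$-cells and that any such cell $c'\subseteq f^{-k}(c)$ satisfies $f^k(c')\subseteq c$; when $c$ is a tile this forces the $n$-tile $f^k(c')$ to equal $c$, and the edge and vertex cases follow by restricting to boundaries.

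Finally, the counts in (iv) and the $m$-gon assertion (vi) are bookkeeping. The $m$ points of $\post f$ cut $\CC$ into $m$ arcs, so each $0$-tile is an $m$-gon; by (i) an iterate of $f$ carries each $n$-tile homeomorphically onto a $0$-tile, so it is again an $m$-gon, which is (vi). Since $f$ is unbranched over the interior of a tile, each of the two open $0$-tiles has exactly $\deg f$ preimage components, and inductively $\card(\X^n(f,\CC))=2(\deg f)^n$; a double count, using that each $n$-edge bounds two $n$-tiles and each $n$-tile has $m$ edges, yields $\card(\E^n(f,\CC))=m(\deg f)^n$, while $\card(\V^n(f,\CC))\le m(\deg f)^n$ because each vertex meets at least two tiles. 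The step I expect to be the real obstacle is the base case: checking that the preimages of the $1$-cells of $\DD^0$ glue up into genuine arcs and that the preimages of the open $0$-tiles have disk closures, i.e.\ that the pull-back is honestly a cell decomposition. That is exactly where the branched-covering structure of $f$ and the containment $\post f\subseteq\CC$ are indispensable; beyond it, everything is formal induction and counting.
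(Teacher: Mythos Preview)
The paper does not prove this proposition; it merely records it as Proposition~5.16 of \cite{BM17} and refers the reader there. So there is no ``paper's own proof'' to compare against beyond the citation.

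Your outline is essentially the standard argument that Bonk and Meyer carry out in their book: build $\DD^1$ by pulling back $\DD^0$ under the branched cover $f$ (using $\post f\subseteq\CC$ so that $f$ is an honest covering over each open $0$-cell), then iterate and induct. The reductions you describe for (i), (v), (vi), (vii) are correct, and your double-count for (iv) is fine. Two small points worth tightening: in (ii), the sentence ``$f^{-k}(c)$ is a union of $(n+k)$-cells'' needs a word of justification when $c$ is a tile, since the cells $c'$ in question must be exactly the $(n+k)$-tiles with $f^k(c')=c$ (lower-dimensional cells in $f^{-k}(c)$ map into $\partial c$, not onto $c$, but they are already contained in the closures of the relevant tiles); and in (vii), ``share the $1$-skeleton, hence have the same cells'' implicitly uses that a cell decomposition of this type is determined by its $1$-skeleton together with the vertex set, which is immediate from (v) once you know both decompositions arise from the pull-back construction. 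You correctly identify the genuine work as the base case, which is where the local normal form $z\mapsto z^d$ is used to verify that preimages of edges are arcs and that preimage components of the open $0$-tiles have disk closures.
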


We also note that for each $n$-edge $e\in\E^n(f,\CC)$, $n\in\N_0$, there exist exactly two $n$-tiles $X, \, X'\in\X^n(f,\CC)$ such that $X\cap X' = e$.

For $n\in \N_0$, we define the \defn{set of black $n$-tiles} as
\begin{equation*}
\X_\b^n(f,\CC) \coloneqq \bigl\{X\in\X^n(f,\CC)  :   f^n(X)=X_\b^0 \bigr\},
\end{equation*}
and the \defn{set of white $n$-tiles} as
\begin{equation*}
\X_\w^n(f,\CC) \coloneqq \bigl\{X\in\X^n(f,\CC)  :  f^n(X)=X_\w^0 \bigr\}.
\end{equation*}
It follows immediately from Proposition~\ref{propCellDecomp} that
\begin{equation}   \label{eqCardBlackNTiles}
\card ( \X_\b^n(f,\CC) ) = \card  (\X_\w^n(f,\CC) ) = (\deg f)^n 
\end{equation}
for each $n\in\N_0$.

From now on, if the map $f$ and the Jordan curve $\CC$ are clear from the context, we will sometimes omit $(f,\CC)$ in the notation above.

We denote, for each $x\in S^2$ and $n\in\Z$,
\begin{equation}  \label{defU^n}
U^n(x) \coloneqq \bigcup \{Y^n\in \X^n  :     \text{there exists } X^n\in\X^n  
                                        \text{ with } x\in X^n, \, X^n\cap Y^n \neq \emptyset  \}  
\end{equation}
if $n\geq 0$, and set $U^n(x) \coloneqq S^2$ otherwise. 

We can now recall a definition of expanding Thurston maps.

\begin{definition} [Expansion] \label{defExpanding}
A Thurston map $f\:S^2\rightarrow S^2$ is called \defn{expanding} if there exists a metric $d$ on $S^2$ that induces the standard topology on $S^2$ and a Jordan curve $\CC\subseteq S^2$ containing $\post f$ such that 
\begin{equation*}
\lim_{n\to+\infty}\max \{\diam_d(X)  :  X\in \X^n(f,\CC)\}=0.
\end{equation*}
\end{definition}

P.~Ha\"{\i}ssinsky and K.~M.~Pilgrim developed a notion of expansion in a more general context for finite branched coverings between topological spaces (see \cite[Sections~2.1 and~2.2]{HP09}). This applies to Thurston maps, and their notion of expansion is equivalent to our notion defined above in the context of Thurston maps (see \cite[Proposition~6.4]{BM17}). Our notion of expansion is not equivalent to classical notions such as forward-expansive maps or distance-expanding maps. One topological obstruction comes from the presence of critical points for (non-homeomorphic) branched covering maps on $S^2$.

For an expanding Thurston map $f$, we can fix a particular metric $d$ on $S^2$ called a \emph{visual metric for $f$}. For the existence and properties of such metrics, see \cite[Chapter~8]{BM17}. For a visual metric $d$ for $f$, there exists a unique constant $\Lambda > 1$ called the \emph{expansion factor} of $d$ (see \cite[Chapter~8]{BM17} for more details). One major advantage of a visual metric $d$ is that in $(S^2,d)$, we have good quantitative control over the sizes of the cells in the cell decompositions discussed above. We summarize several results of this type (\cite[Proposition~8.4, Lemmas~8.10,~8.11]{BM17}) in the lemma below.

\begin{lemma}[M.~Bonk \& D.~Meyer \cite{BM17}]   \label{lmCellBoundsBM}
Let $f\:S^2 \rightarrow S^2$ be an expanding Thurston map, and $\CC \subseteq S^2$ be a Jordan curve containing $\post f$. Let $d$ be a visual metric on $S^2$ for $f$ with expansion factor $\Lambda>1$. Then there exist constants $C\geq 1$, $K\geq 1$, and $n_0\in\N_0$ with the following properties:
\begin{enumerate}
\smallskip
\item[(i)] $d(\sigma,\tau) \geq C^{-1} \Lambda^{-n}$ whenever $\sigma$ and $\tau$ are disjoint $n$-cells for $n\in \N_0$.

\smallskip
\item[(ii)] $C^{-1} \Lambda^{-n} \leq \diam_d(\tau) \leq C\Lambda^{-n}$ for all $n$-edges and all $n$-tiles $\tau$ for $n\in\N_0$.

\smallskip
\item[(iii)] $B_d(x,K^{-1} \Lambda^{-n} ) \subseteq U^n(x) \subseteq B_d(x, K\Lambda^{-n})$ for $x\in S^2$ and $n\in\N_0$.

\smallskip
\item[(iv)] $U^{n+n_0} (x)\subseteq B_d(x,r) \subseteq U^{n-n_0}(x)$ where $n= \lceil -\log r / \log \Lambda \rceil$ for $r>0$ and $x\in S^2$.

\smallskip
\item[(v)] For every $n$-tile $X^n\in\X^n(f,\CC)$, $n\in\N_0$, there exists a point $p\in X^n$ such that $B_d(p,C^{-1}\Lambda^{-n}) \subseteq X^n \subseteq B_d(p,C\Lambda^{-n})$.
\end{enumerate}

Conversely, if $\wt{d}$ is a metric on $S^2$ satisfying conditions \textnormal{(i)} and \textnormal{(ii)} for some constant $C\geq 1$, then $\wt{d}$ is a visual metric with expansion factor $\Lambda>1$.
\end{lemma}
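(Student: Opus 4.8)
The plan is to reduce the entire statement to the combinatorics of the cell decompositions $\DD^n(f,\CC)$ and then feed it into the defining property of a visual metric. Recall from \cite[Chapter~8]{BM17} that for $x,y\in S^2$ with $x\neq y$ one sets
\[
m_{f,\CC}(x,y)\coloneqq\max\bigl\{n\in\N_0 : \text{there are } X,Y\in\X^n(f,\CC) \text{ with } x\in X,\ y\in Y,\ X\cap Y\neq\emptyset\bigr\}
\]
(a finite number precisely because $f$ is expanding), and that $d$ being a visual metric for $f$ with expansion factor $\Lambda$ means exactly that there is $C_0\geq 1$ with $C_0^{-1}\Lambda^{-m_{f,\CC}(x,y)}\leq d(x,y)\leq C_0\Lambda^{-m_{f,\CC}(x,y)}$ for all $x\neq y$. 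The one tautology to record first is $U^n(x)=\{x\}\cup\{y\in S^2 : m_{f,\CC}(x,y)\geq n\}$, immediate from the definitions of $U^n(x)$ and of $m_{f,\CC}$. Granting it, part~(iii) is just a rewriting of the two-sided estimate (take $K\coloneqq 2C_0$), and part~(iv) follows by substituting $n\pm n_0$ into~(iii), choosing $n_0$ large in terms of $K$, $\Lambda$ and $\diam_d(S^2)$ and using the convention $U^m(x)=S^2$ for $m<0$.

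For the remaining parts I would exploit the dynamics through two facts from Proposition~\ref{propCellDecomp}: $f^n$ carries every $n$-cell homeomorphically onto a $0$-cell and, more generally, every $(n+k)$-tile onto a $k$-tile. The second fact yields the monotonicity $m_{f,\CC}(x,y)\leq m_{f,\CC}(f^n(x),f^n(y))+n$. The upper bounds in~(ii) and~(v) need no dynamics: two points of a common $n$-tile have $m_{f,\CC}\geq n$, so $n$-tiles (and hence $n$-edges, by containment) have $d$-diameter $\leq C_0\Lambda^{-n}$. For the lower bound in~(ii), an $n$-tile and an $n$-edge each contain two $n$-vertices mapped by $f^n$ onto two \emph{distinct} postcritical points; since $M_0\coloneqq\max\{m_{f,\CC}(v,w):v,w\in\post f,\ v\neq w\}$ is finite (again by expansion), monotonicity bounds $m_{f,\CC}$ of this pair by $n+M_0$, whence a diameter $\geq(C_0\Lambda^{M_0})^{-1}\Lambda^{-n}$. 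For~(v), fix a point $p_\bullet$ in the interior of each $0$-tile $X^0_\bullet$ and a single integer $\ell_1$ with $U^{\ell_1}(p_\bullet)\subseteq\inte X^0_\bullet$ (possible by~(iii)); for an $n$-tile $X$ with $f^n|_X\colon X\to X^0_\bullet$ a homeomorphism and $q\coloneqq(f^n|_X)^{-1}(p_\bullet)$, one checks $f^n\bigl(U^{n+\ell_1}(q)\bigr)\subseteq U^{\ell_1}(p_\bullet)\subseteq\inte X^0_\bullet$, so that $U^{n+\ell_1}(q)$ is a connected neighbourhood of $q$ disjoint from the $n$-skeleton $f^{-n}(\CC)$ and hence contained in $X$; combined with~(iii) this puts a ball about $q$ of radius $(K\Lambda^{\ell_1})^{-1}\Lambda^{-n}$ inside $X$, while $X\subseteq B_d(q,C_0\Lambda^{-n})$ by the diameter bound.

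Part~(i) is the step I expect to be the main obstacle. Disjoint $n$-\emph{tiles} $X,Y$ are easy: the $n$-skeleton is contained in the $(n+k)$-skeleton, so every $(n+k)$-tile sits inside a unique $n$-tile; hence for $x\in\inte X$ and $y\in\inte Y$ every $(n+k)$-tile through $x$ lies in $X$ and every one through $y$ lies in $Y$, forcing $m_{f,\CC}(x,y)\leq n-1$ and $d(x,y)\geq C_0^{-1}\Lambda^{-n}$; passing to closures gives $d(X,Y)\geq C_0^{-1}\Lambda^{-n}$. The trouble is disjoint $n$-cells that are not separated by disjoint $n$-tiles---typically two non-adjacent faces of a common $n$-tile. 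My approach there would be to descend through the iterates: take the largest $j$ for which the $f^{j-1}$-images of the two cells are still disjoint, which reduces matters to a one-step statement about two disjoint $p$-cells whose $f$-images meet, i.e.\ ultimately to bounding $m_{f,\CC}(x_1,y_1)$ for two \emph{distinct} points lying in disjoint $p$-cells but sharing an $f$-image. Finiteness of $\crit f$ and the bounded local-degree structure of $f$ at such points should give a uniform bound $m_{f,\CC}(x_1,y_1)\leq p+M_1$, but turning this expectation into a proof requires the structure theory of cell decompositions near iterated preimages of critical points from \cite[Chapter~5]{BM17}, and that is the genuinely delicate part.

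Finally, the converse: given a metric $\wt d$ satisfying~(i) and~(ii) with some constant $C$, one recovers $\wt d(x,y)\asymp\Lambda^{-m_{f,\CC}(x,y)}$ directly. The upper bound comes from~(ii) applied to two touching $m_{f,\CC}(x,y)$-tiles through $x$ and $y$, each of $\wt d$-diameter $\leq C\Lambda^{-m_{f,\CC}(x,y)}$; the lower bound comes from~(i) applied to \emph{any} $(m_{f,\CC}(x,y)+1)$-tile through $x$ and \emph{any} one through $y$---necessarily disjoint, since $m_{f,\CC}(x,y)<m_{f,\CC}(x,y)+1$. This exhibits $\wt d$ as a visual metric with expansion factor $\Lambda$; that $\wt d$ induces the standard topology and is independent of the choice of $\CC$ are the remaining routine points, the latter being part of \cite[Chapter~8]{BM17}.
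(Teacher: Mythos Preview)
The paper does not prove this lemma; it is quoted verbatim as a summary of \cite[Proposition~8.4, Lemmas~8.10,~8.11]{BM17}, with no argument supplied. So there is no proof in the paper to compare your proposal against.

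That said, your sketch is broadly in line with how the result is established in \cite{BM17}: the characterization of a visual metric via $m_{f,\CC}(x,y)$ is exactly the tool, and your derivations of (ii)--(v) and of the converse from that two-sided estimate are correct and essentially the intended arguments. You are also right to flag (i) as the nontrivial step and to isolate the case of disjoint $n$-cells that are not separated by disjoint $n$-tiles (e.g., non-adjacent edges of a common tile). Your proposed reduction there---iterating forward until the images meet and then invoking a uniform bound on $m_{f,\CC}$ for distinct points with a common $f$-image---is plausible but, as you yourself acknowledge, incomplete: the missing ingredient is precisely the local structure of the cell decompositions around vertices (the ``flower'' description in \cite[Chapter~5]{BM17}), which gives the needed uniform separation without the detour through critical preimages. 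In short: your outline is sound, your gap is honestly identified, and the paper simply defers the whole thing to the reference.
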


Recall that $U^n(x)$ is defined in (\ref{defU^n}).

In addition, we will need the fact that a visual metric $d$ induces the standard topology on $S^2$ (\cite[Proposition~8.3]{BM17}) and the fact that the metric space $(S^2,d)$ is linearly locally connected (\cite[Proposition~18.5]{BM17}). A metric space $(X,d)$ is \defn{linearly locally connected} if there exists a constant $L\geq 1$ such that the following conditions are satisfied:
\begin{enumerate}
\smallskip

\item  For all $z\in X$, $r > 0$, and $x, \, y\in B_d(z,r)$ with $x\neq y$, there exists a continuum $E\subseteq X$ with $x, \, y\in E$ and $E\subseteq B_d(z,rL)$.

\smallskip

\item For all $z\in X$, $r > 0$, and $x, \, y\in X \setminus B_d(z,r)$ with $x\neq y$, there exists a continuum $E\subseteq X$ with $x, \, y\in E$ and $E\subseteq X \setminus B_d(z,r/L)$.
\end{enumerate}
We call such a constant $L \geq 1$ a \defn{linear local connectivity constant of $d$}. 

In fact, visual metrics play a crucial role in connecting the dynamical arguments with geometric properties for rational expanding Thurston maps, especially Latt\`{e}s maps.

We first recall the following notions of equivalence between metric spaces.

\begin{definition}  \label{defQuasiSymmetry}
Consider two metric spaces $(X_1,d_1)$ and $(X_2,d_2)$. Let $g\: X_1 \rightarrow X_2$ be a homeomorphism. Then
\begin{enumerate}
\smallskip
\item[(i)] $g$ is \defn{bi-Lipschitz} if there exists a constant $C \geq 1$ such that for all $u,\, v \in X_1$,
\begin{equation*}
 C^{-1} d_1( u, v ) \leq d_2 ( g(u), g(v) ) \leq C d_1 ( u, v ).
\end{equation*}

\smallskip
\item[(ii)] $g$ is a \defn{snowflake homeomorphism} if there exist constants $\alpha >0$ and $C \geq 1$ such that for all $u,\, v \in X_1$,
\begin{equation*}
 C^{-1} d_1( u, v )^\alpha \leq d_2 ( g(u), g(v) ) \leq C d_1 ( u, v )^\alpha.
\end{equation*}

\smallskip
\item[(iii)] $g$ is a \defn{quasisymmetric homeomorphism} or a \defn{quasisymmetry} if there exists a homeomorphism $\eta \: [0, +\infty) \rightarrow [0, +\infty)$ such that for all $u,\,v,\,w \in X_1$,
\begin{equation*}
\frac{ d_2 ( g(u), g(v) ) }{ d_2 ( g(u), g(w) ) } 
\leq \eta \biggl(  \frac{ d_1 ( u, v ) }{ d_1 ( u, w ) }  \biggr).
\end{equation*}
\end{enumerate} 
Moreover, the metric spaces $(X_1,d_1)$ and $(X_2,d_2)$ are \emph{bi-Lipschitz}, \emph{snowflake}, or \defn{quasisymmetrically equivalent} if there exists a homeomorphism from $(X_1,d_1)$ to $(X_2,d_2)$ with the corresponding property.

When $X_1 = X_2 \eqqcolon X$, then we say the metrics $d_1$ and $d_2$ are \emph{bi-Lipschitz}, \emph{snowflake}, or \defn{quasisymmetrically equivalent} if the identity map from $(X,d_1)$ to $(X,d_2)$ has the corresponding property.
\end{definition}

\begin{rem}   \label{rmChordalVisualQSEquiv}
If $f\: \widehat\C \rightarrow \widehat\C$ is a rational expanding Thurston map (or equivalently, a postcritically-finite rational map without periodic critical points (see \cite[Proposition~2.3]{BM17})), then each visual metric is quasisymmetrically equivalent to the chordal metric on the Riemann sphere $\widehat\C$ (see \cite[Lemma~18.10]{BM17}). Here the chordal metric $\sigma$ on $\widehat\C$ is given by
$
\sigma(z,w) =\frac{2\abs{z-w}}{\sqrt{1+\abs{z}^2} \sqrt{1+\abs{w}^2}}
$
for $z, \, w\in\C$, and $\sigma(\infty,z)=\sigma(z,\infty)= \frac{2}{\sqrt{1+\abs{z}^2}}$ for $z\in \C$. We also note that quasisymmetric embeddings of bounded connected metric spaces are H\"{o}lder continuous (see \cite[Section~11.1 and Corollary~11.5]{He01}). Accordingly, the class of H\"{o}lder continuous functions on $\widehat\C$ equipped with the chordal metric and that on $S^2=\widehat\C$ equipped with any visual metric for $f$ are the same (up to a change of the H\"{o}lder exponent).
\end{rem}

A Jordan curve $\CC\subseteq S^2$ is \defn{$f$-invariant} if $f(\CC)\subseteq \CC$. We are interested in $f$-invariant Jordan curves that contain $\post f$, since for such a Jordan curve $\CC$, we get a cellular Markov partition $(\DD^1(f,\CC),\DD^0(f,\CC))$ for $f$. According to Example~15.11 in \cite{BM17}, such $f$-invariant Jordan curves containing $\post{f}$ need not exist. However, M.~Bonk and D.~Meyer \cite[Theorem~15.1]{BM17} proved that there exists an $f^n$-invariant Jordan curve $\CC$ containing $\post{f}$ for each sufficiently large $n$ depending on $f$. A slightly stronger version of this result was proved in \cite[Lemma~3.11]{Li16}, and we record it below.

\begin{lemma}[M.~Bonk \& D.~Meyer \cite{BM17}, Z.~Li \cite{Li16}]  \label{lmCexistsL}
Let $f\:S^2\rightarrow S^2$ be an expanding Thurston map, and $\wt{\CC}\subseteq S^2$ be a Jordan curve with $\post f\subseteq \wt{\CC}$. Then there exists an integer $N(f,\wt{\CC}) \in \N$ such that for each $n\geq N(f,\wt{\CC})$ there exists an $f^n$-invariant Jordan curve $\CC$ isotopic to $\wt{\CC}$ rel.\ $\post f$ such that no $n$-tile in $\X^n(f,\CC)$ joins opposite sides of $\CC$.
\end{lemma}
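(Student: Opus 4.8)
The plan is to follow the method of Bonk--Meyer \cite{BM17} together with the refinement of Li \cite{Li16}. Fix throughout a visual metric $d$ for $f$ with expansion factor $\Lambda>1$.

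The one structural tool that does all the work is the \emph{lifting of isotopies through iterates of $f$}. Since $\post f^n=\post f$ for every $n\in\N$, the set $\post f$ contains all critical values of $f^n$, and $f^n$ restricts to a covering map $S^2\setminus f^{-n}(\post f)\to S^2\setminus\post f$; hence every isotopy $H\:S^2\times[0,1]\to S^2$ with $H_0=\id_{S^2}$ that fixes $\post f$ pointwise at each time admits a unique lift $\widehat{H}$ with $\widehat{H}_0=\id_{S^2}$, with $f^n\circ\widehat{H}_t=H_t\circ f^n$ for all $t$, and with each $\widehat{H}_t$ fixing $f^{-n}(\post f)\supseteq\post f$ pointwise; in particular $\widehat{H}$ is again an isotopy rel.\ $\post f$. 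I would establish (or quote) this first.

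Next I would build the invariant curve as a geometric limit of iterated pull-backs. Fix $n$ large. Because $f$ is expanding, $\diam_d(X^n)\leq C\Lambda^{-n}\to 0$ for $n$-tiles $X^n\in\X^n(f,\wt{\CC})$ by Lemma~\ref{lmCellBoundsBM}(ii), while disjoint $0$-edges of $\wt{\CC}$ lie at positive $d$-distance; so for $n$ large no $n$-tile of $\X^n(f,\wt{\CC})$ joins opposite sides of $\wt{\CC}$, and this combinatorial fact lets one select inside the $1$-skeleton $f^{-n}(\wt{\CC})$ a Jordan curve $\CC_1\subseteq f^{-n}(\wt{\CC})$ isotopic to $\wt{\CC}$ rel.\ $\post f$. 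Iterating --- at stage $k$, lifting the isotopy carrying $\wt{\CC}$ to $\CC_k$ through $f^n$ and extracting a pull-back $\CC_{k+1}\subseteq f^{-n}(\CC_k)$ isotopic to $\CC_k$ rel.\ $\post f$ --- produces Jordan curves $\CC_1,\CC_2,\dots$, each containing $\post f$ and isotopic to $\wt{\CC}$ rel.\ $\post f$, with $\CC_{k+1}$ agreeing with $\CC_k$ outside a union of $(kn)$-cells. By Lemma~\ref{lmCellBoundsBM}(ii) these cells have $d$-diameter $\bigO(\Lambda^{-kn})$, so $(\CC_k)_{k\geq 1}$ is Cauchy in the Hausdorff metric and converges to a Jordan curve $\CC$ with $\post f\subseteq\CC$; passing to the limit in $\CC_{k+1}\subseteq f^{-n}(\CC_k)$ gives $\CC\subseteq f^{-n}(\CC)$, i.e.\ $f^n(\CC)\subseteq\CC$, and a telescoping estimate on the connecting isotopies (again controlled by the $\bigO(\Lambda^{-kn})$ bound) shows $\CC$ is isotopic to $\wt{\CC}$ rel.\ $\post f$.

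Finally I would verify the opposite-sides conclusion for $\CC$ itself. Since $\CC$ is $f^n$-invariant, $\DD^{kn}(f,\CC)=\DD^k(f^n,\CC)$ by Proposition~\ref{propCellDecomp}(vii); and the estimates above keep $\CC$ within Hausdorff distance $\bigO(1)$ of $\wt{\CC}$ \emph{uniformly in $n$}, so the comparison constants in Lemma~\ref{lmCellBoundsBM}(ii) for the decompositions $\DD^n(f,\CC)$ may be taken independent of $n$. Then the same dichotomy as above --- disjoint $0$-edges of $\CC$ at positive $d$-distance versus $n$-tiles of $\X^n(f,\CC)$ of $d$-diameter $\bigO(\Lambda^{-n})$ --- forces that no $n$-tile of $\X^n(f,\CC)$ joins opposite sides of $\CC$ for $n$ large, and one takes $N(f,\wt{\CC})$ to be the maximum of the finitely many thresholds on $n$ appearing above. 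I expect the main obstacle to be the middle step: showing that the pull-back curve $\CC_{k+1}$ can actually be chosen inside $f^{-n}(\CC_k)$ and is genuinely isotopic rel.\ $\post f$ to $\CC_k$, and that the iteration stabilizes rather than drifts away --- this combinatorial bookkeeping is the heart of \cite[Chapter~15]{BM17}, and it is precisely where the hypothesis that no $n$-tile joins opposite sides is used; arranging all the thresholds to be uniform, so that the opposite-sides property holds for the very same $n$ used to build $\CC$, is the content of \cite[Lemma~3.11]{Li16}.
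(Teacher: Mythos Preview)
The paper does not give its own proof of this lemma: it is simply recorded from the literature, with the attribution ``M.~Bonk \& D.~Meyer \cite{BM17}, Z.~Li \cite{Li16}'' and the remark preceding the statement that ``a slightly stronger version of this result was proved in \cite[Lemma~3.11]{Li16}''. Your sketch is a faithful outline of the argument in those references --- the iterated pull-back construction via lifted isotopies is exactly the content of \cite[Chapter~15]{BM17}, and the uniformity needed so that the opposite-sides property holds for the very same $n$ used to build $\CC$ is precisely the refinement of \cite[Lemma~3.11]{Li16}. So there is nothing to compare against in this paper; your proposal matches the cited proofs.
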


The phrase ``joining opposite sides'' has a specific meaning in our context. 

\begin{definition}[Joining opposite sides]  \label{defJoinOppositeSides} 
Fix a Thurston map $f$ with $\card(\post f) \geq 3$ and an $f$-invariant Jordan curve $\CC$ containing $\post f$.  A set $K\subseteq S^2$ \defn{joins opposite sides} of $\CC$ if $K$ meets two disjoint $0$-edges when $\card( \post f)\geq 4$, or $K$ meets  all  three $0$-edges when $\card(\post f)=3$. 
 \end{definition}
 
Note that $\card (\post f) \geq 3$ for each expanding Thurston map $f$ \cite[Lemma~6.1]{BM17}.

The following lemma proved in \cite[Lemma~3.13]{Li18} generalizes \cite[Lemma~15.25]{BM17}.

\begin{lemma}[M.~Bonk \& D.~Meyer \cite{BM17}, Z.~Li \cite{Li18}]   \label{lmMetricDistortion}
Let $f\:S^2 \rightarrow S^2$ be an expanding Thurston map, and $\CC \subseteq S^2$ be a Jordan curve that satisfies $\post f \subseteq \CC$ and $f^{n_\CC}(\CC)\subseteq\CC$ for some $n_\CC\in\N$. Let $d$ be a visual metric on $S^2$ for $f$ with expansion factor $\Lambda>1$. Then there exists a constant $C_0 > 1$, depending only on $f$, $d$, $\CC$, and $n_\CC$, with the following property:

If $k, \, n\in\N_0$, $X^{n+k}\in\X^{n+k}(f,\CC)$, and $x, \, y\in X^{n+k}$, then 
\begin{equation}   \label{eqMetricDistortion}
C_0^{-1} d(x,y) \leq \Lambda^{-n}  d(f^n(x),f^n(y)) \leq C_0 d(x,y).
\end{equation}
\end{lemma}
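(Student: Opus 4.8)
The plan is to reduce (\ref{eqMetricDistortion}) to a purely combinatorial comparison. Recall from Lemma~\ref{lmCellBoundsBM}~(iii) and~(iv) that the visual metric is controlled by the cell decompositions: setting, for $x\neq y$, $\ell(x,y)\coloneqq\max\{m\in\N_0: y\in U^m(x)\}$ (the maximum exists and is finite because $f$ is expanding), one gets a constant $K'\geq 1$ depending only on $f$ and $d$ with $K'^{-1}\Lambda^{-\ell(x,y)}\leq d(x,y)\leq K'\Lambda^{-\ell(x,y)}$. Hence it suffices to produce $c_0\in\N_0$, depending only on $f$, $\CC$, and $n_\CC$, with
\begin{equation*}
\ell(x,y)-n-c_0\ \leq\ \ell\bigl(f^n(x),f^n(y)\bigr)\ \leq\ \ell(x,y)-n+c_0
\end{equation*}
whenever $x,y$ lie in a common $(n+k)$-tile $X^{n+k}$; note that then $y\in U^{n+k}(x)$, so $\ell(x,y)\geq n+k$, in particular $\ell(x,y)\geq n$. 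Given this, (\ref{eqMetricDistortion}) follows with $C_0\coloneqq K'^2\Lambda^{c_0}$.

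The first (easy) inequality, equivalently the upper bound $\Lambda^{-n}d(f^n(x),f^n(y))\leq C_0 d(x,y)$, needs neither the $f^{n_\CC}$-invariance of $\CC$ nor the hypothesis that $x,y$ lie in a common tile. Using Proposition~\ref{propCellDecomp}~(i) and~(ii) one checks the identity $f^n\bigl(U^{n+l}(x)\bigr)=U^l(f^n(x))$ for all $l\geq 0$: the inclusion ``$\subseteq$'' is immediate since $f^n$ sends non-disjoint $(n+l)$-tiles to non-disjoint $l$-tiles, and the reverse inclusion follows from Proposition~\ref{propCellDecomp}~(ii), which lets one realize the $l$-tiles near $f^n(x)$ as $f^n$-images of $(n+l)$-tiles near $x$. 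Applying this with $l=\ell(x,y)-n\geq 0$ to $y\in U^{\ell(x,y)}(x)$ gives $f^n(y)\in U^{\ell(x,y)-n}(f^n(x))$, i.e.\ $\ell(f^n(x),f^n(y))\geq\ell(x,y)-n$.

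The second inequality is the delicate one, and this is where the remaining hypotheses enter. Here $f^n|_{X^{n+k}}$ is a homeomorphism of $X^{n+k}$ onto the $k$-tile $W\coloneqq f^n(X^{n+k})$ (Proposition~\ref{propCellDecomp}~(i)), and, since $\DD^{j}(f,\CC)$ refines $\DD^{k}(f,\CC)$ and $f^k|_{W}$ is a cellular homeomorphism of $W$ onto one of the two $0$-tiles, the subdivision of $W$ into $j$-tiles is cellularly isomorphic to the subdivision of a \emph{fixed} $0$-tile into $(j-k)$-tiles. Writing $j\coloneqq\ell(f^n(x),f^n(y))$ (so $j\geq k$ by the easy direction), I would take non-disjoint $j$-tiles through $f^n(x)$ and $f^n(y)$ and, using $f^n(x),f^n(y)\in W$ together with the uniform geometry of the two $0$-tiles afforded by Lemma~\ref{lmCellBoundsBM} and the linear local connectivity of $(S^2,d)$, replace them by non-disjoint $(j-c_0)$-tiles \emph{contained in} $W$, at the cost of a bounded number $c_0$ of levels. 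Pulling these back under the homeomorphism $f^n|_{X^{n+k}}$ (and using that $f^n$-preimages of cells lying in $X^{n+k}$ are again cells, by Proposition~\ref{propCellDecomp}) produces non-disjoint $(n+j-c_0)$-tiles through $x$ and $y$, whence $\ell(x,y)\geq n+j-c_0$, i.e.\ $\ell(f^n(x),f^n(y))\leq\ell(x,y)-n+c_0$. (If $j-c_0\leq k$ this is trivial, since already $\ell(x,y)\geq n+k$.) To keep $c_0$ uniform in $n_\CC$ and the level bookkeeping clean, I would first run this for $F_0\coloneqq f^{n_\CC}$ with its $F_0$-invariant curve $\CC$ and the Markov partition $\bigl(\DD^1(F_0,\CC),\DD^0(F_0,\CC)\bigr)$ (the case $n_\CC=1$, essentially \cite[Lemma~15.25]{BM17}), then pass to a general $f$-iterate $f^n$ and a general grading by cell level, absorbing the mismatch --- of size at most $n_\CC$ --- into the constant via Lemma~\ref{lmCellBoundsBM}~(ii); this is exactly how $C_0$ comes to depend on $n_\CC$.

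The main obstacle is the replacement step in the previous paragraph: showing that a $j$-tile (and, after pull-back, its $f^n$-preimage) that meets $W$ can be ``pushed inside'' $W$ at the expense of only a universally bounded number $c_0$ of levels, uniformly over all tiles $W$ and all levels $n,k,j$. The rest is soft. The cellular-homeomorphism reduction via $f^k|_{W}$ to one of the two $0$-tiles is precisely what makes this uniformity attainable, since the boundary behaviour then only has to be analyzed for two fixed $0$-tiles and the fixed curve $\CC$.
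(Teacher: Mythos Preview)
The paper does not give its own proof of this lemma; it simply records the statement and cites \cite[Lemma~3.13]{Li18} (which in turn generalizes \cite[Lemma~15.25]{BM17}). Your outline is the standard combinatorial argument and is essentially what those references do: compare $d$ to $\Lambda^{-\ell(\cdot,\cdot)}$ via Lemma~\ref{lmCellBoundsBM}, prove the easy direction from the identity $f^n(U^{n+l}(x))=U^l(f^n(x))$, and prove the hard direction by pulling back tiles through the homeomorphism $f^n|_{X^{n+k}}$.

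Two small sharpenings. First, the ``replacement step'' does not need linear local connectivity. Once you know there exist $j$-tiles $A''\ni f^n(x)$ and $B''\ni f^n(y)$ contained in $W$ (this uses only that $\DD^j$ refines $\DD^k$ in the $F_0$-invariant setting), Lemma~\ref{lmCellBoundsBM}~(ii) gives $d(f^n(x),f^n(y))\leq K'\Lambda^{-j}$, while Lemma~\ref{lmCellBoundsBM}~(i) says disjoint $(j-c_0)$-cells are at distance $\geq C^{-1}\Lambda^{-(j-c_0)}$; choosing $c_0>\log_\Lambda(CK')$ forces the unique $(j-c_0)$-tiles in $W$ containing $A''$ and $B''$ to intersect, and these pull back to intersecting $(n+j-c_0)$-tiles through $x$ and $y$ inside $X^{n+k}$. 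Second, the passage from $F_0=f^{n_\CC}$ to $f$ is indeed just bookkeeping: write $n=qn_\CC+r$ with $0\leq r<n_\CC$, apply the $F_0$-result to $f^{qn_\CC}$, and absorb the at most $n_\CC$ leftover iterates using $d(f(u),f(v))\leq \LIP_d(f)\,d(u,v)$ together with Lemma~\ref{lmCellBoundsBM}~(ii); this is where the dependence of $C_0$ on $n_\CC$ enters. With these two points made explicit, your argument is complete.
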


We summarize the existence, uniqueness, and some basic properties of equilibrium states for expanding Thurston maps in the following theorem.

\begin{theorem}[Z.~Li \cite{Li18}]   \label{thmEquilibriumState}
Let $f\:S^2 \rightarrow S^2$ be an expanding Thurston map and $d$ be a visual metric on $S^2$ for $f$. Let $\phi, \, \gamma\in \Holder{\alpha}(S^2,d)$ be real-valued H\"{o}lder continuous functions with an exponent $\alpha\in(0,1]$. Then the following statements hold:
\begin{enumerate}
\smallskip
\item[(i)] There exists a unique equilibrium state $\mu_\phi$ for the map $f$ and the potential $\phi$.

\smallskip
\item[(ii)] For each $t_0\in\R$, we have
$
\frac{\mathrm{d}}{\mathrm{d}t} P(f,\phi + t\gamma)|_{t=t_0}  = \int \!\gamma \,\mathrm{d}\mu_{\phi + t_0 \gamma}.
$

\smallskip
\item[(iii)] If $\CC \subseteq S^2$ is a Jordan curve containing $\post f$ with the property that $f^{n_\CC}(\CC)\subseteq \CC$ for some $n_\CC\in\N$, then
$
\mu_\phi \bigl( \, \bigcup_{i=0}^{+\infty}  f^{-i}(\CC)  \bigr)  = 0.
$
\end{enumerate}
\end{theorem}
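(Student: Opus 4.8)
\emph{Overall plan.} I would deduce all three statements from a Ruelle--Perron--Frobenius analysis of the transfer operator of $f$ and $\phi$, carried out in the metric--topological framework of expanding Thurston maps through the tile decompositions $\DD^n(f,\CC)$ and a visual metric $d$. First, replacing $f$ by an iterate if necessary (equilibrium states, entropies, and the potential transform compatibly), I may fix a Jordan curve $\CC$ with $\post f\subseteq\CC$ and $f(\CC)\subseteq\CC$ as provided by Lemma~\ref{lmCexistsL}, and work with the Ruelle operator $\RR_\phi\colon\CCC(S^2)\to\CCC(S^2)$, $\RR_\phi(u)(x)\coloneqq\sum_{y\in f^{-1}(x)}\deg_f(y)\,e^{\phi(y)}\,u(y)$. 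For part~(i) the scheme is: \emph{(1) bounded distortion} --- for an $n$-tile $X^n\in\X^n(f,\CC)$ the map $f^n|_{X^n}$ is a homeomorphism onto a $0$-tile, and combining Lemma~\ref{lmMetricDistortion} along the intermediate tiles $f^j(X^n)$ with the size bound of Lemma~\ref{lmCellBoundsBM}(ii) gives $d(f^j(x),f^j(y))\le C\Lambda^{-(n-j)}$ for $x,y\in X^n$ and $0\le j<n$, so summing a geometric series against $\phi\in\Holder{\alpha}(S^2,d)$ yields a uniform bound on $\Abs{S_n\phi(x)-S_n\phi(y)}$ over all $n$ and all $x,y$ in a common $n$-tile (and likewise that $\{\lambda^{-n}\RR_\phi^n\mathbbm{1}\}_n$ below is a uniformly H\"older family); \emph{(2) eigendata} --- a Schauder--Tychonoff fixed point of $\nu\mapsto\RR_\phi^*\nu/(\RR_\phi^*\nu)(\mathbbm{1})$ on $\PPP(S^2)$ gives an eigenmeasure $\RR_\phi^*\nu_\phi=\lambda\nu_\phi$ with $\lambda>0$, and by~(1) the family $\{\lambda^{-n}\RR_\phi^n\mathbbm{1}\}_n$ is uniformly bounded away from $0$ and $\infty$ and equicontinuous, so Arzel\`{a}--Ascoli together with Ces\`{a}ro averaging produces a strictly positive $u_\phi\in\CCC(S^2)$ with $\RR_\phi u_\phi=\lambda u_\phi$ and $\int u_\phi\,\mathrm{d}\nu_\phi=1$, while the topological exactness of expanding Thurston maps lets a Birkhoff cone-contraction argument upgrade this to $\lambda^{-n}\RR_\phi^n\to u_\phi\otimes\nu_\phi$ and hence to exponential mixing of $\mu_\phi\coloneqq u_\phi\nu_\phi$; \emph{(3) equilibrium state} --- the intertwining identity $\RR_\phi((v\circ f)w)=v\,\RR_\phi(w)$ makes $\mu_\phi$ $f$-invariant, and comparing $\lambda^{-n}\RR_\phi^n\mathbbm{1}$ with the partition sums $\sum_{X^n\in\X^n(f,\CC)}\exp(S_n\phi(\,\cdot\,))$ via~(1), then bounding $h_{\mu_\phi}(f)+\int\phi\,\mathrm{d}\mu_\phi$ from below by a standard entropy estimate with the refining tile partitions, gives $\log\lambda=P(f,\phi)$ and that $\mu_\phi$ is an equilibrium state; \emph{(4) uniqueness} --- from $\RR_\phi^*\nu_\phi=\lambda\nu_\phi$ and~(1) one reads off the Gibbs bounds $C^{-1}\le\mu_\phi(X^n)/\exp(S_n\phi(x)-nP(f,\phi))\le C$ for $x\in X^n\in\X^n(f,\CC)$, and since the tile decompositions are generating modulo null sets for \emph{every} equilibrium state (a strengthening of part~(iii)), every equilibrium state satisfies the same Gibbs bounds and two Gibbs measures for a H\"older potential coincide, so $\mu_\phi$ is the unique equilibrium state.

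\emph{The main obstacle.} The hard part, flagged in the introduction, is that $\Holder{\alpha}(S^2,d)$ is \emph{not} invariant under $\RR_\phi$: pulling back near a critical value degrades the H\"older exponent to $\alpha/\max_{c\in\crit f}\deg_f(c)$, so there is no spectral gap on the obvious Banach space and one cannot simply quote classical Ruelle theory. I would get around this by relying only on the quantitative estimate~(1) --- which \emph{is} stable under iteration --- to verify that the limiting eigenfunction $u_\phi$ is continuous (in fact H\"older for a smaller exponent) and that the cone contraction and the Gibbs/uniqueness argument still go through; equivalently, by coding $f$ by the subshift of finite type associated with $\CC$, on which $S_n\phi$ pulls back to a genuinely H\"older function because the cylinders have geometrically shrinking diameters (Lemma~\ref{lmCellBoundsBM}(ii)), applying classical Ruelle--Perron--Frobenius theory there, and transporting the conclusions back through the factor map, with part~(iii) ensuring that map is a measure isomorphism. (The connectedness of $S^2$, which prevents applying $\RR_\phi$ to characteristic functions of tiles and later forces the split Ruelle operator, is not needed for this theorem.)

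\emph{Part~(ii).} This is the standard differentiability of pressure. By the Variational Principle $t\mapsto P(f,\phi+t\gamma)$ is the supremum over $\mu\in\MMM(S^2,f)$ of the affine functions $t\mapsto h_\mu(f)+\int\phi\,\mathrm{d}\mu+t\int\gamma\,\mathrm{d}\mu$, hence convex, and its one-sided derivatives at $t_0$ are the $\max$ and $\min$ of $\int\gamma\,\mathrm{d}\mu$ over the equilibrium states $\mu$ of $f$ and $\phi+t_0\gamma$. Since $\phi+t_0\gamma\in\Holder{\alpha}(S^2,d)$, part~(i) makes that equilibrium state unique, so the left and right derivatives agree and equal $\int\gamma\,\mathrm{d}\mu_{\phi+t_0\gamma}$; as $t_0$ is arbitrary this is the claim. (Equivalently, on the coding space $e^{P(f,\phi+t\gamma)}$ is the simple leading eigenvalue of the transfer operator, real-analytic in $t$ by perturbation theory, with logarithmic derivative $\int\gamma\,\mathrm{d}\mu_{\phi+t\gamma}$.)

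\emph{Part~(iii).} Let $\CC$ be any Jordan curve with $\post f\subseteq\CC$ and $f^{n_\CC}(\CC)\subseteq\CC$. It is enough to prove $\mu_\phi(\CC)=0$, for then, by $f$-invariance of $\mu_\phi$, $\mu_\phi(f^{-i}(\CC))=\mu_\phi(\CC)=0$ for all $i\in\N_0$, and countable subadditivity gives $\mu_\phi\bigl(\bigcup_{i\ge0}f^{-i}(\CC)\bigr)=0$. For $\mu_\phi(\CC)=0$: from $f^{n_\CC}(\CC)\subseteq\CC$ we get $\CC\subseteq f^{-n_\CC}(\CC)$, and iterating, $\CC\subseteq f^{-kn_\CC}(\CC)$ for every $k\in\N$, so $\mu_\phi\bigl(\CC\cap f^{-kn_\CC}(\CC)\bigr)=\mu_\phi(\CC)$ for all $k$. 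Because $\mu_\phi$ is mixing (from step~(2) above, or from the topological exactness of $f$ together with the Gibbs property), the left-hand side tends to $\mu_\phi(\CC)^2$ as $k\to\infty$, whence $\mu_\phi(\CC)=\mu_\phi(\CC)^2$ and $\mu_\phi(\CC)\in\{0,1\}$. Finally, $\CC$ has empty interior, so $S^2\setminus\CC$ is non-empty and open; the Gibbs bounds give $\mu_\phi(X^n)>0$ for every tile, and every non-empty open set contains a tile (Lemma~\ref{lmCellBoundsBM}(iv)), so $\mu_\phi$ has full support and $\mu_\phi(S^2\setminus\CC)>0$. Hence $\mu_\phi(\CC)<1$, and therefore $\mu_\phi(\CC)=0$, as required.
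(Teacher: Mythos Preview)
The paper does not prove this theorem; it merely records that (i) is \cite[Theorem~1.1]{Li18}, that (ii) follows from \cite[Theorem~6.13]{Li18} together with the uniqueness in (i), and that (iii) is \cite[Proposition~7.1]{Li18}. Your sketch is precisely the Ruelle--Perron--Frobenius scheme carried out in \cite{Li18}: bounded distortion from the tile geometry (your step~(1)), construction of the eigenmeasure and eigenfunction and the convergence $\lambda^{-n}\RR_\phi^n\to u_\phi\otimes\nu_\phi$ (your step~(2)), identification of $\log\lambda$ with the pressure and of $\mu_\phi=u_\phi\nu_\phi$ as a Gibbs/equilibrium state (your step~(3)), and uniqueness. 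Your arguments for (ii) (convexity of pressure plus uniqueness of the tangent functional) and for (iii) (the $0$--$1$ law from mixing together with full support) are correct and are exactly the arguments used.

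There is one genuine soft spot in your plan for (i), in the uniqueness step. You write that ``the tile decompositions are generating modulo null sets for \emph{every} equilibrium state (a strengthening of part~(iii))'', and then deduce that every equilibrium state is Gibbs. But your proof of (iii) uses the mixing of $\mu_\phi$, which you only know for the specific measure $u_\phi\nu_\phi$ produced by the RPF machinery; it says nothing a priori about an arbitrary equilibrium state $\mu$. So as written the argument is circular. The fix in \cite{Li18} (and the standard one) is to bypass the Gibbs comparison and use the RPF convergence directly: from $\lambda^{-n}\RR_\phi^n u\to u_\phi\int u\,\mathrm{d}\nu_\phi$ for all $u\in\CCC(S^2)$ one shows, via a Rokhlin-type entropy computation against the tile partitions, that $h_\mu(f)+\int\phi\,\mathrm{d}\mu\le\log\lambda$ for every $f$-invariant $\mu$, with equality forcing $\mu=\mu_\phi$. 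Alternatively, one first proves that any equilibrium state must assign measure zero to $f^{-n}(\CC)$ by an entropy argument (equilibrium states have entropy $P(f,\phi)-\int\phi\,\mathrm{d}\mu>0$ while the forward-invariant set $\bigcap_n f^{-n}(\CC)$ supports only zero-entropy measures), and then runs your Gibbs comparison. Either route closes the gap; just be aware that the parenthetical ``a strengthening of part~(iii)'' is where the actual work hides.
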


Theorem~\ref{thmEquilibriumState}~(i) is part of \cite[Theorem~1.1]{Li18}. Theorem~\ref{thmEquilibriumState}~(ii) follows immediately from \cite[Theorem~6.13]{Li18} and the uniqueness of equilibrium states in Theorem~\ref{thmEquilibriumState}~(i). Theorem~\ref{thmEquilibriumState}~(iii) was established in \cite[Proposition~7.1]{Li18}.

The following distortion lemma serves as the cornerstone in the development of thermodynamic formalism for expanding Thurston maps in \cite{Li18} (see \cite[Lemmas~5.1 and~5.2]{Li18}).

\begin{lemma}[Z.~Li \cite{Li18}]    \label{lmSnPhiBound}
Let $f\:S^2 \rightarrow S^2$ be an expanding Thurston map and $\CC \subseteq S^2$ be a Jordan curve containing $\post f$ with the property that $f^{n_\CC}(\CC)\subseteq \CC$ for some $n_\CC\in\N$. Let $d$ be a visual metric on $S^2$ for $f$ with expansion factor $\Lambda>1$ and a linear local connectivity constant $L\geq 1$. Fix $\phi\in \Holder{\alpha}(S^2,d)$ with $\alpha\in(0,1]$. Then there exist  constants $C_1=C_1(f,\CC,d,\phi,\alpha)$ and $C_2=C_2(f,\CC,d,\phi,\alpha)  \geq 1$ depending only on $f$, $\CC$, $d$, $\phi$, and $\alpha$ such that
\begin{align} 
\Abs{S_n\phi(x)-S_n\phi(y)}  & \leq C_1 d(f^n(x),f^n(y))^\alpha, \label{eqSnPhiBound} \\
\frac{\sum_{z'\in f^{-n}(z)}  \deg_{f^n}(z')  \exp (S_n\phi(z'))}{\sum_{w'\in f^{-n}(w)}  \deg_{f^n}(w')  \exp (S_n\phi(w'))} 
                                                  & \leq \exp\left(4C_1 Ld(z,w)^\alpha\right) \leq C_2,\label{eqSigmaExpSnPhiBound}
\end{align}
for $n, \, m\in\N_0$ with $n\leq m $, $X^m\in\X^m(f,\CC)$, $x, \, y\in X^m$, and $z, \, w\in S^2$. We can choose
\begin{equation}   \label{eqC1C2}
C_1 \coloneqq \Hseminorm{\alpha,\, (S^2,d)}{\phi} C_0 (1-\Lambda^{-\alpha})^{-1}   \qquad \text{and} \qquad 
C_2 \coloneqq \exp\bigl(4C_1 L \bigl(\diam_d(S^2)\bigr)^\alpha \bigr) 
\end{equation}
where $C_0 > 1$ is the constant depending only on $f$, $\CC$, and $d$ from \cite[Lemma~3.13]{Li18}.
\end{lemma}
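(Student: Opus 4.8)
The plan is to derive both estimates from the metric distortion control of Lemma~\ref{lmMetricDistortion} together with the H\"older continuity of $\phi$ and the linear local connectivity of the visual metric $d$. First I would prove \eqref{eqSnPhiBound}. Fix $n\leq m$, a tile $X^m\in\X^m(f,\CC)$, and $x,\,y\in X^m$. For each $j\in\{0,\,1,\,\dots,\,n-1\}$, Proposition~\ref{propCellDecomp}~(i) gives that $f^j(X^m)$ is an $(m-j)$-tile containing $f^j(x)$ and $f^j(y)$, and since $m-j \geq m-n+1 \geq n-j+ (m-n)+1 \geq n-j$ — more to the point, $f^j(X^m)\in\X^{m-j}(f,\CC)$ with $m-j\geq (n-j)$ — Lemma~\ref{lmMetricDistortion} applied with the pair of levels $(n-j,\,m-j)$ yields
\begin{equation*}
d\bigl(f^j(x),f^j(y)\bigr) \leq C_0 \Lambda^{-(n-j)} d\bigl(f^n(x),f^n(y)\bigr).
\end{equation*}
Then by the triangle inequality and the definition \eqref{eqDefSnPt} of $S_n\phi$,
\begin{equation*}
\Abs{S_n\phi(x)-S_n\phi(y)} \leq \sum_{j=0}^{n-1} \Abs{\phi(f^j(x)) - \phi(f^j(y))} \leq \Hseminorm{\alpha,\,(S^2,d)}{\phi} \sum_{j=0}^{n-1} d\bigl(f^j(x),f^j(y)\bigr)^\alpha,
\end{equation*}
and substituting the distortion bound gives a geometric sum $\sum_{j=0}^{n-1} \Lambda^{-\alpha(n-j)} \leq (1-\Lambda^{-\alpha})^{-1}$, producing exactly the constant $C_1$ in \eqref{eqC1C2} times $C_0^\alpha$; absorbing $C_0^\alpha\leq C_0$ (as $C_0>1$, $\alpha\leq 1$) into the definition of $C_1$ closes this part.

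Next I would prove \eqref{eqSigmaExpSnPhiBound}. The key geometric input is that for $z,\,w\in S^2$ one can compare the weighted sums over $f^{-n}(z)$ and $f^{-n}(w)$ by pairing preimages that lie in a common $n$-tile. Using linear local connectivity (constant $L$), one joins $z$ to $w$ by a continuum of controlled diameter, pulls it back under $f^n$, and uses that each component of the pull-back meets tiles in a controlled way; concretely, for each $z'\in f^{-n}(z)$ there is a corresponding $w'\in f^{-n}(w)$ with $z',\,w'$ in a common $n$-tile (matching local degrees, since $f^n$ restricted to a tile is a homeomorphism and local degree is multiplicative along the orbit as in \eqref{eqLocalDegreeProduct}), and then by \eqref{eqSnPhiBound} with $n=m$,
\begin{equation*}
\Abs{S_n\phi(z') - S_n\phi(w')} \leq C_1 d(z,w)^\alpha.
\end{equation*}
Summing over the pairing with the bijection preserving $\deg_{f^n}$ gives the ratio bound $\exp(C_1 d(z,w)^\alpha)$; the extra factor $4L$ accounts for the fact that the continuum joining $z$ and $w$ (and hence a chain of adjacent $n$-tiles whose pull-backs connect $z'$ to $w'$) has diameter at most a bounded multiple of $d(z,w)$, and one traverses at most a bounded number of such tiles, so one accumulates $4 C_1 L d(z,w)^\alpha$ rather than $C_1 d(z,w)^\alpha$. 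Finally $d(z,w)\leq\diam_d(S^2)$ gives the uniform bound $C_2$ as in \eqref{eqC1C2}.

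The main obstacle I anticipate is the second estimate \eqref{eqSigmaExpSnPhiBound}: unlike the first, which is a clean orbit-wise telescoping, it requires a genuine combinatorial-geometric argument to set up the degree-preserving bijection between $f^{-n}(z)$ and $f^{-n}(w)$ through chains of adjacent tiles, and to control both the number of tiles in the chain and the diameters of their pull-backs uniformly in $n$. This is where linear local connectivity and the mesh control from Lemma~\ref{lmCellBoundsBM} must be combined carefully; the factor $4L$ is the visible trace of this bookkeeping. Everything else — the geometric series, absorbing constants, passing to the global diameter bound — is routine once the distortion lemma and the pairing are in hand.
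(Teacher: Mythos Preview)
The paper does not actually prove this lemma; it is quoted from \cite[Lemmas~5.1 and~5.2]{Li18} and stated here without proof as a cornerstone imported from that reference. So there is no ``paper's own proof'' to compare against.

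That said, your outline for \eqref{eqSnPhiBound} is exactly the standard argument and is correct: iterate the metric distortion estimate of Lemma~\ref{lmMetricDistortion} along the orbit, sum the H\"older increments, and collapse the geometric series. For \eqref{eqSigmaExpSnPhiBound} your strategy is also the right one, but the phrase ``degree-preserving bijection between $f^{-n}(z)$ and $f^{-n}(w)$'' oversimplifies the mechanism. In \cite{Li18} one does not pair preimages one-to-one; rather, one uses linear local connectivity to produce a continuum $E$ joining $z$ and $w$ with $\diam_d(E)\leq 2L\,d(z,w)$, lifts $E$ under branches of $f^n$, and exploits that each lift lands in a connected union of $n$-tiles of controlled diameter. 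The weighted sum with $\deg_{f^n}$ is then compared tile-by-tile via \eqref{eqSnPhiBound}, and the factor $4L$ records both the diameter inflation from LLC and the doubling incurred when bounding the continuum. Your sketch captures the shape of this, and you correctly flag it as the nontrivial step; just be aware that making the pairing precise requires the continuum-lifting argument rather than a literal bijection of preimage points.
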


Recall that the main tool used in \cite{Li18} to develop the thermodynamic formalism for expanding Thurston maps is the Ruelle operator. We will need a complex version of the Ruelle operator in this paper discussed in \cite{Li17}. We summarize relevant definitions and facts about the Ruelle operator below and refer the reader to \cite[Section~3.3]{Li17} for a detailed discussion.

Let $f\: S^2\rightarrow S^2$ be an expanding Thurston map and $\phi\in \CCC(S^2,\C)$ be a complex-valued continuous function.  The \defn{Ruelle operator} $\RR_\phi$ (associated to $f$ and $\phi$) acting on $\CCC(S^2,\C)$ is defined as follows
\begin{equation}   \label{eqDefRuelleOp}
\RR_\phi(u)(x) \coloneqq \sum_{y\in f^{-1}(x)}  \deg_f(y) u(y) \exp(\phi(y)),
\end{equation}
for each $u\in \CCC(S^2,\C)$. Note that $\RR_\phi$ is a well-defined and continuous operator on $\CCC(S^2,\C)$. The Ruelle operator $\RR_\phi \: \CCC(S^2,\C) \rightarrow \CCC(S^2,\C)$ has an extension to the space of complex-valued bounded Borel functions $B(S^2,\C)$ (equipped with the uniform norm) given by (\ref{eqDefRuelleOp}) for each $u\in B(S^2,\C)$. 

We observe that if $\phi\in\CCC(S^2)$ is real-valued, then $\RR_\phi ( \CCC(S^2)) \subseteq \CCC(S^2)$ and $\RR_\phi ( B(S^2)) \subseteq B(S^2)$. The adjoint operator $\RR_\phi^*\: \CCC^*(S^2)\rightarrow \CCC^*(S^2)$ of $\RR_\phi$ acts on the dual space $\CCC^*(S^2)$ of the Banach space $\CCC(S^2)$. We identify $\CCC^*(S^2)$ with the space $\MMM(S^2)$ of finite signed Borel measures on $S^2$ by the Riesz representation theorem.

When $\phi\in\CCC(S^2)$ is real-valued, we denote 
\begin{equation}   \label{eqDefPhi-}
\overline\phi \coloneqq \phi - P(f,\phi).
\end{equation}

We record the following three technical results on the Ruelle operators in our context.

\begin{lemma}[Z.~Li \cite{Li18}]    \label{lmR1properties}
Let $f\:S^2 \rightarrow S^2$ be an expanding Thurston map and $\CC \subseteq S^2$ be a Jordan curve containing $\post f$ with the property that $f^{n_\CC}(\CC)\subseteq \CC$ for some $n_\CC\in\N$. Let $d$ be a visual metric on $S^2$ for $f$ with expansion factor $\Lambda>1$. Let $\phi\in \Holder{\alpha}(S^2,d)$ be a real-valued H\"{o}lder continuous function with an exponent $\alpha\in(0,1]$. Then there exists a constant $C_3=C_3(f, \CC, d, \phi, \alpha)$ depending only on $f$, $\CC$, $d$, $\phi$, and $\alpha$ such that for each $x, \, y\in S^2$ and each $n\in \N_0$ the following equations hold
\begin{align} 
 \RR_{\overline{\phi}}^n(\mathbbm{1})(x) / \RR_{\overline{\phi}}^n(\mathbbm{1})(y) & \leq \exp\left(4C_1 Ld(x,y)^\alpha\right) \leq C_2, \label{eqR1Quot} \\
 C_2^{-1} & \leq \RR_{\overline{\phi}}^n(\mathbbm{1})(x)  \leq C_2, \label{eqR1Bound} \\
 \Absbig{\RR_{\overline{\phi}}^n(\mathbbm{1})(x) - \RR_{\overline{\phi}}^n(\mathbbm{1})(y) }  
& \leq  C_2 ( \exp(4C_1 Ld(x,y)^\alpha ) - 1 )   \leq C_3 d(x,y)^\alpha,     \label{eqR1Diff}
\end{align}
where $C_1, C_2$ are the constants in Lemma~\ref{lmSnPhiBound} depending only on $f$, $\CC$, $d$, $\phi$, and $\alpha$.
\end{lemma}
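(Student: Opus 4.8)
The plan is to reduce all three estimates to the distortion bound \eqref{eqSigmaExpSnPhiBound} of Lemma~\ref{lmSnPhiBound} together with one normalization fact for $\RR_{\overline{\phi}}^n(\mathbbm{1})$. First I would record the iterate formula for the Ruelle operator: by induction from \eqref{eqDefRuelleOp}, \eqref{eqLocalDegreeProduct}, and \eqref{eqDefSnPt}, for every $\psi\in\CCC(S^2,\C)$, $n\in\N_0$, and $x\in S^2$ one has
\[
\RR_{\psi}^n(\mathbbm{1})(x)=\sum_{y\in f^{-n}(x)}\deg_{f^n}(y)\,e^{S_n\psi(y)};
\]
taking $\psi=\overline{\phi}=\phi-P(f,\phi)$ and using $S_n\overline{\phi}(y)=S_n\phi(y)-nP(f,\phi)$ gives $\RR_{\overline{\phi}}^n(\mathbbm{1})(x)=e^{-nP(f,\phi)}\sum_{y\in f^{-n}(x)}\deg_{f^n}(y)\,e^{S_n\phi(y)}$. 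Then for \eqref{eqR1Quot} I would observe that the scalar $e^{-nP(f,\phi)}$ cancels in the quotient $\RR_{\overline{\phi}}^n(\mathbbm{1})(x)\big/\RR_{\overline{\phi}}^n(\mathbbm{1})(y)$, so this quotient equals exactly the ratio of weighted preimage sums on the left of \eqref{eqSigmaExpSnPhiBound} (with $z=x$, $w=y$); hence it is $\leq\exp(4C_1Ld(x,y)^\alpha)$, and the second inequality in \eqref{eqR1Quot} is just $d(x,y)\leq\diam_d(S^2)$ together with the choice of $C_2$ in \eqref{eqC1C2}.

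For \eqref{eqR1Bound} I would use that $\RR_{\overline{\phi}}^n(\mathbbm{1})$ has average $1$ against the eigenmeasure. Let $m_\phi\in\PPP(S^2)$ be the eigenmeasure of $\RR_{\overline{\phi}}$, i.e.\ $\RR_{\overline{\phi}}^*m_\phi=m_\phi$, which is part of the thermodynamic formalism for expanding Thurston maps developed in \cite{Li18} (see also \cite{Li17}); then $\int\RR_{\overline{\phi}}^n(\mathbbm{1})\,\mathrm{d}m_\phi=\int\mathbbm{1}\,\mathrm{d}\bigl((\RR_{\overline{\phi}}^*)^nm_\phi\bigr)=1$ for all $n\in\N_0$. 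Integrating the two-sided bound $C_2^{-1}\RR_{\overline{\phi}}^n(\mathbbm{1})(y)\leq\RR_{\overline{\phi}}^n(\mathbbm{1})(x)\leq C_2\RR_{\overline{\phi}}^n(\mathbbm{1})(y)$ from \eqref{eqR1Quot} in $y$ against $m_\phi$ then yields $C_2^{-1}\leq\RR_{\overline{\phi}}^n(\mathbbm{1})(x)\leq C_2$. (Should one prefer to avoid invoking $m_\phi$, an alternative is: set $M_n\coloneqq\max_{S^2}\RR_{\overline{\phi}}^n(\mathbbm{1})$ and $m_n\coloneqq\min_{S^2}\RR_{\overline{\phi}}^n(\mathbbm{1})$; positivity of $\RR_{\overline{\phi}}$ and $\RR_{\overline{\phi}}^{n+k}(\mathbbm{1})=\RR_{\overline{\phi}}^n\bigl(\RR_{\overline{\phi}}^k(\mathbbm{1})\bigr)$ make $(\log M_n)_n$ subadditive and $(\log m_n)_n$ superadditive, with common Cesàro limit $P(f,\overline{\phi})=0$ by the preimage characterization of the pressure; since $M_n\leq C_2m_n$ by \eqref{eqR1Quot}, the limit $0$ is approached monotonically, forcing $m_n\leq 1\leq M_n$, hence $C_2^{-1}\leq M_n/C_2\leq m_n$ and $M_n\leq C_2m_n\leq C_2$.)

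Finally, for \eqref{eqR1Diff} I would assume without loss of generality $\RR_{\overline{\phi}}^n(\mathbbm{1})(x)\geq\RR_{\overline{\phi}}^n(\mathbbm{1})(y)$, factor out $\RR_{\overline{\phi}}^n(\mathbbm{1})(y)$, and estimate $\bigl|\RR_{\overline{\phi}}^n(\mathbbm{1})(x)-\RR_{\overline{\phi}}^n(\mathbbm{1})(y)\bigr|=\RR_{\overline{\phi}}^n(\mathbbm{1})(y)\bigl(\RR_{\overline{\phi}}^n(\mathbbm{1})(x)/\RR_{\overline{\phi}}^n(\mathbbm{1})(y)-1\bigr)\leq C_2\bigl(e^{4C_1Ld(x,y)^\alpha}-1\bigr)$ using \eqref{eqR1Quot} and \eqref{eqR1Bound}. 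For the last inequality of \eqref{eqR1Diff}, I would note that $t\coloneqq 4C_1Ld(x,y)^\alpha$ lies in $[0,\log C_2]$, since $d(x,y)\leq\diam_d(S^2)$ and $\log C_2=4C_1L(\diam_d(S^2))^\alpha$ by \eqref{eqC1C2}; convexity of $t\mapsto e^t-1$ on that interval gives $e^t-1\leq\frac{C_2-1}{\log C_2}t$, so $C_2\bigl(e^{4C_1Ld(x,y)^\alpha}-1\bigr)\leq\frac{C_2(C_2-1)}{(\diam_d(S^2))^\alpha}d(x,y)^\alpha$, and one may take $C_3\coloneqq C_2(C_2-1)(\diam_d(S^2))^{-\alpha}$, which depends only on $f$, $\CC$, $d$, $\phi$, $\alpha$. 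The only substantive step is \eqref{eqR1Bound}: parts \eqref{eqR1Quot} and \eqref{eqR1Diff} are bookkeeping on top of Lemma~\ref{lmSnPhiBound}, whereas \eqref{eqR1Bound} genuinely needs the normalization $\int\RR_{\overline{\phi}}^n(\mathbbm{1})\,\mathrm{d}m_\phi=1$ (equivalently, the preimage formula $P(f,\overline{\phi})=0$ plus sub/super-multiplicativity), so the main point is to make sure the eigenmeasure $m_\phi$ is available at this stage of the development and otherwise fall back on the self-contained argument indicated above.
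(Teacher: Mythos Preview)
Your proof is correct. The paper does not actually give its own proof of this lemma: immediately after the statement it simply records ``Lemma~\ref{lmR1properties} was proved in \cite[Lemma~5.15]{Li18}.'' Your argument is the standard one in this setting and is almost certainly what appears in \cite{Li18}: \eqref{eqR1Quot} is literally \eqref{eqSigmaExpSnPhiBound} after cancelling the pressure normalization, \eqref{eqR1Bound} follows by integrating the quotient bound against the eigenmeasure $m_\phi$ of $\RR_{\overline\phi}^*$ (which is available in \cite{Li18}; your sub/super-additivity alternative is also fine), and \eqref{eqR1Diff} is the factorization $|a-b|=b(a/b-1)$ followed by the convexity bound for $e^t-1$ on $[0,\log C_2]$. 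There is nothing to correct.
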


Lemma~\ref{lmR1properties} was proved in \cite[Lemma~5.15]{Li18}. The next theorem is part of \cite[Theorem~5.16]{Li18}.

\begin{theorem}[Z.~Li \cite{Li18}]   \label{thmMuExist}
Let $f\:S^2 \rightarrow S^2$ be an expanding Thurston map and $\CC \subseteq S^2$ be a Jordan curve containing $\post f$ with the property that $f^{n_\CC}(\CC)\subseteq \CC$ for some $n_\CC\in\N$. Let $d$ be a visual metric on $S^2$ for $f$ with expansion factor $\Lambda>1$. Let $\phi\in \Holder{\alpha}(S^2,d)$ be a real-valued H\"{o}lder continuous function with an exponent $\alpha\in(0,1]$. Then the sequence $\bigl\{\frac{1}{n}\sum_{j=0}^{n-1} \RR_{\overline{\phi}}^j(\mathbbm{1})\bigr\}_{n\in\N}$ converges uniformly to a function $u_\phi \in \Holder{\alpha}(S^2,d)$, which satisfies
$\RR_{\overline{\phi}}(u_\phi)  = u_\phi$ and $C_2^{-1} \leq u_\phi(x) \leq C_2$ for each $x\in S^2$, 
where $C_2\geq 1$ is the constant from Lemma~\ref{lmSnPhiBound}.
\end{theorem}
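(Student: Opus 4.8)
\emph{Sketch of the intended proof.} The plan is to produce $u_\phi$ as a uniform limit of the Ces\`aro averages $A_n \coloneqq \frac{1}{n}\sum_{j=0}^{n-1}\RR_{\overline{\phi}}^j(\mathbbm{1}) \in \CCC(S^2)$, $n\in\N$: an equicontinuity argument supplies a subsequential limit, a telescoping identity identifies it as a fixed function of $\RR_{\overline{\phi}}$, and the mean ergodic theorem upgrades subsequential convergence to convergence of the whole sequence. The structural input that makes this work is that $\RR_{\overline{\phi}}$ is a \emph{positive} bounded linear operator on the real Banach space $\CCC(S^2)$: by (\ref{eqDefRuelleOp}) it maps nonnegative functions to nonnegative functions, since $\deg_f(\cdot)\geq 1$ and $\exp(\cdot)>0$. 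For such an operator $L$ we have $L(\mathbbm{1})\geq 0$, and $\abs{u}\leq\mathbbm{1}$ forces $\abs{Lu}\leq L(\mathbbm{1})$ pointwise, so the operator norm of $L$ on $\CCC(S^2)$ equals $\Norm{L(\mathbbm{1})}_{\CCC^0(S^2)}$. Hence by (\ref{eqR1Bound}) the operator norm of $\RR_{\overline{\phi}}^n$ is at most $C_2$ for every $n\in\N_0$; in particular $\RR_{\overline{\phi}}$ is power-bounded, and each $A_n$ has operator norm at most $C_2$.

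First I would verify that $\{A_n\}_{n\in\N}$ is uniformly bounded and uniformly $\alpha$-H\"older. Each $A_n$ is a convex combination of the functions $\RR_{\overline{\phi}}^j(\mathbbm{1})$ with $0\leq j\leq n-1$, and each of these satisfies $C_2^{-1}\leq\RR_{\overline{\phi}}^j(\mathbbm{1})\leq C_2$ by (\ref{eqR1Bound}) and $\Absbig{\RR_{\overline{\phi}}^j(\mathbbm{1})(x)-\RR_{\overline{\phi}}^j(\mathbbm{1})(y)}\leq C_3 d(x,y)^\alpha$ by (\ref{eqR1Diff}); hence $C_2^{-1}\leq A_n\leq C_2$ and $\Hseminorm{\alpha,\,(S^2,d)}{A_n}\leq C_3$, uniformly in $n$. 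Since $(S^2,d)$ is compact (a visual metric induces the standard topology on $S^2$), the Arzel\`a--Ascoli theorem shows that $\{A_n\}_{n\in\N}$ is relatively compact in $\bigl(\CCC(S^2),\Norm{\cdot}_{\CCC^0(S^2)}\bigr)$. Next, the telescoping identity $\RR_{\overline{\phi}}(A_n)=A_n+\frac{1}{n}\bigl(\RR_{\overline{\phi}}^n(\mathbbm{1})-\mathbbm{1}\bigr)$ combined with (\ref{eqR1Bound}) gives $\Norm{\RR_{\overline{\phi}}(A_n)-A_n}_{\CCC^0(S^2)}\leq (C_2+1)/n\to 0$ as $n\to+\infty$, so along any uniformly convergent subsequence $A_{n_k}\to u$ the continuity of $\RR_{\overline{\phi}}$ on $\CCC(S^2)$ yields $\RR_{\overline{\phi}}(u)=u$. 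Finally, since $\RR_{\overline{\phi}}$ is power-bounded and $\{A_n\}$ is relatively norm-compact, a fortiori relatively weakly compact, the mean ergodic theorem implies that the full sequence converges, $A_n\to u_\phi$ in $\CCC^0(S^2)$, with $\RR_{\overline{\phi}}(u_\phi)=u_\phi$. Being a uniform limit of functions with the two-sided bound $C_2^{-1}\leq\cdot\leq C_2$ and with $\alpha$-H\"older seminorm $\leq C_3$, the limit satisfies $C_2^{-1}\leq u_\phi(x)\leq C_2$ for all $x\in S^2$ and belongs to $\Holder{\alpha}(S^2,d)$, which is what the theorem asserts.

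I expect the only genuinely non-formal step to be the passage from a subsequential limit to convergence of the whole sequence $\{A_n\}$. At this stage one does not yet have uniqueness of a fixed function of $\RR_{\overline{\phi}}$ with the stated normalization --- that is typically deduced \emph{after} this very construction --- so one cannot simply argue that all subsequential limits coincide; the power-boundedness of the positive operator $\RR_{\overline{\phi}}$ is instead fed into the mean ergodic theorem, which is exactly what forces the entire sequence to converge. One could bypass the mean ergodic theorem by first proving such uniqueness directly, e.g.\ via a Birkhoff cone / contraction argument for $\RR_{\overline{\phi}}$, and then noting that every subsequential limit must equal the unique normalized fixed function; I would favour the mean ergodic route, as it keeps the argument self-contained given only Lemma~\ref{lmR1properties}.
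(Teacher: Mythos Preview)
The paper does not itself prove Theorem~\ref{thmMuExist}; it is quoted verbatim as part of \cite[Theorem~5.16]{Li18}, so there is no in-paper proof to compare against. Your sketch is nonetheless a correct and standard route to the result: the uniform two-sided bound (\ref{eqR1Bound}) and the uniform $\alpha$-H\"older estimate (\ref{eqR1Diff}) from Lemma~\ref{lmR1properties} give equicontinuity of the Ces\`aro averages, Arzel\`a--Ascoli supplies norm-relative compactness, the telescoping identity pins down subsequential limits as fixed functions, and the Yosida--Kakutani mean ergodic theorem (power-boundedness plus a weak cluster point of the averages implies norm convergence of the full sequence) closes the loop. Your observation that positivity of $\RR_{\overline\phi}$ gives $\bigl\lVert\RR_{\overline\phi}^n\bigr\rVert=\bigl\lVert\RR_{\overline\phi}^n(\mathbbm{1})\bigr\rVert_{\CCC^0(S^2)}\leq C_2$ is exactly the power-boundedness input needed, and your remark that one cannot yet appeal to uniqueness of the fixed function is well taken---that is indeed established only afterwards in \cite{Li18}. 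The approach in \cite{Li18} follows essentially the same compactness-plus-averaging scheme, so there is no substantive divergence to report.
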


Let $f\:S^2 \rightarrow S^2$ be an expanding Thurston map and $d$ be a visual metric on $S^2$ for $f$ with expansion factor $\Lambda>1$. Let $\phi\in \Holder{\alpha}(S^2,d)$ be a real-valued H\"{o}lder continuous function with an exponent $\alpha\in(0,1]$. Then we denote
\begin{equation}    \label{eqDefPhiWidetilde}
\wt{\phi} \coloneqq \phi - P(f,\phi) + \log u_\phi - \log ( u_\phi \circ f ),
\end{equation}
where $u_\phi$ is the continuous function given by Theorem~\ref{thmMuExist}.

\begin{lemma}     \label{lmUphiCountinuous}
Let $f\:S^2 \rightarrow S^2$ be an expanding Thurston map and $d$ be a visual metric on $S^2$ for $f$ with expansion factor $\Lambda>1$. Let $\phi\in \Holder{\alpha}(S^2,d)$ be a real-valued H\"{o}lder continuous function with an exponent $\alpha\in(0,1]$. We define a map $\tau \: \R\rightarrow \Holder{\alpha}(S^2,d)$ by setting $\tau(t) \coloneqq u_{t\phi}$. Then $\tau$ is continuous with respect to the uniform norm $\norm{\cdot}_{\CCC^0(S^2)}$ on $\Holder{\alpha}(S^2,d)$.
\end{lemma}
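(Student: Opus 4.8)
The plan is to fix $t_0\in\R$ and prove that $\tau$ is continuous at $t_0$; since continuity is local, it suffices, for an arbitrary compact interval $I$ with $t_0\in\inter I$, to show that $u_{t_n\phi}\to u_{t_0\phi}$ in $\Norm{\cdot}_{\CCC^0(S^2)}$ whenever $(t_n)_{n\in\N}$ is a sequence in $I$ with $t_n\to t_0$. (Note that it is exactly the fact that the target topology is the $\CCC^0$-topology that makes an Arzel\`{a}--Ascoli argument available.) First I would fix once and for all a Jordan curve $\CC\subseteq S^2$ with $\post f\subseteq\CC$ and $f^{n_\CC}(\CC)\subseteq\CC$ for some $n_\CC\in\N$; such a curve exists by Lemma~\ref{lmCexistsL}. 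As $t$ ranges over $I$, the potentials $t\phi$ form a bounded subset of $\Holder{\alpha}(S^2,d)$ because $\Hseminorm{\alpha,\,(S^2,d)}{t\phi}=\abs{t}\Hseminorm{\alpha,\,(S^2,d)}{\phi}$, so the constants $C_1$, $C_2$ of Lemma~\ref{lmSnPhiBound} and $C_3$ of Lemma~\ref{lmR1properties}, applied to the potential $t\phi$, may be chosen independently of $t\in I$; I denote such uniform choices again by $C_1$, $C_2$, $C_3$.

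The next step is to show that the family $\{u_{t\phi}:t\in I\}$ is precompact in $(\CCC(S^2),\Norm{\cdot}_{\CCC^0(S^2)})$. By Theorem~\ref{thmMuExist}, for each $t$ the function $u_{t\phi}$ is the $\CCC^0$-limit of the Ces\`{a}ro averages $\frac1n\sum_{j=0}^{n-1}\RR_{\overline{t\phi}}^j(\mathbbm{1})$, where $\overline{t\phi}\coloneqq t\phi-P(f,t\phi)$, and $C_2^{-1}\leq u_{t\phi}\leq C_2$. By the estimate~(\ref{eqR1Diff}) each iterate $\RR_{\overline{t\phi}}^j(\mathbbm{1})$ has $\alpha$-H\"{o}lder seminorm at most $C_3$, uniformly in $j\in\N_0$ and $t\in I$; hence so does every Ces\`{a}ro average, and letting $n\to\infty$ gives $\Hseminorm{\alpha,\,(S^2,d)}{u_{t\phi}}\leq C_3$ for all $t\in I$. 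Thus $\{u_{t\phi}:t\in I\}$ is uniformly bounded and equi-H\"{o}lder-continuous, so by Arzel\`{a}--Ascoli every sequence drawn from it has a uniformly convergent subsequence. In particular, given $t_n\to t_0$ in $I$, along some subsequence $u_{t_n\phi}$ converges uniformly to some $u_*\in\Holder{\alpha}(S^2,d)$ with $C_2^{-1}\leq u_*\leq C_2$.

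To identify $u_*$ with $u_{t_0\phi}$, I would first note that $t\mapsto P(f,t\phi)$ is continuous (indeed differentiable, by Theorem~\ref{thmEquilibriumState}(ii) applied with potential $0$ and $\gamma=\phi$), so $\overline{t_n\phi}\to\overline{t_0\phi}$ in $\CCC^0(S^2)$; and that the Ruelle operator depends continuously on its potential, since from $\sum_{y\in f^{-1}(x)}\deg_f(y)=\deg f$ and $\abs{e^a-e^b}\leq e^{\max\{a,b\}}\abs{a-b}$ one gets $\Norm{\RR_{\psi_1}(v)-\RR_{\psi_2}(v)}_{\CCC^0(S^2)}\leq(\deg f)\,e^{\max\{\Norm{\psi_1}_{\CCC^0(S^2)},\,\Norm{\psi_2}_{\CCC^0(S^2)}\}}\Norm{\psi_1-\psi_2}_{\CCC^0(S^2)}\Norm{v}_{\CCC^0(S^2)}$ for all $\psi_1,\psi_2,v\in\CCC(S^2)$. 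Writing, along the chosen subsequence, $\RR_{\overline{t_n\phi}}(u_{t_n\phi})-\RR_{\overline{t_0\phi}}(u_*)=\RR_{\overline{t_n\phi}}(u_{t_n\phi}-u_*)+\bigl(\RR_{\overline{t_n\phi}}-\RR_{\overline{t_0\phi}}\bigr)(u_*)$, bounding the first term by $\Norm{\RR_{\overline{t_n\phi}}(\mathbbm{1})}_{\CCC^0(S^2)}\Norm{u_{t_n\phi}-u_*}_{\CCC^0(S^2)}\leq C_2\Norm{u_{t_n\phi}-u_*}_{\CCC^0(S^2)}$ via~(\ref{eqR1Bound}), and the second by the continuity estimate, and using the fixed-point identity $\RR_{\overline{t_n\phi}}(u_{t_n\phi})=u_{t_n\phi}$ from Theorem~\ref{thmMuExist}, I let $n\to\infty$ to obtain $\RR_{\overline{t_0\phi}}(u_*)=u_*$. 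So $u_*$ is a strictly positive H\"{o}lder eigenfunction of $\RR_{\overline{t_0\phi}}$ for the eigenvalue $1$, whence, by the uniqueness up to a positive multiplicative constant of such an eigenfunction (part of the thermodynamic formalism of \cite{Li18}), $u_*=\lambda u_{t_0\phi}$ for some $\lambda>0$. To pin down $\lambda=1$, I would use the eigenmeasure $m_{t\phi}\in\PPP(S^2)$ with $\RR_{\overline{t\phi}}^*(m_{t\phi})=m_{t\phi}$: then $\int\RR_{\overline{t\phi}}^j(\mathbbm{1})\,\mathrm{d}m_{t\phi}=\int\mathbbm{1}\,\mathrm{d}m_{t\phi}=1$ for all $j$, so $\int u_{t\phi}\,\mathrm{d}m_{t\phi}=1$; the weak-$*$ convergence $m_{t_n\phi}\to m_{t_0\phi}$ (again by weak-$*$ compactness of $\PPP(S^2)$, passage to the limit in the eigenmeasure equation via the continuity estimate applied to the adjoints, and uniqueness of $m_\phi$ from \cite[Corollary~6.10]{Li18}) then yields $1=\int u_{t_n\phi}\,\mathrm{d}m_{t_n\phi}\to\int u_*\,\mathrm{d}m_{t_0\phi}=\lambda$. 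Thus $u_*=u_{t_0\phi}$, and since every subsequence of $(u_{t_n\phi})$ admits a further subsequence converging uniformly to $u_{t_0\phi}$, the whole sequence converges to $u_{t_0\phi}$, proving continuity of $\tau$ at $t_0$.

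The step I expect to be the main obstacle is the last one --- forcing the normalization $\lambda=1$, i.e.\ upgrading continuity of the ray $\R_{>0}u_{t\phi}$ to continuity of $u_{t\phi}$ itself --- since it requires controlling the eigenmeasures $m_{t\phi}$ in addition to the eigenfunctions. A cleaner route, which I would adopt if the needed statement is available in the cited form, is to invoke a spectral gap for $\RR_{\overline{t\phi}}$ on $\Holder{\alpha}(S^2,d)$ with constants uniform over $t\in I$ (this uniform-in-potential convergence is part of the thermodynamic formalism of \cite{Li17,Li18}): since $\int(\mathbbm{1}-u_{t\phi})\,\mathrm{d}m_{t\phi}=0$, it gives $\Norm{\RR_{\overline{t\phi}}^j(\mathbbm{1})-u_{t\phi}}_{\CCC^0(S^2)}=\Norm{\RR_{\overline{t\phi}}^j(\mathbbm{1}-u_{t\phi})}_{\CCC^0(S^2)}\leq C\rho^j$ with $C\geq1$ and $\rho\in(0,1)$ independent of $t\in I$, hence $\frac1n\sum_{j=0}^{n-1}\RR_{\overline{t\phi}}^j(\mathbbm{1})\to u_{t\phi}$ uniformly in $t\in I$; as each such Ces\`{a}ro average depends $\CCC^0$-continuously on $t$ (by the continuity estimate above and that of $t\mapsto P(f,t\phi)$, together with the bound $\RR_{\overline{t\phi}}(\mathbbm{1})\leq C_2$), $\tau$ is then a uniform limit of continuous maps $I\to(\CCC(S^2),\Norm{\cdot}_{\CCC^0(S^2)})$ and is therefore continuous.
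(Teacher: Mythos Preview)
Your proposal is correct, and the ``cleaner route'' you settle on at the end is exactly the paper's argument: the paper fixes a bounded interval $I$, defines $T_n(t)\coloneqq\RR_{\overline{t\phi}}^n(\mathbbm{1})$ (iterates rather than Ces\`aro averages, but this is immaterial), observes that each $T_n$ is $\CCC^0$-continuous in $t$ by continuity of pressure and of the Ruelle operator in its potential, and then invokes \cite[Theorem~6.8 and Corollary~6.10]{Li18} to get $T_n(t)\to u_{t\phi}$ uniformly in $t\in I$; continuity of $\tau$ follows as a uniform limit of continuous maps.

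Your first route via Arzel\`a--Ascoli and identification of subsequential limits is a genuinely different and self-contained alternative. What it buys is independence from the uniform-in-potential spectral gap black box: you only need the pointwise convergence in Theorem~\ref{thmMuExist}, plus uniqueness of the positive H\"older eigenfunction and of the eigenmeasure. The price is the normalization step $\lambda=1$, which, as you correctly anticipate, forces you to run a parallel compactness-plus-uniqueness argument for the eigenmeasures $m_{t\phi}$. Your sketch there is right (pass to a further subsequence so that $m_{t_n\phi}$ converges weak-$*$; the limit satisfies the eigenmeasure equation by the adjoint of your continuity estimate; uniqueness from \cite[Corollary~6.10]{Li18} identifies it as $m_{t_0\phi}$; and the joint limit $\int u_{t_n\phi}\,\mathrm{d}m_{t_n\phi}\to\int u_*\,\mathrm{d}m_{t_0\phi}$ follows by splitting into $\int(u_{t_n\phi}-u_*)\,\mathrm{d}m_{t_n\phi}+\int u_*\,\mathrm{d}(m_{t_n\phi}-m_{t_0\phi})$). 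The paper sidesteps all of this by citing the uniform convergence directly.
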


\begin{proof}
Fix an arbitrary bounded open interval $I \subseteq \R$. For each $n\in\N$, define $T_n \: I \rightarrow \CCC(S^2,d)$ by $T_n(t) \coloneqq \RR^n_{\overline{t\phi}}  ( \mathbbm{1}_{S^2} )$ for $t \in I$. Since $\overline{t\phi}  = t\phi - P(f, t\phi)$, by (\ref{eqDefRuelleOp}) and the continuity of the topological pressure (see for example, \cite[Theorem~3.6.1]{PrU10}), we know that $T_n$ is a continuous function with respect to the uniform norm $\norm{\cdot}_{\CCC^0(S^2)}$ on  $\CCC(S^2,d)$. Applying \cite[Theorem~6.8 and Corollary~6.10]{Li18}, we get that $T_n(t)$ converges to $\tau|_I (t)$ in the uniform norm on  $\CCC(S^2,d)$ uniformly in $t\in I$ as $n\to +\infty$. Hence, $\tau(t)$ is continuous on $I$. Recall $u_{t\phi} \in \Holder{\alpha}(S^2,d)$ (see Theorem~\ref{thmMuExist}). Therefore, $\tau(t)$ is continuous in $t\in\R$ with respect to the uniform norm on $\Holder{\alpha}(S^2,d)$.
\end{proof}

A measure $\mu\in\PPP(S^2)$ is an \defn{eigenmeasure} of $\RR_\phi^*$ if $\RR_\phi^* (\mu) = c\mu$ for some $c\in\R$. See \cite[Corollary~6.10]{Li18} for the uniqueness of such a measure. 

The potentials that satisfy the following property are of particular interest in the considerations of Prime Orbit Theorems and the analytic study of dynamical zeta functions.

\begin{definition}[Eventually positive functions]  \label{defEventuallyPositive}
Let $g\: X\rightarrow X$ be a map on a set $X$, and $\varphi\:X\rightarrow\C$ be a complex-valued function on $X$. Then $\varphi$ is \defn{eventually positive} if there exists $N\in\N$ such that $S_n\varphi(x)>0$ for each $x\in X$ and each $n\in\N$ with $n\geq N$.
\end{definition}

Theorem~\ref{thmEquilibriumState}~(ii) leads to the following corollary that we frequently use, often implicitly, throughout this paper. See \cite[Corollary~3.29]{LZ24a} for a proof.

\begin{cor}   \label{corS0unique}
Let $f\:S^2 \rightarrow S^2$ be an expanding Thurston map, and $d$ be a visual metric on $S^2$ for $f$. Let $\phi \in \Holder{\alpha}(S^2,d)$ be an eventually positive real-valued H\"{o}lder continuous function with an exponent $\alpha\in(0,1]$. Then the function $t\mapsto P(f,-t\phi)$, $t\in\R$, is strictly decreasing and there exists a unique number $s_0 \in\R$ such that $P(f,-s_0\phi)=0$. Moreover, $s_0>0$.
\end{cor}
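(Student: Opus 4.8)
The plan is to analyze the function $g(t) \coloneqq P(f,-t\phi)$ for $t \in \R$ using the differentiability formula in Theorem~\ref{thmEquilibriumState}~(ii), together with the eventual positivity of $\phi$. First I would apply Theorem~\ref{thmEquilibriumState}~(ii) with the potential $\phi$ replaced by $-t\phi$ and $\gamma \coloneqq -\phi$ (noting $-t\phi = 0 + t(-\phi)$, so the relevant derivative is $\frac{\mathrm{d}}{\mathrm{d}t}P(f,-t\phi) = \int\! (-\phi) \,\mathrm{d}\mu_{-t\phi} = -\int\!\phi\,\mathrm{d}\mu_{-t\phi}$). Thus it suffices to show $\int\!\phi\,\mathrm{d}\mu_{-t\phi} > 0$ for every $t \in \R$; this immediately gives that $g$ is strictly decreasing and $C^1$. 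The key point is that eventual positivity of $\phi$ forces $\int\!\phi\,\mathrm{d}\nu > 0$ for \emph{every} $f$-invariant Borel probability measure $\nu$: indeed, if $N \in \N$ is such that $S_N\phi > 0$ on $S^2$, then by compactness $S_N\phi \geq c$ for some $c > 0$, and by $f$-invariance of $\nu$ we have $\int\! S_N\phi \,\mathrm{d}\nu = N\int\!\phi\,\mathrm{d}\nu$, hence $\int\!\phi\,\mathrm{d}\nu \geq c/N > 0$. Applying this with $\nu = \mu_{-t\phi}$ (which is $f$-invariant, being an equilibrium state) yields strict monotonicity.

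Next I would establish existence and uniqueness of $s_0$ with $g(s_0) = 0$. Uniqueness is immediate from strict monotonicity. For existence, by the intermediate value theorem it suffices to check that $g$ is continuous (which follows from differentiability, or directly from continuity of the topological pressure, e.g.\ \cite[Theorem~3.6.1]{PU10}) and that $g$ takes both positive and negative values. We have $g(0) = P(f,0) = h_{\operatorname{top}}(f) = \log(\deg f) > 0$ since $\deg f \geq 2$. To see that $g(t) < 0$ for $t$ large, note that by the Variational Principle, for any $t > 0$,
\begin{equation*}
g(t) = P(f,-t\phi) = \sup_{\mu \in \MMM(S^2,f)} \Bigl( h_\mu(f) - t\!\int\!\phi\,\mathrm{d}\mu \Bigr) \leq h_{\operatorname{top}}(f) - t \cdot \frac{c}{N},
\end{equation*}
using $\int\!\phi\,\mathrm{d}\mu \geq c/N$ uniformly in $\mu$ from the previous paragraph and $h_\mu(f) \leq h_{\operatorname{top}}(f)$. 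Since the right-hand side tends to $-\infty$ as $t \to +\infty$, there is some $t_1 > 0$ with $g(t_1) < 0$. Combined with $g(0) > 0$ and continuity, this produces $s_0 \in (0, t_1)$ with $g(s_0) = 0$; strict monotonicity forces $s_0 > 0$ and makes it unique.

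I do not anticipate a serious obstacle here: the argument is a standard pressure-monotonicity computation, and all the machinery (the derivative formula, the Variational Principle, continuity of pressure, $h_{\operatorname{top}}(f) = \log\deg f$) is already available in the excerpt or is classical. The only mildly delicate point is making sure the lower bound $\int\!\phi\,\mathrm{d}\mu \geq c/N$ is genuinely uniform over all invariant measures — but this follows purely from the uniform positivity of the continuous function $S_N\phi$ on the compact space $S^2$, so there is nothing subtle. One could alternatively invoke \cite[Corollary~3.29]{LZhe23a} directly as cited, but writing out the short argument as above keeps the paper self-contained.
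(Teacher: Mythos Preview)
Your proof is correct. The paper does not actually prove this corollary; it simply cites \cite[Corollary~3.29]{LZhe23a} for the proof, while noting in the sentence preceding the statement that Theorem~\ref{thmEquilibriumState}~(ii) is the key input---which is precisely the tool you use, so your argument is almost certainly the intended one.
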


\subsection{Subshifts of finite type}   \label{subsctSFT}

We give a brief review of the dynamics of one-sided subshifts of finite type in this subsection. We refer the reader to \cite{Ki98} for a beautiful introduction to symbolic dynamics. For a discussion on results on subshifts of finite type in our context, see \cite{PP90, Ba00}.

Let $S$ be a finite nonempty set, and $A \: S\times S \rightarrow \{0, \, 1\}$ be a matrix whose entries are either $0$ or $1$. We denote the \defn{set of admissible sequences defined by $A$} by
\begin{equation*}
\Sigma_A^+ \coloneqq \{ \{x_i\}_{i\in\N_0}  :  x_i \in S, \, A(x_i,x_{i+1})=1  \text{ for each } i\in\N_0\}.
\end{equation*}
Given $\theta\in(0,1)$, we equip the set $\Sigma_A^+$ with a metric $d_\theta$ given by $d_\theta(\{x_i\}_{i\in\N_0},\{y_i\}_{i\in\N_0})=\theta^N$ for $\{x_i\}_{i\in\N_0} \neq \{y_i\}_{i\in\N_0}$, where $N$ is the smallest integer with $x_N \neq y_N$. The topology on the metric space $\left(\Sigma_A^+,d_\theta \right)$ coincides with that induced from the product topology, and is therefore compact.

The \defn{left-shift operator} $\sigma_A \: \Sigma_A^+ \rightarrow \Sigma_A^+$ (defined by $A$) is given by
\begin{equation*}
\sigma_A ( \{x_i\}_{i\in\N_0} ) \coloneqq \{x_{i+1}\}_{i\in\N_0}  \qquad \text{for } \{x_i\}_{i\in\N_0} \in \Sigma_A^+.
\end{equation*}

The pair $\left(\Sigma_A^+, \sigma_A\right)$ is called the \defn{one-sided subshift of finite type} defined by $A$. The set $S$ is called the \defn{set of states} and the matrix $A\: S\times S \rightarrow \{0, \, 1\}$ is called the \defn{transition matrix}.

We say that a one-sided subshift of finite type $\bigl(\Sigma_A^+, \sigma_A \bigr)$ is \defn{topologically mixing} if there exists $N\in\N$ such that $A^n(x,y)>0$ for each $n\geq N$ and each pair of $x, \, y\in S$.

Let $X$ and $Y$ be topological spaces, and $f\:X\rightarrow X$ and $g\:Y\rightarrow Y$ be continuous maps. We say that the topological dynamical system $(X,f)$ is a \defn{factor} of the topological dynamical system $(Y,g)$ if there is a surjective continuous map $\pi\:Y\rightarrow X$ such that $\pi\circ g=f\circ\pi$. We call the map $\pi\: Y\rightarrow X$ a \emph{factor map}.

The following proposition is established in \cite[Proposition~3.31]{LZ24a}.

\begin{prop}   \label{propTileSFT}
Let $f\: S^2 \rightarrow S^2$ be an expanding Thurston map with a Jordan curve $\CC\subseteq S^2$ satisfying $f(\CC)\subseteq \CC$ and $\post f\subseteq \CC$. Let $d$ be a visual metric on $S^2$ for $f$ with expansion factor $\Lambda>1$. Fix $\theta\in(0,1)$. We set $S_{\ti} \coloneqq \X^1(f,\CC)$, and define a transition matrix $A_{\ti}\: S_{\ti}\times S_{\ti} \rightarrow \{0, \, 1\}$ by
\begin{equation*}
A_{\ti}(X,X') \coloneqq \begin{cases} 1 & \text{if } f(X)\supseteq X', \\ 0  & \text{otherwise}  \end{cases}
\end{equation*}
for $X, \, X'\in \X^1(f,\CC)$. Then $f$ is a factor of the one-sided subshift of finite type $\bigl(\Sigma_{A_{\ti}}^+, \sigma_{A_{\ti}}\bigr)$ defined by the transition matrix $A_{\ti}$ with a surjective and H\"{o}lder continuous factor map $\pi_{\ti}\: \Sigma_{A_{\ti}}^+ \rightarrow S^2$ given by 
\begin{equation}   \label{eqDefTileSFTFactorMap}
\pi_{\ti} \left( \{X_i\}_{i\in\N_0} \right) \coloneqq x,  \text{ where } \{x\} = \bigcap_{i \in \N_0} f^{-i} (X_i).
\end{equation}
Here $\Sigma_{A_{\ti}}^+$ is equipped with the metric $d_\theta$ defined in Subsection~\ref{subsctSFT}, and $S^2$ is equipped with the visual metric $d$.

Moreover, $\bigl(\Sigma_{A_{\ti}}^+, \sigma_{A_{\ti}} \bigr)$ is topologically mixing and $\pi_{\ti}$ is injective on $\pi_{\ti}^{-1} \bigl( S^2 \setminus \bigcup_{i\in\N_0} f^{-i}(\CC) \bigr)$.
\end{prop}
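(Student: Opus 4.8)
\emph{Overview and Step 1 (well-definedness of $\pi_{\ti}$).} The plan is to show first that each admissible sequence determines a nested sequence of tiles shrinking to a point, and then to read off the factor identity, surjectivity, topological mixing, and the injectivity statement. For $\omega=\{X_i\}_{i\in\N_0}\in\Sigma_{A_{\ti}}^+$ and $n\in\N_0$ set $Z_n(\omega)\coloneqq\bigcap_{i=0}^n f^{-i}(X_i)$. I would prove by induction on $n$ that $Z_n(\omega)$ is exactly one $(n+1)$-tile in $\X^{n+1}(f,\CC)$ and that, for $n\ge 1$, $f\bigl(Z_n(\omega)\bigr)=Z_{n-1}\bigl(\sigma_{A_{\ti}}(\omega)\bigr)$. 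The case $n=0$ is clear since $Z_0(\omega)=X_0\in\X^1(f,\CC)$. For the inductive step, observe that $Z_n(\omega)\subseteq X_0$ and that $f|_{X_0}$ is a homeomorphism onto the $0$-tile $f(X_0)$ by Proposition~\ref{propCellDecomp}~(i); using the injectivity of $f|_{X_0}$, the surjectivity of $f$, and the equivalence $A_{\ti}(X_0,X_1)=1\iff X_1\subseteq f(X_0)$, one computes $f(Z_n(\omega))=\bigcap_{j=0}^{n-1}f^{-j}(X_{j+1})=Z_{n-1}(\sigma_{A_{\ti}}(\omega))$, which by the inductive hypothesis is a single $n$-tile $W\subseteq X_1\subseteq f(X_0)$. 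Since $\DD^{n+1}(f,\CC)$ refines $\DD^1(f,\CC)$ and $f$ is cellular for $\bigl(\DD^{n+1}(f,\CC),\DD^n(f,\CC)\bigr)$ (Proposition~\ref{propCellDecomp}~(i)), the homeomorphism $f|_{X_0}$ restricts to a bijection from the $(n+1)$-tiles contained in $X_0$ onto the $n$-tiles contained in $f(X_0)$; hence $Z_n(\omega)=(f|_{X_0})^{-1}(W)$ is one $(n+1)$-tile. The sets $Z_n(\omega)$ are then nonempty, closed, and nested, with $\diam_d(Z_n(\omega))\le C\Lambda^{-(n+1)}\to 0$ by Lemma~\ref{lmCellBoundsBM}~(ii); by compactness of $(S^2,d)$, $\bigcap_{i\in\N_0}f^{-i}(X_i)=\bigcap_{n}Z_n(\omega)$ is a single point, so $\pi_{\ti}$ is well defined by (\ref{eqDefTileSFTFactorMap}).

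\emph{Step 2 (H\"older continuity, factor identity, surjectivity).} If $\omega\ne\omega'$ in $\Sigma_{A_{\ti}}^+$ first disagree at index $N$, then $\pi_{\ti}(\omega),\pi_{\ti}(\omega')\in Z_{N-1}(\omega)$, so $d(\pi_{\ti}(\omega),\pi_{\ti}(\omega'))\le C\Lambda^{-N}=C\,d_\theta(\omega,\omega')^{\log_{1/\theta}\Lambda}$; thus $\pi_{\ti}$ is H\"older continuous. The identity $f\circ\pi_{\ti}=\pi_{\ti}\circ\sigma_{A_{\ti}}$ follows from the relation of Step 1: $f(\pi_{\ti}(\omega))\in f(Z_n(\omega))=Z_{n-1}(\sigma_{A_{\ti}}(\omega))$ for all $n$, so $f(\pi_{\ti}(\omega))$ is the unique point of $\bigcap_m Z_m(\sigma_{A_{\ti}}(\omega))$, namely $\pi_{\ti}(\sigma_{A_{\ti}}(\omega))$. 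For surjectivity, given $x\in S^2$ choose a $1$-tile $X_0\ni x$ and recursively a $1$-tile $X_{i+1}\subseteq f(X_i)$ with $f^{i+1}(x)\in X_{i+1}$ — this is possible because $f^{i+1}(x)=f(f^i(x))\in f(X_i)$ and the $0$-tile $f(X_i)$ is the union of the $1$-tiles it contains; then $\omega=\{X_i\}\in\Sigma_{A_{\ti}}^+$ and $f^i(x)\in X_i$ for every $i$, so $x\in\bigcap_i f^{-i}(X_i)=\{\pi_{\ti}(\omega)\}$. Hence $(S^2,f)$ is a factor of $\bigl(\Sigma_{A_{\ti}}^+,\sigma_{A_{\ti}}\bigr)$ via $\pi_{\ti}$.

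\emph{Step 3 (topological mixing and injectivity off the backward orbit of $\CC$).} I would first verify that $A_{\ti}^m(X,X')\ge 1\iff X'\subseteq f^m(X)$ for $X,X'\in\X^1(f,\CC)$: a length-$m$ path from $X$ to $X'$ corresponds, as in Step 1, to an $(m+1)$-tile $Y\subseteq X$ with $f^m(Y)=X'$, and conversely $f^m$ maps the $(m+1)$-tiles contained in $X$ onto precisely the $1$-tiles contained in $f^m(X)$. Since $f(X)$ is a $0$-tile, $f^m(X)=f^{m-1}(f(X))$; as the image of a $0$-tile under $f$ is again a $0$-tile or all of $S^2$, and since expanding Thurston maps are locally eventually onto (so there is $N\in\N$ with $f^N(X^0)=S^2$ for each of the two $0$-tiles $X^0$; see \cite{BM17}), we get $f^m(X)=S^2$ for all $m\ge N+1$ and all $X\in\X^1(f,\CC)$, whence $A_{\ti}^m(X,X')\ge 1$ for all such $X,X'$ and $m\ge N+1$; that is, $\bigl(\Sigma_{A_{\ti}}^+,\sigma_{A_{\ti}}\bigr)$ is topologically mixing. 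Finally, if $\pi_{\ti}(\{X_i\})=\pi_{\ti}(\{X_i'\})=x$ with $f^j(x)\notin\CC$ for all $j\in\N_0$, then for each $i$ the point $f^i(x)$ lies in $X_i\cap X_i'$ but not in $f^{-1}(\CC)$, the $1$-skeleton of $\DD^1(f,\CC)$ (Proposition~\ref{propCellDecomp}~(iii)); so $f^i(x)$ is interior to a unique $1$-tile, forcing $X_i=X_i'$, and $\pi_{\ti}$ is injective on $\pi_{\ti}^{-1}\bigl(S^2\setminus\bigcup_{i\in\N_0}f^{-i}(\CC)\bigr)$.

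\emph{Main obstacle.} The technical core is the induction in Step 1 — simultaneously showing $Z_n(\omega)$ is a single $(n+1)$-tile and establishing the Markov relation $f(Z_n(\omega))=Z_{n-1}(\sigma_{A_{\ti}}(\omega))$ — which is elementary but requires careful tracking of which cells contain, or are mapped into, which. The only substantial external ingredient is the topological exactness of expanding Thurston maps, invoked for topological mixing; there I would be careful to cite the precise statement from \cite{BM17}. All other parts are soft consequences of the tile geometry recorded in Lemma~\ref{lmCellBoundsBM}.
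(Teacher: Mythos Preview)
Your argument is correct. The paper itself does not prove this proposition; it simply records it and cites \cite[Proposition~3.31]{LZhe23a} for the proof, so there is nothing in the present paper to compare against beyond noting that your direct verification succeeds.

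A couple of minor points worth tightening. In Step~1 you use that $\DD^{n+1}(f,\CC)$ refines $\DD^1(f,\CC)$; this is true precisely because of the hypothesis $f(\CC)\subseteq\CC$ (see \cite[Proposition~12.5]{BM17}), so it is worth making that dependence explicit since Proposition~\ref{propCellDecomp} alone does not assert refinement. In Step~3, your claim that the image under $f$ of a $0$-tile is a union of $0$-tiles is correct (each $0$-tile is a union of $1$-tiles, each of which maps onto a $0$-tile), and the eventually-onto property you invoke is \cite[Lemma~6.5]{BM17}; once $f^N(X^0)=S^2$ for both $0$-tiles, the inclusion $f^m(X)\supseteq S^2$ for $m\ge N+1$ follows. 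Finally, in the injectivity argument you need $f^i(x)\notin f^{-1}(\CC)$ rather than $f^i(x)\notin\CC$, but this is immediate from the hypothesis $f^{i+1}(x)\notin\CC$, so the conclusion stands.
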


\begin{rem}
We can show that if $f$ has no periodic critical points, then $\pi$ is uniformly bounded-to-one (i.e., there exists $N\in\N_0$ depending only on $f$ such that $\card \left(\pi_{\ti}^{-1}(x)\right) \leq N$ for each $x\in S^2$); if $f$ has at least one periodic critical point, then $\pi_{\ti}$ is uncountable-to-one on a dense set. We will not use this fact in this paper.
\end{rem}

\subsection{Dynamical zeta functions and Dirichlet series}   \label{subsctDynZetaFn}

Let $g\: X\rightarrow X$ be a map on a topological space $X$. Let $\psi \: X \rightarrow \C$ be a complex-valued function on $X$. We write
\begin{equation}  \label{eqDefZn}
Z_{g,\,\minus\psi}^{(n)} (s) \coloneqq  \sum_{x\in P_{1,g^n}} e^{-s S_n \psi(x)}  , \qquad n\in\N \text{ and }  s\in\C.
\end{equation}
Recall that $P_{1,g^n}$ defined in (\ref{eqDefSetPeriodicPts}) is the set of fixed points of $g^n$, and $S_n\psi$ is defined in (\ref{eqDefSnPt}). We denote by the formal infinite product
\begin{equation}  \label{eqDefZetaFn}
\zeta_{g,\,\minus\psi} (s) \coloneqq \exp \Biggl( \sum_{n=1}^{+\infty} \frac{Z_{g,\,\minus\psi}^{(n)} (s)}{n}  \Biggr) 
 = \exp \biggl( \sum_{n=1}^{+\infty} \frac{1}{n} \sum_{x\in P_{1,g^n}} e^{-s S_n \psi(x)} \biggr), \qquad s\in\C,
\end{equation}
the \defn{dynamical zeta function} for the map $g$ and the potential $\psi$. More generally, we can define dynamical Dirichlet series as analogs of Dirichlet series in analytic number theory.

\begin{definition}   \label{defDynDirichletSeries}
Let $g\: X\rightarrow X$ be a map on a topological space $X$. Let $\psi \: X \rightarrow \C$ and $w\: X\rightarrow \C$ be complex-valued functions on $X$. We denote by the formal infinite product
\begin{equation}  \label{eqDefDynDirichletSeries}
\DS_{g,\,\minus\psi,\,w} (s) \coloneqq \exp \biggl( \sum_{n=1}^{+\infty} \frac{1}{n} \sum_{x\in P_{1,g^n}} e^{-s S_n \psi(x)} \prod_{i=0}^{n-1} w \bigl( g^i(x) \bigr) \biggr), \qquad s\in\C,
\end{equation}
the \defn{dynamical Dirichlet series} with coefficient $w$ for the map $g$ and the potential $\psi$.

\end{definition}

The following result is obtained in \cite[Proposition~5.5]{LZ24a}.

\begin{prop}   \label{propZetaFnConv_s0}
Let $f\: S^2 \rightarrow S^2$ be an expanding Thurston map with a Jordan curve $\CC\subseteq S^2$ satisfying $f(\CC) \subseteq \CC$ and $\post f \subseteq \CC$. Let $d$ be a visual metric on $S^2$ for $f$ with expansion factor $\Lambda>1$. Let $\phi \in \Holder{\alpha}(S^2,d)$ be an eventually positive real-valued H\"{o}lder continuous function with an exponent $\alpha\in (0,1]$. Denote by $s_0$ the unique positive number with $P(f,-s_0 \phi) = 0$. Let $\bigl(\Sigma_{A_{\ti}}^+,\sigma_{A_{\ti}}\bigr)$ be the one-sided subshift of finite type associated to $f$ and $\CC$ defined in Proposition~\ref{propTileSFT}, and let $\pi_{\ti}\: \Sigma_{A_{\ti}}^+\rightarrow S^2$ be the factor map defined in (\ref{eqDefTileSFTFactorMap}). Denote by $\deg_f(\cdot)$ the local degree of $f$. Then the following statements hold:
\begin{enumerate}
\smallskip
\item[(i)] $P(\sigma_{A_{\ti}}, \varphi\circ\pi_{\ti}) = P(f,\varphi)$ for each $\varphi \in \Holder{\alpha}(S^2,d)$ . In particular, for an arbitrary number $t\in\R$, we have $P(\sigma_{A_{\ti}}, - t \phi \circ \pi_{\ti}) = 0$ if and only if $t=s_0$.

\smallskip
\item[(ii)] All three infinite products $\zeta_{f,\,\minus \phi}$, $\zeta_{\sigma_{A_{\ti}},\,\minus\phi\circsmall\pi_{\ti}}$, and $\DS_{f,\,\minus\phi,\,\deg_f}$ converge uniformly and absolutely to respective non-vanishing continuous functions on $\overline{\H}_a=\{ s\in\C  :  \Re(s) \geq a \}$ that are holomorphic on $\H_a=\{ s\in\C  :  \Re(s) > a \}$, for each $a\in\R$ satisfies $a>s_0$.

\smallskip
\item[(iii)] For all $s\in \C$ with $\Re(s)>s_0$, we have
\begin{align}
\zeta_{f,\,\minus\phi} (s)           & = \prod_{\tau\in \Orb(f)} \biggl( 1- \exp\biggl( - s \sum_{y\in\tau} \phi(y) \biggr) \biggr)^{-1},  \label{eqZetaFnOrbitForm_ThurstonMap}\\
\DS_{f,\,\minus\phi,\,\deg_f}(s)  & = \prod_{\tau\in \Orb(f)} \biggl( 1- \exp\biggl( - s \sum_{y\in\tau} \phi(y) \biggr) \prod_{z\in\tau} \deg_f(z) \biggr)^{-1}, \label{eqZetaFnOrbitForm_ThurstonMapDegree}\\
\zeta_{\sigma_{A_{\ti}},\,\minus\phi\circsmall\pi_{\ti}} (s)    & = \prod_{\tau\in \Orb(\sigma_{A_{\ti}}) } \biggl( 1- \exp\biggl( - s \sum_{y\in\tau} \phi\circ\pi_{\ti}(y) \biggr) \biggr)^{-1}  \label{eqZetaFnOrbitForm_Subshift}.
\end{align}
\end{enumerate}
\end{prop}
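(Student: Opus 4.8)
We prove (i) first, then deduce (ii), and finally (iii). Part (iii) is the standard Euler-product identity: once the three infinite products are known to converge absolutely on a right half-plane, one collects the elements of $P_{1,g^n}$ (for $g\in\{f,\sigma_{A_{\ti}}\}$) according to the primitive orbit on which they lie — an $x\in P_{1,g^n}$ of minimal period $m\mid n$ lies on some $\tau\in\Orb(m,g)$, with $S_n\psi(x)=\frac nm\sum_{z\in\tau}\psi(z)$ and $\prod_{i=0}^{n-1}w(g^i(x))=\bigl(\prod_{z\in\tau}w(z)\bigr)^{n/m}$, each such $\tau$ accounting for exactly $m$ points of $P_{1,g^n}$ — and then sums the resulting series via $\sum_{k\ge1}\frac{t^k}{k}=-\log(1-t)$ and $\exp(-\log(1-t))=(1-t)^{-1}$. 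Part (ii), in turn, reduces to one quantitative input: for every $a>s_0$ the periodic-point sums $\sum_{x\in P_{1,g^n}}w_n(x)\exp(-a\,S_n\psi(x))$ decay exponentially, i.e.\ their $\frac1n\log$ tends to a negative limit, for the three data $(g,\psi,w_n)=(\sigma_{A_{\ti}},\phi\circ\pi_{\ti},1)$, $(f,\phi,1)$, and $(f,\phi,\deg_{f^n})$; this is where (i) and Corollary~\ref{corS0unique} enter.

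\textbf{Proof of (i).}
Since $\pi_{\ti}$ is a factor map (Proposition~\ref{propTileSFT}), a standard lifting argument — lift an $f$-invariant Borel probability measure $\mu$ to a $\sigma_{A_{\ti}}$-invariant $\nu$ with $(\pi_{\ti})_*\nu=\mu$, so that $h_\nu(\sigma_{A_{\ti}})\ge h_\mu(f)$ and $\int\varphi\circ\pi_{\ti}\,\mathrm{d}\nu=\int\varphi\,\mathrm{d}\mu$ — together with the Variational Principle gives $P(\sigma_{A_{\ti}},\varphi\circ\pi_{\ti})\ge P(f,\varphi)$. For the reverse inequality I would use the characterization of the pressure of a topologically mixing subshift of finite type by iterated preimages (cf.\ \cite{PP90}): for any $\xi\in\Sigma_{A_{\ti}}^+$,
\begin{equation*}
P(\sigma_{A_{\ti}},\varphi\circ\pi_{\ti})=\lim_{n\to+\infty}\frac1n\log\sum_{x\in\sigma_{A_{\ti}}^{-n}(\xi)}\exp\bigl(S_n(\varphi\circ\pi_{\ti})(x)\bigr).
\end{equation*}
Choose $\xi$ with $\pi_{\ti}(\xi)\notin\bigcup_{k\in\N_0}f^{-k}(\CC)$; such $\xi$ exists because that ``grid'' is a countable union of Jordan curves, hence has dense complement in $S^2$, and $\pi_{\ti}$ is onto. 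Then no forward iterate of $\pi_{\ti}(\xi)$ meets the $1$-skeleton $f^{-1}(\CC)$, so by the cellular structure of the decompositions $\DD^n(f,\CC)$ (Proposition~\ref{propCellDecomp}, and $\CC\subseteq f^{-1}(\CC)$) every $y\in f^{-n}(\pi_{\ti}(\xi))$ lies in the interior of a unique $1$-tile at each level $0,\dots,n-1$, so has a \emph{unique} length-$n$ tile-itinerary, that itinerary is automatically $A_{\ti}$-admissible and compatible with $\xi$, and distinct $y$ give distinct itineraries. Hence $\pi_{\ti}$ restricts to a bijection $\sigma_{A_{\ti}}^{-n}(\xi)\to f^{-n}(\pi_{\ti}(\xi))$ along which $S_n(\varphi\circ\pi_{\ti})=S_n\varphi\circ\pi_{\ti}$, so the sum above equals $\sum_{y\in f^{-n}(\pi_{\ti}(\xi))}\exp(S_n\varphi(y))$. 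Moreover, since $\pi_{\ti}(\xi)\notin\CC\supseteq\post f$ and $\post f$ is forward invariant, no $y\in f^{-n}(\pi_{\ti}(\xi))$ has a critical point in its orbit $y,\dots,f^{n-1}(y)$, so $\deg_{f^n}(y)=1$ for all such $y$; therefore by (\ref{eqDefRuelleOp}) this sum is exactly $\RR_\varphi^n(\mathbbm1)(\pi_{\ti}(\xi))$, whose $\frac1n\log$ tends to $P(f,\varphi)$ by the transfer-operator characterization of the pressure of an expanding Thurston map (\cite{Li18}). This yields $P(\sigma_{A_{\ti}},\varphi\circ\pi_{\ti})=P(f,\varphi)$, and the ``in particular'' clause follows at once: $t\mapsto P(\sigma_{A_{\ti}},-t\phi\circ\pi_{\ti})=P(f,-t\phi)$ is strictly decreasing with unique zero $s_0>0$ by Corollary~\ref{corS0unique}.

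\textbf{Proof of (ii) and (iii).}
Fix $a>s_0$ and choose $N\in\N$ with $S_n\phi>0$ on $S^2$ for all $n\ge N$; note $S_n^{\sigma_{A_{\ti}}}(\phi\circ\pi_{\ti})=(S_n^f\phi)\circ\pi_{\ti}$, so $\phi\circ\pi_{\ti}$ is eventually positive as well. For $\Re(s)\ge a$ and $n\ge N$ one has $\bigl|e^{-sS_n\psi(x)}\bigr|\le e^{-aS_n\psi(x)}$ pointwise, whence $\bigl|Z^{(n)}_{\sigma_{A_{\ti}},\,\minus\phi\circsmall\pi_{\ti}}(s)\bigr|\le\sum_{x\in P_{1,\sigma_{A_{\ti}}^n}}e^{-aS_n(\phi\circ\pi_{\ti})(x)}$, and similarly for the two $f$-series, with and without the weight $\prod_{i=0}^{n-1}\deg_f(f^i(\cdot))=\deg_{f^n}$. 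By the periodic-point characterization of the pressure of an expanding Thurston map, valid for any weights between $1$ and $\deg_{f^n}$ (cf.\ \cite{Li15}), and by (i), each of these three sums has $\frac1n\log$ converging to $P(f,-a\phi)$, which is $<P(f,-s_0\phi)=0$ by the strict monotonicity in Corollary~\ref{corS0unique}; here the eventual positivity of $\phi$ is exactly what makes Corollary~\ref{corS0unique} applicable and also what makes the bound by the real part $a$ valid on the \emph{closed} half-plane $\overline{\H}_a$. Consequently $\sum_{n\ge1}\frac1n\bigl|Z^{(n)}(s)\bigr|$ and its weighted analogue converge uniformly on $\overline{\H}_a$, so each of $\zeta_{f,\,\minus\phi}$, $\zeta_{\sigma_{A_{\ti}},\,\minus\phi\circsmall\pi_{\ti}}$, and $\DS_{f,\,\minus\phi,\,\deg_f}$ — the exponential of a uniformly, absolutely convergent series of holomorphic functions — is a non-vanishing continuous function on $\overline{\H}_a$ that is holomorphic on $\H_a$; letting $a\downarrow s_0$ gives (ii). Finally, on each $\overline{\H}_a$ the absolute convergence legitimizes the rearrangement described in the Strategy paragraph, which produces the Euler products (\ref{eqZetaFnOrbitForm_ThurstonMap}), (\ref{eqZetaFnOrbitForm_ThurstonMapDegree}) and (\ref{eqZetaFnOrbitForm_Subshift}) on $\bigcup_{a>s_0}\overline{\H}_a=\{s\in\C:\Re(s)>s_0\}$, which is (iii).

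\textbf{The main obstacle.}
Everything after (i) is routine Pollicott--Sharp-type bookkeeping; the heart of the matter is the reverse inequality in (i), and specifically the verification that, over a base point whose $f$-orbit avoids the grand orbit $\bigcup_{k\in\N_0}f^{-k}(\CC)$, the tile-itineraries of the $f^n$-preimages are unique, $A_{\ti}$-admissible, and in bijection with the $\sigma_{A_{\ti}}^n$-preimages (so that the symbolic and dynamical preimage sums literally coincide). This is a purely combinatorial point about the cell decompositions $\DD^n(f,\CC)$ and the injectivity property of $\pi_{\ti}$ from Proposition~\ref{propTileSFT}, but it is where the work lies.
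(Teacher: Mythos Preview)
The paper does not contain a proof of this proposition; it is quoted from the companion paper \cite[Section~5]{LZhe23a}, so there is no in-paper argument to compare against directly.

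Your proof is correct and follows a natural line. For (i), the factor-map inequality $P(\sigma_{A_\ti},\varphi\circ\pi_\ti)\ge P(f,\varphi)$ via lifted measures is standard, and your reverse inequality is sound: the combinatorial point you flag as the main obstacle does go through, because $f(\CC)\subseteq\CC$ forces every $f$-preimage of a grid-avoiding point to itself have forward orbit disjoint from $\CC$ (if $f^j(z)\in\CC$ with $j<n$ then $y=f^{n-j}(f^j(z))\in f^{n-j}(\CC)\subseteq\CC$), so each $f^i(z)$ lies in the interior of a unique $1$-tile, the resulting itinerary is admissible and concatenates with $\xi$, and $\deg_{f^n}(z)=1$ since no $f^i(z)$ can be critical (else $f^{i+1}(z)\in\post f\subseteq\CC$). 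This makes the symbolic preimage sum literally equal to $\RR_\varphi^n(\mathbbm 1)(\pi_\ti(\xi))$, whose $\frac1n\log$ tends to $P(f,\varphi)$. Parts (ii) and (iii) are, as you say, routine consequences via the periodic-point pressure formula of \cite{Li15}, Corollary~\ref{corS0unique}, and the Euler-product rearrangement.
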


Recall that $\Orb(g)$ denotes the set of all primitive periodic orbits of $g$ (see (\ref{eqDefSetAllPeriodicOrbits})). We recall that an infinite product of the form $\exp \sum a_i$, $a_i\in\C$, converges uniformly (resp.\ absolutely) if $\sum a_i$ converges uniformly (resp.\ absolutely).

\section{The Assumptions}      \label{sctAssumptions}
We state below the hypotheses under which we will develop our theory in most parts of this paper. We will repeatedly refer to such assumptions in the later sections. We emphasize again that not all of these assumptions are assumed in all the statements in this paper.

\begin{assumptions}
\quad

\begin{enumerate}

\smallskip

\item $f\:S^2 \rightarrow S^2$ is an expanding Thurston map.

\smallskip

\item $\CC\subseteq S^2$ is a Jordan curve containing $\post f$ with the property that there exists $n_\CC\in\N$ such that $f^{n_\CC} (\CC)\subseteq \CC$ and $f^m(\CC)\nsubseteq \CC$ for each $m\in\{1, \, 2, \, \dots, \, n_\CC-1\}$.

\smallskip

\item $d$ is a visual metric on $S^2$ for $f$ with expansion factor $\Lambda>1$ and a linear local connectivity constant $L\geq 1$.

\smallskip

\item $\alpha\in(0,1]$.

\smallskip

\item $\psi\in \Holder{\alpha}((S^2,d),\C)$ is a complex-valued H\"{o}lder continuous function with an exponent $\alpha$.

\smallskip

\item $\phi\in \Holder{\alpha}(S^2,d)$ is an eventually positive real-valued H\"{o}lder continuous function with an exponent $\alpha$, and $s_0\in\R$ is the unique positive real number satisfying $P(f, -s_0\phi)=0$.

\smallskip

\item $\mu_\phi$ is the unique equilibrium state for the  map $f$ and the potential $\phi$.

\end{enumerate}

\end{assumptions}

Note that the uniqueness of $s_0$ in~(6) is guaranteed by Corollary~\ref{corS0unique}. For a pair of $f$ in~(1) and $\phi$ in~(6), we will say that a quantity depends on $f$ and $\phi$ if it depends on $s_0$.

Observe that by Lemma~\ref{lmCexistsL}, for each $f$ in~(1), there exists at least one Jordan curve $\CC$ that satisfies~(2). Since for a fixed $f$, the number $n_\CC$ is uniquely determined by $\CC$ in~(2), in the remaining part of the paper, we will say that a quantity depends on $f$ and $\CC$ even if it also depends on $n_\CC$.

Recall that the expansion factor $\Lambda$ of a visual metric $d$ on $S^2$ for $f$ is uniquely determined by $d$ and $f$. We will say that a quantity depends on $f$ and $d$ if it depends on $\Lambda$.

Note that even though the value of $L$ is not uniquely determined by the metric $d$, in the remainder of this paper, for each visual metric $d$ on $S^2$ for $f$, we will fix a choice of linear local connectivity constant $L$. We will say that a quantity depends on the visual metric $d$ without mentioning the dependence on $L$, even though if we had not fixed a choice of $L$, it would have depended on $L$ as well.

In the discussion below, depending on the conditions we will need, we will sometimes say ``Let $f$, $\CC$, $d$, $\psi$, $\alpha$ satisfy the Assumptions.'', and sometimes say ``Let $f$ and $d$ satisfy the Assumptions.'', etc.

\section{Ruelle operators and split Ruelle operators}    \label{sctSplitRuelleOp}

In this section, we define appropriate variations of the Ruelle operator on the suitable function spaces in our context and establish some important inequalities that will be used later. More precisely, in Subsection~\ref{subsctSplitRuelleOp_Construction}, for an expanding Thurston map $f$ with some forward invariant Jordan curve $\CC\subseteq S^2$ and a complex-valued H\"{o}lder continuous function $\psi$, we ``split'' the Ruelle operator $\RR_{\psi} \: \CCC(S^2,\C) \rightarrow \CCC(S^2,\C)$ into pieces $\RR_{\psi,\c,E}^{(n)} \: \CCC(E,\C) \rightarrow C\bigl(X^0_\c,\C\bigr)$, for $\c\in\{\b, \, \w\}$, $n\in\N_0$, and a union $E\subseteq S^2$ of an arbitrary collection of $n$-tiles in the cell decomposition $\DD^n(f,\CC)$ of $S^2$ induced by $f$ and $\CC$. Such construction is crucial to the proof of Proposition~\ref{propTelescoping} where the images of characteristic functions supported on $n$-tiles under $\RR_{\psi,\c,E}^{(n)}$ are used to relate periodic points and preimage points of $f$. We then define the \emph{split Ruelle operators} $\RRR_\psi$ on the product space $\CCC\bigl(X^0_\b,\C\bigr) \times \CCC\bigl(X^0_\w, \C\bigr)$ by piecing together $\RR_{\psi,\c_1,\c_2}^{(1)} = \RR_{\psi,\c_1,X^0_{\c_2}}^{(1)}$, $\c_1, \, \c_2\in\{\b, \, \w\}$. Subsection~\ref{subsctSplitRuelleOp_BasicIneq} is devoted to establishing various inequalities, among them the \emph{basic inequalities} in Lemma~\ref{lmBasicIneq}, that are indispensable in the arguments in Section~\ref{sctDolgopyat}. In Subsection~\ref{subsctSplitRuelleOp_SpectralGap}, we verify the spectral gap for $\RRR_\psi$ that is essential in the proof of Theorem~\ref{thmOpHolderNorm}.

\subsection{Construction}   \label{subsctSplitRuelleOp_Construction}

\begin{lemma}   \label{lmExpToLiniear}
Let $f$, $\CC$, $d$, $\Lambda$, $\alpha$ satisfy the Assumptions. Fix a constant $T>0$. Then for all $n\in\N$, $X^n\in\X^n(f,\CC)$, $x, \, x'\in X^n$, and $\psi\in\Holder{\alpha}((S^2,d),\C)$ with $\Hseminorm{\alpha,\, (S^2,d)}{\Re(\psi)} \leq T$, we have
\begin{equation}  \label{eqExpToLinear}
\abs{1-\exp(S_n\psi(x)-S_n\psi(x'))} \leq C_{4} \Hseminorm{\alpha,\, (S^2,d)}{\psi} d(f^n(x),f^n(x'))^\alpha,
\end{equation}
where the constant
\begin{equation}     \label{eqDefC10}
C_{4} = C_{4}(f,\CC,d,\alpha,T) \coloneqq \frac{2 C_0 }{1-\Lambda^{-\alpha}} \exp\biggl(\frac{C_0 T }{1-\Lambda^{-\alpha}}\bigl(\diam_d(S^2)\bigr)^\alpha\biggr)  >1
\end{equation}
depends only on $f$, $\CC$, $d$, $\alpha$, and $T$. Here $C_0>1$ is the constant from Lemma~\ref{lmMetricDistortion} depending only on $f$, $\CC$, and $d$.
\end{lemma}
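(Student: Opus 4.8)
The plan is to reduce the estimate $|1-\exp(w)|$ to a bound on $|w|$, where $w = S_n\psi(x) - S_n\psi(x')$, and then control $|w|$ by combining the distortion bound of Lemma~\ref{lmSnPhiBound} (applied to $\Re(\psi)$, for which the hypothesis $\Hseminorm{\alpha,\,(S^2,d)}{\Re(\psi)}\leq T$ gives a uniform constant) with a direct Hölder estimate on $\Im(\psi)$. First I would recall the elementary complex-analytic inequality $|1-e^w|\leq |w|e^{|\Re(w)|}$, valid for all $w\in\C$ (it follows by writing $1-e^w = -\int_0^1 w e^{tw}\,\mathrm{d}t$ and estimating $|e^{tw}|=e^{t\Re(w)}\leq e^{|\Re(w)|}$). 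So it suffices to bound $|\Re(w)|$ above (to tame the exponential factor) and $|w|$ above by the desired quantity.

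Next, for $x,\,x'\in X^n$ I would apply Lemma~\ref{lmSnPhiBound}, inequality~(\ref{eqSnPhiBound}), to the real-valued Hölder function $\Re(\psi)$, taking $m=n$ and $X^m = X^n$; this gives
\begin{equation*}
\abs{\Re(S_n\psi(x)) - \Re(S_n\psi(x'))} = \abs{S_n(\Re\psi)(x) - S_n(\Re\psi)(x')} \leq C_1 d(f^n(x),f^n(x'))^\alpha,
\end{equation*}
where $C_1 = \Hseminorm{\alpha,\,(S^2,d)}{\Re(\psi)} C_0 (1-\Lambda^{-\alpha})^{-1} \leq T C_0 (1-\Lambda^{-\alpha})^{-1}$ by the hypothesis and the explicit formula~(\ref{eqC1C2}). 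Since $d(f^n(x),f^n(x'))\leq \diam_d(S^2)$, this yields $|\Re(w)| \leq \frac{C_0 T}{1-\Lambda^{-\alpha}}(\diam_d(S^2))^\alpha$, which is exactly the quantity sitting inside the exponential in the definition~(\ref{eqDefC10}) of $C_{10}$. Separately, applying~(\ref{eqSnPhiBound}) componentwise to $\Re(\psi)$ and $\Im(\psi)$ (or more directly noting that the proof of Lemma~\ref{lmSnPhiBound} only uses the Hölder seminorm, and $\Hseminorm{\alpha}{\Re\psi}, \Hseminorm{\alpha}{\Im\psi}\leq \Hseminorm{\alpha}{\psi}$) gives $|w| \leq 2 \cdot \frac{C_0}{1-\Lambda^{-\alpha}}\Hseminorm{\alpha,\,(S^2,d)}{\psi}\, d(f^n(x),f^n(x'))^\alpha$; the factor $2$ accounts for combining the two real components (using $|w|\leq |\Re w| + |\Im w|$ and $\max\{\Hseminorm{\alpha}{\Re\psi},\Hseminorm{\alpha}{\Im\psi}\}\leq\Hseminorm{\alpha}{\psi}$, or equivalently $\sqrt 2\,\Hseminorm{\alpha}{\psi}\leq 2\Hseminorm{\alpha}{\psi}$).

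Finally I would assemble: $|1-e^w| \leq |w| e^{|\Re w|} \leq \frac{2C_0}{1-\Lambda^{-\alpha}}\Hseminorm{\alpha,\,(S^2,d)}{\psi}\,d(f^n(x),f^n(x'))^\alpha \cdot \exp\!\bigl(\frac{C_0 T}{1-\Lambda^{-\alpha}}(\diam_d(S^2))^\alpha\bigr)$, which is precisely~(\ref{eqExpToLinear}) with $C_{10}$ as in~(\ref{eqDefC10}). The proof is essentially routine bookkeeping; the only mild subtlety — the ``main obstacle,'' such as it is — is making sure Lemma~\ref{lmSnPhiBound} is applied to the \emph{real and imaginary parts separately} (since that lemma is stated for real-valued potentials) and that the hypothesis is on $\Re(\psi)$ only, so the exponential constant depends on $T$ rather than on $\Hseminorm{\alpha}{\psi}$, while the linear factor carries the full $\Hseminorm{\alpha}{\psi}$. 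One should also record that $C_{10}>1$, which is immediate since $\frac{2C_0}{1-\Lambda^{-\alpha}}>2>1$ and the exponential factor is $\geq 1$.
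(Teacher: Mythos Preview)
Your proposal is correct and follows essentially the same approach as the paper: separate $\psi$ into real and imaginary parts, apply Lemma~\ref{lmSnPhiBound} to each to control $S_n\psi(x)-S_n\psi(x')$, use the hypothesis $\Hseminorm{\alpha}{\Re(\psi)}\leq T$ to bound the exponential factor, and arrive at exactly the constant $C_{10}$ in~(\ref{eqDefC10}). The only cosmetic difference is that the paper splits $|1-e^{a+\I b}|\leq |1-e^a| + e^a|1-e^{\I b}|$ and uses the two real-variable inequalities $|1-e^y|\leq|y|e^{|y|}$ and $|1-e^{\I y}|\leq|y|$, whereas you use the single inequality $|1-e^w|\leq|w|e^{|\Re(w)|}$; these yield the same bound.
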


\begin{proof}
Fix $T>0$, $n\in\N$, $X^n\in\X^n(f,\CC)$, $x, \, x'\in X^n$, and $\psi\in\Holder{\alpha}((S^2,d),\C)$ with $\Hseminorm{\alpha,\, (S^2,d)}{\Re(\psi)} \leq T$. By Lemma~\ref{lmSnPhiBound}, for each $\phi\in\Holder{\alpha}(S^2,d)$,
\begin{equation}   \label{eqPflmExpToLinear1}
\abs{S_n \phi(x)-S_n \phi(x')}  \leq \frac{C_0\Hseminorm{\alpha,\, (S^2,d)}{\phi} }{1-\Lambda^{-\alpha}}  d(f^n(x),f^n(x') )^\alpha.
\end{equation}

Then by (\ref{eqPflmExpToLinear1}) and the fact that $\abs{1-e^y} \leq  \abs{y}e^{\abs{y}}$ and $\abs{1-e^{\I y}} \leq \abs{y}$ for $y\in\R$, we get
\begin{align*}
&              \AbsBig{1-e^{S_n\psi(x)-S_n\psi(x')}} \\
&\qquad \leq   \AbsBig{1- e^{S_n \Re(\psi)(x)-S_n\Re(\psi)(x')}}  + e^{S_n \Re(\psi)(x)-S_n\Re(\psi)(x')}  \AbsBig{1- e^{\I S_n \Im(\psi)(x) - \I S_n\Im(\psi)(x')}} \\
&\qquad \leq   \frac{C_0\Hseminorm{\alpha,\, (S^2,d)}{\Re(\psi)} }{1-\Lambda^{-\alpha}}  d(f^n(x),f^n(x'))^\alpha  \exp\biggl(\frac{ C_0 T}{1-\Lambda^{-\alpha}}\bigl(\diam_d(S^2)\bigr)^\alpha\biggr)  \\
&\qquad \quad +\exp\biggl(\frac{C_0 T }{1-\Lambda^{-\alpha}}\bigl(\diam_d(S^2)\bigr)^\alpha\biggr) \frac{C_0\Hseminorm{\alpha,\, (S^2,d)}{\Im(\psi)} }{1-\Lambda^{-\alpha}}  d(f^n(x),f^n(x'))^\alpha\\
&\qquad \leq   C_{4}  \Hseminorm{\alpha,\, (S^2,d)}{\psi} d(f^n(x),f^n(x'))^\alpha.
\end{align*}
Here the constant $C_{4} =C_{4} (f, \CC, d, \alpha, T )$ is defined in (\ref{eqDefC10}).
\end{proof}

Fix an expanding Thurston map $f\: S^2 \rightarrow S^2 $ with a Jordan curve $\CC\subseteq S^2$ satisfying $\post f\subseteq \CC$. Let $d$ be a visual metric for $f$ on $S^2$, and $\psi\in\Holder{\alpha}((S^2,d),\C)$ a complex-valued H\"{o}lder continuous function.

Let $n\in\N$, $\c \in\{\b, \, \w\}$, and $x \in\inte \bigl( X^0_\c \bigr)$, where $X^0_\b$ (resp.\ $X^0_\w$) is the black (resp.\ white) $0$-tile. If $E\subseteq S^2$ is a union of $n$-tiles in $\X^n(f,\CC)$, $u\in\CCC((E,d),\C)$ a complex-valued continuous function defined on $E$, and if we define a function $v\in B(S^2,\C)$ by
\begin{equation}   \label{eqExtendToS2}
v(y) \coloneqq  \begin{cases} u(y) & \text{if } y\in E, \\ 0  & \text{otherwise}, \end{cases}
\end{equation}
then by Proposition~\ref{propCellDecomp}~(i) and (ii), the Ruelle operator associated to $f$ and $\psi$ (recalled in (\ref{eqDefRuelleOp})) acting on $B(S^2,\C)$ can be written in the following form:
\begin{equation}  \label{eqDefLcInte}
\RR_\psi^n (v) (x) = \sum\limits_{\substack{X^n\in\X^n_\c\\ X^n\subseteq E}} u\bigl((f^n|_{X^n})^{-1}(x)\bigr) \exp \bigl(S_n\psi\bigl((f^n|_{X^n})^{-1}(x)\bigr)\bigr).
\end{equation}

Note that by default, a summation over an empty set is equal to $0$. We will always use this convention in this paper. Inspired by (\ref{eqDefLcInte}), we give the following definition.

\begin{definition}   \label{defSplitRuelle}
Let $f\: S^2 \rightarrow S^2 $ be an expanding Thurston map, $\CC\subseteq S^2$ a Jordan curve containing $\post f$, and $\psi\in \CCC(S^2,\C)$ a complex-valued continuous function. Let $n\in\N_0$, and $E\subseteq S^2$ be a union of $n$-tiles in $\X^n(f,\CC)$. We define a map $\RR_{\psi,\c,E}^{(n)} \: \CCC(E,\C) \rightarrow \CCC\bigl(X^0_\c,\C\bigr)$, for each $\c\in\{\b, \, \w\}$, by
\begin{equation}  \label{eqDefLc}
\RR_{\psi,\c,E}^{(n)} (u) (y) \coloneqq \sum\limits_{\substack{X^n\in\X^n_\c\\ X^n\subseteq E}} u\bigl((f^n|_{X^n})^{-1}(y)\bigr) \exp \bigl(S_n\psi\bigl((f^n|_{X^n})^{-1}(y)\bigr)\bigr),
\end{equation}
for each complex-valued continuous function $u\in\CCC(E,\C)$ defined on $E$, and each point $y \in X^0_\c$. When $E=X^0_{\c'}$ for some $\c'\in\{\b, \, \w\}$, we often write 
\begin{equation*}
\RR_{\psi,\c,\c'}^{(n)} \coloneqq \RR_{\psi,\c,X^0_{\c'}}^{(n)}.
\end{equation*}
\end{definition}

Note that 
\begin{equation*}
	\RR_{\psi,\c,E}^{(0)} (u) = \begin{cases} u & \text{if } X^0_\c \subseteq E, \\ 0  & \text{otherwise},  \end{cases} \qquad \text{ for } \c\in\{\b, \, \w\},
\end{equation*}
whenever the expression on the left-hand side of the equation makes sense.

\begin{lemma}  \label{lmLDiscontProperties}
Let $f$, $\CC$, $d$, $\alpha$ satisfy the Assumptions.  Let $\psi \in \CCC(S^2,\C)$ be a complex-valued continuous function. Fix numbers $n, \,  m\in\N_0$ and a union $E\subseteq S^2$ of an arbitrary collection of $n$-tiles in $\X^n(f,\CC)$ (i.e., $E= \bigcup \{  X^n  \in \X^n(f,\CC)   :    X^n\subseteq E\}$). Then for each $\c\in\{\b, \, \w\}$ and each $u\in \CCC(E,\C)$, we have
$\RR_{\psi, \c, E}^{(n)} (u) \in \CCC\bigl( X^0_\c,\C \bigr)$, and
\begin{equation}  \label{eqLDiscontProperties_Split}
\RR_{\psi,\c,E}^{(n+m)} (u)  = \sum_{\c'\in\{\b, \, \w\}}   \RR_{\psi,\c,\c'}^{(m)} \Bigl( \RR_{\psi,\c',E}^{(n)} (u)  \Bigr). 
\end{equation}
If, in addition, $\psi \in \Holder{\alpha}((S^2,d),\C)$ and $u\in\Holder{\alpha}((E,d),\C)$ are H\"{o}lder continuous, then
\begin{equation}    \label{eqLDiscontProperties_Holder}
\RR_{\psi, \c, E}^{(n)} (u) \in \Holder{\alpha} \bigl( \bigl( X^0_\c,d \bigr),\C \bigr).
\end{equation}
\end{lemma}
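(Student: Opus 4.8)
The plan is to verify the three assertions of Lemma~\ref{lmLDiscontProperties} in order, all by direct computation from Definition~\ref{defSplitRuelle}, using the structure of the cell decompositions (Proposition~\ref{propCellDecomp}) and the distortion estimates (Lemmas~\ref{lmSnPhiBound} and~\ref{lmExpToLiniear}).

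\textbf{Continuity of $\RR_{\psi,\c,E}^{(n)}(u)$.} First I would fix $\c\in\{\b,\,\w\}$ and $u\in\CCC(E,\C)$ and show $\RR_{\psi,\c,E}^{(n)}(u)\in\CCC(X^0_\c,\C)$. The point is that for each $n$-tile $X^n\in\X^n_\c$ with $X^n\subseteq E$, Proposition~\ref{propCellDecomp}~(i) says $f^n|_{X^n}$ is a homeomorphism of $X^n$ onto $X^0_\c$, so $(f^n|_{X^n})^{-1}\: X^0_\c\to X^n$ is continuous; composing with the continuous functions $u$ and $\exp(S_n\psi(\cdot))$ and summing over the finite set $\{X^n\in\X^n_\c:X^n\subseteq E\}$ (finiteness by Proposition~\ref{propCellDecomp}~(iv)) gives a continuous function on $X^0_\c$. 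This is routine but I would spell out that $S_n\psi$ is continuous because $\psi$ and all $f^j$ are continuous.

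\textbf{The splitting identity \eqref{eqLDiscontProperties_Split}.} Next I would prove the composition/splitting formula. Extend $u$ to $v\in B(S^2,\C)$ by zero off $E$ as in \eqref{eqExtendToS2}; then by \eqref{eqDefLcInte}, $\RR_{\psi,\c,E}^{(n+m)}(u)(y)=\RR_\psi^{n+m}(v)(y)$ for $y\in\inte(X^0_\c)$, and by continuity (just established) this extends to all of $X^0_\c$. Now use $\RR_\psi^{n+m}=\RR_\psi^m\circ\RR_\psi^n$ and the cocycle identity $S_{n+m}\psi(z)=S_n\psi(z)+S_m\psi(f^n(z))$. Concretely, for $y\in\inte(X^0_\c)$, partition the preimages $(f^{n+m}|_{X^{n+m}})^{-1}(y)$ over $(n+m)$-tiles $X^{n+m}\subseteq E$ according to which $n$-tile $X^n=f^m(X^{n+m})$ they land in and which $0$-tile $X^0_{\c'}=f^{n}(X^n)$; this re-indexing is exactly the statement that $\RR_{\psi,\c,E}^{(n+m)}(u)=\sum_{\c'}\RR_{\psi,\c,\c'}^{(m)}(\RR_{\psi,\c',E}^{(n)}(u))$ on $\inte(X^0_\c)$, and both sides are continuous on $X^0_\c$, so equality holds there. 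The bookkeeping — that each $(n+m)$-tile $X^{n+m}\subseteq E$ sits inside a unique $n$-tile $X^n$ (Proposition~\ref{propCellDecomp}~(ii),(v)) which is a white or black $n$-tile, hence maps under $f^n$ onto a unique $0$-tile — is the content here.

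\textbf{H\"older regularity \eqref{eqLDiscontProperties_Holder}.} Finally, assuming $\psi\in\Holder{\alpha}((S^2,d),\C)$ and $u\in\Holder{\alpha}((E,d),\C)$, I would estimate $\bigl|\RR_{\psi,\c,E}^{(n)}(u)(y)-\RR_{\psi,\c,E}^{(n)}(u)(y')\bigr|$ for $y,\,y'\in X^0_\c$. For each $X^n\in\X^n_\c$ with $X^n\subseteq E$, write $z=(f^n|_{X^n})^{-1}(y)$ and $z'=(f^n|_{X^n})^{-1}(y')$; these lie in the common $n$-tile $X^n$. The difference of the $X^n$-summand splits via the triangle inequality into a term controlled by $|u(z)-u(z')|\exp(S_n\Re\psi(z))$ and a term controlled by $|u(z')|\,|\exp(S_n\psi(z))-\exp(S_n\psi(z'))|$. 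For the first, Lemma~\ref{lmMetricDistortion} gives $d(z,z')\le C_0\Lambda^{-n}d(y,y')$, so $|u(z)-u(z')|\le\Hseminorm{\alpha,(S^2,d)}{u}C_0^\alpha d(y,y')^\alpha$, and $S_n\Re\psi(z)$ is bounded using Lemma~\ref{lmSnPhiBound}; for the second, Lemma~\ref{lmExpToLiniear} gives $|\exp(S_n\psi(z))-\exp(S_n\psi(z'))|\le C_{10}\Hseminorm{\alpha,(S^2,d)}{\psi}d(y,y')^\alpha$ (here $d(f^n(z),f^n(z'))=d(y,y')$). Summing over the at most $(\deg f)^n$ tiles in $\X^n_\c$ gives a finite H\"older constant, proving $\RR_{\psi,\c,E}^{(n)}(u)\in\Holder{\alpha}((X^0_\c,d),\C)$.

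\textbf{Main obstacle.} None of the steps is deep; the one requiring care is the re-indexing in \eqref{eqLDiscontProperties_Split} — making sure every $(n+m)$-tile contained in $E$ is accounted for exactly once when grouped by its image $n$-tile and that the exponential weights recombine correctly via the cocycle identity, and that passing from the dense open set $\inte(X^0_\c)$ to all of $X^0_\c$ is justified by the continuity proved in the first step. The H\"older bound is the most computational but entirely mechanical given Lemmas~\ref{lmMetricDistortion},~\ref{lmSnPhiBound}, and~\ref{lmExpToLiniear}.
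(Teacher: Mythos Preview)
Your approach matches the paper's almost exactly: continuity is immediate from Proposition~\ref{propCellDecomp}~(i), the splitting identity is a direct re-indexing checked on a dense set and extended by continuity, and the H\"older bound splits each summand into the two terms you describe, controlled respectively by Lemmas~\ref{lmMetricDistortion} and~\ref{lmExpToLiniear}.

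There is, however, a bookkeeping slip in your description of the re-indexing for \eqref{eqLDiscontProperties_Split}. Given an $(n+m)$-tile $X^{n+m}\in\X^{n+m}_\c$ with $X^{n+m}\subseteq E$, the correct associated data are the $n$-tile $X^n\supseteq X^{n+m}$ (the one containing it, not $f^m(X^{n+m})$), whose color $\c'$ is determined by $f^n(X^n)=X^0_{\c'}$, together with the $m$-tile $X^m\coloneqq f^n(X^{n+m})\in\X^m_\c$, which then satisfies $X^m\subseteq f^n(X^n)=X^0_{\c'}$. With your formula $X^n=f^m(X^{n+m})$ one would get $f^n(X^n)=f^{n+m}(X^{n+m})=X^0_\c$ regardless, so the sum over $\c'$ would collapse and the bijection would fail. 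In the H\"older estimate, note also that the exponential weight $e^{S_n\Re\psi}$ is not uniformly bounded by Lemma~\ref{lmSnPhiBound}; rather, summing it over the $n$-tiles yields $\bigl\|\RR_{\Re(\psi)}^n(\mathbbm{1}_{S^2})\bigr\|_{\CCC^0(S^2)}$ for the first term and $\bigl\|\RR_{\Re(\psi),\c,E}^{(n)}(|u|)\bigr\|_{\CCC^0(X^0_\c)}$ for the second --- this is how the paper packages the bound.
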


\begin{rem}    \label{rmSplitRuelleCoincide}
In the above context, $\RR_\psi^n(v) \in B(S^2,\C)$ may not be continuous on $S^2$ if $E\neq S^2$, where $v$ is defined in (\ref{eqExtendToS2}) extending $u$ to $S^2$. If $E=S^2$, then it follows immediately from  (\ref{eqDefLc}) that for each $\c\in\{\b, \, \w\}$,
$
  \RR_{\psi,\c,E}^{(n)} (u)   = \bigl( \RR_{\psi}^n (u) \bigr) \big|_{X^0_\c}.
$
Hence, by (\ref{eqLDiscontProperties_Holder}) and the linear local connectivity of $(S^2,d)$, it can be shown that
$
\RR_\psi^n  ( \Holder{\alpha}((S^2,d),\C)  ) \subseteq \Holder{\alpha}((S^2,d),\C).
$
We will not use this fact in this paper.
\end{rem}

\begin{proof}
Fix arbitrary $\c\in\{\b, \, \w\}$ and $u\in \CCC(E,\C)$.

The cases of Lemma~\ref{lmLDiscontProperties} when either $m=0$ or $n=0$ follow immediately from Definition~\ref{defSplitRuelle}. Thus, without loss of generality, we can assume $m, \, n\in\N$.

The continuity of $\RR_{\psi, \c, E}^{(n)} (u)$ follows trivially from (\ref{eqDefLc}) and Proposition~\ref{propCellDecomp}~(i).

By  (\ref{eqDefLc}), Proposition~\ref{propCellDecomp}~(i) and (ii), and the fact that $f^{-m}(x) \cap \CC \neq \emptyset$, we get
\begin{align*}
&     \sum_{\c'\in\{\b, \, \w\}}   \RR_{\psi,\c,\c'}^{(m)} \Bigl( \RR_{\psi,\c',E}^{(n)} (u)  \Bigr) (x)  \\
&\qquad=   \sum_{\c'\in\{\b, \, \w\}}  \sum_{ y\in f^{-m}(x) \cap X^0_{\c'} } e^{S_m\psi(y)}  \sum_{z\in f^{-n}(y) \cap E}  e^{S_n\psi(z)}   u(z)\\
&\qquad=   \sum_{  y\in f^{-m}(x)  }  \sum_{ z\in f^{-n}(y) \cap E} e^{S_m\psi(y) + S_n\psi(z)}   u(z) \\
&\qquad=   \sum_{ z\in f^{-(n+m)}(x) \cap E} e^{S_{n+m}\psi(z) }   u(z) \\
&\qquad=   \RR_{\psi,\c,E}^{(n+m)} (u)(x).
\end{align*}
Identity (\ref{eqLDiscontProperties_Split}) is now established by the continuity of two sides of the equation above.

\smallskip

Finally, to prove (\ref{eqLDiscontProperties_Holder}), we first fix two distinct points $x, \, x'\in X^0_\c$. We denote, for each $X^n\in\X^n_\c$, $y_{X^n} \coloneqq (f^n|_{X^n})^{-1}(x)$ and $y'_{X^n} \coloneqq (f^n|_{X^n})^{-1}(x')$.

By Lemmas~\ref{lmMetricDistortion},~\ref{lmSnPhiBound}, and~\ref{lmExpToLiniear}, we have
\begin{align*}
&    \Absbig{   \RR_{\psi,\c,E}^{(n)}(u) (x) -  \RR_{\psi,\c,E}^{(n)}(u) (x')  } \big/ d(x,x')^\alpha  \\
&\qquad\leq  \frac{1}{d(x,x')^\alpha} \sum\limits_{\substack{X^n\in\X^n_\c \\ X^n\subseteq E} }  \Absbig{  e^{S_n\psi(y_{X^n})} u(y_{X^n}) - e^{S_n\psi(y'_{X^n})} u(y'_{X^n}) } \\
&\qquad\leq  \frac{1}{d(x,x')^\alpha} \sum\limits_{\substack{X^n\in\X^n_\c \\ X^n\subseteq E} }     \Absbig{e^{S_n\psi(y_{X^n})}}  \abs{ u(y_{X^n}) - u(y'_{X^n})}  \\
&\qquad\qquad      + \frac{1}{d(x,x')^\alpha}   \sum\limits_{\substack{X^n\in\X^n_\c \\ X^n\subseteq E} }      \Absbig{e^{S_n\psi(y_{X^n})}  -  e^{S_n\psi(y'_{X^n})}}    \abs{  u(y'_{X^n})}  \\
&\qquad\leq  \frac{1}{d(x,x')^\alpha} \sum\limits_{\substack{X^n\in\X^n_\c \\ X^n\subseteq E} }   e^{S_n \Re(\psi)(y_{X^n})} \Hseminorm{\alpha,\, (E,d)}{u} d(y_{X^n},y'_{X^n})^\alpha \\
&\qquad\qquad      + \frac{1}{d(x,x')^\alpha} \sum\limits_{\substack{X^n\in\X^n_\c \\ X^n\subseteq E} } \Absbig{1- e^{ S_n\psi(y_{X^n}) - S_n\psi(y'_{X^n}) } }  e^{S_n \Re(\psi)(y'_{X^n})}  \abs{  u(y'_{X^n})}  \\
&\qquad\leq  \Hseminorm{\alpha,\, (E,d)}{u} C_0^\alpha  \Lambda^{ - \alpha n}   \sum_{X^n\in\X^n} e^{S_n\Re(\psi)(y_{X^n})} 
      + C_{4} \Hseminorm{\alpha,\, (S^2,d)}{\psi}   \sum\limits_{\substack{X^n\in\X^n_\c \\ X^n\subseteq E} }  e^{S_n\Re(\psi)(y'_{X^n})} \abs{u(y'_{X^n})} \\
&\qquad\leq   C_0  \Lambda^{ - \alpha n }  \Hseminorm{\alpha,\, (E,d)}{u}  \Normbig{\RR_{\Re(\psi)}^n (\mathbbm{1}_{S^2}) }_{\CCC^0(S^2)}  
      + C_{4} \Hseminorm{\alpha,\, (S^2,d)}{\psi}  \NormBig{\RR_{\Re(\psi),\c,E}^{(n)} (\abs{u}) }_{\CCC^0(S^2)}  ,      
\end{align*}
where $C_0 > 1$ is the constant depending only on $f$, $\CC$, and $d$ from Lemma~\ref{lmMetricDistortion}, and $C_{4}>1$ is the constant depending only on $f$, $\CC$, $d$, $\alpha$, and $\psi$ from Lemma~\ref{lmExpToLiniear}. Therefore, (\ref{eqLDiscontProperties_Holder}) holds.
\end{proof}

Now that we have ``split'' the Ruelle operator $\RR_{\psi} \: \CCC(S^2,\C) \rightarrow \CCC(S^2,\C)$ into pieces and studied some basic properties of the pieces, we are ready to define the \emph{split Ruelle operators} $\RRR_\psi$ on the product space $\CCC\bigl(X^0_\b,\C\bigr) \times \CCC\bigl(X^0_\w, \C\bigr)$ by piecing together the pieces.

\begin{definition}[Split Ruelle operators]   \label{defSplitRuelleOnProductSpace}
Let $f\: S^2 \rightarrow S^2$ be an expanding Thurston map with a Jordan curve $\CC\subseteq S^2$ satisfying $f(\CC)\subseteq\CC$ and $\post f\subseteq \CC$. Let $d$ be a visual metric for $f$ on $S^2$, and $\psi\in\Holder{\alpha}((S^2,d),\C)$ a complex-valued H\"{o}lder continuous function with an exponent $\alpha\in(0,1]$. Let $X^0_\b, \, X^0_\w\in\X^0(f,\CC)$ be the black $0$-tile and the while $0$-tile, respectively. The \defn{split Ruelle operator} 
$
\RRR_\psi \: \CCC\bigl(X^0_\b,\C\bigr) \times \CCC\bigl(X^0_\w, \C\bigr) \rightarrow \CCC \bigl(X^0_\b, \C\bigr) \times \CCC \bigl(X^0_\w,\C\bigr)
$
on the product space $\CCC \bigl(X^0_\b, \C\bigr) \times \CCC \bigl(X^0_\w, \C\bigr)$ is given by
\begin{equation*}    \label{eqDefSplitRuelleOnProductSpace}
\RRR_\psi(u_\b,u_\w) \coloneqq \Bigl(   \RR_{\psi, \b, \b}^{(1)} (u_\b) + \RR_{\psi, \b, \w}^{(1)} (u_\w)   ,
                              \RR_{\psi, \w, \b}^{(1)} (u_\b) + \RR_{\psi, \w, \w}^{(1)} (u_\w)           \Bigr)
\end{equation*}
for $u_\b \in \CCC\bigl(X^0_\b, \C\bigr)$ and $u_\w \in \CCC \bigl(X^0_\w, \C\bigr)$.
\end{definition}

Note that by Lemma~\ref{lmLDiscontProperties}, the operator $\RRR_\psi$ is well-defined. Moreover, by (\ref{eqLDiscontProperties_Holder}) in Lemma~\ref{lmLDiscontProperties}, we have
\begin{equation}    \label{eqSplitRuelleRestrictHolder}
 \RRR_\psi \bigl(  \Holder{\alpha}\bigl(\bigl(X^0_\b,d\bigr),\C\bigr) \times \Holder{\alpha}\bigl(\bigl(X^0_\w,d\bigr),\C\bigr) \bigr) 
  \subseteq        \Holder{\alpha}\bigl(\bigl(X^0_\b,d\bigr),\C\bigr) \times \Holder{\alpha}\bigl(\bigl(X^0_\w,d\bigr),\C\bigr).
\end{equation}

Note that it follows immediately from Definition~\ref{defSplitRuelle} that $\RRR_\psi$ is a linear operator on the Banach space $\Holder{\alpha}\bigl(\bigl(X^0_\b,d\bigr),\C\bigr) \times \Holder{\alpha}\bigl(\bigl(X^0_\w,d\bigr),\C\bigr)$ equipped with a norm given by 
\begin{equation*}\
	\norm{(u_\b,u_\w)} \coloneqq \max\bigl\{  \NHnorm{\alpha}{b}{ u_\b }{(X^0_\b,d)}, \, \NHnorm{\alpha}{b}{ u_\w }{(X^0_\w,d)} \bigr\},
\end{equation*}
for each $b\in\R \setminus \{0\}$. See (\ref{eqDefNormalizedHolderNorm}) for the definition of the normalized H\"{o}lder norm $\NHnorm{\alpha}{b}{u}{(E,d)}$.

For each $\c\in\{\b, \, \w\}$, we define the projection $\pi_\c \: \CCC(X^0_\b,\C) \times \CCC(X^0_\w,\C) \rightarrow \CCC(X^0_\c,\C)$ by
\begin{equation}   \label{eqDefProjections}
\pi_\c(u_\b,u_\w) \coloneqq u_\c, \qquad \mbox{ for } (u_\b,u_\w)\in \CCC(X^0_\b,\C) \times \CCC(X^0_\w,\C).
\end{equation}

\begin{definition}   \label{defOpHNormDiscont}
Let $f\: S^2 \rightarrow S^2$ be an expanding Thurston map with a Jordan curve $\CC\subseteq S^2$ satisfying $f(\CC)\subseteq\CC$ and $\post f\subseteq \CC$. Let $d$ be a visual metric for $f$ on $S^2$, and $\psi\in\Holder{\alpha}((S^2,d),\C)$ a complex-valued H\"{o}lder continuous function with an exponent $\alpha\in(0,1]$. For all $n\in\N_0$ and $b\in \R\setminus\{0\}$, we write the operator norm
\begin{align}  \label{eqDefNormalizedOpHNormSplitRuelle}
 \NOpHnormD{\alpha}{b}{\RRR_\psi^n}   
&\coloneqq   \sup \biggl\{     \NHnormbig{\alpha}{b}{  \pi_\c \bigl( \RRR_\psi^n (u_\b, u_\w)\bigr) }{(X^0_\c,d)}  : \notag  \\
                   & \qquad \qquad \qquad   \begin{array}{ll}  \c\in\{\b, \, \w\}, \, u_\b\in\Holder{\alpha} ((X^0_\b,d),\C), \, u_\w\in\Holder{\alpha}((X^0_\w,d),\C) \\ 
                     \text{with } \NHnorm{\alpha}{b}{ u_\b}{(X^0_\b,d)} \leq 1 \text{ and } \NHnorm{\alpha}{b}{ u_\w}{(X^0_\w,d)} \leq 1\end{array} \biggr\}.   
\end{align}
We write
$\OpHnormDbig{\alpha}{\RRR_\psi^n}  \coloneqq  \NOpHnormDbig{\alpha}{1}{\RRR_\psi^n}$.
\end{definition}

\begin{lemma}   \label{lmSplitRuelleCoordinateFormula}
Let $f$, $\CC$, $d$, $\alpha$, $\psi$ satisfy the Assumptions. We assume, in addition, that $f(\CC) \subseteq \CC$. Let $X^0_\b, \, X^0_\w\in\X^0(f,\CC)$ be the black $0$-tile and the while $0$-tile, respectively. Then for all $n\in\N_0$, $u_\b \in \CCC \bigl(X^0_\b, \C\bigr)$, and $u_\w \in \CCC \bigl(X^0_\w, \C\bigr)$, 
\begin{equation}    \label{eqSplitRuelleCoordinateFormula}
\RRR_\psi^n(u_\b,u_\w) = \Bigl(   \RR_{\psi, \b, \b}^{(n)} (u_\b) + \RR_{\psi, \b, \w}^{(n)} (u_\w)   ,
                                \RR_{\psi, \w, \b}^{(n)} (u_\b) + \RR_{\psi, \w, \w}^{(n)} (u_\w)           \Bigr).
\end{equation}
Consequently,
\begin{align}   \label{eqSplitRuelleOpNormQuot}
    \NOpHnormD{\alpha}{b}{\RRR_\psi^n}  
 =  \sup \Biggl\{ & \frac{   \NHnormbig{\alpha}{b}{   \RR_{\psi, \c, \b}^{(n)} (u_\b)  +  \RR_{\psi, \c, \w}^{(n)} (u_\w)    }{(X^0_\c,d)}      }{  \max\bigl\{     \NHnorm{\alpha}{b}{u_\b}{(X^0_\b,d)}, \, \NHnorm{\alpha}{b}{u_\w}{(X^0_\w,d)}  \bigr\}   }    : \notag\\
                &\qquad  \begin{array}{ll}  \c\in\{\b, \, \w\}, \, u_\b\in\Holder{\alpha}((X^0_\b,d),\C), \, u_\w\in\Holder{\alpha}((X^0_\w,d),\C) \\ \text{with } \norm{u_\b}_{\CCC^0(X^0_\b)} \norm{u_\w}_{\CCC^0(X^0_\w)} \neq 0 \end{array} \Biggr\}.
\end{align}
\end{lemma}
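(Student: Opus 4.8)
The plan is to prove the coordinate formula \eqref{eqSplitRuelleCoordinateFormula} by induction on $n$, and then to deduce \eqref{eqSplitRuelleOpNormQuot} as an immediate consequence of the definition of the operator norm and a standard scaling argument.

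First I would treat the base case $n = 0$: by the remark following Definition~\ref{defSplitRuelle}, we have $\RR_{\psi,\c,\c'}^{(0)}(u) = u$ if $\c = \c'$ and $0$ otherwise (since $X^0_\b$ and $X^0_\w$ meet only along $\CC$, which contains no interior $0$-tiles, each $X^0_\c$ is a union of $0$-tiles, so the formula in Definition~\ref{defSplitRuelle} applies with $E = X^0_{\c'}$). Hence the right-hand side of \eqref{eqSplitRuelleCoordinateFormula} reduces to $(u_\b, u_\w)$, which equals $\RRR_\psi^0(u_\b, u_\w) = \id(u_\b, u_\w)$. The case $n = 1$ is exactly Definition~\ref{defSplitRuelleOnProductSpace}. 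For the inductive step, assume \eqref{eqSplitRuelleCoordinateFormula} holds for some $n \in \N$. Then, writing $\RRR_\psi^{n+1} = \RRR_\psi \circ \RRR_\psi^n$ and applying Definition~\ref{defSplitRuelleOnProductSpace} to the pair $\RRR_\psi^n(u_\b, u_\w)$, the $\c$-th coordinate becomes $\sum_{\c' \in \{\b,\w\}} \RR_{\psi,\c,\c'}^{(1)}\bigl(\RR_{\psi,\c',\b}^{(n)}(u_\b) + \RR_{\psi,\c',\w}^{(n)}(u_\w)\bigr)$. Using linearity of $\RR_{\psi,\c,\c'}^{(1)}$ and regrouping the four terms according to whether they carry $u_\b$ or $u_\w$, this equals $\sum_{\c'} \RR_{\psi,\c,\c'}^{(1)}\bigl(\RR_{\psi,\c',\b}^{(n)}(u_\b)\bigr) + \sum_{\c'} \RR_{\psi,\c,\c'}^{(1)}\bigl(\RR_{\psi,\c',\w}^{(n)}(u_\w)\bigr)$. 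Now apply \eqref{eqLDiscontProperties_Split} of Lemma~\ref{lmLDiscontProperties} with $m = 1$, $E = X^0_\b$ (resp.\ $E = X^0_\w$): each inner sum collapses to $\RR_{\psi,\c,\b}^{(n+1)}(u_\b)$ (resp.\ $\RR_{\psi,\c,\w}^{(n+1)}(u_\w)$). This gives the $n+1$ case and closes the induction.

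For \eqref{eqSplitRuelleOpNormQuot}, I would start from Definition~\ref{defOpHNormDiscont}, substitute the coordinate formula \eqref{eqSplitRuelleCoordinateFormula} to rewrite $\pi_\c\bigl(\RRR_\psi^n(u_\b,u_\w)\bigr) = \RR_{\psi,\c,\b}^{(n)}(u_\b) + \RR_{\psi,\c,\w}^{(n)}(u_\w)$, so that $\NOpHnormD{\alpha}{b}{\RRR_\psi^n}$ is the supremum of $\NHnormbig{\alpha}{b}{\RR_{\psi,\c,\b}^{(n)}(u_\b) + \RR_{\psi,\c,\w}^{(n)}(u_\w)}{(X^0_\c,d)}$ over $\c$ and over pairs with $\NHnorm{\alpha}{b}{u_\b}{(X^0_\b,d)} \leq 1$ and $\NHnorm{\alpha}{b}{u_\w}{(X^0_\w,d)} \leq 1$. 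By homogeneity of the norm and linearity of $\RR_{\psi,\c,\c'}^{(n)}$, the constraint ``$\leq 1$'' can be replaced by dividing by $\max\{\NHnorm{\alpha}{b}{u_\b}{(X^0_\b,d)}, \NHnorm{\alpha}{b}{u_\w}{(X^0_\w,d)}\}$ and taking the sup over all pairs with this maximum nonzero; this yields the displayed quotient formula except that the normalization in the denominator of \eqref{eqSplitRuelleOpNormQuot} is written with $\norm{u_\b}_{\CCC^0(X^0_\b)} \norm{u_\w}_{\CCC^0(X^0_\w)} \neq 0$ rather than with the max of the normalized H\"older norms being nonzero. To reconcile these, I would argue that it suffices to take the supremum over pairs where both $u_\b$ and $u_\w$ are nonzero (hence have nonzero $\CCC^0$-norm): if one of them, say $u_\w$, is zero, one can perturb it to $\epsilon \mathbbm{1}_{X^0_\w}$ and let $\epsilon \to 0^+$, using continuity of $\RR_{\psi,\c,\w}^{(n)}$ and of the H\"older norm, to see that such degenerate pairs do not increase the supremum; similarly a single nonzero function can be approximated. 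A density/continuity remark of this kind makes the two formulations of the supremum agree.

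The main obstacle I anticipate is not in the induction — which is a routine regrouping once \eqref{eqLDiscontProperties_Split} is in hand — but in the bookkeeping needed to pass from the ``unit ball'' form of the operator norm in Definition~\ref{defOpHNormDiscont} to the precise ``quotient over pairs with $\norm{u_\b}_{\CCC^0}\norm{u_\w}_{\CCC^0} \neq 0$'' form in \eqref{eqSplitRuelleOpNormQuot}. One must be careful that restricting to pairs where \emph{both} coordinates are nonzero does not lose any of the supremum; this requires the approximation argument sketched above, invoking the boundedness of the operators $\RR_{\psi,\c,\c'}^{(n)}$ on the relevant H\"older spaces (which follows from \eqref{eqLDiscontProperties_Holder} together with Lemma~\ref{lmLDiscontProperties} and the estimates in its proof) so that small perturbations of the input produce small perturbations of the output norm. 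Everything else is formal.
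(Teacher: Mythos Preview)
Your proposal is correct and follows essentially the same approach as the paper: the induction for \eqref{eqSplitRuelleCoordinateFormula} is identical (base cases by definition, inductive step via linearity and \eqref{eqLDiscontProperties_Split} with $m=1$), and the paper likewise derives \eqref{eqSplitRuelleOpNormQuot} directly from Definition~\ref{defOpHNormDiscont} together with \eqref{eqSplitRuelleCoordinateFormula}. The only difference is that the paper dismisses the passage from the unit-ball formulation of the operator norm to the quotient form over pairs with $\norm{u_\b}_{\CCC^0}\norm{u_\w}_{\CCC^0}\neq 0$ as immediate, whereas you spell out the approximation argument to handle pairs with one vanishing coordinate; your extra care is justified and the argument you sketch is sound.
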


\begin{proof}
We prove (\ref{eqSplitRuelleCoordinateFormula}) by induction. The case where $n=0$ and the case where $n=1$ both hold by definition. Assume now (\ref{eqSplitRuelleCoordinateFormula}) holds when $n=m$ for some $m\in\N$. Then by (\ref{eqLDiscontProperties_Split}) in Lemma~\ref{lmLDiscontProperties}, for each $\c\in\{\b, \, \w\}$, we have
\begin{align*}
        \pi_\c \bigl( \RRR_\psi^{m+1}(u_\b,u_\w)  \bigr) 
&=   \pi_\c \Bigl(  \RRR_\psi \Bigl(   \RR_{\psi, \b, \b}^{(m)} (u_\b) + \RR_{\psi, \b, \w}^{(m)} (u_\w)   ,
                                                             \RR_{\psi, \w, \b}^{(m)} (u_\b) + \RR_{\psi, \w, \w}^{(m)} (u_\w)           \Bigr)    \Bigr)\\
&=   \sum_{\c'\in\{\b, \, \w\}}  \RR_{\psi, \c, \c'}^{(1)} \Bigl(   \RR_{\psi, \c', \b}^{(m)} (u_\b) + \RR_{\psi, \c', \w}^{(m)} (u_\w) \Bigr) \\
&=   \sum_{\c'\in\{\b, \, \w\}}  \RR_{\psi, \c, \c'}^{(1)}  \Bigl(\RR_{\psi, \c', \b}^{(m)} (u_\b) \Bigr)
         +  \sum_{\c'\in\{\b, \, \w\}}  \RR_{\psi, \c, \c'}^{(1)}  \Bigl(\RR_{\psi, \c', \w}^{(m)} (u_\w) \Bigr)  \\  
&=        \RR_{\psi, \c, \b}^{(m+1)} (u_\b) + \RR_{\psi, \c, \w}^{(m+1)} (u_\w) ,                               
\end{align*}
for $u_\b \in \CCC \bigl(X^0_\b, \C\bigr)$ and $u_\w \in \CCC \bigl(X^0_\w, \C\bigr)$. This completes the inductive step, establishing (\ref{eqSplitRuelleCoordinateFormula}).

Identity (\ref{eqSplitRuelleOpNormQuot}) follows immediately from Definition~\ref{defOpHNormDiscont} and (\ref{eqSplitRuelleCoordinateFormula}).
\end{proof}

\subsection{Basic inequalities}   \label{subsctSplitRuelleOp_BasicIneq}

Let $f\: S^2 \rightarrow S^2$ be an expanding Thurston map, and $d$ be a visual metric on $S^2$ for $f$ with expansion factor $\Lambda>1$. Let $\psi\in \Holder{\alpha}((S^2,d),\C)$ be a complex-valued H\"{o}lder continuous function with an exponent $\alpha\in(0,1]$. We define
\begin{equation}   \label{eqDefWideTildePsiComplex}
\wt{\psi} \coloneqq \wt{\Re(\psi)} +  \I \Im(\psi) = \psi - P(f,\Re(\psi)) + \log u_{\Re(\psi)} - \log \bigl(u_{\Re(\psi)} \circ f \bigr),
\end{equation}
where $u_{\Re(\psi)}$ is the continuous function given by Theorem~\ref{thmMuExist} with $\phi \coloneqq \Re(\psi)$. Then for each $u\in \CCC(S^2,\C)$ and each $x\in S^2$, we have
\begin{align}   \label{eqRuelleTilde_NoTilde}
     \RR_{\wt{\psi}} (u)(x)
& =  \sum_{y\in f^{-1}(x)} \deg_f(y)u(y) e^{ \psi(y) - P(f,\Re(\psi)) + \log u_{\Re(\psi)}(y) - \log  ( u_{\Re(\psi)} (f(y)) ) }   \notag \\
& =  \frac{\exp(-P(f,\Re(\psi))}{u_{\Re(\psi)} (x) }  \sum_{y\in f^{-1}(x)} \deg_f(y)u(y) u_{\Re(\psi)}(y) \exp ( \psi(y))\\
& =  \frac{\exp(-P(f,\Re(\psi))}{u_{\Re(\psi)} (x) }  \RR_{ \psi } \bigl(u_{\Re(\psi)} u \bigr)(x). \notag
\end{align}
Given a Jordan curve $\CC\subseteq S^2$ with $\post f\subseteq\CC$, then for each $n\in\N_0$, each union $E$ of $n$-tiles in $\X^n(f,\CC)$, each $v\in\CCC(E,\C)$, each $\c\in\{\b, \, \w\}$, and each $z\in X^0_\c$,
\begin{align}   \label{eqSplitRuelleTilde_NoTilde}
   \RR_{\wt{\psi},\c,E}^{(n)} (v)(z)  
&=   \sum\limits_{\substack{X^n\in\X^n_\c(f,\CC)\\ X^n\subseteq E}} \Bigl( v  e^{ S_n  ( \psi - P(f,\Re(\psi)) + \log u_{\Re(\psi)}  - \log  ( u_{\Re(\psi)}\circ f )  )} \Bigr) \bigl(  (f^n|_{X^n} )^{-1}(z) \bigr)  \notag \\
&=   \frac{\exp(-n P(f,\Re(\psi))}{u_{\Re(\psi)} (z) }  \sum\limits_{\substack{X^n\in\X^n_\c(f,\CC)\\ X^n\subseteq E}} \bigl(v  u_{\Re(\psi)}  \exp (S_n \psi )\bigr)   \bigl(  (f^n|_{X^n} )^{-1}(z) \bigr) \\
&=   \frac{\exp(-n P(f,\Re(\psi))}{u_{\Re(\psi)} (z) }  \RR_{ \psi,\c,E }^{(n)} \bigl(u_{\Re(\psi)} v \bigr)(z). \notag
\end{align}

\begin{definition}[Cones]  \label{defCone}
Let $f\: S^2 \rightarrow S^2$ be an expanding Thurston map, and $d$ be a visual metric on $S^2$ for $f$ with expansion factor $\Lambda>1$. Fix a constant $\alpha\in(0,1]$. For each subset $E\subseteq S^2$ and each constant $B\in \R$ with $B>0$, we define the \defn{$B$-cone inside $\Holder{\alpha}(E,d)$} as 
\begin{align*}  
 K_B(E,d)   \coloneqq   \bigl\{ u\in \Holder{\alpha} (E,d)    : \text{for all } &x, \, y\in E, \, u(x)> 0 \text{ and }\\
 																						&\qquad \abs{u(x)-u(y)} \leq B (u(x)+u(y)) d(x,y)^\alpha \bigr\}.  
\end{align*}
\end{definition}

It is essential to define the $B$-cones inside $\Holder{\alpha}(E,d)$ in the form above in order to establish the following lemma, which will be used in the proof of Proposition~\ref{propDolgopyatOperator}.

\begin{lemma}   \label{lmConePower}
Let $(X,d)$ be a metric space and $\alpha\in (0,1]$. Then for each $B>0$ and each $u\in K_B(X,d)$, we have $u^2 \in K_{2B}(X,d)$.
\end{lemma}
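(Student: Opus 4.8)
The plan is to verify directly the two defining conditions of $K_{2B}(X,d)$ for the function $u^2$, bootstrapping from the conditions that $u\in K_B(X,d)$ already satisfies. Positivity of $u^2$ is immediate from $u(x)>0$ for all $x\in X$. For the membership $u^2\in\Holder{\alpha}(X,d)$, I would use that $u$ is bounded on $X$ (which holds in the settings where cones are applied in this paper, $X$ being compact there), so that for $x,\,y\in X$,
\[
\abs{u^2(x)-u^2(y)} = \abs{u(x)-u(y)}\,(u(x)+u(y)) \leq 2\Norm{u}_{\CCC^0(X)}\,\Hseminorm{\alpha,\,(X,d)}{u}\, d(x,y)^\alpha,
\]
so $u^2$ is indeed $\alpha$-H\"older continuous (and bounded).

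The heart of the matter is the cone inequality. Fix $x,\,y\in X$. Factoring the difference of squares and using $u>0$, then applying the cone estimate for $u$, gives
\[
\abs{u^2(x)-u^2(y)} = \abs{u(x)-u(y)}\,(u(x)+u(y)) \leq B\,(u(x)+u(y))^2\, d(x,y)^\alpha.
\]
It then remains to bound $(u(x)+u(y))^2$ by a multiple of $u^2(x)+u^2(y)$; the elementary inequality $(a+b)^2\leq 2(a^2+b^2)$ for $a,\,b\in\R$ (just $(a-b)^2\geq 0$) yields $(u(x)+u(y))^2\leq 2(u^2(x)+u^2(y))$, whence
\[
\abs{u^2(x)-u^2(y)} \leq 2B\,(u^2(x)+u^2(y))\, d(x,y)^\alpha,
\]
which is precisely the defining inequality for membership in $K_{2B}(X,d)$.

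There is no substantial obstacle here; the only things to be careful about are the order of the two manipulations — factoring the difference of squares and invoking positivity \emph{before} applying the cone bound, so that the factor $u(x)+u(y)$ is handled cleanly — and recording that the constant produced is exactly $2B$, as claimed. One could also phrase the proof without any boundedness assumption on $u$ if one only wants the cone condition, but since $K_{2B}(X,d)$ requires $u^2\in\Holder{\alpha}(X,d)$ it is cleanest to invoke compactness (or boundedness of $u$) at that one point.
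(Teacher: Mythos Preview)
Your proof is correct and follows exactly the same route as the paper: factor $u^2(x)-u^2(y)=(u(x)+u(y))(u(x)-u(y))$, apply the cone inequality for $u$, then use $(a+b)^2\leq 2(a^2+b^2)$. The paper's proof is in fact terser than yours---it records only the chain of inequalities for the cone condition and does not separately comment on the $\Holder{\alpha}$ membership of $u^2$; your remark about needing boundedness of $u$ for that step is a reasonable observation, though in the paper's applications $X$ is always a compact subset of $S^2$ so this is automatic.
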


\begin{proof}
Fix arbitrary $B>0$ and $u\in K_B(X,d)$. For any $x, \, y\in X$,
\begin{align*}
      \Absbig{u^2(x)-u^2(y)} 
      &=  \abs{u(x)+u(y)}\abs{u(x)-u(y)} \\
      &\leq B\abs{u(x)+u(y)}^2 d(x,y)^\alpha \\
      &\leq   2B \bigl(u^2(x)+u^2(y)\bigr) d(x,y)^\alpha.
\end{align*}
Therefore, $u^2\in K_{2B}(X,d)$.
\end{proof}

\begin{lemma}   \label{lmRtildeNorm=1}
Let $f$, $d$, $\alpha$, $\psi$ satisfy the Assumptions. Let $\phi\in \Holder{\alpha}(S^2,d)$ be a real-valued H\"{o}lder continuous function with an exponent $\alpha$. Then the operator norm of $\RR_{\wt\phi}$ acting on $\CCC(S^2)$ is given by $\Normbig{\RR_{\wt\phi}}_{C^0(S^2)}=1$. In addition, $\RR_{\wt\phi}(\mathbbm{1}_{S^2})=\mathbbm{1}_{S^2}$.

Moreover, consider a Jordan curve $\CC\subseteq S^2$ satisfying $\post f\subseteq \CC$. Assume in addition that $f(\CC)\subseteq\CC$. Then for all $n\in\N_0$, $\c, \, \c' \in\{\b, \, \w\}$, $u_\b\in\CCC(X^0_\b,\C)$, and $u_\w\in\CCC(X^0_\w,\C)$, we have
\begin{align} 
      \Norm{  \RR_{\wt{\psi}, \c, \c'}^{(n)}  (u_{\c'})  }_{C^0(X^0_\c)} 
& \leq \norm{u_{\c'}}_{C^0(X^0_{\c'})} \qquad  \text{ and}  \label{eqRDiscontTildeSupDecreasing} \\
      \Norm{  \RR_{\wt{\psi}, \c, \b}^{(n)}  (u_\b) + \RR_{\wt{\psi}, \c, \w} ^{(n)} (u_\w)  }_{C^0(X^0_\c)}  
& \leq \max\bigl\{\norm{u_\b}_{C^0(X^0_\b)}, \, \norm{u_\w}_{C^0(X^0_\w)}\bigr\}.  \label{eqSplitRuelleTildeSupDecreasing}
\end{align}
\end{lemma}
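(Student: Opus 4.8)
The plan is to derive the two statements about $\RR_{\wt\phi}$ directly from the eigenfunction property $\RR_{\overline\phi}(u_\phi)=u_\phi$ of Theorem~\ref{thmMuExist} together with positivity, and then to obtain the split-operator bounds by a modulus estimate that reduces the complex potential $\wt\psi$ to the real potential $\wt{\Re(\psi)}$, followed by reassembling the two ``split'' pieces $\RR_{\wt\phi,\c,\b}^{(n)}$, $\RR_{\wt\phi,\c,\w}^{(n)}$ into the full iterate $\RR_{\wt\phi}^n$ and invoking $\RR_{\wt\phi}^n(\mathbbm{1}_{S^2})=\mathbbm{1}_{S^2}$.

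First I would establish $\RR_{\wt\phi}(\mathbbm{1}_{S^2})=\mathbbm{1}_{S^2}$. By (\ref{eqDefPhiWidetilde}) and (\ref{eqDefRuelleOp}) (equivalently, by (\ref{eqRuelleTilde_NoTilde}) with $\psi\coloneqq\phi$ real-valued), for each $x\in S^2$ one has $\RR_{\wt\phi}(\mathbbm{1}_{S^2})(x)=\frac{\exp(-P(f,\phi))}{u_\phi(x)}\RR_\phi(u_\phi)(x)$; since $\RR_\phi=\exp(P(f,\phi))\RR_{\overline\phi}$ and $\RR_{\overline\phi}(u_\phi)=u_\phi$ with $u_\phi>0$ by Theorem~\ref{thmMuExist}, the right-hand side equals $1$. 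For the operator norm, note that $\RR_{\wt\phi}$ maps $\CCC(S^2)$ into itself and is a positive operator because $\wt\phi$ is real-valued and $\deg_f\geq 1$; hence for $u\in\CCC(S^2)$ with $\Norm{u}_{\CCC^0(S^2)}\leq 1$ we get $\Abs{\RR_{\wt\phi}(u)}\leq\RR_{\wt\phi}(\mathbbm{1}_{S^2})=\mathbbm{1}_{S^2}$ pointwise, so $\Normbig{\RR_{\wt\phi}}_{\CCC^0(S^2)}\leq 1$, with equality since $\RR_{\wt\phi}(\mathbbm{1}_{S^2})=\mathbbm{1}_{S^2}$.

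For the ``moreover'' part I would set $\phi\coloneqq\Re(\psi)$, so that $\wt\psi=\wt\phi+\I\Im(\psi)$ and $\Abs{\exp(S_n\wt\psi)}=\exp(S_n\wt\phi)$; then from (\ref{eqDefLc}), for any union $E$ of $n$-tiles, any $v\in\CCC(E,\C)$, any $\c\in\{\b,\,\w\}$, and any $z\in X^0_\c$,
\begin{equation*}
\Absbig{\RR_{\wt\psi,\c,E}^{(n)}(v)(z)}\leq\Norm{v}_{\CCC^0(E)}\,\RR_{\wt\phi,\c,E}^{(n)}(\mathbbm{1}_E)(z).
\end{equation*}
The key step is then the identity $\RR_{\wt\phi,\c,\b}^{(n)}(\mathbbm{1}_{X^0_\b})+\RR_{\wt\phi,\c,\w}^{(n)}(\mathbbm{1}_{X^0_\w})=\mathbbm{1}_{X^0_\c}$. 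Indeed, $f(\CC)\subseteq\CC$ forces $\CC\subseteq f^{-n}(\CC)$, which is the $1$-skeleton of $\DD^n(f,\CC)$; hence the connected interior of every $n$-tile avoids $\CC$, so every $n$-tile lies in exactly one of $X^0_\b$, $X^0_\w$, and $\X^n_\c(f,\CC)$ splits accordingly. Comparing with (\ref{eqDefLc}) and Remark~\ref{rmSplitRuelleCoincide},
\begin{equation*}
\RR_{\wt\phi,\c,\b}^{(n)}(\mathbbm{1}_{X^0_\b})+\RR_{\wt\phi,\c,\w}^{(n)}(\mathbbm{1}_{X^0_\w})=\RR_{\wt\phi,\c,S^2}^{(n)}(\mathbbm{1}_{S^2})=\bigl(\RR_{\wt\phi}^n(\mathbbm{1}_{S^2})\bigr)\big|_{X^0_\c}=\mathbbm{1}_{X^0_\c},
\end{equation*}
the last equality coming from $\RR_{\wt\phi}(\mathbbm{1}_{S^2})=\mathbbm{1}_{S^2}$ iterated. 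In particular each summand is a nonnegative function bounded above by $\mathbbm{1}_{X^0_\c}$. Applying the displayed modulus bound with $E=X^0_{\c'}$ and $v=u_{\c'}$ gives $\Absbig{\RR_{\wt\psi,\c,\c'}^{(n)}(u_{\c'})(z)}\leq\Norm{u_{\c'}}_{\CCC^0(X^0_{\c'})}$, i.e.\ (\ref{eqRDiscontTildeSupDecreasing}); summing the modulus bounds for $\c'=\b$ and $\c'=\w$ and using the identity above yields (\ref{eqSplitRuelleTildeSupDecreasing}).

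The only genuine content is the reassembly identity: that $f(\CC)\subseteq\CC$ makes each $n$-tile sit on a single side of $\CC$, so that $\RR_{\wt\phi,\c,\b}^{(n)}+\RR_{\wt\phi,\c,\w}^{(n)}$ applied to $(\mathbbm{1}_{X^0_\b},\mathbbm{1}_{X^0_\w})$ recovers $\RR_{\wt\phi}^n(\mathbbm{1}_{S^2})$. Everything else is positivity of $\RR_{\wt\phi}$ together with the eigenfunction relation $\RR_{\overline\phi}(u_\phi)=u_\phi$; no new estimates beyond those already in the excerpt are needed.
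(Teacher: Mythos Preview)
Your proposal is correct and follows essentially the same route as the paper's proof. The paper cites \cite[Lemma~5.25]{Li17} for $\RR_{\wt\phi}(\mathbbm{1}_{S^2})=\mathbbm{1}_{S^2}$ and $\Normbig{\RR_{\wt\phi}}_{\CCC^0(S^2)}=1$ rather than spelling it out, and it proves (\ref{eqSplitRuelleTildeSupDecreasing}) first (bounding directly by $M\sum_{X^n\in\X^n_\c}\bigl|e^{S_n\wt\psi((f^n|_{X^n})^{-1}(y))}\bigr|=M\,\RR_{\wt{\Re(\psi)}}^n(\mathbbm{1}_{S^2})(y)=M$ for $y\in\inte(X^0_\c)$) and then deduces (\ref{eqRDiscontTildeSupDecreasing}) by setting one of $u_\b,u_\w$ to zero; but the underlying mechanism --- reducing to $\wt{\Re(\psi)}$ via $\bigl|e^{S_n\wt\psi}\bigr|=e^{S_n\wt{\Re(\psi)}}$ and using the tile partition $\X^n_\c=\{X^n\in\X^n_\c:X^n\subseteq X^0_\b\}\sqcup\{X^n\in\X^n_\c:X^n\subseteq X^0_\w\}$ together with $\RR_{\wt{\Re(\psi)}}^n(\mathbbm{1}_{S^2})=\mathbbm{1}_{S^2}$ --- is identical to yours.
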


\begin{proof}
The fact that $\Normbig{\RR_{\wt\phi}}_{C^0(S^2)}=1$ and $\RR_{\wt\phi}(\mathbbm{1}_{S^2})=\mathbbm{1}_{S^2}$ is established in \cite[Lemma~5.25]{Li17}.

To prove (\ref{eqSplitRuelleTildeSupDecreasing}), we first fix arbitrary $n\in\N_0$, $\c\in\{\b, \, \w\}$, $u_\b\in\CCC(X^0_\b)$, and $u_\w\in\CCC(X^0_\w)$. Denote $M \coloneqq \max\bigl\{\norm{u_\b}_{C^0(X^0_\b)},\,\norm{u_\w}_{C^0(X^0_\w)}\bigr\}$. Then by Definition~\ref{defSplitRuelle}, (\ref{eqDefWideTildePsiComplex}), and the fact that $\RR_{\wt{\Re(\psi)}}(\mathbbm{1}_{S^2})=\mathbbm{1}_{S^2}$, for each $y\in\inte(X^0_\c)$,
\begin{align*}
              \Norm{  \RR_{\wt{\psi}, \c, \b}^{(n)}  (u_\b) + \RR_{\wt{\psi}, \c, \w} ^{(n)} (u_\w)  }_{C^0(X^0_\c)}  
  &\leq  M  \sum_{X^n\in\X^n_\c} \Absbig{  \exp \bigl(S_n \wt{\psi} \bigl((f^n|_{X^n})^{-1}(y)\bigr)\bigr)     }  \\
  &=       M \RR_{\wt{\Re(\psi)}}^n (\mathbbm{1}_{S^2})  (y)
  =          M.
\end{align*}
This establishes (\ref{eqSplitRuelleTildeSupDecreasing}). Finally, (\ref{eqRDiscontTildeSupDecreasing}) follows immediately from (\ref{eqSplitRuelleTildeSupDecreasing}) and Definition~\ref{defSplitRuelle} by setting one of the functions $u_\b$ and $u_\w$ to be $0$.
\end{proof}

\begin{lemma}   \label{lmBound_aPhi}
Let $f$, $\CC$, $d$, $L$, $\alpha$, $\Lambda$ satisfy the Assumptions. Then there exist constants $C_{5}>1$ and $C_{6}>0$ depending only on $f$, $\CC$, $d$, and $\alpha$ such that the following statement holds:

\smallskip 

For all $K, \, M, \, T, \, a\in\R$ with $K>0$, $M>0$, $T>0$, and $\abs{a}\leq T$, and all real-valued H\"{o}lder continuous function $\phi\in\Holder{\alpha}(S^2,d)$ with $\Hseminorm{\alpha,\, (S^2,d)}{\phi}  \leq K$ and $\Norm{\phi}_{\CCC^0(S^2)} \leq M$, we have
\begin{align}
\Normbig{\wt{a\phi}}_{\CCC^0(S^2)}                 & \leq C_{5} (K+M)T + \abs{\log(\deg f)},  \label{eqBound_aPhi_Sup} \\
\Hseminormbig{\alpha,\, (S^2,d)}{\wt{a\phi}}  &\leq C_{5} KT   e^{C_{6}KT}, \label{eqBound_aPhi_Hseminorm} \\
\Hnorm{\alpha}{u_{a\phi}}{(S^2,d)}                    &\leq \bigl( 4 TK C_0 (1-\Lambda^{-\alpha})^{-1}  L +1 \bigr) e^{2C_{7}}, \label{eqBound_Uaphi_Hnorm} \\
\exp(-C_{7})                                                          &\leq u_{a\phi}(x) \leq \exp(C_{7})  \label{eqBound_Uaphi_UpperLower}
\end{align}
for $x\in S^2$, where the constant $C_0>1$ depending only on $f$, $d$, and $\CC$ is from Lemma~\ref{lmMetricDistortion}, and the constant
\begin{equation}     \label{eqConst_lmBound_aPhi1}
C_{7} = C_{7}(f,\CC,d,\alpha,T,K) \coloneqq 4 TKC_0 ( 1-\Lambda^{-\alpha} )^{-1} L \bigl(\diam_d(S^2)\bigr)^\alpha  >  0
\end{equation}
depends only on $f$, $\CC$, $d$, $\alpha$, $T$, and $K$.
\end{lemma}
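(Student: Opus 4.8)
The plan is to specialize the distortion estimates of Lemma~\ref{lmSnPhiBound}, Lemma~\ref{lmR1properties}, and Theorem~\ref{thmMuExist} to the potential $a\phi$, and then expand the definition $\wt{a\phi} = a\phi - P(f,a\phi) + \log u_{a\phi} - \log(u_{a\phi}\circ f)$ term by term. The crucial numerical observation is that if $\Hseminorm{\alpha,\, (S^2,d)}{\phi} \leq K$ and $\abs{a}\leq T$, then the constants $C_1$ and $C_2$ of Lemma~\ref{lmSnPhiBound} attached to the potential $a\phi$ satisfy $C_1 \leq TKC_0(1-\Lambda^{-\alpha})^{-1}$ and $C_2 \leq \exp(C_{15})$; indeed, by the choice (\ref{eqConst_lmBound_aPhi1}), $C_{15}$ is exactly the logarithm of this upper bound for $C_2$. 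Then (\ref{eqBound_Uaphi_UpperLower}) is immediate from the two-sided bound $C_2^{-1}\leq u_{a\phi}\leq C_2$ in Theorem~\ref{thmMuExist}. For (\ref{eqBound_Uaphi_Hnorm}), I would recall that $u_{a\phi}$ is the uniform limit of the Cesàro averages $\frac1n\sum_{j=0}^{n-1}\RR_{\overline{a\phi}}^j(\mathbbm{1})$, so passing to the limit in (\ref{eqR1Diff}) yields $\abs{u_{a\phi}(x) - u_{a\phi}(y)} \leq C_2\bigl(\exp(4C_1 L d(x,y)^\alpha) - 1\bigr)$; since $4C_1 L d(x,y)^\alpha \leq 4C_1 L(\diam_d(S^2))^\alpha \leq C_{15}$, the elementary inequality $e^t - 1 \leq t e^t$ linearizes the right-hand side and gives $\Hseminorm{\alpha,\, (S^2,d)}{u_{a\phi}} \leq 4TKC_0(1-\Lambda^{-\alpha})^{-1}L\, e^{2C_{15}}$; adding the sup bound $e^{C_{15}}\leq e^{2C_{15}}$ gives (\ref{eqBound_Uaphi_Hnorm}).

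For (\ref{eqBound_aPhi_Sup}), I would bound the four terms of $\wt{a\phi}$ separately: $\Norm{a\phi}_{\CCC^0(S^2)}\leq TM$; $\abs{P(f,a\phi)} \leq h_{\operatorname{top}}(f) + TM = \abs{\log(\deg f)} + TM$ by the Variational Principle together with $h_{\operatorname{top}}(f) = \log(\deg f)$; and $\abs{\log u_{a\phi}} + \abs{\log(u_{a\phi}\circ f)}\leq 2C_{15}$ by (\ref{eqBound_Uaphi_UpperLower}). Collecting terms and absorbing $2C_{15} = 8TKC_0(1-\Lambda^{-\alpha})^{-1}L(\diam_d(S^2))^\alpha$ into $C_{13}TK$ (choosing $C_{13}$ at least $2$ and at least $8C_0(1-\Lambda^{-\alpha})^{-1}L(\diam_d(S^2))^\alpha$) gives (\ref{eqBound_aPhi_Sup}).

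For (\ref{eqBound_aPhi_Hseminorm}), I would again split $\wt{a\phi}$: the constant $P(f,a\phi)$ drops out, $\Hseminorm{\alpha,\, (S^2,d)}{a\phi}\leq TK$, and since $\log$ is $e^{C_{15}}$-Lipschitz on $[e^{-C_{15}}, e^{C_{15}}]$ and $f$ is Lipschitz with respect to the visual metric $d$ (a standard property of expanding Thurston maps), the two logarithmic terms each contribute at most $\max\{1,\LIP_d(f)^\alpha\}\, e^{C_{15}}\Hseminorm{\alpha,\, (S^2,d)}{u_{a\phi}}$. Plugging in the bound of the first paragraph and noting that $3C_{15}$ equals a constant (depending only on $f,\CC,d,\alpha$) times $KT$, the exponential factors combine into $e^{C_{14}KT}$ with $C_{14} \coloneqq 12 C_0(1-\Lambda^{-\alpha})^{-1}L(\diam_d(S^2))^\alpha$, and enlarging $C_{13}$ once more (so that it also dominates $1 + (1+\LIP_d(f)^\alpha)4C_0(1-\Lambda^{-\alpha})^{-1}L$) yields (\ref{eqBound_aPhi_Hseminorm}). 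Finally I would check that every constant introduced depends only on $f$, $\CC$, $d$, and $\alpha$, via $C_0$, $\Lambda$, $L$, $\diam_d(S^2)$, $\LIP_d(f)$, and $\deg f$.

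The computations here are essentially bookkeeping; the only genuine point requiring care is the appearance of the exponential $e^{C_{14}KT}$ in (\ref{eqBound_aPhi_Hseminorm}). This is forced by the fact that $C_{15}$ grows linearly in $KT$ while each application of the mean value theorem to $\log$ — used for $\log u_{a\phi}$, and again, after composition with $f$, for $\log(u_{a\phi}\circ f)$ — costs a multiplicative factor $e^{C_{15}}$; arranging the order of the estimates so that these factors aggregate to $e^{O(KT)}$ rather than something worse, and identifying the precise constant $C_{14}$, is the one place where the sequencing matters. A secondary technical point is invoking the (standard) Lipschitz continuity of $f$ with respect to $d$, which is what allows $\Hseminorm{\alpha,\, (S^2,d)}{u_{a\phi}\circ f}$ to be controlled by $\Hseminorm{\alpha,\, (S^2,d)}{u_{a\phi}}$.
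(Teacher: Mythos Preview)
Your proposal is correct and follows essentially the same route as the paper's proof: specialize $C_1,C_2$ from Lemma~\ref{lmSnPhiBound} to $a\phi$, read off (\ref{eqBound_Uaphi_UpperLower}) from Theorem~\ref{thmMuExist}, pass (\ref{eqR1Diff}) to the Ces\`aro limit and linearize to get (\ref{eqBound_Uaphi_Hnorm}), and then split $\wt{a\phi}$ termwise for (\ref{eqBound_aPhi_Sup}) and (\ref{eqBound_aPhi_Hseminorm}) using Lipschitz continuity of $f$ and of $\log$ on $[e^{-C_{15}},e^{C_{15}}]$. The only substantive difference is that the paper bounds $\abs{P(f,a\phi)}$ via the Ruelle-operator formula for pressure (Lemma~3.25 of \cite{LZhe23a}) while you use the Variational Principle together with $h_{\operatorname{top}}(f)=\log(\deg f)$; both yield $\abs{P(f,a\phi)}\leq TM+\abs{\log(\deg f)}$, and your argument is arguably the more self-contained of the two.
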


\begin{proof}
Fix $K$, $M$, $T$, $a$, $\phi$ that satisfy the conditions in this lemma.

Recall $\wt{a\phi} = a\phi - P(f,a\phi) + \log u_{a\phi} - \log(u_{a\phi} \circ f)$, where the function $u_{a\phi}$ is defined as $u_{\phi}$ in Theorem~\ref{thmMuExist}.

By Theorem~\ref{thmMuExist} and (\ref{eqC1C2}) in Lemma~\ref{lmSnPhiBound}, we immediately get (\ref{eqBound_Uaphi_UpperLower}).

By Lemma~3.25 in \cite{LZ24a}, (\ref{eqDeg=SumLocalDegree}), and (\ref{eqLocalDegreeProduct}), for each $x\in S^2$,
\begin{align*}
P(f,a\phi) & = \lim_{n\to+\infty} \frac{1}{n} \log \sum_{y\in f^{-n}(x)} \deg_{f^n}(y) e^{aS_n\phi(y)} 
            \leq  \lim_{n\to+\infty} \frac{1}{n} \log \sum_{y\in f^{-n}(x)} \deg_{f^n}(y) e^{nTM}  \\
          & = TM + \lim_{n\to+\infty}\frac{1}{n}  \log \sum_{y\in f^{-n}(x)} \deg_{f^n}(y) 
            = TM + \log(\deg f).
\end{align*}
Similarly, $P(f,a\phi)\geq -TM + \log(\deg f)$. So $\abs{P(f,a\phi)}  \leq TM + \abs{\log(\deg f)}$.

Thus, by combining the above with (\ref{eqBound_Uaphi_UpperLower}) and (\ref{eqConst_lmBound_aPhi1}), we get 
\begin{equation}  \label{eqPflmBound_aPhi3}
\Normbig{\wt{a\phi}}_{\CCC^0(S^2)} 
\leq TM +  TM + \abs{\log(\deg f)} + 2C_{7} 
\leq C_{8} T(K+M) + \abs{\log(\deg f)},
\end{equation}
where $C_{8} \coloneqq 2 + 8 \frac{C_0}{1-\Lambda^{-\alpha}} L \bigl(\diam_d(S^2)\bigr)^\alpha$ is a constant depending only on $f$, $\CC$, $d$, and $\alpha$.

Note $f$ is Lipschitz with respect to $d$ (see \cite[Lemma~3.12]{Li18}). Thus, by (\ref{eqBound_Uaphi_UpperLower}) and the fact that $\abs{\log t_1 - \log t_2} \leq \frac{\abs{t_1-t_2}}{\min\{t_1, \, t_2\}}$ for all $t_1,t_2>0$, we get 
\begin{align}  \label{eqPflmBound_aPhi2}
\Hseminormbig{\alpha,\, (S^2,d)}{\wt{a\phi}}  
&\leq  \Hseminorm{\alpha,\, (S^2,d)}{a\phi}  + \Hseminorm{\alpha,\, (S^2,d)}{\log u_{a\phi}}  +\Hseminorm{\alpha,\, (S^2,d)}{\log(u_{a\phi}\circ f)} \notag \\
&\leq  TK + e^{C_{7}} (1+ \LIP_d(f)  ) \Hseminorm{\alpha,\, (S^2,d)}{ u_{a\phi}}   .                                            
\end{align}
Here $\LIP_d(f)$ denotes the Lipschitz constant of $f$ with respect to the visual metric $d$ (see (\ref{eqDefLipConst})). 

By Theorem~\ref{thmMuExist}, (\ref{eqR1Diff}) in Lemma~\ref{lmR1properties}, (\ref{eqC1C2}) in Lemma~\ref{lmSnPhiBound}, (\ref{eqConst_lmBound_aPhi1}), and the fact that $\abs{1-e^{-t}} \leq t$ for $t>0$, we get 
\begin{align*}
        \abs{u_{a\phi}(x) - u_{a\phi}(y) }  
&=     \Absbigg{ \lim_{n\to+\infty} \frac{1}{n} \sum_{j=0}^{n-1}  \Bigl( \RR_{\overline{a\phi}}^j \bigl(\mathbbm{1}_{S^2}\bigr)(x) - \RR_{\overline{a\phi}}^j \bigl(\mathbbm{1}_{S^2}\bigr)(y)  \Bigr)}\\
&\leq  \limsup_{n\to+\infty} \frac{1}{n} \sum_{j=0}^{n-1} \AbsBig{ \RR_{\overline{a\phi}}^j \bigl(\mathbbm{1}_{S^2}\bigr)(x) - \RR_{\overline{a\phi}}^j \bigl(\mathbbm{1}_{S^2}\bigr)(y)  } \\
&\leq  e^{2C_{7}} \Bigl( 1 - \exp\Bigl( - 4 \frac{TKC_0}{1-\Lambda^{-\alpha}} L d(x,y)^\alpha   \Bigr)  \Bigr)  \\
&\leq  e^{2C_{7}}  \frac{4TKC_0}{1-\Lambda^{-\alpha}} L d(x,y)^\alpha,
\end{align*}
for all $x, \, y\in S^2$. So 
\begin{equation}    \label{eqBound_Uaphi_Hseminorm}
\Hseminorm{\alpha,\, (S^2,d)}{u_{a\phi}} \leq 4\frac{TKC_0}{1-\Lambda^{-\alpha}} L e^{2C_{7}} . 
\end{equation}

Thus, by (\ref{eqPflmBound_aPhi2}), (\ref{eqBound_Uaphi_Hseminorm}), and (\ref{eqConst_lmBound_aPhi1}), we get
\begin{equation*}
\Hseminormbig{\alpha,\, (S^2,d)}{\wt{a\phi}} \leq TK  C_{5} e^{C_{6}TK} ,
\end{equation*}
where the constants
$
C_{5}  \coloneqq \max\bigl\{ C_{8}, \, 1+  4C_0 (1-\Lambda^{-\alpha})^{-1}  L (1+ \LIP_d(f) ) \bigr\}
$
and 
$
C_{6} \coloneqq 12 C_0 (1-\Lambda^{-\alpha} )^{-1} L (\diam_d(S^2))^\alpha
$
depend only on $f$, $\CC$, $d$, and $\alpha$. Since $C_{5}\geq C_{8}$, (\ref{eqBound_aPhi_Sup}) follows from (\ref{eqPflmBound_aPhi3}). 

Finally, (\ref{eqBound_Uaphi_Hnorm}) follows from (\ref{eqBound_Uaphi_UpperLower}) and (\ref{eqBound_Uaphi_Hseminorm}).
\end{proof}

\begin{lemma}[Basic inequalities]   \label{lmBasicIneq}
Let $f$, $\CC$, $d$, $\alpha$, $\phi$, $s_0$ satisfy the Assumptions. Then there exists a constant $A_0=A_0\bigl(f,\CC,d,\Hseminorm{\alpha,\, (S^2,d)}{\phi},\alpha\bigr)\geq 2C_0>2$ depending only on $f$, $\CC$, $d$, $\Hseminorm{\alpha,\, (S^2,d)}{\phi}$, and $\alpha$ such that $A_0$ increases as $\Hseminorm{\alpha,\, (S^2,d)}{\phi}$ increases, and that for all $\c\in\{\b, \, \w\}$, $x, \, x'\in X^0$, $n\in\N$, union $E\subseteq S^2$ of $n$-tiles in $\X^n(f,\CC)$, $B\in\R$ with $B>0$, and $a, \, b\in\R$ with $\abs{a}\leq 2 s_0$ and $\abs{b} \in\{0\}\cup [1,+\infty)$, the following statements hold:

\begin{enumerate}
\smallskip
\item[(i)] For each $u\in K_B(E,d)$, we have
\begin{equation}   \label{eqBasicIneqR}
\frac{\Absbig{\RR_{\wt{a\phi},\c,E}^{(n)} (u)(x) - \RR_{\wt{a\phi},\c,E}^{(n)} (u)(x')}}
     {  \RR_{\wt{a\phi},\c,E}^{(n)} (u)(x)  + \RR_{\wt{a\phi},\c,E}^{(n)} (u)(x') }  
\leq A_0\biggl( \frac{B}{\Lambda^{\alpha n}} + \frac{\Hseminormbig{\alpha,\, (E,d)}{\wt{a\phi}}}{1-\Lambda^{-\alpha}}  \biggr) d(x,x')^\alpha.
\end{equation}

\smallskip
\item[(ii)] Denote $s \coloneqq a+\I b$. Fix an arbitrary $v\in \Holder{\alpha}((E,d),\C)$. Then
\begin{equation}  \label{eqBasicIneqN1}
       \AbsBig{\RR_{\wt{s\phi}, \c, E}^{(n)} (v)(x) - \RR_{\wt{s\phi}, \c, E}^{(n)} (v)(x')} 
\leq    \biggl( C_0  \frac{ \Hseminorm{\alpha,\, (E,d)}{v} }{\Lambda^{\alpha n}}  + A_0 \max\{1, \, \abs{b}\} \RR_{\wt{a\phi}, \c, E}^{(n)} (\abs{v})(x)    \biggr) d(x,x')^\alpha,
\end{equation}
where $C_0>1$ is the constant from Lemma~\ref{lmMetricDistortion} depending only on $f$, $d$, and $\CC$.

If, in addition, there exists a non-negative real-valued H\"{o}lder continuous function $h\in\Holder{\alpha}(E,d)$ such that 
\begin{equation*}
\abs{v(y)-v(y')} \leq B (h(y)+h(y')) d(y,y')^\alpha
\end{equation*}
whenever $y, \, y'\in E$, then
\begin{align}  \label{eqBasicIneqC}
     & \AbsBig{\RR_{\wt{s\phi}, \c, E}^{(n)} (v)(x) - \RR_{\wt{s\phi}, \c, E}^{(n)} (v)(x')}   \\
&\qquad \leq   A_0 \biggl(  \frac{B}{\Lambda^{\alpha n}}  \Bigl( \RR_{\wt{a\phi}, \c, E}^{(n)} (h)(x) + \RR_{\wt{a\phi}, \c, E}^{(n)} (h)(x')  \Bigr) + \max\{1, \, \abs{b}\} \RR_{\wt{a\phi}, \c, E}^{(n)} (\abs{v})(x)    \biggr) d(x,x')^\alpha.\notag
\end{align}
\end{enumerate}
\end{lemma}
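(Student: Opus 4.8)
The plan is to estimate each numerator termwise over the $n$-tiles composing $E$ and to match the resulting pieces against the quantities on the right-hand sides. Fix $\c\in\{\b,\w\}$, and for $X^n\in\X^n_\c(f,\CC)$ with $X^n\subseteq E$ write $y_{X^n}\coloneqq(f^n|_{X^n})^{-1}(x)$ and $y'_{X^n}\coloneqq(f^n|_{X^n})^{-1}(x')$. First I would record the elementary facts used repeatedly: by Lemma~\ref{lmMetricDistortion}, $d(y_{X^n},y'_{X^n})^\alpha\le C_0\Lambda^{-\alpha n}d(x,x')^\alpha$ (using $C_0^\alpha\le C_0$); applying Lemma~\ref{lmSnPhiBound} to the potential $\wt{a\phi}$ gives $\bigl|S_n\wt{a\phi}(y_{X^n})-S_n\wt{a\phi}(y'_{X^n})\bigr|\le C_0(1-\Lambda^{-\alpha})^{-1}\Hseminorm{\alpha,\,(S^2,d)}{\wt{a\phi}}(\diam_d(S^2))^\alpha$, and applying Lemma~\ref{lmExpToLiniear} to $\wt{a\phi}$ yields a constant $D_1$ with $e^{S_n\wt{a\phi}(y_{X^n})}\le D_1e^{S_n\wt{a\phi}(y'_{X^n})}$ (and symmetrically); finally, by Remark~\ref{rmSplitRuelleCoincide} together with $\RR_{\wt{a\phi}}^n(\mathbbm{1}_{S^2})=\mathbbm{1}_{S^2}$ from Lemma~\ref{lmRtildeNorm=1}, one has $\RR_{\wt{a\phi},\c,E}^{(n)}(\mathbbm{1}_E)\le\RR_{\wt{a\phi},\c,S^2}^{(n)}(\mathbbm{1}_{S^2})=\mathbbm{1}_{X^0_\c}$. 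All parameter-dependent constants are controlled uniformly over $|a|\le 2s_0$ by Lemma~\ref{lmBound_aPhi}, which in particular bounds $\Hseminorm{\alpha,\,(S^2,d)}{\wt{a\phi}}$ by a constant depending only on $f$, $\CC$, $d$, $\alpha$, $s_0$, and $\Hseminorm{\alpha,\,(S^2,d)}{\phi}$, non-decreasing in the latter; taking $A_0$ to be a large enough maximum of $2C_0$ and the constants produced below secures both the lower bound $A_0\ge 2C_0$ and the monotonicity.

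For part~(i), I would expand each summand of $\RR_{\wt{a\phi},\c,E}^{(n)}(u)(x)-\RR_{\wt{a\phi},\c,E}^{(n)}(u)(x')$ as $\bigl(u(y_{X^n})-u(y'_{X^n})\bigr)e^{S_n\wt{a\phi}(y_{X^n})}+u(y'_{X^n})\bigl(e^{S_n\wt{a\phi}(y_{X^n})}-e^{S_n\wt{a\phi}(y'_{X^n})}\bigr)$. The first bracket is bounded using the defining cone inequality $|u(y_{X^n})-u(y'_{X^n})|\le B\bigl(u(y_{X^n})+u(y'_{X^n})\bigr)d(y_{X^n},y'_{X^n})^\alpha$ (Definition~\ref{defCone}) and the metric distortion; the second bracket equals $e^{S_n\wt{a\phi}(y'_{X^n})}\bigl|e^{S_n\wt{a\phi}(y_{X^n})-S_n\wt{a\phi}(y'_{X^n})}-1\bigr|$, which Lemma~\ref{lmExpToLiniear} bounds by $e^{S_n\wt{a\phi}(y'_{X^n})}C_{10}\Hseminorm{\alpha,\,(S^2,d)}{\wt{a\phi}}d(x,x')^\alpha$. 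Converting the few factors $e^{S_n\wt{a\phi}(y_{X^n})}$ that are paired with $u(y'_{X^n})$ into $D_1e^{S_n\wt{a\phi}(y'_{X^n})}$, summing over $X^n$, and using positivity of $u$, shows that $\bigl|\RR_{\wt{a\phi},\c,E}^{(n)}(u)(x)-\RR_{\wt{a\phi},\c,E}^{(n)}(u)(x')\bigr|$ is at most $\bigl(C_0D_1\Lambda^{-\alpha n}B+C_{10}\Hseminorm{\alpha,\,(S^2,d)}{\wt{a\phi}}\bigr)\bigl(\RR_{\wt{a\phi},\c,E}^{(n)}(u)(x)+\RR_{\wt{a\phi},\c,E}^{(n)}(u)(x')\bigr)d(x,x')^\alpha$; dividing and absorbing $(1-\Lambda^{-\alpha})^{-1}\ge 1$ gives (\ref{eqBasicIneqR}).

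For part~(ii), denote $s=a+\I b$, and note that $\wt{s\phi}=\wt{a\phi}+\I b\phi$ and $\Re(\wt{s\phi})=\wt{a\phi}$ by (\ref{eqDefWideTildePsiComplex}), so $|e^{S_n\wt{s\phi}(\cdot)}|=e^{S_n\wt{a\phi}(\cdot)}$. Here I would use the asymmetric decomposition $\bigl(v(y_{X^n})-v(y'_{X^n})\bigr)e^{S_n\wt{s\phi}(y'_{X^n})}+v(y_{X^n})\bigl(e^{S_n\wt{s\phi}(y_{X^n})}-e^{S_n\wt{s\phi}(y'_{X^n})}\bigr)$. The first bracket is estimated as in~(i): using $|v(y_{X^n})-v(y'_{X^n})|\le\Hseminorm{\alpha,\,(E,d)}{v}d(y_{X^n},y'_{X^n})^\alpha$ for (\ref{eqBasicIneqN1}) (respectively $h$ in place of $\Hseminorm{\alpha,\,(E,d)}{v}$ for (\ref{eqBasicIneqC})), the metric distortion, and $\RR_{\wt{a\phi},\c,E}^{(n)}(\mathbbm{1}_E)\le\mathbbm{1}_{X^0_\c}$ (respectively the $D_1$-exchange producing $\RR_{\wt{a\phi},\c,E}^{(n)}(h)(x)$ and $\RR_{\wt{a\phi},\c,E}^{(n)}(h)(x')$), this contributes $C_0\Lambda^{-\alpha n}\Hseminorm{\alpha,\,(E,d)}{v}d(x,x')^\alpha$ (respectively the first term of (\ref{eqBasicIneqC})). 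For the second bracket, write $e^{S_n\wt{s\phi}(y_{X^n})}-e^{S_n\wt{s\phi}(y'_{X^n})}=e^{S_n\wt{s\phi}(y_{X^n})}\bigl(1-e^{S_n\wt{s\phi}(y'_{X^n})-S_n\wt{s\phi}(y_{X^n})}\bigr)$, split the exponent $S_n\wt{s\phi}(y'_{X^n})-S_n\wt{s\phi}(y_{X^n})$ into its real part (whose exponential contribution is controlled by Lemma~\ref{lmExpToLiniear} applied to $\wt{a\phi}$) and $\I b\bigl(S_n\phi(y'_{X^n})-S_n\phi(y_{X^n})\bigr)$ (controlled by Lemma~\ref{lmSnPhiBound} applied to $\phi$ together with $|1-e^{\I t}|\le|t|$). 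Since $|b|\in\{0\}\cup[1,+\infty)$ one has $1+|b|\le 2\max\{1,|b|\}$, so the outcome is a bound $\bigl|e^{S_n\wt{s\phi}(y_{X^n})}-e^{S_n\wt{s\phi}(y'_{X^n})}\bigr|\le A_1\max\{1,|b|\}e^{S_n\wt{a\phi}(y_{X^n})}d(x,x')^\alpha$ for a suitable constant $A_1$; multiplying by $|v(y_{X^n})|$ and summing produces $A_1\max\{1,|b|\}\RR_{\wt{a\phi},\c,E}^{(n)}(|v|)(x)d(x,x')^\alpha$. Combining the two brackets and setting $A_0\coloneqq\max\{2C_0,C_0D_1,C_{10},A_1\}$ (enlarging if needed to preserve monotonicity) yields (\ref{eqBasicIneqN1}) and (\ref{eqBasicIneqC}).

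The point I expect to be the main obstacle is the bookkeeping that keeps every error term attached to the operator value at the correct point: because $v$ in part~(ii) is an arbitrary H\"older function rather than one from a cone, the quantities $\RR_{\wt{a\phi},\c,E}^{(n)}(|v|)(x)$ and $\RR_{\wt{a\phi},\c,E}^{(n)}(|v|)(x')$ are not comparable up to a constant, so each summand must be decomposed (as above) so that the $|v|$-weighted piece lands on $e^{S_n\wt{a\phi}(y_{X^n})}$ while only the ``small'' $\Hseminorm{\alpha,\,(E,d)}{v}$- or $h$-weighted pieces are allowed to travel to the other point via the bounded ratio $e^{S_n\wt{a\phi}(y_{X^n})}/e^{S_n\wt{a\phi}(y'_{X^n})}\le D_1$. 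A secondary technical matter is verifying that the constants coming from Lemmas~\ref{lmExpToLiniear} and~\ref{lmBound_aPhi} are genuinely uniform over $|a|\le 2s_0$ and monotone in $\Hseminorm{\alpha,\,(S^2,d)}{\phi}$, which is exactly the content those lemmas are designed to supply.
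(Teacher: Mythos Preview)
Your proposal is correct and follows essentially the same route as the paper's proof: the same termwise decomposition over $n$-tiles, the same cone inequality plus metric distortion for the $u$- (or $v$-, $h$-) differences, and the same use of Lemma~\ref{lmExpToLiniear} for the exponential differences, with constants made uniform in $|a|\le 2s_0$ via Lemma~\ref{lmBound_aPhi}. The only organizational difference is that in part~(ii) the paper applies Lemma~\ref{lmExpToLiniear} directly to the complex potential $\wt{s\phi}$ (using $\Hseminorm{\alpha,\,(S^2,d)}{\wt{s\phi}}\le 2T_0\max\{1,|b|\}$), whereas you unpack that lemma by hand and split the exponent into its real and imaginary parts; since Lemma~\ref{lmExpToLiniear} itself performs exactly this split internally, the two arguments are the same.
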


\begin{proof}
Fix $\c$, $n$, $E$, $B$, $a$, and $b$ as in the statement of Lemma~\ref{lmBasicIneq}.

\smallskip

(i) Note that by Lemma~\ref{lmBound_aPhi},
\begin{equation}  \label{eqT0Bound} 
\sup\bigl\{ \Hseminormbig{\alpha,\, (S^2,d)}{\wt{\tau\phi}}  :  \tau \in\R, \abs{ \tau }\leq 2 s_0 + 1  \bigr\} \leq T_0,
\end{equation}
where the constant
\begin{equation}  \label{eqDefT0}
T_0 = T_0\bigl(f,\CC,d,\Hseminorm{\alpha,\, (S^2,d)}{\phi},\alpha\bigr) \coloneqq (2 s_0 + 1) C_{5} \Hseminorm{\alpha,\, (S^2,d)}{\phi} \exp\bigl((2 s_0 + 1) C_{6} \Hseminorm{\alpha,\, (S^2,d)}{\phi}\bigr)>0
\end{equation}
depends only on $f$, $\CC$, $d$, $\Hseminorm{\alpha,\, (S^2,d)}{\phi}$, and $\alpha$. Here $C_{5}>1$ and $C_{6}>0$  are the constants from Lemma~\ref{lmBound_aPhi} depending only on $f$, $\CC$, $d$, and $\alpha$.

Fix $u\in K_B(E,d)$ and $x, \, x'\in X^0_\c$. For each $X^n\in\X^n_\c$, denote $y_{X^n}  \coloneqq (f^n|_{X^n})^{-1}(x)$ and $y'_{X^n} \coloneqq (f^n|_{X^n})^{-1}(x')$.

Then by Definition~\ref{defCone},
\begin{align*}
&      \AbsBig{\RR_{\wt{a\phi},\c,E}^{(n)} (u)(x)  -  \RR_{\wt{a\phi},\c,E}^{(n)} (u)(x')} \\
&\quad\leq   \sum\limits_{\substack{X^n\in\X^n_\c \\ X^n\subseteq E}}  \Abs{u(y_{X^n}) e^{S_n\wt{a\phi}(y_{X^n})} - u(y'_{X^n}) e^{S_n\wt{a\phi}(y'_{X^n})} } \\
&\quad\leq   \sum\limits_{\substack{X^n\in\X^n_\c \\ X^n\subseteq E}}  \Bigl(  \Abs{u(y_{X^n}) -  u(y'_{X^n}) }  e^{S_n\wt{a\phi}(y'_{X^n})}  +  u(y_{X^n}) \AbsBig{e^{S_n\wt{a\phi}(y_{X^n})}  - e^{S_n\wt{a\phi}(y'_{X^n})} }  \Bigr)     \\
&\quad\leq   \sum\limits_{\substack{X^n\in\X^n_\c \\ X^n\subseteq E}} 
            B \Bigl( u(y_{X^n}) e^{S_n\wt{a\phi}(y_{X^n})}    e^{\Absbig{S_n\wt{a\phi}(y'_{X^n}) - S_n\wt{a\phi}(y_{X^n})}}
                    +u(y'_{X^n})e^{S_n\wt{a\phi}(y'_{X^n})}  \Bigr)    d(y_{X^n}, y'_{X^n})^\alpha    \\
&\quad \qquad     +   \sum\limits_{\substack{X^n\in\X^n_\c \\ X^n\subseteq E}}  
             u(y_{X^n}) \AbsBig{1- e^{S_n\wt{a\phi}(y'_{X^n}) - S_n\wt{a\phi}(y_{X^n})} }   e^{S_n\wt{a\phi}(y_{X^n})}  .
\end{align*}
Combining the above with Lemmas~\ref{lmSnPhiBound},~\ref{lmMetricDistortion},~\ref{lmExpToLiniear}, (\ref{eqT0Bound}), and (\ref{eqDefT0}), we get
\begin{align*}
&              \frac{\Absbig{\RR_{\wt{a\phi},\c,E}^{(n)} (u)(x)  -  \RR_{\wt{a\phi},\c,E}^{(n)} (u)(x')}}
                                        {\RR_{\wt{a\phi},\c,E}^{(n)} (u)(x)  +  \RR_{\wt{a\phi},\c,E}^{(n)} (u)(x')  }  \\
&\qquad\leq   B\exp \biggl( \frac{\Hseminormbig{\alpha,\, (S^2,d)}{\wt{a\phi}}  C_0 ( \diam_d(S^2) )^\alpha}{1-\Lambda^{-\alpha}}  \biggr) \frac{d(x,x')^\alpha C_0^\alpha}{\Lambda^{\alpha n}}      + C_{4} \Hseminormbig{\alpha,\, (S^2,d)}{\wt{a\phi}}  d(x,x')^\alpha \\
&\qquad\leq   A_1 \biggl( \frac{B}{\Lambda^{\alpha n}}  + \frac{ \Hseminormbig{\alpha,\, (S^2,d)}{\wt{a\phi}} }{1-\Lambda^{-\alpha}} \biggr) d(x,x')^\alpha,
\end{align*}
where 
\begin{equation}   \label{eqPflmBasicIneq_DefC10}   
C_{4}= C_{4} (f,\CC,d,\alpha,T_0) = \frac{2 C_0 }{1-\Lambda^{-\alpha}} \exp\biggl(\frac{C_0 T_0 }{1-\Lambda^{-\alpha}}\bigl(\diam_d(S^2)\bigr)^\alpha\biggr)
\end{equation}
is the constant from Lemma~\ref{lmExpToLiniear}, and
\begin{equation}    \label{eqDefA1}   
A_1 \coloneqq (1-\Lambda^{-\alpha}) C_{4}(f,\CC,d,\alpha,T_0).
\end{equation}
Both of these constants only depend on $f$, $\CC$, $d$, $\Hseminorm{\alpha,\, (S^2,d)}{\phi}$ and $\alpha$.

Define a constant
\begin{equation}   \label{eqDefA0}
A_0= A_0\bigl(f,\CC,d,\Hseminorm{\alpha,\, (S^2,d)}{\phi},\alpha\bigr) \coloneqq \frac{(1+2T_0) A_1}{1-\Lambda^{-\alpha}}    = (1+2T_0)C_{4}\bigl(f,\CC,d,\alpha,T_0\bigr)>2
\end{equation}
depending only on $f$, $\CC$, $d$, $\Hseminorm{\alpha,\, (S^2,d)}{\phi}$, and $\alpha$. By (\ref{eqDefA0}), (\ref{eqDefT0}), and (\ref{eqPflmBasicIneq_DefC10}), we see that $A_0$ increases as $\Hseminorm{\alpha,\, (S^2,d)}{\phi}$ increases. Now (\ref{eqBasicIneqR}) follows from the fact that $A_0\geq A_1$.

\smallskip

(ii) Fix $x, \, x'\in X^0_\c$. For each $X^n\in\X^n_\c$, denote $y_{X^n} \coloneqq (f^n|_{X^n})^{-1}(x)$ and $y'_{X^n} \coloneqq (f^n|_{X^n})^{-1}(x')$.

Note that by (\ref{eqDefPhiWidetilde}) and (\ref{eqT0Bound}), we have
\begin{equation}  \label{eqPflmBasicIneqC1}
\Hseminormbig{\alpha,\, (S^2,d)}{\wt{s\phi}}  \leq  \Hseminormbig{\alpha,\, (S^2,d)}{\wt{a\phi}} + \Hseminorm{\alpha,\, (S^2,d)}{b\phi} \leq T_0+ \abs{b} \Hseminorm{\alpha,\, (S^2,d)}{\phi} \leq 2T_0 \max\{1, \, \abs{b}\},
\end{equation}
since $T_0 \geq \Hseminorm{\alpha,\, (S^2,d)}{\phi}$ by (\ref{eqDefT0}) and the fact that $C_{5}>1$ from Lemma~\ref{lmBound_aPhi}.

Note that
\begin{align}    \label{eqPflmBasicIneqC2}
&       \AbsBig{\RR_{\wt{s\phi},\c,E}^{(n)} (v)(x) - \RR_{\wt{s\phi},\c,E}^{(n)} (v)(x')}   \\
&\qquad \leq    \sum\limits_{\substack{X^n\in\X^n_\c \\ X^n\subseteq E}} 
             \Abs{ v(y_{X^n})  e^{S_n\wt{s\phi}(y_{X^n})}- v(y'_{X^n}) e^{S_n\wt{s\phi}(y'_{X^n})} } \notag  \\
&\qquad \leq   \sum\limits_{\substack{X^n\in\X^n_\c \\ X^n\subseteq E}}   
             \Bigl(  \Abs{v(y_{X^n}) -  v(y'_{X^n}) }  \AbsBig{ e^{S_n\wt{s\phi}(y'_{X^n})}}  
                   + \abs{v(y_{X^n})} \AbsBig{e^{S_n\wt{s\phi}(y_{X^n})}  - e^{S_n\wt{s\phi}(y'_{X^n})}  } \Bigr).  \notag
\end{align}

We bound the two terms in the last summation above separately.

By Lemmas~\ref{lmSnPhiBound},~\ref{lmExpToLiniear}, (\ref{eqDefA1}), and (\ref{eqPflmBasicIneqC1}), 
\begin{align}   \label{eqPflmBasicIneqC3}
&            \sum\limits_{\substack{X^n\in\X^n_\c \\ X^n\subseteq E}} 
                     \abs{v(y_{X^n})} \Abs{e^{S_n\wt{s\phi}(y_{X^n})}  - e^{S_n\wt{s\phi}(y'_{X^n})}  }    \notag \\
&\qquad  =  \sum\limits_{\substack{X^n\in\X^n_\c \\ X^n\subseteq E}} 
                     \abs{v(y_{X^n})}  \AbsBig{1- e^{S_n\wt{s\phi}(y'_{X^n}) - S_n\wt{s\phi}(y_{X^n})} }  
                      e^{S_n\wt{a\phi}(y_{X^n})}    \notag \\
&\qquad \leq C_{4}(f,\CC,d,\alpha,T_0) \Hseminormbig{\alpha,\, (S^2,d)}{\wt{s\phi}}  d(x,x')^\alpha  \RR_{\wt{a\phi},\c,E}^{(n)} (\abs{v})(x)  \\
&\qquad \leq  A_1  \frac{   2T_0 \max\{1, \, \abs{b}\} \RR_{\wt{a\phi},\c,E}^{(n)} (\abs{v})(x)   }{   1-\Lambda^{-\alpha} }  d(x,x')^\alpha  \notag\\
&\qquad =     A_0  \max\{1, \, \abs{b}\} \RR_{\wt{a\phi}, \c, E}^{(n)} (\abs{v})(x)   d(x,x')^\alpha, \notag
\end{align}
where the last inequality follows from (\ref{eqDefA0}).

By (\ref{eqDefWideTildePsiComplex}), Lemma~\ref{lmMetricDistortion}, and (\ref{eqRDiscontTildeSupDecreasing}) in Lemma~\ref{lmRtildeNorm=1},
\begin{align}   \label{eqPflmBasicIneqC4}
&         \sum\limits_{\substack{X^n\in\X^n_\c \\ X^n\subseteq E}}    
            \Abs{v(y_{X^n}) -  v(y'_{X^n}) }  \AbsBig{ e^{S_n\wt{s\phi}(y'_{X^n})}}  \notag\\
&\qquad \leq    \sum\limits_{\substack{X^n\in\X^n_\c \\ X^n\subseteq E}} 
             \HseminormD{\alpha,\, (E,d)}{v}    d(y_{X^n}, y'_{X^n})^\alpha  e^{S_n\wt{a\phi}(y'_{X^n})}      \\
&\qquad \leq      \HseminormD{\alpha,\, (E,d)}{v}    \frac{d(x,x')^\alpha C_0^\alpha}{\Lambda^{\alpha n}}   \sum_{X^n\in\X^n_\c} e^{S_n\wt{a\phi}(y'_{X^n})}  \notag\\
&\qquad \leq    C_0    \HseminormD{\alpha,\, (E,d)}{v}     \Lambda^{-\alpha n}     d(x,x')^\alpha.  \notag
\end{align}

Thus, (\ref{eqBasicIneqN1}) follows from (\ref{eqPflmBasicIneqC2}), (\ref{eqPflmBasicIneqC3}), and (\ref{eqPflmBasicIneqC4}).

If, in addition, there exists a non-negative real-valued H\"{o}lder continuous function $h\in\Holder{\alpha}(E,d)$ such that 
$
\abs{v(y)-v(y')} \leq B (h(y)+h(y')) d(y,y')^\alpha
$
when $y, \, y'\in E$, then by Lemmas~\ref{lmSnPhiBound},~\ref{lmMetricDistortion}, (\ref{eqT0Bound}), (\ref{eqDefA1}), and (\ref{eqPflmBasicIneq_DefC10}),
\begin{align*}   \label{eqPflmBasicIneqC5}
&             \sum\limits_{\substack{X^n\in\X^n_\c \\ X^n\subseteq E}}   
                  \Abs{v(y_{X^n}) -  v(y'_{X^n}) }   \AbsBig{ e^{S_n\wt{s\phi}(y'_{X^n})}}  \\
&\quad \leq  \sum\limits_{\substack{X^n\in\X^n_\c \\ X^n\subseteq E}} 
                 B \Bigl( h(y_{X^n})  e^{S_n\wt{a\phi}(y_{X^n})}   e^{\Absbig{S_n\wt{a\phi}(y'_{X^n}) - S_n\wt{a\phi}(y_{X^n})}} 
                         +h(y'_{X^n}) e^{S_n\wt{a\phi}(y'_{X^n})}  \Bigr)  d(y_{X^n}, y'_{X^n})^\alpha \notag \\
&\quad \leq  B\exp \Biggl( \frac{ \Hseminormbig{\alpha,\, (S^2,d)}{\wt{a\phi}}  C_0 \bigl(\diam_d(S^2)\bigr)^\alpha  }
                                 {  1-\Lambda^{-\alpha}  }  \Biggr) \frac{  d(x,x')^\alpha C_0^\alpha  }{  \Lambda^{\alpha n}  }   
                     \Bigl( \RR_{\wt{a\phi},\c,E}^{(n)} (h)(x) + \RR_{\wt{a\phi},\c,E}^{(n)} (h)(x')  \Bigr)  \notag\\
&\quad \leq   A_1  B \Lambda^{ - \alpha n }  \Bigl( \RR_{\wt{a\phi},\c,E}^{(n)} (h)(x) + \RR_{\wt{a\phi},\c,E}^{(n)} (h)(x')  \Bigr) d(x,x')^\alpha.   \notag
\end{align*}
Therefore, (\ref{eqBasicIneqC}) follows from (\ref{eqPflmBasicIneqC2}), (\ref{eqPflmBasicIneqC3}), the last inequality, and the fact that $A_0\geq A_1$ from (\ref{eqDefA0}).
\end{proof}

\subsection{Spectral gap}   \label{subsctSplitRuelleOp_SpectralGap}

Let $(X,d)$ be a metric space. A function $h\: [0,+\infty)\rightarrow [0,+\infty)$ is an \defn{abstract modulus of continuity} if it is continuous at $0$, non-decreasing, and $h(0)=0$. Given any constant $\tau \in [0,+\infty]$, and any abstract modulus of continuity $g$, we define the subclass $\CCC_g^\tau((X, d),\C)$ of $\CCC(X,\C)$ as
\begin{align*}
&\CCC_g^\tau((X,d),\C)\\
&\qquad \coloneqq  \bigl\{u\in\CCC(X,\C)  :   \Norm{u}_{\CCC^0(X)}\leq \tau \text{ and for all }x, \, y\in X, \, \Abs{u(x)-u(y)}\leq g(d(x,y))  \bigr\}.
\end{align*}
We denote $\CCC_g^\tau(X,d) \coloneqq \CCC_g^\tau((X,d),\C) \cap \CCC(X)$.

Assume now that $(X,d)$ is compact. Then by the Arzel\`a--Ascoli Theorem, each $\CCC_g^\tau((X,d),\C)$ (resp.\ $\CCC_g^\tau(X,d)$) is precompact in $\CCC(X,\C)$ (resp.\ $\CCC(X)$) equipped with the uniform norm. It is easy to see that each $\CCC_g^\tau((X,d),\C)$ (resp.\ $\CCC_g^\tau(X,d)$) is actually compact. On the other hand, for $u\in\CCC(X,\C)$, we can define an abstract modulus of continuity by
\begin{equation}    \label{eqAbsModContForU}
g(t) \coloneqq \sup \{\abs{u(x)-u(y)}  :  x, \, y \in X, d(x,y) \leq t\}
\end{equation}
for $t\in [0,+\infty)$, so that $u\in \CCC_g^\iota ((X,d),\C)$, where $\iota \coloneqq \Norm{u}_\infty$.

The following lemma is easy to check (see also \cite[Lemma~5.24]{Li17}).

\begin{lemma}  \label{lmChbChbSubsetChB}
Let $(X,d)$ be a metric space. For each pair of constants $\tau_1, \, \tau_2 \geq 0$, each pair of abstract moduli of continuity $g_1, \, g_2$, and each real number $c > 0$, we have
\begin{align*}
\bigl\{u_1 u_2  :  u_1 \in \CCC_{g_1}^{\tau_1} ((X,d),\C),  \, u_2 \in \CCC_{g_2}^{\tau_2} ((X,d),\C) \bigr\}  & \subseteq \CCC_{\tau_1 g_2 + \tau_2 g_1}^{\tau_1 \tau_2} ((X,d),\C) \quad \text{ and} \\
\bigl\{ 1 / u  :   u\in \CCC_{g_1}^{\tau_1} ((X,d),\C),  \, u(x) \geq c \text{ for each } x\in X\bigr\}                    & \subseteq \CCC_{c^{-2} g_1}^{c^{-1}}((X,d),\C).
\end{align*}
\end{lemma}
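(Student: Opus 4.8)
The plan is to verify both inclusions directly from the definitions of $\CCC_g^\tau((X,d),\C)$, checking in each case the uniform bound and the modulus-of-continuity estimate, and observing that the claimed majorants $\tau_1 g_2 + \tau_2 g_1$ and $c^{-2} g_1$ are themselves abstract moduli of continuity (they are continuous at $0$, non-decreasing, and vanish at $0$, being non-negative linear combinations or positive scalar multiples of such functions).

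For the first inclusion, fix $u_1 \in \CCC_{g_1}^{\tau_1}((X,d),\C)$ and $u_2 \in \CCC_{g_2}^{\tau_2}((X,d),\C)$. First I would bound the uniform norm by $\Norm{u_1 u_2}_{\CCC^0(X)} \leq \Norm{u_1}_{\CCC^0(X)} \Norm{u_2}_{\CCC^0(X)} \leq \tau_1\tau_2$. Then for $x, \, y\in X$, inserting the cross term $u_1(x) u_2(y)$ and applying the triangle inequality together with the two pointwise bounds gives
\begin{equation*}
\Abs{u_1(x)u_2(x) - u_1(y)u_2(y)} \leq \Abs{u_1(x)}\Abs{u_2(x)-u_2(y)} + \Abs{u_2(y)}\Abs{u_1(x)-u_1(y)} \leq \tau_1 g_2(d(x,y)) + \tau_2 g_1(d(x,y)).
\end{equation*}
Since $\tau_1 g_2 + \tau_2 g_1$ is an abstract modulus of continuity, this shows $u_1 u_2 \in \CCC_{\tau_1 g_2 + \tau_2 g_1}^{\tau_1\tau_2}((X,d),\C)$.

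For the second inclusion, fix $u \in \CCC_{g_1}^{\tau_1}((X,d),\C)$ with $u(x) \geq c > 0$ for each $x\in X$. Then $1/u$ is continuous with $\Norm{1/u}_{\CCC^0(X)} \leq c^{-1}$, and for $x, \, y \in X$,
\begin{equation*}
\Abs{\tfrac{1}{u(x)} - \tfrac{1}{u(y)}} = \frac{\Abs{u(x)-u(y)}}{\Abs{u(x)}\Abs{u(y)}} \leq c^{-2} g_1(d(x,y)),
\end{equation*}
so $1/u \in \CCC_{c^{-2} g_1}^{c^{-1}}((X,d),\C)$. There is no real obstacle here; the only point requiring a word of care is the elementary verification that non-negative linear combinations and positive scalar multiples of abstract moduli of continuity are again abstract moduli of continuity, which is immediate from the definition.
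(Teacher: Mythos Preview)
Your proposal is correct and is exactly the direct verification the paper has in mind; the paper does not actually write out a proof for this lemma but simply states that it ``is easy to check'' (referring also to \cite[Lemma~5.24]{Li17}), and your argument is the standard one.
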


The following corollary follows immediately from Lemma~\ref{lmChbChbSubsetChB}. We leave the proof to the reader.

\begin{cor}    \label{corProductInverseHolderNorm}
Let $(X,d)$ be a metric space, and $\alpha \in (0,1]$ a constant. Then for all H\"{o}lder continuous functions $u, \, v \in \Holder{\alpha}((X,d),\C)$, we have $u,  \, v \in \Holder{\alpha}((X,d),\C)$ with
\begin{equation*}
 \Hnorm{\alpha}{u v}{(X,d)} \leq \Hnorm{\alpha}{u }{(X,d)}   \Hnorm{\alpha}{v}{(X,d)},
\end{equation*}
and if, in addition, $\abs{u(x)} \geq c$ for each $x\in X$, for some constant $c>0$, then $1/u \in \Holder{\alpha}((X,d),\C)$ with
$\Hnorm{\alpha}{ 1 / u }{(X,d)} \leq c^{-1} +  c^{-2} \Hnorm{\alpha}{u}{(X,d)}$.
\end{cor}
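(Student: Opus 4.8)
The plan is to deduce this purely from Lemma~\ref{lmChbChbSubsetChB} by recasting the H\"older condition in terms of the classes $\CCC_g^\tau$. First I would note that for any $u\in\Holder{\alpha}((X,d),\C)$ the function $g_u(t)\coloneqq \Hseminorm{\alpha,\,(X,d)}{u}\,t^\alpha$, $t\in[0,+\infty)$, is an abstract modulus of continuity (it is continuous, non-decreasing, and vanishes at $0$), and directly from the definition of the H\"older seminorm one has $u\in\CCC_{g_u}^{\tau_u}((X,d),\C)$ with $\tau_u\coloneqq\Norm{u}_{\CCC^0(X)}$; similarly $v\in\CCC_{g_v}^{\tau_v}((X,d),\C)$ with $\tau_v\coloneqq\Norm{v}_{\CCC^0(X)}$. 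Applying the first inclusion in Lemma~\ref{lmChbChbSubsetChB} then gives $uv\in\CCC_{\tau_u g_v+\tau_v g_u}^{\tau_u\tau_v}((X,d),\C)$, which unwinds to $\Norm{uv}_{\CCC^0(X)}\leq\tau_u\tau_v$ and $\Hseminorm{\alpha,\,(X,d)}{uv}\leq\tau_u\Hseminorm{\alpha,\,(X,d)}{v}+\tau_v\Hseminorm{\alpha,\,(X,d)}{u}$.

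Next I would simply add these two bounds and compare with the product $\Hnorm{\alpha}{u}{(X,d)}\Hnorm{\alpha}{v}{(X,d)}$: expanding the latter as $\bigl(\Hseminorm{\alpha,\,(X,d)}{u}+\tau_u\bigr)\bigl(\Hseminorm{\alpha,\,(X,d)}{v}+\tau_v\bigr)$ and discarding the nonnegative cross term $\Hseminorm{\alpha,\,(X,d)}{u}\Hseminorm{\alpha,\,(X,d)}{v}$ yields exactly $\Hnorm{\alpha}{uv}{(X,d)}\leq\Hnorm{\alpha}{u}{(X,d)}\Hnorm{\alpha}{v}{(X,d)}$.

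For the reciprocal, assuming $\abs{u(x)}\geq c>0$ for all $x\in X$, I would invoke the second inclusion of Lemma~\ref{lmChbChbSubsetChB}; the only point needing a word of care is that that inclusion is phrased for real-valued $u$, but its proof applies verbatim to complex-valued $u$ with $\abs{u}$ in place of $u$, since $\Absbig{1/u(x)-1/u(y)}=\abs{u(x)-u(y)}\big/\bigl(\abs{u(x)}\abs{u(y)}\bigr)$. This gives $1/u\in\CCC_{c^{-2}g_u}^{c^{-1}}((X,d),\C)$, hence $\Norm{1/u}_{\CCC^0(X)}\leq c^{-1}$ and $\Hseminorm{\alpha,\,(X,d)}{1/u}\leq c^{-2}\Hseminorm{\alpha,\,(X,d)}{u}$; adding these and using $\Hseminorm{\alpha,\,(X,d)}{u}\leq\Hnorm{\alpha}{u}{(X,d)}$ gives the claimed bound $\Hnorm{\alpha}{1/u}{(X,d)}\leq c^{-1}+c^{-2}\Hnorm{\alpha}{u}{(X,d)}$. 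I do not expect any genuine obstacle: the statement is elementary bookkeeping once Lemma~\ref{lmChbChbSubsetChB} is in hand, and the single subtlety is the real-versus-complex remark just mentioned.
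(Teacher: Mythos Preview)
Your proposal is correct and is exactly the argument the paper has in mind: the paper simply states that the corollary ``follows immediately from Lemma~\ref{lmChbChbSubsetChB}'' and leaves the details to the reader, and you have supplied precisely those details. Your observation that the second inclusion in Lemma~\ref{lmChbChbSubsetChB} is stated with the real-valued hypothesis $u(x)\geq c$ and needs the obvious adaptation to $\abs{u(x)}\geq c$ in the complex case is a fair point of care, and your one-line justification via $\abs{1/u(x)-1/u(y)}=\abs{u(x)-u(y)}/(\abs{u(x)}\abs{u(y)})$ is exactly right.
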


\begin{lemma}   \label{lmRDiscontTildeDual}
Let $f$, $\CC$, $d$, $\alpha$ satisfy the Assumptions. Assume in addition that $f(\CC)\subseteq\CC$. Let $\phi\in \Holder{\alpha}(S^2,d)$ be a real-valued H\"{o}lder continuous function with an exponent $\alpha$, and $\mu_\phi$ denote the unique equilibrium state for $f$ and $\phi$. Fix arbitrary $\c\in\{\b, \, \w\}$ and $u\in\CCC(X^0)$. Then for each $n\in\N$,
\begin{equation*}
    \int_{X^0_\c}\! u\,\mathrm{d}\mu_\phi 
=   \sum_{ \c'\in\{\b, \, \w\} }\int_{X^0_{\c'}} \!  \RR_{\wt{\phi}, \c', \c}^{(n)} (u) \,\mathrm{d}\mu_\phi.
\end{equation*}
\end{lemma}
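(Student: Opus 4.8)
The plan is to reduce the statement to the defining property of the measure $\mu_\phi$ as an eigenmeasure of a suitable adjoint operator, and then to exploit the decomposition of the Ruelle operator into its split pieces. First I would recall that $\mu_\phi$ is the unique equilibrium state for $f$ and $\phi$, and that (after normalizing via $u_\phi$) the measure $\mu_\phi$ is the unique $\RR_{\wt\phi}^*$-invariant Borel probability measure, i.e. $\RR_{\wt\phi}^*\mu_\phi = \mu_\phi$; this is exactly the content we get from Theorem~\ref{thmMuExist} together with $\RR_{\wt\phi}(\mathbbm 1_{S^2}) = \mathbbm 1_{S^2}$ (Lemma~\ref{lmRtildeNorm=1}) and the standard uniqueness statement for the eigenmeasure. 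Equivalently, for every $w\in\CCC(S^2,\C)$ and every $n\in\N$ we have $\int_{S^2} \RR_{\wt\phi}^n(w)\,\mathrm{d}\mu_\phi = \int_{S^2} w\,\mathrm{d}\mu_\phi$.

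Next I would connect this global identity to the split pieces. Fix $\c\in\{\b,\w\}$ and $u\in\CCC(X^0_\c)$; extend $u$ to a bounded Borel function $v$ on $S^2$ by setting $v = u$ on $X^0_\c$ and $v=0$ elsewhere (as in \eqref{eqExtendToS2}). By Remark~\ref{rmSplitRuelleCoincide} (the case $E = S^2$ of Lemma~\ref{lmLDiscontProperties}) applied with $E = X^0_\c$, we have for each $\c'\in\{\b,\w\}$ that $\bigl(\RR_{\wt\phi}^n(v)\bigr)\big|_{X^0_{\c'}} = \RR_{\wt\phi,\c',\c}^{(n)}(u)$ — here I need to be slightly careful since $v$ is only Borel, not continuous, but the formula \eqref{eqDefLc} together with Proposition~\ref{propCellDecomp}~(i),(ii) still computes $\RR_{\wt\phi}^n(v)$ on the interior of each $0$-tile, and the boundary $\CC\cup f^{-1}(\CC)\cup\cdots$ has $\mu_\phi$-measure zero by Theorem~\ref{thmEquilibriumState}~(iii). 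Then
\begin{equation*}
\int_{X^0_\c}\! u \,\mathrm{d}\mu_\phi = \int_{S^2}\! v\,\mathrm{d}\mu_\phi = \int_{S^2}\! \RR_{\wt\phi}^n(v)\,\mathrm{d}\mu_\phi = \sum_{\c'\in\{\b,\w\}} \int_{X^0_{\c'}}\! \RR_{\wt\phi,\c',\c}^{(n)}(u)\,\mathrm{d}\mu_\phi,
\end{equation*}
where the middle equality uses $\RR_{\wt\phi}^*\mu_\phi=\mu_\phi$ and the last equality splits the integral over $S^2$ into integrals over $X^0_\b$ and $X^0_\w$, legitimate because they overlap only in the $\mu_\phi$-null set $\CC$.

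I expect the main obstacle to be the measure-theoretic bookkeeping at the $0$-skeleton: the extended function $v$ is discontinuous across $\CC$, $\RR_{\wt\phi}^n(v)$ need not be continuous, and the split operators $\RR_{\wt\phi,\c',\c}^{(n)}$ are defined on the closed $0$-tiles which overlap. All of this is harmless precisely because Theorem~\ref{thmEquilibriumState}~(iii) gives $\mu_\phi\bigl(\bigcup_{i\ge 0} f^{-i}(\CC)\bigr)=0$, so that the ambiguities live on a null set; but stating this cleanly — in particular verifying that $\RR_{\wt\phi}^n(v)$ agrees $\mu_\phi$-a.e. with the Borel function that equals $\RR_{\wt\phi,\c',\c}^{(n)}(u)$ on $\inter(X^0_{\c'})$ — is the one point that needs care. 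A secondary, purely notational point is to confirm that the normalization conventions make $\mu_\phi$ genuinely $\RR_{\wt\phi}^*$-invariant (rather than invariant only up to the eigenvalue $e^{P(f,\phi)}$), which is built into the definition \eqref{eqDefPhiWidetilde} of $\wt\phi$ and the identity $\RR_{\wt\phi}(\mathbbm 1_{S^2})=\mathbbm 1_{S^2}$. Once these are in place the result is immediate, and in fact the identity for general $n$ follows from the case $n=1$ by iterating \eqref{eqLDiscontProperties_Split}, so it would suffice to prove the $n=1$ case and induct.
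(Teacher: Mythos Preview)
Your approach is essentially the same as the paper's: both reduce to the adjoint identity $\RR_{\wt\phi}^*\mu_\phi = \mu_\phi$, extend $u$ by zero to a Borel function on $S^2$, and use that $\CC$ (and its preimages) are $\mu_\phi$-null to handle the discontinuity. The one place where the paper is more explicit is exactly the ``measure-theoretic bookkeeping'' you flag: since the duality $\int \RR_{\wt\phi}^n w\,\mathrm d\mu_\phi = \int w\,\mathrm d\mu_\phi$ is stated for continuous $w$, the paper approximates $v$ by $v\tau_i$ with $\tau_i\in\CCC(S^2)$ increasing pointwise to $\mathbbm 1_{\inte(X^0_\c)}$ and passes to the limit via dominated convergence, rather than invoking the adjoint identity directly on a Borel test function.
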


\begin{proof}
We define a function $v\in B(S^2)$ by setting $v(x)  = u(x)$ if  $x\in \inte \bigl(X^0_\c \bigr)$ and $v(x)  = 0$ otherwise. We choose a pointwise increasing sequence of continuous non-negative functions $\tau_i\in \CCC(S^2)$, $i\in\N$, such that  $\lim_{i\to+\infty} \tau_i(x) = \mathbbm{1}_{\inte (X^0_\c)}$ for all $x\in S^2$. Then $\{v\tau_i\}_{i\in\N}$ is a bounded sequence of continuous functions on $S^2$, convergent pointwise to $v$. 

Fix $n\in\N$. Since $\mu_\phi (\CC) = 0$ by \cite[Proposition~5.39]{Li17}, then by (\ref{eqDefLc}), Proposition~\ref{propCellDecomp}~(i) and (ii), and the Dominated Convergence Theorem, we get
\begin{align*}
&      \sum_{ \c'\in\{\b, \, \w\} } \int_{X^0_{\c'}} \!  \RR_{\wt{\phi}, \c', \c}^{(n)} (u) \,\mathrm{d}\mu_\phi   \\
&\qquad= \sum_{ \c'\in\{\b, \, \w\} } \int_{X^0_{\c'}} \! \sum\limits_{\substack{X^n\in \X^n_{\c'} \\ X^n\subseteq X^0_\c}  } \Bigl(e^{S_n\wt{\phi}} u\Bigr) \bigl((f^n|_{X^n})^{-1} (x)\bigr) \,\mathrm{d}\mu_\phi(x) \\ 
&\qquad = \sum_{ \c'\in\{\b, \, \w\} } \lim_{i\to+\infty}   \int_{\inte( X^0_{\c'} )} \! \sum\limits_{\substack{X^n\in \X^n_{\c'} \\ X^n\subseteq X^0_\c}  } \Bigl(e^{S_n\wt{\phi}} v\tau_i \Bigr) \bigl((f^n|_{X^n})^{-1} (x)\bigr) \,\mathrm{d}\mu_\phi(x) \\
&\qquad= \sum_{ \c'\in\{\b, \, \w\} } \lim_{i\to+\infty}   \int_{\inte( X^0_{\c'}) } \! \RR_{\wt{\phi}}^n (v \tau_i)(x) \,\mathrm{d}\mu_\phi(x)    \\
&\qquad=  \lim_{i\to+\infty}   \int_{S^2} \! \RR_{\wt{\phi}}^n (v \tau_i)  \,\mathrm{d}\mu_\phi \\
&\qquad=  \lim_{i\to+\infty}   \int_{S^2} \! v \tau_i \,\mathrm{d} \bigl(\RR^*_{\wt{\phi}}\bigr)^n (\mu_\phi)  \\
&\qquad=  \lim_{i\to+\infty}   \int_{S^2} \! v \tau_i \,\mathrm{d}   \mu_\phi   \\
&\qquad=     \int_{S^2} \! v   \,\mathrm{d}   \mu_\phi  \\
&\qquad=     \int_{X^0_\c} \! u   \,\mathrm{d}   \mu_\phi . \qedhere
\end{align*}
\end{proof}

\begin{lemma}    \label{lmLSplitTildeSupBound}
Let $f$, $\CC$, $d$ satisfy the Assumptions. Assume in addition that $f(\CC)\subseteq\CC$. Fix an abstract modulus of continuity $g$. Then for each $\alpha \in (0,1]$, each $K\in(0,+\infty)$, and each $\delta_1\in(0,+\infty)$, there exist constants $\delta_2\in (0,+\infty)$ and $N\in\N$ with the following property:

For all $\c\in\{\b, \, \w\}$, $u_\b\in \CCC_g^{+\infty}(X^0_\b,d)$, $u_\w\in \CCC_g^{+\infty}(X^0_\w,d)$, and $\phi\in\Holder{\alpha}(S^2,d)$, if $\Hnorm{\alpha}{\phi}{(S^2,d)} \leq K$, $\max\bigl\{  \norm{u_\b}_{\CCC^0(X^0_\b)} , \, \norm{u_\w}_{\CCC^0(X^0_\w)}  \bigr\} \geq \delta_1$, and $\int_{X^0_\b}\! u_\b\,\mathrm{d}\mu_\phi + \int_{X^0_\w}\! u_\w\,\mathrm{d}\mu_\phi  = 0$ where $\mu_\phi$ denotes the unique equilibrium state for $f$ and $\phi$, then
\begin{equation*}
\NormBig{\RR_{\wt{\phi}, \c, \b}^{(N)}(u_\b)  + \RR_{\wt{\phi}, \c, \w}^{(N)}(u_\w)  }_{\CCC^0(X^0_\c)} \leq \max\bigl\{  \norm{u_\b}_{\CCC^0(X^0_\b)} , \, \norm{u_\w}_{\CCC^0(X^0_\w)}  \bigr\} - \delta_2.
\end{equation*}
\end{lemma}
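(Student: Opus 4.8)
\textbf{Proof strategy for Lemma~\ref{lmLSplitTildeSupBound}.}

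The plan is to argue by contradiction using a compactness argument on the product space of continuous functions together with the spectral-gap/convergence properties of the normalized Ruelle operator. Suppose the conclusion fails. Then there exist $\alpha$, $K$, $\delta_1$ as in the statement, and for each $N\in\N$ a choice of $\c_N\in\{\b,\,\w\}$, functions $u_{\b,N}\in\CCC_g^{+\infty}(X^0_\b,d)$, $u_{\w,N}\in\CCC_g^{+\infty}(X^0_\w,d)$, and $\phi_N\in\Holder{\alpha}(S^2,d)$ with $\Hnorm{\alpha}{\phi_N}{(S^2,d)}\leq K$, with $\max\bigl\{\norm{u_{\b,N}}_{\CCC^0(X^0_\b)},\,\norm{u_{\w,N}}_{\CCC^0(X^0_\w)}\bigr\}\geq\delta_1$, with $\int_{X^0_\b}u_{\b,N}\,\mathrm{d}\mu_{\phi_N}+\int_{X^0_\w}u_{\w,N}\,\mathrm{d}\mu_{\phi_N}=0$, but
\[
\NormBig{\RR_{\wt{\phi_N},\c_N,\b}^{(N)}(u_{\b,N})+\RR_{\wt{\phi_N},\c_N,\w}^{(N)}(u_{\w,N})}_{\CCC^0(X^0_{\c_N})}>\max\bigl\{\norm{u_{\b,N}}_{\CCC^0(X^0_\b)},\,\norm{u_{\w,N}}_{\CCC^0(X^0_\w)}\bigr\}-\tfrac1N.
\]
By homogeneity we may normalize so that $\max\bigl\{\norm{u_{\b,N}}_{\CCC^0(X^0_\b)},\,\norm{u_{\w,N}}_{\CCC^0(X^0_\w)}\bigr\}=1$; the lower bound $\geq\delta_1$ will only be needed to ensure this normalization is harmless and to keep the limit nonzero, so one can also simply track the normalized quantities and use $\delta_1$ at the end. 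Passing to a subsequence, we may assume $\c_N=\c$ is constant, and (using that a ball in $\Holder{\alpha}$ is precompact in $\CCC^0$ by Arzel\`a--Ascoli) that $\phi_N\to\phi$ uniformly with $\Hnorm{\alpha}{\phi}{(S^2,d)}\leq K$; moreover, since $\CCC_g^{1}(X^0_\c,d)$ is compact (as noted in Subsection~\ref{subsctSplitRuelleOp_SpectralGap}), we may assume $u_{\b,N}\to u_\b$ and $u_{\w,N}\to u_\w$ uniformly with $u_\b\in\CCC_g^1(X^0_\b,d)$, $u_\w\in\CCC_g^1(X^0_\w,d)$ and $\max\{\norm{u_\b}_{\CCC^0},\norm{u_\w}_{\CCC^0}\}=1$.

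The next step is to identify the limiting object. One first checks that $\RR_{\wt{\phi_N},\c,\b}^{(N)}(u_{\b,N})+\RR_{\wt{\phi_N},\c,\w}^{(N)}(u_{\w,N})$ is, via Remark~\ref{rmSplitRuelleCoincide} and formula (\ref{eqSplitRuelleTilde_NoTilde}), just the restriction to $X^0_\c$ of $\RR_{\wt{\phi_N}}^N(v_N)$, where $v_N\in B(S^2,\C)$ agrees with $u_{\b,N}$ on $\inte(X^0_\b)$ and with $u_{\w,N}$ on $\inte(X^0_\w)$ (the values on $\CC$ being irrelevant for the $\mu_{\phi_N}$-estimates since $\mu_{\phi_N}(\CC)=0$ by \cite[Proposition~5.39]{Li17}). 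The key input is the convergence $\RR_{\wt{\phi}}^N(v)\to\int v\,\mathrm{d}\mu_{\phi}$ uniformly for $v$ ranging over a fixed equicontinuous bounded family and $\phi$ over a bounded subset of $\Holder{\alpha}$ — this is precisely \cite[Theorem~6.8 and Corollary~6.10]{Li18} (cf.\ the commented-out Theorem in the excerpt and the proof of Lemma~\ref{lmUphiCountinuous}), combined with the fact that $\RR_{\wt{\phi}}(\mathbbm1_{S^2})=\mathbbm1_{S^2}$ from Lemma~\ref{lmRtildeNorm=1}, which makes the limiting eigenfunction constant and the eigenmeasure equal to $\mu_{\phi}$. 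Applying this with the continuity of $\phi\mapsto\mu_\phi$ in the weak topology (which follows from uniqueness of equilibrium states in Theorem~\ref{thmEquilibriumState}~(i) and continuity of the pressure, or directly from the convergence theorem), and handling the mild discrepancy between $v_N$ and $v:=$ the corresponding limit function built from $u_\b,u_\w$, one gets that
\[
\NormBig{\RR_{\wt{\phi_N},\c,\b}^{(N)}(u_{\b,N})+\RR_{\wt{\phi_N},\c,\w}^{(N)}(u_{\w,N})}_{\CCC^0(X^0_\c)}\longrightarrow\AbsBig{\int_{X^0_\b}u_\b\,\mathrm{d}\mu_\phi+\int_{X^0_\w}u_\w\,\mathrm{d}\mu_\phi}.
\]
But the hypothesis $\int_{X^0_\b}u_{\b,N}\,\mathrm{d}\mu_{\phi_N}+\int_{X^0_\w}u_{\w,N}\,\mathrm{d}\mu_{\phi_N}=0$ passes to the limit (uniform convergence of $u_{\b,N},u_{\w,N}$ plus weak convergence $\mu_{\phi_N}\to\mu_\phi$) to give $\int_{X^0_\b}u_\b\,\mathrm{d}\mu_\phi+\int_{X^0_\w}u_\w\,\mathrm{d}\mu_\phi=0$, so the right-hand side above is $0$. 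On the other hand the contradiction hypothesis forces this limit to be $\geq\lim_N(1-\tfrac1N)=1>0$, a contradiction. This proves the lemma, and quantitatively one reads off an explicit (non-effective) $\delta_2$ and $N$ from the uniformity of the convergence in \cite[Theorem~6.8, Corollary~6.10]{Li18} over the relevant compact families.

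\textbf{Main obstacle.} The delicate point is the joint uniformity of the convergence $\RR_{\wt{\phi}}^N(v)\to\int v\,\mathrm{d}\mu_\phi$ simultaneously over $v$ in the compact family $\CCC_g^{1}$ and over $\phi$ in a bounded ball of $\Holder{\alpha}(S^2,d)$, together with the continuity of $\phi\mapsto\mu_\phi$; one must invoke the version of the Ruelle--Perron--Frobenius theorem from \cite{Li18} with its uniformity statement (as is done in Lemma~\ref{lmUphiCountinuous}), rather than the pointwise-in-$\phi$ version. A secondary technical nuisance is the passage between the "split" operators $\RR_{\wt\phi,\c,\c'}^{(N)}$ acting on functions on a single $0$-tile and the genuine Ruelle operator $\RR_{\wt\phi}^N$ on $S^2$; this is routine given Remark~\ref{rmSplitRuelleCoincide} and the fact that the relevant measures give no mass to $\CC$, but it must be stated carefully so that the boundary behavior of $v_N$ does not interfere.
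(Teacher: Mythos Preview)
Your compactness-and-contradiction strategy is genuinely different from the paper's proof, which is direct and elementary: the paper fixes $\epsilon$ with $g(\epsilon)<\delta_1/2$, picks $N$ large enough that every $x\in X^0_\c$ has an $f^N$-preimage within $\epsilon$ of any prescribed point, uses the mean-zero hypothesis to locate points $y_1\in X^0_\b$, $y_2\in X^0_\w$ where $u_\b(y_1)\le 0$ and $u_\w(y_2)\ge 0$, and then shows that one term in the Ruelle sum at $x$ is at most $M-\delta_1/2$ while the rest are at most $M$. Summing against the normalized weights (which total $1$ by Lemma~\ref{lmRtildeNorm=1}) gives the desired drop, with an explicit $\delta_2 = \tfrac{\delta_1}{2}\exp(-N(C_{13}K+|\log\deg f|))$. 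No convergence theorem is invoked.

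Your route can in principle be made to work, but the step you label a ``secondary technical nuisance'' is actually the crux, and it is not handled by Remark~\ref{rmSplitRuelleCoincide} together with $\mu_\phi(\CC)=0$. The convergence result \cite[Theorem~6.8, Corollary~6.10]{Li18} is stated for $v\in\CCC_h^b(S^2,d)$, i.e.\ \emph{continuous} functions on all of $S^2$; your $v_N$ is discontinuous across $\CC$ and lies in no such equicontinuous family. The vanishing of $\mu_\phi(\CC)$ controls integrals, not the \emph{sup-norm} of $\RR^N_{\wt{\phi_N}}(v_N)$ that you need. To repair this you must either (a) prove the split-operator analogue of the convergence theorem directly---but that is precisely Theorem~\ref{thmLDiscontTildeUniformConv}, whose proof \emph{uses} the present lemma, so this is circular---or (b) run a genuine approximation argument: replace $v_N$ by a continuous $w_{\delta,N}$ agreeing with it outside a $\delta$-neighborhood $U_\delta$ of $\CC$, bound $\RR^N(\mathbbm{1}_{U_\delta})$ via a continuous majorant $\chi'_\delta$, and then show $\sup_N\mu_{\phi_N}(U_{2\delta})\to 0$ as $\delta\to 0$ along the weakly convergent subsequence (via portmanteau and $\mu_\phi(\CC)=0$). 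This works, but it is several careful steps, not ``routine''; and it still ends up longer and less explicit than the paper's five-line direct argument. The paper's approach also makes transparent why the modulus of continuity $g$ and the lower bound $\delta_1$ enter: they guarantee a definite sign-controlled preimage, which your compactness argument obscures.
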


\begin{proof}
Fix arbitrary constants $\alpha \in (0,1]$, $K\in (0,+\infty)$, and $\delta_1 \in (0,+\infty)$. Choose $\epsilon > 0$ small enough such that $g(\epsilon) < \frac{\delta_1}{2}$. Let $n_0\in\N$ be the smallest number such that $f^{n_0} \bigl( \inte\bigl( X^0_\b \bigr) \bigr)= S^2 = f^{n_0}  \bigl( \inte\bigl( X^0_\w \bigr) \bigr)$.

By Lemma~\ref{lmCellBoundsBM}~(iv), there exists a number $N\in \N$ depending only on $f$, $\CC$, $d$, $g$, and $\delta_1$ such that $N\geq 2n_0$ and for each $z\in S^2$, we have $U^{N-n_0}(z) \subseteq B_d(z,\epsilon)$ (see (\ref{defU^n})).

Fix arbitrary $\c\in\{\b, \, \w\}$, $\phi\in \Holder{\alpha}(S^2,d)$ with $\Hnorm{\alpha}{\phi}{(S^2,d)} \leq K$, and functions $u_\b\in \CCC_g^{+\infty}(X^0_\b,d)$ and $u_\w\in \CCC_g^{+\infty}(X^0_\w,d)$ with $\max\bigl\{  \norm{u_\b}_{\CCC^0(X^0_\b)} , \, \norm{u_\w}_{\CCC^0(X^0_\w)}  \bigr\} \geq \delta_1$ and $\int_{X^0_\b}\! u_\b\,\mathrm{d}\mu_\phi + \int_{X^0_\w}\! u_\w\,\mathrm{d}\mu_\phi  = 0$. Without loss of generality, we assume that $\int_{X^0_\b}\! u_\b\,\mathrm{d}\mu_\phi \leq 0$ and $\int_{X^0_\w}\! u_\w\,\mathrm{d}\mu_\phi \geq 0$. So we can choose points $y_1 \in X^0_\b$ and $y_2\in X^0_\w$ in such a way that $u_\b(y_1) \leq 0$ and $u_\w(y_2)\geq 0$.

We denote
\begin{equation*}
M   \coloneqq   \max\bigl\{  \norm{u_\b}_{\CCC^0(X^0_\b)} , \, \norm{u_\w}_{\CCC^0(X^0_\w)}  \bigr\}.
\end{equation*}

We fix a point $x\in X^0_\c$. Since $f^N \bigl( U^{N-n_0}(y_1) \cap X^0_\b \bigr) = S^2$, there exists $y\in f^{-N}(x)\cap X^0_\b$ such that $y\in U^{N-n_0}(y_1) \subseteq B_d(y_1,\epsilon)$. Since $M\geq \delta_1$,
$u_\b(y)\leq  u_\b(y_1) + g(\epsilon) < \frac{\delta_1}{2} \leq M - \frac{\delta_1}{2}$.
Choose $X^N_y \in \X^N_\c$ such that $y\in X^N_y \subseteq X^0_\b$. Denote $w_{X^N} \coloneqq (f^N|_{X^N})^{-1}(x)$ for each $X^N\in \X^N_\c$. So by Lemma~\ref{lmRtildeNorm=1}, we have
\begin{align*}
&            \RR_{\wt{\phi}, \c, \b}^{(N)} (u_\b) (x)  +  \RR_{\wt{\phi}, \c, \w}^{(N)} (u_\w) (x)      \\
&\qquad =    u_\b(y) e^{S_N\wt{\phi}(y)} 
              + \sum\limits_{\substack{X^N\in \X^N_\c \setminus\{X^N_y\} \\ X^N\subseteq X^0_\b}}  u_\b(w_{X^N}) e^{S_N\wt{\phi}(w_{X^N})}
              + \sum\limits_{\substack{X^N\in \X^N_\c                   \\ X^N\subseteq X^0_\w}}  u_\w(w_{X^N}) e^{S_N\wt{\phi}(w_{X^N})}      \\
&\qquad\leq \biggl( M - \frac{\delta_1}{2} \biggr) \exp\bigl(S_N\wt{\phi}(y)\bigr)  
              +  M  \sum_{ X^N\in \X^N_\c\setminus\{X^N_y\}}  \exp\bigl(S_N\wt{\phi}(w_{X^N})\bigr) \\
&\qquad =    M  \sum_{ X^N\in \X^N_\c  }  \exp\bigl(S_N\wt{\phi}(w_{X^N})\bigr)  -  \frac{\delta_1}{2}   \exp\bigl(S_N\wt{\phi}(y)\bigr)  \\
&\qquad =    M  -  \frac{\delta_1}{2}   \exp\bigl(S_N\wt{\phi}(y)\bigr) .
\end{align*}
Similarly, there exists $z\in f^{-N}(x)\cap X^0_\w$ such that $z\in U^{N-n_0}(y_2) \subseteq B_d(y_2, \epsilon)$ and
\begin{equation*}
\RR_{\wt{\phi}, \c, \b}^{(N)} (u_\b) (x)  +  \RR_{\wt{\phi}, \c, \w}^{(N)} (u_\w) (x) \geq - M  +  \frac{\delta_1}{2}   \exp\bigl(S_N\wt{\phi}(z)\bigr) .
\end{equation*}
Hence, we get
$       \NormBig{ \RR_{\wt{\phi}, \c, \b}^{(N)} (u_\b)  +  \RR_{\wt{\phi}, \c, \w}^{(N)} (u_\w)  }_{ \CCC^0(X^0_\c) }  
\leq  M -  \frac{\delta_1}{2} \inf \bigl\{  \exp\bigl(S_N\wt{\phi}(w)\bigr)   :  w\in S^2  \bigr\}$.

By (\ref{eqBound_aPhi_Sup}) in Lemma~\ref{lmBound_aPhi} with $T\coloneqq 1$, the definition of $M$ above, and (\ref{eqDefHolderNorm}), we have
\begin{equation*}
     \NormBig{\RR_{\wt{\phi}, \c, \b}^{(N)}(u_\b)  + \RR_{\wt{\phi}, \c, \w}^{(N)}(u_\w)  }_{\CCC^0(X_\c^0)}
 \leq \max\Bigl\{  \norm{u_\b}_{\CCC^0(X^0_\b)} , \, \norm{u_\w}_{\CCC^0(X^0_\w)}  \Bigr\} - \delta_2
\end{equation*}
with
$\delta_2   \coloneqq    \frac{\delta_1}{2} \exp ( - N (C_{5} K + \abs{\log(\deg f)}))$, where $C_{5}$ is the constant from Lemma~\ref{lmBound_aPhi} depending only on $f$, $\CC$, $d$, and $\alpha$. Therefore, the constant $\delta_2$ depends only on $f$, $\CC$, $d$, $\alpha$, $g$, $K$, and $\delta_1$.
\end{proof}

\begin{theorem}    \label{thmLDiscontTildeUniformConv}
Let $f\: S^2 \rightarrow S^2$ be an expanding Thurston map with a Jordan curve $\CC\subseteq S^2$ satisfying $f(\CC)\subseteq\CC$ and $\post f\subseteq \CC$. Let $d$ be a visual metric on $S^2$ for $f$ with expansion factor $\Lambda>1$ and $\alpha\in(0,1]$ be a constant. Let $H$, $H_\b$, and $H_\w$ be bounded subsets of $\Holder{\alpha}(S^2,d)$, $\Holder{\alpha} \bigl( X^0_\b,d \bigr)$, and $\Holder{\alpha} \bigl(X^0_\w,d \bigr)$, respectively (with respect to H\"older norms). Then for all $\c\in\{\b, \, \w\}$, $\phi\in H$, $u_\b\in H_\b$, and $u_\w\in H_\w$, we have
\begin{equation}  \label{eqLDiscontTildeUniformConv}
\lim_{n\to+\infty}  \NormBig{ \RR_{\wt{\phi}, \c, \b}^{(n)}  (\overline{u}_\b )    +       \RR_{\wt{\phi}, \c, \w}^{(n)} (\overline{u}_\w)}_{\CCC^0(X^0_\c)} = 0,
\end{equation}
where the pair of functions $\overline{u}_\b \in \Holder{\alpha} \bigl(X^0_\b,d \bigr)$ and $\overline{u}_\w \in \Holder{\alpha} \bigl(X^0_\w,d \bigr)$ are given by
\begin{equation*}
\overline{u}_\b  \coloneqq  u_\b - \int_{X^0_\b}\! u_\b\,\mathrm{d}\mu_\phi - \int_{X^0_\w}\! u_\w\,\mathrm{d}\mu_\phi   \mbox{ and } 
\overline{u}_\w \coloneqq  u_\w - \int_{X^0_\b}\! u_\b\,\mathrm{d}\mu_\phi - \int_{X^0_\w}\! u_\w\,\mathrm{d}\mu_\phi
\end{equation*}
with $\mu_\phi$ denoting the unique equilibrium state for $f$ and $\phi$.

Moreover, the convergence in (\ref{eqLDiscontTildeUniformConv}) is uniform in $\phi\in H$, $u_\b\in H_\b$, and $u_\w\in H_\w$.
\end{theorem}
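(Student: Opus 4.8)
I would follow the standard iteration-and-contraction scheme for transfer operators, adapted here to the split operator $\RRR_{\wt\phi}$. Write $\bigl(v_\b^{(n)},v_\w^{(n)}\bigr)\coloneqq\RRR_{\wt\phi}^{\,n}\bigl(\overline u_\b,\overline u_\w\bigr)$, so that by Lemma~\ref{lmSplitRuelleCoordinateFormula} one has $v_\c^{(n)}=\RR_{\wt\phi,\c,\b}^{(n)}(\overline u_\b)+\RR_{\wt\phi,\c,\w}^{(n)}(\overline u_\w)$ and the quantity in (\ref{eqLDiscontTildeUniformConv}) equals $\bigl\|v_\c^{(n)}\bigr\|_{\CCC^0(X^0_\c)}$. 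Set $M_n\coloneqq\max\bigl\{\|v_\b^{(n)}\|_{\CCC^0(X^0_\b)},\,\|v_\w^{(n)}\|_{\CCC^0(X^0_\w)}\bigr\}$; the claim is that $M_n\to 0$ with a rate independent of $\phi\in H$, $u_\b\in H_\b$, $u_\w\in H_\w$. First I would collect three facts. (i) Monotonicity: by (\ref{eqSplitRuelleTildeSupDecreasing}) in Lemma~\ref{lmRtildeNorm=1}, $M_{n+1}\le M_n$, so $M_n$ decreases to some $M_\infty\ge 0$. (ii) Conservation of the mean-zero condition: since $\overline u_\b,\overline u_\w$ were chosen so that $\int_{X^0_\b}\overline u_\b\,\mathrm d\mu_\phi+\int_{X^0_\w}\overline u_\w\,\mathrm d\mu_\phi=0$ (using $\mu_\phi(\CC)=0$, whence $\mu_\phi(X^0_\b)+\mu_\phi(X^0_\w)=1$), applying Lemma~\ref{lmRDiscontTildeDual} with $u=\overline u_\c$ for $\c\in\{\b,\w\}$, summing over $\c$ and interchanging the two summations gives $\int_{X^0_\b}v_\b^{(n)}\,\mathrm d\mu_\phi+\int_{X^0_\w}v_\w^{(n)}\,\mathrm d\mu_\phi=0$ for every $n\in\N_0$. (iii) Uniform equicontinuity: because $H$ is bounded, Lemma~\ref{lmBound_aPhi} (with $a=1$) bounds $\Hnorm{\alpha}{\wt\phi}{(S^2,d)}$ by a constant $K$ depending only on $f,\CC,d,\alpha$ and $H$; feeding this into the H\"older estimate produced in the proof of Lemma~\ref{lmLDiscontProperties} — whose two terms here reduce, via $\RR_{\wt\phi}^{\,n}(\mathbbm{1}_{S^2})=\mathbbm{1}_{S^2}$ and (\ref{eqRDiscontTildeSupDecreasing}), to $C_0\Lambda^{-\alpha n}\Hseminorm{\alpha,\, (X^0_{\c'},d)}{\overline u_{\c'}}$ and a bounded multiple of $\Hseminorm{\alpha,\, (S^2,d)}{\wt\phi}\,\|\overline u_{\c'}\|_{\CCC^0}$ — yields a bound $\Hseminorm{\alpha,\, (X^0_\c,d)}{v_\c^{(n)}}\le M^*$ with $M^*$ independent of $n$ and of the chosen functions (here $\Hseminorm{\alpha,\, (X^0_{\c'},d)}{\overline u_{\c'}}=\Hseminorm{\alpha,\, (X^0_{\c'},d)}{u_{\c'}}$ is bounded since $H_{\c'}$ is bounded). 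Equivalently, every $v_\c^{(n)}$ lies in $\CCC_g^{+\infty}(X^0_\c,d)$ for the single abstract modulus of continuity $g(t)\coloneqq M^*t^\alpha$; boundedness of $H_\b,H_\w$ and $\abs{\int v\,\mathrm d\mu_\phi}\le\|v\|_{\CCC^0}$ also give a uniform bound $M_0\le M_0^*$.

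With (i)--(iii) in hand, fix $\epsilon>0$ and apply Lemma~\ref{lmLSplitTildeSupBound} with this $g$, with the constant $K$ from (iii), and with $\delta_1\coloneqq\epsilon$, obtaining $N=N(\epsilon)\in\N$ and $\delta_2=\delta_2(\epsilon)>0$ — both depending only on $f,\CC,d,\alpha,g,K,\epsilon$ — such that whenever $M_n\ge\epsilon$ one has $\bigl\|\RR_{\wt\phi,\c,\b}^{(N)}(v_\b^{(n)})+\RR_{\wt\phi,\c,\w}^{(N)}(v_\w^{(n)})\bigr\|_{\CCC^0(X^0_\c)}\le M_n-\delta_2$ for each $\c$. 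By Lemma~\ref{lmSplitRuelleCoordinateFormula} the left-hand side is $\|v_\c^{(n+N)}\|_{\CCC^0(X^0_\c)}$, so this reads $M_{n+N}\le M_n-\delta_2$. Together with the monotonicity $M_{m+1}\le M_m$, a telescoping over blocks of length $N$ shows that $M_n$ drops below $\epsilon$ after at most $N\lceil M_0^*/\delta_2\rceil$ steps and then remains below $\epsilon$. Since this threshold depends only on $\epsilon$ and the bounded sets $H,H_\b,H_\w$ (not on $\phi,u_\b,u_\w$), this is exactly the uniform convergence asserted in (\ref{eqLDiscontTildeUniformConv}). In particular the contradiction hypothesis $M_\infty>0$ would force $M_{kN}\le M_0-k\delta_2(M_\infty)\to-\infty$, which is impossible.

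The step I expect to be the main obstacle is (iii), the uniform equicontinuity of the slabs $v_\c^{(n)}$ over $X^0_\c$: the split Ruelle operator does not preserve the H\"older space over the connected sphere $S^2$, so no spectral-gap shortcut is available at this stage, and the bound has to be obtained by hand from the distortion estimates (Lemmas~\ref{lmMetricDistortion},~\ref{lmSnPhiBound},~\ref{lmExpToLiniear}) exactly as in the proof of Lemma~\ref{lmLDiscontProperties}, taking care that every constant is controlled purely in terms of $f,\CC,d,\alpha$ and the bounded sets $H,H_\b,H_\w$ (the $\wt\phi$-dependence being absorbed through Lemma~\ref{lmBound_aPhi}). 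Once (iii) is secured the remaining pieces are soft: Lemma~\ref{lmRtildeNorm=1} supplies the monotone weak contraction of the sup norm, Lemma~\ref{lmRDiscontTildeDual} supplies the conserved mean-zero constraint preventing $M_n$ from stabilizing at a positive limit, and Lemma~\ref{lmLSplitTildeSupBound} supplies the definite decrease on the resulting equicontinuous family bounded away from $0$ in sup norm.
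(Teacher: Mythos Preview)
Your proposal is correct and follows essentially the same route as the paper: both arguments rest on the triad (monotonicity from Lemma~\ref{lmRtildeNorm=1}, preservation of the mean-zero constraint from Lemma~\ref{lmRDiscontTildeDual}, uniform equicontinuity of the iterates from the H\"older estimate in Lemma~\ref{lmLDiscontProperties}/Lemma~\ref{lmBasicIneq} together with Lemma~\ref{lmBound_aPhi}), and then invoke Lemma~\ref{lmLSplitTildeSupBound} for the definite drop. The only packaging difference is that the paper argues by contradiction with $a_n\coloneqq\sup_{\phi,u_\b,u_\w}M_n$ and $\delta_1=a_*/2$, whereas you give the direct quantitative version with $\delta_1=\epsilon$ and a telescoping count of steps; your version is slightly more constructive but otherwise equivalent. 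One minor wording issue: the constant $K$ you feed into Lemma~\ref{lmLSplitTildeSupBound} should bound $\Hnorm{\alpha}{\phi}{(S^2,d)}$ (not $\Hnorm{\alpha}{\wt\phi}{(S^2,d)}$), but since $H$ is bounded this is harmless.
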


\begin{proof}
Without loss of generality, we assume that $H\neq\emptyset$, $H_\b\neq\emptyset$, and $H_\w\neq\emptyset$. Define constants $K  \coloneqq  \sup \bigl\{ \Hnorm{\alpha}{\phi}{(S^2,d)}  :  \phi\in H\bigr\} \in [0,+\infty)$ and $K_\c  \coloneqq  \sup \bigl\{ \Hnorm{\alpha}{u_\c}{(X^0_\b,d)}  :  u_\c\in H_\c\bigr\}  \in [0,+\infty)$ for $\c\in\{\b, \, \w\}$. Define for each $n\in\N_0$,
\begin{equation*}
a_n  \coloneqq  \sup \Bigl\{  \NormBig{ \RR_{\wt{\phi}, \c, \b}^{(n)}  (\overline{u}_\b )    +       \RR_{\wt{\phi}, \c, \w}^{(n)} (\overline{u}_\w)}_{\CCC^0(X^0_\c)}   :   \c\in\{\b, \, \w\},\, \phi\in H, \, u_\b\in H_\b, \, u_\w\in H_\w \Bigr\}.
\end{equation*}
Note that by Definition~\ref{defSplitRuelle}, $a_0\leq 2K_\b+2K_\w <+\infty$.

By (\ref{eqLDiscontProperties_Split}) in Lemma~\ref{lmLDiscontProperties} and (\ref{eqSplitRuelleTildeSupDecreasing}) in Lemma~\ref{lmRtildeNorm=1}, for all $n\in\N_0$, $\phi\in H$, $\c\in\{\b, \, \w\}$, $v_\b\in\CCC(X^0_\b)$, and $v_\w\in\CCC(X^0_\w)$, we have
\begin{align*}
&   \NormBig{  \RR_{\wt{\phi}, \c, \b}^{(n+1)}  (v_\b) +  \RR_{\wt{\phi}, \c, \w}^{(n+1)}  (v_\w)   }_{ \CCC^0(X^0_\c,d)}   \\
&\qquad =   \Normbigg{   \sum_{\c'\in\{\b, \, \w\}} \  \RR_{\wt{\phi}, \c, \c'}^{(1)}  \Bigl(  \RR_{\wt{\phi}, \c', \b}^{(n)}  (v_\b) + \RR_{\wt{\phi}, \c', \w}^{(n)}  (v_\w)\Bigr)   }_{ \CCC^0(X^0_\c,d)}    \\
 &\qquad \leq\max\Bigl\{   \NormBig{   \RR_{\wt{\phi}, \c', \b}^{(n)}  (v_\b) + \RR_{\wt{\phi}, \c', \w}^{(n)}  (v_\w)   }_{ \CCC^0(X^0_{\c'},d)}   :  \c'\in\{\b, \, \w\}    \Bigr\}.
\end{align*}
So $\{a_n\}_{n\in\N_0}$ is a non-increasing sequence of non-negative real numbers. 

Suppose now that $\lim_{n\to+\infty} a_n   \eqqcolon   a_* >0$. By Lemma~\ref{lmRtildeNorm=1}, (\ref{eqBasicIneqN1}) in Lemma~\ref{lmBasicIneq} with $a \coloneqq 1$ and $b  \coloneqq 0$, (\ref{eqDefA0}), (\ref{eqDefT0}), and (\ref{eqDefC10}), we get that $\RR_{\wt{\phi},  \c,  \b}^{(n)} (\overline{u}_\b)  + \RR_{\wt{\phi},  \c,  \w}^{(n)} (\overline{u}_\w)  \in \CCC^{2(K_\b+K_\w)}_g(X^0_\c,d)$, for each $\c\in\{\b, \, \w\}$ and each pair of $u_\b\in H_\b$ and $u_\w\in H_\w$, with an abstract modulus of continuity $g$ given by $g(t)  \coloneqq  2 (C_0(K_\b+K_\w)+2(K_\b+K_\w)A)t^\alpha$, $t\in[0,+\infty)$, where the constant $A>1$ is given by
\begin{equation*}
A  \coloneqq (1+2T) \frac{ 2C_0 }{1-\Lambda^{-\alpha}} \exp\Bigl( \frac{C_0 T}{1-\Lambda^{-\alpha}} \bigl( \diam_d(S^2)\bigr)^\alpha   \Bigr),
\end{equation*}
and $T  \coloneqq  (2 s_0 + 1)  C_{5} K \exp ( 2 s_0 C_{6} K)$. Here the constant $C_0 > 1$ depending only on $f$, $d$, and $\CC$ comes from Lemma~\ref{lmMetricDistortion}, and $C_{5}>1$, $C_{6}>0$ are the constants from Lemma~\ref{lmBound_aPhi} depending only on $f$, $\CC$, $d$, and $\alpha$. So $g$ and $A$ both depend only on $f$, $\CC$, $d$, $\alpha$, $H$, $H_\b$, and $H_\w$. By Lemma~\ref{lmRDiscontTildeDual},
\begin{equation*}
\sum_{\c\in\{\b, \, \w\}}  \int_{X^0_\c} \!  \Bigl(  \RR_{\wt{\phi}, \c, \b}^{(n)}  (\overline{u}_\b) + \RR_{\wt{\phi}, \c, \w}^{(n)}  (\overline{u}_\w)  \Bigr) \,\mathrm{d}\mu_\phi 
=  \int_{X^0_\b} \!  \overline{u}_\b \,\mathrm{d}\mu_\phi + \int_{X^0_\w} \!  \overline{u}_\w \,\mathrm{d}\mu_\phi = 0.
\end{equation*}
By  (\ref{eqLDiscontProperties_Split}) in Lemma~\ref{lmLDiscontProperties}, (\ref{eqSplitRuelleTildeSupDecreasing}) in Lemma~\ref{lmRtildeNorm=1}, and applying Lemma~\ref{lmLSplitTildeSupBound} with $f$, $\CC$, $d$, $g$, $\alpha$, $K$, and $\delta_1 \coloneqq  \frac{a_*}{2}>0$, we find constants $N\in\N$ and $\delta_2>0$ such that
\begin{align*}
&           \NormBig{  \RR_{\wt{\phi}, \c, \b}^{(N+n)} (\overline{u}_\b)  + \RR_{\wt{\phi}, \c, \w}^{(N+n)} (\overline{u}_\w)      }_{\CCC^0(X^0_\c)}   \\
&\qquad   = \NormBig{ \sum_{\c'\in\{\b, \, \w\}} \RR_{\wt{\phi}, \c, \c'}^{(N)} \Bigl(  \RR_{\wt{\phi}, \c', \b}^{(n)} (\overline{u}_\b ) +   \RR_{\wt{\phi}, \c', \w}^{(n)} (\overline{u}_\w) \Bigr)       }_{\CCC^0(X^0_\c)} \\
&\qquad  \leq \max \Bigl\{  \NormBig{  \RR_{\wt{\phi}, \c', \b}^{(n)} (\overline{u}_\b ) +   \RR_{\wt{\phi}, \c', \w}^{(n)} (\overline{u}_\w)  }_{\CCC^0(X^0_{\c'})}  :  \c'\in\{\b, \, \w\}       \Bigr\}   - \delta_2 \\
&\qquad  \leq   a_n - \delta_2,
\end{align*}
for all $n\in\N_0$, $\c\in\{\b, \, \w\}$, $\phi\in H$, $u_\b\in H_\b$, and $u_\w\in H_\w$ satisfying
\begin{equation}   \label{eqPfthmLDiscontTildeUniformConv_Geq}
\max \Bigl\{  \NormBig{  \RR_{\wt{\phi}, \c', \b}^{(n)} (\overline{u}_\b ) +   \RR_{\wt{\phi}, \c', \w}^{(n)} (\overline{u}_\w)  }_{\CCC^0(X^0_{\c'})}  :  \c'\in\{\b, \, \w\}       \Bigr\}    \geq  a_* / 2 .
\end{equation}
Since $\lim_{n\to+\infty} a_n = a_*$, we can fix $m\geq 1$ large enough so that $a_m\leq a_* +\frac{\delta_2}{2}$. Then for each $\c\in\{\b, \, \w\}$, each $\phi\in H$, and each pair $u_\b\in H_\b$ and $u_\w\in H_\w$ satisfying (\ref{eqPfthmLDiscontTildeUniformConv_Geq}) with $n  \coloneqq  m$, we have 
\begin{equation*}
 \NormBig{  \RR_{\wt{\phi}, \c, \b}^{(N+m)} (\overline{u}_\b)  + \RR_{\wt{\phi}, \c, \w}^{(N+m)} (\overline{u}_\w)      }_{\CCC^0(X^0_\c)} 
\leq a_m - \delta_2 \leq a_* - 2^{-1} \delta_2 .
\end{equation*}
On the other hand, by (\ref{eqSplitRuelleTildeSupDecreasing}) in Lemma~\ref{lmRtildeNorm=1}, for all $\phi \in H$, $u_\b\in H_\b$, and $u_\w\in H_\w$ with 
$\max \bigl\{  \Normbig{  \RR_{\wt{\phi}, \c', \b}^{(m)} (\overline{u}_\b ) +   \RR_{\wt{\phi}, \c', \w}^{(m)} (\overline{u}_\w)  }_{\CCC^0(X^0_{\c'})}  :  \c'\in\{\b, \, \w\}      \bigr\}  <  a_* / 2$,
the following inequality
\begin{equation*}
 \NormBig{  \RR_{\wt{\phi}, \c, \b}^{(N+m)} (\overline{u}_\b)  + \RR_{\wt{\phi}, \c, \w}^{(N+m)} (\overline{u}_\w)      }_{\CCC^0(X^0_\c)} 
<   a_* / 2  
\end{equation*}
 holds for each $\c\in\{\b, \, \w\}$. Thus, $a_{N+m} \leq \max \bigl\{ a_*- \frac{\delta_2}{2}, \, \frac{a_*}{2} \bigr\}  < a_*$, contradicting the fact that $\{a_n\}_{n\in\N_0}$ is a non-increasing sequence and the assumption that $\lim_{n\to+\infty}  a_n = a_*>0$. This proves the uniform convergence in (\ref{eqLDiscontTildeUniformConv}).
\end{proof}

\begin{theorem}    \label{thmSpectralGap}
Let $f\: S^2 \rightarrow S^2$ be an expanding Thurston map with a Jordan curve $\CC\subseteq S^2$ satisfying $f(\CC)\subseteq\CC$ and $\post f\subseteq \CC$. Let $d$ be a visual metric on $S^2$ for $f$ with expansion factor $\Lambda>1$. Let $\alpha\in(0,1]$ be a constant and $H$ be a bounded subset of $\Holder{\alpha}(S^2,d)$ with respect to the H\"older norm. Then there exists a constant $\rho_1  \in(0,1)$ depending on $f$, $\CC$, $d$, $\alpha$, and $H$ such that the following property holds:

\smallskip

For all $\phi\in H$, $n\in\N_0$, $\c\in\{\b, \, \w\}$, $u_\b\in \Holder{\alpha} \bigl(X^0_\b,d \bigr)$, and $u_\w\in \Holder{\alpha} \bigl(X^0_\w,d \bigr)$, we have
\begin{equation}  \label{eqSpectralGap}
 \NormBig{  \sum_{\c' \in \{ \b, \, \w \} } \RR_{\wt{\phi}, \c, \c'}^{(n)} ( \overline{u}_{\c'}) }_{\CCC^0(X^0_\c)}   
 \leq 6 \rho_1^n \max_{ \c' \in \{ \b, \, \w \} } \Bigl\{    \Hnorm{\alpha}{u_{\c'} }{(X^0_{\c'},d)}    \Bigr\},
\end{equation}
where $\overline{u}_{\c'} \in \Holder{\alpha} \bigl(X^0_{\c'},d \bigr)$ for $c'\in\{ \b, \, \w \}$ are given by
$\overline{u}_{\c'}   \coloneqq   u_{\c'} - \int_{X^0_\b}\! u_\b\,\mathrm{d}\mu_\phi - \int_{X^0_\w}\! u_\w\,\mathrm{d}\mu_\phi$, 
with $\mu_\phi$ denoting the unique equilibrium state for $f$ and $\phi$. In particular,
\begin{equation}   \label{eqSpectralGapAll}
      \NormBig{  \sum_{\c' \in \{ \b, \, \w \} } \RR_{\wt{\phi}, \c, \c'}^{(n)} (u_{\c'}) }_{\CCC^0(X^0_\c)}   
\leq  \Absbigg{  \int_{X^0_\b}\! u_\b\,\mathrm{d}\mu_\phi + \int_{X^0_\w}\! u_\w\,\mathrm{d}\mu_\phi  } 
+  6 \rho_1^n \max_{ \c' \in \{ \b, \, \w \} } \Bigl\{    \Hnorm{\alpha}{u_{\c'} }{(X^0_{\c'},d)}    \Bigr\}.
\end{equation}
\end{theorem}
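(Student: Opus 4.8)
The proof of Theorem~\ref{thmSpectralGap} is a quantitative upgrade of the qualitative uniform convergence in Theorem~\ref{thmLDiscontTildeUniformConv}, combined with the H\"older regularity estimates from the basic inequalities (Lemma~\ref{lmBasicIneq}) and the boundedness estimates of Lemma~\ref{lmBound_aPhi}. The key point is that the family of ``zero-average'' pairs $(\overline{u}_\b, \overline{u}_\w)$ arising from a bounded set $H$ of potentials and bounded sets of test functions is invariant (up to uniform control) under the split Ruelle operators $\RR_{\wt\phi,\c,\c'}^{(n)}$, and that on this family the sup-norm decays geometrically. First I would reduce to the case where $\max\{\Hnorm{\alpha}{u_\b}{(X^0_\b,d)}, \Hnorm{\alpha}{u_\w}{(X^0_\w,d)}\} \le 1$ by homogeneity of both sides of \eqref{eqSpectralGap} in $(u_\b,u_\w)$, and replace the generic bounded set $H$ by noting that the relevant constants in Lemmas~\ref{lmBound_aPhi},~\ref{lmBasicIneq} depend only on $K := \sup_{\phi\in H}\Hnorm{\alpha}{\phi}{(S^2,d)}$, which is finite. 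Also $\eqref{eqSpectralGapAll}$ is immediate from $\eqref{eqSpectralGap}$ by writing $u_{\c'} = \overline{u}_{\c'} + \bigl(\int_{X^0_\b} u_\b \,\mathrm{d}\mu_\phi + \int_{X^0_\w} u_\w \,\mathrm{d}\mu_\phi\bigr)$, applying linearity of the split operator, and using \eqref{eqSplitRuelleTildeSupDecreasing} in Lemma~\ref{lmRtildeNorm=1} on the constant part (noting $\RR_{\wt\phi,\c,\b}^{(n)}(c) + \RR_{\wt\phi,\c,\w}^{(n)}(c) = c$ by $\RR_{\wt{\Re\phi}}^n(\mathbbm 1)=\mathbbm 1$). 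So the whole content is \eqref{eqSpectralGap}.

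**Main steps.** Fix $H_\b = H_\w = \{u : \Hnorm{\alpha}{u}{(X^0_\c,d)} \le 1\}$, bounded. By the basic inequality \eqref{eqBasicIneqN1} of Lemma~\ref{lmBasicIneq} (applied with $a=1$, $b=0$, so $s=1$, $\wt{s\phi} = \wt\phi$), together with Lemma~\ref{lmRtildeNorm=1} to control the sup-norm factor $\RR_{\wt\phi,\c,E}^{(n)}(\abs v)$, one shows there is a constant $A' = A'(f,\CC,d,\alpha,H)$ such that for all $\phi\in H$, all $\c$, all $n\in\N_0$, and all $u_\b\in H_\b$, $u_\w\in H_\w$,
\begin{equation*}
\Hseminormbig{\alpha,\,(X^0_\c,d)}{\RR_{\wt\phi,\c,\b}^{(n)}(\overline u_\b) + \RR_{\wt\phi,\c,\w}^{(n)}(\overline u_\w)} \le A',
\end{equation*}
i.e. these images lie in a fixed abstract-modulus-of-continuity class $\CCC_g^{+\infty}$ with $g(t) = A' t^\alpha$ (the sup-norm is already bounded by $2$ via \eqref{eqSplitRuelleTildeSupDecreasing}). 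This is where the ``cone-type'' form of \eqref{eqBasicIneqN1} is essential: the H\"older seminorm of the image is controlled by the seminorm of the input divided by $\Lambda^{\alpha n}$ plus a term proportional to the sup-norm of the input, so iterating keeps everything in a bounded family. Next, by Theorem~\ref{thmLDiscontTildeUniformConv}, $\NormBig{\RR_{\wt\phi,\c,\b}^{(n)}(\overline u_\b) + \RR_{\wt\phi,\c,\w}^{(n)}(\overline u_\w)}_{\CCC^0(X^0_\c)} \to 0$ uniformly over $\phi\in H$, $u_\b\in H_\b$, $u_\w\in H_\w$, and $\c\in\{\b,\w\}$; hence there is $N_0\in\N$ with this sup-norm $\le 1/2$ for all $n\ge N_0$. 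Now I use the semigroup property \eqref{eqLDiscontProperties_Split}: for $n = qN_0 + r$ with $0\le r < N_0$, split $\RR_{\wt\phi,\c,\cdot}^{(n)} = \RR_{\wt\phi,\c,\cdot}^{(N_0)}\circ\cdots\circ\RR_{\wt\phi,\cdot,\cdot}^{(N_0)}\circ\RR_{\wt\phi,\cdot,\cdot}^{(r)}$. The point is that after applying $\RR^{(N_0)}_{\wt\phi}$ once, the resulting pair — after renormalizing by its sup-norm — again lies in (a fixed multiple of) the family $H_\b\times H_\w$: indeed its H\"older seminorm is bounded by $A'$ (so its full H\"older norm is $\le A' + \tfrac12$), and crucially its $\mu_\phi$-average is still zero by Lemma~\ref{lmRDiscontTildeDual} (which says $\sum_{\c'}\int_{X^0_{\c'}} \RR_{\wt\phi,\c',\c}^{(N_0)}(u)\,\mathrm d\mu_\phi = \int_{X^0_\c} u\,\mathrm d\mu_\phi$, so the total average is preserved, hence stays $0$). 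Therefore each further application of $\RR^{(N_0)}_{\wt\phi}$ contracts the sup-norm by a factor $\le \tfrac12 \cdot(A'+\tfrac12)$... — and here one must be slightly more careful: rescale so that the input to the $k$-th block has sup-norm $1$ and H\"older norm $\le A'+1$; feed it into $\RR^{(N_0)}$; the output has sup-norm $\le \tfrac12$ and, after dividing by this sup-norm, H\"older norm $\le 2A'+1$, which is NOT $\le A'+1$.

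**Fixing the self-improvement.** The standard remedy (Doeblin–Fortet / Lasota–Yorke style) is to iterate two blocks at a time, or more cleanly: first note that the H\"older seminorm bound $A'$ holds for \emph{all} $n$, including large $n$, independent of the input seminorm — because \eqref{eqBasicIneqN1} gives seminorm of $\RR^{(n)}$-image $\le C_0\Lambda^{-\alpha n}\Hseminorm{\alpha,\,(E,d)}{v} + A_0\RR^{(n)}_{\wt{a\phi}}(\abs v)(x)$, and the second term is $\le A_0 \cdot 2$ when $\|v\|_{\CCC^0}\le 2$ by Lemma~\ref{lmRtildeNorm=1}, while the first term decays. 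So for $n\ge N_1$ (some fixed $N_1$ depending on $H$ only), every image pair from $H_\b\times H_\w$ has H\"older seminorm $\le 2A_0 + 1 =: A''$ regardless of further iteration, and sup-norm $\le 2$; moreover after one more application of $\RR^{(N_0)}$ the sup-norm is $\le 1$. Set $M := \max\{N_0, N_1\}$. Then for $n \ge M$: writing $n = qM + r$ with $M \le r < 2M$ and applying $\RR^{(M)}$ repeatedly $q$ times to the pair $\RR^{(r)}(\overline u_\b, \overline u_\w)$ — whose sup-norm is $\le 2$ and whose H\"older norm is $\le A'' + 2$ and whose total $\mu_\phi$-average is $0$ — each $\RR^{(M)}$ application, by Theorem~\ref{thmLDiscontTildeUniformConv} applied with $H_\b = H_\w = \{u: \Hnorm{\alpha}{u}{\cdot}\le A''+2\}$, contracts the sup-norm by a fixed factor $\eta < 1$ (the uniform limit being $0$ means for large enough $M$, sup-norm $\le \eta\cdot(\text{sup-norm})$ for inputs in that ball with zero total average), while Lemma~\ref{lmRDiscontTildeDual} preserves zero average and \eqref{eqBasicIneqN1} (plus the decay of $\Lambda^{-\alpha M}$) keeps the H\"older norm $\le A'' + 2$. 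Hence the sup-norm of the $n$-th image is $\le 2\eta^q \le 2\eta^{n/(2M) - 1}$, which is $\le C\rho_1^n$ for $\rho_1 := \eta^{1/(2M)}\in(0,1)$ and a constant $C$ absorbed into the factor $6$; the small values $n < M$ are handled by \eqref{eqSplitRuelleTildeSupDecreasing} giving sup-norm $\le 2\max_\c\Hnorm{\alpha}{u_\c}{\cdot}$, which is $\le 6\rho_1^n \max_\c \Hnorm{\alpha}{u_\c}{\cdot}$ after enlarging the constant (choose $\rho_1$ close enough to $1$, or simply note $6\rho_1^M \ge 2$ forces a constraint that one satisfies by the final choice of $\rho_1$). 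Tracking all constants shows $\rho_1$ depends only on $f$, $\CC$, $d$, $\alpha$, $H$.

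**Main obstacle.** The one genuinely delicate point, as flagged above, is the \emph{self-improvement of the invariant family}: a naive single-block iteration does not close because renormalizing by the decaying sup-norm inflates the H\"older norm. The resolution rests on the fact — extracted from the form of \eqref{eqBasicIneqN1} in Lemma~\ref{lmBasicIneq} — that the H\"older \emph{seminorm} of $\RR^{(n)}_{\wt\phi}$-images is \emph{a priori bounded by a constant depending only on $H$} once $n$ is moderately large, because the contribution of the input's seminorm comes with a factor $\Lambda^{-\alpha n}$ and the remaining contribution is controlled purely by the sup-norm (hence by $2$). Combined with the invariance of the zero-$\mu_\phi$-average condition under the split operators (Lemma~\ref{lmRDiscontTildeDual}), this gives a genuinely forward-invariant bounded family on which Theorem~\ref{thmLDiscontTildeUniformConv} supplies uniform geometric decay. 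Everything else is bookkeeping of constants.
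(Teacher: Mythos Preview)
Your overall strategy is the paper's strategy: reduce by homogeneity, use the Lasota--Yorke structure of \eqref{eqBasicIneqN1} to control the H\"older seminorm of iterates uniformly, invoke Theorem~\ref{thmLDiscontTildeUniformConv} for qualitative decay, preserve the zero-average condition via Lemma~\ref{lmRDiscontTildeDual}, and iterate in blocks. You also correctly isolate the genuine obstacle (closing up the invariant family after rescaling).

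But your fix has a gap at the step ``each $\RR^{(M)}$ application \dots\ contracts the sup-norm by a fixed factor $\eta<1$''. Theorem~\ref{thmLDiscontTildeUniformConv} does \emph{not} give a multiplicative contraction of the sup-norm: it only gives an \emph{absolute} bound --- for inputs in the ball $\{\Hnorm{\alpha}{u}{}\le A''+2\}$ with zero total average, the sup-norm after $M$ steps is $\le\epsilon$, independent of the input sup-norm. After one block the sup-norm is $\le\epsilon$; after a second block the input is still in the same ball (H\"older norm $\le A''+\epsilon<A''+2$), so the output sup-norm is again only $\le\epsilon$, not $\le\epsilon^2$. Rescaling by $1/\epsilon$ does not help either, since the seminorm is only known to be $\le A''$ (not comparable to the sup-norm), so the rescaled seminorm blows up and you leave the ball.

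The resolution --- which is exactly what the paper does --- is to contract the \emph{full H\"older norm}, not the sup-norm alone. Feed the output of the first $N_0$ steps back into \eqref{eqBasicIneqN1} for $N_0$ more steps via \eqref{eqLDiscontProperties_Split}: for inputs with H\"older norm $\le 3$ and zero average, the seminorm after $2N_0$ steps is bounded by $2C_0 C_{17}\Lambda^{-\alpha N_0}$ plus $2A_0$ times the sup-norm after $N_0$ steps, while the sup-norm after $2N_0$ steps is controlled directly by Theorem~\ref{thmLDiscontTildeUniformConv}. Choosing $N_0$ large makes every piece $\le 1/8$, so the full H\"older norm after $2N_0$ steps is $\le 3/8$. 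Now multiply by $2$: the rescaled pair has H\"older norm $\le 3/4<3$ and still zero average (Lemma~\ref{lmRDiscontTildeDual}), so it lies back in the same ball and the argument iterates, giving H\"older norm (hence sup-norm) $\le 3\cdot 2^{-m}$ after $2N_0 m$ steps and $\rho_1 = 2^{-1/(2N_0)}$. With this correction your proof goes through and coincides with the paper's.
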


\begin{proof}
Without loss of generality, we assume that $H\neq \emptyset$. Define a constant 
\begin{equation}   \label{eqPfthmSpectralGap_K}
K   \coloneqq   \sup\bigl\{  \Hnorm{\alpha}{\phi}{(S^2,d)}   :  \phi\in H   \bigr\} \in [0,+\infty).
\end{equation}
Denote, for each $\c\in\{\b, \, \w\}$,
\begin{equation*}
H_\c    \coloneqq    \bigl\{  v_\c\in \Holder{\alpha} \bigl(X^0_\c,d \bigr)  :  \Hnorm{\alpha}{v_\c}{(X^0_\c,d)} \leq 3      \bigr\}.
\end{equation*}

Inequality~(\ref{eqSpectralGapAll}) follows immediately from (\ref{eqSpectralGap}), the triangle inequality, and the fact that $\RR_{\wt{\phi}, \c, \b}^{(1)} \bigl(\mathbbm{1}_{X^0_\b}\bigr) + \RR_{\wt{\phi}, \c, \w}^{(1)} \bigl(\mathbbm{1}_{X^0_\w}\bigr) = \mathbbm{1}_{X^0_\c}$ by (\ref{eqDefLc}) and Lemma~\ref{lmRtildeNorm=1}. So it suffices to establish (\ref{eqSpectralGap}).

We first consider the special case where $u_\b \in H_\b$ and $u_\w \in H_\w$.

By (\ref{eqBasicIneqN1}) in Lemma~\ref{lmBasicIneq} with $s\coloneqq 1$, (\ref{eqRDiscontTildeSupDecreasing}) in Lemma~\ref{lmRtildeNorm=1}, and (\ref{eqPfthmSpectralGap_K}), for all $j\in \N$, $\c\in\{\b, \, \w\}$, $\phi\in H$, $u_\b\in H_\b$, and $u_\w\in H_\w$, we have
\begin{align}   \label{eqPfthmSpectralGap_HolderBound}
              \Hseminormbigg{\alpha,\, (X^0_\c,d)}{   \sum_{\c'\in\{\b, \, \w\}}    \RR_{\wt{\phi}, \c, \c'}^{(j)}( \overline{u}_{\c'} )   }    
&\leq  \frac{C_0}{\Lambda^{\alpha j}}    \sum_{\c'\in\{\b, \, \w\}}  \Hseminorm{\alpha,\, (X^0_{\c'},d)}{ \overline{u}_{\c'} }  
                          +  A_0    \sum_{\c'\in\{\b, \, \w\}}   \NormBig{ \RR_{\wt{\phi}, \c, \c'}^{(j)}  (\abs{\overline{u}_{\c'} })}_{\CCC^0(X^0_\c)}   \notag\\
&\leq  \frac{6C_0}{\Lambda^{\alpha j}}  + A_0    \sum_{\c'\in\{\b, \, \w\}}    \Norm{\overline{u}_{\c'}}_{\CCC^0(X^0_{\c'})}   
                 \leq    C_{9}, 
\end{align}
where the constant $C_{9}$ is given by $C_{9} \coloneqq 6 C_0 + 12 A_0$, the constant $A_0 \coloneqq A_0 \bigl(f,\CC,d,K, \alpha  \bigr)>2$ defined in (\ref{eqDefA0}) from Lemma~\ref{lmBasicIneq} depends only on $f$, $\CC$, $d$, $H$, and $\alpha$,  and the constant $C_0>1$ from Lemma~\ref{lmMetricDistortion} depends only on $f$, $\CC$, and $d$. Thus, $C_{9}>1$ depends only on $f$, $\CC$, $d$, and $H$.

So by (\ref{eqLDiscontProperties_Split}) in Lemma~\ref{lmLDiscontProperties}, (\ref{eqBasicIneqN1}) in Lemma~\ref{lmBasicIneq} with $s \coloneqq 1$, (\ref{eqPfthmSpectralGap_HolderBound}), and (\ref{eqRDiscontTildeSupDecreasing}) in Lemma~\ref{lmRtildeNorm=1}, we get that for all $k\in\N$,
\begin{align}   \label{eqPfthmSpectralGap_JK}
&            \HseminormBig{\alpha,\, (X^0_\c,d)}{    \RR_{\wt{\phi}, \c, \b}^{(k+j)}( \overline{u}_\b )   +  \RR_{\wt{\phi}, \c, \w}^{(k+j)}( \overline{u}_\w ) }    \notag\\
&\qquad\leq  \sum_{\c'\in\{\b, \, \w\}}   \HseminormBig{ \alpha,\, (X^0_\c,d)}{     \RR_{\wt{\phi}, \c, \c'}^{(k)} \Bigl( \RR_{\wt{\phi}, \c', \b}^{(j)}( \overline{u}_\b )  +     \RR_{\wt{\phi}, \c', \w}^{(j)}( \overline{u}_\w ) \Bigr)  } \notag\\
&\qquad\leq  \sum_{\c'\in\{\b, \, \w\}}  \biggl(   \frac{C_0}{\Lambda^{\alpha k}}   \HseminormBig{ \alpha,\, (X^0_{\c'},d)}{    \RR_{\wt{\phi}, \c', \b}^{(j)}( \overline{u}_\b ) 	 +     \RR_{\wt{\phi}, \c', \w}^{(j)}( \overline{u}_\w )   }   \\
&\qquad\qquad\qquad\qquad +   A_0   \NormBig{     \RR_{\wt{\phi}, \c, \c'}^{(k)} \Bigl( \AbsBig{ \RR_{\wt{\phi}, \c', \b}^{(j)}( \overline{u}_\b )  +   \RR_{\wt{\phi}, \c', \w}^{(j)}( \overline{u}_\w ) }   \Bigr)  }_{\CCC^0(X^0_\c)} \biggr)    \notag\\
&\qquad\leq  \frac{2C_0 C_{9}}{\Lambda^{\alpha k}}  + A_0  \sum_{\c'\in\{\b, \, \w\}}  \NormBig{ \RR_{\wt{\phi}, \c', \b}^{(j)}( \overline{u}_\b )  +  \RR_{\wt{\phi}, \c',\w}^{(j)}( \overline{u}_\w )}_{\CCC^0(X^0_{\c'})}.  \notag
\end{align}

By Theorem~\ref{thmLDiscontTildeUniformConv}, we can choose $N_0\in \N$ with the property that 
\begin{equation}   \label{eqPfthmSpectralGap_N0}
\frac{2C_0 C_{9}}{\Lambda^{\alpha j}} \leq \frac18    \quad \mbox{ and } \quad
 ( 1+ A_0 )    \NormBig{ \RR_{\wt{\phi}, \c, \b}^{(j)}( \overline{u}_\b )  +   \RR_{\wt{\phi}, \c, \w}^{(j)}( \overline{u}_\w )}_{\CCC^0(X^0_\c)}    \leq \frac18,
\end{equation}
for all $j\in \N$ with $j\geq N_0$, $\c\in\{\b, \, \w\}$, $\phi\in H$, $u_\b\in H_\b$, and $u_\w\in H_\w$. We set $N_0\in\N$ to be the smallest integer with this property. So $N_0$ depends only on $f$, $\CC$, $d$, $\alpha$, and $H$.

For each $m\in \N$, each $\c\in\{\b, \, \w\}$, each $\phi\in H$, and each pair of functions $u_\b\in H_\b$ and $u_\w\in H_\w$, we denote
\begin{equation}    \label{eqPfthmSpectralGap_v_mc}
v_{m,\c}   \coloneqq    \RR_{\wt{\phi}, \c, \b}^{(2N_0 m)} (\overline{u}_\b)  + \RR_{\wt{\phi}, \c, \w}^{(2N_0 m)} (\overline{u}_\w).
\end{equation}
Then by (\ref{eqPfthmSpectralGap_JK}) and (\ref{eqPfthmSpectralGap_N0}), the function $v_{m,\c} \in \Holder{\alpha} \bigl(X^0_\c,d \bigr)$ satisfies
$\Hnorm{\alpha}{v_{m,\c}}{(X^0_\c,d)}   \leq 3 / 8$.
So $2 v_{m,\c} \in H_\c$. We also note that by Lemma~\ref{lmRDiscontTildeDual},
\begin{equation*}  
        \sum_{\c\in\{\b, \, \w\}}  \int_{X^0_\c} \! v_{m,\c} \,\mathrm{d}\mu_\phi 
 =  \sum_{\c\in\{\b, \, \w\}} \sum_{\c'\in\{\b, \, \w\}} \int_{X^0_\c} \! \RR_{\wt{\phi}, \c, \c'}^{(2N_0 m)} (\overline{u}_{\c'})   \,\mathrm{d}\mu_\phi 
 =  \sum_{\c'\in\{\b, \, \w\}}   \int_{X^0_{\c'}}  \! \overline{u}_{\c'} \,\mathrm{d}\mu_\phi  
 =  0.                                   
\end{equation*}

\smallskip

Next, we prove by induction that for each $m\in\N$, each $\phi\in H$, and each pair of functions $u_\b\in H_\b$ and $u_\w\in H_\w$, we have
\begin{equation}   \label{eqPfthmSpectralGap_Induction}
\max\bigl\{   \Hnorm{\alpha}{v_{m,\b}}{(X^0_\b,d)} ,  \, \Hnorm{\alpha}{v_{m,\w}}{(X^0_\w,d)}   \bigr\}  \leq 3 ( 1 / 2 )^m.
\end{equation}

We have already shown that (\ref{eqPfthmSpectralGap_Induction}) holds for $m=1$. 

Assume that (\ref{eqPfthmSpectralGap_Induction}) holds for $m=j$ for some $j\in\N$, then $2^j v_{j,\b} \in H_\b$ and $2^j v_{j,\w} \in H_\w$. By (\ref{eqLDiscontProperties_Split}) in Lemma~\ref{lmLDiscontProperties}, for each $\c\in\{\b, \, \w\}$, 
$2^j v_{j+1,\c}  = \RR_{\wt{\phi}, \c, \b}^{(2N_0)} (2^j v_{j,\b})  +  \RR_{\wt{\phi}, \c, \w}^{(2N_0)} (2^j v_{j,\w})$.
Thus, $\Hnorm{\alpha}{   2^j v_{j+1,\c}}{(X^0_\c,d)}  \leq 3 / 8 < 1 / 2$. So  $\Hnorm{\alpha}{  v_{j+1,\c}}{(X^0_\c,d)} \leq     ( 1 / 2 )^{j+1} < 3 ( 1 / 2 )^{j+1}$.

The induction is now complete.

\smallskip

Then by (\ref{eqLDiscontProperties_Split}) in Lemma~\ref{lmLDiscontProperties}, (\ref{eqSplitRuelleTildeSupDecreasing}) in Lemma~\ref{lmRtildeNorm=1}, (\ref{eqPfthmSpectralGap_v_mc}), and (\ref{eqPfthmSpectralGap_Induction}), the following holds for all $j\in \N$, $m\in\N_0$, $\c\in\{\b, \, \w\}$, $\phi\in H$, $u_\b\in H_\b$, and $u_\w\in H_\w$:
\begin{align*}
&           \NormBig{  \RR_{\wt{\phi}, \c, \b}^{(j+ 2N_0 m)} (\overline{u}_\b) +  \RR_{\wt{\phi}, \c, \w}^{(j+ 2N_0 m)} (\overline{u}_\w)  }_{\CCC^0(X^0_\c)}    \\
&\qquad =   \NormBig{ \sum_{\c'\in\{\b, \, \w\}} \RR_{\wt{\phi}, \c, \c'}^{(j)} \Bigl(\RR_{\wt{\phi}, \c', \b}^{(2N_0 m)} (\overline{u}_\b) +    \RR_{\wt{\phi}, \c', \w}^{(2N_0 m)} (\overline{u}_\w) \Bigr)  }_{\CCC^0(X^0_\c)}    \\
&\qquad\leq  \max \Bigl\{   \NormBig{    \RR_{\wt{\phi}, \c', \b}^{(2N_0 m)} (\overline{u}_\b) +    \RR_{\wt{\phi}, \c', \w}^{(2N_0 m)} (\overline{u}_\w)    }_{\CCC^0(X^0_{\c'})}    :   \c'\in\{\b, \, \w\}   \Bigr\}   \\
&\qquad\leq  3    ( 1 / 2 )^m.
\end{align*}
Hence, for each $n\in\N_0$,
\begin{equation}    \label{eqPfthmSpectralGap_Half}
 \NormBig{  \RR_{\wt{\phi}, \c, \b}^{(n)} (\overline{u}_\b) +  \RR_{\wt{\phi}, \c, \w}^{(n)} (\overline{u}_\w)  }_{\CCC^0(X^0_\c)}
\leq 3  ( 1 / 2 )^{ \lfloor \frac{n}{2N_0} \rfloor}
\leq 6 \rho_1^n,
\end{equation}
where the constant 
$
\rho_1 \coloneqq 2^{- 1 / ( 2N_0 ) }
$
depends only on $f$, $\CC$, $d$, $\alpha$, and $H$.

Finally, we consider the general case. For each pair of functions $w_\b \in \Holder{\alpha} \bigl(X^0_\b,d \bigr)$ and $w_\w \in \Holder{\alpha} \bigl(X^0_\w,d \bigr)$, we denote
$M  \coloneqq \max\bigl\{    \Hnorm{\alpha}{w_\b}{(X^0_\b,d)}, \, \Hnorm{\alpha}{w_\w}{(X^0_\w,d)}  \bigr\}$ and
$\overline{w}_{\c'} \coloneqq w_{\c'} - \int_{X^0_\b}\! w_\b\,\mathrm{d}\mu_\phi - \int_{X^0_\w}\! w_\w\,\mathrm{d}\mu_\phi$ for each $\c' \in \{ \b, \, \w \}$. Let $u_\b \coloneqq \frac{1}{M}w_\b$ and $u_\w \coloneqq \frac{1}{M}w_\w$. Then clearly $u_\b\in H_\b$, $u_\w\in H_\w$, $\overline{u}_\b = \frac{1}{M}\overline{w}_\b$, and $\overline{u}_\w = \frac{1}{M}\overline{w}_\w$. Therefore, by (\ref{eqPfthmSpectralGap_Half}), for each $n\in\N_0$, each $\phi\in H$, and each $\c\in\{\b, \, \w\}$,
\begin{equation*}
 \NormBig{ \RR_{\wt{\phi}, \c, \b}^{(n)}  \Bigl(\frac{\overline{w}_\b}{M}  \Bigr)    +       \RR_{\wt{\phi}, \c, \w}^{(n)} \Bigl(\frac{\overline{w}_\w}{M} \Bigr)}_{\CCC^0(X^0_\c)}
 \leq 6 \rho_1^n.
\end{equation*}
Now (\ref{eqSpectralGap}) follows. This completes the proof.
\end{proof}

\begin{rem}   \label{rmSpectralGap}
For $\phi\in \Holder{\alpha}(S^2, d)$, the existence of the spectral gap for the split Ruelle operator $\RRR_{\wt{\phi}}$  on 
$
\Holder{\alpha}\bigl(X^0_\b,d\bigr) \times \Holder{\alpha}\bigl(X^0_\w,d\bigr)
$
follows immediately from (\ref{eqSplitRuelleCoordinateFormula}) in Lemma~\ref{lmSplitRuelleCoordinateFormula}, Theorem~\ref{thmSpectralGap}, and Lemma~\ref{lmBasicIneq}~(ii).
\end{rem}

Finally, we establish the following lemma that will be used in Section~\ref{sctPreDolgopyat}.

\begin{lemma}    \label{lmSplitRuelleUnifBoundNormalizedNorm}
Let $f$, $\CC$, $d$, $\alpha$, $\phi$, $s_0$ satisfy the Assumptions. Assume in addition $f(\CC)\subseteq\CC$. Then for all  $n\in\N$ and $s\in\C$ satisfying $\abs{\Re(s)} \leq 2 s_0$ and $\abs{\Im(s)}\geq 1$, we have
\begin{equation}     \label{eqSplitRuelleUnifBoundNormalizedNorm}
\NOpHnormDbig{\alpha}{\Im(s)}{\RRR_{\wt{s\phi}}^n}  \leq 4 A_0,
\end{equation}
and more generally,
\begin{equation}   \label{eqSplitRuelleUnifBoundNormalizedNorm_m}
\NHnormBig{\alpha}{\Im(s)}{ \Bigl( \RR_{\wt{s\phi}, \c, \b}^{(n)} (u_\b) + \RR_{\wt{s\phi}, \c, \w}^{(n)} (u_\w)  \Bigr)^m }{(X^0_\c,d)}  \leq  (3m+1)A_0
\end{equation}  
for all $m\in\N$, $\c\in\{\b, \, \w\}$, $u_\b \in \Holder{\alpha} \bigl( \bigl(X^0_\b, d \bigr),\C \bigr)$, and $u_\w \in \Holder{\alpha} \bigl( \bigl(X^0_\w, d \bigr),\C \bigr)$ satisfying 
\begin{equation}  \label{eqSplitRuelleUnifBoundNormalizedNorm_u}
\NHnorm{\alpha}{\Im(s)}{u_\b}{(X^0_\b,d)} \leq 1 \quad \mbox{ and } \quad \NHnorm{\alpha}{\Im(s)}{u_\w}{(X^0_\w,d)}  \leq 1.
\end{equation}
Here $A_0=A_0\bigl(f,\CC,d,\Hseminorm{\alpha,\, (S^2,d)}{\phi},\alpha\bigr)\geq 2C_0>2$ is the constant from Lemma~\ref{lmBasicIneq} depending only on $f$, $\CC$, $d$, $\Hseminorm{\alpha,\, (S^2,d)}{\phi}$, and $\alpha$, and $C_0 > 1$ is the constant depending only on $f$, $\CC$, and $d$ from Lemma~\ref{lmMetricDistortion}.
\end{lemma}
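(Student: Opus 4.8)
## Proof proposal for Lemma~\ref{lmSplitRuelleUnifBoundNormalizedNorm}

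The plan is to deduce both \eqref{eqSplitRuelleUnifBoundNormalizedNorm} and \eqref{eqSplitRuelleUnifBoundNormalizedNorm_m} from the basic inequalities in Lemma~\ref{lmBasicIneq}~(ii), using the fact that the $\CCC^0$-norm is controlled by the normalizing factor of the normalized H\"older norm together with \eqref{eqRDiscontTildeSupDecreasing} and \eqref{eqSplitRuelleTildeSupDecreasing} of Lemma~\ref{lmRtildeNorm=1}. Fix $n\in\N$ and $s=a+\I b\in\C$ with $\abs{a}=\abs{\Re(s)}\leq 2s_0$ and $\abs{b}=\abs{\Im(s)}\geq 1$, and fix $\c\in\{\b,\,\w\}$ together with $u_\b$, $u_\w$ satisfying \eqref{eqSplitRuelleUnifBoundNormalizedNorm_u}. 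Write $v\coloneqq \pi_\c\bigl(\RRR_{\wt{s\phi}}^n(u_\b,u_\w)\bigr) = \RR_{\wt{s\phi},\c,\b}^{(n)}(u_\b)+\RR_{\wt{s\phi},\c,\w}^{(n)}(u_\w)$, which by \eqref{eqSplitRuelleCoordinateFormula} in Lemma~\ref{lmSplitRuelleCoordinateFormula} is the correct object. First I would bound $\norm{v}_{\CCC^0(X^0_\c)}$: since $\abs{b}\geq 1$, condition \eqref{eqSplitRuelleUnifBoundNormalizedNorm_u} forces $\norm{u_\c}_{\CCC^0(X^0_\c)}\leq \NHnorm{\alpha}{b}{u_\c}{(X^0_\c,d)}\leq 1$ for $\c\in\{\b,\,\w\}$, so by \eqref{eqSplitRuelleTildeSupDecreasing} we get $\norm{v}_{\CCC^0(X^0_\c)}\leq \max\{\norm{u_\b}_{\CCC^0(X^0_\b)},\,\norm{u_\w}_{\CCC^0(X^0_\w)}\}\leq 1$.

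Next I would bound the H\"older seminorm of $v$. Apply \eqref{eqBasicIneqN1} in Lemma~\ref{lmBasicIneq}~(ii) to each of the two summands: with $v\coloneqq u_\b$ on $E=X^0_\b$ and $v\coloneqq u_\w$ on $E=X^0_\w$, using $\max\{1,\abs{b}\}=\abs{b}$ and $\RR_{\wt{a\phi},\c,E}^{(n)}(\abs{u_\c})(x)\leq \norm{u_\c}_{\CCC^0} \leq 1$ via \eqref{eqSplitRuelleTildeSupDecreasing}. Since $\Hseminorm{\alpha,\,(X^0_\c,d)}{u_\c}\leq \abs{b}\NHnorm{\alpha}{b}{u_\c}{(X^0_\c,d)}\leq \abs{b}$, the $C_0\Lambda^{-\alpha n}\Hseminorm{\alpha,\,(E,d)}{u_\c}$ term is at most $C_0\Lambda^{-\alpha n}\abs{b}\leq C_0\abs{b}$, and the other term is at most $A_0\abs{b}$. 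Summing the two summands and using $A_0\geq 2C_0>2$ yields $\Hseminorm{\alpha,\,(X^0_\c,d)}{v}\leq 2(C_0+A_0)\abs{b}\leq 4A_0\abs{b}$. Therefore $\NHnorm{\alpha}{b}{v}{(X^0_\c,d)} = \abs{b}^{-1}\Hseminorm{\alpha,\,(X^0_\c,d)}{v}+\norm{v}_{\CCC^0(X^0_\c)}\leq 4A_0+1\leq 4A_0$ (absorbing the $+1$ since $A_0>2$, or one can be slightly more careful to land exactly at $4A_0$ — a harmless rescaling of the constant). Taking the supremum over $\c$, $u_\b$, $u_\w$ with normalized norm $\leq 1$ gives \eqref{eqSplitRuelleUnifBoundNormalizedNorm}.

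For \eqref{eqSplitRuelleUnifBoundNormalizedNorm_m}, I would compute the normalized H\"older norm of the $m$-th power $v^m$ directly from the estimates on $v$ just obtained. We have $\norm{v^m}_{\CCC^0(X^0_\c)}=\norm{v}_{\CCC^0(X^0_\c)}^m\leq 1$, and the seminorm satisfies the elementary bound $\Hseminorm{\alpha,\,(X^0_\c,d)}{v^m}\leq m\norm{v}_{\CCC^0(X^0_\c)}^{m-1}\Hseminorm{\alpha,\,(X^0_\c,d)}{v}\leq m\Hseminorm{\alpha,\,(X^0_\c,d)}{v}\leq 4mA_0\abs{b}$; more sharply, using $\Hseminorm{\alpha,\,(X^0_\c,d)}{v}\leq 2(C_0+A_0)\abs{b}\leq 3A_0\abs{b}$ and $m\norm{v}^{m-1}\leq m$ gives $\Hseminorm{\alpha,\,(X^0_\c,d)}{v^m}\leq 3mA_0\abs{b}$, hence $\NHnorm{\alpha}{b}{v^m}{(X^0_\c,d)}\leq 3mA_0+1\leq (3m+1)A_0$ since $A_0>1$. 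This is exactly \eqref{eqSplitRuelleUnifBoundNormalizedNorm_m}, and \eqref{eqSplitRuelleUnifBoundNormalizedNorm} is the case $m=1$ up to the harmless constant discrepancy noted above. I do not anticipate a serious obstacle here; the only point requiring mild care is bookkeeping the constants so that the final numbers ($4A_0$ and $(3m+1)A_0$) come out as stated, which means being a little economical in the seminorm estimate (e.g.\ noting that $2(C_0+A_0)\leq 3A_0$ when $A_0\geq 2C_0$, and handling the additive $+1$ by the strict inequality $A_0>2$) rather than using the crudest available bounds.
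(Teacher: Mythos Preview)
Your proposal is correct and follows essentially the same route as the paper's proof: bound $\norm{v}_{\CCC^0}\leq 1$ via \eqref{eqSplitRuelleTildeSupDecreasing}, bound the H\"older seminorm of $v$ via \eqref{eqBasicIneqN1} summed over the two colors together with \eqref{eqRDiscontTildeSupDecreasing}, then pass to $v^m$ using $\Hseminorm{\alpha}{v^m}\leq m\norm{v}_{\CCC^0}^{m-1}\Hseminorm{\alpha}{v}$. The only blemish is the literal claim ``$4A_0+1\leq 4A_0$'', but your own sharper observation $2(C_0+A_0)\leq 3A_0$ (from $A_0\geq 2C_0$) is exactly the bookkeeping the paper uses, and with it the constants $(3m+1)A_0$ and $4A_0$ fall out cleanly.
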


\begin{proof}
Fix $n, \, m\in\N$, $\c\in\{\b, \, \w\}$, and $s=a+\I b$ with $a, \, b\in\R$ satisfying $\abs{a}\leq 2 s_0$ and $\abs{b}\geq 1$. Choose arbitrary $u_\b \in \Holder{\alpha} \bigl( \bigl(X^0_\b, d \bigr),\C \bigr)$ and $u_\w \in \Holder{\alpha} \bigl( \bigl(X^0_\w, d \bigr),\C \bigr)$ satisfying (\ref{eqSplitRuelleUnifBoundNormalizedNorm_u}). Denote $M \coloneqq \NormBig{ \RR_{\wt{s\phi}, \c, \b}^{(n)} (u_\b) + \RR_{\wt{s\phi}, \c, \w}^{(n)} (u_\w) }_{\CCC^0(X^0_\c)}$. By (\ref{eqSplitRuelleTildeSupDecreasing}) in Lemma~\ref{lmRtildeNorm=1}, $M\leq 1$. 

We then observe that for each $v\in \Holder{\alpha} ( (X,d_0 ),\C )$ on a compact metric space $(X,d_0)$, we have 
$\Hseminorm{\alpha,\, (X,d_0)}{v^m} \leq m \norm{v}_{\CCC^0(X)}^{m-1}\Hseminorm{\alpha,\, (X,d_0)}{v}$. Thus, we get from (\ref{eqBasicIneqN1}) in Lemma~\ref{lmBasicIneq}, (\ref{eqRDiscontTildeSupDecreasing}) in Lemma~\ref{lmRtildeNorm=1}, (\ref{eqSplitRuelleUnifBoundNormalizedNorm_u}), and the observation above that
\begin{align*}
&            \NHnormBig{\alpha}{b}{ \Bigl( \RR_{\wt{s\phi}, \c, \b}^{(n)} (u_\b) + \RR_{\wt{s\phi}, \c, \w}^{(n)} (u_\w) \Bigr)^m }{(X^0_\c,d)}   \\
&\qquad =     M^m + \frac{1}{\abs{b}} \HseminormBig{\alpha,\, (X^0_\c,d)}{ \Bigl( \RR_{\wt{s\phi}, \c, \b}^{(n)} (u_\b) + \RR_{\wt{s\phi}, \c, \w}^{(n)} (u_\w) \Bigr)^m}  \\  
&\qquad\leq  1 +  mM^{m-1} \abs{b}^{-1}  \HseminormBig{\alpha,\, (X^0_\c,d)}{   \RR_{\wt{s\phi}, \c, \b}^{(n)} (u_\b) + \RR_{\wt{s\phi}, \c, \w}^{(n)} (u_\w)  } \\      
&\qquad\leq  1 +   m    C_0 \Lambda^{  - \alpha n}    \sum_{\c'\in\{\b, \, \w\}}    \NHnorm{\alpha}{b}{u_{\c'}}{(X^0_{\c'},d)} 
                          + m A_0 \sum_{\c'\in\{\b, \, \w\}} \NormBig{\RR_{\wt{a\phi}, \c, \c'}^{(n)} (\abs{u_{\c'}})  }_{\CCC^0(X^0_\c)}      \\
&\qquad\leq  1 +   2mC_0 +  m A_0 \bigl( \norm{u_\b}_{\CCC^0(X^0_\b)}  +   \norm{u_\w}_{\CCC^0(X^0_\w)}  \bigr) \\
&\qquad\leq  (3m+1)A_0,
\end{align*}
where $C_0 > 1$ is the constant depending only on $f$, $\CC$, and $d$ from Lemma~\ref{lmMetricDistortion}, and the last inequality follows from the fact that $A_0\geq 2C_0>2$ (see Lemma~\ref{lmBasicIneq}). The inequality (\ref{eqSplitRuelleUnifBoundNormalizedNorm_m}) is now established, and (\ref{eqSplitRuelleUnifBoundNormalizedNorm}) follows from (\ref{eqSplitRuelleOpNormQuot}) in Lemma~\ref{lmSplitRuelleCoordinateFormula} and (\ref{eqSplitRuelleUnifBoundNormalizedNorm_m}).
\end{proof}

\section{Bound the zeta function with the operator norm}   \label{sctPreDolgopyat}

In this section, we bound the dynamical zeta function $\zeta_{\sigma_{A_{\ti}},\,\minus\phi\circsmall\pi_{\ti}}$ using some bounds of the operator norm of $\RRR_{\minus s\phi}$, for an expanding Thurston map $f$ with some forward invariant Jordan curve $\CC$ and an eventually positive real-valued H\"{o}lder continuous potential $\phi$.

Subsection~\ref{subsctRuelleEstimate} focuses on Proposition~\ref{propTelescoping}, which provides a bound of the dynamical zeta function $\zeta_{\sigma_{A_{\ti}},\,\minus\phi\circsmall\pi_{\ti}}$ for the symbolic system $\bigl(\Sigma_{A_{\ti}}^+, \sigma_{A_{\ti}}\bigr)$ associated to $f$ in terms of the operator norms of $\RRR_{\minus s\phi}^n$, $n\in\N$ and $s\in\C$ in some vertical strip with $\abs{\Im(s)}$ large enough. The idea of the proof originated from D.~Ruelle \cite{Ru90}. In Subsection~\ref{subsctOperatorNorm}, we establish in Theorem~\ref{thmOpHolderNorm} an exponential decay bound on the operator norm $\OpHnormD{\alpha}{\RRR_{\minus s\phi}^n}$ of $\RRR_{\minus s\phi}^n$, $n\in\N$, assuming the bound stated in Theorem~\ref{thmL2Shrinking}. Theorem~\ref{thmL2Shrinking} will be proved at the end of Subsection~\ref{subsctCancellation}. Combining the bounds in Proposition~\ref{propTelescoping} and Theorem~\ref{thmOpHolderNorm}, we give a proof of Theorem~\ref{thmZetaAnalExt_SFT} in Subsection~\ref{subsctProofthmZetaAnalExt_SFT}. Finally, in Subsection~\ref{subsctProofthmLogDerivative}, we deduce Theorem~\ref{thmLogDerivative} from Theorem~\ref{thmZetaAnalExt_InvC} following the ideas from \cite{PS98} using basic complex analysis.

\subsection{Ruelle's estimate}     \label{subsctRuelleEstimate}

\begin{prop}  \label{propTelescoping}
Let $f$, $\CC$, $d$, $\Lambda$, $\alpha$, $\phi$, $s_0$ satisfy the Assumptions. We assume, in addition, that $f(\CC) \subseteq \CC$ and no $1$-tile in $\X^1(f,\CC)$ joins opposite sides of $\CC$. Let $\bigl( \Sigma_{A_{\ti}}^+,\sigma_{A_{\ti}} \bigr)$ be the one-sided subshift of finite type associated to $f$ and $\CC$ defined in Proposition~\ref{propTileSFT}, and let $\pi_{\ti}\: \Sigma_{A_{\ti}}^+\rightarrow S^2$ be defined in (\ref{eqDefTileSFTFactorMap}). Then for each $\delta>0$ there exists a constant $D_\delta>0$ such that for all integers $n\geq 2$ and $k\in\N$, we have
\begin{equation} \label{eqSumHnormSplitRuelleOn1}
            \sum_{X^k\in\X^k(f,\CC)} \max_{\c\in\{\b, \, \w\}} \Hnormbig{\alpha}{\RR_{\minus s\phi, \c, X^k}^{(k)} (\mathbbm{1}_{X^k})}{(X^0_\c,d)}    
 \leq    D_\delta \abs{ \Im(s) } \Lambda^{-\alpha}  \exp(k(\delta + P(f,-\Re(s)\phi)))    
\end{equation}
and
\begin{align} \label{eqTelescoping}
&       \Absbigg{ Z_{\sigma_{A_{\ti}},\,\minus \phi \circsmall \pi_{\ti}}^{(n)} (s) - \sum_{\c\in\{\b, \, \w\}}\sum\limits_{\substack{X^1\in\X^1(f,\CC)\\X^1\subseteq X^0_\c}} \RR_{\minus s\phi,\c,X^1}^{(n)}(\mathbbm{1}_{X^1})(x_{X^1})   }  \\
&\qquad\leq  D_\delta \abs{\Im(s)} \sum_{m=2}^{n} \OpHnormDbig{\alpha}{\RRR_{\minus s\phi}^{n-m}} \Bigl(\frac{1}{\Lambda^\alpha} \exp(\delta + P(f,-\Re(s)\phi))  \Bigr)^m     \notag
\end{align}
for any choice of a point $x_{X^1} \in \inte (X^1)$ for each $X^1\in\X^1(f,\CC)$, and for all $s\in\C$ with $\abs{\Im(s)} \geq 2 s_0 +1$ and $\abs{\Re(s)-s_0} \leq  s_0 $, where $Z_{\sigma_{A_{\ti}},\,\minus \phi \circsmall \pi_{\ti}}^{(n)} (s)$ is defined in (\ref{eqDefZn}).
\end{prop}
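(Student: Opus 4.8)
The plan is to follow D.~Ruelle's strategy \cite{Ru90} of expressing the sum over periodic points of $\sigma_{A_{\ti}}$ in terms of iterates of the (split) Ruelle operator applied to characteristic functions of tiles, then telescoping and controlling the error by the operator norms. First I would translate the symbolic count $Z_{\sigma_{A_{\ti}},\,\minus\phi\circsmall\pi_{\ti}}^{(n)}(s) = \sum_{x\in P_{1,\sigma_{A_{\ti}}^n}} e^{-s S_n(\phi\circ\pi_{\ti})(x)}$ into a sum over the fixed points of $f^n$ that lie in interiors of $n$-tiles: by Proposition~\ref{propTileSFT} the factor map $\pi_{\ti}$ is injective off $\bigcup_{i} f^{-i}(\CC)$, and since no $1$-tile joins opposite sides of $\CC$, each $n$-tile $X^n$ contains at most one fixed point of $f^n$ (the Bonk--Meyer/Li analysis recorded in the commented lemmas, which I may invoke as it is stated earlier), so periodic points correspond bijectively to those $X^n$ for which $f^n|_{X^n}$ has a fixed point in $\inte(X^n)$. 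For such a tile, $S_n(\phi\circ\pi_{\ti})$ at the corresponding symbolic periodic point equals $S_n^f\phi$ evaluated at that fixed point, which is $S_n\phi\bigl((f^n|_{X^n})^{-1}(p)\bigr)$ for $p$ the fixed point; comparing with the definition of $\RR_{\minus s\phi,\c,X^n}^{(n)}(\mathbbm{1}_{X^n})$ in \eqref{eqDefLc}, the leading term $\sum_\c \sum_{X^1\subseteq X^0_\c}\RR_{\minus s\phi,\c,X^1}^{(n)}(\mathbbm{1}_{X^1})(x_{X^1})$ is, via \eqref{eqLDiscontProperties_Split} iterated, $\sum_{X^n}\RR_{\minus s\phi}^{(n)}$-contributions evaluated at a \emph{reference} point $x_{X^1}\in\inte(X^1)$ rather than at the actual fixed point; so the difference between this and $Z^{(n)}$ has two sources — tiles whose fixed point lies on the boundary skeleton (or tiles with no fixed point but whose $\RR$-term is nonzero), and the discrepancy between evaluating at $x_{X^1}$ versus at the true fixed point.

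Next I would set up the telescoping. Using \eqref{eqLDiscontProperties_Split} one writes, for each $2\le m\le n$, $\RR_{\minus s\phi,\c,X^1}^{(n)}(\mathbbm{1}_{X^1}) = \sum_{\c'}\RR_{\minus s\phi,\c,\c'}^{(n-m)}\bigl(\RR_{\minus s\phi,\c',X^m}^{(m)}\text{-pieces}\bigr)$ after refining $X^1$ into $m$-subtiles, and the key point is that the ``distortion'' between the value at $x_{X^1}$ and the value at the genuine periodic point, once pushed to level $m$, is controlled by the diameter of an $m$-tile, i.e.\ $\lesssim\Lambda^{-\alpha m}$ (Lemma~\ref{lmCellBoundsBM}~(ii)) times a Hölder constant; this is where the factor $\Hnorm{\alpha}{\RR_{\minus s\phi,\c,X^k}^{(k)}(\mathbbm{1}_{X^k})}{(X^0_\c,d)}$ enters and why \eqref{eqSumHnormSplitRuelleOn1} is proved first. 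For \eqref{eqSumHnormSplitRuelleOn1} itself: apply Lemma~\ref{lmBasicIneq}~(ii) (the basic inequality \eqref{eqBasicIneqN1}) with $v=\mathbbm{1}_{X^k}$, or rather pass through $\wt{\minus s\phi}$ via \eqref{eqSplitRuelleTilde_NoTilde} and \eqref{eqBound_aPhi_Sup}–\eqref{eqBound_aPhi_Hseminorm} in Lemma~\ref{lmBound_aPhi} to bound the Hölder seminorm of each piece by $C\abs{\Im(s)}\Lambda^{-\alpha}$ times a uniform constant, and bound the $\CCC^0$-norm of $\sum_{X^k}\RR_{\minus s\phi,\c,X^k}^{(k)}(\mathbbm{1}_{X^k}) = \RR_{\minus\Re(s)\phi}^k(\mathbbm{1})(\cdot)$ in absolute value by $\RR_{\minus\Re(s)\phi}^k(\mathbbm{1})$, whose growth rate is $e^{kP(f,-\Re(s)\phi)}$ up to $e^{\delta k}$ by Lemma~\ref{lmRR^nConvToTopPressureUniform} (the commented-out lemma, or \cite[Lemma~3.25]{LZhe23a} invoked directly), uniformly over the compact range $\abs{\Re(s)-s_0}\le s_0$; summing over the $2(\deg f)^k$ tiles and absorbing constants gives the bound with $D_\delta$ depending only on $\delta$ (and $f,\CC,d,\phi,\alpha$). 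Then \eqref{eqTelescoping} follows by a standard telescoping identity: the error after $n-m$ applications of $\RRR_{\minus s\phi}$ onto the level-$m$ pieces is bounded by $\OpHnormD{\alpha}{\RRR_{\minus s\phi}^{n-m}}$ times $\sum_{X^m}\Hnorm{\alpha}{\RR_{\minus s\phi,\c,X^m}^{(m)}(\mathbbm{1}_{X^m})}{}$, and plugging \eqref{eqSumHnormSplitRuelleOn1} with $k=m$ produces the summand $\OpHnormD{\alpha}{\RRR_{\minus s\phi}^{n-m}}\bigl(\Lambda^{-\alpha}e^{\delta+P(f,-\Re(s)\phi)}\bigr)^m$ after noting that the number of $m$-tiles times $\Lambda^{-\alpha m}$ and the pressure growth $e^{mP}$ combine correctly (the $(\deg f)^m$ is exactly absorbed into $e^{mP(f,-\Re(s)\phi)}$ since $P$ already counts the degree, cf.\ \eqref{eqVPPressure}).

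The main obstacle I expect is the bookkeeping at the boundary: precisely matching symbolic periodic points of $\bigl(\Sigma_{A_{\ti}}^+,\sigma_{A_{\ti}}\bigr)$ with fixed points of $f^n$ in tile interiors, and showing that the contributions from periodic points of $\sigma_{A_{\ti}}$ projecting into $\bigcup_i f^{-i}(\CC)$ (where $\pi_{\ti}$ may be many-to-one) are either already captured by the tile sum $\sum_{X^1\subseteq X^0_\c}\RR_{\minus s\phi,\c,X^1}^{(n)}(\mathbbm{1}_{X^1})(x_{X^1})$ or are of lower order and hence subsumed into the same telescoping error — this requires carefully using that no $1$-tile joins opposite sides of $\CC$ (so the relevant finiteness/at-most-one-per-tile statements hold for all iterates $f^n$) together with the uniform bounded-to-one property of $\pi_{\ti}$ when $f$ has no periodic critical points, or a direct combinatorial argument on admissible cycles in the subshift when it does. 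A secondary technical point is keeping the dependence of $D_\delta$ on $\abs{\Im(s)}$ exactly linear throughout — this forces the use of the \emph{normalized} Hölder norm $\NHnorm{\alpha}{\Im(s)}{\cdot}{}$ (cf.\ Lemma~\ref{lmSplitRuelleUnifBoundNormalizedNorm}) in the intermediate estimates, converting back to the standard Hölder norm only at the end, and being careful that the operator-norm $\OpHnormD{\alpha}{\cdot}$ appearing in \eqref{eqTelescoping} is the \emph{unnormalized} one so that the explicit factor $\abs{\Im(s)}$ must be displayed separately.
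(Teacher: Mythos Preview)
Your overall strategy is correct and matches the paper's: express $Z^{(n)}$ exactly at level $n$, telescope down to level $1$, and control each step by the operator norm times the H\"older norm of the level-$m$ pieces. However, you are overcomplicating two points where the paper's argument is much cleaner.

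First, the boundary/symbolic issue you flag as the ``main obstacle'' is a red herring. The periodic points of $\sigma_{A_{\ti}}$ of period $n$ are in canonical bijection with the $n$-tiles $X^n$ satisfying $X^n\subseteq f^n(X^n)$ (equivalently, $X^n\in\X^n_\c$ and $X^n\subseteq X^0_\c$ for the same $\c$), regardless of whether the corresponding fixed point lies on the skeleton. For each such tile the fixed point is unique (by the ``no $1$-tile joins opposite sides'' hypothesis and \cite[Lemmas~4.1,~4.2]{Li16}), and the paper simply \emph{chooses} $x_{X^n}$ to be that fixed point, so that \eqref{eqPfpropTelescoping_1} and hence \eqref{eqPfpropTelescoping_ZnRewrite} are \emph{exact identities}. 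There is no separate ``boundary error'' term, no need to invoke bounded-to-one properties of $\pi_{\ti}$, and no case distinction on periodic critical points. All the error is then purely the telescoping from level $n$ down to level $1$.

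Second, for the bound \eqref{eqSumHnormSplitRuelleOn1} the paper does \emph{not} pass through $\wt{-s\phi}$ or use Lemma~\ref{lmBasicIneq}; it computes the H\"older seminorm of $\RR_{\minus s\phi,\c',X^m}^{(m)}(\mathbbm{1}_{X^m})$ directly via Lemma~\ref{lmExpToLiniear}, obtaining a factor $C_{10}\abs{s}\Hseminorm{\alpha,(S^2,d)}{\phi}$ (hence $\lesssim\abs{\Im(s)}$ on the given strip), and then observes that summing over all $X^m$ gives exactly $2\RR_{\minus\Re(s)\phi}^m(\mathbbm{1})(y)$ for any fixed $y\in\CC\setminus\post f$. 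This is where \cite[Lemma~3.25]{LZhe23a} enters to produce the $e^{m(\delta+P(f,-\Re(s)\phi))}$ growth. Your remark about using the normalized H\"older norm $\NHnorm{\alpha}{\Im(s)}{\cdot}{}$ is unnecessary here: the unnormalized operator norm $\OpHnormD{\alpha}{\cdot}$ is used throughout this proposition, and the single linear factor of $\abs{\Im(s)}$ already appears from the direct H\"older seminorm computation.
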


\begin{proof}
Fix the integer $n\geq 2$.

We first choose $x_{X^n} \in X^n$ for each $n$-tile $X^n \in \X^n$ in the following way. If $X^n\subseteq f^n(X^n)$, then let $x_{X^n}$ be the unique point in $X^n\cap P_{1,f^n}$ (see \cite[Lemmas~4.1 and~4.2]{Li16}); otherwise $X^n$ must be a black $n$-tile contained in the white $0$-tile, or a white $n$-tile contained in the black $0$-tile, in which case we choose an arbitrary point $x_{X^n} \in \inte (X^n)$. Next, for each $i\in\N_0$ with $i\leq n-1$, and each $X^i\in\X^i$, we fix an arbitrary point $x_{X^i}\in \inte (X^i)$.

By (\ref{eqDefLc}) and our construction, we get that for all $s\in\C$, $\c\in\{\b, \, \w\}$, and $X^n\in\X^n$ with $X^n\subseteq X^0_\c$,
\begin{equation}   \label{eqPfpropTelescoping_1}
\RR_{\minus s\phi,\c,X^n}^{(n)} (\mathbbm{1}_{X^n})(x_{X^n}) =  \begin{cases} \exp(-sS_n\phi(x_{X^n})) & \text{if } X^n \subseteq f^n (X^n), \\ 0  & \text{otherwise}. \end{cases}
\end{equation}

It is easy to check that by (\ref{eqPfpropTelescoping_1}), the function $Z_{\sigma_{A_{\ti}},\,\minus \phi \circsmall \pi_{\ti}}^{(n)} (s)$ defined in (\ref{eqDefZn}) satisfies the following equality
\begin{equation}   \label{eqPfpropTelescoping_ZnRewrite}
Z_{\sigma_{A_{\ti}},\,\minus \phi \circsmall \pi_{\ti}}^{(n)} (s) = \sum_{\c\in\{\b, \, \w\}} \sum\limits_{\substack{X^n\in\X^n\\X^n \subseteq X^0_\c}}  \RR_{\minus s\phi,\c,X^n}^{(n)} (\mathbbm{1}_{X^n}) (x_{X^n}).
\end{equation}

Thus, by the triangle inequality, we get
\begin{align}   \label{eqPfpropTelescoping_SplitTileSum}
&   \Absbigg{  Z_{\sigma_{A_{\ti}},\,\minus \phi \circsmall \pi_{\ti}}^{(n)} (s) - \sum_{\c\in\{\b, \, \w\}} \sum\limits_{\substack{X^1\in\X^1\\X^1 \subseteq X^0_\c}} \RR_{\minus s\phi,\c,X^1}^{(n)} (\mathbbm{1}_{X^1}) (x_{X^1}) }  \notag \\
&\qquad\leq  \sum_{m=2}^{n}  \sum_{\c\in\{\b, \, \w\}} \Absbigg{ \sum\limits_{\substack{X^{m-1}\in\X^{m-1}\\X^{m-1} \subseteq X^0_\c}} \RR_{\minus s\phi,\c,X^{m-1}}^{(n)} (\mathbbm{1}_{X^{m-1}}) (x_{X^{m-1}})  \notag \\
&\qquad\qquad\qquad\qquad\qquad\qquad -  \sum\limits_{\substack{X^m\in\X^m\\X^m \subseteq X^0_\c}} \RR_{\minus s\phi,\c,X^m}^{(n)} (\mathbbm{1}_{X^m}) (x_{X^m})  } \\
&\qquad\leq  \sum_{m=2}^{n}  \sum_{\c\in\{\b, \, \w\}}  \sum\limits_{\substack{X^{m-1}\in\X^{m-1}\\X^{m-1} \subseteq X^0_\c}} 
    \Absbigg{  \RR_{\minus s\phi,\c,X^{m-1}}^{(n)} (\mathbbm{1}_{X^{m-1}}) (x_{X^{m-1}})  \notag  \\
&\qquad\qquad\qquad\qquad\qquad\qquad\quad -  \sum\limits_{\substack{X^m\in\X^m\\X^m \subseteq X^{m-1}}} \RR_{\minus s\phi,\c,X^m}^{(n)} (\mathbbm{1}_{X^m}) (x_{X^m}) }.  \notag
\end{align}

Note that for all $s\in\C$, $2\leq m\leq n$, $\c\in\{\b, \, \w\}$, and $X^{m-1} \in \X^{m-1}$ with $X^{m-1}\subseteq X^0_\c$, by (\ref{eqDefLc}),
\begin{align} \label{eqPfpropTelescoping_SplitTile}
    \RR_{\minus s\phi, \c, X^{m-1}}^{(n)} (\mathbbm{1}_{X^{m-1}}) (x_{X^{m-1}})  
&=  \sum\limits_{\substack{X^n\in\X^n_\c\\X^n \subseteq X^{m-1}}} \exp\left( - sS_n\phi \left( (f^n|_{X^n})^{-1}(x_{X^{m-1}}) \right)\right)   \notag \\
&=  \sum\limits_{\substack{X^m\in\X^m\\X^m \subseteq X^{m-1}}} \sum\limits_{\substack{X^n\in\X^n_\c\\X^n \subseteq X^m}}  
                                                                                         \exp\left( -sS_n\phi \left( (f^n|_{X^n})^{-1}(x_{X^{m-1}}) \right)\right)  \\
&=  \sum\limits_{\substack{X^m\in\X^m\\X^m \subseteq X^{m-1}}} \RR_{\minus s\phi, \c, X^m}^{(n)} (\mathbbm{1}_{X^m}) (x_{X^{m-1}}).  \notag
\end{align}

Hence, by (\ref{eqPfpropTelescoping_SplitTileSum}), (\ref{eqPfpropTelescoping_SplitTile}), and (\ref{eqLDiscontProperties_Holder}), we get
\begin{align*}
&                     \Absbigg{  Z_{\sigma_{A_{\ti}},\,\minus \phi \circsmall \pi_{\ti}}^{(n)} (s) 
                                         - \sum_{\c\in\{\b, \, \w\}} \sum\limits_{\substack{X^1\in\X^1\\X^1 \subseteq X^0_\c}} \RR_{\minus s\phi,\c,X^1}^{(n)} (\mathbbm{1}_{X^1}) (x_{X^1}) }\\
&\quad \leq \sum_{m=2}^{n}  \sum_{\c\in\{\b, \, \w\}}  \sum\limits_{\substack{X^{m-1}\in\X^{m-1}\\X^{m-1} \subseteq X^0_\c}}    \sum\limits_{\substack{X^m\in\X^m\\X^m \subseteq X^{m-1}}}   
                                       \Absbig{ \RR_{\minus s\phi,\c,X^m}^{(n)} (\mathbbm{1}_{X^m}) (x_{X^{m-1}})    -  \RR_{\minus s\phi,\c,X^m}^{(n)} (\mathbbm{1}_{X^m}) (x_{X^m}) }  \\
&\quad \leq  \sum_{m=2}^{n}  \sum_{\c\in\{\b, \, \w\}}  \sum\limits_{\substack{X^{m-1}\in\X^{m-1}\\X^{m-1} \subseteq X^0_\c}}    \sum\limits_{\substack{X^m\in\X^m\\X^m \subseteq X^{m-1}}}   
                                      \Hnormbig{\alpha}{\RR_{\minus s\phi,\c,X^m}^{(n)} (\mathbbm{1}_{X^m})}{(X^0_\c,d)}  d(x_{X^{m-1}},x_{X^m})^\alpha.
\end{align*}

Note that by (\ref{eqLDiscontProperties_Holder}),
$\RR_{\minus s\phi,\c,X^m}^{(m)} (\mathbbm{1}_{X^m}) \in \Holder{\alpha} \bigl( \bigl( X^0_\c,d \bigr), \C \bigr)$
for $s\in\C$, $m\in\N$, $\c\in\{\b, \, \w\}$, $X^m\in\X^m$, and that by Lemma~\ref{lmCellBoundsBM}~(ii),
$d(x_{X^{m-1}},x_{X^m}) \leq \diam_d(X^{m-1}) \leq C\Lambda^{-m+1}$.
Here $C\geq 1$ is the constant from Lemma~\ref{lmCellBoundsBM} depending only on $f$, $\CC$, and $d$. So by (\ref{eqLDiscontProperties_Split}) in Lemma~\ref{lmLDiscontProperties} and (\ref{eqSplitRuelleOpNormQuot}) in Lemma~\ref{lmSplitRuelleCoordinateFormula},
\begin{align}   \label{eqPfpropTelescoping_BoundZn-L}
&       \Absbigg{ Z_{\sigma_{A_{\ti}},\,\minus \phi \circsmall \pi_{\ti}}^{(n)} (s) 
                           - \sum_{\c\in\{\b, \, \w\}} \sum\limits_{\substack{X^1\in\X^1\\X^1 \subseteq X^0_\c}} \RR_{\minus s\phi,\c,X^1}^{(n)} (\mathbbm{1}_{X^1}) (x_{X^1}) } \notag\\
&\qquad  \leq   \sum_{m=2}^{n}   \sum_{ X^m\in\X^m }    \OpHnormD{\alpha}{\RRR_{\minus s\phi}^{n-m}}      
                               \Bigl(\max_{\c'\in\{\b, \, \w\}} \Hnormbig{\alpha}{\RR_{\minus s\phi,\c',X^m}^{(m)} (\mathbbm{1}_{X^m})}{(X^0_{\c'},d)}  \Bigr)       C^\alpha\Lambda^{\alpha(1-m)}  .   
\end{align}

\smallskip

We now give an upper bound for $\sum_{X^m\in\X^m}  \max_{\c'\in\{\b, \, \w\}} \Hnormbig{\alpha}{\RR_{\minus s\phi,\c',X^m}^{(m)} (\mathbbm{1}_{X^m})}{(X^0_{\c'},d)} $.

Fix an arbitrary point $y\in\CC\setminus \post f$.

Consider arbitrary $s\in\C$ with $\abs{\Re(s)-s_0}\leq s_0$, $m\in\N$, $X^m_\b\in\X^m_\b$, $X^m_\w\in\X^m_\w$, $X^m \in\X^m$, $x_\b, \, x'_\b \in X^0_\b$, $x_\w, \, x'_\w\in X^0_\w$, and $\c, \, \c'\in\{\b, \, \w\}$ with $\c\neq\c'$. By (\ref{eqDefLc}), Lemmas~\ref{lmMetricDistortion}, and~\ref{lmCellBoundsBM}~(ii), we have
\begin{equation}   \label{eqPfpropTelescoping_LDiscontSupNormBW}
\RR_{\minus s\phi, \c',X^m_\c}^{(m)} \bigl( \mathbbm{1}_{X^m_\c} \bigr)  (x_{\c'}) = 0
\end{equation}
and 
\begin{align}   \label{eqPfpropTelescoping_LDiscontSupNormBB}
&           \Abs{\RR_{\minus s\phi, \c', X^m_{\c'}}^{(m)}  (\mathbbm{1}_{X^m_{\c'}} ) (x_{\c'})}  \notag\\
&\qquad =   \Abs{\exp\bigl( -sS_m\phi \bigl(  (f^m|_{X^m_{\c'}} )^{-1}(x_{\c'})\bigr)  \bigr)}  \notag \\
&\qquad =   \exp\bigl( -\Re(s)S_m\phi \bigl(  (f^m|_{X^m_{\c'}} )^{-1}(y)\bigr)  \bigr)   \frac{  \exp  ( -\Re(s)S_m\phi   (  (f^m|_{X^m_{\c'}} )^{-1}(x_{\c'})   ) )} {\exp ( -\Re(s) S_m\phi  (  (f^m|_{X^m_{\c'}} )^{-1}(y) ) )  }  \\
&\qquad\leq \exp\bigl( -\Re(s)S_m\phi \bigl(  (f^m|_{X^m_{\c'}} )^{-1}(y)\bigr)  \bigr)   \exp\bigl( \Re(s) C_1 \left( \diam_d\bigl(X^0_{\c'}\right)\bigr)^\alpha  \bigr)  \notag\\
&\qquad\leq \exp\bigl( -\Re(s)S_m\phi \bigl(  (f^m|_{X^m_{\c'}} )^{-1}(y)\bigr)  \bigr)   \exp (\Re(s)C^\alpha C_1 ),   \notag
\end{align}
where $C_1>0$ is the constant from Lemma~\ref{lmSnPhiBound} depending only on $f$, $\CC$, $d$, $\phi$, and $\alpha$.

Hence, by (\ref{eqPfpropTelescoping_LDiscontSupNormBW}) and (\ref{eqPfpropTelescoping_LDiscontSupNormBB}), we get 
\begin{equation}   \label{eqPfpropTelescoping_LDiscontSupNorm}
      \Norm{ \RR_{\minus s\phi,\c',X^m }^{(m)} (\mathbbm{1}_{X^m }) }_{\CCC^0(X^0_{\c'})}   
\leq  \exp \bigl( -\Re(s)S_m\phi  \bigl(  (f^m|_{X^m} )^{-1}(y)  \bigr) \bigr)  \exp\left(\Re(s)C^\alpha C_1\right)  .
\end{equation}

By (\ref{eqDefLc}),
\begin{equation} \label{eqPfpropTelescoping_LDiscontHseminormBW}
\RR_{\minus s\phi, \c', X^m_{\c}}^{(m)} \bigl( \mathbbm{1}_{X^m_{\c}} \bigr) (x_{\c'}) - \RR_{\minus s\phi, \c', X^m_{\c} }^{(m)} \bigl(\mathbbm{1}_{X^m_{\c}} \bigr) (x'_{\c'}) = 0.
\end{equation}

By (\ref{eqDefLc}) and Lemma~\ref{lmExpToLiniear} with $T \coloneqq  2 s_0  \Hseminorm{\alpha,\, (S^2,d)}{\phi}$,
\begin{align*}
&                       \Absbig{1-   \RR_{\minus s\phi,\c',X^m_{\c'}}^{(m)}  (\mathbbm{1}_{X^m_{\c'}} ) (x_{\c'})  \big / \RR_{\minus s\phi,\c',X^m_{\c'}}^{(m)}  (\mathbbm{1}_{X^m_{\c'}} ) (x'_{\c'}) }   \\  
&\qquad  =    \Absbig{1-   \exp  \bigl( -s \bigl(S_m \phi  \bigl(  (f^m|_{X^m_{\c'}} )^{-1}(x_{\c'})  \bigr) \bigr) -    S_m   \phi  \bigl(  (f^m|_{X^m_{\c'}} )^{-1}(x'_{\c'})  \bigr) \bigr)    } \\
&\qquad \leq  C_{4} \Hseminorm{\alpha,\, (S^2,d)}{s\phi} d(x_{\c'},x'_{\c'})^\alpha \\
&\qquad  =     C_{4} \abs{s} \Hseminorm{\alpha,\, (S^2,d)}{\phi} d(x_{\c'},x'_{\c'})^\alpha,
\end{align*}
where the constant $C_{4}=C_{4}(f,\CC,d,\alpha,T)>1$ depends only on $f$, $\CC$, $d$, $\alpha$, and $\phi$ in our context.

Thus, by (\ref{eqPfpropTelescoping_LDiscontSupNormBB}),
\begin{align*}
&           \Absbig{ \RR_{\minus s\phi,\c',X^m_{\c'}}^{(m)}  (\mathbbm{1}_{X^m_{\c'}} ) (x_{\c'}) - \RR_{\minus s\phi,\c',X^m_{\c'}}^{(m)}  (\mathbbm{1}_{X^m_{\c'}} ) (x'_{\c'})  }  \\
&\qquad\leq  \Absbig{1-  \RR_{\minus s\phi,\c',X^m_{\c'}}^{(m)}  (\mathbbm{1}_{X^m_{\c'}} ) (x_{\c'}) \big/ \RR_{\minus s\phi,\c',X^m_{\c'}}^{(m)}  (\mathbbm{1}_{X^m_{\c'}} ) (x'_{\c'})  }
                         \Absbig{\RR_{\minus s\phi,\c',X^m_{\c'}}^{(m)}  (\mathbbm{1}_{X^m_{\c'}} ) (x'_{\c'}) }  \\
&\qquad\leq  4^{-1} C_{10} \abs{s} d(x_{\c'},x'_{\c'})^\alpha  \exp\bigl( -\Re(s)S_m\phi \bigl(  (f^m|_{X^m_{\c'}} )^{-1}(y)\bigr)  \bigr)   ,
\end{align*}
where we define the constant 
\begin{equation}  \label{eqConst_propTelescoping1}
C_{10} \coloneqq   \max \bigl\{ 2 , \, 4   C_{4}  \Hseminorm{\alpha,\, (S^2,d)}{\phi}  \bigr\}   \exp\left( 2 s_0  C^\alpha C_1\right) 
\end{equation}
depending only on $f$, $\CC$, $d$, $\alpha$, and $\phi$.

So we get
\begin{equation} \label{eqPfpropTelescoping_LDiscontHseminormBB}
      \Hseminormbig{\alpha,\, (X^0_{\c'},d )}{\RR_{\minus s\phi,\c',X^m_{\c'}}^{(m)} ( \mathbbm{1}_{X^m_{\c'}} )}  
 \leq  4^{-1} C_{10} \abs{s} \exp\bigl( -\Re(s)S_m\phi \bigl(  (f^m|_{X^m_{\c'}} )^{-1}(y)\bigr)  \bigr)   .  
\end{equation}

Thus, by (\ref{eqPfpropTelescoping_LDiscontHseminormBW}) and (\ref{eqPfpropTelescoping_LDiscontHseminormBB}), we have
\begin{equation} \label{eqPfpropTelescoping_LDiscontHseminorm}
      \Hseminormbig{\alpha,\, (X^0_{\c'},d )}{\RR_{\minus s\phi,\c',X^m}^{(m)}  ( \mathbbm{1}_{X^m} )}  
 \leq  4^{-1} C_{10} \abs{s} \exp\bigl( -\Re(s)S_m\phi \bigl(  (f^m|_{X^m} )^{-1}(y)\bigr)  \bigr) . 
\end{equation}

Hence, by (\ref{eqPfpropTelescoping_LDiscontSupNorm}) and (\ref{eqPfpropTelescoping_LDiscontHseminorm}), for all $m\in\N$, $X^m\in\X^m$, $s\in\C$, and $\c'\in\{\b, \, \w\}$ satisfying 
$\abs{\Im(s)} \geq 2 s_0 + 1  \text{ and } \abs{\Re(s)-s_0} \leq  s_0$,
we have
\begin{equation}  \label{eqPfpropTelescoping_LDiscontHNorm}
\Hnormbig{\alpha}{\RR_{\minus s\phi,\c',X^m}^{(m)} (\mathbbm{1}_{X^m})}{(X^0_{\c'},d)}    \leq C_{10} \abs{\Im(s)} \exp\bigl(-\Re(s) S_m\phi \bigl(  (f^m|_{X^m} )^{-1}(y)\bigr)  \bigr).
\end{equation}

So by (\ref{eqPfpropTelescoping_LDiscontHNorm}) and the fact that $y\in \CC$, we get
\begin{align}   \label{eqPfpropTelescoping_SumHnormD=L}
&         \sum_{X^m\in\X^m}   \max_{\c'\in\{\b, \, \w\}} \Hnormbig{\alpha}{\RR_{\minus s\phi,\c',X^m}^{(m)} (\mathbbm{1}_{X^m})}{(X^0_{\c'},d)}  \notag \\
&\qquad\leq   C_{10} \abs{\Im(s)}   \sum_{X^m\in\X^m} \exp\bigl( -\Re(s)S_m\phi \bigl(  (f^m|_{X^m} )^{-1}(y)\bigr)  \bigr)  \\
&\qquad=   2C_{10}  \abs{\Im(s)}  \RR_{\minus \Re(s)\phi}^m  ( \mathbbm{1}_{S^2}  ) (y)   \notag.
\end{align}

We construct a sequence of continuous functions $p_m \: \R\rightarrow\R$, $m\in\N$, as
\begin{equation}   \label{eqDefp_m}
p_m(a) \coloneqq   \bigl( \RR_{\minus a\phi}^m  ( \mathbbm{1}_{S^2}  ) (y) \bigr)^{ 1/m }.
\end{equation}
By Lemma~3.25 in \cite{LZ24a}, the function $a \mapsto p_m(a) - e^{P(f, -a\phi)}$ converges to $0$ as $m$ tends to $+\infty$, uniformly in $a\in [0, 2s_0]$. Recall that $a\mapsto P(f, - a\phi)$ is continuous in $a\in\R$ (see for example, \cite[Theorem~3.6.1]{PrU10}). Thus, by (\ref{eqPfpropTelescoping_SumHnormD=L}), there exists a constant $C_{11}>0$ depending only on $f$, $\CC$, $d$, $\alpha$, $\phi$, and $\delta$ such that for all $m\in\N$ and $s\in\C$ with $\abs{\Im(s)}\geq 2 s_0 +1$ and $\abs{\Re(s)-s_0}\leq s_0$, 
\begin{align}
&                       \sum_{X^m\in\X^m} \max_{\c'\in\{\b, \, \w\}} \Hnormbig{\alpha}{\RR_{\minus s\phi,\c',X^m}^{(m)} (\mathbbm{1}_{X^m})}{(X^0_{\c'},d)}    \label{eqPfpropTelescoping_SumHnormSplitRuelleOn1} \\
&\qquad  \leq   2C_{10} \abs{\Im(s)}  (p_m(\Re(s)))^m \leq   C_{11} \abs{\Im(s)}  e^{m(\delta + P(f,-\Re(s)\phi))} .   \notag 
\end{align}

Combining (\ref{eqPfpropTelescoping_BoundZn-L}) with the above inequality, we get for all $s\in\C$ with $\abs{\Im(s)}\geq 2 s_0 +1$ and $\abs{\Re(s)-s_0}\leq  s_0$, 
\begin{align*}
&                    \Absbigg{  Z_{\sigma_{A_{\ti}},\,\minus \phi \circsmall \pi_{\ti}}^{(n)} (s)
                                         - \sum_{\c\in\{\b, \, \w\}} \sum\limits_{\substack{X^1\in\X^1\\X^1 \subseteq X^0_\c}} \RR_{\minus s\phi,\c, X^1}^{(n)} (\mathbbm{1}_{X^1}) (x_{X^1}) }\\
&\qquad \leq   D_\delta \abs{\Im(s)} \sum_{m=2}^{n} \OpHnormD{\alpha}{\RRR_{\minus s\phi}^{n-m}} \Bigl(\frac{1}{\Lambda^\alpha} \exp(\delta + P(f, -\Re(s)\phi))  \Bigr)^m,    \notag
\end{align*}
where $D_\delta \coloneqq C^\alpha C_{11} \Lambda^\alpha > C_{11} >0$ is a constant depending only on $f$, $\CC$, $d$, $\phi$, $\alpha$, and $\delta$.

Inequality (\ref{eqSumHnormSplitRuelleOn1}) now follows from (\ref{eqPfpropTelescoping_SumHnormSplitRuelleOn1}) and $D_\delta  \Lambda^{-\alpha} \geq C_{11}$.
\end{proof}

\subsection{Operator norm}    \label{subsctOperatorNorm}

The following theorem is one of the main estimates we need to prove in this paper. 

\begin{theorem}  \label{thmL2Shrinking}
Let $f\: S^2 \rightarrow S^2$ be an expanding Thurston map with a Jordan curve $\CC\subseteq S^2$ satisfying $f(\CC) \subseteq \CC$ and $\post f\subseteq \CC$. Let $d$ be a visual metric on $S^2$ for $f$ with expansion factor $\Lambda>1$, and $\phi\in \Holder{\alpha}(S^2,d)$ be an eventually positive real-valued H\"{o}lder continuous function with an exponent $\alpha\in(0,1]$ that satisfies the $\alpha$-strong non-integrability condition. Let $s_0\in\R$ be the unique positive real number satisfying $P(f, -s_0\phi)=0$. 

Then there exist constants $\iota \in \N$, $a_0 \in (0, s_0 ]$, $b_0 \in [ 2 s_0 +1,+\infty)$, and $\rho \in (0,1)$ such that for each $\c\in\{\b, \, \w\}$, each $n\in\N$, each $s\in\C$ with $\abs{\Re(s)-s_0} \leq a_0$ and $\abs{\Im(s)} \geq b_0$, and each pair of functions $u_\b \in \Holder{\alpha}\bigl(\bigl(X^0_\b,d\bigr),\C\bigr)$ and $u_\w \in \Holder{\alpha}\bigl(\bigl(X^0_\w,d\bigr),\C\bigr)$ satisfying $\NHnorm{\alpha}{\Im(s)}{u_\b}{(X^0_\b,d)} \leq 1$ and $\NHnorm{\alpha}{\Im(s)}{u_\w}{(X^0_\w,d)} \leq 1$, we have 
\begin{equation}
\int_{X^0_\c} \! \AbsBig{ \RR_{\wt{\minus s\phi}, \c, \b}^{(n \iota)} (u_\b) + \RR_{\wt{\minus s\phi}, \c, \w}^{(n\iota)} (u_\w)  }^2   \,\mathrm{d}\mu_{\minus s_0\phi} \leq \rho^n.
\end{equation}
Here $\mu_{\minus s_0\phi}$ denotes the unique equilibrium state for the map $f$ and the potential $-s_0\phi$.
\end{theorem}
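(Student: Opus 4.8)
The plan is to prove Theorem~\ref{thmL2Shrinking} by adapting Dolgopyat's cancellation mechanism to the split Ruelle operator $\RRR_{\wt{\minus s\phi}}$, using the $\alpha$-strong non-integrability condition as the source of oscillation. The overall strategy is the standard one: write $s = a + \I b$ with $a$ close to $s_0$ and $\abs{b}$ large, decompose a high iterate $f^{n_0}$ (with $n_0$ depending on $b$ through $\abs{b}\approx\Lambda^{n_0}$) into its branches, and show that because of the non-integrability of the temporal distance, the complex phases $e^{-\I b S_{n_0}\phi}$ attached to nearby branches cannot all be aligned; on a definite proportion of the sphere (measured by $\mu_{\minus s_0\phi}$) this produces genuine cancellation in the sum defining $\RRR_{\wt{\minus s\phi}}^{n_0}$, so the $L^2(\mu_{\minus s_0\phi})$-norm is contracted by a factor strictly below $1$. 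Iterating this over blocks of length $\iota := n_0$ gives the geometric bound $\rho^n$.

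First I would set up the Dolgopyat framework: define the relevant cone conditions (building on Definition~\ref{defCone} and Lemma~\ref{lmConePower}) and the class of Dolgopyat operators. Using the basic inequalities of Lemma~\ref{lmBasicIneq} — which already package the distortion estimates (Lemmas~\ref{lmMetricDistortion}, \ref{lmSnPhiBound}, \ref{lmExpToLiniear}) and the control on $\wt{a\phi}$ from Lemma~\ref{lmBound_aPhi} — one checks that if $(u_\b, u_\w)$ has normalized H\"older norm $\leq 1$ with scaling parameter $b = \Im(s)$, then after applying $\RRR_{\wt{\minus s\phi}}^{n_0}$ the modulus $\bigl(\Absbig{\pi_\c(\RRR^{n_0}(u_\b,u_\w))}\bigr)$ is dominated by $\RR_{\wt{\minus a\phi},\c,\cdot}^{(n_0)}$ applied to a function lying in an appropriate $B$-cone (via Lemma~\ref{lmSplitRuelleUnifBoundNormalizedNorm}, which already gives the uniform bound $4A_0$ on the normalized operator norm). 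The key dichotomy is then: on each small piece (an $n_0$-tile or union thereof), either the modulus is already much smaller than its sup (the "minority" set where cancellation has happened), in which case one replaces it by a suitably shrunk function, or it is uniformly comparable to its sup, in which case the phases are close to a common phase on that piece — and this is exactly where the $\alpha$-strong non-integrability condition (Definition~\ref{defStrongNonIntegrability}) forces the phases of two well-separated inverse branches to differ by an amount $\gtrsim \abs{b}\cdot(\text{scale})^\alpha$, contradicting alignment. One feeds the output back into the cone, and after controlling the accumulated H\"older seminorm using the spectral-gap estimate Theorem~\ref{thmSpectralGap} applied to the normalizing operator $\RRR_{\wt{\minus a\phi}}$, concludes the $L^2$-contraction using the invariance relation of Lemma~\ref{lmRDiscontTildeDual} (so that $\mu_{\minus s_0\phi}$ interacts correctly with $\RR_{\wt{\minus s_0\phi}}$) and the fact that $\mu_{\minus s_0\phi}(\CC \cup \bigcup f^{-i}(\CC)) = 0$ from Theorem~\ref{thmEquilibriumState}~(iii), so the split into black and white $0$-tiles loses no mass.

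The choice of parameters is routine once the mechanism works: $\iota = n_0$ is fixed by how many iterates are needed to see the non-integrability at a fixed scale; $a_0 \in (0,s_0]$ is taken small so that $P(f,-a\phi)$ stays near $0$ and all the constants $A_0$, $C_{13}$, etc.\ (which depend on $\Hseminorm{\alpha,(S^2,d)}{\phi}$ but not on $a$ in the relevant range $\abs{a}\leq 2s_0$) remain uniformly bounded; $b_0 \geq 2s_0+1$ is taken large so that $\abs{b}$ dominates the H\"older seminorms appearing in Lemma~\ref{lmBasicIneq}~(ii) (this is why one works with the normalized norm $\NHnorm{\alpha}{b}{\cdot}{}$ rather than the plain H\"older norm). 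Finally $\rho\in(0,1)$ is the contraction constant produced by the Dolgopyat estimate; one verifies that it can be chosen uniformly over the vertical strip $\abs{\Re(s)-s_0}\leq a_0$, $\abs{\Im(s)}\geq b_0$.

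The main obstacle I expect is the connectivity issue that the authors flag in the introduction: because $S^2$ is connected and the ordinary Ruelle operator does not act on characteristic functions of proper subtiles, one must run the entire Dolgopyat argument for the \emph{split} operator on the product space $\CCC\bigl(X^0_\b,\C\bigr)\times\CCC\bigl(X^0_\w,\C\bigr)$, keeping careful track of how the two colors mix under iteration (the matrix structure in Definition~\ref{defSplitRuelleOnProductSpace} and Lemma~\ref{lmSplitRuelleCoordinateFormula}) and ensuring the cone/dichotomy argument is compatible with restricting to $X^0_\c$ rather than all of $S^2$. A secondary technical difficulty is extracting from the $\alpha$-strong non-integrability condition a \emph{quantitative, scale-uniform} lower bound on phase differences along inverse branches over $n_0$ iterates — converting the pointwise non-vanishing in Definition~\ref{defStrongNonIntegrability} into the oscillation estimate needed to defeat alignment on a set of definite $\mu_{\minus s_0\phi}$-measure, which requires the metric-geometric control of the temporal distance that is the whole point of working with visual metrics rather than in a smooth setting.
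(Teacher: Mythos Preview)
Your overall strategy matches the paper's: build Dolgopyat operators $\MM_{J,\minus s,\phi}$ via bump functions indexed by subsets $J$ (Definition~\ref{defDolgopyatOperator}), prove cone invariance, an $L^2$-contraction, and a dichotomy (Proposition~\ref{propDolgopyatOperator}), then iterate recursively to get $\rho^n$. But two points in your parameter bookkeeping are off and would derail the argument if taken literally.

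First, you write that $\iota = n_0$ with $n_0$ depending on $b$ via $\abs{b}\approx\Lambda^{n_0}$, yet the theorem asserts $\iota$ is a fixed constant independent of $s$. In the paper $\iota = N_1 + M_0$ is fixed once and for all (depending only on $f,\CC,d,\alpha,\phi$); what depends on $b$ is instead the \emph{scale} $m(b) = \lceil \alpha^{-1}\log_\Lambda(C\abs{b}/\epsilon_1)\rceil$ of the tiles $X\in\mathfrak{C}_b$ on which the bump functions $\psi_{i,X}$ and hence $\beta_J$ live (see (\ref{eqDefmb}), (\ref{eqDefCb}), (\ref{eqDefBetaJ})). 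The $\alpha$-strong non-integrability condition (Definition~\ref{defStrongNonIntegrability}) already gives a \emph{uniform} lower bound $\varepsilon$ on the temporal-distance quotient at \emph{every} scale $M\geq M_0$ --- this is precisely what Proposition~\ref{propSNI} packages as (\ref{eqSNIBounds}) --- so no $b$-dependent block length is needed: the cancellation occurs after a fixed number $\iota$ of iterates regardless of $\abs{b}$, provided the Dolgopyat partition sits at the correct $b$-dependent scale. Your ``secondary technical difficulty'' about extracting a scale-uniform lower bound is therefore not a difficulty at all; it is exactly what the definition hands you.

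Second, you invoke Theorem~\ref{thmSpectralGap} inside the $L^2$ argument, but the paper's proof of Theorem~\ref{thmL2Shrinking} does not use it. The recursion (sequences $h_{\c,k}$, $u_{\c,k}$, $J_k$) runs entirely on Proposition~\ref{propDolgopyatOperator}, with the $L^2$-contraction coming from part~(ii), the measure comparison of Lemma~\ref{lmGibbsDoubling}, and the duality of Lemma~\ref{lmRDiscontTildeDual}. The spectral gap enters only downstream, in the proof of Theorem~\ref{thmOpHolderNorm}, to convert the $L^2$ bound on $\RRR_{\wt{\minus s\phi}}^{n\iota}$ back into a sup-norm (hence H\"older-norm) bound.
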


We will prove the above theorem at the end of Section~\ref{sctDolgopyat}. Assuming Theorem~\ref{thmL2Shrinking}, we can establish the following theorem.

\begin{theorem}   \label{thmOpHolderNorm}
Let $f\: S^2 \rightarrow S^2$ be an expanding Thurston map with a Jordan curve $\CC\subseteq S^2$ satisfying $f(\CC) \subseteq \CC$ and $\post f\subseteq \CC$. Let $d$ be a visual metric on $S^2$ for $f$ with expansion factor $\Lambda>1$, and $\phi\in \Holder{\alpha}(S^2,d)$ be an eventually positive real-valued H\"{o}lder continuous function with an exponent $\alpha\in(0,1]$ that satisfies the $\alpha$-strong non-integrability condition. Let $s_0\in\R$ be the unique positive real number satisfying $P(f,-s_0\phi)=0$. 

Then there exists a constant $D' = D'(f,\CC,d,\alpha,\phi) >0$ such that for each $\epsilon>0$, there exist constants $\delta_\epsilon \in (0, s_0)$, $\wt{b}_\epsilon\geq 2s_0 + 1$, and $\rho_\epsilon \in (0,1)$ with the following property:

For each $n\in\N$ and all $s\in\C$ satisfying $\abs{\Re(s) - s_0} < \delta_\epsilon$ and $\abs{\Im(s)} \geq \wt{b}_\epsilon$, we have
\begin{equation}  \label{eqOpHolderNorm}
\OpHnormD{\alpha}{\RRR_{\minus s\phi}^n} \leq D'  \abs{\Im(s)}^{1+\epsilon} \rho_\epsilon^n.
\end{equation}
\end{theorem}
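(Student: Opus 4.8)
The plan is to derive (\ref{eqOpHolderNorm}) from the $L^2$-contraction of Theorem~\ref{thmL2Shrinking} by coupling it with the basic (Lasota--Yorke type) inequalities of Lemma~\ref{lmBasicIneq} and an $L^2\to L^\infty$ bootstrap, and then to trade a fixed power of $\abs{\Im(s)}$ against the decay rate in order to reach the exponent $1+\epsilon$. First I would reduce to the normalized split operator: by the conjugation identity (\ref{eqSplitRuelleTilde_NoTilde}) with $\psi=-s\phi$ we have, writing $m$ for multiplication by $u_{-\Re(s)\phi}$, that $\RRR_{\minus s\phi}^n = e^{nP(f,-\Re(s)\phi)}\, m\circ \RRR_{\wt{\minus s\phi}}^n\circ m^{-1}$; by Lemma~\ref{lmBound_aPhi} and Corollary~\ref{corProductInverseHolderNorm} the H\"older norms of $u_{-\Re(s)\phi}^{\pm1}$ are bounded in terms of $f,\CC,d,\alpha,\phi$ on $\abs{\Re(s)-s_0}\le s_0$, and since $P(f,-s_0\phi)=0$ and $t\mapsto P(f,-t\phi)$ is locally Lipschitz (Theorem~\ref{thmEquilibriumState}(ii)), choosing $\delta_\epsilon$ small makes $e^{nP(f,-\Re(s)\phi)}\le e^{n\eta_0}$ with $\eta_0$ as small as we wish. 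Comparing $\OpHnormD{\alpha}{\cdot}=\NOpHnormD{\alpha}{1}{\cdot}$ with $\NOpHnormD{\alpha}{\Im(s)}{\cdot}$ (via (\ref{eqDefNormalizedHolderNorm}), (\ref{eqDefHolderNorm})) costs one factor $\abs{\Im(s)}$ since $\abs{\Im(s)}\ge 1$, so it suffices to bound $\NOpHnormD{\alpha}{\Im(s)}{\RRR_{\wt{\minus s\phi}}^n}$.

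For the main sup-norm estimate, fix $u_\b,u_\w$ with $\NHnorm{\alpha}{\Im(s)}{u_\b}{(X^0_\b,d)}\le1$ and $\NHnorm{\alpha}{\Im(s)}{u_\w}{(X^0_\w,d)}\le1$, and write $n=\iota N+k$. After $\iota N$ iterates, Theorem~\ref{thmL2Shrinking} gives $\int_{X^0_\c}\Absbig{\pi_\c(\RRR_{\wt{\minus s\phi}}^{\iota N}(u_\b,u_\w))}^2\,\mathrm{d}\mu_{\minus s_0\phi}\le\rho^N$, while $\norm{\pi_\c(\RRR_{\wt{\minus s\phi}}^{\iota N}(u_\b,u_\w))}_{\CCC^0}\le1$ by Lemma~\ref{lmRtildeNorm=1} and $\Hseminorm{\alpha,\,(X^0_\c,d)}{\pi_\c(\RRR_{\wt{\minus s\phi}}^{\iota N}(u_\b,u_\w))}\le C\abs{\Im(s)}$ by the basic inequality (\ref{eqBasicIneqN1}) together with $\Hseminorm{\alpha,\,(X^0_\c,d)}{u_\c}\le\abs{\Im(s)}$. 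Writing $v_\c$ for this function, I would handle the remaining $k$ iterates by dominating $\Absbig{\RR_{\wt{\minus s\phi},\c,\c'}^{(k)}(v_{\c'})}\le\RR_{\wt{\minus\Re(s)\phi},\c,\c'}^{(k)}(\abs{v_{\c'}})$, applying Cauchy--Schwarz in the weights, which sum to $\RR_{\wt{\minus\Re(s)\phi}}^k(\mathbbm1)=\mathbbm1$ by Lemma~\ref{lmRtildeNorm=1}, then replacing the weights of $\wt{\minus\Re(s)\phi}$ by those of $\wt{\minus s_0\phi}$ at a cost $e^{O(k\delta_\epsilon)}$ (their $\CCC^0$-difference is $O(\delta_\epsilon)$ by Lemma~\ref{lmBound_aPhi} and Lemma~\ref{lmUphiCountinuous}), and finally invoking the spectral gap (\ref{eqSpectralGapAll}) for the real operator associated to $-s_0\phi$ applied to the pair $(\abs{v_\b}^2,\abs{v_\w}^2)$, whose H\"older norms are $\lesssim\abs{\Im(s)}$ since $\norm{v_\c}_{\CCC^0}\le1$. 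This yields $\norm{\RRR_{\wt{\minus s\phi}}^{\iota N+k}(u_\b,u_\w)}_{\CCC^0}^2\lesssim e^{O(k\delta_\epsilon)}\bigl(\rho^N+\rho_1^{k}\abs{\Im(s)}\bigr)$; choosing $k\asymp N\log(1/\rho)/\log(1/\rho_1)+\log\abs{\Im(s)}/\log(1/\rho_1)$ so that $\rho_1^{k}\abs{\Im(s)}\le\rho^N$, and $\delta_\epsilon$ small, gives $\norm{\RRR_{\wt{\minus s\phi}}^{n}(u_\b,u_\w)}_{\CCC^0}\le C'\abs{\Im(s)}^{c_*}\rho_*^{\,n}$ for fixed $c_*\in(0,1)$, $\rho_*\in(0,1)$, valid once $n$ exceeds a threshold of order $\log\abs{\Im(s)}$. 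A further application of (\ref{eqBasicIneqN1}) (split $n=n_1+n_2$ with $n_2\asymp n$, use the sup bound at $n_1$) upgrades this to the same bound for $\NHnorm{\alpha}{\Im(s)}{\RRR_{\wt{\minus s\phi}}^{n}(u_\b,u_\w)}{(X^0_\c,d)}$, i.e.\ $\NOpHnormD{\alpha}{\Im(s)}{\RRR_{\wt{\minus s\phi}}^{n}}\le C''\abs{\Im(s)}^{c_*}\rho_*^{\,n}$ past that threshold; for all $n$ one has the uniform bound $\NOpHnormD{\alpha}{\Im(s)}{\RRR_{\wt{\minus s\phi}}^{n}}\le 4A_0$ from Lemma~\ref{lmSplitRuelleUnifBoundNormalizedNorm}.

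The exponent $c_*$ produced above is a fixed constant rather than $\epsilon$, so I would conclude by interpolating between the two bounds. Combining Step~1 with the uniform bound gives $\OpHnormD{\alpha}{\RRR_{\minus s\phi}^n}\le D''\abs{\Im(s)}$ for all $n$ (with $D''$ depending on $f,\CC,d,\alpha,\phi$ and $\delta_\epsilon$), and combining Step~1 with the Step~2 estimate gives $\OpHnormD{\alpha}{\RRR_{\minus s\phi}^n}\le C'''\abs{\Im(s)}^{1+c_*}\rho_*^{\,n}$ for $n$ past a threshold $\asymp\log\abs{\Im(s)}$. Given $\epsilon\in(0,c_*)$, a short computation shows that for $\rho_\epsilon\in(0,1)$ close enough to $1$ and $\wt b_\epsilon$ large enough, on $\abs{\Im(s)}\ge\wt b_\epsilon$ the minimum of these two bounds is $\le D'\abs{\Im(s)}^{1+\epsilon}\rho_\epsilon^{\,n}$ for every $n$: for $n\lesssim\epsilon\log\abs{\Im(s)}/\log(1/\rho_\epsilon)$ the uniform bound already suffices, while for larger $n$ the Step~2 bound does, because $\rho_*<\rho_\epsilon$ forces $\abs{\Im(s)}^{c_*-\epsilon}(\rho_*/\rho_\epsilon)^{\,n}\le1$ there; for $\epsilon\ge c_*$ one simply takes $\rho_\epsilon\coloneqq\rho_*$. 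Shrinking $\delta_\epsilon$ once more absorbs the $e^{n\eta_0}$ from Step~1, and $D'$ depends only on $f,\CC,d,\alpha,\phi$.

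The hardest part is Theorem~\ref{thmL2Shrinking} itself — Dolgopyat's cancellation mechanism in the metric--topological setting, proved separately in Section~\ref{sctDolgopyat}. Granting it, the two delicate points of the present argument are the weight/measure comparison in the $L^2\to L^\infty$ step, which is exactly what forces $\delta_\epsilon$ to be small and is the reason the estimate holds only near the critical line, and the bookkeeping in the last step needed to replace a fixed polynomial power of $\abs{\Im(s)}$ by the sharp exponent $1+\epsilon$.
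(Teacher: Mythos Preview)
Your approach is correct and uses exactly the same ingredients as the paper: the conjugation (\ref{eqSplitRuelleTilde_NoTilde}) to pass between $\RRR_{\minus s\phi}$ and $\RRR_{\wt{\minus s\phi}}$, the Cauchy--Schwarz step combined with the spectral gap (Theorem~\ref{thmSpectralGap}) to convert the $L^2$-contraction of Theorem~\ref{thmL2Shrinking} into a sup-norm bound, the weight comparison $\wt{-a\phi}\leftrightarrow\wt{-s_0\phi}$ controlled by choosing $\delta_\epsilon$ small, and the basic inequality (\ref{eqBasicIneqN1}) to recover the H\"older seminorm. The organizational difference is the following.

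The paper does not produce a fixed exponent $c_*$ and then interpolate. Instead, it first chooses an integer $\iota_0$ (a multiple of $\iota$) with $\tfrac{1}{2\iota_0}<\epsilon$, then picks $m\asymp\log\abs{b}$ so that $m\iota_0\gamma\geq 2\log\abs{b}$, and shows that a single block of $2m\iota_0$ iterates contracts the $b$-normalized norm by a factor $\lesssim\abs{b}^{-1/(2\iota_0)}$ (the $L^2$ and spectral-gap parts each run for $m\iota_0$ steps inside this block). Iterating this block bound and writing $n=2m\iota_0 k+r$ gives $\NOpHnormD{\alpha}{b}{\RRR_{\wt{\minus s\phi}}^n}\leq 4A_0\abs{b}^{-k/(2\iota_0)}$, which the paper rewrites as $4A_0\abs{b}^{1/(2\iota_0)}\rho_\epsilon^{2n}$ with $\rho_\epsilon=e^{-\gamma/(32\iota_0)}$; the exponent $1+1/(2\iota_0)<1+\epsilon$ then falls out directly, with no separate interpolation step.

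Your single-pass scheme (one split $n=\iota N+k$) and the subsequent interpolation are legitimate, but the interpolation step relies on a quantitative relationship between the threshold $c_0\log\abs{b}$, the exponent $c_*$, and the rate $\rho_*$: one needs $c_*\leq c_0\log(1/\rho_*)$ so that bound (B) at the threshold is no worse than the uniform bound (A). This inequality does hold here (both sides come from the same optimization over the split), but you should make it explicit. Also, in your H\"older-seminorm upgrade via the split $n=n_1+n_2$, the decay rate degrades from $\rho_*$ to roughly $\rho_*^{1/2}$ (since only $n_1\asymp n/2$ carries the sup-norm contraction); this is harmless for the interpolation but should be acknowledged. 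The paper's block-iteration avoids both bookkeeping issues by building the $\epsilon$ into the block size from the start.
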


\begin{proof}
Fix an arbitrary number $\epsilon>0$. Let $\iota \in \N$, $a_0 \in (0, s_0 ]$, $b_0 \in [ 2 s_0 +1,+\infty)$, and $\rho \in (0,1)$ be the constants from Theorem~\ref{thmL2Shrinking} depending only on $f$, $\CC$, $d$, $\alpha$, and $\phi$.

We choose $\iota_0\in\N$ to be the smallest integer satisfying $\frac{1}{2\iota_0} < \epsilon$, $\iota_0\geq 2$, and $\frac{\iota_0}{\iota}\in\N$.

Denote 
\begin{equation} \label{eqPfthmOpHolderNorm_gamma}
\gamma \coloneqq -\log \max\bigl\{ \rho^{ \iota_0 / ( 2\iota ) },  \, \rho_1^{ 1 / 2 }, \, \Lambda^{-\alpha} \bigr\} > 0,
\end{equation}
where $\rho_1 \coloneqq \rho_1 \bigl(f,\CC,d,\alpha,  H \bigr)\in (0,1)$, with $H \coloneqq \bigl\{ \wt{- t\phi}  :  t \in \R, \, \abs{t-s_0}\leq a_0 \bigr\}$ a bounded subset of $\Holder{\alpha}(S^2,d)$, is the constant from Theorem~\ref{thmSpectralGap} depending only on $f$, $\CC$, $d$, and $\alpha$ in our context here.

We define
\begin{align}   
\rho_\epsilon    &\coloneqq e^{ - \gamma / (32\iota_0) } \in (0,1),\label{eqPfthmOpHolderNorm_rho_eps}  \\
\wt{b}_\epsilon &\coloneqq \max \bigl\{ e^{\iota_0\gamma}, \,  (21 A_0^2 )^{2\iota_0},  \,  2s_0 + 1 \bigr\} > e.  \label{eqDefwtB_eps}
\end{align}
Here $A_0=A_0\bigl(f,\CC,d,\Hseminorm{\alpha,\, (S^2,d)}{\phi}, \alpha\bigr)>2$ is the constant from Lemma~\ref{lmBasicIneq} depending only on $f$, $\CC$, $d$, $\Hseminorm{\alpha,\, (S^2,d)}{\phi}$, and $\alpha$.

Moreover, note that by (\ref{eqDefPhiWidetilde}),
$ \Normbig{\wt{ - a\phi} -  \wt{ - s_0\phi} }_{\CCC^0(S^2)}    
\leq  \abs{a-s_0} \norm{\phi}_{\CCC^0(S^2)} + \abs{P(f,-a\phi) - P(f,-s_0\phi)}    + 2 \norm{ \log u_{\minus a\phi} - \log u_{\minus s_0\phi}}_{\CCC^0(S^2)}$.
Since the function $t\mapsto P(f,t\phi)$ is continuous (see for example, \cite[Theorem~3.6.1]{PrU10}), $P(f, - s_0 \phi)=0$, and the map $t\mapsto u_{t\phi}$ is continuous on $\Holder{\alpha}(S^2,d)$ equipped with the uniform norm $\norm{\cdot}_{\CCC^0(S^2)}$ by Lemma~\ref{lmUphiCountinuous}, we can choose $\delta_\epsilon \in (0,  a_0 )$ sufficiently small so that if $a\in [s_0-\delta_\epsilon, s_0 + \delta_\epsilon]$, then
\begin{equation}  \label{eqPfthmOpHolderNorm_PressureBound}
\abs{P(f,-a\phi)} \leq - \log \rho_\epsilon  \quad \text{and} \quad
\Normbig{\wt{ - a\phi} -  \wt{ - s_0\phi} }_{\CCC^0(S^2)} \leq \log \min\bigl\{ \rho^{ - 1 / (2\iota) } , \,  \rho_1^{ - 1/2 }  \bigr\}.
\end{equation}

Fix an arbitrary number $s=a+\I b\in\C$ with $a, \, b\in\R$ satisfying $\abs{a-s_0}\leq \delta_\epsilon$ and $\abs{b}\geq \wt{b}_\epsilon$, and fix an arbitrary pair of complex-valued H\"{o}lder continuous functions $u_\b \in \Holder{\alpha} \bigl( \bigl(X^0_\b,d \bigr),\C \bigr)$ and $u_\w \in \Holder{\alpha} \bigl( \bigl(X^0_\w,d \bigr),\C \bigr)$ satisfying $\NHnorm{\alpha}{b}{u_\b}{(X^0_\b,d)} \leq 1$ and $\NHnorm{\alpha}{b}{u_\w}{(X^0_\w,d)} \leq 1$. 

We denote by $m\in\N$ the smallest integer satisfying
\begin{equation}   \label{eqPfthmOpHolderNorm_m}
m \iota_0 \gamma 
\geq 2\log\abs{b} \geq 0.
\end{equation}
Then $m\geq 2$ by (\ref{eqDefwtB_eps}).

We first note that by (\ref{eqDefLc}), the Cauchy--Schwarz inequality, Lemma~\ref{lmRtildeNorm=1}, (\ref{eqSpectralGapAll}) in Theorem~\ref{thmSpectralGap}, Theorem~\ref{thmL2Shrinking},  (\ref{eqSplitRuelleUnifBoundNormalizedNorm_m}) in Lemma~\ref{lmSplitRuelleUnifBoundNormalizedNorm}, and (\ref{eqPfthmOpHolderNorm_PressureBound}), and by denoting $L_{\c'} \coloneqq \Absbig{ \RR_{\wt{\minus s\phi}, \c', \b}^{(m\iota_0)} (u_\b)  +  \RR_{\wt{\minus s\phi}, \c', \w}^{(m\iota_0)} (u_\w)}$, we have for each $\c\in\{\b, \, \w\}$ and each $x\in X^0_\c$,
\begin{align*}
&      \biggl(  \sum_{\c'\in\{\b, \, \w\}}  \RR_{\wt{\minus a\phi}, \c, \c'}^{(m\iota_0)}
         \Bigl( \Absbig{ \RR_{\wt{\minus s\phi}, \c', \b}^{(m\iota_0)} (u_\b)  +  \RR_{\wt{\minus s\phi}, \c', \w}^{(m\iota_0)} (u_\w)}  \Bigr)(x)   \biggr)^2\\
&\qquad=        \biggl(  \sum_{\c'\in\{\b, \, \w\}}  \sum\limits_{\substack{ X \in \X^{m\iota_0}_\c \\ X \in X^0_{\c'} } }  
                                       \Bigl( e^{ \frac{1}{2} S_{m\iota_0}  \wt{ - a\phi}  +  \frac{1}{2} S_{m\iota_0} ( \wt{ - a\phi} -  \wt{ - s_0\phi} ) }
                                                  \cdot  e^{ \frac{1}{2} S_{m\iota_0} \wt{ - s_0\phi}  } L_{\c'}    \Bigr)  \Bigl( \bigl( f^{m\iota_0}|_{X} \bigr)^{-1} (x) \Bigr)  \biggr)^2              \\              
&\qquad\leq  \biggl(   \sum_{\c'\in\{\b, \, \w\}}  \RR_{\wt{\minus a\phi}, \c, \c'}^{(m\iota_0)} 
         \Bigl(  e^{ m\iota_0\norm{\wt{ - a\phi} -  \wt{ - s_0\phi} }_{\CCC^0(S^2)}  }    \Bigr)(x)   \biggr)
         \biggl(   \sum_{\c'\in\{\b, \, \w\}}  \RR_{\wt{\minus s_0\phi}, \c, \c'}^{(m\iota_0)} \bigl( L_{\c'}^2\bigr)(x)   \biggr)       \\
&\qquad\leq    e^{ m\iota_0\norm{\wt{ - a\phi} -  \wt{ - s_0\phi} }_{\CCC^0(S^2)}  }
                           \biggl(  6 \rho_1^{m\iota_0} \max_{\c'\in\{\b, \, \w\}} \Hnormbig{\alpha}{ L_{\c'}^2 }{(X^0_{\c'}, d)} +\sum_{\c'\in\{\b, \, \w\}}  \int_{X^0_{\c'}} L_{\c'}^2  \,\mathrm{d}\mu_{\minus s_0 \phi} \biggr) \\
&\qquad\leq   42 A_0 \rho_1^{  m\iota_0 / 2 } \abs{b} + 2 \rho^{ m \iota_0   / ( 2 \iota ) } .
\end{align*}
Combining the above with (\ref{eqPfthmOpHolderNorm_m}), (\ref{eqPfthmOpHolderNorm_gamma}), and the fact that $\iota_0\geq 2$ and $A_0>2$, we get for each $\c\in\{\b, \, \w\}$,
\begin{align}    \label{eqPfthmOpHolderNorm_CommonSup}
&               \Normbigg{  \sum_{\c'\in\{\b, \, \w\}}  \RR_{\wt{\minus a\phi}, \c, \c'}^{(m\iota_0)}
                     \Bigl( \AbsBig{ \RR_{\wt{\minus s\phi}, \c', \b}^{(m\iota_0)} (u_\b) 
                                             +\RR_{\wt{\minus s\phi}, \c', \w}^{(m\iota_0)} (u_\w)}\Bigr) }_{\CCC^0(X^0_\c)} \\
&\qquad  \leq  \bigl( 42 A_0 \abs{b}^{-2+1} + 2\abs{b}^{- 2 / \iota_0 }  \bigr)^{ 1 / 2 } 
                  \leq 7A_0 \abs{b}^{ - 1 / \iota_0 }.                      \notag
\end{align}

Thus, by (\ref{eqDefProjections}), (\ref{eqSplitRuelleCoordinateFormula}), (\ref{eqLDiscontProperties_Split}), and (\ref{eqPfthmOpHolderNorm_CommonSup}), we get that for each $\c\in\{\b, \, \w\}$,
\begin{align}    \label{eqPfthmOpHolderNorm_Sup}
             \NormBig{  \pi_\c \Bigl(  \RRR_{\wt{\minus s\phi}}^{2m\iota_0}  (u_\b,u_\w)  \Bigr) }_{\CCC^0(X^0_\c)}  
&  =    \Normbigg{\sum_{\c'\in\{\b, \, \w\}}  \RR_{\wt{\minus s\phi},  \c, \c'}^{(m\iota_0)}
         \Bigl(  \RR_{\wt{\minus s\phi}, \c', \b}^{(m\iota_0)} (u_\b) 
                  +\RR_{\wt{\minus s\phi}, \c', \w}^{(m\iota_0)} (u_\w) \Bigr)}_{\CCC^0(X^0_\c)}  \notag \\  
& \leq \Normbigg{\sum_{\c'\in\{\b, \, \w\}}  \RR_{\wt{\minus a\phi}, \c, \c'}^{(m\iota_0)}
         \Bigl( \AbsBig{ \RR_{\wt{\minus s\phi}, \c', \b}^{(m\iota_0)} (u_\b) 
                                 +\RR_{\wt{\minus s\phi}, \c', \w}^{(m\iota_0)} (u_\w)}\Bigr)}_{\CCC^0(X^0_\c)} \\
& \leq  7A_0 \abs{b}^{- 1 / \iota_0}.   \notag
\end{align}

By (\ref{eqDefProjections}), (\ref{eqSplitRuelleCoordinateFormula}), (\ref{eqBasicIneqN1}) in Lemma~\ref{lmBasicIneq}, Lemma~\ref{lmSplitRuelleUnifBoundNormalizedNorm}, (\ref{eqPfthmOpHolderNorm_CommonSup}),  (\ref{eqPfthmOpHolderNorm_m}), and (\ref{eqPfthmOpHolderNorm_gamma}), we have for each $\c\in\{\b, \, \w\}$,
\begin{align}   \label{eqPfthmOpHolderNorm_HolderSeminorm}
&            \frac{1}{\abs{b}}  \HseminormBig{\alpha,\, (X^0_\c,d)}{  \pi_\c  \Bigl( \RRR_{\wt{\minus s\phi}}^{2m\iota_0}  (u_\b,u_\w)  \Bigr) }   \notag\\
&\qquad =     \frac{1}{\abs{b}}  \Hseminormbigg{\alpha,\, (X^0_\c,d)}{ \sum_{\c'\in\{\b, \, \w\}}  \RR_{\wt{\minus s\phi}, \c, \c'}^{(m\iota_0)}
             \Bigl( \RR_{\wt{\minus s\phi}, \c', \b}^{(m\iota_0)}(u_\b)
                     +\RR_{\wt{\minus s\phi}, \c', \w}^{(m\iota_0)}(u_\w)   \Bigr)  }  \notag \\
&\qquad\leq  \sum_{\c'\in\{\b, \, \w\}}   \frac{C_0}{\Lambda^{\alpha m \iota_0}} \NHnormBig{\alpha}{b}{\RR_{\wt{\minus s\phi}, \c', \b}^{(m \iota_0)} (u_\b)
                                                                                                                                                                                            +\RR_{\wt{\minus s\phi}, \c', \w}^{(m \iota_0)} (u_\w) }{(X^0_{\c'},d)}   \notag\\
&\qquad\quad   + \sum_{\c'\in\{\b, \, \w\}}    A_0\NormBig{ \RR_{\wt{\minus a\phi}, \c, \c'}^{(m \iota_0)} 
                            \Bigl(   \AbsBig{ \RR_{\wt{\minus s\phi}, \c', \b}^{(m \iota_0)} (u_\b)
                                                      +\RR_{\wt{\minus s\phi}, \c', \w}^{(m \iota_0)} (u_\w)}    \Bigr)}_{\CCC^0(X^0_\c)}   \\
&\qquad\leq  8 A_0  C_0 \Lambda^{ - \alpha m \iota_0}   + A_0 \bigl(7A_0 \abs{b}^{- 1 / \iota_0} \bigr)   \notag\\
&\qquad\leq   7A_0^2 \abs{b}^{-2} + 7 A_0^2 \abs{b}^{- 1 / \iota_0} \notag\\
&\qquad\leq  14A_0^2 \abs{b}^{- 1 / \iota_0}, \notag
\end{align}
where $C_0 > 1$ is the constant depending only on $f$, $\CC$, and $d$ from Lemma~\ref{lmMetricDistortion}, and $A_0 \geq 2C_0 >2$ (see Lemma~\ref{lmBasicIneq}).

Hence, for each $n\in\N$, by choosing $k\in\N_0$ and $r\in\{0, \, 1, \, \dots, \, 2 m \iota_0 - 1\}$ with $n=2 m \iota_0 k +r$, we get from (\ref{eqPfthmOpHolderNorm_Sup}), (\ref{eqPfthmOpHolderNorm_HolderSeminorm}), Definition~\ref{defOpHNormDiscont}, and (\ref{eqSplitRuelleUnifBoundNormalizedNorm}) in Lemma~\ref{lmSplitRuelleUnifBoundNormalizedNorm} that since $\abs{b} \geq \wt{b}_\epsilon$ and $m\geq 2$,
\begin{align}    \label{eqSplitRuelleTildeOpHnormBound}
      \OpHnormDbig{\alpha}{ \RRR_{\wt{\minus s\phi}}^n}  
&\leq    \abs{b} \NOpHnormDbig{\alpha}{b}{  \RRR_{\wt{\minus s\phi}}^{2 m \iota_0 k + r} } \notag \\
&\leq     \abs{b} \Bigl( \NOpHnormDbig{\alpha}{b}{  \RRR_{\wt{\minus s\phi}}^{2 m \iota_0} } \Bigr)^k 
           \, \NOpHnormDbig{\alpha}{b}{  \RRR_{\wt{\minus s\phi}}^{r} }  \notag \\
&\leq   4A_0 \abs{b} \bigl(  7A_0 \abs{b}^{- 1 / \iota_0} +  14A_0^2 \abs{b}^{- 1 / \iota_0} \bigr)^k  \notag \\
&\leq      4A_0  \abs{b}^{1-\frac{k}{2\iota_0}}  \\
&\leq   4A_0  \abs{b}^{1+ \frac{1}{2\iota_0} - \frac{2 m \iota_0 k + r}{2\iota_0}\frac{1}{2 m \iota_0} }  \notag \\
&\leq      4A_0  \abs{b}^{1+ \frac{1}{2\iota_0}} \abs{b}^{-\frac{n}{4 m\iota_0^2} } \notag \\
&\leq   4A_0 \abs{b}^{1+\epsilon}  e^{  - \frac{ n \log\abs{b} } { 8(m-1) \iota_0^2 } }  \notag \\
&\leq      4A_0 \abs{b}^{1+\epsilon} \rho_\epsilon^{2n},  \notag
\end{align}
where the last inequality follows from (\ref{eqPfthmOpHolderNorm_rho_eps}) and the fact that $m$ is the smallest integer satisfying (\ref{eqPfthmOpHolderNorm_m}).

\smallskip

We now turn the upper bound for $\OpHnormDbig{\alpha}{ \RRR_{\wt{\minus s\phi}}^n}$ in (\ref{eqSplitRuelleTildeOpHnormBound}) into a bound for $\OpHnormD{\alpha}{ \RRR_{ \minus s\phi }^n}$.

By (\ref{eqSplitRuelleOpNormQuot}), (\ref{eqSplitRuelleTilde_NoTilde}), (\ref{eqBound_Uaphi_UpperLower}) in Lemma~\ref{lmBound_aPhi}, and Corollary~\ref{corProductInverseHolderNorm}, we get
\begin{align*}  
 &            e^{ - n P(f, - a\phi) } \OpHnormD{\alpha}{\RRR_{\minus s \phi}^n}  \\
 &\qquad=   e^{ - n P(f, - a\phi) }   \sup \Biggl\{  \frac{   \Hnormbig{\alpha}{   \sum_{\c\in\{\b, \, \w\}} \RR_{ \minus s \phi, \c', \c}^{(n)} (v_\c)     }{(X^0_{\c'},d)}      }
                                           {  \max\bigl\{     \Hnorm{\alpha}{v_\c}{(X^0_\c,d)}  :  \c\in\{\b, \, \w\}  \bigr\}}   \Biggr\}  \\
&\qquad\leq   \Hnorm{\alpha}{u_{\minus a\phi}}{(S^2,d)}                          
            \sup \Biggl\{  \frac{   \Hnormbig{\alpha}{    \sum_{\c\in\{\b, \, \w\}}  \RR_{ \wt{\minus s \phi}, \c', \c}^{(n)} \bigl( v_\c / u_{\minus a\phi}  \bigr)   }{(X^0_{\c'},d)}      }
                                           {  \max\bigl\{     \Hnorm{\alpha}{v_\c}{(X^0_\c,d)}  :  \c\in\{\b, \, \w\}  \bigr\}}   \Biggr\}  \\
&\qquad\leq   \Hnorm{\alpha}{u_{\minus a\phi}}{(S^2,d)}     \OpHnormDbig{\alpha}{\RRR_{\wt{\minus s \phi}}^n}  
            \sup \Biggl\{  \frac{ \max\bigl\{     \Hnorm{\alpha}{  v_\c / u_{\minus a\phi}   }{(X^0_\c,d)}  :  \c\in\{\b, \, \w\}  \bigr\}   }
                                            { \max\bigl\{     \Hnorm{\alpha}{v_\c}{(X^0_\c,d)}  :  \c\in\{\b, \, \w\}  \bigr\}   }   \Biggr\}  \\    
&\qquad\leq  \Hnorm{\alpha}{u_{\minus a\phi}}{(S^2,d)}     \OpHnormDbig{\alpha}{\RRR_{\wt{\minus s \phi}}^n}  
                                                      \Hnorm{\alpha}{   1  / u_{\minus a\phi} }{(S^2,d)}    \\                                              
&\qquad\leq   \Hnorm{\alpha}{u_{\minus a\phi}}{(S^2,d)}        \OpHnormDbig{\alpha}{\RRR_{\wt{\minus s \phi}}^n}
             e^{ 2 C_{7} }    \bigl(   1+ \Hnorm{\alpha}{u_{\minus a\phi}}{(S^2,d)} \bigr)                  \\
&\qquad\leq   \OpHnormDbig{\alpha}{\RRR_{\wt{\minus s \phi}}^n}       e^{2 C_{7}}    
            \bigl(   1+ \Hnorm{\alpha}{u_{\minus a\phi}}{(S^2,d)} \bigr)^2   ,                        
\end{align*}
where the suprema are taken over all $v_\b\in\Holder{\alpha} \bigl( \bigl(X^0_\b,d \bigr),\C \bigr)$, $v_\w\in\Holder{\alpha} \bigl( \bigl(X^0_\w,d \bigr),\C \bigr)$, and $\c'\in\{\b, \, \w\}$ with $\norm{v_\b}_{\CCC^0(X^0_\b)} \norm{v_\w}_{\CCC^0(X^0_\w)} \neq 0$. Here the constant $C_{7} = C_{7}(f,\CC,d,\alpha,T,K)$, with $T\coloneqq 2 s_0$ and $K\coloneqq \Hseminorm{\alpha,\, (S^2,d)}{\phi} > 0$, is defined in (\ref{eqConst_lmBound_aPhi1}) in Lemma~\ref{lmBound_aPhi} and depends only on $f$, $\CC$, $d$, $\alpha$, and $\Hseminorm{\alpha,\, (S^2,d)}{\phi}$ in our context.

Combining the above inequality with (\ref{eqSplitRuelleTildeOpHnormBound}), (\ref{eqDefwtB_eps}), (\ref{eqPfthmOpHolderNorm_PressureBound}), and (\ref{eqBound_Uaphi_Hnorm}) in Lemma~\ref{lmBound_aPhi}, we get that if $a\in (s_0 - \delta_\epsilon, s_0 + \delta_\epsilon)$ and $\abs{b} \geq \wt{b}_\epsilon$, then
\begin{equation*}
        \OpHnormD{\alpha}{\RRR_{\minus s\phi}^n} 
\leq   4A_0 \abs{b}^{1+\epsilon} \rho_\epsilon^{2n} \rho_\epsilon^{-n}  e^{2 C_{7}} 
      \bigl(   1+ \Hnorm{\alpha}{u_{\minus a\phi}}{(S^2,d)} \bigr)^2  
\leq D'   \abs{b}^{1+\epsilon}    \rho_\epsilon^n,
\end{equation*}
where $D' \coloneqq 4 A_0 e^{2 C_{7}}  \bigl( 8 \frac{ s_0  \Hseminorm{\alpha,\, (S^2,d)}{\phi} C_0 }{1-\Lambda^{-\alpha}} L   +2 \bigr)^2 \bigl(e^{2C_{7}}\bigr)^2>1$,
which depends only on $f$, $\CC$, $d$, $\alpha$, and $\phi$.
\end{proof}

\subsection{Bound the symbolic zeta function}   \label{subsctProofthmZetaAnalExt_SFT}

Using Proposition~\ref{propTelescoping} and Theorem~\ref{thmOpHolderNorm}, we can get the following bound for the zeta function $\zeta_{\sigma_{A_{\ti}}, \, \minus \phi \circsmall \pi_{\ti} } $ (see also (\ref{eqDefZetaFn})).

\begin{prop}   \label{propHighFreq}
Let $f$, $\CC$, $d$, $\Lambda$, $\alpha$, $\phi$, $s_0$ satisfy the Assumptions. We assume, in addition, that $\phi$ satisfies the $\alpha$-strong non-integrability condition, and that $f(\CC) \subseteq \CC$ and no $1$-tile in $\X^1(f,\CC)$ joins opposite sides of $\CC$. Then for each $\epsilon>0$ there exist constants $\wt{C}_\epsilon > 0$ and $\wt{a}_\epsilon \in (0, s_0)$ such that
\begin{equation}   \label{eqLogZetaFnBdd}
     \Absbigg{  \sum_{n=1}^{+\infty} \frac{1}{n}   Z_{\sigma_{A_{\ti}},\,\minus \phi \circsmall \pi_{\ti}}^{(n)} (s)  }
\leq  \wt{C}_\epsilon  \abs{ \Im(s) }^{2+\epsilon}
\end{equation}
for all $s\in\C$ with $\abs{\Re(s) - s_0}  < \wt{a}_\epsilon$ and $\abs{\Im(s)} \geq \wt{b}_\epsilon$, where $\wt{b}_\epsilon \geq 2s_0 + 1$ is the constant depending only on $f$, $\CC$, $d$, $\alpha$, $\phi$, and $\epsilon$ defined in Theorem~\ref{thmOpHolderNorm}.
\end{prop}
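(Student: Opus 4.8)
\textbf{Proof plan for Proposition~\ref{propHighFreq}.}

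The plan is to combine the telescoping estimate in Proposition~\ref{propTelescoping} with the exponential operator-norm decay of Theorem~\ref{thmOpHolderNorm}. First I would fix $\epsilon>0$ and apply Theorem~\ref{thmOpHolderNorm} to obtain the constants $D'=D'(f,\CC,d,\alpha,\phi)>0$, $\delta_\epsilon\in(0,s_0)$, $\wt b_\epsilon\geq 2s_0+1$, and $\rho_\epsilon\in(0,1)$ such that $\OpHnormD{\alpha}{\RRR_{\minus s\phi}^n}\leq D'\abs{\Im(s)}^{1+\epsilon}\rho_\epsilon^n$ whenever $\abs{\Re(s)-s_0}<\delta_\epsilon$ and $\abs{\Im(s)}\geq \wt b_\epsilon$. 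The key point is that this bound holds \emph{uniformly in $n$}, so that the sum $\sum_{m=2}^{n}\OpHnormD{\alpha}{\RRR_{\minus s\phi}^{n-m}}\bigl(\Lambda^{-\alpha}e^{\delta+P(f,-\Re(s)\phi)}\bigr)^m$ appearing in \eqref{eqTelescoping} is controlled by a convergent geometric-type series once $\Re(s)$ is close enough to $s_0$. Since $P(f,-s_0\phi)=0$ and $t\mapsto P(f,-t\phi)$ is continuous (Corollary~\ref{corS0unique}, \cite[Theorem~3.6.1]{PU10}), and since $\Lambda^{-\alpha}<1$, I can choose $\delta$ (as the $\delta$ in Proposition~\ref{propTelescoping}) and then shrink $\delta_\epsilon$ further to some $\wt a_\epsilon\in(0,\delta_\epsilon]$ so that $\Lambda^{-\alpha}e^{\delta+P(f,-\Re(s)\phi)}<\min\{\rho_\epsilon^{-1},1\}\cdot r$ for some fixed $r<1$ — concretely, arranging that the product of $\Lambda^{-\alpha}e^{\delta+P(f,-\Re(s)\phi)}$ with $\rho_\epsilon^{-1}$ stays $\leq r<1$ for all $s$ with $\abs{\Re(s)-s_0}<\wt a_\epsilon$.

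Next, for each $n\geq 2$ I would estimate $\Absbig{Z_{\sigma_{A_{\ti}},\,\minus\phi\circsmall\pi_{\ti}}^{(n)}(s)}$ by the triangle inequality applied to \eqref{eqTelescoping}. The first term, $\sum_{\c}\sum_{X^1\subseteq X^0_\c}\RR_{\minus s\phi,\c,X^1}^{(n)}(\mathbbm{1}_{X^1})(x_{X^1})$, is bounded using \eqref{eqPfpropTelescoping_BoundZn-L}-type reasoning together with $\OpHnormD{\alpha}{\RRR_{\minus s\phi}^{n-1}}$ and the $k=1$ case of \eqref{eqSumHnormSplitRuelleOn1}: one gets a bound of the shape $C\abs{\Im(s)}\cdot\OpHnormD{\alpha}{\RRR_{\minus s\phi}^{n-1}}\cdot\Lambda^{-\alpha}e^{\delta+P(f,-\Re(s)\phi)}$, hence $\lesssim \abs{\Im(s)}^{2+\epsilon}\rho_\epsilon^{n-1}r$ by the choices above. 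The second term is bounded by $D_\delta\abs{\Im(s)}\sum_{m=2}^{n}D'\abs{\Im(s)}^{1+\epsilon}\rho_\epsilon^{n-m}\bigl(\Lambda^{-\alpha}e^{\delta+P(f,-\Re(s)\phi)}\bigr)^m$; factoring out $\rho_\epsilon^{n}$ and using $\rho_\epsilon^{-m}\bigl(\Lambda^{-\alpha}e^{\delta+P(f,-\Re(s)\phi)}\bigr)^m\leq r^m$ gives $\lesssim \abs{\Im(s)}^{2+\epsilon}\rho_\epsilon^{n}\sum_{m=2}^{\infty}r^m\lesssim \abs{\Im(s)}^{2+\epsilon}\rho_\epsilon^{n}$. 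Combining, $\Absbig{Z_{\sigma_{A_{\ti}},\,\minus\phi\circsmall\pi_{\ti}}^{(n)}(s)}\leq C_\epsilon\abs{\Im(s)}^{2+\epsilon}\rho_\epsilon^{n}$ for all $n\geq 2$, for a constant $C_\epsilon$ depending only on $f$, $\CC$, $d$, $\alpha$, $\phi$, $\epsilon$; the $n=1$ term is handled separately and directly (it is a finite sum over $P_{1,\sigma_{A_{\ti}}}$, bounded by a constant times $\abs{\Im(s)}^0$, or simply absorbed).

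Finally I would sum over $n$: $\Absbig{\sum_{n=1}^{+\infty}\frac{1}{n}Z_{\sigma_{A_{\ti}},\,\minus\phi\circsmall\pi_{\ti}}^{(n)}(s)}\leq \Absbig{Z^{(1)}(s)}+C_\epsilon\abs{\Im(s)}^{2+\epsilon}\sum_{n=2}^{+\infty}\frac{\rho_\epsilon^{n}}{n}\leq \wt C_\epsilon\abs{\Im(s)}^{2+\epsilon}$, since $\sum_{n\geq 2}\rho_\epsilon^n/n<+\infty$ and $\abs{\Im(s)}\geq\wt b_\epsilon>1$ lets us absorb the $n=1$ contribution into the same power. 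This gives \eqref{eqLogZetaFnBdd} with $\wt a_\epsilon$ and $\wt b_\epsilon$ as described. I expect the main technical obstacle to be the bookkeeping in choosing the constants in the right order — $\delta$ (in Proposition~\ref{propTelescoping}), then $\rho_\epsilon,\delta_\epsilon,\wt b_\epsilon$ (from Theorem~\ref{thmOpHolderNorm} applied with that $\delta$ and the given $\epsilon$), and then shrinking to $\wt a_\epsilon$ — so that $\rho_\epsilon^{-1}\Lambda^{-\alpha}e^{\delta+P(f,-\Re(s)\phi)}$ is uniformly $<1$ on the vertical strip; the geometric summation itself is routine once this is arranged.
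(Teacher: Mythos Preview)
Your plan is essentially the paper's proof: apply Proposition~\ref{propTelescoping}, control the first term via \eqref{eqSumHnormSplitRuelleOn1} with $k=1$ together with $\OpHnormD{\alpha}{\RRR_{\minus s\phi}^{n-1}}$, control the second via Theorem~\ref{thmOpHolderNorm}, then sum. The paper fixes $\delta\coloneqq\tfrac{1}{3}\log(\Lambda^\alpha)$ up front and then chooses $\wt a_\epsilon\in(0,\delta_\epsilon)$ so that $\abs{P(f,-\Re(s)\phi)}<\tfrac{1}{3}\log(\Lambda^\alpha)$, which makes $\Lambda^{-\alpha}e^{\delta+P(f,-\Re(s)\phi)}\leq\Lambda^{-\alpha/3}$; the inner sum is then bounded by $\sum_{m=1}^{n}\rho_\epsilon^{\,n-m}\Lambda^{-m\alpha/3}\leq n\,\wt\rho_\epsilon^{\,n}$ with $\wt\rho_\epsilon\coloneqq\max\{\rho_\epsilon,\Lambda^{-\alpha/3}\}<1$, and the factor $n$ is absorbed by the $1/n$ from the zeta series.

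One point in your bookkeeping needs adjusting. You propose to arrange $\rho_\epsilon^{-1}\Lambda^{-\alpha}e^{\delta+P(f,-\Re(s)\phi)}\leq r<1$; this requires $\Lambda^{-\alpha}<\rho_\epsilon$, which is not something you can force by shrinking $\delta$ or $\wt a_\epsilon$ (the constant $\rho_\epsilon$ is handed to you by Theorem~\ref{thmOpHolderNorm} and may well be smaller than $\Lambda^{-\alpha}$). The fix is exactly the $\max$ trick the paper uses: bound each summand $\rho_\epsilon^{\,n-m}q^m$ (with $q\coloneqq\Lambda^{-\alpha}e^{\delta+P}<1$) by $(\max\{\rho_\epsilon,q\})^n$, so the inner sum is at most $n(\max\{\rho_\epsilon,q\})^n$, and the extra $n$ cancels against the $1/n$. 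With that correction your argument goes through and matches the paper.
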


Recall $Z_{\sigma_{A_{\ti}},\,\minus \phi \circsmall \pi_{\ti}}^{(n)} (s)$ defined in (\ref{eqDefZn}).

\begin{proof}
Let $\delta \coloneqq \frac{1}{3} \log (\Lambda^\alpha) > 0.$ 

Since $t\mapsto P(f, -t\phi)$ is continuous on $\R$ (see for example, \cite[Theorem~3.6.1]{PrU10}), we fix $\wt{a}_\epsilon \in (0,\delta_\epsilon) \subseteq (0, s_0)$ such that $\abs{P(f, -t\phi)} < \frac{1}{3} \log(\Lambda^\alpha)$ for each $t\in\R$ with $\abs{t-s_0} < \wt{a}_\epsilon$, where $\delta_\epsilon \in (0, s_0)$ is the constant defined in Theorem~\ref{thmOpHolderNorm} depending only on $f$, $\CC$, $d$, $\alpha$, $\phi$, and $\epsilon$.

Fix an arbitrary point $x_{X^1} \in \inte (X^1)$ for each $X^1 \in \X^1$. By Lemmas~\ref{lmLDiscontProperties},~\ref{lmSplitRuelleCoordinateFormula}, and (\ref{eqSumHnormSplitRuelleOn1}) in Proposition~\ref{propTelescoping}, for each integer $n\geq 2$ and each $s\in \C$ with $\abs{ \Re(s) - s_0 } < \wt{a}_\epsilon$,  we have
\begin{align}   \label{eqPfpropHighFreq_SumIneq}
&                          \Absbigg{  \sum_{\c\in\{\b, \, \w\}}  \sum\limits_{\substack{X^1\in\X^1\\X^1 \subseteq X^0_\c}}     \RR_{\minus s\phi,\c,X^1}^{(n)} (\mathbbm{1}_{X^1}) (x_{X^1})  }  \notag \\
&\qquad   \leq    \sum_{\c\in\{\b, \, \w\}}  \sum\limits_{\substack{X^1\in\X^1\\X^1 \subseteq X^0_\c}}    
                                 \Absbigg{   \sum_{\c'\in\{\b, \, \w\}}   \RR_{\minus s\phi,\c,\c'}^{(n-1)} \Bigl( \RR_{\minus s\phi,\c',X^1}^{(1)}  (\mathbbm{1}_{X^1} ) \Bigr) (x_{X^1})  } \\
&\qquad   \leq    \OpHnormD{\alpha}{\RRR_{\minus s \phi}^{n-1}}    
                              \sum_{\c\in\{\b, \, \w\}}  \sum\limits_{\substack{X^1\in\X^1\\X^1 \subseteq X^0_\c}}  
                                       \max_{\c'\in\{\b, \, \w\}} \Hnormbig{\alpha}{\RR_{\minus s\phi,\c',X^1}^{(1)} (\mathbbm{1}_{X^1})}{(X^0_{\c'},d)}         \notag      \\
&\qquad   \leq    \OpHnormD{\alpha}{\RRR_{\minus s \phi}^{n-1}}    D_\delta  \abs{\Im(s)}  \Lambda^{-\alpha} \exp( \delta + P(f, - \Re(s) \phi) ),                        \notag                            
\end{align}
where $D_\delta>0$ is the constant depending only on  $f$, $\CC$, $d$, $\alpha$, $\phi$, and $\delta$ from Proposition~\ref{propTelescoping}.

Hence, by (\ref{eqDefZn}), Proposition~\ref{propTelescoping}, (\ref{eqPfpropHighFreq_SumIneq}), Theorem~\ref{thmOpHolderNorm}, and the choices of $\delta$ and $\wt{a}_\epsilon$ above, we get that for each $s\in\C$ with $\abs{ \Re(s) - s_0 } < \wt{a}_\epsilon$ and $\abs{ \Im(s) } \geq \wt{b}_\epsilon$, 
\begin{align*}
&                       \sum_{n=2}^{+\infty} \frac{1}{n} \AbsBig{  Z_{\sigma_{A_{\ti}},\,\minus \phi \circsmall \pi_{\ti}}^{(n)} (s)   }  \\
&\qquad\leq  \sum_{n=2}^{+\infty} \frac{1}{n} \biggl(                  
                                                             \Absbigg{               \sum_{\c\in\{\b, \, \w\}}  \sum\limits_{\substack{X^1\in\X^1\\X^1 \subseteq X^0_\c}}     \RR_{\minus s\phi,\c,X^1}^{(n)} (\mathbbm{1}_{X^1}) (x_{X^1})  }  \\
&\qquad\qquad \qquad\quad    + \Absbigg{ Z_{\sigma_{A_{\ti}},\,\minus \phi \circsmall \pi_{\ti}}^{(n)} (s)
                                               - \sum_{\c\in\{\b, \, \w\}}  \sum\limits_{\substack{X^1\in\X^1\\X^1 \subseteq X^0_\c}}     \RR_{\minus s\phi,\c,X^1}^{(n)} (\mathbbm{1}_{X^1}) (x_{X^1})  }  \biggr) \\
&\qquad\leq   \sum_{n=2}^{+\infty} \frac{1}{n} \biggl(     \OpHnormD{\alpha}{\RRR_{\minus s \phi}^{n-1}}    D_\delta \abs{ \Im(s) } \Lambda^{-\frac{\alpha}{3}} 
                                                                                                        +   D_\delta \abs{\Im(s)} \sum_{m=2}^{n} \OpHnormD{\alpha}{\RRR_{\minus s\phi}^{n-m}} \Lambda^{-\frac{m\alpha}{3}}   \biggr)   \\
&\qquad\leq  \abs{\Im(s)}^{2+\epsilon} \sum_{n=2}^{+\infty} \frac{D'}{n}  D_\delta \sum_{m=1}^{n} \rho_\epsilon^{n-m} \Lambda^{-\frac{m\alpha}{3}}   \\  
&\qquad\leq  D' D_\delta \abs{\Im(s)}^{2+\epsilon} \sum_{n=2}^{+\infty}  \wt{\rho}_\epsilon^n \\
&\qquad\leq         \frac{    D' D_\delta  } { 1 -  \wt{\rho}_\epsilon }                 \abs{\Im(s)}^{2+\epsilon},                                                                                                                                                  
\end{align*}
where the constant $\wt{\rho}_\epsilon \coloneqq \max \bigl\{ \rho_\epsilon, \, \Lambda^{ - \alpha / 3 }  \bigr\} < 1$ depends only on $f$, $\CC$, $d$, $\alpha$, $\phi$, and $\epsilon$. Here constants $D'\in (0,s_0)$ and $\rho_\epsilon\in(0,1)$ are from Theorem~\ref{thmOpHolderNorm} depending only on $f$, $\CC$, $d$, $\alpha$, $\phi$, and $\epsilon$.

Therefore, by Proposition~A.1~(i) in \cite{LZ24a} and (\ref{eqDefZn}), we have
\begin{equation*}
          \Absbigg{  \sum_{n=1}^{+\infty} \frac{1}{n}   Z_{\sigma_{A_{\ti}},\,\minus \phi \circsmall \pi_{\ti}}^{(n)} (s)      }        
\leq  \AbsBig{  Z_{\sigma_{A_{\ti}},\,\minus \phi \circsmall \pi_{\ti}}^{(1)} (s)  }    +   \sum_{n=2}^{+\infty} \frac{1}{n} \AbsBig{  Z_{\sigma_{A_{\ti}},\,\minus \phi \circsmall \pi_{\ti}}^{(n)} (s)   }    
\leq   \wt{C}_\epsilon  \abs{ \Im(s) }^{2+\epsilon}
\end{equation*}
for all $s\in\C$ with $\abs{\Re(s) - s_0}  < \wt{a}_\epsilon$ and $\abs{\Im(s)} \geq \wt{b}_\epsilon$, where the constant
\begin{equation*}
\wt{C}_\epsilon \coloneqq  D' D_\delta ( 1 -  \wt{\rho}_\epsilon )^{-1}    + 2 \deg f \exp \bigl(2 s_0 \norm{\phi}_{\CCC^0(S^2)} \bigr)
\end{equation*}
depends only on $f$, $\CC$, $d$, $\alpha$, $\phi$, and $\epsilon$.
\end{proof}

It follows immediately from the above proposition that $\zeta_{\sigma_{A_{\ti}}, \, \minus \phi \circsmall \pi_{\ti} } (s)$ has a non-vanishing holomorphic extension across the vertical line $\Re(s) = s_0$ for high frequency. In order to get a similar theorem for $\zeta_{\sigma_{A_{\ti}}, \, \minus \phi \circsmall \pi_{\ti} } (s)$ as Theorem~\ref{thmZetaAnalExt_InvC}, we just need to establish its holomorphic extension for low frequency.

\begin{proof}[Proof of Theorem~\ref{thmZetaAnalExt_SFT}]
Statement~(i) of Theorem~\ref{thmZetaAnalExt_SFT} has been established in \cite[Theorem~E]{LZ24a}.

To verify statement~(ii) in Theorem~\ref{thmZetaAnalExt_SFT}, we assume, in addition, that $\phi$ satisfies the $\alpha$-strong non-integrability condition.

Fix an arbitrary $\epsilon > 0$. Let $\wt{C}_\epsilon > 0$ and $\wt{a}_\epsilon \in (0,s_0)$ be the constants from Proposition~\ref{propHighFreq}, and $\wt{b}_\epsilon \geq 2 s_0 + 1$ be the constant from Theorem~\ref{thmOpHolderNorm}, all of which depend only on $f$, $\CC$, $d$, $\alpha$, $\phi$, and $\epsilon$. The inequality (\ref{eqZetaBound_SFT}) follows immediately from (\ref{eqLogZetaFnBdd}) in Proposition~\ref{propHighFreq}.

Therefore, by the compactness of $\bigl[ - \wt{b}_\epsilon, \wt{b}_\epsilon \bigr]$, we can choose $\wt{\epsilon}_0 \in (0, \wt{a}_\epsilon) \subseteq(0, s_0)$ small enough such that $\zeta_{\sigma_{A_{\ti}}, \, \minus \phi \circsmall \pi_{\ti} } (s)$ extends to a non-vanishing holomorphic function on the closed half-plane $\{s\in \C  :  \Re(s) \geq s_0 - \wt{\epsilon}_0 \}$ except for a simple pole at $s= s_0$.
\end{proof}

\subsection{Proof of Theorem~\ref{thmZetaAnalExt_InvC}}  \label{subsctDynOnC_Reduction}

In this subsection, we give a proof of Theorem~\ref{thmZetaAnalExt_InvC} assuming Theorem~\ref{thmZetaAnalExt_SFT}.

\begin{proof}[Proof of Theorem~\ref{thmZetaAnalExt_InvC}]
Statement~(i) is established in \cite[Theorem~D]{LZ24a}.

To verify statement~(ii), we continue with the proof of \cite[Theorem~D]{LZ24a} and assume in addition that $\phi$ satisfies the $\alpha$-strong non-integrability condition. By statement~(ii) in Theorem~\ref{thmZetaAnalExt_SFT} and the proof of Claim in the proof of \cite[Theorem~D]{LZ24a} in \cite[Section~8]{LZ24a}, $\DS_{f,\,\minus\phi,\,\deg_f}$ extends to a non-vanishing holomorphic function on $\overline{\H}_{s_0 - \epsilon'_0 }$ except for the simple pole at $s=s_0$. Here $\epsilon '_{0} > 0$ is the constant from the proof of Proposition~8.1 in \cite{LZ24a}.  Moreover, for each $\epsilon >0$, there exists a constant $C'_\epsilon >0$ such that
\begin{equation*}  
     \exp\left( - C'_\epsilon \abs{\Im(s)}^{2+\epsilon} \right) 
\leq \Absbig{ \DS_{ f,\,\minus\phi,\,\deg_f} (s) }
\leq \exp\left(   C'_\epsilon \abs{\Im(s)}^{2+\epsilon} \right) 
\end{equation*}
for all $s\in\C$ with $\abs{\Re(s) - s_0} < \epsilon'_0$ and $\abs{\Im(s)}  \geq b_\epsilon$, where $b_\epsilon \coloneqq \wt{b}_\epsilon>0$ is the constant from Theorem~\ref{thmZetaAnalExt_SFT} depending only on $f$, $\CC$, $d$, $\phi$, and $\epsilon$.

Therefore, statement~(ii) in Theorem~\ref{thmZetaAnalExt_InvC} holds for  $a_\epsilon \coloneqq \min \{ \epsilon_0, \, \wt{a}_\epsilon \}>0$, $b_\epsilon = \wt{b}_\epsilon>0$, and some constant $C_{\epsilon} > C'_{\epsilon}>0$ depending only on $f$, $\CC$, $d$, $\phi$, and $\epsilon$. 
\end{proof}

\subsection{Proof of Theorem~\ref{thmPrimeOrbitTheorem}}    \label{subsctProofthmLogDerivative}
We first state the following theorem on the logarithmic derivative of the zeta function, which will be proved at the end of this subsection.

\begin{theorem}  \label{thmLogDerivative}
Let $f\: S^2 \rightarrow S^2$ be an expanding Thurston map, and $d$ be a visual metric on $S^2$ for $f$. Let $\phi \in \Holder{\alpha}(S^2,d)$ be an eventually positive real-valued H\"{o}lder continuous function with an exponent $\alpha\in (0,1]$ that satisfies the $\alpha$-strong non-integrability condition. Denote by $s_0$ the unique positive number with $P(f,-s_0 \phi) = 0$. 

Then there exists $N_f\in\N$ depending only on $f$ such that for each $n\in \N$ with $n\geq N_f$, the following statement holds for $F\coloneqq f^n$ and $\Phi\coloneqq \sum_{i=0}^{n-1} \phi \circ f^i$:

There exist constants $a \in (0, s_0)$, $b\geq 2 s_0 + 1$, and $D>0$ such that 
\begin{equation}   \label{eqLogDerivative}
          \Absbigg{  \frac{ \zeta'_{F,\, \minus\Phi}(s) }{ \zeta_{F,\, \minus\Phi}(s) } }
\leq  D \abs{\Im(s)}^{\frac12}
\end{equation}
for all $s\in\C$ with $\abs{\Re(s) - s_0} < a$ and $\abs{\Im(s)}  \geq b$.
\end{theorem}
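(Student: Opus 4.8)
The plan is to deduce Theorem~\ref{thmLogDerivative} from Theorem~\ref{thmZetaAnalExt_InvC} by a standard application of the Borel--Carath\'eodory inequality, following the complex-analytic scheme of M.~Pollicott and R.~Sharp \cite{PS98}. First I would fix $N_f\in\N$ to be the integer provided by Theorem~\ref{thmZetaAnalExt_InvC} (which in turn is the one described in Remark~\ref{rmNf}), and fix $n\geq N_f$ together with $F\coloneqq f^n$, $\Phi\coloneqq\sum_{i=0}^{n-1}\phi\circ f^i$. Note that $F$ is again an expanding Thurston map, $\Phi\in\Holder{\alpha}(S^2,d)$ is eventually positive, and $\Phi$ inherits the $\alpha$-strong non-integrability condition from $\phi$; moreover $P(F,-s_0\Phi)=nP(f,-s_0\phi)=0$, so the same $s_0$ works for $F$ and $\Phi$. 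Thus Theorem~\ref{thmZetaAnalExt_InvC}~(ii) applies: there exist $\epsilon_0\in(0,s_0)$, and for the choice $\epsilon\coloneqq 1$ there exist $C_1>0$, $a_1\in(0,\epsilon_0]$, and $b_1\geq 2s_0+1$ so that
\begin{equation*}
\exp\bigl(-C_1\abs{\Im(s)}^{3}\bigr)\leq \abs{\zeta_{F,\,\minus\Phi}(s)}\leq \exp\bigl(C_1\abs{\Im(s)}^{3}\bigr)
\end{equation*}
for all $s$ with $\abs{\Re(s)-s_0}<a_1$ and $\abs{\Im(s)}\geq b_1$, and $\zeta_{F,\,\minus\Phi}$ is holomorphic and non-vanishing there except for the simple pole at $s=s_0$.

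Next I would localize. Fix $t\in\R$ with $\abs{t}\geq b$ for a threshold $b$ to be chosen $\geq b_1+1$, and set $w_0\coloneqq s_0 - \tfrac{a}{2} + \I t$ for a small $a\in(0,a_1)$ to be chosen. On the disk $\overline{B}(w_0, r)$ with $r\coloneqq \tfrac{a}{2}$ the function $g(s)\coloneqq \zeta_{F,\,\minus\Phi}(s)$ is holomorphic and non-vanishing (the pole at $s_0$ and any region $\Re(s)\le s_0-\epsilon_0$ are excluded once $a<a_1\le\epsilon_0$ and $\abs{t}$ is large). Hence $\log g$ is a well-defined holomorphic branch on a neighborhood of $\overline{B}(w_0,r)$, and on the larger disk $\overline{B}(w_0, R)$ with $R\coloneqq \tfrac{3a}{4}<a_1$ we have from the bound above
\begin{equation*}
\Re\bigl(\log g(s)\bigr)=\log\abs{g(s)}\leq C_1(\abs{t}+R)^{3}\leq C_2\abs{t}^{3}
\end{equation*}
for a constant $C_2$ depending only on $C_1$, valid once $\abs{t}\geq b$. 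Shifting $\log g$ by the constant $\log g(w_0)$ (whose real part is controlled by the same bound) so that it vanishes at $w_0$, the Borel--Carath\'eodory inequality on the pair of concentric disks of radii $r<R$ gives
\begin{equation*}
\sup_{\abs{s-w_0}\leq r}\abs{(\log g)'(s)}\leq \frac{2R}{(R-r)^2}\Bigl(\sup_{\abs{s-w_0}=R}\Re(\log g(s)) - \Re(\log g(w_0))\Bigr)\leq C_3 a^{-1}\abs{t}^{3},
\end{equation*}
so $\abs{\zeta'_{F,\,\minus\Phi}(s)/\zeta_{F,\,\minus\Phi}(s)}\leq C_3 a^{-1}\abs{t}^{3}$ for all $s$ with $\abs{s-w_0}\le r$; in particular this covers a neighborhood of every point $s_0-\tfrac{a}{4}+\I t$, hence — after relabelling $a$ — the whole region $\abs{\Re(s)-s_0}<a'$, $\abs{\Im(s)}\geq b$ for a slightly smaller $a'$.

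The one remaining point is that the crude exponent obtained this way is $3$, whereas the statement demands $\abs{\Im(s)}^{1/2}$. This is recovered by the usual trick of applying the estimate with a shrinking contour: the logarithmic derivative is holomorphic on the strip $\abs{\Re(s)-s_0}<a_1$, $\abs{\Im(s)}\ge b_1$ (no zeros or poles there), and one applies the Borel--Carath\'eodory/Cauchy estimate a second time, now on disks of radius comparable to $\abs{t}^{-\kappa}$ inside this strip, to trade polynomial loss in $\abs{t}$ for a radius going to zero; optimizing $\kappa$ together with applying Theorem~\ref{thmZetaAnalExt_InvC}~(ii) with a sufficiently small $\epsilon>0$ upgrades the exponent from $2+\epsilon$ down to any value $>1/2$, and a final application at the target radius produces the clean bound $D\abs{\Im(s)}^{1/2}$. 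I expect this last refinement — carefully matching the shrinking-disk radii against the $\abs{\Im(s)}^{2+\epsilon}$ growth and against the non-vanishing region $\Re(s)\ge s_0-\epsilon_0$, so that the Cauchy estimate for $\zeta'/\zeta$ yields exactly the power $1/2$ rather than something like $1+\epsilon$ — to be the main technical obstacle; everything else is routine complex analysis combined with the already-established analytic continuation and growth bounds of Theorem~\ref{thmZetaAnalExt_InvC}. Finally, $a\coloneqq a'$, $b$, and $D$ so obtained depend only on $f$ (through $N_f$, $\epsilon_0$, $C_1$, $a_1$, $b_1$) and on $\phi$, completing the proof.
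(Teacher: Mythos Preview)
Your first step — using the growth bound $\abs{\log\abs{\zeta_{F,\,\minus\Phi}(s)}}\leq C_\epsilon\abs{\Im(s)}^{2+\epsilon}$ from Theorem~\ref{thmZetaAnalExt_InvC}~(ii) together with a Borel--Carath\'eodory type estimate on fixed-radius disks to get $\abs{\zeta'/\zeta}\lesssim\abs{\Im(s)}^{2+\epsilon}$ (or $\abs{\Im(s)}^3$ for $\epsilon=1$) — is essentially what the paper does as well; the paper formulates this step via Lemma~\ref{lmLogDerivFromEE} (a consequence of \cite[Theorem~4.2]{EE85}) rather than Borel--Carath\'eodory, but the mechanism and output are the same.

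The genuine gap is in how you propose to reduce the exponent from $2+\epsilon$ down to $1/2$. The ``shrinking-disk'' refinement does not do this: in the Borel--Carath\'eodory inequality on concentric disks of radii $r<R$, the bound for $(\log g)'$ scales like $R/(R-r)^2$ times $\sup\Re\log g$, so taking $R\sim r\sim\abs{t}^{-\kappa}$ multiplies the estimate by a factor $\sim\abs{t}^{\kappa}$ and makes it \emph{worse}, not better. Letting $\epsilon\to 0$ in Theorem~\ref{thmZetaAnalExt_InvC}~(ii) at best pushes the exponent toward $2$, never below. There is no purely local complex-analytic mechanism that converts a bound $\abs{\log\zeta}\lesssim\abs{t}^{2+\epsilon}$ on a strip into $\abs{\zeta'/\zeta}\lesssim\abs{t}^{1/2}$; you need a second, independent piece of information.

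That second input is the one you did not use: on a line strictly to the \emph{right} of $\Re(s)=s_0$, say $\Re(s)=s_0+\delta$, the Dirichlet series for $\zeta'_{F,\,\minus\Phi}/\zeta_{F,\,\minus\Phi}$ converges absolutely, so $\abs{\zeta'/\zeta}$ is bounded uniformly in $\Im(s)$ there (this is the ``Claim'' in the paper's proof, with the explicit bound $\mathcal{K}(a)$). One then applies the Phragm\'en--Lindel\"of theorem (Theorem~\ref{thmPhragmenLindelof}) to $h(s)=\zeta'_{F,\,\minus\Phi}(s)/\zeta_{F,\,\minus\Phi}(s)+1/(s-s_0)$ on a strip $[\delta_1,\delta_2]$ with $\delta_1<s_0<\delta_2$: the crude estimate gives exponent $k_1=3$ at $\delta_1$, the uniform bound gives $k_2=0$ at $\delta_2$, and choosing $\delta_2-s_0$ small relative to $s_0-\delta_1$ (the paper takes $\delta_1=s_0-a_\epsilon/12$ and $\delta_2=s_0+a_\epsilon/200$) makes the linear interpolant $k(\delta)\leq 1/2$ throughout a neighborhood $\abs{\Re(s)-s_0}\leq a$. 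This interpolation step is the missing idea; once you insert it, the rest of your outline goes through.
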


Statement~(i) in Theorem~\ref{thmPrimeOrbitTheorem} is established in \cite[Theorem~C]{LZ24a}. Once Theorem~\ref{thmZetaAnalExt_InvC} and Theorem~\ref{thmLogDerivative} are established, statement~(ii) in Theorem~\ref{thmPrimeOrbitTheorem} follows from standard number-theoretic arguments. More precisely, a proof of statement~(ii) in Theorem~\ref{thmPrimeOrbitTheorem}, relying on Proposition~\ref{propZetaFnConv_s0}, Theorem~\ref{thmLogDerivative}, and statement~(ii) in Theorem~\ref{thmZetaAnalExt_InvC}, is verbatim the same as that of \cite[Theorem~1]{PS98} presented in \cite[Section~3]{PS98}. We omit this proof here and direct the interested readers to the references cited above.

To prove Theorem~\ref{thmLogDerivative}, following the ideas from \cite{PS98}, we convert the bounds of the zeta function for an expanding Thurston map from Theorem~\ref{thmZetaAnalExt_InvC} to a bound of its logarithmic derivative.

We first record a standard result from complex analysis (see \cite[Theorem~4.2]{EE85}) as in \cite[Section~2]{PS98}.

\begin{lemma}    \label{lmLogDerivFromEE}
Consider $z\in\C$, $R>0$, and $\delta>0$. Let $F\: \Delta \rightarrow \C$ be a holomorphic function on the closed disk $\Delta\coloneqq \bigl\{ s\in\C  :  \abs{s-z} \leq R(1+\delta)^3 \bigr\}$. Assume that $F$ satisfies the following two conditions:
\begin{enumerate}
\smallskip
\item[(i)] $F(s)$ has no zeros on the subset 
\begin{equation*}
\bigl\{ s\in \C  :   \abs{s-z} \leq R (1+\delta)^2, \Re(s) > \Re(z) - R(1+\delta) \bigr\} \subseteq \Delta.
\end{equation*}
\item[(ii)] There exists a constant $U\geq 0$ depending only on $z$, $R$, $\delta$, and $F$ such that
\begin{equation*}
\log \abs{F(s)} \leq U + \log \abs{F(z)}
\end{equation*}
for all $s\in\Delta$ with $\abs{s-z} \leq R (1+\delta)^3$.
\end{enumerate}
\smallskip
Then for each $s\in\Delta$ with $\abs{s-z} \leq R$, we have
\begin{equation*}
           \Absbigg{ \frac{ F'(s) }{ F(s) } }
\leq  \frac{ 2 + \delta }{ \delta }   \biggl(  \Absbigg{ \frac{ F'(z) }{ F(z) } }   +  \frac{ \bigl(2 + (1+\delta)^{-2} \bigr) (1+\delta) }{  R \delta^2 } U  \biggr).
\end{equation*}
\end{lemma}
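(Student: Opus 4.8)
The plan is to reduce Lemma~\ref{lmLogDerivFromEE} to the classical Borel--Carath\'eodory estimate after dividing out the zeros of $F$ from the relevant disk, using hypothesis~(i) to confine those zeros to a half-disk on which the resulting simple-pole terms stay bounded for $\abs{s-z}\le R$. First I would normalize by a translation so that $z=0$, and abbreviate $r_j\coloneqq R(1+\delta)^j$ for $j\in\{0,\,1,\,2,\,3\}$, so that $\Delta$ is the closed disk of radius $r_3$ about $0$ and the closed disks of radii $r_0<r_1<r_2<r_3$ are nested. Hypothesis~(i) says precisely that every zero $\rho$ of $F$ with $\abs{\rho}\le r_2$ has $\Re(\rho)\le -r_1$; hence for every such $\rho$ and every $s$ with $\abs{s}\le r_0$ one gets $\Re(s-\rho)\ge r_1-r_0=R\delta$, so $\abs{s-\rho}\ge R\delta$. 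Applying Jensen's formula to $F$ on the disk of radius $r_3$ (after replacing $r_3$ by a slightly smaller radius if needed so that $F$ has no zero on the bounding circle) and invoking hypothesis~(ii), I would bound the number $N$ of zeros of $F$ in the closed disk of radius $r_2$, counted with multiplicity, by
\begin{equation*}
N\log(1+\delta)\;\le\;\frac{1}{2\pi}\int_0^{2\pi}\log\Absbig{F(r_3 e^{\I\theta})}\,\mathrm{d}\theta-\log\abs{F(0)}\;\le\;U.
\end{equation*}

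Next I would perform the Blaschke factorization $F=B\cdot G$ on the disk of radius $r_2$, where $B=\prod_{k=1}^{N}B_k$ is the finite Blaschke product for that disk whose zeros are exactly the $\rho_k$ (so $\abs{B_k}\equiv 1$ on the bounding circle) and $G$ is holomorphic and zero-free on the closed disk with $\abs{G}=\abs{F}$ on the circle. By the maximum modulus principle and hypothesis~(ii), the function $\psi\coloneqq\log(G/G(0))$ is holomorphic on the disk of radius $r_2$ with $\psi(0)=0$ and $\Re\psi\le U$ there; the Borel--Carath\'eodory inequality for derivatives, applied along the descent from radius $r_2$ past $r_1$ down to $r_0$ (it is exactly this descent that produces the factor $\frac{r_0+r_1}{r_1-r_0}=\frac{2+\delta}{\delta}$ appearing in the statement), then bounds $\abs{G'(s)/G(s)}=\abs{\psi'(s)}$ on the closed disk of radius $r_0$ by a constant multiple of $U/(R\delta^2)$.

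Finally I would combine these via $\frac{F'}{F}=\frac{G'}{G}+\sum_{k=1}^{N}\frac{B_k'}{B_k}$: for $\abs{s}\le r_0$ each term satisfies $\Absbig{B_k'(s)/B_k(s)}\le \frac{1}{R\delta}+\frac{1}{r_2-r_0}=\frac{1}{R\delta}+\frac{1}{R\delta(2+\delta)}$ by the estimates of the first paragraph, and summing over the $N\le U/\log(1+\delta)$ zeros together with the bound on $G'/G$ yields a bound of the required shape; evaluating the same identity at $s=0$ shows that the residual finite sum $\sum_{k}B_k'(0)/B_k(0)$ equals $F'(0)/F(0)-G'(0)/G(0)$, which is the route by which the term $\Absbig{F'(z)/F(z)}$ enters the bound as stated. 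I expect the main obstacle to be essentially bookkeeping: extracting the precise numerical constants $\frac{2+\delta}{\delta}$ and $\frac{(2+(1+\delta)^{-2})(1+\delta)}{R\delta^2}$ exactly as written, and dispatching the minor technical point that $F$ might vanish on a bounding circle (handled by an arbitrarily small perturbation of the radii). The structural ingredients --- Jensen's formula, the Blaschke (equivalently Poisson--Jensen) factorization, and the Borel--Carath\'eodory inequality --- are entirely standard, which is why this lemma is simply quoted from \cite{EE85} in the main text.
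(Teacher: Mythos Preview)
The paper does not supply a proof of this lemma: it is simply recorded as ``a standard result from complex analysis (see \cite[Theorem~4.2]{EE85})'' and then applied. Your sketch follows exactly the classical route (Jensen to count zeros, factor them out, Borel--Carath\'eodory on the zero-free part) that one finds in \cite{EE85} and in the analytic-number-theory literature from which this estimate originates, and you already note this yourself in the last sentence. There is nothing further to compare.
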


We will also need a version of the well-known Phragm\'en--Lindel\"of theorem recorded below. See \cite[Section~5.65]{Ti39} for the statement and proof of this theorem.

\begin{theorem}[The Phragm\'en--Lindel\"of Theorem]    \label{thmPhragmenLindelof}
Consider real numbers $\delta_1<\delta_2$. Let $h(s)$ be a holomorphic function on the strip $\{s\in\C  :  \delta_1\leq \Re(s) \leq \delta_2 \}$. Assume that the following conditions are satisfied:
\begin{enumerate}
\smallskip
\item[(i)] For each $\sigma > 0$, there exist real numbers $C_\sigma >0$ and $T_\sigma >0$ such that
\begin{equation*}
\abs{ h(\delta + \I t) } \leq C_{\sigma} e^{\sigma\abs{t}}
\end{equation*}
for all $\delta, \,  t\in\R$ with $\delta_1 \leq \delta \leq \delta_2$ and $\abs{t} \geq T_\sigma$.

\smallskip
\item[(ii)] There exist real numbers $C_0>0$, $T_0 >0$, and $k_1, \, k_2\in \R$ such that
\begin{equation*}
\abs{ h(\delta_1 + \I t) } \leq C_0 \abs{t}^{k_1}       \qquad \text{and} \qquad 
\abs{ h(\delta_2 + \I t) } \leq C_0 \abs{t}^{k_2} 
\end{equation*}
for all $t\in\R$ with $\abs{t} \geq T_0$.
\end{enumerate}
\smallskip
Then there exist real numbers $D>0$ and $T>0$ such that
\begin{equation*}
           \abs{ h(\delta + \I t) }   \leq C\abs{t}^{k(\delta)}
\end{equation*}
for all $\delta, \,  t\in\R$ with $\delta_1 \leq \delta \leq \delta_2$ and $\abs{t} \geq T$, where $k(\delta)$ is the linear function of $\delta$ that takes values $k_1$, $k_2$ for $\delta=\delta_1$, $\delta_2$, respectively.
\end{theorem}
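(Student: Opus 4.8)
The plan is to give the classical proof of the Phragm\'en--Lindel\"of principle for a strip of finite width, following the approach of \cite[Section~5.65]{Ti39}. After an elementary reduction and a conjugation trick, the problem is reduced to a \emph{half}-strip with \emph{bounded} boundary data, to which a standard comparison argument with an exponential multiplier applies.

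\textbf{Reductions.} Replacing $h(s)$ by $\overline{h(\bar s)}$, which is again holomorphic on the same strip and satisfies hypotheses~(i) and~(ii) with identical constants (both hypotheses are phrased in terms of $\abs{t}$), it suffices to prove the bound $\abs{h(\delta + \I t)} \leq C\abs{t}^{k(\delta)}$ for all sufficiently large \emph{positive} $t$; the case $t \to -\infty$ then follows by applying this to $\overline{h(\bar s)}$. Fix $T > \max\{T_0, 1\}$ and work on the closed half-strip $\Omega \coloneqq \{ s \in \C : \delta_1 \leq \Re(s) \leq \delta_2, \ \Im(s) \geq T \}$. Extend $k$ to a complex-affine function on $\C$ (still denoted $k$), so that $k(\delta + \I t) = k(\delta) + \I\beta t$ with $\beta \coloneqq \frac{k_2 - k_1}{\delta_2 - \delta_1}\in\R$; in particular $\Re(k(s)) = k(\Re(s))$ and $\Im(k(s)) = \beta\Im(s)$.

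\textbf{Flattening the endpoint exponents.} For $s \in \Omega$ the number $-\I s$ lies in the open right half-plane, so the principal branch of $\operatorname{Log}(-\I s)$ is holomorphic on $\Omega$, and a direct computation gives
\begin{equation*}
\bigl\lvert (-\I s)^{-k(s)} \bigr\rvert = \abs{s}^{-k(\Re(s))} \exp\bigl( \Im(k(s))\arg(-\I s) \bigr),
\end{equation*}
where the exponential factor is bounded above and below by positive constants uniformly on $\Omega$: indeed $\arg(-\I s) = -\arctan(\Re(s)/\Im(s))$ has absolute value at most $\abs{\Re(s)}/\Im(s)$, while $\Im(k(s)) = \beta\Im(s)$, so $\abs{\Im(k(s))\arg(-\I s)} \leq \abs{\beta}\max\{\abs{\delta_1},\abs{\delta_2}\}$. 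Set $G(s) \coloneqq h(s)\,(-\I s)^{-k(s)}$, holomorphic on $\Omega$. On the side $\Re(s) = \delta_j$ the factors $\abs{t}^{k_j}$ from hypothesis~(ii) and $\abs{s}^{-k(\delta_j)} = \abs{s}^{-k_j}$ cancel (up to bounded factors, as $\abs{s} \asymp \abs{t}$ on $\Omega$), so $\abs{G} \leq M_0$ there for some constant $M_0$; continuity gives $\abs{G} \leq M_1$ on the compact bottom segment $\{\Im(s) = T\}$. Put $M \coloneqq \max\{M_0, M_1\}$. Finally, since the polynomial factor $\abs{s}^{-k(\Re(s))}$ is absorbed into any exponential, hypothesis~(i) yields: for every $\sigma > 0$ there is $C'_\sigma > 0$ with $\abs{G(s)} \leq C'_\sigma e^{\sigma \Im(s)}$ on $\Omega$.

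\textbf{The Phragm\'en--Lindel\"of comparison.} For $\epsilon > 0$ set $G_\epsilon(s) \coloneqq G(s)\exp\bigl( \I\epsilon(s - \delta_1 + 1)^2 \bigr)$. Writing $s - \delta_1 + 1 = u + \I t$ with $u \geq 1$ and $t \geq T \geq 1$ on $\Omega$, one has $\bigl\lvert \exp(\I\epsilon(s-\delta_1+1)^2)\bigr\rvert = e^{-2\epsilon u t} \leq 1$, so $\abs{G_\epsilon} \leq M$ on the left, right, and bottom boundary of $\Omega$; and taking $\sigma = \epsilon$ gives $\abs{G_\epsilon(\delta + \I R)} \leq C'_\epsilon e^{\epsilon R}e^{-2\epsilon R} = C'_\epsilon e^{-\epsilon R} \to 0$ as $R \to +\infty$, uniformly in $\delta \in [\delta_1,\delta_2]$. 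Applying the maximum modulus principle to $G_\epsilon$ on the rectangles $[\delta_1,\delta_2]\times[T,R]$ and letting $R \to +\infty$ gives $\abs{G_\epsilon} \leq M$ on $\Omega$; letting $\epsilon \downarrow 0$ (so $G_\epsilon \to G$ pointwise) yields $\abs{G} \leq M$ on $\Omega$. Unwinding, $\abs{h(s)} = \abs{G(s)}\,\bigl\lvert(-\I s)^{k(s)}\bigr\rvert \leq M\abs{s}^{k(\Re(s))}\cdot\bigO(1) \leq C\abs{\Im(s)}^{k(\Re(s))}$ for $s\in\Omega$, which together with the symmetric bound for $\Im(s) \leq -T$ is the assertion of the theorem. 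The one point requiring genuine care is the bookkeeping for the branch of $\operatorname{Log}(-\I s)$ and the uniform estimate $\Im(k(s))\arg(-\I s) = \bigO(1)$; in particular, the necessity of splitting into the half-strips $\Im(s)\geq T$ and $\Im(s)\leq -T$ is precisely that no single branch of $(-\I s)^{-k(s)}$ is holomorphic across the real axis inside the strip.
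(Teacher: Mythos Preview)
The paper does not supply its own proof of this theorem; it simply records the statement and refers to \cite[Section~5.65]{Ti39} for a proof. Your argument is the classical Phragm\'en--Lindel\"of proof for a vertical strip (multiply by $(-\I s)^{-k(s)}$ to normalise the boundary data, then by a decaying exponential $\exp(\I\epsilon(s-\delta_1+1)^2)$ to force the maximum modulus principle to apply on truncated rectangles), and it is correct as written; the bookkeeping on the branch of $\operatorname{Log}(-\I s)$ and the bound $\abs{\Im(k(s))\arg(-\I s)} \leq \abs{\beta}\max\{\abs{\delta_1},\abs{\delta_2}\}$ is handled carefully. This is essentially the argument in Titchmarsh that the paper cites, so there is nothing to compare.
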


Assuming Theorem~\ref{thmZetaAnalExt_InvC}, we establish Theorem~\ref{thmLogDerivative} as follows.

\begin{proof}[Proof of Theorem~\ref{thmLogDerivative}]
We choose $N_f\in\N$ as in Remark~\ref{rmNf}. Note that $P \bigl( f^i, - s_0 S_i^f \phi \bigr) = i P(f, - s_0 \phi) = 0$ for each $i\in\N$ (see for example, \cite[Theorem~9.8]{Wal82}). We observe that by Lemma~\ref{lmCexistsL}, it suffices to prove the case $n=N_f = 1$. In this case, $F=f$, $\Phi=\phi$, and there exists a Jordan curve $\CC\subseteq S^2$ satisfying $f(\CC)\subseteq \CC$, $\post f\subseteq \CC$, and no $1$-tile in $\X^1(f,\CC)$ joins opposite sides of $\CC$.

Let  $C_\epsilon$, $a_\epsilon\in(0,s_0)$, and $b_\epsilon\geq 2s_0 +1$ be the constants from Theorem~\ref{thmZetaAnalExt_InvC} depending only on $f$, $\CC$, $d$, $\alpha$, $\phi$, and $\epsilon$. We fix $\epsilon\coloneqq 1$ throughout this proof.

Define $R\coloneqq \frac{ a_\epsilon }{ 3 }$,  $\beta\coloneqq b_\epsilon + \frac{ a_\epsilon }{2}$, and $\delta \coloneqq  \bigl(\frac{3}{2} \bigr)^{1/3} - 1$. Note that $R(1+\delta)^3 = \frac{ a_\epsilon }{2}$.

Fix an arbitrary $z\in \C$ with $\Re(z) = s_0 + \frac{a_\epsilon}{4}$ and $\abs{ \Im(z) } \geq \beta$. The closed disk 
\begin{equation*}
\Delta \coloneqq \bigl\{ s\in\C  :  \abs{s-z} \leq R(1+\delta)^3 \bigr\}  =  \{ s\in\C  :  \abs{s-z} \leq  a_\epsilon / 2   \}
\end{equation*}
is a subset of $\{ s\in\C  :  \abs{ \Re(s) - s_0 } < a_\epsilon, \, \abs{ \Im(s) } \geq b_\epsilon \}$. Thus, by Theorem~\ref{thmZetaAnalExt_InvC}, inequality~(\ref{eqZetaBound}) holds for all $s\in\Delta$, and the zeta function $\zeta_{f,\,\minus\phi}$ has no zeros in $\Delta$.

For each $s\in \Delta$, by (\ref{eqZetaBound}) in Theorem~\ref{thmZetaAnalExt_InvC} and the fact that $\abs{ \Im(z) } \geq \beta = b_\epsilon + \frac{a_\epsilon}{2}$,
\begin{equation*}
           \Absbig{  \log \Absbig{ \zeta_{f,\,\minus\phi} (s) } -  \log \Absbig{ \zeta_{f,\,\minus\phi} (z) }  } 
 \leq  2 C_\epsilon \bigl( \abs{ \Im(z) } +2^{-1} a_\epsilon  \bigr)^3
\leq 2^4 C_\epsilon \abs{ \Im(z) }^3
\eqqcolon U.
\end{equation*}

\smallskip

\emph{Claim.} For each $a\in\R$ with $a> s_0$, there exists a real number $\mathcal{K}(a)>0$ depending only on $f$, $\CC$, $d$, $\phi$, and $a$ such that
$
\abs{  \zeta'_{f,\,\minus \phi} ( a+ \I t )  /    \zeta_{f,\,\minus \phi} ( a+ \I t ) }      \leq \mathcal{K}(a)
$
for all $t\in\R$.

\smallskip

To establish the claim, we first fix an arbitrary $a\in \R$ with $a> s_0$. By Corollary~\ref{corS0unique}, the topological pressure $P( f, -a \phi) < 0$. It follows from \cite[Proposition~6.8]{Li15} that there exist numbers $N_a\in\N$ and $\tau_a \in (0,1)$ such that for each integer $n\in\N$ with $n\geq N_a$,
\begin{equation*}
\sum_{ x \in P_{1,f^n} } \exp ( - a S_n\phi (x) ) \leq \tau_a^n.
\end{equation*}
Since the zeta function $\zeta_{f,\,\minus\phi}$ converges uniformly and absolutely to a non-vanishing holomorphic function on $\bigl\{ s\in\C  :  \Re(s) \geq \frac{ a+ s_0}{2}  \bigr\}$ (see Proposition~\ref{propZetaFnConv_s0}), we get from (\ref{eqDefZetaFn}), Theorem~3.20~(ii) in \cite{LZ24a}, and (\ref{eqDegreeProduct}) that
\begin{align*}
           \Absbigg{  \frac{ \zeta'_{f,\,\minus \phi} ( a+ \I t ) } {    \zeta_{f,\,\minus \phi} ( a+ \I t ) } }   
&  =    \Absbigg{ \sum_{n=1}^{+\infty}     \frac{1}{n}    \sum_{ x \in P_{1,f^n} }  ( S_n\phi (x)  )    \exp ( - (a + \I t) S_n\phi (x) )  }  \\
&\leq \norm{\phi}_{\CCC^0(S^2)}    \sum_{n=1}^{+\infty}     \sum_{ x \in P_{1,f^n} }  \exp ( - a S_n\phi (x) )  \\
&\leq \norm{\phi}_{\CCC^0(S^2)}   \biggl(    \sum_{n=N_a + 1}^{ + \infty }  \tau_a^n    +     \sum_{n=1}^{N_a} \card P_{1,f^n}     \biggr) \\
&\leq  \mathcal{K}(a),
\end{align*}
for all $t\in\R$, where $\mathcal{K}(a) \coloneqq \norm{\phi}_{\CCC^0(S^2)}   \bigl(    \frac{1}{1-\tau_a} + N_a + \sum_{n=1}^{N_a} (\deg f)^n   \bigr)$ is a constant depending only on $f$, $\CC$, $d$, $\phi$, and $a$. This establishes the claim.

\smallskip

Hence, by Lemma~\ref{lmLogDerivFromEE}, the claim with $a\coloneqq s_0 + \frac{a_\epsilon}{4}$, and the choices of $U$, $R$, and $\delta$ above, we get that for all $s\in\Delta$ with $\Im(s) = \Im(z)$ and $\Absbig{ \Re(s) -  \bigl( s_0 + \frac{a_\epsilon}{4} \bigr) }   \leq R= \frac{a_\epsilon}{3}$, we have
\begin{equation}   \label{eqPfthmLogDerivative_1}
          \Absbigg{  \frac{ \zeta'_{f,\,\minus \phi} ( s ) } {    \zeta_{f,\,\minus \phi} ( s ) } } 
 \leq   \frac{ 2 + \delta }{ \delta }   \biggl(   \mathcal{K}\Bigl(s_0 + \frac{a_\epsilon}{4}\Bigr)    +  \frac{ 2^4 C_\epsilon \bigl(2 + (1+\delta)^{-2} \bigr) (1+\delta) }{  R \delta^2 }  \abs{ \Im(z) }^3  \biggr)   
 \leq  C_{13} \abs{ \Im(s) }^3,
\end{equation}
where
$
C_{13}  \coloneqq \frac{ 2 + \delta }{ \delta }   \bigl(   \mathcal{K}\bigl(s_0 + \frac{a_\epsilon}{4}\bigr)    +  \frac{ 2^4 C_\epsilon  ( 2 + (1+\delta)^{-2}  ) (1+\delta) }{  R \delta^2 }   \bigr)
$
is a constant depending only on $f$, $\CC$, $d$, $\alpha$, and $\phi$. Recall that the only restriction on $\Im(z)$ is that $\abs{\Im(z)} \geq \beta$. Thus, (\ref{eqPfthmLogDerivative_1}) holds for all $s\in\C$ with $\Absbig{ \Re(s) -  \bigl( s_0 + \frac{a_\epsilon}{4} \bigr) }   \leq  \frac{a_\epsilon}{3}$ and $\abs{\Im(s)} \geq \beta$.

By Theorem~\ref{thmZetaAnalExt_InvC}, $h(s)  \coloneqq   \frac{ \zeta'_{f,\,\minus \phi} ( s ) } {    \zeta_{f,\,\minus \phi} ( s ) }  + \frac{ 1 } { s - s_0 }$ is holomorphic on $\{ s\in\C  :  \abs{ \Re(s) - s_0 } < a_\epsilon \}$. Applying the Phragm\'en--Lindel\"of theorem (Theorem~\ref{thmPhragmenLindelof}) to $h(s)$ on the strip $\{ s\in\C  :  \delta_1 \leq \Re(s) \leq \delta_2  \}$ with $\delta_1 \coloneqq  s_0 - \frac{a_\epsilon}{12}$ and $\delta_2  \coloneqq  s_0 + \frac{a_\epsilon}{200}$. It follows from (\ref{eqPfthmLogDerivative_1}) that condition~(i) of Theorem~\ref{thmPhragmenLindelof} holds. On the other hand, (\ref{eqPfthmLogDerivative_1}) and the claim above guarantees condition~(ii) of Theorem~\ref{thmPhragmenLindelof} with $k_1 \coloneqq 3$ and $k_2 \coloneqq 0$. Hence, by Theorem~\ref{thmPhragmenLindelof}, there exist constants $\wt{D} >0$ and $b\geq 2 s_0 +1$ depending only on $f$, $\CC$, $d$, $\alpha$, and $\phi$ such that
$
\abs{h(s)}   \leq \wt{D} \abs{ \Im(s) }^{ 1 / 2 }
$
for all $s\in\C$ with $\abs{ \Re(s) - s_0 } \leq \frac{ a_\epsilon }{ 200 }$ and $\abs{ \Im(s) } \geq b$.

Therefore, inequality (\ref{eqLogDerivative}) holds for all $s\in\C$ with $\abs{ \Re(s) - s_0 } \leq \frac{ a_\epsilon }{ 200 } \eqqcolon a$ and $\abs{ \Im(s) } \geq b$, where $a\in(0,s_0)$, $b\geq 2s_0 + 1$, and $D\coloneqq \wt{D} + 1$ are constants depending only on $f$, $\CC$, $d$, $\alpha$, and $\phi$.
\end{proof}

\section{The Dolgopyat cancellation estimate}   \label{sctDolgopyat}

We adapt the arguments of D.~Dolgopyat \cite{Do98} in our metric-topological setting, aiming to prove Theorem~\ref{thmL2Shrinking} at the end of this section. In Subsection~\ref{subsctSNI}, we first give a formulation of the \emph{$\alpha$-strong non-integrability condition}, $\alpha\in(0,1]$, for our setting and then show its independence on the choice of the Jordan curve $\CC$. In Subsection~\ref{subsctDolgopyatOperator}, a consequence of the $\alpha$-strong non-integrability condition that we will use in the remaining part of this section is formulated in Proposition~\ref{propSNI}. We remark that it is crucial for the arguments in Subsection~\ref{subsctCancellation} to have the same exponent $\alpha\in(0,1]$ in both the lower bound and the upper bound in (\ref{eqSNIBounds}). The definition of the Dolgopyat operator $\MM_{J,s,\phi}$ in our context is given in Definition~\ref{defDolgopyatOperator} after important constants in the construction are carefully chosen. In Subsection~\ref{subsctCancellation}, we adapt the cancellation arguments of D.~Dolgopyat to establish the $l^2$-bound in Theorem~\ref{thmL2Shrinking}.

\subsection{Strong non-integrability}    \label{subsctSNI}

\begin{definition}[Strong non-integrability condition]   \label{defStrongNonIntegrability}
Let $f\: S^2 \rightarrow S^2$ be an expanding Thurston map and $d$ be a visual metric on $S^2$ for $f$. Fix $\alpha \in (0,1]$. Let $\phi\in \Holder{\alpha}(S^2,d)$ be a real-valued H\"{o}lder continuous function with an exponent $\alpha$. 

\begin{enumerate}

\smallskip
\item[(1)] We say that $\phi$ satisfies the \defn{$(\CC, \alpha)$-strong non-integrability condition} (with respect to $f$ and $d$), for a Jordan curve $\CC\subseteq S^2$ with $\post f \subseteq \CC$, if there exist
\begin{enumerate}
	\smallskip
	\item[(a)] numbers $N_0,\, M_0\in\N$, $\varepsilon \in (0,1)$, and 
	
	\smallskip
	\item[(b)] $M_0$-tiles $Y^{M_0}_\b\in \X^{M_0}_\b(f,\CC)$,  $Y^{M_0}_\w\in \X^{M_0}_\w(f,\CC)$ 
\end{enumerate}
 such that for each $\c\in\{\b, \, \w\}$, each integer $M \geq M_0$, and each $M$-tile $X\in \X^M(f,\CC)$ with $X \subseteq Y^{M_0}_\c$, there exist two points $x_1(X),\, x_2(X) \in X$ with the following properties:
\begin{enumerate}
\smallskip
\item[(i)] $\min \{ d(x_1(X), S^2 \setminus X), \, d(x_2(X), S^2 \setminus X), d(x_1(X), x_2(X)) \} \geq \varepsilon \diam_d(X)$, and

\smallskip
\item[(ii)] for each integer $N \geq N_0$, there exist two  $(N+M_0)$-tiles $X^{N+M_0}_{\c,1}, \, X^{N+M_0}_{\c,2} \in \X^{N+M_0}(f,\CC)$ such that $Y^{M_0}_\c = f^N\bigl(  X^{N+M_0}_{\c,1}  \bigr) =   f^N\bigl(  X^{N+M_0}_{\c,2}  \bigr)$, and that
\begin{equation}   \label{eqSNIBoundsDefn}
\frac{ \Abs{  S_{N }\phi ( \varsigma_1 (x_1(X)) ) -  S_{N }\phi ( \varsigma_2 (x_1(X)) )  -S_{N }\phi ( \varsigma_1 (x_2(X)) ) +  S_{N }\phi ( \varsigma_2 (x_2(X)) )   }  } {d(x_1(X),x_2(X))^\alpha}
\geq \varepsilon,
\end{equation}
where we write $\varsigma_1 \coloneqq \bigl(f^{N}\big|_{X^{N+M_0}_{\c,1}} \bigr)^{-1}$ and $\varsigma_2 \coloneqq \bigl(f^{N}\big|_{X^{N+M_0}_{\c,2}} \bigr)^{-1}$. 
\end{enumerate}

\smallskip
\item[(2)] We say that $\phi$ satisfies the \defn{$\alpha$-strong non-integrability condition} (with respect to $f$ and $d$) if $\phi$ satisfies the $(\CC, \alpha)$-strong non-integrability condition with respect to $f$ and $d$ for some  Jordan curve $\CC\subseteq S^2$ with $\post f \subseteq \CC$.

\smallskip
\item[(3)] We say that $\phi$ satisfies the \defn{strong non-integrability condition} (with respect to $f$ and $d$) if $\phi$ satisfies the $\alpha'$-strong non-integrability condition with respect to $f$ and $d$ for some  $\alpha' \in (0, \alpha]$.
\end{enumerate}
\end{definition}

For given $f$, $d$, and $\alpha$ as in Definition~\ref{defStrongNonIntegrability}, if $\phi\in \Holder{\alpha} (S^2, d)$ satisfies the $(\CC,\alpha)$-strong non-integrability condition for some Jordan curve $\CC\subseteq S^2$ with $\post f \subseteq \CC$, then we fix the choices of $N_0$, $M_0$, $\varepsilon$, $Y^{M_0}_\b$, $Y^{M_0}_\w$, $x_1(X)$, $x_2(X)$, $X^{N+M_0}_{\b,1}$, $X^{N+M_0}_{\w,1}$ as in Definition~\ref{defStrongNonIntegrability}, and say that something depends only on $f$, $d$, $\alpha$, and $\phi$ even if it also depends on some of these choices.

We will see in the following lemma that the strong non-integrability condition is independent of the Jordan curve $\CC$.

\begin{lemma}   \label{lmSNIwoC}
Let $f$, $d$, $\alpha$ satisfies the Assumptions. Let $\CC$ and $\widehat\CC$ be Jordan curves on $S^2$ with $\post f \subseteq \CC \cap \widehat\CC$. Let $\phi \in \Holder{\alpha}(S^2,d)$ be a real-valued H\"{o}lder continuous function with an exponent $\alpha$. Fix arbitrary integers $n,\, \widehat{n} \in \N$. Let $F\coloneqq f^n$ and $\widehat{F} \coloneqq f^{\widehat{n}}$ be iterates of $f$. Then $\Phi \coloneqq S_n^f \phi$ satisfies the $(\CC, \alpha)$-strong non-integrability condition with respect to $F$ and $d$ if and only if $\widehat\Phi \coloneqq S_{\widehat{n}}^f \phi$ satisfies the $( \widehat\CC, \alpha )$-strong non-integrability condition with respect to $\widehat{F}$ and $d$.

In particular, if $\phi$ satisfies the $\alpha$-strong non-integrability condition with respect to $f$ and $d$, then it satisfies the $(\CC, \alpha)$-strong non-integrability condition with respect to $f$ and $d$.
\end{lemma}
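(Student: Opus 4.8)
The plan is to reduce, using the symmetry of the biconditional together with the identities $F^{\widehat n}=f^{n\widehat n}=\widehat F^{n}$ and $S^{F}_{\widehat n}\Phi=S^{f}_{n\widehat n}\phi=S^{\widehat F}_{n}\widehat\Phi$, the whole statement to two assertions: (I) (\emph{curve-independence}) for a fixed expanding Thurston map $g$, a potential $\psi\in\Holder\alpha(S^{2},d)$, and any two Jordan curves $\CC_{1},\CC_{2}\supseteq\post g$, the $(\CC_{1},\alpha)$- and $(\CC_{2},\alpha)$-strong non-integrability conditions for $(g,\psi)$ are equivalent; and (II) (\emph{iterate-invariance}) for a fixed $g$, curve $\CC$, potential $\psi$, and $k\in\N$, the potential $\psi$ satisfies the $(\CC,\alpha)$-strong non-integrability condition with respect to $g$ if and only if $S^{g}_{k}\psi$ satisfies it with respect to $g^{k}$ --- here one exploits $\DD^{M}(g^{k},\CC)=\DD^{kM}(g,\CC)$ from Proposition~\ref{propCellDecomp}(vii) and $S^{g^{k}}_{M}(S^{g}_{k}\psi)=S^{g}_{kM}\psi$. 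Chaining (II) with $k=\widehat n$, then (I), then (II) with $k=n$, and inserting at the middle stage (via (I)) an $f^{m}$-invariant Jordan curve, which exists by Lemma~\ref{lmCexistsL}, yields the lemma; the point of passing through an invariant curve is that the distortion estimates of Lemmas~\ref{lmMetricDistortion} and~\ref{lmSnPhiBound}, stated for eventually forward-invariant curves, will only be needed when (II) is applied, whereas (I) will require only Lemma~\ref{lmCellBoundsBM} and Proposition~\ref{propCellDecomp} and hence holds for arbitrary curves through $\post g$.

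The data transfer common to (I) and (II) runs as follows. From the source data $N_{0},M_{0},\varepsilon,Y^{M_{0}}_{\b},Y^{M_{0}}_{\w}$ I would first choose the distinguished tiles of the target decomposition: inside the open set $\inte(Y^{M_{0}}_{\c})$ there are, at every sufficiently deep level, target tiles of both colors --- by Lemma~\ref{lmCellBoundsBM} together with the fact that two tiles of the same level sharing a common edge of that level carry opposite colors (because $f^{m}$ is a local homeomorphism near the interior of an $m$-edge and hence maps the two adjacent $m$-tiles onto the two $0$-tiles on the two sides of $\CC$) --- so one can pick a black target tile inside $\inte(Y^{M_{0}}_{\b})$ and a white one inside $\inte(Y^{M_{0}}_{\w})$, processing each by means of the source data corresponding to the color of the source distinguished tile that contains it. Given a target tile $X$ inside such a distinguished tile, one fits inside $X$ a source tile $X'$ of level at least $M_{0}$ with $\diam_{d}(X')\ge c\,\diam_{d}(X)$ for a uniform $c>0$ (place $X'$ in a round ball contained in $X$, using Lemma~\ref{lmCellBoundsBM}(ii),(v)); the points $x_{1}(X'),x_{2}(X')$ provided by the source condition lie in $X$ and still satisfy condition~(i) with $\varepsilon$ replaced by $c\varepsilon$, since $S^{2}\setminus X\subseteq S^{2}\setminus X'$. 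For condition~(ii) one restricts the source inverse branch $\varsigma_{i}=(g^{N}|_{X_{\c,i}})^{-1}$, defined over $Y^{M_{0}}_{\c}$, to the (smaller) target distinguished tile; by Proposition~\ref{propCellDecomp}(i),(ii) and a connectedness argument its image is a single tile of the target decomposition, sitting at exactly the required level. When source and target maps coincide --- the situation of (I) and of the ``build-up'' direction of (II) --- the temporal-distance quantity is literally unchanged, because the Birkhoff sums are the same and $x_{1},x_{2}$ lie where the restricted branch agrees with the original, so these cases close with $\widehat\varepsilon=\min\{\varepsilon,c\varepsilon\}$ and a suitably enlarged $\widehat N_{0}$.

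The genuinely delicate point, which I expect to be the main obstacle, is the remaining direction of (II): passing from the $(\CC,\alpha)$-condition for $(g^{k},S^{g}_{k}\psi)$ down to $(g,\psi)$. There the target condition demands inverse branches of $g^{N}$ for \emph{every} $N\ge N_{0}$, while the source only supplies branches of $(g^{k})^{\widehat N}=g^{k\widehat N}$, i.e.\ for $N$ divisible by $k$. For $N=k\widehat N+r$ with $0<r<k$ I would prepend $r$ extra inverse steps: pick a level-$(N+M_{0})$ tile $T_{i}$ with $g^{r}(T_{i})=\widehat X_{\c,i}$ (possible by Proposition~\ref{propCellDecomp}(ii), since $M_{0}=k\widehat M_{0}$ makes the levels match), set $\varsigma_{i}:=(g^{r}|_{T_{i}})^{-1}\circ\widehat\varsigma_{i}$, and use the splitting $S^{g}_{N}\psi(\varsigma_{i}(x))=S^{g}_{r}\psi(\varsigma_{i}(x))+S^{g}_{k\widehat N}\psi(\widehat\varsigma_{i}(x))$, valid because the first $r$ iterates of $\varsigma_{i}(x)$ run through $T_{i},g(T_{i}),\dots$ before landing at $\widehat\varsigma_{i}(x)\in\widehat X_{\c,i}$. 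The $S^{g}_{k\widehat N}\psi$-terms reassemble precisely the source temporal-distance quantity --- which is $\ge\widehat\varepsilon\,d(x_{1},x_{2})^{\alpha}$ once one writes $S^{g}_{k\widehat N}\psi=S^{g^{k}}_{\widehat N}(S^{g}_{k}\psi)$ --- while the $S^{g}_{r}\psi$-terms, grouped as the second difference $[S^{g}_{r}\psi(\varsigma_{1}x_{1})-S^{g}_{r}\psi(\varsigma_{1}x_{2})]-[S^{g}_{r}\psi(\varsigma_{2}x_{1})-S^{g}_{r}\psi(\varsigma_{2}x_{2})]$, are bounded by $C_{1}\sum_{i}d(\widehat\varsigma_{i}x_{1},\widehat\varsigma_{i}x_{2})^{\alpha}\le 2C_{1}C_{0}^{\alpha}\Lambda^{-\alpha k\widehat N}d(x_{1},x_{2})^{\alpha}$, using Lemma~\ref{lmSnPhiBound} on $T_{i}$ (level $\ge r$) and Lemma~\ref{lmMetricDistortion} on $\widehat X_{\c,i}$ (level $\ge k\widehat N$). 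Taking $N_{0}$ large enough that $2C_{1}C_{0}^{\alpha}\Lambda^{-\alpha k\lfloor N_{0}/k\rfloor}<\widehat\varepsilon/2$ makes this perturbation subordinate to the main term, so the target condition holds with $\widehat\varepsilon$ replaced by $\min\{c\varepsilon,\widehat\varepsilon/2\}$. The bulk of the work is the bookkeeping with levels, colors, and nested inverse branches, but every step reduces to Proposition~\ref{propCellDecomp} and Lemmas~\ref{lmCellBoundsBM},~\ref{lmMetricDistortion},~\ref{lmSnPhiBound}.
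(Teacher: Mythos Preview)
Your modular decomposition into (I) curve-independence and (II) iterate-invariance is a valid alternative route; the paper instead transfers from $(F,\CC,\Phi)$ to $(\widehat F,\widehat\CC,\widehat\Phi)$ directly in one step. The paper's device for reconciling the iterate counts also differs from your ``prepend'' idea: given the target $\widehat N$, it chooses the source $N$ with $Nn>\widehat N\widehat n$ and defines the target branch by pushing \emph{forward}, namely $\widehat\varsigma_i \coloneqq f^{Nn-\widehat N\widehat n}\circ\varsigma_i|_{\widehat Y^{\widehat M_0}_\c}$. The discrepancy in the temporal-distance quantity is then the second difference of $S^f_{Nn-\widehat N\widehat n}\phi(\varsigma_i(\cdot))$, evaluated on a tile of $f$-level at least $Nn$, and the resulting estimate is essentially the same as yours. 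The push-forward handles the curve change and the iterate change simultaneously, so no chain is needed; your decomposition buys modularity at the cost of more bookkeeping.

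There is, however, a slip in your chain. As written --- ``(II) with $k=\widehat n$, then (I), then (II) with $k=n$'' --- the two (II) steps are carried out over the curves $\CC$ and $\widehat\CC$, neither of which is assumed eventually invariant, so inserting an invariant curve only at the (I) stage does not make Lemmas~\ref{lmMetricDistortion} and~\ref{lmSnPhiBound} available where you actually invoke them. The remedy is to reorder: apply (I) first and last, so that both (II) steps take place over an invariant curve $\CC_{\mathrm{inv}}$ (eventually $F$- and $\widehat F$-invariant since $F,\widehat F$ are iterates of $f$), giving the chain
\[
(F,\CC)\overset{\mathrm{(I)}}{\Longleftrightarrow}(F,\CC_{\mathrm{inv}})\overset{\mathrm{(II)}}{\Longleftrightarrow}(f^{n\widehat n},\CC_{\mathrm{inv}})\overset{\mathrm{(II)}}{\Longleftrightarrow}(\widehat F,\CC_{\mathrm{inv}})\overset{\mathrm{(I)}}{\Longleftrightarrow}(\widehat F,\widehat\CC).
\]
With this reordering your argument goes through.
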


\begin{proof}
Let $\Lambda>1$ be the expansion factor of the visual metric $d$ for $f$. Note that $\post f = \post F = \post \widehat{F}$, and that it follows immediately from Lemma~\ref{lmCellBoundsBM} that $d$ is a visual metric for both $F$ and $\widehat{F}$.

By Lemma~\ref{lmCellBoundsBM}~(ii) and (v), there exist numbers $C_{14} \in (0,1)$ and $l\in \N$ such that for each $\widehat{m}\in\N_0$, each $\widehat{X} \in \X^{\widehat{m}}  ( \widehat{F}, \widehat\CC  )$, there exists $X\in \X^{  \lceil   \widehat{m} \widehat{n} / n  \rceil + l}  (F, \CC)$ such that $X\subseteq \widehat{X}$ and $\diam_d (X) \geq C_{14}  \diam_d  ( \widehat{X}  )$.

By symmetry, it suffices to show the forward implication in the first statement of Lemma~\ref{lmSNIwoC}.

We assume that $\Phi$ satisfies the $(\CC,\alpha)$-strong non-integrability condition with respect to $F$ and $d$. We use the choices of numbers $N_0$, $M_0$, $\varepsilon$, tiles $Y^{M_0}_\b\in \X^{M_0}_\b(F,\CC)$,  $Y^{M_0}_\w\in \X^{M_0}_\w(F,\CC)$, $X^{N+M_0}_{\c,1}, X^{N+M_0}_{\c,2} \in \X^{N+M_0}(F,\CC)$, points $x_1(X)$, $x_2(X)$, and functions $\varsigma_1$, $\varsigma_2$ as in Definition~\ref{defStrongNonIntegrability} (with $f$ and $\phi$ replaced by $F$ and $\Phi$, respectively).

It follows from Lemma~\ref{lmCellBoundsBM}~(ii) and (v) again that we can choose an integer $\widehat{M}_0 \in \N$ large enough such that the following statements hold:
\begin{enumerate}
\smallskip
\item[(1)] $\bigl\lceil  \widehat{M}_0 \widehat{n} / n \bigr\rceil + l \geq M_0$.

\smallskip
\item[(2)] There exist $\widehat{M}_0$-tiles $\widehat{Y}^{\widehat{M}_0}_\b\in \X^{\widehat{M}_0}_\b( \widehat{F}, \widehat\CC )$ and $\widehat{Y}^{\widehat{M}_0}_\w\in \X^{\widehat{M}_0}_\w( \widehat{F}, \widehat\CC )$ such that $\widehat{Y}^{\widehat{M}_0}_\b \subseteq \inte \bigl( Y^{M_0}_\b \bigr)$ and $\widehat{Y}^{\widehat{M}_0}_\w \subseteq \inte \bigl( Y^{M_0}_\w \bigr)$.
\end{enumerate}

We define the following constants:
\begin{align}
\widehat{N}_0         &  \coloneqq    \biggl\lceil \frac{1}{ \alpha \widehat{n} }  \log_\Lambda   \frac{  2 \Hseminorm{\alpha,\, (S^2,d)}{\phi} C_0 C^{2\alpha} }{ ( 1 - \Lambda^{-\alpha} )  \varepsilon^{1+\alpha}  (1 - C_{14})  }   \biggr\rceil.  \label{eqPflmSNIwoC_wt_N0} \\
\widehat\varepsilon &\coloneqq  \varepsilon C_{14}  \in (0, \varepsilon) .   \label{eqPflmSNIwoC_wt_varepsilon}
\end{align}

For each $\c\in\{\b, \, \w\}$, each integer $\widehat{M} \geq \widehat{M}_0$, and each $\widehat{M}$-tile $\widehat{X} \in \X^{\widehat{M}} ( \widehat{F}, \widehat\CC )$ with $\widehat{X} \subseteq \widehat{Y}^{\widehat{M}_0}_\c$, we denote $M\coloneqq \bigl\lceil   \widehat{M} \widehat{n} / n \bigr\rceil + l \geq M_0$, and choose an $M$-tile $X\in\X^M(F,\CC)$ with
\begin{equation}  \label{eqPflmSNIwoC_X}
X \subseteq \widehat{X}  \qquad\text{and}\qquad \diam_d(X) \geq C_{14} \diam_d ( \widehat{X} ).
\end{equation}
Define, for each $i\in\{1, \, 2\}$,
\begin{equation}  \label{eqPflmSNIwoC_wt_x_i}
\widehat{x}_i ( \widehat{X} ) \coloneqq x_i(X).
\end{equation}

We need to verify Properties~(i) and (ii) in Definition~\ref{defStrongNonIntegrability} for the $(\widehat\CC, \alpha)$-strong non-integrability condition of $\widehat\Phi$ with respect to $\widehat{F}$ and $d$.

Fix arbitrary $\c\in\{\b, \, \w\}$, $\widehat{M}\in\N$, and $\widehat{X} \in \X^{\widehat{M}} ( \widehat{F}, \widehat\CC )$ with $\widehat{M} \geq \widehat{M}_0$ and $\widehat{X} \subseteq \widehat{Y}^{\widehat{M}_0}_\c$.

\smallskip

\emph{Property~(i).} By (\ref{eqPflmSNIwoC_X}), (\ref{eqPflmSNIwoC_wt_x_i}), (\ref{eqPflmSNIwoC_wt_varepsilon}), and Property~(i) for the $(\CC, \alpha)$-strong non-integrability condition of $\Phi$ with respect to $F$ and $d$, we get
\begin{equation*}
              d( \widehat{x}_1 ( \widehat{X} ),  \widehat{x}_2 ( \widehat{X} ) )  / \diam_d ( \widehat{X} )   
\geq       d(         x_1 (         X  ),  x_2         (         X  ) )  / \bigl( C_{14}^{-1}  \diam_d ( X )  \bigr)  
\geq   \varepsilon C_{14} 
=        \widehat\varepsilon,
\end{equation*}
and for each $i\in\{1, \, 2\}$,
\begin{equation*}
               d( \widehat{x}_i ( \widehat{X} ),  S^2 \setminus \widehat{X} )  / \diam_d ( \widehat{X} )  
\geq       d(         x_i (         X  ),  S^2 \setminus         X )   / \bigl( C_{14}^{-1}  \diam_d ( X ) \bigr)  
\geq   \varepsilon C_{14} 
=        \widehat\varepsilon.
\end{equation*}

\smallskip

\emph{Property~(ii).}  Fix an arbitrary integer $\widehat{N} \geq \widehat{N}_0$. Choose an integer $N \geq N_0$ large enough so that 
$
Nn > \widehat{N} \widehat{n}.
$

By Proposition~\ref{propCellDecomp}~(i) and (vii), for each $i\in\{1, \, 2\}$, since $F^N$ maps $X^{N+M_0}_{\c,i}$ injectively onto $Y^{M_0}_\c$ and $\widehat{Y}^{\widehat{M}_0}_\c \subseteq \inte \bigl( Y^{M_0}_\c \bigr)$, we have
\begin{equation*}
\varsigma_i \bigl(  \widehat{Y}^{\widehat{M}_0}_\c   \bigr) \in \X^{ \widehat{M}_0 \widehat{n} + Nn } (f, \widehat\CC),
\end{equation*}
where $\varsigma_i = \bigl(F^{N}\big|_{X^{N+M_0}_{\c,i}} \bigr)^{-1}$. Define, for each $i\in\{1, \, 2\}$,
\begin{equation*}
\widehat{X}^{ \widehat{N} + \widehat{M}_0 }_{\c,i} \coloneqq f^{Nn - \widehat{N} \widehat{n} }  \bigl( \varsigma_i \bigl(  \widehat{Y}^{\widehat{M}_0}_\c   \bigr) \bigr) 
\in \X^{ \widehat{N} \widehat{n} + \widehat{M}_0 \widehat{n} } (f, \widehat\CC) 
   = \X^{ \widehat{N} + \widehat{M}_0 } (\widehat{F}, \widehat\CC),
\end{equation*}
and write $\widehat\varsigma_i \coloneqq \bigl( \widehat{F}^{\widehat{N}}\big|_{\widehat{X}^{\widehat{N}+\widehat{M}_0}_{\c,i}} \bigr)^{-1}   =   \bigl( f^{\widehat{N} \widehat{n} }\big|_{\widehat{X}^{\widehat{N}+\widehat{M}_0}_{\c,i}} \bigr)^{-1} $. Note that $f^{ Nn - \widehat{N} \widehat{n} } \circ \varsigma_i = \widehat\varsigma_i$.

By (\ref{eqPflmSNIwoC_X}), (\ref{eqPflmSNIwoC_wt_x_i}), Properties~(i) and (ii) for the $(\CC, \alpha)$-strong non-integrability condition of $\Phi$ with respect to $F$ and $d$, Lemmas~\ref{lmMetricDistortion},~\ref{lmCellBoundsBM}~(ii), (\ref{eqPflmSNIwoC_wt_N0}), and (\ref{eqPflmSNIwoC_wt_varepsilon}), we have
\begin{align*}
&                        \frac{ \AbsBig{    S_{\widehat{N}}^{\widehat{F}} \widehat\Phi ( \widehat\varsigma_1 ( \widehat{x}_1( \widehat{X} )) )  -   S_{\widehat{N}}^{\widehat{F}} \widehat\Phi ( \widehat\varsigma_2 ( \widehat{x}_1( \widehat{X} )) ) 
                                                     -  S_{\widehat{N}}^{\widehat{F}} \widehat\Phi ( \widehat\varsigma_1 ( \widehat{x}_2( \widehat{X} )) ) +   S_{\widehat{N}}^{\widehat{F}} \widehat\Phi ( \widehat\varsigma_2 ( \widehat{x}_2( \widehat{X} )) )  }  }   
                                   {d(  \widehat{x}_1( \widehat{X} ), \widehat{x}_2( \widehat{X} ))^\alpha}      \\
&\qquad =        \frac{ \AbsBig{    S_{\widehat{N} \widehat{n}}^{f} \phi ( \widehat\varsigma_1 ( x_1( X )) )  -   S_{\widehat{N} \widehat{n}}^{f} \phi ( \widehat\varsigma_2 ( x_1( X )) )
                                                     -  S_{\widehat{N} \widehat{n}}^{f} \phi ( \widehat\varsigma_1 ( x_2( X )) ) +   S_{\widehat{N} \widehat{n}}^{f} \phi ( \widehat\varsigma_2 ( x_2( X )) )  }  }  
                                  {d(  x_1( X ), x_2( X ))^\alpha}    \\  
&\qquad \geq  \frac{ \AbsBig{    S_{Nn}^{f} \phi ( \varsigma_1 ( x_1( X )) )  -   S_{Nn}^{f} \phi ( \varsigma_2 ( x_1( X )) )
                                                     -  S_{Nn}^{f} \phi ( \varsigma_1 ( x_2( X )) ) +   S_{Nn}^{f} \phi ( \varsigma_2 ( x_2( X )) )  }  }  
                                  {d(  x_1( X ), x_2( X ))^\alpha}    \\
&\qquad\qquad -  \sum_{i\in\{1, \, 2\}}         \frac{ \AbsBig{    S_{ Nn - \widehat{N} \widehat{n} }^{f} \phi ( \varsigma_i ( x_1( X )) )  -   S_{ Nn - \widehat{N} \widehat{n} }^{f} \phi ( \varsigma_i ( x_2( X )) )  }  }      {d(  x_1( X ), x_2( X ))^\alpha} \\
&\qquad \geq  \frac{ \AbsBig{    S_{N}^{F} \Phi ( \varsigma_1 ( x_1( X )) )  -   S_{N}^{F} \Phi ( \varsigma_2 ( x_1( X )) )
                                                     -  S_{N}^{F} \Phi ( \varsigma_1 ( x_2( X )) ) +   S_{N}^{F} \Phi ( \varsigma_2 ( x_2( X )) )  }  }  
                                  {d(  x_1( X ), x_2( X ))^\alpha}    \\
&\qquad\qquad -  \sum_{i\in\{1, \, 2\}}       \frac{ \Hseminorm{\alpha,\, (S^2,d)}{\phi} C_0 }{ 1 - \Lambda^{-\alpha}  } 
                                                                       \cdot  \frac{    d \bigl(  \bigl( f^{ Nn - \widehat{N} \widehat{n} } \circ \varsigma_i \bigr) ( x_1( X ) )  ,  \bigl( f^{ Nn - \widehat{N} \widehat{n} } \circ \varsigma_i \bigr) ( x_2( X ) ) \bigr)    } 
                                                                                          { \varepsilon^\alpha ( \diam_d (X) )^\alpha }   \\
&\qquad \geq      \varepsilon  -   \sum_{i\in\{1, \, 2\}}       \frac{ \Hseminorm{\alpha,\, (S^2,d)}{\phi} C_0 }{ 1 - \Lambda^{-\alpha}  } 
                                                                                               \cdot  \frac{    \diam_d \bigl(  \bigl( f^{ Nn - \widehat{N} \widehat{n} } \circ \varsigma_i \bigr) ( X )   \bigr)    }   { \varepsilon^\alpha ( \diam_d (X) )^\alpha }   \\           
&\qquad \geq      \varepsilon  -    \frac{  2 \Hseminorm{\alpha,\, (S^2,d)}{\phi} C_0 }{ 1 - \Lambda^{-\alpha}  }       
                            \cdot  \frac{   C^\alpha \Lambda^{ - \alpha ( Mn+Nn - ( Nn - \widehat{N}\widehat{n} ) ) }  }   { \varepsilon^\alpha  C^{-\alpha} \Lambda^{ - \alpha  Mn } }   \\
&\qquad \geq      \varepsilon  -    \frac{  2 \Hseminorm{\alpha,\, (S^2,d)}{\phi} C_0 C^{2 \alpha} }{ ( 1 - \Lambda^{-\alpha} )  \varepsilon^\alpha  }    \Lambda^{ - \alpha\widehat{N}_0\widehat{n} } \\
&\qquad \geq      \varepsilon -  \varepsilon (1 - C_{14}) \\
&\qquad   =        \widehat\varepsilon,                       
\end{align*}
where $C\geq 1$ is the constant from Lemma~\ref{lmCellBoundsBM} and $C_0 > 1$ is the constant from Lemma~\ref{lmMetricDistortion}, both of which depend only on $f$, $\CC$, and $d$.

The first statement of Lemma~\ref{lmSNIwoC} is now established. The second statement is a special case of the first statement. 
\end{proof}

\begin{prop}  \label{propSNI2NLI}
Let $f$, $d$, $\alpha$ satisfy the Assumptions. Fix $\phi \in \Holder{\alpha} (S^2,d)$. If $\phi$ satisfies the $\alpha$-strong non-integrability condition (in the sense of Definition~\ref{defStrongNonIntegrability}), then $\phi$ is non-locally integrable (in the sense of Definition~\ref{defLI}). 
\end{prop}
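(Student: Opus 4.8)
The plan is to show that the quantitative lower bound furnished by the $\alpha$-strong non-integrability condition already contains, as a much weaker consequence, the mere non-vanishing of the relevant ``temporal distance'' asked for in the definition of non-local integrability (Definition~\ref{defLI}). First I would invoke Lemma~\ref{lmSNIwoC} to pass, without loss of generality, to the $(\CC,\alpha)$-strong non-integrability condition with respect to $f$ and $d$ for a Jordan curve $\CC$ with $\post f\subseteq\CC$ that is convenient for the non-local integrability setup (for instance the one fixed in the Assumptions, or whichever curve Definition~\ref{defLI} is phrased relative to). This reduction is important because Definition~\ref{defStrongNonIntegrability}~(2) only asserts the existence of \emph{some} such curve, whereas Definition~\ref{defLI} may be stated relative to a fixed one.

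Next I would unpack the data $N_0,\,M_0\in\N$, $\varepsilon\in(0,1)$, the $M_0$-tiles $Y^{M_0}_\b,\,Y^{M_0}_\w$, the points $x_1(X),\,x_2(X)$, and the $(N+M_0)$-tiles $X^{N+M_0}_{\c,1},\,X^{N+M_0}_{\c,2}$ guaranteed by the $(\CC,\alpha)$-strong non-integrability condition, and specialize to the smallest admissible scales: take $\c\in\{\b,\,\w\}$ arbitrary, $M\coloneqq M_0$, $X\coloneqq Y^{M_0}_\c$, and $N\coloneqq N_0$. Property~(i) in Definition~\ref{defStrongNonIntegrability} then gives $d(x_1(X),x_2(X))\geq\varepsilon\diam_d(X)>0$ (positivity of $\diam_d(X)$ being immediate since $X$ is a tile, e.g.\ by Lemma~\ref{lmCellBoundsBM}~(ii)), so in particular $x_1(X)\neq x_2(X)$. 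With $\varsigma_1\coloneqq\bigl(f^{N}|_{X^{N+M_0}_{\c,1}}\bigr)^{-1}$ and $\varsigma_2\coloneqq\bigl(f^{N}|_{X^{N+M_0}_{\c,2}}\bigr)^{-1}$, one no longer needs to divide by the Hölder denominator: the inequality (\ref{eqSNIBoundsDefn}) shows that
\begin{equation*}
S_N\phi(\varsigma_1(x_1(X)))-S_N\phi(\varsigma_2(x_1(X)))-S_N\phi(\varsigma_1(x_2(X)))+S_N\phi(\varsigma_2(x_2(X)))\neq 0.
\end{equation*}
This exhibits two tiles $Y^{M_0}_\c=f^{N}(X^{N+M_0}_{\c,1})=f^{N}(X^{N+M_0}_{\c,2})$, two inverse branches $\varsigma_1,\,\varsigma_2$ of $f^{N}$ carrying $Y^{M_0}_\c$ into itself, and two distinct points of $Y^{M_0}_\c$ at which the second-order difference of the Birkhoff sum of $\phi$ along $\varsigma_1$ and $\varsigma_2$ does not vanish --- which is exactly the configuration whose existence the non-local integrability condition (Definition~\ref{defLI}) requires.

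Finally I would match the notation: translate the quadruple $(\varsigma_1,\varsigma_2,x_1(X),x_2(X))$ into whatever normalization Definition~\ref{defLI} uses (recording, for instance, that $\varsigma_1,\varsigma_2$ are branches of the same iterate $f^{N}$, that the common target is $Y^{M_0}_\c$, and that the two basepoints lie in a common tile), and conclude that $\phi$ is non-locally integrable. The only genuinely delicate point is the bookkeeping: one must check that the tiles and branches produced by the strong non-integrability condition can be organized into precisely the shape demanded by Definition~\ref{defLI}, so that the non-vanishing just derived is literally the non-vanishing in that definition and not merely an analogue of it. Once the reduction via Lemma~\ref{lmSNIwoC} is in place this is routine, since Definition~\ref{defStrongNonIntegrability} was designed to refine Definition~\ref{defLI} quantitatively by inserting the Hölder denominator $d(x_1(X),x_2(X))^\alpha$ and by adding uniformity in the scales $M\geq M_0$ and $N\geq N_0$.
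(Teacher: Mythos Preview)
Your proposal has a genuine gap: the quantity that vanishes in Definition~\ref{defLI} is the \emph{temporal distance} $(S_n^f\phi)^{f^n,\CC}_{\xi,\eta}(x,y)$, which by Definition~\ref{defTemporalDist} and~(\ref{eqDelta}) is an \emph{infinite} sum along backward sequences $\xi,\eta\in\Sigma^-_{f^n,\CC}$. What you exhibit, namely
\[
S_N\phi(\varsigma_1(x_1(X)))-S_N\phi(\varsigma_2(x_1(X)))-S_N\phi(\varsigma_1(x_2(X)))+S_N\phi(\varsigma_2(x_2(X)))\neq 0
\]
for a single $N=N_0$, is a \emph{finite} second-order difference. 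These are not the same object, and the nonvanishing of the latter for one fixed $N$ does not witness the nonvanishing of the former. To connect them you would at minimum need to extend $\varsigma_1,\varsigma_2$ to full infinite backward sequences $\xi,\eta$ and control the resulting tail; since Definition~\ref{defStrongNonIntegrability} gives you separate (and a priori incompatible) tiles $X^{N+M_0}_{\c,j}$ for each $N$, this extension is not automatic. The step you label ``bookkeeping'' is therefore the entire content of the argument, and as written it is missing. You would also need $\CC$ to satisfy $f^n(\CC)\subseteq\CC$ for some $n$, which Lemma~\ref{lmSNIwoC} alone does not arrange.

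The paper sidesteps all of this by arguing by contradiction through the cohomological characterization (Theorem~F of \cite{LZhe23a}): if $\phi$ were locally integrable then $\phi=K+\beta\circ f-\beta$ with $\beta\in\Holder{\alpha}(S^2,d)$, and the Birkhoff sums telescope so that the quantity in~(\ref{eqSNIBoundsDefn}) reduces to a combination of values of $\beta$ at four points lying in the two $(N+M_0)$-tiles $X^{N+M_0}_{\c,1},X^{N+M_0}_{\c,2}$. By Lemma~\ref{lmCellBoundsBM}~(ii) the diameters of these tiles are $\lesssim\Lambda^{-(N+M_0)}$, so the numerator is $\lesssim\Lambda^{-\alpha N}$, while the denominator $d(x_1(X),x_2(X))^\alpha$ is bounded below independently of $N$ by Property~(i). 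Letting $N\to\infty$ contradicts the uniform lower bound $\varepsilon$. Note that this argument genuinely uses the validity of~(\ref{eqSNIBoundsDefn}) for \emph{all} $N\geq N_0$, not just for $N=N_0$.
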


\begin{proof}
We argue by contradiction and assume that $\phi$ is locally integrable and satisfies the $\alpha$-strong non-integrability condition.

Let $\Lambda>1$ be the expansion factor of $d$ for $f$. We first fix a Jordan curve $\CC \subseteq S^2$ containing $\post f$. Then we fix $N_0$, $M_0$, $Y^{M_0}_\b$, and $Y^{M_0}_\w$ as in Definition~\ref{defStrongNonIntegrability}. We choose $M\coloneqq M_0$ and consider an arbitrary $M$-tile $X\in \X^M(f,\CC)$ with $X\subseteq Y^{M_0}_\b$. We fix $x_1(X),  \, x_2(X)\in X$ satisfying Properties~(i) and (ii) in Definition~\ref{defStrongNonIntegrability}~(1). By Theorem~F in \cite{LZ24a}, $\phi = K + \beta \circ f - \beta$ for some constant $K\in\C$ and some H\"older continuous function $\beta \in \Holder{\alpha}((S^2,d),\C)$.

Then by Property~(ii) in Definition~\ref{defStrongNonIntegrability}~(1), for each $N\geq N_0$,
\begin{equation*}
 \frac {\abs{   \beta ( \varsigma_1 (x_1(X)) ) -  \beta ( \varsigma_2 (x_1(X)) )  -  \beta ( \varsigma_1 (x_2(X)) ) +  \beta ( \varsigma_2 (x_2(X)) )   }}{  d(x_1(X),x_2(X))^\alpha }
\geq \varepsilon > 0,
\end{equation*}
where $\varsigma_1 \coloneqq \bigl(f^{N}\big|_{X^{N+M_0}_{\c,1}} \bigr)^{-1}$ and $\varsigma_2 \coloneqq \bigl(f^{N}\big|_{X^{N+M_0}_{\c,2}} \bigr)^{-1}$. Combining the above with Property~(i) in Definition~\ref{defStrongNonIntegrability} and Proposition~\ref{propCellDecomp}~(i), we get
\begin{equation*}
\frac{ 2 \Hseminorm{\alpha,\, (S^2,d)}{\beta}  \bigl( \max \bigl\{ \diam_d \bigl(Y^{N+M_0} \bigr)  :  Y^{N+M_0} \in \X^{N+M_0}(f,\CC) \bigr\} \bigr)^\alpha } { \varepsilon^\alpha (\diam_d(X))^\alpha }  \geq \varepsilon > 0.
\end{equation*}
Thus, by Lemma~\ref{lmCellBoundsBM}~(ii), $2 \Hseminorm{\alpha,\, (S^2,d)}{\beta}  \frac{ C^\alpha \Lambda^{-\alpha N-\alpha M_0 } } { C^{-\alpha} \Lambda^{-\alpha M_0} }  \geq \varepsilon^{1+\alpha} > 0$, where $C\geq 1$ is the constant from Lemma~\ref{lmCellBoundsBM} depending only on $f$, $\CC$, and $d$. This is impossible since $N\geq N_0$ is arbitrary.
\end{proof}

\subsection{Dolgopyat operator}    \label{subsctDolgopyatOperator}

We now fix an expanding Thurston map $f\: S^2 \rightarrow S^2$, a visual metric $d$ on $S^2$ for $f$ with expansion factor $\Lambda>1$, a Jordan curve $\CC\subseteq S^2$ with $f(\CC)\subseteq \CC$ and $\post f\subseteq \CC$, and an eventually positive real-valued H\"{o}lder continuous function $\phi\in \Holder{\alpha}(S^2,d)$ that satisfies the $(\CC, \alpha)$-strong non-integrability condition. We use the notations from Definition~\ref{defStrongNonIntegrability} below.

We set the following constants that will be repeatedly used in this section. We will see that all these constants defined from (\ref{eqDef_m0}) to (\ref{eqDef_eta}) below depend only on $f$, $\CC$, $d$, $\alpha$, and $\phi$.

\begin{align}
  m_0       & \coloneqq      \max \bigl\{    \bigl\lceil \alpha^{-1}  \log_\Lambda  \bigl( 8C_1 \varepsilon^{\alpha - 1}  \bigr) \bigr\rceil   ,\,
                                                              \bigl\lceil \log_\Lambda \bigl( 10 \varepsilon^{-1} C^2 \bigr) \bigr\rceil   \bigr\}                    \geq 1.   \label{eqDef_m0}  \\
\delta_0  & \coloneqq     \min \bigl\{ (2C_1)^{-1},\, \varepsilon^2 C^{-2} / 20 \bigr\} \in (0,1).   \label{eqDef_delta0}  \\  
  b_0        & \coloneqq \max \bigl\{ 2 s_0  +1,\, 
                                C_0 T_0 / ( 1-\Lambda^{-\alpha}),\,
                               2 A_0 \Hseminormbig{\alpha,\, (S^2,d)}{\wt{- s_0\phi}} \big/ ( 1-\Lambda^{-\alpha})   \bigr\}.    \label{eqDef_b0} \\
   A         & \coloneqq \max \{ 3C_{4}T_0,\, 4 A_0 \}.   \label{eqDef_A} \\
\epsilon_1 & \coloneqq \min \bigl\{  \pi \delta_0 / 16 ,\,  (4A)^{-1} \Lambda^{-M_0}  \bigr\} \in(0,1).  \label{eqDef_epsilon1}  \\
  N_1      & \coloneqq \max \bigl\{   N_0,\,  \bigl\lceil  \alpha^{-1} \log_\Lambda \bigl( \max \bigl\{ 2^{10} A, \, 
                                                                 1280 A \Lambda C^2 / (\varepsilon \delta_0) ,\,4A_0, \, 4C_{4} \bigr\} \bigr) \bigr\rceil  \bigr\} .   \label{eqDef_N1} \\
 \eta       & \coloneqq \min \biggl\{ 2^{-12},\, \biggl( \frac{ \varepsilon \delta_0 \epsilon_1 }{ 1280 \Lambda C^2 } \biggr)^2, \,
                             \frac{A\epsilon_1 \varepsilon^\alpha }{240 C_{4} C^2} \Lambda^{-2\alpha m_0 - 1} \bigl(\LIP_d(f)\bigr)^{-\alpha N_1} \biggr\}. \label{eqDef_eta}
\end{align}   
Here the constants $M_0\in \N$, $N_0\in\N$, and $\varepsilon\in (0,1)$ depending only on $f$, $d$, $\CC$, and $\phi$ are from Definition~\ref{defStrongNonIntegrability}; the constant $s_0$ is the unique positive real number satisfying $P(f,-s_0\phi)=0$; the constant $C\geq 1$ depending only on $f$, $d$, and $\CC$ is from Lemma~\ref{lmCellBoundsBM}; the constant $C_0>1$ depending only on $f$, $d$, and $\CC$ is from Lemma~\ref{lmMetricDistortion}; the constant $C_1>0$ depending only on $f$, $d$, $\CC$, $\phi$, and $\alpha$ is from Lemma~\ref{lmSnPhiBound}; the constant $A_0>2$ depending only on $f$, $\CC$, $d$, $\Hseminorm{\alpha,\, (S^2,d)}{\phi}$, and $\alpha$ is from Lemma~\ref{lmBasicIneq}; the constant $C_{4}=C_{4}(f,\CC,d,\alpha,T_0)>1$ depending only on $f$, $\CC$, $d$, $\alpha$, and $\phi$ is defined in (\ref{eqDefC10}) from Lemma~\ref{lmExpToLiniear}; and the constant $T_0>0$ depending only on $f$, $\CC$, $d$, $\phi$, and $\alpha$ is defined in (\ref{eqDefT0}), and according to Lemma~\ref{lmBound_aPhi} satisfies
\begin{equation}  \label{eqT0Bound_sctDolgopyat}
\sup\bigl\{ \Hseminormbig{\alpha,\, (S^2,d)}{\wt{a\phi}}  :  a\in\R, \, \abs{a}\leq 2 s_0     \bigr\} \leq T_0.
\end{equation}

We denote for each $b\in\R$ with $\abs{b}\geq 1$,
\begin{equation}   \label{eqDefCb}
\mathfrak{C}_b \coloneqq \bigl\{  X\in \X^{m(b)}(f,\CC)  :  X\subseteq Y^{M_0}_\b \cup Y^{M_0}_\w \bigr\},
\end{equation}
where we write 
\begin{equation}   \label{eqDefmb}
m(b) \coloneqq  \bigl\lceil  \alpha^{-1}  \log_\Lambda (  C\abs{b} / \epsilon_1 ) \bigr\rceil    .
\end{equation}

Note that by (\ref{eqDef_epsilon1}),
\begin{equation*}
m(b) \geq \log_{\Lambda}( 1 / \epsilon_1 ) \geq M_0,
\end{equation*}
and if $X\in \mathfrak{C}_b$, then $\diam_d(X) \leq \bigl(  \frac{\epsilon_1}{\abs{b}}  \bigr)^{ 1 / \alpha }$ by Lemma~\ref{lmCellBoundsBM}~(ii).

For each $X\in\mathfrak{C}_b$, we now fix choices of tiles $\mathfrak{X}_1 (X),  \, \mathfrak{X}_2 (X) \in \X^{m(b)+m_0}(f,\CC)$ and $\mathfrak{X}'_1 (X),  \, \mathfrak{X}'_2 (X) \in \X^{m(b)+ 2 m_0}(f,\CC)$ in such a way that for each $i\in\{1, \, 2\}$,
\begin{equation} \label{eqXXXsubsetXX}
 x_i(X) \in \mathfrak{X}'_i (X)       \subseteq \mathfrak{X}_i(X).
\end{equation}
By Property~(i) in Definition~\ref{defStrongNonIntegrability}, (\ref{eqDef_m0}), and Lemma~\ref{lmCellBoundsBM}~(ii) and (v), it is easy to see that the constant $m_0$ we defined in (\ref{eqDef_m0}) is large enough so that the following inequalities hold:
\begin{align}
  d( \mathfrak{X}_i (X) , S^2\setminus X ) & \geq  \frac{\varepsilon}{10} C^{-1} \Lambda^{-m(b)}, \label{eqXXtoCX_LB}\\
 \diam_d (\mathfrak{X}_i (X))                     & \leq  \frac{\varepsilon}{10} C^{-1} \Lambda^{-m(b)}, \label{eqDiamXX_UB} \\
 d( \mathfrak{X}'_i(X), S^2 \setminus \mathfrak{X}_i(X) )  & \geq  \frac{\varepsilon}{10} C^{-1} \Lambda^{-m(b)-m_0}, \label{eqXXXtoCXX_LB}\\
 \diam_d (\mathfrak{X}'_i (X))                    & \leq  \frac{\varepsilon}{10} C^{-1} \Lambda^{-m(b)-m_0}  \label{eqDiamXXX_UB}
\end{align}
for $i\in\{1, \, 2\}$, and that
\begin{equation} \label{eqXX1toXX2_LB}
     d(\mathfrak{X}_1 (X),\mathfrak{X}_2 (X)  )  \geq \frac{\varepsilon}{10} C^{-1} \Lambda^{-m(b)}.  
\end{equation}

For each $X\in \mathfrak{C}_b$ and each $i\in\{1, \, 2\}$, we define a function $\psi_{i,X} \: S^2\rightarrow \R$ by
\begin{equation}    \label{eqDefCharactFn}
\psi_{i,X}(x) \coloneqq \frac{  d(x, S^2 \setminus \mathfrak{X}_i(X))^\alpha  }
                     {  d(x,\mathfrak{X}'_i(X))^\alpha + d(x, S^2 \setminus \mathfrak{X}_i(X))^\alpha  }
\end{equation}
for $x\in S^2$. Note that 
\begin{equation}  \label{eqPsi_iX}
\psi_{i,X}(x)=1  \text{ if }x\in \mathfrak{X}'_i(X), \quad \text{ and } \quad  \psi_{i,X}(x)=0 \text{ if } x\notin \mathfrak{X}_i(X).
\end{equation} 

\begin{definition}    \label{defFull}
We say that a subset $J\subseteq \{1, \, 2\} \times \{1, \, 2\} \times \mathfrak{C}_b$ has a \defn{full projection} if $\pi_3(J) = \mathfrak{C}_b$, where $\pi_3\: \{1, \, 2\} \times \{1, \, 2\} \times \mathfrak{C}_b\rightarrow \mathfrak{C}_b$ is the projection $\pi_3(j,i,X)=X$. We write $\mathcal{F}$ for the collection of all subsets of $\{1, \, 2\} \times \{1, \, 2\} \times \mathfrak{C}_b$ that have full projections.
\end{definition}

For a subset $J \subseteq \{1, \, 2\} \times \{1, \, 2\} \times \mathfrak{C}_b$, we define a function $\beta_J \: S^2 \rightarrow \R$ as
\begin{equation}    \label{eqDefBetaJ}
\beta_J(x) \coloneqq  \begin{cases} 1- \frac{\eta}{4} \sum_{i\in\{1, \, 2\}} \sum\limits_{\substack{X\in\mathfrak{C}_b\\ (1,i,X)\in J}}  \psi_{i,X} \bigl(f^{N_1}(x)\bigr) & \text{if } x\in \inte\bigl(X_{\b,1}^{N_1+M_0}\bigr) \cup \inte\bigl( X_{\w,1}^{N_1+M_0} \bigr), \\  
       1-\frac{\eta}{4} \sum_{i\in\{1, \, 2\}} \sum\limits_{\substack{X\in\mathfrak{C}_b\\ (2,i,X)\in J}}  \psi_{i,X} \bigl(f^{N_1}(x)\bigr) & \text{if } x\in \inte\bigl(X_{\b,2}^{N_1+M_0}\bigr) \cup \inte\bigl(X_{\w,2}^{N_1+M_0}\bigr), \\
       1  & \text{otherwise}, \end{cases}
\end{equation}
for $x\in S^2$.

The only properties of potentials that satisfy $\alpha$-strong non-integrability used in this section are summarized in the following proposition.

\begin{prop}   \label{propSNI}
Let $f$, $\CC$, $d$, $\alpha$, $\phi$ satisfy the Assumptions. We assume, in addition, that $f(\CC)\subseteq\CC$ and that $\phi$ satisfies the $\alpha$-strong non-integrability condition. Let $b\in\R$ with $\abs{b}\geq 1$. Using the notation above, the following statement holds:

\smallskip

For each $\c\in\{\b, \, \w\}$, each $X \in \mathfrak{C}_b$, each $x\in \mathfrak{X}'_1(X)$, and each $y\in \mathfrak{X}'_2(X)$,
\begin{equation}   \label{eqSNIBounds}
     \delta_0 
\leq \frac {\abs{  S_{N_1 }\phi ( \tau_1 (x) ) -  S_{N_1 }\phi ( \tau_2 (x) )  -S_{N_1 }\phi ( \tau_1 (y) ) +  S_{N_1 }\phi ( \tau_2 (y) )   } }{d(x,y)^\alpha}
\leq \delta_0^{-1},
\end{equation}
where we write $\tau_1 \coloneqq \bigl(f^{N_1}\big|_{X^{N_1+M_0}_{\c,1}} \bigr)^{-1}$ and $\tau_2 \coloneqq \bigl(f^{N_1}\big|_{X^{N_1+M_0}_{\c,2}} \bigr)^{-1}$.
\end{prop}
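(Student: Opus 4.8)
The plan is to deduce Proposition~\ref{propSNI} directly from the $\alpha$-strong non-integrability condition (Definition~\ref{defStrongNonIntegrability}), which by Lemma~\ref{lmSNIwoC} we may assume holds as a $(\CC,\alpha)$-strong non-integrability condition for our fixed curve $\CC$. Recall that the integer $N_1$ was chosen in (\ref{eqDef_N1}) with $N_1 \geq N_0$, so for any $X\in\mathfrak{C}_b$ (which lies in $\X^{m(b)}(f,\CC)$ with $m(b)\geq M_0$, hence $X\subseteq Y^{M_0}_\b\cup Y^{M_0}_\w$) the condition furnishes points $x_1(X), x_2(X)\in X$ satisfying Properties~(i) and (ii) of Definition~\ref{defStrongNonIntegrability}, and these are exactly the points used to build $\mathfrak{X}'_1(X)\ni x_1(X)$ and $\mathfrak{X}'_2(X)\ni x_2(X)$ in (\ref{eqXXXsubsetXX}). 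With $N\coloneqq N_1$ the condition yields the $(N_1+M_0)$-tiles $X^{N_1+M_0}_{\c,1}, X^{N_1+M_0}_{\c,2}$ and the inverse branches $\varsigma_1=\tau_1$, $\varsigma_2=\tau_2$, together with the lower bound (\ref{eqSNIBoundsDefn}) evaluated at the specific pair $(x_1(X), x_2(X))$. The main work is to promote this single-pair estimate into the two-sided estimate (\ref{eqSNIBounds}) valid for \emph{all} $x\in\mathfrak{X}'_1(X)$ and $y\in\mathfrak{X}'_2(X)$, with the sharp exponent $\alpha$ on both sides.

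First I would establish the \textbf{upper bound}. Writing $G(u,v)\coloneqq S_{N_1}\phi(\tau_1(u)) - S_{N_1}\phi(\tau_1(v)) - S_{N_1}\phi(\tau_2(u)) + S_{N_1}\phi(\tau_2(v))$ for $u,v$ ranging over $Y^{M_0}_\c$, I would bound $\abs{G(x,y)}$ by $\abs{S_{N_1}\phi(\tau_1(x)) - S_{N_1}\phi(\tau_1(y))} + \abs{S_{N_1}\phi(\tau_2(x)) - S_{N_1}\phi(\tau_2(y))}$ and apply (\ref{eqSnPhiBound}) in Lemma~\ref{lmSnPhiBound}: since $\tau_i(x),\tau_i(y)$ both lie in the single $(N_1+M_0)$-tile $X^{N_1+M_0}_{\c,i}$, we get $\abs{S_{N_1}\phi(\tau_i(x)) - S_{N_1}\phi(\tau_i(y))} \leq C_1 d(f^{N_1}(\tau_i(x)), f^{N_1}(\tau_i(y)))^\alpha = C_1 d(x,y)^\alpha$. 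Hence $\abs{G(x,y)} \leq 2C_1 d(x,y)^\alpha \leq \delta_0^{-1} d(x,y)^\alpha$ by the choice $\delta_0\leq (2C_1)^{-1}$ in (\ref{eqDef_delta0}). For the \textbf{lower bound}, I would interpolate: $\abs{G(x,y)} \geq \abs{G(x_1(X), x_2(X))} - \abs{G(x,y) - G(x_1(X), x_2(X))}$, where the first term is $\geq \varepsilon\, d(x_1(X), x_2(X))^\alpha$ by (\ref{eqSNIBoundsDefn}). Decomposing the error term as $G(x,y) - G(x_1(X), x_2(X)) = \bigl(G(x, x_2(X)) - G(x_1(X), x_2(X))\bigr) + \bigl(G(x,y) - G(x, x_2(X))\bigr)$ and expanding each piece into four Birkhoff-sum differences, I would use Lemma~\ref{lmSnPhiBound} again, together with the fact that $x, x_1(X)\in\mathfrak{X}'_1(X)$ and $y, x_2(X)\in\mathfrak{X}'_2(X)$, and the diameter bound (\ref{eqDiamXXX_UB}): $\diam_d(\mathfrak{X}'_i(X))\leq \tfrac{\varepsilon}{10}C^{-1}\Lambda^{-m(b)-m_0}$, while $\diam_d(X^{N_1+M_0}_{\c,i})$ controls $\diam_d(\tau_i(\mathfrak{X}'_i(X)))$ via Lemma~\ref{lmCellBoundsBM}~(ii)-(v). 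The key point is that $m_0$ was chosen in (\ref{eqDef_m0}) precisely so that $\Lambda^{-\alpha m_0}\leq \varepsilon^{1-\alpha}/(8C_1)$, which makes the error at most $\tfrac12\varepsilon\,d(x_1(X),x_2(X))^\alpha$; since also $d(x_1(X),x_2(X))$ and $d(x,y)$ are comparable (both are between positive multiples of $\diam_d(X)$ using Property~(i) of Definition~\ref{defStrongNonIntegrability} and the nesting $\mathfrak{X}'_i(X)\subseteq\mathfrak{X}_i(X)\subseteq X$, together with (\ref{eqXX1toXX2_LB}), (\ref{eqDiamXX_UB})), one concludes $\abs{G(x,y)} \geq \tfrac12\varepsilon\,d(x_1(X),x_2(X))^\alpha \geq \delta_0\,d(x,y)^\alpha$, the last step absorbing the comparability constant into the choice $\delta_0 \leq \varepsilon^2 C^{-2}/20$ from (\ref{eqDef_delta0}).

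The \textbf{main obstacle} I anticipate is the bookkeeping in the lower bound: one must carefully track how the metric distortion of the inverse branches $\tau_i = (f^{N_1}|_{X^{N_1+M_0}_{\c,i}})^{-1}$ expands distances (by a factor essentially $\Lambda^{N_1}$, controlled through Lemma~\ref{lmCellBoundsBM} applied to $(N_1+M_0)$-tiles versus $M_0$-tiles), and to verify that after this expansion the images still lie in a single tile so that Lemma~\ref{lmSnPhiBound} applies with the correct level. Getting the exponent $\alpha$ — rather than a smaller power — on the lower bound is the delicate part, and it hinges on the fact that $m_0$ is chosen large enough (depending on $\varepsilon$ and $C_1$) that the $\Lambda^{-\alpha m_0}$ gain beats the fixed constant $C_1$ in the distortion estimate; this is exactly what the first clause in the definition (\ref{eqDef_m0}) of $m_0$ guarantees. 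Everything else is a routine application of the distortion lemmas already recorded, so once the geometry of the nested tiles $\mathfrak{X}'_i(X)\subseteq\mathfrak{X}_i(X)\subseteq X$ and their preimages is set up, the two inequalities in (\ref{eqSNIBounds}) follow by assembling the pieces.
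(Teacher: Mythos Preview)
Your proposal is correct and follows essentially the same route as the paper: the upper bound via the triangle inequality, Lemma~\ref{lmSnPhiBound}, and the choice $\delta_0\leq(2C_1)^{-1}$; the lower bound by invoking (\ref{eqSNIBoundsDefn}) at the reference pair $(x_1(X),x_2(X))$, bounding the perturbation error through Lemma~\ref{lmSnPhiBound} and the diameter estimate (\ref{eqDiamXXX_UB}) together with the choice of $m_0$ in (\ref{eqDef_m0}), and finishing with the comparability of $d(x,y)$ and $d(x_1(X),x_2(X))$ absorbed into $\delta_0\leq\varepsilon^2 C^{-2}/20$. One simplification: the ``main obstacle'' you anticipate about tracking the metric distortion of the inverse branches $\tau_i$ is not actually needed, since Lemma~\ref{lmSnPhiBound} already gives $\abs{S_{N_1}\phi(\tau_i(u))-S_{N_1}\phi(\tau_i(v))}\leq C_1\,d(f^{N_1}\tau_i(u),f^{N_1}\tau_i(v))^\alpha=C_1\,d(u,v)^\alpha$ directly --- the $\Lambda^{N_1}$ factor never enters the estimate.
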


\begin{proof}
We first observe that the second inequality in (\ref{eqSNIBounds}) follows immediately from the triangle inequality, Lemma~\ref{lmSnPhiBound}, and (\ref{eqDef_delta0}).

It suffices to prove the first inequality in (\ref{eqSNIBounds}). Fix arbitrary $\c\in\{\b, \, \w\}$, $X \in \mathfrak{C}_b$, $x\in \mathfrak{X}'_1(X)$, and $y\in \mathfrak{X}'_2(X)$. By (\ref{eqXXXsubsetXX}), (\ref{eqXX1toXX2_LB}), Lemmas~\ref{lmCellBoundsBM}~(ii),~\ref{lmSnPhiBound}, and (\ref{eqXXXtoCXX_LB}),
\begin{align*}
&                          \abs{  S_{N_1 }\phi ( \tau_1 (x) ) -  S_{N_1 }\phi ( \tau_2 (x) )  -S_{N_1 }\phi ( \tau_1 (y) ) +  S_{N_1 }\phi ( \tau_2 (y) )   }   / d(x,y)^\alpha   \\
&\qquad\geq    \frac{  \abs{  S_{N_1 }\phi ( \tau_1 (x) ) -  S_{N_1 }\phi ( \tau_2 (x) )  -S_{N_1 }\phi ( \tau_1 (y) ) +  S_{N_1 }\phi ( \tau_2 (y) )   }   } { d(x_1(X),x_2(X))^\alpha }   \cdot
                            \frac{ d( \mathfrak{X}_1(X),\mathfrak{X}_2(X))^\alpha }{ (\diam_d(X))^\alpha}  \\
&\qquad\geq   \biggl( \varepsilon  -  \frac{ 2 C_1 ( \diam_d ( \mathfrak{X}'_1(X))^\alpha + 2 C_1 ( \diam_d ( \mathfrak{X}'_2(X))^\alpha }  {(\diam_d(X))^\alpha}    \biggr) 
                           \frac{ 10^{-\alpha} \varepsilon^\alpha C^{-\alpha} \Lambda^{-\alpha m(b)} }{ (\diam_d(X))^\alpha}  \\
&\qquad\geq   \biggl( \varepsilon  -  \frac{ 4  C_1 10^{-\alpha}  \varepsilon^\alpha C^{-\alpha} \Lambda^{-\alpha m(b) - \alpha m_0}   }  { C^{-\alpha} \Lambda^{-\alpha m(b)} }    \biggr)        
                           \frac{ 10^{-\alpha} \varepsilon^\alpha C^{-\alpha} \Lambda^{-\alpha m(b)} }{ C^{\alpha} \Lambda^{-\alpha m(b)} }  \\
&\qquad\geq   \varepsilon^{1+\alpha} \big/ \bigl(2C^{2\alpha} 10^\alpha \bigr) \\
&\qquad\geq \delta_0,
\end{align*}
where the last two inequalities follow from (\ref{eqDef_m0}) and (\ref{eqDef_delta0}).
\end{proof}

\begin{lemma}   \label{lmBetaJ}
Let $f$, $\CC$, $d$, $\Lambda$, $\alpha$, $\phi$, $s_0$ satisfy the Assumptions. We assume, in addition, that $f(\CC)\subseteq \CC$ and that $\phi$ satisfies the $\alpha$-strong non-integrability condition. We use the notation in this section.

Fix $b\in\R$ with $\abs{b}\geq 2 s_0 +1$. Then for each $X\in \mathfrak{C}_b$ and each $i\in\{1, \, 2\}$, the function $\psi_{i,X} \: S^2\rightarrow \R$ defined in (\ref{eqDefCharactFn}) is H\"{o}lder with an exponent $\alpha$ and
\begin{equation}  \label{eqCharactFnHolderConst}
\Hseminorm{\alpha,\, (S^2,d)}{\psi_{i,X}}  \leq 20 \varepsilon^{-\alpha} C \Lambda^{\alpha ( m(b) + 2m_0 ) }.
\end{equation} 
Moreover, for each subset $J\subseteq \{1, \, 2\}\times\{1, \, 2\}\times\mathfrak{C}_b$, the function $\beta_J\:S^2\rightarrow\R$ defined in (\ref{eqDefBetaJ}) satisfies
\begin{equation} \label{eqBetaJBounds}
1\geq \beta_J(x) \geq 1-\eta > 1 / 2 
\end{equation}
for $x\in S^2$. In addition, $\beta_J \in \Holder{\alpha}(S^2,d)$ with $\Hseminorm{\alpha,\, (S^2,d)}{\beta_J} \leq L_\beta$, where
\begin{equation}   \label{eqDefLipBoundBetaJ}
L_{\beta} \coloneqq 40 \varepsilon^{-\alpha} C \Lambda^{ \alpha ( m(b)+2m_0 ) }   (\LIP_d(f))^{\alpha N_1} \eta
\end{equation}
is a constant depending only on $f$, $\CC$, $d$, $\alpha$, $\phi$, and $b$. Here $C\geq 1$ is the constant from Lemma~\ref{lmCellBoundsBM} depending only on $f$, $\CC$, and $d$.
\end{lemma}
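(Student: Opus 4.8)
\textbf{Proof proposal for Lemma~\ref{lmBetaJ}.} The plan is to first establish the H\"older bound \eqref{eqCharactFnHolderConst} for each bump function $\psi_{i,X}$, and then assemble the bound for $\beta_J$ by the triangle inequality together with the estimate for $f^{N_1}$ as a Lipschitz map. For \eqref{eqCharactFnHolderConst}, I would use the elementary fact that for a metric space $(X,d)$ and a nonempty set $A\subseteq X$, the function $x\mapsto d(x,A)^\alpha$ is H\"older with seminorm $\leq 1$ (since $d(\cdot,A)$ is $1$-Lipschitz and $t\mapsto t^\alpha$ is $\alpha$-H\"older with constant $1$ on $[0,+\infty)$). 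Writing $\psi_{i,X} = p/(q+p)$ with $p(x) \coloneqq d(x, S^2\setminus\mathfrak{X}_i(X))^\alpha$ and $q(x) \coloneqq d(x,\mathfrak{X}'_i(X))^\alpha$, one has the standard quotient estimate: for $x, y\in S^2$,
\begin{equation*}
\abs{\psi_{i,X}(x) - \psi_{i,X}(y)} \leq \frac{\abs{p(x)-p(y)}\,(q(y)+p(y)) + p(y)\,\abs{q(x)-q(y)}}{(q(x)+p(x))(q(y)+p(y))} \leq \frac{\abs{p(x)-p(y)} + \abs{q(x)-q(y)}}{q(x)+p(x)},
\end{equation*}
using $p(y)\leq q(y)+p(y)$. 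Now $\abs{p(x)-p(y)}\leq d(x,y)^\alpha$ and $\abs{q(x)-q(y)}\leq d(x,y)^\alpha$ by the observation above, so the numerator is at most $2d(x,y)^\alpha$. For the denominator: if $x\in\mathfrak{X}'_i(X)$ then $q(x)=0$ and $p(x) = d(x,S^2\setminus\mathfrak{X}_i(X))^\alpha \geq \bigl(\frac{\varepsilon}{10}C^{-1}\Lambda^{-m(b)-m_0}\bigr)^\alpha$ by \eqref{eqXXXtoCXX_LB}; if $x\notin\mathfrak{X}_i(X)$ then $p(x)=0$ and $q(x)\geq d(x,\mathfrak{X}'_i(X))^\alpha\geq d(\mathfrak{X}'_i(X), S^2\setminus\mathfrak{X}_i(X))^\alpha$, again bounded below by the same quantity via \eqref{eqXXXtoCXX_LB}; and if $x$ is in the annular region $\mathfrak{X}_i(X)\setminus\mathfrak{X}'_i(X)$, then $q(x)+p(x)\geq p(x)$, and we still need a lower bound — here I should be slightly more careful and note that at least one of $d(x,S^2\setminus\mathfrak{X}_i(X))$, $d(x,\mathfrak{X}'_i(X))$ is $\geq \frac12 d(\mathfrak{X}'_i(X), S^2\setminus\mathfrak{X}_i(X))$ is false in general; instead I use that for \emph{any} $x$, $\max\{d(x,S^2\setminus\mathfrak{X}_i(X)), d(x,\mathfrak{X}'_i(X))\}\geq \frac12 d(\mathfrak{X}'_i(X), S^2\setminus\mathfrak{X}_i(X))$ by the triangle inequality if $\mathfrak{X}'_i(X) \subseteq \mathfrak{X}_i(X)$ are both nonempty (if $d$ to both were small, the two sets would be close, contradiction). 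Hence $q(x)+p(x)\geq \bigl(\frac12\bigr)^\alpha d(\mathfrak{X}'_i(X), S^2\setminus\mathfrak{X}_i(X))^\alpha \geq \bigl(\frac{\varepsilon}{20}C^{-1}\Lambda^{-m(b)-m_0}\bigr)^\alpha$. Combining, $\Hseminorm{\alpha,(S^2,d)}{\psi_{i,X}} \leq 2\bigl(\frac{20}{\varepsilon}C\Lambda^{m(b)+m_0}\bigr)^\alpha \leq 20\varepsilon^{-\alpha}C\Lambda^{\alpha(m(b)+2m_0)}$ (being generous with constants, and using $\alpha\leq 1$, $C\geq 1$, $\Lambda>1$). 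This gives \eqref{eqCharactFnHolderConst}.

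For \eqref{eqBetaJBounds}: by \eqref{eqPsi_iX} each $\psi_{i,X}$ takes values in $[0,1]$, and by \eqref{eqPsi_iX} again the supports $\{\psi_{i,X}\neq 0\}\subseteq\mathfrak{X}_i(X)$ are pairwise disjoint for distinct pairs $(i,X)$ — this is where \eqref{eqXX1toXX2_LB} (separation of $\mathfrak{X}_1(X)$ from $\mathfrak{X}_2(X)$) and \eqref{eqDiamXX_UB} together with the fact that distinct $m(b)$-tiles $X, X'\in\mathfrak{C}_b$ have disjoint interiors and the tiles $\mathfrak{X}_i(\cdot)$ are deep inside $X$ (by \eqref{eqXXtoCX_LB}) are used. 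Therefore at any point $x$ the sum $\sum_{i}\sum_{(j,i,X)\in J}\psi_{i,X}(f^{N_1}(x))$ has at most one nonzero term, which lies in $[0,1]$, so $\beta_J(x)\in[1-\eta/4, 1]\subseteq[1-\eta,1]$, and $1-\eta>1/2$ since $\eta\leq 2^{-12}$ by \eqref{eqDef_eta}. This proves \eqref{eqBetaJBounds}.

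For the H\"older seminorm of $\beta_J$: away from $f^{-N_1}\bigl(\mathfrak{X}_i(X)\bigr)$ the function is locally constant, so it suffices to bound the seminorm of each term $x\mapsto \frac{\eta}{4}\psi_{i,X}(f^{N_1}(x))$, and since the supports are disjoint and each has positive distance from the others (again by the separation estimates \eqref{eqXXtoCX_LB}, \eqref{eqXX1toXX2_LB}), the seminorm of the whole sum is the max over terms, not the sum — but to be safe I will just bound it by the sum, which is still fine up to an extra factor absorbed in the constant, OR more cleanly use disjointness of supports to keep the max. Either way, $f^{N_1}$ is Lipschitz with $\LIP_d(f^{N_1})\leq \LIP_d(f)^{N_1}$ (since $f$ is Lipschitz by \cite[Lemma~3.12]{Li18}, cited in the proof of Lemma~\ref{lmBound_aPhi}), so
\begin{equation*}
\Hseminorm{\alpha,(S^2,d)}{\psi_{i,X}\circ f^{N_1}} \leq \Hseminorm{\alpha,(S^2,d)}{\psi_{i,X}}\,\bigl(\LIP_d(f)\bigr)^{\alpha N_1} \leq 20\varepsilon^{-\alpha}C\Lambda^{\alpha(m(b)+2m_0)}\bigl(\LIP_d(f)\bigr)^{\alpha N_1},
\end{equation*}
and multiplying by $\eta/4$ and allowing a factor $2$ for safety yields $\Hseminorm{\alpha,(S^2,d)}{\beta_J}\leq 40\varepsilon^{-\alpha}C\Lambda^{\alpha(m(b)+2m_0)}(\LIP_d(f))^{\alpha N_1}\eta = L_\beta$. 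Finally, $\beta_J$ is bounded (by $1$) and H\"older, hence $\beta_J\in\Holder{\alpha}(S^2,d)$; and $L_\beta$ depends only on $f$, $\CC$, $d$, $\alpha$, $\phi$, $b$ since $m(b)$, $m_0$, $N_1$, $\varepsilon$, $\eta$, $C$ all do (by \eqref{eqDef_m0}, \eqref{eqDef_N1}, \eqref{eqDef_eta}, \eqref{eqDefmb} and Definition~\ref{defStrongNonIntegrability}).

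The main obstacle I anticipate is the bookkeeping around the disjointness of the supports of the $\psi_{i,X}\circ f^{N_1}$ — one must check that $f^{-N_1}(\mathfrak{X}_i(X))$ for different $(i,X)$, intersected with the relevant tiles $\inte(X^{N_1+M_0}_{\c,j})$, are genuinely separated, which requires combining \eqref{eqXXtoCX_LB}, \eqref{eqDiamXX_UB}, \eqref{eqXX1toXX2_LB} with Lemma~\ref{lmMetricDistortion} or Lemma~\ref{lmCellBoundsBM}. If this turns out to be delicate, the fallback is to not insist on the max-over-terms improvement and simply bound the seminorm of $\beta_J$ by $\card(\mathfrak{C}_b)$ times the single-term bound — but that would blow up the constant, so instead I would bound it by $2$ times the single-term bound using the fact that at most the two terms indexed by $(1,i,X)$ and $(2,i,X)$ with a \emph{common} $X$ and $i$ can have overlapping supports after pulling back (they cannot, since $X^{N_1+M_0}_{\c,1}\neq X^{N_1+M_0}_{\c,2}$ are distinct tiles), confirming the supports are in fact pairwise disjoint and the seminorm equals the max. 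Everything else is routine; the quotient rule computation for $\psi_{i,X}$ and the Lipschitz composition are standard.
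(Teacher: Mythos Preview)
Your treatment of $\psi_{i,X}$ via the uniform quotient estimate (with the triangle-inequality lower bound $\max\{p(x),q(x)\}\geq \frac12 d(\mathfrak{X}'_i(X),S^2\setminus\mathfrak{X}_i(X))^\alpha$) is a clean alternative to the paper's case-by-case computation, and your argument for \eqref{eqBetaJBounds} via disjointness of the $\mathfrak{X}_i(X)$'s matches the paper's.

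The gap is in the H\"older bound for $\beta_J$. You write $\beta_J$ as ``$1$ minus a sum of disjointly supported bumps $\frac{\eta}{4}\psi_{i,X}\circ f^{N_1}$'' and bound the seminorm term-by-term via Lipschitz composition. But $\beta_J$ is \emph{not} globally such a sum: by \eqref{eqDefBetaJ} the summands present at a point $x$ depend on which of the four tiles $X^{N_1+M_0}_{\c,j}$ contains $x$ (through the index $j$). Concretely, one can have $x\in\inte\bigl(X^{N_1+M_0}_{\c,1}\bigr)$ and $y\in\inte\bigl(X^{N_1+M_0}_{\c,2}\bigr)$ with $f^{N_1}(x)=f^{N_1}(y)\in\mathfrak{X}_i(X_*)$, yet $\beta_J(x)\neq\beta_J(y)$ because $(1,i,X_*)\in J$ while $(2,i,X_*)\notin J$; here your single-term Lipschitz-composition bound gives nothing (the ``same'' bump is on for $x$ and off for $y$), and the disjoint-support heuristic does not apply. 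You also omit the continuity check across $\partial X^{N_1+M_0}_{\c,j}$, which the paper verifies explicitly using $\psi_{i,X}\bigl(f^{N_1}(\partial X^{N_1+M_0}_{\c,j})\bigr)=\psi_{i,X}(\partial Y^{M_0}_\c)=\{0\}$ (via \eqref{eqXXtoCX_LB}).

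The paper handles $\Hseminorm{\alpha,(S^2,d)}{\beta_J}$ by a two-case argument that sidesteps the piecewise issue. \textbf{Case 1:} $x,y$ lie in a common $(N_1+m(b))$-tile $X^{N_1+m(b)}$. Then either $X^{N_1+m(b)}$ is not contained in any $X^{N_1+M_0}_{\c,j}$ (so $\beta_J(x)=\beta_J(y)=1$), or it is, and on that tile $\beta_J=1-\frac{\eta}{4}\sum_i\psi_{i,X_*}\circ f^{N_1}$ for the single $X_*=f^{N_1}(X^{N_1+m(b)})$; now your Lipschitz-composition bound applies directly and gives $\leq\eta\,\Hseminorm{\alpha}{\psi_{i,X_*}}(\LIP_d(f))^{\alpha N_1}\leq L_\beta$. \textbf{Case 2:} no $(N_1+m(b))$-tile contains both $x$ and $y$. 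If $\beta_J(x)\neq\beta_J(y)$ one argues (using \eqref{eqPsi_iX} and \eqref{eqXXtoCX_LB}) that $d(f^{N_1}(x),f^{N_1}(y))\geq\frac{\varepsilon}{10}C^{-1}\Lambda^{-m(b)}$, hence $d(x,y)\geq\frac{\varepsilon}{10}C^{-1}\Lambda^{-m(b)}(\LIP_d(f))^{-N_1}$; combined with the uniform bound $|\beta_J(x)-\beta_J(y)|\leq\eta$ from \eqref{eqBetaJBounds}, this yields the ratio bound $\leq L_\beta$. The point is that in Case~2 you never try to track individual bump terms---you just use the global oscillation bound $\eta$ against a definite lower bound on $d(x,y)$.
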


\begin{proof}
We will first establish (\ref{eqCharactFnHolderConst}). Consider distinct points $x, \, y\in S^2$.

If $x, \, y\in S^2 \setminus \mathfrak{X}_i(X)$, then $(\psi_{i,X}(x)-\psi_{i,X}(y) ) / d(x,y)^\alpha  = 0$.

If $x \in S^2 \setminus \mathfrak{X}_i(X)$ and $y \in \mathfrak{X}_i(X)$, then by (\ref{eqXXXtoCXX_LB}),
\begin{align*}
&         \abs{ \psi_{i,X}(x)-\psi_{i,X}(y) }  / d(x,y)^\alpha  \\
&\qquad =     d \bigl(y, S^2\setminus \mathfrak{X}_i(X) \bigr)  \cdot d(x,y)^{-\alpha }  \cdot
       \bigl( d \bigl(y,\mathfrak{X}'_i(X) \bigr)^\alpha + d\bigl(y, S^2\setminus \mathfrak{X}_i(X) \bigr)^\alpha \bigr)^{-1} \\
&\qquad \leq     d \bigl(\mathfrak{X}'_i(X), S^2\setminus \mathfrak{X}_i(X) \bigr)^{-\alpha } \\
&\qquad \leq     10^\alpha \varepsilon^{-\alpha} C^\alpha \Lambda^{ \alpha ( m(b)+m_0 ) }  \\
&\qquad \leq  20 \varepsilon^{-\alpha} C \Lambda^{ \alpha ( m(b)+2m_0 ) }.
\end{align*}

Similarly, if $y \in S^2 \setminus \mathfrak{X}_i(X)$ and $x \in \mathfrak{X}_i(X)$, then $\abs{ \psi_{i,X}(x)-\psi_{i,X}(y) } / d(x,y)^\alpha   \leq 20 \varepsilon^{-\alpha} C \Lambda^{ \alpha ( m(b)+2m_0 ) }$.

If $x, \, y\in \mathfrak{X}_i(X)$, then by (\ref{eqDiamXX_UB}), (\ref{eqXXXsubsetXX}), and (\ref{eqXXXtoCXX_LB}),
\begin{align*}    
     &   \abs{ \psi_{i,X}(x)-\psi_{i,X}(y) } / d(x,y)^\alpha   \\
&\qquad\leq  \frac{ d(x, S^2 \setminus \mathfrak{X}_i(X))^\alpha \abs{d(x,\mathfrak{X}'_i(X))^\alpha - d(y,\mathfrak{X}'_i(X))^\alpha}   }
            { d(x,y)^\alpha  (d(x,\mathfrak{X}'_i(X))^\alpha + d(x, S^2 \setminus \mathfrak{X}_i(X))^\alpha ) (d(y,\mathfrak{X}'_i(X))^\alpha + d(y, S^2 \setminus \mathfrak{X}_i(X))^\alpha )}    \\
&\qquad \; +\frac{\abs{d(x, S^2 \setminus \mathfrak{X}_i(X))^\alpha - d(y, S^2 \setminus \mathfrak{X}_i(X))^\alpha} d(x,\mathfrak{X}'_i(X))^\alpha}
     {  d(x,y)^\alpha   (d(x,\mathfrak{X}'_i(X))^\alpha + d(x, S^2 \setminus \mathfrak{X}_i(X))^\alpha ) (d(y,\mathfrak{X}'_i(X))^\alpha + d(y, S^2 \setminus \mathfrak{X}_i(X))^\alpha )}   \\ 
&\qquad\leq  \frac{ d(x, S^2 \setminus \mathfrak{X}_i(X))^\alpha d(x,y)^\alpha  + d(x,y)^\alpha d(x,\mathfrak{X}'_i(X))^\alpha  }
            {  d(x,y)^\alpha  d( \mathfrak{X}'_i(X), S^2 \setminus \mathfrak{X}_i(X)) ^{2\alpha}} \\
&\qquad\leq  \bigl(  10^{-\alpha}  \varepsilon^\alpha C^{-\alpha}\Lambda^{-\alpha m(b)}
               + 10^{-\alpha} \varepsilon^\alpha C^{-\alpha}\Lambda^{-\alpha m(b)}  \bigr)
              \bigl(  10 \varepsilon^{-1} C \Lambda^{ m(b) +m_0 }  \bigr)^{2 \alpha}    \\
&\qquad\leq  20 \varepsilon^{-\alpha} C \Lambda^{ \alpha ( m(b)+2m_0) } . 
\end{align*}

Hence, $\Hseminorm{\alpha,\, (S^2,d)}{\psi_{i,X}} \leq 20 \varepsilon^{-\alpha} C \Lambda^{ \alpha ( m(b)+2m_0) }$, establishing (\ref{eqCharactFnHolderConst}).

In order to establish (\ref{eqBetaJBounds}), we only need to observe that for each $j\in\{1, \, 2\}$, and each $x\in \inte\bigl(X_{\b,j}^{N_1+M_0}\bigr) \cup \inte\bigl( X_{\w,j}^{N_1+M_0} \bigr)$, at most one term in the summations in (\ref{eqDefBetaJ}) is nonzero. Indeed, we note that for each pair of distinct tiles $X_1, \, X_2\in\mathfrak{C}_b$, $\mathfrak{X}_{i_1}(X_1)\cap\mathfrak{X}_{i_2}(X_2) = \emptyset$ for all $i_1, \, i_2\in\{1, \, 2\}$ by (\ref{eqXXtoCX_LB}), and $\mathfrak{X}_1(X_1) \cap \mathfrak{X}_2(X_1) = \emptyset$ by (\ref{eqXX1toXX2_LB}). Hence, by (\ref{eqPsi_iX}), at most one term in the summations in (\ref{eqDefBetaJ}) is nonzero, and (\ref{eqBetaJBounds}) follows from (\ref{eqDef_eta}).

We now show the continuity of $\beta_J$. Note that for each $i\in \{1, \, 2\}$ and each $X\in\mathfrak{C}_b$, by (\ref{eqXXtoCX_LB}), (\ref{eqPsi_iX}), and the continuity of $\psi_{i,X}$, we have 
\begin{equation*}
\psi_{i,X} \bigl( f^{N_1} \bigl( \partial X_{\c,j}^{N_1+M_0} \bigr) \bigr) = \psi_{i,X}\bigl( Y_\c^{M_0} \bigr) = \{0\}
\end{equation*}
for $\c\in\{\b, \, \w\}$ and $j\in\{1, \, 2\}$. It follows immediately from (\ref{eqDefBetaJ}) that $\beta_J$ is continuous.

Finally, for arbitrary $x, \, y\in S^2$ with $x\neq y$, we will establish $\frac{\abs{\beta_J(x) - \beta_J(y)}}{ d(x,y)^\alpha } \leq L_\beta$ by considering the following two cases.

\smallskip

\emph{Case 1.} $x, \, y\in X^{N_1 + m(b)}$ for some $X^{N_1+m(b)} \in \X^{N_1+m(b)}$. If 
\begin{equation*}
X^{N_1+m(b)}  \nsubseteq \bigcup \bigl\{ X_{\c,j}^{N_1+M_0}  :  \c\in\{\b, \, \w\},\, j\in\{1, \, 2\} \bigr\},
\end{equation*}
then $\beta_J(x)-\beta_J(y) = 1-1 = 0$. If
\begin{equation*}
X^{N_1+m(b)}  \subseteq \bigcup \bigl\{ X_{\c,j}^{N_1+M_0}  :  \c\in\{\b, \, \w\},\, j\in\{1, \, 2\} \bigr\},
\end{equation*}
then by (\ref{eqPsi_iX}), 
\begin{align*}
          \frac{\abs{\beta_J(x) - \beta_J(y)}}{ d(x,y)^\alpha }
 &  =     \frac{ \bigl( 1- \frac{\eta}{4} \sum_{i\in\{1, \, 2\}} \psi_{i,X_*} \bigl( f^{N_1} (x) \bigr) \bigr)
                      -\bigl( 1- \frac{\eta}{4}  \sum_{i\in\{1, \, 2\}} \psi_{i,X_*} \bigl( f^{N_1} (y) \bigr) \bigr) } { d(x,y)^\alpha }\\
&\leq       \eta \Hseminorm{\alpha,\, (S^2,d)}{\psi_{i,X_*}}  \bigl( \LIP_d(f) \bigr)^{\alpha N_1}  \leq L_\beta,                    
\end{align*}
where we denote $X_* \coloneqq  f^{N_1} \bigl(X^{N_1+m(b)} \bigr)$.

\smallskip
\emph{Case 2.} $\card \bigl( \{x, \, y\} \cap X^{N_1+m(b)} \bigr) \leq 1$ for all $X^{N_1+m(b)} \in \X^{N_1+m(b)}$. We assume, without loss of generality, that $\beta_J(x)-\beta_J(y) \neq 0$. Then by (\ref{eqPsi_iX}) and (\ref{eqXXtoCX_LB}), $d\bigl( f^{N_1}(x),f^{N_1}(y)\bigr) \geq \frac{\varepsilon}{10} C^{-1}\Lambda^{-m(b)}$. Thus, $d(x,y) \geq \frac{\varepsilon}{10} C^{-1}\Lambda^{-m(b)}  ( \LIP_d(f) )^{-N_1}$. Hence, by (\ref{eqBetaJBounds}), 
$
\frac{\abs{\beta_J(x) - \beta_J(y)}}{ d(x,y)^\alpha } \leq  10 \varepsilon^{-\alpha} C \Lambda^{\alpha m(b)} ( \LIP_d(f) )^{\alpha N_1} \eta \leq L_\beta.
$ 
\end{proof}

\begin{definition}   \label{defDolgopyatOperator}
Let $f$, $\CC$, $d$, $\alpha$, $\phi$ satisfy the Assumptions. We assume, in addition, that $f(\CC)\subseteq \CC$ and that $\phi$ satisfies the $\alpha$-strong non-integrability condition. Let $a, \, b\in\R$ satisfy $\abs{b}\geq 1$. Denote $s \coloneqq a+\I b$. For each subset $J\subseteq \{1, \, 2\}\times\{1, \, 2\}\times \mathfrak{C}_b$, the \defn{Dolgopyat operator} $\MM_{J,s,\phi}$ on $\Holder{\alpha}\bigl(\bigl(X^0_\b,d\bigr),\C\bigr) \times \Holder{\alpha}\bigl(\bigl(X^0_\w,d \bigr),\C\bigr)$ is defined by
\begin{equation}   \label{eqDefDolgopyatOperator}
\MM_{J,s,\phi} (u_\b,u_\w) \coloneqq \RRR_{ \wt{a\phi}}^{N_1+M_0}  \bigl( u_\b \beta_J|_{X^0_\b} ,  u_\w  \beta_J|_{X^0_\w} \bigr)
\end{equation}
for $u_\b \in \Holder{\alpha}\bigl(\bigl(X^0_\b,d\bigr),\C\bigr)$ and $u_\w\in\Holder{\alpha}\bigl(\bigl(X^0_\w,d\bigr),\C\bigr)$.
\end{definition}

Here $\mathfrak{C}_b$ is defined in (\ref{eqDefCb}), $\beta_J$ is defined in (\ref{eqDefBetaJ}), $M_0\in\N$ is the constant from Definition~\ref{defStrongNonIntegrability}, and $N_1$ is given in (\ref{eqDef_N1}). Note that in (\ref{eqDefDolgopyatOperator}), since $\beta_J \in \Holder{\alpha}(S^2,d)$ (see Lemma~\ref{lmBetaJ}), we have $u_\c \beta_J|_{X^0_\c}  \in \Holder{\alpha}\bigl(\bigl(X^0_\c,d\bigr),\C\bigr)$ for $\c\in\{\b, \, \w\}$.

\subsection{Cancellation argument}    \label{subsctCancellation}

\begin{lemma}    \label{lmGibbsDoubling}
Let $f$, $\CC$, $d$ satisfy the Assumptions. Let $\varphi\in \Holder{\alpha}(S^2,d)$ be a real-valued H\"{o}lder continuous function with an exponent $\alpha\in(0,1]$. Then there exists a constant $C_{\mu_\varphi}\geq 1$ depending only on $f$, $d$, and $\varphi$ such that for all integers $m, \, n\in\N_0$, and tiles $X^n\in \X^n(f,\CC)$, $X^{m+n}\in \X^{m+n}(f,\CC)$ satisfying $X^{m+n} \subseteq X^n$, we have
\begin{equation}    \label{eqGibbsDoubling}
  \mu_\varphi(X^n) / \mu_\varphi(X^{m+n})  \leq C_{\mu_\varphi}^2 \exp  ( m  ( \norm{\varphi}_{\CCC^0(S^2) } +P(f,\varphi) )  ),
\end{equation}
where $\mu_\varphi$ is the unique equilibrium state for the map $f$ and the potential $\varphi$, and $P(f,\varphi)$ denotes the topological pressure for $f$ and $\varphi$.
\end{lemma}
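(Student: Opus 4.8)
\textbf{Proof plan for Lemma~\ref{lmGibbsDoubling}.}
The plan is to exploit the well-known Gibbs property of the equilibrium state $\mu_\varphi$ with respect to the cell decompositions $\DD^n(f,\CC)$. Since $f$ is an expanding Thurston map and $\varphi\in\Holder{\alpha}(S^2,d)$, the results summarized in Theorem~\ref{thmEquilibriumState} (together with the more detailed thermodynamic formalism of \cite{Li18}, in particular the construction of $m_\varphi$ as the eigenmeasure of $\RR_\varphi^*$ and the identity $\mu_\varphi = u_\varphi\, m_\varphi$) give a two-sided Gibbs estimate: there is a constant $C_{\mu_\varphi}\geq 1$, depending only on $f$, $d$, and $\varphi$, such that for every $n\in\N_0$, every $n$-tile $X^n\in\X^n(f,\CC)$, and every $x\in X^n$,
\begin{equation}   \label{eqGibbsProperty}
C_{\mu_\varphi}^{-1} \leq \frac{\mu_\varphi(X^n)}{\exp\bigl(S_n\varphi(x) - n P(f,\varphi)\bigr)} \leq C_{\mu_\varphi}.
\end{equation}
This is exactly the form of Gibbs property established in the thermodynamic formalism for expanding Thurston maps; the constant $C_{\mu_\varphi}$ absorbs the constant $C_2$ from Lemma~\ref{lmR1properties}, the bounds $C_2^{-1}\leq u_\varphi\leq C_2$ from Theorem~\ref{thmMuExist}, and the distortion constant $C_1$ from Lemma~\ref{lmSnPhiBound}. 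First I would recall or cite this estimate precisely (it is the analogue, for our cell decompositions, of the classical Gibbs inequality, and is recorded in \cite{Li18}), taking care that the curve $\CC$ here only needs $f^{n_\CC}(\CC)\subseteq\CC$, which is part of the Assumptions.

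Granting \eqref{eqGibbsProperty}, the lemma is a short computation. Fix $m,\,n\in\N_0$, $X^n\in\X^n(f,\CC)$, and $X^{m+n}\in\X^{m+n}(f,\CC)$ with $X^{m+n}\subseteq X^n$. Pick any point $x\in X^{m+n}\subseteq X^n$. Applying the upper bound of \eqref{eqGibbsProperty} to $X^n$ and the lower bound to $X^{m+n}$, both evaluated at $x$, I get
\begin{equation*}
\frac{\mu_\varphi(X^n)}{\mu_\varphi(X^{m+n})}
\leq C_{\mu_\varphi}^2 \,\frac{\exp\bigl(S_n\varphi(x) - n P(f,\varphi)\bigr)}{\exp\bigl(S_{m+n}\varphi(x) - (m+n) P(f,\varphi)\bigr)}
= C_{\mu_\varphi}^2 \exp\bigl(-(S_{m+n}\varphi(x) - S_n\varphi(x)) + m P(f,\varphi)\bigr).
\end{equation*}
By the definition of Birkhoff sums in \eqref{eqDefSnPt}, $S_{m+n}\varphi(x) - S_n\varphi(x) = \sum_{j=n}^{m+n-1}\varphi(f^j(x))$, whose absolute value is at most $m\norm{\varphi}_{\CCC^0(S^2)}$. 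Hence $-(S_{m+n}\varphi(x) - S_n\varphi(x)) \leq m\norm{\varphi}_{\CCC^0(S^2)}$, and combining this with the display above yields exactly \eqref{eqGibbsDoubling}.

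The only substantive point — and thus the main obstacle — is pinning down the correct reference (or a self-contained one-paragraph derivation) for the two-sided Gibbs property \eqref{eqGibbsProperty} in the exact generality needed here, namely for a visual metric $d$, a Jordan curve $\CC$ merely satisfying $f^{n_\CC}(\CC)\subseteq\CC$, and the cell decomposition $\DD^n(f,\CC)$ rather than a Markov partition adapted to an $f$-invariant curve. If one prefers not to invoke \eqref{eqGibbsProperty} directly, an alternative is to argue via the Ruelle operator: using $\RR_{\overline\varphi}$ with $\overline\varphi = \varphi - P(f,\varphi)$, the bounds in Lemma~\ref{lmR1properties} and Theorem~\ref{thmMuExist} control $\mu_\varphi(X^n)$ up to multiplicative constants in terms of $\exp(S_n\overline\varphi)$ on $X^n$, by writing $\mu_\varphi(X^n) = \int \mathbbm{1}_{X^n}\,\mathrm{d}\mu_\varphi$ and iterating the eigenmeasure relation $\RR_{\overline\varphi}^*(m_\varphi) = m_\varphi$ together with the split Ruelle formalism of Section~\ref{sctSplitRuelleOp}; the distortion Lemma~\ref{lmSnPhiBound} then turns the ratio $\mu_\varphi(X^n)/\mu_\varphi(X^{m+n})$ into a comparison of Birkhoff sums along a common point, giving the same conclusion. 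Either way the estimate on the Birkhoff-sum difference and the extraction of the factor $\exp(m P(f,\varphi))$ is routine; I would present the argument in the compact form above once the Gibbs bound is cited.
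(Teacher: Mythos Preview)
Your proposal is correct and follows essentially the same approach as the paper: cite the Gibbs property of $\mu_\varphi$ from \cite{Li18} (the paper invokes \cite[Theorems~5.16,~1.1, and Corollary~5.18]{Li18}, together with $P_{\mu_\varphi}=P(f,\varphi)$ from \cite[Theorem~5.16 and Proposition~5.17]{Li18}), then pick $x\in X^{m+n}$, divide the two Gibbs bounds, and estimate the Birkhoff-sum difference by $m\norm{\varphi}_{\CCC^0(S^2)}$. Your discussion of the ``main obstacle'' is well placed --- the paper resolves it by direct citation rather than the alternative derivation you sketch.
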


\begin{proof}
By \cite[Theorems~5.16,~1.1, and Corollary~5.18]{Li18}, the unique equilibrium state $\mu_\varphi$ is a \emph{Gibbs state} with respect to $f$, $\CC$, and $\varphi$ as defined in Definition~5.3 in \cite{Li18}. More precisely, there exist constants $P_{\mu_\varphi} \in \R$ and $C_{\mu_\varphi} \geq 1$ such that for each $n\in\N_0$, each $n$-tile $X^n \in\X^n$, and each $x\in X^n$, we have
$             C_{\mu_\varphi}^{-1}
\leq    \frac{ \mu_\varphi(X^n)}  { \exp ( S_n\varphi (x) - n P_{\mu_\varphi}  ) }
\leq   C_{\mu_\varphi}$.

We fix arbitrary integers $m, \, n\in\N_0$, and tiles $X^n\in \X^n$, $X^{m+n}\in \X^{m+n}$ satisfying $X^{m+n} \subseteq X^n$. Choose an arbitrary point $x \in X^{m+n}$. Then
\begin{equation*}
                 \frac{  \mu_\varphi(X^n) }{ \mu_\varphi(X^{m+n}) } 
\leq   C_{\mu_\varphi}^2  \frac{  \exp \bigl( S_n\varphi (x) - n P_{\mu_\varphi}  \bigr) }{ \exp \bigl( S_{n+m}\varphi (x) - (n+m) P_{\mu_\varphi}  \bigr) }
\leq   C_{\mu_\varphi}^2 \exp  ( m  ( \norm{\varphi}_{\CCC^0(S^2) } +P(f,\varphi)  )   ).
\end{equation*}

Inequality~(\ref{eqGibbsDoubling}) follows immediately from the fact that $P_{\mu_\varphi} = P(f,\varphi)$ (see \cite[Theorem~5.16 and Proposition~5.17]{Li18}).
\end{proof}

\begin{lemma}   \label{lmArgumentIneqalities}
For all $z_1, \, z_2\in \C\setminus\{0\}$, the following inequalities hold:
\begin{align} 
\abs{\Arg (z_1 z_2)}            & \leq \abs{\Arg(z_1)} + \abs{\Arg(z_2)}, \label{eqArgumentTriangleIneq}  \\
\abs{z_1 + z_2}                   & \leq \abs{z_1} + \abs{z_2} -    (\Arg( z_1 / z_2 ) )^2  \min\{\abs{z_1}, \, \abs{z_2}\} / 16, \label{eqTriangleIneqWithAngle} \\
\AbsBig{ \Arg(  z_1 / z_2 ) } & \leq  2 \abs{z_1 - z_2} / \min\{\abs{z_1}, \, \abs{z_2} \}   . \label{eqArgDist}
\end{align}
\end{lemma}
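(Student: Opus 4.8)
The three inequalities are elementary facts about complex numbers, so I would prove each by reducing to a one-variable trigonometric computation. For \eqref{eqArgumentTriangleIneq}, write $z_j = \abs{z_j}e^{\I\theta_j}$ with $\theta_j = \Arg(z_j)\in(-\pi,\pi]$. Then $z_1z_2 = \abs{z_1z_2}e^{\I(\theta_1+\theta_2)}$, and $\Arg(z_1z_2)$ is obtained from $\theta_1+\theta_2$ by adding an integer multiple of $2\pi$ to land in $(-\pi,\pi]$. Since $\abs{\theta_1+\theta_2}\le\abs{\theta_1}+\abs{\theta_2}\le 2\pi$, either $\Arg(z_1z_2)=\theta_1+\theta_2$ (and we are done), or $\abs{\theta_1+\theta_2}>\pi$, in which case $\Arg(z_1z_2) = \theta_1+\theta_2 \mp 2\pi$ and $\abs{\Arg(z_1z_2)} = 2\pi - \abs{\theta_1+\theta_2} \le 2\pi - \pi \le \abs{\theta_1}+\abs{\theta_2}$, using that $\abs{\theta_1}+\abs{\theta_2}\ge\abs{\theta_1+\theta_2}>\pi$. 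This gives \eqref{eqArgumentTriangleIneq}.

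For \eqref{eqTriangleIneqWithAngle}, by homogeneity I may divide through by, say, $z_2$ and set $w = z_1/z_2$, $\vartheta = \Arg(w)\in(-\pi,\pi]$; the claim becomes $\abs{1+w} \le 1 + \abs{w} - \vartheta^2\min\{1,\abs{w}\}/16$. Writing $r=\abs{w}$ and using the law of cosines, $\abs{1+w}^2 = 1 + r^2 + 2r\cos\vartheta = (1+r)^2 - 2r(1-\cos\vartheta) = (1+r)^2 - 4r\sin^2(\vartheta/2)$. Hence $\abs{1+w} \le (1+r)\sqrt{1 - 4r\sin^2(\vartheta/2)/(1+r)^2} \le (1+r) - 2r\sin^2(\vartheta/2)/(1+r)$, using $\sqrt{1-t}\le 1-t/2$. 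Now $2r/(1+r) \ge \min\{1,r\}$ (check: if $r\le 1$ then $2r/(1+r)\ge 2r/2 = r$; if $r\ge1$ then $2r/(1+r)\ge 2r/(2r)=1$), and $\sin^2(\vartheta/2) \ge (\vartheta/\pi)^2 \ge \vartheta^2/16$ for $\abs{\vartheta}\le\pi$ by concavity of $\sin$ on $[0,\pi/2]$ (so $\sin(\vartheta/2)\ge \vartheta/\pi$ for $\vartheta\in[0,\pi]$), noting $\pi^2 < 16$. Combining, $\abs{1+w} \le 1+r - \vartheta^2\min\{1,r\}/16$, which is \eqref{eqTriangleIneqWithAngle} after multiplying back by $\abs{z_2}$.

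For \eqref{eqArgDist}, again set $w = z_1/z_2$ and $\vartheta=\Arg(w)$, so the claim is $\abs{\vartheta} \le 2\abs{w-1}/\min\{1,\abs{w}\}$. With $r=\abs{w}$, we have $\abs{w-1}^2 = (r-1)^2 + 2r(1-\cos\vartheta) = (r-1)^2 + 4r\sin^2(\vartheta/2) \ge 4r\sin^2(\vartheta/2)$, so $\abs{w-1} \ge 2\sqrt{r}\,\abs{\sin(\vartheta/2)} \ge 2\sqrt{r}\cdot(2/\pi)\abs{\vartheta/2} = (2\sqrt r/\pi)\abs{\vartheta}$, using $\abs{\sin t}\ge (2/\pi)\abs{t}$ for $\abs{t}\le\pi/2$. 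Thus $\abs{\vartheta} \le (\pi/(2\sqrt r))\abs{w-1}$. Since $\sqrt{r} \ge \min\{1,r\} \ge \min\{1,\sqrt r\}^2$... more directly: if $r\ge 1$ then $\sqrt r\ge 1 = \min\{1,r\}$ and $\pi/(2\sqrt r)\le\pi/2 < 2$; if $r\le 1$ then $\sqrt r \ge r = \min\{1,r\}$, so $\pi/(2\sqrt r) \le \pi/(2r) < 2/r$. In both cases $\pi/(2\sqrt r) \le 2/\min\{1,r\}$, giving $\abs{\vartheta}\le 2\abs{w-1}/\min\{1,r\}$, hence \eqref{eqArgDist} after rescaling by $\abs{z_2}$.

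The only mild subtlety — the ``main obstacle,'' though it is minor — is the branch-cut bookkeeping in \eqref{eqArgumentTriangleIneq}: one must handle the wrap-around case $\abs{\theta_1+\theta_2}>\pi$ carefully, and the stated inequality does require (and gets) the extra room from $\abs{\theta_1}+\abs{\theta_2}\ge\abs{\theta_1+\theta_2}$. Everything else is a routine application of the elementary bounds $\sqrt{1-t}\le 1-t/2$, $\sin^2(\vartheta/2)\ge\vartheta^2/16$, and $\abs{\sin t}\ge (2/\pi)\abs{t}$ on the relevant ranges, so I would state these helper estimates once at the top of the proof and then apply them.
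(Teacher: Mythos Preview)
Your proof is correct. The main difference from the paper is one of style: the paper argues \eqref{eqTriangleIneqWithAngle} and \eqref{eqArgDist} via Euclidean-plane pictures (a triangle with a bisector for \eqref{eqTriangleIneqWithAngle}, a perpendicular drop for \eqref{eqArgDist}) and then the Taylor-type bound $\cos(\theta/2)\le 1-\theta^2/16$, whereas you normalize by $z_2$, apply the law of cosines, and finish with the scalar inequalities $\sqrt{1-t}\le 1-t/2$, $\sin^2(\vartheta/2)\ge\vartheta^2/\pi^2$, and $\abs{\sin t}\ge(2/\pi)\abs t$. Your route is fully self-contained (no figures needed) and makes every constant transparent; the paper's geometric argument is shorter once the picture is drawn but leans on the diagram for the key step $\abs{z_1+z_2}\le\abs{z_2}+\abs{z_1}\cos(\theta/2)$. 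For \eqref{eqArgumentTriangleIneq} the paper simply says it is immediate from the definition of $\Arg$; your explicit handling of the wrap-around case $\abs{\theta_1+\theta_2}>\pi$ is exactly the verification that makes that immediacy honest.
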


\begin{proof}
Inequality (\ref{eqArgumentTriangleIneq}) follows immediately from the definition of $\Arg$ (see Section~\ref{sctNotation}).

We then verify (\ref{eqTriangleIneqWithAngle}). Without loss of generality, we assume that $\abs{z_1}\leq\abs{z_2}$ and $\theta \coloneqq \Arg\bigl(\frac{z_1}{z_2}\bigr) \geq 0$. Using the labeling in Figure~\ref{figTriangleIneqWithAngle}, we let $\overrightarrow{OQ} = z_2$ and $\overrightarrow{QC} = z_1$. Then
\begin{align*}
\abs{z_1+z_2}  &=   \abs{OA} + \abs{AC} \leq \abs{z_2} + \abs{BC} = \abs{z_2} + \abs{z_1}\cos(  \theta / 2 ) \\
             &\leq  \abs{z_2} + \abs{z_1} \biggl( 1- \frac{\theta^2}{8} + \frac{\theta^4}{4! 2^4} \biggr) 
             \leq   \abs{z_2} + \biggl( 1- \frac{\theta^2}{16} \biggr) \abs{z_1} .
\end{align*}

\begin{figure}
    \centering
    \begin{overpic}
    [width=8cm, 
    tics=20]{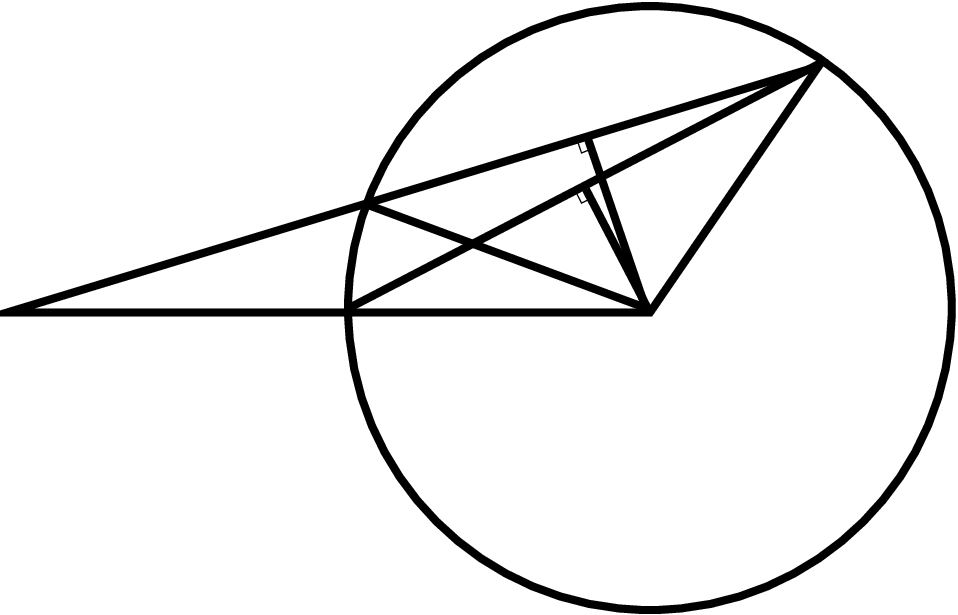}
    \put(-13,67){$O$}
    \put(77,100){$D$}
    \put(154,61){$Q$}
    \put(136,118){$A$}
    \put(131,88){$B$}
    \put(197,133){$C$}    
    \put(175,93){$z_1$}   
    \put(110,63){$z_2$}     
    \end{overpic}
    \caption{Proof of (\ref{eqTriangleIneqWithAngle}) of Lemma~\ref{lmArgumentIneqalities}.}
    \label{figTriangleIneqWithAngle}
\end{figure}

\begin{figure}
    \centering
    \begin{overpic}
    [width=5cm, 
    tics=20]{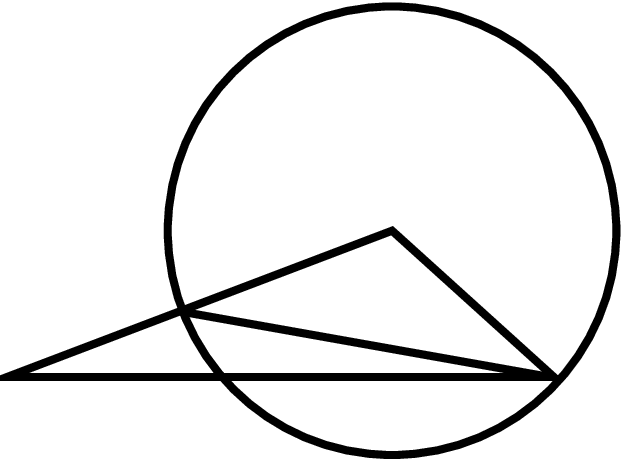}
    \put(-13,15){$B$}
    \put(25,35){$C$}
    \put(87,57){$O$}    
    \put(130,12){$A$}
    \end{overpic}
    \caption{Proof of (\ref{eqArgDist}) of Lemma~\ref{lmArgumentIneqalities}.}
    \label{figArgDist}
\end{figure}

Inequality (\ref{eqArgDist}) follows immediately from the following observation in elementary Euclidean plane geometry. As seen in Figure~\ref{figArgDist}, assume $A=z_1$ and $B=z_2$. Then $\abs{z_1-z_2}=\abs{AB}\geq \abs{AC} \geq \frac12 \abs{OA} \measuredangle AOC = \frac12 \abs{z_1} \abs{ \Arg(  z_1 / z_2 ) }$.    
\end{proof}

\begin{lemma}  \label{lmSupLeq2Inf}
Let $f$, $\CC$, $d$, $\alpha$, $\phi$, $s_0$ satisfy the Assumptions. We assume, in addition, that $f(\CC)\subseteq\CC$ and that $\phi$ satisfies the $\alpha$-strong non-integrability condition. Fix $b\in\R$ with $\abs{b} \geq 2 s_0 +1$. Fix $\c\in\{\b, \, \w\}$ and $h_\c\in K_{A\abs{b}} \bigl(X^0_\c,d\bigr)$. For each $m\geq  m(b) - M_0$ and each $m$-tile $X^m\in\X^m(f,\CC)$ with $X^m\subseteq X^0_\c$, we have
\begin{equation*}
 \sup \{ h_\c(x)  :  x\in X^m \}  \leq 2  \inf \{ h_\c(x)  :  x\in X^m \}.
\end{equation*}
\end{lemma}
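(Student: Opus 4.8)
\textbf{Proof proposal for Lemma~\ref{lmSupLeq2Inf}.}

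The statement says that a function $h_\c$ in the cone $K_{A\abs{b}}(X^0_\c,d)$ cannot oscillate by more than a factor of $2$ on any $m$-tile $X^m$ with $m\geq m(b)-M_0$. This is a purely geometric consequence of the cone condition combined with the diameter bound for $m$-tiles. The plan is as follows. First I would fix $\c\in\{\b,\,\w\}$, $h_\c\in K_{A\abs{b}}(X^0_\c,d)$, an integer $m\geq m(b)-M_0$, and an $m$-tile $X^m\in\X^m(f,\CC)$ with $X^m\subseteq X^0_\c$. For arbitrary $x,\,y\in X^m$, the defining inequality of the cone $K_{A\abs{b}}(X^0_\c,d)$ (see Definition~\ref{defCone}) gives $\abs{h_\c(x)-h_\c(y)}\leq A\abs{b}\,(h_\c(x)+h_\c(y))\,d(x,y)^\alpha$. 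Since $x,\,y\in X^m$ and $X^m$ is an $m$-tile, Lemma~\ref{lmCellBoundsBM}~(ii) yields $d(x,y)^\alpha\leq(\diam_d(X^m))^\alpha\leq C^\alpha\Lambda^{-\alpha m}$.

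The key step is to estimate $\abs{b}\,C^\alpha\Lambda^{-\alpha m}$ from above using the hypothesis $m\geq m(b)-M_0$ and the definition $m(b)=\lceil\alpha^{-1}\log_\Lambda(C\abs{b}/\epsilon_1)\rceil$ from (\ref{eqDefmb}). Indeed $\Lambda^{\alpha m(b)}\geq C\abs{b}/\epsilon_1$, hence $\Lambda^{-\alpha m}\leq\Lambda^{\alpha M_0}\Lambda^{-\alpha m(b)}\leq\Lambda^{\alpha M_0}\epsilon_1/(C\abs{b})$, so that $\abs{b}\,C^\alpha\Lambda^{-\alpha m}\leq C^{\alpha-1}\epsilon_1\Lambda^{\alpha M_0}\leq C^{\alpha-1}\epsilon_1\Lambda^{\alpha M_0}$. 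Then using the bound $\epsilon_1\leq(4A)^{-1}\Lambda^{-M_0}$ from (\ref{eqDef_epsilon1}) (and $C\geq1$, $\alpha\leq1$, $\Lambda>1$ so that $C^{\alpha-1}\leq1$ and $\Lambda^{\alpha M_0}\leq\Lambda^{M_0}$), I would conclude $A\abs{b}\,C^\alpha\Lambda^{-\alpha m}\leq A\cdot(4A)^{-1}=1/4$. Therefore, for all $x,\,y\in X^m$,
\begin{equation*}
\abs{h_\c(x)-h_\c(y)}\leq\frac14\bigl(h_\c(x)+h_\c(y)\bigr).
\end{equation*}
Taking $x$ to be a point where $h_\c$ is close to its supremum on $X^m$ and $y$ a point where $h_\c$ is close to its infimum on $X^m$ (or applying a standard limiting argument, since $h_\c$ is continuous and $X^m$ is compact), and rearranging $\abs{h_\c(x)-h_\c(y)}\leq\frac14(h_\c(x)+h_\c(y))$ gives $\frac34 h_\c(x)\leq\frac54 h_\c(y)$, i.e.\ $h_\c(x)\leq\frac53 h_\c(y)\leq 2h_\c(y)$; hence $\sup\{h_\c(x):x\in X^m\}\leq2\inf\{h_\c(y):y\in X^m\}$.

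The main (minor) obstacle is purely bookkeeping: tracking that the chosen constants $m_0$, $\epsilon_1$, $A$, $m(b)$ are indeed large/small enough to force the product $A\abs{b}\,C^\alpha\Lambda^{-\alpha m}$ below $1/4$, and correctly using $m\geq m(b)-M_0$ rather than $m\geq m(b)$. I expect no real difficulty here since all the constants were deliberately rigged in (\ref{eqDef_m0})--(\ref{eqDef_eta}) and (\ref{eqDefmb}) precisely so that such estimates go through; one only needs $\epsilon_1\Lambda^{\alpha M_0}\leq(4A)^{-1}$ up to the harmless factor $C^{\alpha-1}\leq1$, which is immediate from (\ref{eqDef_epsilon1}). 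A point to be slightly careful about is that the cone condition controls $\abs{h_\c(x)-h_\c(y)}$ by $d(x,y)^\alpha$ rather than $\diam_d(X^m)^\alpha$ directly, but $d(x,y)\leq\diam_d(X^m)$ handles this, and positivity of $h_\c$ (built into the definition of the cone) is what makes the final rearrangement valid.
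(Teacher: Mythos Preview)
Your proposal is correct and follows essentially the same approach as the paper's proof: both use the cone condition, the tile diameter bound from Lemma~\ref{lmCellBoundsBM}~(ii), the definition of $m(b)$, and the bound on $\epsilon_1$ from (\ref{eqDef_epsilon1}) to show $\abs{h_\c(x)-h_\c(y)}\leq\frac14(h_\c(x)+h_\c(y))$ on $X^m$, from which the conclusion follows by elementary rearrangement. The paper simply says ``the lemma follows immediately'' at the last step, whereas you spell out the rearrangement explicitly.
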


Recall that the cone $K_{A\abs{b}} \bigl(X^0_\c,d\bigr)$ is defined in Definition~\ref{defCone}.

\begin{proof}
Fix arbitrary $x, \, x' \in X^m$. By Definition~\ref{defCone}, Lemma~\ref{lmCellBoundsBM}~(ii), (\ref{eqDefmb}), and (\ref{eqDef_epsilon1}),
\begin{align*}
        \abs{h_\c(x) - h_\c(x')}   
&\leq  A\abs{b} (h_\c(x) + h_\c(x')) d(x,x')^\alpha  \\
&\leq    A\abs{b} (h_\c(x) + h_\c(x'))  ( \diam_d(X^m)  )^\alpha  \\
&\leq  A\abs{b} (h_\c(x) + h_\c(x')) C\Lambda^{\alpha M_0 - \alpha m(b)} \\
&\leq  A\abs{b} ( \epsilon_1 / \abs{b} )  \Lambda^{ \alpha M_0}(h_\c(x) + h_\c(x'))  \\
&\leq    (h_\c(x) + h_\c(x')) / 4,
\end{align*}
where $C\geq 1$ is the constant from Lemma~\ref{lmCellBoundsBM} depending only on $f$, $\CC$, and $d$. The lemma follows immediately.
\end{proof}

\begin{lemma}   \label{lmUHDichotomy}
Let $f$, $\CC$, $d$, $\alpha$, $\phi$, $s_0$ satisfy the Assumptions. We assume, in addition, that $f(\CC)\subseteq \CC$ and that $\phi$ satisfies the $\alpha$-strong non-integrability condition. Fix $b\in\R$, $m\in\N$, $\c\in\{\b, \, \w\}$, $u_\c\in \Holder{\alpha}\bigl( \bigl( X^0_\c, d \bigr),\C \bigr)$, and $h_\c \in K_{A\abs{b}} \bigl( X^0_\c, d \bigr)$ such that $\abs{b} \geq 2 s_0 +1$, $m\geq N_1 + m(b)$, $\abs{u_\c(y)} \leq h_\c(y)$, and $\abs{u_\c(y) - u_\c(y')} \leq A\abs{b} (h_\c(y) + h_\c(y')) d(y,y')^\alpha$ whenever $y, \, y'\in X^0_\c$. Then for each $X^m \in \X^m(f,\CC)$ with $X^m\subseteq X^0_\c$, at least one of the following statements holds:

\begin{enumerate}
\smallskip
\item[(1)] $\abs{u_\c(x)} \leq \frac{3}{4} h_\c(x)$ for all $x\in X^m$.

\smallskip
\item[(2)] $\abs{u_\c(x)} \geq \frac{1}{4} h_\c(x)$ for all $x\in X^m$. 
\end{enumerate}
\end{lemma}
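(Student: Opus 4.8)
The plan is to prove the dichotomy by a short contrapositive argument: if neither statement holds, then there are points $x_1, x_2 \in X^m$ with $\abs{u_\c(x_1)} > \frac{3}{4} h_\c(x_1)$ and $\abs{u_\c(x_2)} < \frac{1}{4} h_\c(x_2)$, and I will derive a contradiction by comparing the oscillation of $\abs{u_\c}$ against the oscillation of $h_\c$ over $X^m$. The key inputs are: (a) the hypothesis $\abs{u_\c(y) - u_\c(y')} \leq A\abs{b}(h_\c(y) + h_\c(y')) d(y,y')^\alpha$, which controls the oscillation of $u_\c$; (b) Lemma~\ref{lmSupLeq2Inf} applied to $h_\c \in K_{A\abs{b}}(X^0_\c,d)$, which gives $\sup_{X^m} h_\c \leq 2 \inf_{X^m} h_\c$ since $m \geq N_1 + m(b) \geq m(b) - M_0$; and (c) the diameter bound $\diam_d(X^m) \leq C\Lambda^{-\alpha m} \leq C\Lambda^{-\alpha(N_1 + m(b))}$ from Lemma~\ref{lmCellBoundsBM}~(ii), together with the definition (\ref{eqDefmb}) of $m(b)$, which makes $A\abs{b}(\diam_d(X^m))^\alpha$ small — this is exactly the type of estimate already carried out in the proof of Lemma~\ref{lmSupLeq2Inf}, and the constant $N_1$ in (\ref{eqDef_N1}) was chosen (via the term $\lceil \alpha^{-1}\log_\Lambda(2^{10}A \cdots)\rceil$) precisely so that this quantity is bounded by something like $2^{-9}$ or smaller.

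First I would fix $X^m$ with $X^m \subseteq X^0_\c$ and set $\delta := \sup\{ \abs{u_\c(x) - u_\c(x')} : x, x' \in X^m \}$. From hypothesis (a) and Lemma~\ref{lmCellBoundsBM}~(ii) I get $\delta \leq A\abs{b}\bigl(2\sup_{X^m} h_\c\bigr)(\diam_d X^m)^\alpha \leq 2 A\abs{b} C\Lambda^{-\alpha m} \sup_{X^m} h_\c$, and since $m \geq N_1 + m(b)$ and $\Lambda^{-\alpha m(b)} \leq \epsilon_1/(C\abs{b})$ by (\ref{eqDefmb}), this gives $\delta \leq 2A\epsilon_1 \Lambda^{-\alpha N_1} \sup_{X^m} h_\c$; by the choice of $N_1$ in (\ref{eqDef_N1}) (which forces $\Lambda^{\alpha N_1} \geq 2^{10}A$, hence $2A\Lambda^{-\alpha N_1} \leq 2^{-9}$, and $\epsilon_1 < 1$), one obtains $\delta \leq \frac{1}{8}\sup_{X^m} h_\c \leq \frac{1}{4}\inf_{X^m} h_\c$ using Lemma~\ref{lmSupLeq2Inf}. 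Then, assuming both (1) and (2) fail, pick $x_1, x_2 \in X^m$ as above; we have $\abs{u_\c(x_1)} - \abs{u_\c(x_2)} \leq \abs{u_\c(x_1) - u_\c(x_2)} \leq \delta$, while $\abs{u_\c(x_1)} - \abs{u_\c(x_2)} > \frac{3}{4}h_\c(x_1) - \frac{1}{4}h_\c(x_2) \geq \frac{3}{4}\inf_{X^m} h_\c - \frac{1}{4}\sup_{X^m} h_\c \geq \frac{3}{4}\inf_{X^m} h_\c - \frac{1}{2}\inf_{X^m} h_\c = \frac{1}{4}\inf_{X^m} h_\c$, again by Lemma~\ref{lmSupLeq2Inf}. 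Combining, $\frac{1}{4}\inf_{X^m} h_\c < \delta \leq \frac{1}{4}\inf_{X^m} h_\c$, a contradiction (note $\inf_{X^m} h_\c > 0$ since $h_\c \in K_{A\abs{b}}(X^0_\c,d)$ is strictly positive). Therefore at least one of (1), (2) holds.

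The main obstacle is bookkeeping the constants rather than any conceptual difficulty: I need to verify that the precise choices of $N_1$ (and implicitly $\epsilon_1$, $m(b)$) in (\ref{eqDef_N1}), (\ref{eqDef_epsilon1}), (\ref{eqDefmb}) really do push $\delta$ below $\frac{1}{4}\inf_{X^m} h_\c$, and to make sure I invoke Lemma~\ref{lmSupLeq2Inf} with a valid range of $m$ — here $m \geq N_1 + m(b) \geq m(b) \geq m(b) - M_0$, so the hypothesis of that lemma is satisfied. A secondary point of care is that the factor $2$ coming from $\sup_{X^m} h_\c \leq 2\inf_{X^m} h_\c$ must be absorbed at each step; tracking it shows the threshold $\frac{3}{4}$ versus $\frac{1}{4}$ leaves exactly the slack $\frac{1}{4}\inf h_\c$ that the smallness of $\delta$ must beat, which is why the argument is tight but works. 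No step requires the strong non-integrability condition directly — it enters only through the fact that the constants $N_1$, $\epsilon_1$, $m(b)$, and the cone $K_{A\abs{b}}$ are well-defined in this section — so the proof is essentially a one-paragraph triangle-inequality estimate once the constant chase is laid out.
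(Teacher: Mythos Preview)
Your proposal is correct and follows essentially the same approach as the paper. The paper organizes the argument as ``if (2) fails at some $x_0$, then (1) holds everywhere on $X^m$'' via the same chain of estimates (the oscillation bound on $u_\c$, Lemma~\ref{lmCellBoundsBM}~(ii), the definition (\ref{eqDefmb}) of $m(b)$, Lemma~\ref{lmSupLeq2Inf}, and the choice of $N_1$ in (\ref{eqDef_N1})), whereas you phrase it as a contradiction from the simultaneous failure of (1) and (2); the underlying computation is the same.
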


\begin{proof}
Assume that $\abs{u_\c(x_0)} < \frac{1}{4} h_\c(x_0)$ for some $x_0 \in X^m$. Then by Lemmas~\ref{lmCellBoundsBM}~(ii),~\ref{lmSupLeq2Inf}, and (\ref{eqDefmb}), for each $x\in X^m$, 
\begin{align*}
\abs{u_\c(x)}  
& < \abs{u_\c(x) - u_\c(x_0)} + 4^{-1} h_\c(x_0)  \\
& \leq  A\abs{b} (h_\c(x) + h_\c(x_0))  ( \diam_d(X^m)  )^\alpha  + 4^{-1} h_\c(x_0) \\
& \leq \bigl(2 A\abs{b} C \Lambda^{-\alpha N_1 - \alpha m(b)}   +  4^{-1} \bigr)   \sup \{ h_\c(y)  :  y\in X^m \} \\
& \leq \bigl(4 A \epsilon_1 \Lambda^{-\alpha N_1} + 2^{-1} \bigr) h_\c(x) \\
& \leq \frac{3}{4} h_\c(x),
\end{align*}
where $C\geq 1$ is the constant from Lemma~\ref{lmCellBoundsBM}. The last inequality follows from (\ref{eqDef_N1}) and the fact that $\epsilon_1\in (0,1)$ (see (\ref{eqDef_epsilon1})).
\end{proof}

\begin{lemma}   \label{lmUHDichotomySum}
Let $f$, $\CC$, $d$, $\alpha$, $\phi$, $s_0$ satisfy the Assumptions. We assume, in addition, that $f(\CC)\subseteq \CC$ and that $\phi$ satisfies the $\alpha$-strong non-integrability condition. Fix arbitrary $s \coloneqq a+\I b$ with $a, \, b\in \R$ satisfying $\abs{a-s_0}\leq s_0$ and $\abs{b}\geq b_0$. Given arbitrary $h_\b\in K_{A\abs{b}}\bigl( X^0_\b,d\bigr)$,  $h_\w\in K_{A\abs{b}}\bigl( X^0_\w,d\bigr)$, $u_\b\in\Holder{\alpha} \bigl( \bigl(X^0_\b,d\bigr), \C\bigr)$, and $u_\w\in\Holder{\alpha} \bigl( \bigl(X^0_\w,d\bigr), \C \bigr)$ satisfying the property that for each $\c\in\{\b, \, \w\}$, we have $\abs{u_\c (y)}\leq h_\c (y)$ and $\abs{ u_\c(y) - u_\c(y')} \leq A\abs{b} (h_\c(y) + h_\c(y')) d(y,y')^\alpha$ whenever $y, \, y'\in X^0_\c$.

Define the functions $Q_{\c,j}\: Y_\c^{M_0} \rightarrow \R$ for $j\in\{1, \, 2\}$ and $\c\in\{\b, \, \w\}$ by
\begin{equation*}
    Q_{\c,j}(x) 
\coloneqq \frac{ \Absbig{   \sum_{k\in\{1, \, 2\}} u_{ \varsigma (\c,k)} (\tau_k(x)) e^{S_{N_1} \wt{\minus s\phi} (\tau_k(x))} }   }
       { - \frac{1}{2}\eta h_{\varsigma(\c,j)} (\tau_j(x)) e^{S_{N_1} \wt{\minus a\phi} (\tau_j(x))}
         + \sum_{k\in\{1, \, 2\}} h_{\varsigma(\c,k)} (\tau_k(x)) e^{S_{N_1} \wt{\minus a\phi} (\tau_k(x))}    }  ,
\end{equation*}
for $x\in Y_\c^{M_0}$, where we write $\tau_k \coloneqq \bigl( f^{N_1} \big|_{X^{ N_1+M_0}_{\c,k} } \bigr)^{-1}$ for $k\in\{1, \, 2\}$, and we set $\varsigma(\c,j)\in\{\b, \, \w\}$ in such a way that $\tau_j \bigl( Y_{\c}^{M_0} \bigr) \subseteq X^0_{\varsigma(\c,j)}$ for $j\in \{1, \, 2\}$.

Then for each $\c\in\{\b, \, \w\}$ and each $X\in \mathfrak{C}_b$ with $X\subseteq Y_\c^{M_0}$, we have
\begin{equation*}
\min\bigl\{     \Norm{Q_{\c,j}}_{\CCC^0(\mathfrak{X}_i(X))}   :   i, \, j\in\{1, \, 2\} \bigr\}   \leq 1.
\end{equation*}
\end{lemma}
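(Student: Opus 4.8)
The quantity $Q_{\c,j}$ compares a complex sum $\sum_{k} u_{\varsigma(\c,k)}(\tau_k(x)) e^{S_{N_1}\wt{\minus s\phi}(\tau_k(x))}$ against a real sum of the moduli (with one term slightly reduced by the factor $1-\tfrac{\eta}{2}$). Since $\abs{u_{\varsigma(\c,k)}}\leq h_{\varsigma(\c,k)}$, we always have $Q_{\c,j}(x)\leq \tfrac{\abs{z_1}+\abs{z_2}}{-\tfrac{\eta}{2}\abs{z_j}+\abs{z_1}+\abs{z_2}}$ where $z_k \coloneqq u_{\varsigma(\c,k)}(\tau_k(x)) e^{S_{N_1}\wt{\minus s\phi}(\tau_k(x))}$ and $\abs{z_k}\le h_{\varsigma(\c,k)}(\tau_k(x))e^{S_{N_1}\wt{\minus a\phi}(\tau_k(x))}$; this is typically $>1$. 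The claim is that by a judicious choice of $i,j\in\{1,2\}$ one can always get the value down to $\le 1$. The mechanism is the strong non-integrability: on the pair of subtiles $\mathfrak{X}'_1(X),\mathfrak{X}'_2(X)$ the ``temporal distance'' $S_{N_1}\phi(\tau_1(\cdot))-S_{N_1}\phi(\tau_2(\cdot))$ oscillates by at least $\delta_0 d(\cdot,\cdot)^\alpha$ (Proposition~\ref{propSNI}), which forces the two complex numbers $z_1,z_2$ to be genuinely out of phase at some point, so the triangle inequality with angle (inequality~(\ref{eqTriangleIneqWithAngle}) of Lemma~\ref{lmArgumentIneqalities}) gives a definite gain.

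First I would fix $\c$ and $X\subseteq Y_\c^{M_0}$, and set up a dichotomy: by Lemma~\ref{lmUHDichotomy} applied on the tile $\mathfrak{X}_i(X)$ (which contains an $m$-tile with $m\ge N_1+m(b)$ in the relevant range, using $m(b)\ge M_0$), for each $i\in\{1,2\}$ and each $k\in\{1,2\}$ either $\abs{u_{\varsigma(\c,k)}}\le \tfrac34 h_{\varsigma(\c,k)}$ throughout $\tau_k(\mathfrak{X}_i(X))$ (the ``small'' alternative) or $\abs{u_{\varsigma(\c,k)}}\ge \tfrac14 h_{\varsigma(\c,k)}$ throughout (the ``large'' alternative). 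If on $\mathfrak{X}_i(X)$ some term $k$ is in the small alternative, then $\abs{z_k}\le \tfrac34 h_{\varsigma(\c,k)}(\tau_k(x))e^{S_{N_1}\wt{\minus a\phi}(\tau_k(x))}$, and choosing $j=k$ (so the reduced term in the denominator is the one we can afford) one computes directly $Q_{\c,k}(x)\le \tfrac{\tfrac34 a_k + a_{3-k}}{-\tfrac{\eta}{2}\cdot\tfrac34 a_k + \tfrac34 a_k + a_{3-k}}$ where $a_k = h_{\varsigma(\c,k)}(\tau_k(x))e^{S_{N_1}\wt{\minus a\phi}(\tau_k(x))}$; since $\tfrac34 a_k + a_{3-k} < a_k + a_{3-k}$ and the denominator exceeds $\tfrac34 a_k + a_{3-k}$ minus a small $\eta$-correction, this is $\le 1$ provided $\eta$ is small — which is exactly why $\eta\le 2^{-12}$ in (\ref{eqDef_eta}). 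Actually one should be a touch more careful: the small alternative on term $k$ alone gives the bound for the choice $j=k$ without needing the phase argument, so this case is routine.

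The real work is when \emph{both} terms are in the large alternative on \emph{both} $\mathfrak{X}_1(X)$ and $\mathfrak{X}_2(X)$. Here I would pick points $x\in \mathfrak{X}'_1(X)$, $y\in\mathfrak{X}'_2(X)$ achieving (or nearly achieving) the oscillation in Proposition~\ref{propSNI}, so that $\abs{\Arg(z_1/z_2)}$ evaluated at $x$ and at $y$ differ by at least $\abs{b}\delta_0 d(x,y)^\alpha$ (using that the $u$-arguments and the $\wt{\minus a\phi}$-arguments vary slowly — by Lemma~\ref{lmBasicIneq}~(ii) type estimates and the Hölder bound on $\arg u_{\varsigma(\c,k)}$ via inequality~(\ref{eqArgDist}) and the cone condition — compared to the fast $e^{\I b(\cdots)}$ oscillation, the latter difference dominates once $\abs{b}\ge b_0$ and $m(b)$ is as in (\ref{eqDefmb})). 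Since the phase moves by a definite amount between $x$ and $y$, at one of these two points the phase $\abs{\Arg(z_1/z_2)}$ is bounded below by a constant times $\abs{b}\delta_0 d(x,y)^\alpha / $ (something); combined with the fact that both terms are ``large'' (comparable to $h$), inequality~(\ref{eqTriangleIneqWithAngle}) then yields $\abs{z_1+z_2}\le \abs{z_1}+\abs{z_2} - c\,\eta\,(\text{the } h\text{-sum})$ at that point, and consequently at the \emph{uniform} level (by Lemma~\ref{lmSupLeq2Inf}, which lets us pass from one point to a sup over $\mathfrak{X}_i(X)$ with only a factor $2$ loss) we get $\Norm{\sum_k z_k}_{\CCC^0(\mathfrak{X}_i(X))}$ at most the $h$-sum minus $\tfrac{\eta}{2}$ times a single $h$-term, for a suitable choice of $i$ and $j$; that is precisely $Q_{\c,j}(x)\le 1$ on $\mathfrak{X}_i(X)$. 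The bookkeeping of which index $i$ (corresponding to which of the two points $x\in\mathfrak{X}'_1$ or $y\in\mathfrak{X}'_2$ wins) pairs with which $j$ (the term whose denominator coefficient is reduced) is where the argument is most delicate, and matching the constants $\delta_0$, $\eta$, $\epsilon_1$, $A$, $m_0$, $N_1$ so that all the slow-variation error terms are genuinely negligible against $\delta_0$ is the main obstacle; this is dictated by the choices in (\ref{eqDef_m0})--(\ref{eqDef_eta}). I would organize the final estimate as: (a) reduce to the all-large case; (b) produce the phase gap from Proposition~\ref{propSNI} plus slow-variation bounds; (c) apply (\ref{eqTriangleIneqWithAngle}) pointwise; (d) promote to the $\CCC^0$-norm via Lemma~\ref{lmSupLeq2Inf}; (e) read off $\min_{i,j}\Norm{Q_{\c,j}}_{\CCC^0(\mathfrak{X}_i(X))}\le 1$.
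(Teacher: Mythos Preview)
Your plan follows the paper's approach: dispose of the small alternative directly (the paper applies Lemma~\ref{lmUHDichotomy} on all of $X$ rather than on each $\mathfrak{X}_i(X)$, but this is immaterial), then in the all-large case extract a phase lower bound from Proposition~\ref{propSNI} and feed it into (\ref{eqTriangleIneqWithAngle}). Two places need tightening before this becomes a proof. First, the step ``$|\Arg(z_1/z_2)|$ at $x$ and at $y$ differ by at least $|b|\delta_0 d(x,y)^\alpha$'' has a branch-cut problem: $\Theta(x)\coloneqq\Arg(z_1/z_2)$ need not vary additively when $|\Theta|$ is near $\pi$. The paper inserts a preliminary split: if $|\Theta(y)|\geq\pi/4$ for some $y\in X$, then your slow-variation estimates (sharpened to $|\Arg((u_{1,x}/u_{2,x})/(u_{1,y}/u_{2,y}))|\leq\pi/16$ via (\ref{eqArgDist}) and the cone condition, and similarly $\leq\pi/16$ for the $\wt{\minus a\phi}$-contribution) already force $|\Theta|\geq\pi/8\geq 16\eta^{1/2}$ on all of $X$; otherwise $|\Theta|<\pi/4$ throughout, $\Arg$ is additive, and your difference argument via Proposition~\ref{propSNI} shows $|\Theta|\geq 16\eta^{1/2}$ on one of $\mathfrak{X}_1(X),\mathfrak{X}_2(X)$.

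Second, the ``suitable choice of $j$'' must be the index $k_0$ achieving $\inf\bigl\{h_{k,x}\,e^{S_{N_1}\wt{\minus a\phi}(\tau_k(x))} : x\in\mathfrak{X}_{i_0}(X),\,k\in\{1,2\}\bigr\}$, and the gain you record is too large: (\ref{eqTriangleIneqWithAngle}) yields a saving of $\tfrac{\Theta^2}{16}\min_k|u_ke_k|\geq \eta\min_k h_k|e_k|$, not $c\eta$ times the full $h$-sum. Comparing this minimum to the $\tfrac{\eta}{2}h_{j}|e_{j}|$ reduction in the denominator only works for $j=k_0$, and requires the ratio bound $h_{k_0}(\tau_{k_0}(x))e^{\cdots(x)}\big/h_{k_0}(\tau_{k_0}(y))e^{\cdots(y)}\leq 8$ across $\mathfrak{X}_{i_0}(X)$ (Lemma~\ref{lmSupLeq2Inf} plus a Birkhoff-sum distortion estimate); this is where Lemma~\ref{lmSupLeq2Inf} actually enters, not as a pointwise-to-sup promotion of the cancellation inequality itself (the latter already holds at every point of $\mathfrak{X}_{i_0}(X)$ once $|\Theta|\geq 16\eta^{1/2}$ there). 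With these two fixes your outline is the paper's proof.
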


\begin{proof}
Fix arbitrary $\c\in\{ \b,  \, \w \}$ and $X\in \mathfrak{C}_b$ with $X\subseteq Y_\c^{M_0}$. For typographic reasons, we denote in this proof
\begin{equation}  \label{eqPflmUHDichotomySum_uhe}
u_{i,x} \coloneqq u_{\varsigma(\c,i)} ( \tau_i(x) ), \qquad 
h_{i,x} \coloneqq h_{\varsigma(\c,i)} ( \tau_i(x) ), \qquad
e_{i,x} \coloneqq e^{S_{N_1} \wt{\minus s\phi} (\tau_i(x))}
\end{equation}
for $i\in\{1, \, 2\}$ and $x \in X$.

If $\abs{ u_{j,\cdot} } \leq \frac{3}{4} h_{j,\cdot}$ on $X$, for some $j\in\{1, \, 2\}$, then $ \Norm{Q_{\c,j}}_{\CCC^0(\mathfrak{X}_i(X))} \leq 1$ for all $i\in\{1, \, 2\}$. Thus, by Lemma~\ref{lmUHDichotomy}, we can assume that 
\begin{equation}  \label{eqPflmUHDichotomySum_U14h}
\abs{ u_{k,x} } \geq   h_{k,x} / 4  \qquad \text{for all } x\in  X \text{ and }k\in\{1, \, 2\}.
\end{equation}

We define a function $\Theta\: X \rightarrow (-\pi,\pi]$ by setting
\begin{equation}  \label{eqPflmUHDichotomySum_Theta}
\Theta(x) \coloneqq \Arg  \biggl(   \frac{  u_{1,x}  e_{1,x} }{  u_{2,x}  e_{2,x}  }   \biggr)   
\qquad \text{ for } x\in X.
\end{equation}

\smallskip

We first claim that for all $x, \, y\in X$, we have
\begin{align}   
\Absbigg{    \Arg \biggl(  \frac{   u_{1,x}  /  u_{2,x}    }{    u_{1,y}  /  u_{2,y}  }  \biggr) } 
\leq 16 A \epsilon_1  \Lambda^{-\alpha N_1}
& \leq  \pi / 16
\qquad \text{ and}  \label{eqPflmUHDichotomySum_ArgUxy} \\
  \abs{b} \abs{ -S_{N_1} \phi (\tau_1(x)) + S_{N_1} \phi (\tau_2(x)) + S_{N_1} \phi (\tau_1(y)) - S_{N_1} \phi (\tau_2(y))  } 
& \leq  \pi / 16. \label{eqPflmUHDichotomySum_ArgEXPxy}
\end{align}

Indeed, by (\ref{eqArgumentTriangleIneq}) and (\ref{eqArgDist}) in Lemma~\ref{lmArgumentIneqalities}, (\ref{eqPflmUHDichotomySum_uhe}), (\ref{eqPflmUHDichotomySum_U14h}), Lemmas~\ref{lmCellBoundsBM}~(ii),~\ref{lmSupLeq2Inf}, (\ref{eqDefCb}), and (\ref{eqDefmb}),
\begin{align*}
           \Absbigg{    \Arg \biggl(  \frac{   u_{1,x}  /  u_{2,x}    }{    u_{1,y}  /  u_{2,y}  }  \biggr) }   
& \leq  \Absbigg{ \Arg \biggl(   \frac{ u_{1,x} }{ u_{1,y} } \biggr)    }   +\Absbigg{ \Arg \biggl(   \frac{ u_{2,x} }{ u_{2,y} } \biggr)    }   \\
& \leq  \sum_{j\in\{1, \, 2\}} \frac{ 2 \abs{ u_{j,x}  - u_{j,y}   } }   { \inf \{ \abs{ u_{j,z} }   :  z\in X \}  }  \\
& \leq  \sum_{j\in\{1, \, 2\}} \frac{ 8 A\abs{b}  ( h_{j,x}  + h_{j,y} )  } { \inf \{  h_{j,z}     :  z\in X \}  }     d(\tau_j(x),\tau_j(y))^\alpha  \\ 
& \leq  16 A\abs{b} \sum_{j\in\{1, \, 2\}} \frac{ \sup \{  h_{j,z}     :  z\in X \}  } { \inf \{  h_{j,z}    :  z\in X \}  }   C\Lambda^{-\alpha N_1 - \alpha m(b)}  \\
& \leq   64 A\abs{b} ( \epsilon_1 / \abs{b} ) \Lambda^{-\alpha N_1}  \\
& \leq     \pi / 16,
\end{align*} 
where $C\geq 1$ is the constant from Lemma~\ref{lmCellBoundsBM}. The last inequality follows from the fact that $N_1\geq \bigl\lceil \frac{1}{\alpha} \log_\Lambda \bigl(   2^{10} A  \bigr) \bigr\rceil $ (see (\ref{eqDef_N1})) and the fact that $\epsilon_1\in (0,1)$ (see (\ref{eqDef_epsilon1})). We have now verified (\ref{eqPflmUHDichotomySum_ArgUxy}). To show (\ref{eqPflmUHDichotomySum_ArgEXPxy}), we note that by Lemma~\ref{lmCellBoundsBM}~(ii), (\ref{eqDefCb}), (\ref{eqDefmb}), and (\ref{eqDef_epsilon1}),
\begin{align*}
     & \abs{b} \abs{ -S_{N_1} \phi (\tau_1(x)) + S_{N_1} \phi (\tau_2(x)) + S_{N_1} \phi (\tau_1(y)) - S_{N_1} \phi (\tau_2(y))  }  \\
&\qquad \leq  \abs{b} \delta_0^{-1} d(x,y)^\alpha 
\leq   \abs{b} \delta_0^{-1} ( \diam_d(X) )^\alpha
\leq   \abs{b} \delta_0^{-1}  C^\alpha \Lambda^{-\alpha m(b)}    
\leq   \delta_0^{-1} \epsilon_1
\leq   \pi / 16.
\end{align*}

The claim is now verified.

\smallskip

We will choose $i_0\in\{1, \, 2\}$, by separate discussions in the following two cases, in such a way that
\begin{equation} \label{eqPflmUHDichotomySum_ThetaLB}
\abs{\Theta(x)} \geq 16 \eta^{1/2} \qquad \text{for all } x\in \mathfrak{X}_{i_0}(X).
\end{equation}

\smallskip

\emph{Case 1.} $\abs{\Theta(y)} \geq  \pi / 4$ for some $y\in X$. Then by (\ref{eqArgumentTriangleIneq}) in Lemma~\ref{lmArgumentIneqalities}, (\ref{eqPflmUHDichotomySum_uhe}), (\ref{eqPflmUHDichotomySum_Theta}), (\ref{eqPflmUHDichotomySum_ArgUxy}), (\ref{eqPflmUHDichotomySum_ArgEXPxy}), and the fact that $\eta \in \bigl( 0, 2^{-12} \bigr)$ (see (\ref{eqDef_eta})), for each $x\in X$,
\begin{align*}
\abs{\Theta(x)}   &\geq    \abs{\Theta(y)}    -  \Absbigg{ \Arg \biggl(  \frac{ ( u_{1,y} e_{1,y} ) / ( u_{2,y} e_{2,y} ) }{    ( u_{1,x} e_{1,x} ) / ( u_{2,x} e_{2,x} ) }  \biggr)} \\
                  &\geq   \frac{\pi}{4}  -  \Absbigg{ \Arg \biggl(  \frac{   u_{1,y}  /  u_{2,y}  }{    u_{1,x}   /   u_{2,x}  }  \biggr)} 
                                                  -  \Absbigg{ \Arg \biggl(  \frac{   e_{1,y}  /  e_{2,y}  }{    e_{1,x}   /   e_{2,x}  }  \biggr)}  
                  \geq    \frac{\pi}{4} - \frac{\pi}{16} - \frac{\pi}{16}
                  \geq    \frac{\pi}{8}
                  \geq    16\eta^{1/2}.
\end{align*}
We can choose $i_0=1$ in this case.

\smallskip

\emph{Case 2.} $\abs{\Theta(z)} <  \pi / 4$ for all $z\in X$. Then by (\ref{eqArgumentTriangleIneq}) in Lemma~\ref{lmArgumentIneqalities}, (\ref{eqPflmUHDichotomySum_uhe}), (\ref{eqPflmUHDichotomySum_Theta}), (\ref{eqPflmUHDichotomySum_ArgUxy}), (\ref{eqPflmUHDichotomySum_ArgEXPxy}), $\abs{b} \geq b_0 \geq 1$ (see (\ref{eqDef_b0})), (\ref{eqSNIBounds}), (\ref{eqXX1toXX2_LB}), and (\ref{eqDefmb}), for each $x\in \mathfrak{X}_1(X)$ and each $y\in \mathfrak{X}_2(X)$,
\begin{align*}
&        \abs{ \Theta (x) - \Theta(y) } \\
&\qquad   = \Absbigg{ \Arg \biggl(  \frac{ ( u_{1,x} e_{1,x} ) / ( u_{2,x} e_{2,x} ) }{    ( u_{1,y} e_{1,y} ) / ( u_{2,y} e_{2,y} ) }  \biggr)} \\
&\qquad \geq   \Absbigg{ \Arg \biggl(  \frac{   e_{1,x}  /  e_{2,x}  }{    e_{1,y}   /   e_{2,y}  }  \biggr)}  -  \Absbigg{ \Arg \biggl(  \frac{   u_{2,y}  /  u_{1,y}  }{    u_{2,x}   /   u_{1,x}  }  \biggr)}  \\ 
&\qquad\geq      \abs{b} \abs{ - S_{N_1} \phi (\tau_1(x)) + S_{N_1} \phi (\tau_2(x)) + S_{N_1} \phi (\tau_1(y)) - S_{N_1} \phi (\tau_2(y))  } 
       -   16 A \epsilon_1  \Lambda^{-\alpha N_1}   \\
&\qquad\geq      \abs{b} \delta_0 d(x,y)^\alpha -  16  A \epsilon_1  \Lambda^{-\alpha N_1} \\
&\qquad\geq       \abs{b} \delta_0 \bigl(10^{-1} \varepsilon C^{-1}  \Lambda^{-m(b)} \bigr)^\alpha  -   16 A \epsilon_1  \Lambda^{-\alpha N_1}   \\
&\qquad\geq      \varepsilon \delta_0  ( 10 \Lambda )^{-1}  C^{-2} \epsilon_1  -   16 A \epsilon_1  \Lambda^{-\alpha N_1}   \\
&\qquad\geq       \varepsilon \delta_0 \epsilon_1 \big/ \bigl( 20\Lambda C^2 \bigr),
\end{align*}
where the last inequality follows from the observation that $ 16 A \Lambda^{-\alpha N_1} \leq \frac{\varepsilon \delta_0 }{20\Lambda C^2} $ since $N_1 \geq \bigl\lceil \frac{1}{\alpha} \log_\Lambda \bigl(  \frac{320 A \Lambda C^2}{ \varepsilon \delta_0} \bigr) \bigr\rceil$ (see (\ref{eqDef_N1})).

\smallskip

We now claim that at least one of the following statements holds:
\begin{enumerate}
\smallskip
\item[(1)] $\abs{\Theta(x)} \geq \frac{\varepsilon \delta_0 \epsilon_1}{80\Lambda C^2}$ for all $x\in \mathfrak{X}_1(X)$.

\smallskip
\item[(2)] $\abs{\Theta(y)} \geq \frac{\varepsilon \delta_0 \epsilon_1}{80\Lambda C^2}$ for all $y\in \mathfrak{X}_2(X)$.
\end{enumerate}

Indeed, assume that statement (1) fails, then there exists $x_0\in\mathfrak{X}_1(X)$ such that $ \abs{\Theta(x_0)} \leq \frac{ \varepsilon \delta_0 \epsilon_1}{80\Lambda C^2}$. Hence, for all $y\in \mathfrak{X}_2(X)$,
\begin{equation*}
     \abs{\Theta(y)}  
\geq \abs{\Theta(y)-\Theta(x_0)} - \abs{\Theta(x_0)} 
\geq \frac{\varepsilon \delta_0 \epsilon_1}{20\Lambda C^2} - \frac{\varepsilon \delta_0 \epsilon_1}{80\Lambda C^2}
\geq \frac{\varepsilon \delta_0 \epsilon_1}{80\Lambda C^2}.
\end{equation*}
The claim is now verified.

\smallskip

Thus, we can fix $i_0\in\{1, \, 2\}$ such that $\abs{\Theta(x)} \geq \frac{\varepsilon \delta_0 \epsilon_1}{80\Lambda C^2} \geq 16\eta^{1/2}$ (see (\ref{eqDef_eta})) for all $x\in \mathfrak{X}_{i_0} (X)$ in this case.

\smallskip

By (\ref{eqPflmUHDichotomySum_uhe}), Lemmas~\ref{lmSnPhiBound},~\ref{lmSupLeq2Inf},~\ref{lmCellBoundsBM}~(ii), (\ref{eqDefCb}), and (\ref{eqDefmb}), for arbitrary $x, \, y\in\mathfrak{X}_{i_0}(X)$ and $j\in\{1, \, 2\}$,
\begin{align}   \label{eqPflmUHDichotomySum_QuotHBound}
               \AbsBigg{ \frac{   h_{j,x}  \exp\bigl( S_{N_1} \wt{- a\phi} \bigl(\tau_j(x)\bigr)  \bigr) }
                                         {   h_{j,y}  \exp\bigl( S_{N_1} \wt{- a\phi} \bigl(\tau_j(y)\bigr)  \bigr) }   }  
& \leq  \Absbigg{ \frac{   h_{j,x}  } {   h_{j,y} }  }  e^{  \abs{ S_{N_1} \wt{ \minus a\phi}  (\tau_j(x) ) - S_{N_1} \wt{ \minus a\phi}  (\tau_j(y) ) } }  \notag  \\
& \leq  2 \exp \bigl(  C_0 \Hseminormbig{\alpha,\, (S^2,d)}{\wt{- a\phi}}  d(x,y)^\alpha  /  (1- \Lambda^{-\alpha} )    \bigr)  \notag  \\
& \leq  2 \exp \bigl(  C_0 \Hseminormbig{\alpha,\, (S^2,d)}{\wt{- a\phi}}   C^\alpha \Lambda^{-\alpha m(b)}  /  (1- \Lambda^{-\alpha} ) \bigr) \\
& \leq  2 \exp \bigl(  \epsilon_1  \abs{b}^{-1}  C_0 \Hseminormbig{\alpha,\, (S^2,d)}{\wt{- a\phi}} / ( 1- \Lambda^{-\alpha} )  \bigr) \notag  \\
& \leq   8,   \notag
\end{align}
where the last inequality follows from (\ref{eqDef_b0}), (\ref{eqT0Bound_sctDolgopyat}), the condition that $\abs{b} \geq b_0$, and the fact that $\epsilon_1\in(0,1)$ (see (\ref{eqDef_epsilon1})).

We fix $k_0\in\{1, \, 2\}$ such that
\begin{equation}  \label{eqPflmUHDichotomySum_k0}
  \inf \{  h_{j,x} \abs{ e_{j,x} }               :  x\in\mathfrak{X}_{i_0}(X),\, j\in\{1, \, 2\} \} 
=\inf \{  h_{k_0,x} \abs{ e_{k_0,x} }     :  x\in\mathfrak{X}_{i_0}(X) \} .  
\end{equation}   

Hence, by (\ref{eqTriangleIneqWithAngle}) in Lemma~\ref{lmArgumentIneqalities}, (\ref{eqPflmUHDichotomySum_U14h}), (\ref{eqPflmUHDichotomySum_Theta}), (\ref{eqPflmUHDichotomySum_uhe}), (\ref{eqPflmUHDichotomySum_ThetaLB}), (\ref{eqPflmUHDichotomySum_k0}), (\ref{eqDefWideTildePsiComplex}), and (\ref{eqPflmUHDichotomySum_QuotHBound}), for each $x\in\mathfrak{X}_{i_0} (X)$, we have
\begin{align*}
&                    \abs{  u_{1,x} e_{1,x}  +  u_{2,x} e_{2,x}  } \\
&\qquad \leq   - \frac{\Theta^2(x)}{16}  \min_{k\in\{1, \, 2\}}   \{  \abs{  u_{k,x} e_{k,x} } \}    + \sum_{j\in\{1, \, 2\}}   \abs{  u_{j,x} e_{j,x} }     \\
&\qquad \leq   - \frac{\Theta^2(x)}{64}  \min_{k\in\{1, \, 2\}}  \{  h_{k,x}  \abs{ e_{k,x} }    \}
                          + \sum_{j\in\{1, \, 2\}}          h_{j,x}   \abs{ e_{j,x} }          \\     
&\qquad \leq  -4\eta   \inf  \Bigl\{  h_{k_0,y}   e^{ S_{N_1} \wt{\minus a\phi}  (\tau_{k_0}(y) ) }   : y\in\mathfrak{X}_{i_0}(X) \Bigr\}
                          + \sum_{j\in\{1, \, 2\}}          h_{j,x}  e^{ S_{N_1} \wt{\minus a\phi}  (\tau_j(x) )   }     \\   
&\qquad \leq  -\frac{1}{2} \eta   h_{k_0,x}   e^{ S_{N_1} \wt{\minus a\phi}  (\tau_{k_0}(x) ) }  + \sum_{j\in\{1, \, 2\}}  h_{j,x}  e^{ S_{N_1} \wt{\minus a\phi}  (\tau_j(x) )   }    .     
\end{align*}
Therefore, we conclude that $\norm{Q_{\c,k_0}}_{\CCC^0  ( \mathfrak{X}_{i_0} (X)  ) }  \leq 1$. 
\end{proof}

\begin{prop}    \label{propDolgopyatOperator}
Let $f$, $\CC$, $d$, $\alpha$, $\phi$, $s_0$ satisfy the Assumptions. We assume, in addition, that $f(\CC)\subseteq \CC$ and that $\phi$ satisfies the $\alpha$-strong non-integrability condition. We use the notation in this section.

There exist numbers $a_0\in (0, s_0 )$ and $\rho\in (0,1)$ such that for all $s \coloneqq a+\I b$ with $a, \, b\in\R$ satisfying $\abs{a-s_0}\leq a_0$ and $\abs{b}\geq b_0$, there exists a subset $\mathcal{E}_s\subseteq \mathcal{F}$ of the set $\mathcal{F}$ of all subsets of $\{1, \, 2\}\times\{1, \, 2\}\times\mathfrak{C}_b$ with a full projection such that the following statements hold:

\begin{enumerate}
\smallskip
\item[(i)] The cone $K_{A\abs{b}}\bigl(X^0_\b,d\bigr)\times K_{A\abs{b}}\bigl(X^0_\w,d\bigr)$ is invariant under $\MM_{J,\minus s,\phi}$ for all $J\in \mathcal{F}$, i.e.,
\begin{equation*}
	\MM_{J,\minus s,\phi} \bigl( K_{A\abs{b}}\bigl(X^0_\b,d\bigr)\times K_{A\abs{b}}\bigl(X^0_\w,d\bigr) \bigr) 
           \subseteq K_{A\abs{b}}\bigl(X^0_\b,d\bigr)\times K_{A\abs{b}}\bigl(X^0_\w,d\bigr).
\end{equation*}

\smallskip
\item[(ii)] For all $J\in\mathcal{F}$, $h_\b \in K_{A\abs{b}}\bigl(X^0_\b,d\bigr)$, and $h_\w \in  K_{A\abs{b}}\bigl(X^0_\w,d\bigr)$, we have
\begin{equation}  \label{eqDolgopyatOpL2Shrinking}
                   \sum_{ \c\in\{\b, \, \w\} }    \int_{X^0_\c} \! \abs{ \pi_\c ( \MM_{J,\minus s,\phi} (h_\b,h_\w) ) }^2 \,\mathrm{d}\mu_{\minus s_0\phi}   
\leq   \rho \sum_{ \c\in\{\b, \, \w\} }    \int_{X^0_\c} \! \abs{ h_\c }^2 \,\mathrm{d}\mu_{\minus s_0\phi}.
\end{equation}

\smallskip
\item[(iii)] Given arbitrary $h_\b\in K_{A\abs{b}}\bigl( X^0_\b,d\bigr)$,  $h_\w\in K_{A\abs{b}}\bigl( X^0_\w,d\bigr)$, $u_\b\in\Holder{\alpha} \bigl( \bigl(X^0_\b,d\bigr), \C\bigr)$, and $u_\w\in\Holder{\alpha} \bigl( \bigl(X^0_\w,d\bigr), \C \bigr)$ satisfying the property that for each $\c\in\{\b, \, \w\}$, we have $\abs{u_\c (y)}\leq h_\c (y)$ and $\abs{ u_\c(y) - u_\c(y')} \leq A\abs{b} (h_\c(y) + h_\c(y')) d(y,y')^\alpha$ whenever $y,\,y'\in X^0_\c$. Then the following statement is true:

\smallskip

There exists $J\in\mathcal{E}_s$ such that
\begin{align}   
&\AbsBig{ \pi_\c \Bigl(\RRR_{\wt{\minus s\phi}}^{N_1+M_0} (u_\b,u_\w) \Bigr)  (x) }    \leq \pi_\c (\MM_{J,\minus s,\phi} (h_\b,h_\w) )(x) \qquad\text{ and} \label{eqDolgopyatOpPtwiseBound}  \\
      & \AbsBig{   \pi_\c \Bigl(\RRR_{\wt{\minus s\phi}}^{N_1+M_0} (u_\b,u_\w) \Bigr)  (x)
                 - \pi_\c \Bigl(\RRR_{\wt{\minus s\phi}}^{N_1+M_0} (u_\b,u_\w) \Bigr)  (x') }     \label{eqDolgopyatOpLipBound} \\
&\qquad \leq  A\abs{b} (  \pi_\c (\MM_{J,\minus s,\phi} (h_\b,h_\w) ) (x) + \pi_\c (\MM_{J,\minus s,\phi} (h_\b,h_\w) ) (x') ) d(x,x')^\alpha\notag 
\end{align}
for each $\c\in\{\b, \, \w\}$ and all $x,\,x'\in X^0_\c$.
\end{enumerate}

\end{prop}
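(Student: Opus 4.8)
\textbf{Proof proposal for Proposition~\ref{propDolgopyatOperator}.}

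The plan is to build the family $\mathcal{E}_s$ and verify the three statements in the order (i), (iii), (ii), since (ii) is the genuine cancellation estimate and will want the pointwise bounds from (iii) as input. First I would define, for $s=a+\I b$ with $\abs{a-s_0}\leq a_0$ and $\abs{b}\geq b_0$, the collection $\mathcal{E}_s$ to consist of all $J\in\mathcal{F}$ (full projection) such that, for each $X\in\mathfrak{C}_b$, the fibre $\{(j,i)\in\{1,2\}\times\{1,2\} : (j,i,X)\in J\}$ is a single pair $(j_0(X),i_0(X))$ — i.e.\ exactly one ``dent'' per tile in $\mathfrak{C}_b$. With this choice, $\beta_J$ is $1$ except on the preimages $\tau_{j_0(X)}(\mathfrak{X}_{i_0(X)}(X))$ under $f^{N_1}$, where it dips to $1-\tfrac\eta4$ (and to $1-\tfrac\eta2$ at the core $\tau_{j_0(X)}(\mathfrak{X}'_{i_0(X)}(X))$), by \eqref{eqDefBetaJ}, \eqref{eqPsi_iX}, and the disjointness observations in the proof of Lemma~\ref{lmBetaJ}.

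For statement~(i): since $\MM_{J,\minus s,\phi}(u_\b,u_\w)=\RRR_{\wt{a\phi}}^{N_1+M_0}(u_\b\beta_J|_{X^0_\b},u_\w\beta_J|_{X^0_\w})$ and $\RRR_{\wt{a\phi}}^{N_1+M_0}=\RRR_{\wt{\minus(\minus a)\phi}}^{N_1+M_0}$ acts coordinatewise by the positive operators $\RR_{\wt{a\phi},\c,\c'}^{(N_1+M_0)}$ (see \eqref{eqSplitRuelleCoordinateFormula}), positivity of $\beta_J$ and of $h_\c$ gives positivity of the image, and the regularity estimate is exactly Lemma~\ref{lmBasicIneq}~(i) applied with $E\coloneqq X^0_{\c'}$, $n\coloneqq N_1+M_0$, and cone constant $B\coloneqq A\abs{b}$. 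One computes
\[
A_0\Bigl(\tfrac{A\abs{b}}{\Lambda^{\alpha(N_1+M_0)}}+\tfrac{\Hseminorm{\alpha,(S^2,d)}{\wt{a\phi}}}{1-\Lambda^{-\alpha}}\Bigr)\cdot(\ast)
\]
times a bounded-distortion factor coming from $\beta_J$ (whose Hölder seminorm is $\leq L_\beta$ and whose values lie in $[1-\eta,1]\subseteq[1/2,1]$ by \eqref{eqBetaJBounds}), and the definitions of $b_0$ in \eqref{eqDef_b0}, $A$ in \eqref{eqDef_A}, and $N_1$ in \eqref{eqDef_N1} are precisely tuned so that the sum stays $\leq A\abs{b}$; this is the same bookkeeping as in Lemma~\ref{lmSplitRuelleUnifBoundNormalizedNorm}. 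Statement~(iii) is the heart of the construction: given $(u_\b,u_\w)$ dominated by $(h_\b,h_\w)$ in the stated cone sense, I would, for each $\c\in\{\b,\w\}$ and each $X\in\mathfrak{C}_b$ with $X\subseteq Y^{M_0}_\c$, invoke Lemma~\ref{lmUHDichotomySum}: it furnishes $i=i_0(X)$, $j=k_0(X)\in\{1,2\}$ with $\norm{Q_{\c,j}}_{\CCC^0(\mathfrak{X}_i(X))}\leq1$, meaning that on $\mathfrak{X}_{i}(X)$ the ``$-\tfrac12\eta$ dent'' in the denominator of $Q_{\c,j}$ already absorbs the modulus of the two-term sum $\sum_k u_k e_k$. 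Using the dichotomy Lemma~\ref{lmUHDichotomy} to propagate the bound from $\mathfrak{X}'_i(X)$ (where $\psi_{i,X}\equiv1$) to all relevant tiles, and defining $J\coloneqq\{(j,i,X):X\in\mathfrak{C}_b\}$, one checks that $J\in\mathcal{E}_s$ and that the $\c$-coordinate of $\RRR_{\wt{\minus s\phi}}^{N_1+M_0}(u_\b,u_\w)$ at any $x\in X^0_\c$ is, by \eqref{eqDefLc} and \eqref{eqDefDolgopyatOperator}, a sum over $(N_1+M_0)$-tiles in which, on the subset hit by the dent, $\sum_k\abs{u_k}\abs{e_k}\leq(1-\tfrac12\eta)\sum_k h_k e^{S_{N_1}\wt{\minus a\phi}(\cdot)}=\beta_J\cdot\sum_k h_k(\cdots)$ pointwise — giving \eqref{eqDolgopyatOpPtwiseBound}; and \eqref{eqDolgopyatOpLipBound} follows from Lemma~\ref{lmBasicIneq}~(ii) (the version with the auxiliary nonnegative $h$) applied to $v\coloneqq u_\c\beta_J$ with dominating function $h_\c\beta_J$, again using $b_0$, $A$, $N_1$ to keep the constant at $A\abs{b}$.

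Finally, statement~(ii): by Lemma~\ref{lmRDiscontTildeDual} applied to $\wt{a\phi}$ (with $\varphi\coloneqq a\phi$, noting $\wt{a\phi}=\wt{\minus(\minus a)\phi}$ and $\mu_{a\phi}$ is the relevant equilibrium state), for $J\in\mathcal F$ with a single dent per tile one has
\[
\sum_{\c}\int_{X^0_\c}\abs{\pi_\c(\MM_{J,\minus s,\phi}(h_\b,h_\w))}^2\,\mathrm d\mu
\leq\sum_{\c}\int_{X^0_\c}\RR_{\wt{a\phi},\c,\c}^{(N_1+M_0)}\!\bigl(\cdots\bigr)\,\mathrm d\mu,
\]
and Cauchy--Schwarz plus $\RR_{\wt{a\phi}}^{N_1+M_0}(\mathbbm1)=\mathbbm1$ (Lemma~\ref{lmRtildeNorm=1}) reduces matters to estimating $\sum_\c\int\RR_{\wt{a\phi},\c,\cdot}^{(N_1+M_0)}((h\beta_J)^2)\,\mathrm d\mu$ against $\sum_\c\int h_\c^2\,\mathrm d\mu$. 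Since $(h_\c\beta_J)^2\leq h_\c^2$ everywhere and $(h_\c\beta_J)^2\leq(1-\tfrac\eta4)^2h_\c^2$ on $\bigcup_X\tau_{j(X)}(\mathfrak{X}_{i(X)}(X))$, the gain is $1-(1-(1-\tfrac\eta4)^2)\mu_{a\phi}\bigl(\bigcup_X\tau_{j(X)}(\mathfrak X_{i(X)}(X))\bigr)$; Lemma~\ref{lmGibbsDoubling} gives a lower bound $\gtrsim c_0>0$ on this measure (each $\tau_{j(X)}(\mathfrak X_{i(X)}(X))$ is an $(N_1+m(b)+m_0)$-tile inside the $(N_1+m(b))$-tile $\tau_{j(X)}(X)$, so the ratio of their $\mu_{a\phi}$-masses is bounded by $C_{\mu}^2\exp(m_0(\norm{a\phi}+P))$ uniformly, and the $\tau_{j(X)}(X)$ over $X\in\mathfrak C_b$ tile a fixed-measure region), using also Lemma~\ref{lmUphiCountinuous}, the continuity of $t\mapsto P(f,t\phi)$, and shrinking $a_0$ if necessary to keep these quantities controlled as $a$ ranges over $[s_0-a_0,s_0+a_0]$. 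Then $\rho\coloneqq 1-(1-(1-\tfrac\eta4)^2)c_0\in(0,1)$ works, and a further small reduction of $a_0$ (to absorb the discrepancy between $\mu_{a\phi}$ and $\mu_{s_0\phi}=\mu_{\minus s_0\phi}$ via bounded-distortion, since $\wt{a\phi}$ and $\wt{s_0\phi}$ are uniformly close in $\CCC^0$ by \eqref{eqDefPhiWidetilde}) yields \eqref{eqDolgopyatOpL2Shrinking} with $\mathrm d\mu_{\minus s_0\phi}$. The main obstacle I anticipate is the uniformity in $b$: every constant ($\eta$, $\epsilon_1$, $m(b)$, $N_1$, $L_\beta$, $A$) is a function of $b$, and one must check that the measure lower bound $c_0$ and hence $\rho$ do \emph{not} degenerate as $\abs b\to\infty$ — this is why $\mathfrak C_b$ is taken at the $b$-dependent level $m(b)$ and why the dents are placed via $f^{N_1}$ with $N_1$ \emph{independent} of $b$, so that the tiles $\tau_{j(X)}(\mathfrak X_{i(X)}(X))$ carry a uniform proportion of $\mu_{a\phi}$-mass inside $\tau_{j(X)}(X)$ regardless of how deep $m(b)$ is; carrying this through carefully with Lemma~\ref{lmGibbsDoubling} is the crux.
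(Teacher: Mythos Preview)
Your outline for (i) and (iii) is essentially the paper's argument, with one slip in (iii): for \eqref{eqDolgopyatOpLipBound} you apply Lemma~\ref{lmBasicIneq}~(ii) to $v=u_\c\beta_J$, but the left-hand side involves $\RRR_{\wt{\minus s\phi}}^{\iota}(u_\b,u_\w)$, not $\RRR_{\wt{\minus s\phi}}^{\iota}(u_\b\beta_J,u_\w\beta_J)$. The paper applies Lemma~\ref{lmBasicIneq}~(ii) to $v=u_{\c'}$ with dominating function $h_{\c'}$, obtaining a bound in terms of $\RR_{\wt{\minus a\phi},\c,\c'}^{(\iota)}(h_{\c'})$, and only then inserts $\beta_J$ via $h_{\c'}\leq 2h_{\c'}\beta_J$ (from \eqref{eqBetaJBounds}). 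This is easily repaired.

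The real gap is in (ii). Your Cauchy--Schwarz pairs $(h_\c\beta_J)$ with $\mathbbm 1$, which after duality reduces to comparing $\sum_\c\int_{X^0_\c} h_\c^2\beta_J^2\,\mathrm d\mu$ with $\sum_\c\int_{X^0_\c}h_\c^2\,\mathrm d\mu$. You then write the gain as a \emph{measure} $\mu\bigl(\bigcup_X\tau_{j(X)}(\mathfrak X_{i(X)}(X))\bigr)$; but the gain is actually $\int_{\text{dent}} h_\c^2\,\mathrm d\mu$, and you need this to be a fixed proportion of $\int_{X^0_\c}h_\c^2\,\mathrm d\mu$. That fails: the dent set lives inside the \emph{fixed} $(N_1+M_0)$-tiles $X^{N_1+M_0}_{\c,j}$, while the cone condition $h_\c\in K_{A\abs b}(X^0_\c,d)$ gives bounded distortion only at scale $\Lambda^{-m(b)}$, and is vacuous at scale $\Lambda^{-(N_1+M_0)}$ once $\abs b$ is large. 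So $h_\c$ can be negligible on $X^{N_1+M_0}_{\c,1}\cup X^{N_1+M_0}_{\c,2}$ and your $\rho$ degenerates to $1$ as $\abs b\to\infty$.

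The paper's Cauchy--Schwarz instead pairs $h_\c$ with $\beta_J$ \emph{before} passing to the adjoint, yielding the pointwise inequality
\[
\bigl(\pi_\c(\MM_{J,\minus s,\phi}(h_\b,h_\w))\bigr)^2
\leq \pi_\c\bigl(\RRR^\iota_{\wt{\minus a\phi}}(h_\b^2,h_\w^2)\bigr)\cdot
     \pi_\c\bigl(\RRR^\iota_{\wt{\minus a\phi}}(\beta_J^2,\beta_J^2)\bigr).
\]
Now the $\beta_J^2$-factor drops below $1$ on the \emph{image}-side set $W_J=\bigcup_{(j,i,X)\in J} f^{M_0}(\mathfrak X'_i(X))$, and crucially $\pi_\c\bigl(\RRR^\iota_{\wt{\minus s_0\phi}}(h_\b^2,h_\w^2)\bigr)$ is itself shown to lie in $K_{A\abs b}(X^0_\c,d)$, so Lemma~\ref{lmSupLeq2Inf} gives bounded distortion on $(m(b)-M_0)$-tiles. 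Since the tiles $f^{M_0}(X)$, $X\in\mathfrak C_b$, \emph{cover} $X^0_\c$ and each contains a piece of $W_J$ with Gibbs-comparable mass (Lemma~\ref{lmGibbsDoubling}), the gain is a fixed proportion of $\int\pi_\c(\RRR^\iota(h^2))\,\mathrm d\mu=\int h_\c^2\,\mathrm d\mu$. The passage from $a$ to $s_0$ is then done at the level of the operator (a pointwise comparison of $\MM_{J,\minus s,\phi}$ and $\MM_{J,\minus s_0,\phi}$), not of the measures.
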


\begin{proof}
For typographical convenience, we write $\iota \coloneqq N_1+M_0$ in this proof. 

We fix an arbitrary number $s=a+\I b$ with $a, \, b\in\R$ satisfying $\abs{a-s_0}\leq  s_0$ and $\abs{b}\geq b_0$.

\smallskip

(i) Without loss of generality, it suffices to show that for each $J\in \mathcal{F}$,
\begin{equation*}
\pi_\b \bigl( \MM_{J,\minus s,\phi} \bigl( K_{A\abs{b}}\bigl(X^0_\b,d\bigr)\times K_{A\abs{b}}\bigl(X^0_\w,d\bigr) \bigr)\bigr) 
           \subseteq K_{A\abs{b}}\bigl(X^0_\b,d\bigr).
\end{equation*}

Fix $J\in \mathcal{F}$, functions $h_\b \in K_{A\abs{b}}\bigl(X^0_\b,d\bigr)$, $h_\w \in K_{A\abs{b}}\bigl(X^0_\w,d\bigr)$, and points $x, \, x'\in X^0_\b$ with $x\neq x'$. For each $X^\iota \in \X^\iota_\b$, denote $y_{X^\iota} \coloneqq (f^\iota|_{X^\iota})^{-1}(x)$ and $y'_{X^\iota} \coloneqq (f^\iota|_{X^\iota})^{-1}(x')$.

Then by Definition~\ref{defDolgopyatOperator}, (\ref{eqSplitRuelleCoordinateFormula}) in Lemma~\ref{lmSplitRuelleCoordinateFormula}, Definition~\ref{defSplitRuelle}, and (\ref{eqDefWideTildePsiComplex}),
\begin{align*}
&             \abs{ \pi_\b ( \MM_{J,\minus s,\phi} (h_\b,h_\w) ) (x) - \pi_\b ( \MM_{J,\minus s,\phi} (h_\b,h_\w) ) (x') } \\
&\qquad =     \Absbigg{   \sum_{ \c \in \{ \b, \, \w \} } \RR_{\wt{\minus a\phi},\b,\c}^{(\iota)} \bigl(h_\c \beta_J|_{X^0_\c} \bigr) (x) 
                                     -  \sum_{ \c \in \{ \b, \, \w \} } \RR_{\wt{\minus a\phi},\b,\c}^{(\iota)} \bigl(h_\c \beta_J|_{X^0_\c} \bigr) (x') } \\
&\qquad\leq   \sum_{\c\in\{\b, \, \w\}} \sum\limits_{\substack{X^\iota\in\X^\iota_\b\\X^\iota\subseteq X^0_\c}}
              \AbsBig{   h_\c( y_{X^\iota} )  \beta_J ( y_{X^\iota} )  e^{S_\iota\wt{\minus a\phi}(y_{X^\iota})}  
                      -  h_\c( y'_{X^\iota} ) \beta_J ( y'_{X^\iota} ) e^{S_\iota\wt{\minus a\phi}(y'_{X^\iota})}   } \\
&\qquad\leq  \sum_{\c\in\{\b, \, \w\}} \sum\limits_{\substack{X^\iota\in\X^\iota_\b\\X^\iota\subseteq X^0_\c}}
                 \Absbig{  h_\c( y_{X^\iota} )   \beta_J ( y_{X^\iota} ) - h_\c( y'_{X^\iota} )  \beta_J ( y'_{X^\iota} ) }
                  e^{S_\iota\wt{\minus a\phi}(y'_{X^\iota})}  \\
&\qquad\quad + \sum_{\c\in\{\b, \, \w\}} \sum\limits_{\substack{X^\iota\in\X^\iota_\b\\X^\iota\subseteq X^0_\c}}
                 h_\c( y_{X^\iota} )   \beta_J ( y_{X^\iota} )  
                 \AbsBig{ e^{S_\iota\wt{\minus a\phi}(y_{X^\iota})} - e^{S_\iota\wt{\minus a\phi}(y'_{X^\iota})} }   \\
&\qquad\leq    \sum_{\c\in\{\b, \, \w\}} \sum\limits_{\substack{X^\iota\in\X^\iota_\b\\X^\iota\subseteq X^0_\c}}
                 h_\c( y_{X^\iota} ) \Absbig{    \beta_J ( y_{X^\iota} ) - \beta_J ( y'_{X^\iota} ) }
                 e^{S_\iota\wt{\minus a\phi}(y_{X^\iota})} e^{\Absbig{ S_\iota\wt{\minus a\phi}(y'_{X^\iota}) - S_\iota\wt{\minus a\phi}(y_{X^\iota})}}\\
&\qquad\quad    + \sum_{\c\in\{\b, \, \w\}} \sum\limits_{\substack{X^\iota\in\X^\iota_\b\\X^\iota\subseteq X^0_\c}}  
                  \Absbig{  h_\c( y_{X^\iota} ) - h_\c( y'_{X^\iota} ) }   \beta_J ( y'_{X^\iota} ) 
                  e^{S_\iota\wt{\minus a\phi}(y'_{X^\iota})}\\
&\qquad\quad    + \sum_{\c\in\{\b, \, \w\}} \sum\limits_{\substack{X^\iota\in\X^\iota_\b\\X^\iota\subseteq X^0_\c}}
                  h_\c( y_{X^\iota} )  \beta_J ( y_{X^\iota} )  e^{S_\iota\wt{\minus a\phi}(y_{X^\iota})} 
                  \AbsBig{1 - e^{ S_\iota\wt{\minus a\phi}(y'_{X^\iota}) - S_\iota\wt{\minus a\phi}(y_{X^\iota}) } }  .
\end{align*}

By Lemmas~\ref{lmSnPhiBound},~\ref{lmBetaJ},~\ref{lmMetricDistortion}, and~\ref{lmExpToLiniear}, the right-hand side of the last inequality is
\begin{align*}
&\leq    \exp \biggl( \frac{ T_0 C_0 \bigl(\diam_d(S^2)\bigr)^\alpha}{1-\Lambda^{-\alpha}}  \biggr)  
         \Biggl(  \sum_{\c\in\{\b, \, \w\}} \sum\limits_{\substack{X^\iota\in\X^\iota_\b\\X^\iota\subseteq X^0_\c}}
                      h_\c(y_{X^\iota}) L_\beta C_0^\alpha \Lambda^{-\iota \alpha} d(x,x')^\alpha  e^{S_\iota\wt{\minus a\phi}(y_{X^\iota})}   \\
      &    \qquad     + \sum_{\c\in\{\b, \, \w\}} \sum\limits_{\substack{X^\iota\in\X^\iota_\b\\X^\iota\subseteq X^0_\c}}
                      A\abs{b}  \Bigl(    h_\c(y_{X^\iota})   e^{S_\iota\wt{\minus a\phi}(y_{X^\iota})}  
                                       +  h_\c(y'_{X^\iota})  e^{S_\iota\wt{\minus a\phi}(y'_{X^\iota})}   \Bigr) 
           C_0^\alpha \Lambda^{-\alpha\iota} d(x,x')^\alpha    \Biggr)  \\
      & \qquad +C_{4} T_0 d(x,x')^\alpha   \sum_{\c\in\{\b, \, \w\}} 
           \RR_{\wt{\minus a\phi},\b,\c}^{(\iota)}  \bigl(h_\c \beta_J|_{X^0_\c} \bigr) (x),
\end{align*}
where $C_0>1$ is the constant from Lemma~\ref{lmMetricDistortion} depending only on $f$, $\CC$, and $d$; $L_\beta$ is the constant defined in (\ref{eqDefLipBoundBetaJ}) in Lemma~\ref{lmBetaJ}; $T_0>0$ is the constant defined in (\ref{eqDefT0}) giving an upper bound for $\Hseminormbig{\alpha,\, (S^2,d)}{\wt{ - a\phi}}$ by Lemma~\ref{lmBound_aPhi} (see also (\ref{eqT0Bound})); and $C_{4} \coloneqq C_{4}(f,\CC,d,\alpha,T_0)>1$ is the constant defined in (\ref{eqDefC10}) in Lemma~\ref{lmExpToLiniear}. Both $T_0$ and $C_{4}$ depend only on $f$, $\CC$, $d$, $\phi$, and $\alpha$. Thus, by (\ref{eqDefC10}), (\ref{eqBetaJBounds}) and (\ref{eqDefLipBoundBetaJ}) in Lemma~\ref{lmBetaJ}, Definition~\ref{defDolgopyatOperator}, (\ref{eqDefmb}), and the calculation above, we get
\begin{align*}
&              \frac{ \abs{ \pi_\b ( \MM_{J,\minus s,\phi} (h_\b,h_\w) ) (x) - \pi_\b ( \MM_{J,\minus s,\phi} (h_\b,h_\w) ) (x') }   }
                     { A\abs{b}  ( \pi_\b ( \MM_{J,\minus s,\phi} (h_\b,h_\w) ) (x) + \pi_\b ( \MM_{J,\minus s,\phi} (h_\b,h_\w) ) (x') )  
                       d(x,x')^\alpha  } \\
&\qquad\leq    \frac{C_{4}}{A\abs{b} (1-\eta)} ( L_\beta   + A\abs{b} ) \Lambda^{-\alpha\iota}
                 + \frac{C_{4} T_0}{ A\abs{b} }  \\
&\qquad\leq    \frac{C_{4}}{1-\eta} \biggl(
                   \frac{40 \varepsilon^{-\alpha} }{A\abs{b}} C  \Lambda^{2\alpha m_0 +1} \frac{ C \abs{b} }{\epsilon_1}
                           (\LIP_d(f))^{\alpha N_1} \eta   + 1 \biggr) \Lambda^{-\alpha(N_1+M_0)} + \frac{C_{4} T_0}{ A\abs{b} } \\
&\qquad\leq    1.      
\end{align*}
The last inequality follows from the observations that $\frac{C_{4}T_0}{A}\leq \frac13$ (see (\ref{eqDef_A})), that 
\begin{equation*}  
 40 \varepsilon^{-\alpha} C_{4} C^2 \eta  \Lambda^{-\alpha N_1 - \alpha M_0 + 2 \alpha m_0 +1} (\LIP_d(f))^{\alpha N_1}  / ( A \epsilon_1 (1-\eta) ) \leq 1 / 3
\end{equation*}
(by (\ref{eqDef_eta})), and that by (\ref{eqDef_N1}) and (\ref{eqDef_eta}), 
$\Lambda^{-\alpha (N_1 + M_0)} \leq \frac{1}{4 C_{4}} \leq \frac{1}{3}  \frac{1-\eta}{C_{4}}$.

\smallskip

(ii) Fix $J\in \mathcal{F}$ and two functions $h_\b \in K_{A\abs{b}} \bigl(X^0_\b, d \bigr)$, $h_\w \in K_{A\abs{b}} \bigl(X^0_\w, d \bigr)$.

We first establish that 
\begin{align}   \label{eqPfpropDolgopyatOperator_HolderIneq}
&      \bigl( \pi_\c \bigl( \MM_{J,\minus s,\phi} (h_\b,h_\w) \bigr) (x) \bigr)^2 \notag \\
&\qquad \leq         \pi_\c \Bigl( \RRR^\iota_{\wt{\minus a\phi}} \bigl( h_\b^2,h_\w^2\bigr) \Bigr) (x) 
     \cdot   \pi_\c \Bigl( \RRR^\iota_{\wt{\minus a\phi}} \Bigl( \bigl( \beta_J|_{X^0_\b} \bigr)^2, \bigl( \beta_J|_{X^0_\w} \bigr)^2 \Bigr) \Bigr) (x) 
\end{align} 
for $\c \in \{ \b,  \, \w \}$ and $x\in X^0_\c$. Indeed, fix arbitrary $\c\in\{\b, \, \w\}$ and $x\in X^0_\c$. For each $X^\iota \in \X_\c^\iota$, denote $y_{X^\iota} \coloneqq (f^\iota|_{X^\iota})^{-1}(x)$. Then by Definition~\ref{defDolgopyatOperator}, (\ref{eqSplitRuelleCoordinateFormula}) in Lemma~\ref{lmSplitRuelleCoordinateFormula}, and the Cauchy--Schwarz inequality, we have
\begin{align*}
&             \bigl( \pi_\c \bigl( \MM_{J,\minus s,\phi} (h_\b,h_\w) \bigr) (x) \bigr)^2 \\
&\qquad =     \biggl(   \sum_{\c'\in\{\b, \, \w\}} \RR_{\wt{\minus a\phi},\c,\c'}^{(\iota)} \bigl(h_{\c'} \beta_J|_{X^0_{\c'}} \bigr)(x) \biggr)^2 \\
&\qquad =     \biggl(   \sum_{\c'\in\{\b, \, \w\}} \sum\limits_{\substack{X^\iota\in\X^\iota_\c\\X^\iota\subseteq X^0_{\c'}}}
                      \bigl( h_{\c'} \beta_J \exp\bigl( S_\iota \wt{- a\phi} \bigr)  \bigr) ( y_{X^\iota} )  \biggr)^2  \\
&\qquad \leq  \biggl(   \sum_{\c'\in\{\b, \, \w\}} \sum\limits_{\substack{X^\iota\in\X^\iota_\c\\X^\iota\subseteq X^0_{\c'}}}
                      \bigl( h_{\c'}^2    \exp\bigl( S_\iota \wt{- a\phi} \bigr)  \bigr) ( y_{X^\iota} )  \biggr)
        \biggl(   \sum_{\c'\in\{\b, \, \w\}} \sum\limits_{\substack{X^\iota\in\X^\iota_\c\\X^\iota\subseteq X^0_{\c'}}}
                      \bigl( \beta_J^2 \exp\bigl( S_\iota \wt{- a\phi} \bigr)  \bigr) ( y_{X^\iota} )  \biggr)  \\
&\qquad =          \pi_\c \Bigl( \RRR^\iota_{\wt{\minus a\phi}} \bigl( h_\b^2,h_\w^2\bigr) \Bigr) (x) 
       \cdot \pi_\c \Bigl( \RRR^\iota_{\wt{\minus a\phi}} \Bigl( \bigl( \beta_J|_{X^0_\b} \bigr)^2, \bigl( \beta_J|_{X^0_\w} \bigr)^2 \Bigr) \Bigr) (x).                   
\end{align*}

\smallskip

We will focus on the case where the potential is $\wt{ - s_0\phi}$ for now, and only consider the general case at the end of the proof of statement~(ii).

Next, we define a set
\begin{equation}     \label{eqDefWJ}
W_J \coloneqq \bigcup_{(j,i,X)\in J}  f^{M_0} ( \mathfrak{X}'_i(X) ).
\end{equation}
We claim that for each $\c\in\{\b, \, \w\}$ and each $x\in W_J \cap X^0_\c$, we have
\begin{equation}   \label{eqPfpropDolgopyatOperator_1-eta}
      \pi_\c \Bigl( \RRR^\iota_{\wt{\minus s_0\phi}} \Bigl( \bigl( \beta_J|_{X^0_\b} \bigr)^2, \bigl( \beta_J|_{X^0_\w} \bigr)^2 \Bigr) \Bigr) (x) 
\leq  1 - \frac{1}{4} \eta \exp \Bigl( - \iota \Normbig{ \wt{ - s_0 \phi} }_{\CCC^0(S^2)} \Bigr).
\end{equation}
Indeed, we first fix arbitrary $\c\in\{\b, \, \w\}$ and $x\in W_J \cap X^0_\c$. Let $X\in \mathfrak{C}_b$ denote the unique $m(b)$-tile in $\mathfrak{C}_b$ with $x\in f^{M_0} (X)$. By (\ref{eqSplitRuelleCoordinateFormula}) in Lemma~\ref{lmSplitRuelleCoordinateFormula}, Definition~\ref{defSplitRuelle}, and (\ref{eqDefBetaJ}), we have
\begin{align*}
&            \pi_\c \Bigl( \RRR^\iota_{\wt{\minus s_0\phi}} \Bigl( \bigl( \beta_J|_{X^0_\b} \bigr)^2, \bigl( \beta_J|_{X^0_\w} \bigr)^2 \Bigr) \Bigr) (x) \\  
&\qquad =    \sum_{\c'\in\{\b, \, \w\}} \RR_{\wt{\minus s_0\phi}, \c, \c'}^{(\iota)} \Bigl( \bigl( \beta_J|_{X^0_{\c'}} \bigr)^2 \Bigr)(x) \\
&\qquad =    \sum_{\c'\in\{\b, \, \w\}} \sum\limits_{\substack{X^\iota\in\X^\iota_\c\\X^\iota\subseteq X^0_{\c'}}}
                 \beta_J^2 ( y_{X^\iota}) \exp \bigl( S_\iota \wt{ - s_0\phi} (y_{X^\iota}) \bigr)   \\
&\qquad\leq  \sum_{\c'\in\{\b, \, \w\}} \RR_{\wt{\minus s_0\phi},\c,\c'}^{(\iota)} \bigl(\mathbbm{1}_{X^0_{\c'}} \bigr) (x) 
            - \frac{1}{4} \eta \psi_{i_X,X} \bigl( f^{N_1}  ( y_*   )  \bigr) \exp \bigl( S_\iota \wt{ - s_0\phi} (y_*) \bigr)  \\
&\qquad\leq  1 - \frac{1}{4} \eta \exp \Bigl( - \iota \Normbig{ \wt{ - s_0 \phi} }_{\CCC^0(S^2)} \Bigr),
\end{align*}
where $i_X, \, j_X\in\{1, \, 2\}$ are chosen in such a way that $(j_X,i_X,X)\in J$ (due to the fact that $J\in\mathcal{F}$ has a full projection (see Definition~\ref{defFull})), and we denote $y_{X^\iota} \coloneqq (f^\iota|_{X^\iota})^{-1}(x)$ for $X^\iota \in \X^\iota_\c$, and write $y_* \coloneqq y_{X_{\c,j_X}^{N_1+M_0}}$. The last inequality follows from (\ref{eqSplitRuelleTildeSupDecreasing}) in Lemma~\ref{lmRtildeNorm=1}, (\ref{eqPsi_iX}), and (\ref{eqDefWJ}). The claim is now verified.

\smallskip

Next, we claim that for each $\c\in\{\b, \, \w\}$,
\begin{equation}   \label{eqPfpropDolgopyatOperator_RuelleH2inCone}
        \pi_\c \Bigl( \RRR^\iota_{\wt{\minus s_0\phi}} \bigl(h_\b^2,h_\w^2 \bigr) \Bigr) \in K_{A\abs{b}} (X^0_\c,d).
\end{equation}
Indeed, by (\ref{eqSplitRuelleCoordinateFormula}) in Lemma~\ref{lmSplitRuelleCoordinateFormula}, Lemmas~\ref{lmConePower}, and~\ref{lmBasicIneq}~(i), for all $x, \, y\in X^0_\c$,
\begin{align*}
&            \AbsBig{   \pi_\c \Bigl( \RRR^\iota_{\wt{\minus s_0\phi}} \bigl( h_\b^2,h_\w^2 \bigr) \Bigr) (x) 
                              -  \pi_\c \Bigl( \RRR^\iota_{\wt{\minus s_0\phi}} \bigl( h_\b^2,h_\w^2 \bigr) \Bigr) (y)   } \\
&\qquad\leq  \sum_{\c'\in\{\b, \, \w\}}  \AbsBig{   \RR_{\wt{\minus s_0\phi},\c,\c'}^{(\iota)} \bigl( h_{\c'}^2 \bigr)(x)
                                                                                            - \RR_{\wt{\minus s_0\phi},\c,\c'}^{(\iota)} \bigl( h_{\c'}^2 \bigr)(y)  }\\
&\qquad\leq  A_0 \biggl(  \frac{ 2A\abs{b} }{ \Lambda^{\alpha\iota} }
                        + \frac{ \Hseminormbig{\alpha,\,(S^2,d)}{\wt{ - s_0\phi}} }{ 1 - \Lambda^{-\alpha} } \biggr)  d(x,y)^\alpha 
             \sum_{\c'\in\{\b, \, \w\}}   \sum_{z\in\{x, \, y\}}   \RR_{\wt{\minus s_0\phi},\c,\c'}^{(\iota)} \bigl( h_{\c'}^2 \bigr)(z)  \\
&\qquad\leq  A\abs{b}  d(x,y)^\alpha   \sum_{z\in\{x, \, y\}}   \pi_\c \Bigl( \RRR^\iota_{\wt{\minus s_0\phi}} \bigl( h_\b^2,h_\w^2 \bigr) \Bigr) (z) ,   
\end{align*}
where $A_0=A_0\bigl(f,\CC,d,\Hseminorm{\alpha,\,(S^2,d)}{\phi}, \alpha\bigr)>2$ is the constant from Lemma~\ref{lmBasicIneq} depending only on $f$, $\CC$, $d$, $\Hseminorm{\alpha,\,(S^2,d)}{\phi}$, and $\alpha$; and $C\geq 1$ is the constant from Lemma~\ref{lmCellBoundsBM} depending only on $f$, $\CC$, and $d$. The last inequality follows from $\frac{A_0}{\Lambda^{\alpha(N_1 + M_0)}} \leq \frac14$ (see (\ref{eqDef_N1})) and 
$ 
\frac{A_0 \Hseminorm{\alpha,\,(S^2,d)}{\wt{ \minus s_0\phi}} }{ 1 - \Lambda^{-\alpha} } \leq \frac12  b_0 \leq \frac12 A b_0 \leq \frac12 A \abs{b}
$
(see (\ref{eqDef_b0}) and (\ref{eqDef_A})). The claim now follows immediately.

\smallskip

We now combine (\ref{eqPfpropDolgopyatOperator_RuelleH2inCone}), Lemmas~\ref{lmSupLeq2Inf},~\ref{lmGibbsDoubling}, (\ref{eqDefWJ}), and $\abs{b} \geq b_0 \geq 2 s_0 + 1$ (see (\ref{eqDef_b0})) to deduce that for each $\c\in\{\b, \, \w\}$, we have
\begin{align}    \label{eqPfpropDolgopyatOperator_BoundIntW_J}
&             \int_{X^0_\c} \!  \pi_\c \Bigl(  \RRR_{\wt{\minus s_0\phi}}^\iota \bigl(h_\b^2,h_\w^2\bigr) \Bigr) \,\mathrm{d}\mu_{\minus s_0\phi}    \\
&\qquad\leq   \sum\limits_{\substack{X \in\mathfrak{C}_b \\X \subseteq Y^{M_0}_\c}} 
                  \int_{f^{M_0}(X)} \, \pi_\c \Bigl(  \RRR_{\wt{\minus s_0\phi}}^\iota \bigl(h_\b^2,h_\w^2\bigr) \Bigr) \,\mathrm{d}\mu_{\minus s_0\phi}   \notag   \\
&\qquad\leq   \sum\limits_{\substack{X \in\mathfrak{C}_b \\X \subseteq Y^{M_0}_\c}}        \mu_{\minus s_0\phi} \bigl( f^{M_0}(X) \bigr)
                  \sup_{ x\in f^{M_0}(X) }  \Bigl\{ \pi_\c \Bigl(  \RRR_{\wt{\minus s_0\phi}}^\iota \bigl(h_\b^2,h_\w^2\bigr) \Bigr) (x)  \Bigr\}    \notag \\
&\qquad\leq   \sum\limits_{\substack{X \in\mathfrak{C}_b \\X \subseteq Y^{M_0}_\c}}   \mu_{\minus s_0\phi} \bigl( f^{M_0}(X) \bigr) \cdot
                   2 \inf_{  x\in f^{M_0}(X) }   \Bigl\{ \pi_\c \Bigl(  \RRR_{\wt{\minus s_0\phi}}^\iota \bigl(h_\b^2,h_\w^2\bigr) \Bigr) (x)   \Bigr\}     \notag \\  
&\qquad\leq   C_{12} \sum\limits_{\substack{X \in\mathfrak{C}_b \\X \subseteq Y^{M_0}_\c}}    \mu_{\minus s_0\phi} \bigl(  f^{M_0} \bigl( \mathfrak{X}'_{i_{J,X}}(X) \bigr) \bigr)
                   \inf_{ x\in f^{M_0} \bigl( \mathfrak{X}'_{i_{J,X}}(X) \bigr) }   \Bigl\{ \pi_\c \Bigl(  \RRR_{\wt{\minus s_0\phi}}^\iota \bigl(h_\b^2,h_\w^2\bigr) \Bigr) (x)   \Bigr\}  \notag \\     
&\qquad\leq   C_{12} \sum\limits_{\substack{X \in\mathfrak{C}_b \\X \subseteq Y^{M_0}_\c}} 
                  \int_{f^{M_0}\bigl(\mathfrak{X}'_{i_{J,X}} (X) \bigr)} \, \pi_\c \Bigl(  \RRR_{\wt{\minus s_0\phi}}^\iota \bigl(h_\b^2,h_\w^2\bigr) \Bigr) \,\mathrm{d}\mu_{\minus s_0\phi}    \notag \\    
&\qquad\leq     C_{12} \int_{W_J \cap X^0_\c} \!  \pi_\c \Bigl(  \RRR_{\wt{\minus s_0\phi}}^\iota \bigl(h_\b^2,h_\w^2\bigr) \Bigr) \,\mathrm{d}\mu_{\minus s_0\phi},      \notag                              
\end{align}
where $i_{J,X}\in \{1, \, 2\}$ can be set in such a way that either $(1,i_{J,X},X)\in J$ or $(2,i_{J,X},X)\in J$ due to the assumption that $J\in\mathcal{F}$ has a full projection, and the constant $C_{12}$ can be chosen as
$C_{12} \coloneqq 2 C_{\mu_{\minus s_0\phi}}^2 \exp \bigl( 2 m_0 \bigl( \norm{ {-}s_0\phi}_{\CCC^0(S^2)} + P(f,-s_0\phi) \bigr) \bigr) >1$,
which depends only on $f$, $\CC$, $d$, and $\phi$. Here the constant $C_{\mu_{\minus s_0\phi}}\geq 1$ is from Lemma~\ref{lmGibbsDoubling}, depending only on $f$, $d$, and $\phi$.

We now observe that by (\ref{eqSplitRuelleCoordinateFormula}) in Lemma~\ref{lmSplitRuelleCoordinateFormula} and Lemma~\ref{lmRDiscontTildeDual},
\begin{equation}   \label{eqPfpropDolgopyatOperator_IterateSplitRuelle}
   \sum_{\c\in\{\b, \, \w\}}   
       \int_{X^0_\c} \!  \pi_\c \Bigl(  \RRR_{\wt{\minus s_0\phi}}^\iota \bigl(h_\b^2,h_\w^2\bigr) \Bigr) \,\mathrm{d}\mu_{\minus s_0\phi}
=  \sum_{\c\in\{\b, \, \w\}}   \int_{X^0_\c} \! h_\c^2     \,\mathrm{d}\mu_{\minus s_0\phi}.
\end{equation}

Combining (\ref{eqPfpropDolgopyatOperator_IterateSplitRuelle}), (\ref{eqPfpropDolgopyatOperator_HolderIneq}), Lemma~\ref{lmRtildeNorm=1}, (\ref{eqBetaJBounds}) in Lemma~\ref{lmBetaJ}, (\ref{eqPfpropDolgopyatOperator_1-eta}), and (\ref{eqPfpropDolgopyatOperator_BoundIntW_J}), we get
\begin{align}  \label{eqPfpropDolgopyatOperator_Difference}
&               \sum_{\c\in\{\b, \, \w\}}   \int_{X^0_\c} \! h_\c^2     \,\mathrm{d}\mu_{\minus s_0\phi}
             -  \sum_{\c\in\{\b, \, \w\}}  \int_{X^0_\c} \Absbig{ \pi_\c \bigl( \MM_{J,\minus s_0,\phi} (h_\b,h_\w) \bigr) }^2 \, \mathrm{d}\mu_{\minus s_0\phi}  \\
&\qquad =        \sum_{\c\in\{\b, \, \w\}}   
                     \int_{X^0_\c} \!  \pi_\c \Bigl(  \RRR_{\wt{\minus s_0\phi}}^\iota \bigl(h_\b^2,h_\w^2\bigr) \Bigr) \,\mathrm{d}\mu_{\minus s_0\phi}
             -  \sum_{\c\in\{\b, \, \w\}}  \int_{X^0_\c} \Absbig{ \pi_\c \bigl( \MM_{J,\minus s_0,\phi} (h_\b,h_\w) \bigr) }^2 \, \mathrm{d}\mu_{\minus s_0\phi}  \notag \\
&\qquad\geq     \sum_{\c\in\{\b, \, \w\}}   
                 \int_{X^0_\c} \!  \pi_\c \Bigl(  \RRR_{\wt{\minus s_0\phi}}^\iota \bigl(h_\b^2,h_\w^2\bigr) \Bigr) \cdot
                 \Bigl( 1 - \pi_\c \Bigl( \RRR^\iota_{\wt{\minus s_0\phi}} \Bigl( \bigl( \beta_J|_{X^0_\b} \bigr)^2, \bigl( \beta_J|_{X^0_\w} \bigr)^2 \Bigr)\Bigr)\Bigr) \, \mathrm{d}\mu_{\minus s_0\phi}  \notag \\
&\qquad\geq     \sum_{\c\in\{\b, \, \w\}}   
                 \int_{W_J \cap X^0_\c} \!  \pi_\c \Bigl(  \RRR_{\wt{\minus s_0\phi}}^\iota \bigl(h_\b^2,h_\w^2\bigr) \Bigr) \cdot
                 \Bigl( 1 - \pi_\c \Bigl( \RRR^\iota_{\wt{\minus s_0\phi}} \Bigl( \bigl( \beta_J|_{X^0_\b} \bigr)^2, \bigl( \beta_J|_{X^0_\w} \bigr)^2 \Bigr)\Bigr)\Bigr) \, \mathrm{d}\mu_{\minus s_0\phi}   \notag\\ 
&\qquad\geq   \frac{\eta }{4} \exp \Bigl( -\iota \Normbig{\wt{ - s_0\phi}}_{\CCC^0(S^2)} \Bigr)  \sum_{\c\in\{\b, \, \w\}}   
                     \int_{W_J \cap X^0_\c} \!  \pi_\c \Bigl(  \RRR_{\wt{\minus s_0\phi}}^\iota \bigl(h_\b^2,h_\w^2\bigr) \Bigr) \,\mathrm{d}\mu_{\minus s_0\phi}  \notag \\
&\qquad\geq   \frac{\eta}{4 C_{12}}    \exp \Bigl( -\iota \Normbig{\wt{ - s_0\phi}}_{\CCC^0(S^2)} \Bigr)  \sum_{\c\in\{\b, \, \w\}}   
                     \int_{X^0_\c} \!  \pi_\c \Bigl(  \RRR_{\wt{\minus s_0\phi}}^\iota \bigl(h_\b^2,h_\w^2\bigr) \Bigr) \,\mathrm{d}\mu_{\minus s_0\phi}  \notag \\
&\qquad\geq   \frac{\eta}{4 C_{12}}    \exp \Bigl( -\iota \Normbig{\wt{ - s_0\phi}}_{\CCC^0(S^2)} \Bigr)     
              \sum_{\c\in\{\b, \, \w\}}   \int_{X^0_\c} \! h_\c^2     \,\mathrm{d}\mu_{\minus s_0\phi}.    \notag  
\end{align}

\smallskip

We now consider the general case where the potential is $\wt{- s\phi}$. Fix $\c'\in\{\b, \, \w\}$ and an arbitrary point $x\in X^0_{\c'}$. For each $X^\iota \in \X^\iota_{\c'}$, denote $y_{X^\iota} \coloneqq (f^\iota|_{X^\iota})^{-1}(x)$. Then by Definition~\ref{defDolgopyatOperator} and (\ref{eqSplitRuelleCoordinateFormula}) in Lemma~\ref{lmSplitRuelleCoordinateFormula},
\begin{align*}
&      \pi_{\c'} \bigl( \MM_{J,\minus s,\phi} (h_\b,h_\w) \bigr) (x) \\
&\quad =    \sum_{\c\in\{\b, \, \w\}} \sum\limits_{\substack{X^\iota\in\X^\iota_{\c'}\\X^\iota\subseteq X^0_\c}}
                      h_\c ( y_{X^\iota} ) \beta_J ( y_{X^\iota} ) \exp\bigl( S_\iota \wt{ - a\phi} ( y_{X^\iota} ) \bigr)   \\
&\quad\leq \sum_{\c\in\{\b, \, \w\}} \sum\limits_{\substack{X^\iota\in\X^\iota_{\c'}\\X^\iota\subseteq X^0_\c}} 
                      h_\c ( y_{X^\iota} ) \beta_J ( y_{X^\iota} ) \exp\bigl( S_\iota \wt{ - s_0\phi} ( y_{X^\iota} ) \bigr)
                      \exp \bigl(  \Absbig{ S_\iota \wt{ - a\phi} ( y_{X^\iota} ) - S_\iota \wt{ - s_0\phi} ( y_{X^\iota} ) }  \bigr)  \\
&\quad\leq \pi_{\c'} \bigl( \MM_{J,\minus s_0,\phi} (h_\b,h_\w) \bigr) (x) 
       e^{ \iota \bigl(   \abs{a-s_0} \norm{\phi}_{\CCC^0(S^2)} + \abs{P(f,-a\phi) - P(f,-s_0\phi)}
                                      + 2 \norm{ \log u_{\minus a\phi} - \log u_{\minus s_0\phi}}_{\CCC^0(S^2)}   \bigr)   } .
\end{align*}

Since the function $t\mapsto P(f,t\phi)$ is continuous (see for example, \cite[Theorem~3.6.1]{PrU10}) and the map $t\mapsto u_{t\phi}$ is continuous on $\Holder{\alpha}(S^2,d)$ equipped with the uniform norm $\norm{\cdot}_{\CCC^0(S^2)}$ by Lemma~\ref{lmUphiCountinuous}, we can choose $a_0\in(0, s_0 )$ depending only on $f$, $\CC$, $d$, $\alpha$, and $\phi$ such that if $s=a+ \I b$ with $a, \, b\in\R$ satisfies $\abs{a-s_0} \leq a_0$ and $\abs{b} \geq 2 s_0 + 1$, then
\begin{align*}
     &  \exp \bigl( \iota \bigl(   \abs{a-s_0} \norm{\phi}_{\CCC^0(S^2)} + \abs{P(f,-a\phi) - P(f,-s_0\phi)}
                                        + 2 \norm{ \log u_{\minus a\phi} - \log u_{\minus s_0\phi}}_{\CCC^0(S^2)}   \bigr)   \bigr)\\
&\qquad\leq  \bigl( 1+  (4 C_{12})^{ - 1 }  \eta \exp\bigl( -\iota \norm{\wt{ - s_0\phi}}_{\CCC^0(S^2)} \bigr)    \bigr)^{ 1 / 2 } ,
\end{align*}
and consequently,
\begin{equation}  \label{eqPfpropDolgopyatOperator_s_s0}
      \pi_{\c'} \bigl( \MM_{J,\minus s,\phi} (h_\b,h_\w) \bigr) (x)  
\leq  \biggl( 1+ \frac{ \exp\bigl( -\iota \norm{\wt{ - s_0\phi}}_{\CCC^0(S^2)} \bigr) } { 4  C_{12} }  \biggr)^{ 1 / 2 }
      \pi_{\c'}  ( \MM_{J,\minus s_0,\phi} (h_\b,h_\w)) (x).
\end{equation}

Therefore, if $s=a+\I b$ with $a, \, b\in\R$ satisfies $\abs{a-s_0} \leq a_0$ and $\abs{b} \geq b_0 \geq 2 s_0 + 1$ (see (\ref{eqDef_b0})), we get from (\ref{eqPfpropDolgopyatOperator_s_s0}) and (\ref{eqPfpropDolgopyatOperator_Difference}) that
\begin{align*}
&            \sum_{\c\in\{\b, \, \w\}}   \int_{X^0_\c} \! \abs{ \pi_\c ( \MM_{J,\minus s,\phi} (h_\b,h_\w) ) }^2 \,\mathrm{d}\mu_{\minus s_0\phi}   \\
&\qquad\leq  \biggl( 1 + \frac{ \eta \exp\bigl( -\iota \norm{\wt{ - s_0\phi}}_{\CCC^0(S^2)} \bigr) } { 4 C_{12} }  \biggr) 
             \sum_{\c\in\{\b, \, \w\}}   \int_{X^0_\c} \! \abs{ \pi_\c ( \MM_{J,\minus s_0,\phi} (h_\b,h_\w) ) }^2 \,\mathrm{d}\mu_{\minus s_0\phi}   \\
&\qquad\leq  \biggl( 1 - \frac{ \eta^2 \exp\bigl( -2\iota \norm{\wt{ - s_0\phi}}_{\CCC^0(S^2)} \bigr) } { 16 C_{12}^2 }  \biggr) 
             \sum_{\c\in\{\b, \, \w\}}  \int_{X^0_\c} \! \abs{h_\c}^2 \,\mathrm{d}\mu_{\minus s_0\phi}.   
\end{align*}
We finish the proof of (ii) by choosing 
\begin{equation*} 
\rho \coloneqq 1 -  16^{-1} C_{12}^{ - 2 }\eta^2 \exp\bigl( -2\iota \norm{\wt{ - s_0\phi}}_{\CCC^0(S^2)} \bigr)   \in (0,1),
\end{equation*}
which depends only on $f$, $\CC$, $d$, $\alpha$, and $\phi$.

\smallskip

(iii) Given arbitrary $h_\b$, $h_\w$, $u_\b$, and $u_\w$ satisfying the hypotheses in (iii), we construct a subset $J\subseteq \{1, \, 2\}\times \{1, \, 2\} \times \mathfrak{C}_b$ as follows: For each $X\in\mathfrak{C}_b$, 
\begin{enumerate}
\smallskip
\item[(1)] if $\Normbig{Q_{\c_X, 1}}_{\CCC^0(\mathfrak{X}_1(X))} \leq 1$, then include $(1,1,X)$ in $J$, otherwise

\smallskip
\item[(2)] if $\Normbig{Q_{\c_X, 2}}_{\CCC^0(\mathfrak{X}_1(X))} \leq 1$, then include $(2,1,X)$ in $J$, otherwise

\smallskip
\item[(3)] if $\Normbig{Q_{\c_X, 1}}_{\CCC^0(\mathfrak{X}_2(X))} \leq 1$, then include $(1,2,X)$ in $J$, otherwise

\smallskip
\item[(4)] if $\Normbig{Q_{\c_X, 2}}_{\CCC^0(\mathfrak{X}_2(X))} \leq 1$, then include $(2,2,X)$ in $J$,
\end{enumerate}
where we denote $\c_X\in\{\b, \, \w\}$ with the property that $X\subseteq Y_{\c_X}^{M_0}$. Here functions $Q_{\c,j} \: Y_\c^{M_0} \rightarrow \R$, $\c\in\{\b, \, \w\}$ and $j\in\{1, \, 2\}$, are defined in Lemma~\ref{lmUHDichotomySum}.

By Lemma~\ref{lmUHDichotomySum}, at least one of the four cases above occurs for each $X\in \mathfrak{C}_b$. Thus, the set $J$ constructed above has a full projection (see Definition~\ref{defFull}). 

We finally set $\mathcal{E}_s \coloneqq \bigcup \{ J \}$, where the union ranges over all $h_\b$, $h_\w$, $u_\b$, and $u_\w$ satisfying the hypotheses in (iii).

We now fix such $h_\b$, $h_\w$, $u_\b$, $u_\w$, and the corresponding $J$ constructed above. Then for each $\c\in\{\b, \, \w\}$ and each $x\in X_\c^0$, we will establish (\ref{eqDolgopyatOpPtwiseBound}) as follows:

\begin{enumerate}
\smallskip
\item[(1)] If $x \notin \bigcup_{X\in\mathfrak{C}_b}  f^{M_0} (\mathfrak{X}_1(X) \cup \mathfrak{X}_2(X))$, then by (\ref{eqPsi_iX}) and (\ref{eqDefBetaJ}), $\beta_J(y)=1$ for all $y\in f^{-(N_1+M_0)}(x)$. Thus, (\ref{eqDolgopyatOpPtwiseBound}) holds for $x$ by Definition~\ref{defDolgopyatOperator}, (\ref{eqSplitRuelleCoordinateFormula}) in Lemma~\ref{lmSplitRuelleCoordinateFormula}, and Definition~\ref{defSplitRuelle}.

\smallskip
\item[(2)] If $x\in f^{M_0} (\mathfrak{X}_i(X))$ for some $X\in\mathfrak{C}_b$ and $i\in\{1, \, 2\}$, then one of the following two cases occurs:

\begin{enumerate}
\smallskip
\item[(a)] $(1,i,X)\notin J$ and $(2,i,X)\notin J$. Then by (\ref{eqDefBetaJ}), $\beta_J(y)=1$ for all $y\in f^{-(N_1+M_0)}(x)$. Thus, (\ref{eqDolgopyatOpPtwiseBound}) holds for $x$  by Definition~\ref{defDolgopyatOperator}, (\ref{eqSplitRuelleCoordinateFormula}) in Lemma~\ref{lmSplitRuelleCoordinateFormula}, and Definition~\ref{defSplitRuelle}.

\smallskip
\item[(b)] $(j,i,X)\in J$ for some $j\in\{1, \, 2\}$. Then by the construction of $J$, we have $(j',i',X)\in J$ if and only if $(j',i')=(j,i)$. We denote the inverse branches $\tau_k \coloneqq \Bigl( f^{N_1} \big|_{X^{ N_1+M_0}_{\c,k} } \Bigr)^{-1}$ for $k\in\{1, \, 2\}$. Write $z \coloneqq \bigl( f^{N_1+M_0} \big|_{X^{ N_1+M_0}_{\c,j} } \bigr)^{-1} (x)$. Then $\beta_J(y)=1$ for each $y \in f^{-(N_1+M_0)}(x) \setminus \tau_j(\mathfrak{X}_i(X)) = f^{-(N_1+M_0)}(x) \setminus \{z\}$. In particular, $\beta_J\bigl(\tau_{j_*}\bigl(f^{N_1}(z) \bigr)\bigr) = 1$, where $j_*\in\{1, \, 2\}$ and $j_*\neq j$. By the construction of $J$, we get $Q_{\c,j} \bigl( f^{N_1}(z) \bigr) \leq 1$, i.e.,
\begin{align*}
&              \Absbigg{ \sum_{k\in\{1, \, 2\}}  \Bigl( u_{\varsigma(\c,k)} e^{S_{N_1} \wt{-s\phi} } \Bigr) \bigl( \tau_k \bigl( f^{N_1} (z) \bigr)\bigr)  }  \\
&\qquad \leq   -\frac{1}{2} \eta h_{\varsigma(\c,j)} (z) e^{S_{N_1} \wt{ \minus a\phi} (z) }
                     +  \sum_{k\in\{1, \, 2\}}  \Bigl( h_{\varsigma(\c,k)} e^{S_{N_1} \wt{ \minus a\phi} } \Bigr) \bigl( \tau_k \bigl( f^{N_1} (z) \bigr)\bigr)   \\
&\qquad \leq    \Bigl( \beta_J h_{\varsigma(\c,j  )} e^{S_{N_1} \wt{ \minus a\phi} } \Bigr) (z) 
              + \Bigl( \beta_J h_{\varsigma(\c,j_*)} e^{S_{N_1} \wt{ \minus a\phi} } \Bigr) \bigl( \tau_{j_*} \bigl( f^{N_1} (z) \bigr)\bigr),
\end{align*}
where $\varsigma(\c,k)$ is defined as in the statement of Lemma~\ref{lmUHDichotomySum}. Hence, (\ref{eqDolgopyatOpPtwiseBound}) holds for $x$ by Definition~\ref{defDolgopyatOperator}, (\ref{eqSplitRuelleCoordinateFormula}) in Lemma~\ref{lmSplitRuelleCoordinateFormula}, and Definition~\ref{defSplitRuelle}.
\end{enumerate}
\end{enumerate}

We are going to establish (\ref{eqDolgopyatOpLipBound}) now. By (\ref{eqSplitRuelleCoordinateFormula}) in Lemma~\ref{lmSplitRuelleCoordinateFormula}, (\ref{eqBasicIneqC}) in Lemma~\ref{lmBasicIneq}, Definition~\ref{defSplitRuelle}, and (\ref{eqBetaJBounds}), for all $\c\in\{\b, \, \w\}$ and $x, \, x'\in X^0_\c$ with $x\neq x'$,
\begin{align*}
&  \frac{1}{ d(x,x')^\alpha}        \AbsBig{   \pi_\c \Bigl(\RRR_{\wt{\minus s\phi}}^{N_1+M_0} (u_\b,u_\w) \Bigr)  (x)
                                                                     - \pi_\c \Bigl(\RRR_{\wt{\minus s\phi}}^{N_1+M_0} (u_\b,u_\w) \Bigr)  (x') }   \\
&\qquad\leq   \frac{1}{ d(x,x')^\alpha}  \sum_{\c'\in\{\b, \, \w\}}  \AbsBig{   \RR_{\wt{\minus s\phi},\c,\c'}^{(\iota)} (u_{\c'})   (x)
                                                                                                                                        - \RR_{\wt{\minus s\phi},\c,\c'}^{(\iota)} (u_{\c'})   (x') }   \\
&\qquad\leq  A_0  \sum_{\c'\in\{\b, \, \w\}} \biggl( \biggl(     
                       \frac{A\abs{b}}{\Lambda^{\alpha\iota}}    \sum_{z\in\{x, \, x'\}}   \RR_{\wt{\minus a\phi},\c,\c'}^{(\iota)} (h_{\c'})   (z)    \biggr) 
                                  + \abs{b}  \RR_{\wt{\minus a\phi},\c,\c'}^{(\iota)} (h_{\c'})   (x) \biggr) \\
&\qquad\leq  \biggl( \frac{ A_0 A}{\Lambda^{\alpha\iota}}  + A_0 \biggr)  \abs{b}   \sum_{\c'\in\{\b, \, \w\}}  \sum_{z\in\{x, \, x'\}}
                                \RR_{\wt{\minus a\phi},\c,\c'}^{(\iota)} \bigl(2 h_{\c'} \beta_J|_{X^0_{\c'}}\bigr)   (z)  \\
&\qquad\leq   \biggl( \frac{ 2 A_0 A}{\Lambda^{\alpha\iota}}  + 2 A_0 \biggr)
               \abs{b}  \sum_{z\in\{x, \, x'\}}  \pi_\c (\MM_{J,\minus s,\phi} (h_\b,h_\w) ) (z)  \\
&\qquad\leq   A\abs{b}  \sum_{z\in\{x, \, x'\}}  \pi_\c (\MM_{J,\minus s,\phi} (h_\b,h_\w) ) (z) ,
\end{align*}
where the last inequality follows from $\frac{ 2 A_0}{\Lambda^{\alpha\iota}} \leq \frac12$ (see (\ref{eqDef_N1})) and  $A\geq 4 A_0$ (see (\ref{eqDef_A})).
\end{proof}

\begin{proof}[Proof of Theorem~\ref{thmL2Shrinking}]
We set $\iota \coloneqq N_1 + M_0$, where $N_1\in\Z$ is defined in (\ref{eqDef_N1}) and $M_0\in\N$ is the constant from Definition~\ref{defStrongNonIntegrability}. We take the constants $a_0\in (0,  s_0 )$ and $\rho\in(0,1)$ from Proposition~\ref{propDolgopyatOperator}, and $b_0$ as defined in (\ref{eqDef_b0}).

Fix arbitrary $s \coloneqq a+\I b$ with $a, \, b\in\R$ satisfying $\abs{a-s_0} \leq a_0$ and $\abs{b}\geq b_0$. Fix arbitrary $u_\b\in\Holder{\alpha}\bigl(\bigl(X^0_\b,d\bigr),\C\bigr)$ and $u_\w\in\Holder{\alpha}\bigl(\bigl(X^0_\w,d\bigr),\C\bigr)$ satisfying 
\begin{equation}  \label{eqPfthmL2Shrinking_UNormBound}
\NHnorm{\alpha}{\Im(s)}{u_\b}{(X^0_\b,d)} \leq 1 \qquad \text{and}\qquad  \NHnorm{\alpha}{\Im(s)}{u_\w}{(X^0_\w,d)} \leq 1.
\end{equation}
We recall the constant $A\in\R$ defined in (\ref{eqDef_A}) and the subset $\mathcal{E}_s \subseteq \mathcal{F}$ constructed in Proposition~\ref{propDolgopyatOperator}. 

We will construct sequences $\{h_{\b,k}\}_{k=-1}^{+\infty}$ in $K_{A\abs{b}}\bigl( X^0_\b,d \bigr)$, $\{h_{\w,k}\}_{k=-1}^{+\infty}$ in $K_{A\abs{b}}\bigl( X^0_\w,d \bigr)$, $\{u_{\b,k}\}_{k=0}^{+\infty}$ in $\Holder{\alpha}\bigl(\bigl( X^0_\b,d \bigr),\C \bigr)$, $\{u_{\w,k}\}_{k=0}^{+\infty}$ in $\Holder{\alpha}\bigl(\bigl( X^0_\w,d \bigr),\C \bigr)$, and $\{ J_k \}_{k=0}^{+\infty}$ in $\mathcal{E}_s$ recursively so that the following properties hold for each $k\in\N_0$, each $\c\in\{\b, \, \w\}$, and all $x, \, x'\in X^0_\c$:

\begin{enumerate}
\smallskip
\item[(1)] $u_{\c,k} = \pi_\c \Bigl(  \RRR_{\wt{\minus s\phi}}^{k\iota} (u_\b,u_\w) \Bigr)$.

\smallskip
\item[(2)] $\abs{u_{\c,k}(x)} \leq h_{\c,k}(x)$ and $\abs{u_{\c,k}(x) - u_{\c,k}(x')} \leq A\abs{b} (h_{\c,k}(x) + h_{\c,k}(x')) d(x,x')^\alpha$.

\smallskip
\item[(3)] $\sum_{ \c'\in\{\b, \, \w\} } \int_{ X^0_{\c'} } \!   h_{\c',k}^2 \,\mathrm{d}\mu_{\minus s_0\phi} 
    \leq \rho \sum_{ \c'\in\{\b, \, \w\} }    \int_{ X^0_{\c'} } \!   h_{\c',k-1}^2 \,\mathrm{d}\mu_{\minus s_0\phi}$.
            
\smallskip
\item[(4)] $\pi_\c \Bigl(  \RRR_{\wt{\minus s\phi}}^{\iota} (u_{\b,k},u_{\w,k}) \Bigr) (x) \leq \pi_\c \bigl(  \MM_{J_k,\minus s,\phi} (h_{\b,k},h_{\w,k}) \bigr) (x)$ and
\begin{align*}
     & \AbsBig{    \pi_\c \Bigl(  \RRR_{\wt{\minus s\phi}}^{\iota} (u_{\b,k},u_{\w,k}) \Bigr) (x) - \pi_\c \Bigl(  \RRR_{\wt{\minus s\phi}}^{\iota} (u_{\b,k},u_{\w,k}) \Bigr) (x')  }\\ 
&\quad \leq  A\abs{b} \bigl( \pi_\c \bigl(  \MM_{J_k,\minus s,\phi} (h_{\b,k},h_{\w,k}) \bigr) (x) + \pi_\c \bigl(  \MM_{J_k,\minus s,\phi} (h_{\b,k},h_{\w,k}) \bigr) (x')  \bigr) d(x,x')^\alpha.
\end{align*}
\end{enumerate}

We first set $h_{\c,-1}  \coloneqq 1 / \rho$, $h_{\c,0} \coloneqq \NHnormD{\Holder{\alpha}(X^0_\c,d)}{b}{u_\c}\in [0,1]$, and $u_{\c,0} \coloneqq u_\c$ for each $\c\in\{\b, \, \w\}$. Then clearly, Properties~(1), (2), and (3) hold for $k=0$. By Property~(2) for $k=0$, we can choose $J_0\in\mathcal{E}_s$ according to Proposition~\ref{propDolgopyatOperator}~(iii) such that Property~(4) holds for $k=0$.

We continue our construction recursively as follows. Assume that we have chosen $u_{\b,i} \in \Holder{\alpha}\bigl( \bigl(X^0_\b,d\bigr),\C \bigr)$, $u_{\w,i} \in \Holder{\alpha}\bigl( \bigl(X^0_\w,d\bigr),\C \bigr)$, $h_{\b,i} \in K_{A\abs{b}} \bigl( X^0_\b,d\bigr)$, $h_{\w,i} \in K_{A\abs{b}} \bigl( X^0_\w,d\bigr)$, and $J_i\in\mathcal{E}_s$ for some $i\in\N_0$. Then we define, for each $\c\in\{\b, \, \w\}$,
\begin{equation*}
u_{\c,i+1}   \coloneqq   \pi_\c \Bigl(\RRR_{\wt{\minus s\phi}}^\iota(u_{\b,i}, u_{\w,i}) \Bigr)  \qquad \text{and} \qquad 
h_{\c,i+1}   \coloneqq   \pi_\c  (\MM_{J_i,\minus s,\phi}(h_{\b,i}, h_{\w,i})  ).
\end{equation*}
Then for each $\c\in\{\b, \, \w\}$, by (\ref{eqSplitRuelleRestrictHolder}) we get $u_{\c,i+1} \in \Holder{\alpha}\bigl( \bigl(X^0_\c,d \bigr), \C \bigr)$, and by Proposition~\ref{propDolgopyatOperator}~(i) we have $h_{\c,i+1} \in K_{A\abs{b}} \bigl(X^0_\c,d \bigr)$. Property~(1) for $k=i+1$ follows from Property~(1) for $k=i$. Property~(2) for $k=i+1$ follows from Property~(4) for $k=i$. Property~(3) for $k=i+1$ follows from Proposition~\ref{propDolgopyatOperator}~(ii). By Property~(2) for $k=i+1$ and Proposition~\ref{propDolgopyatOperator}~(iii), we can choose $J_{i+1}\in\mathcal{E}_s$ such that Property~(4) for $k=i+1$ holds. This completes the recursive construction and the verification of Properties~(1) through (4) for all $k\in\N_0$.

By (\ref{eqSplitRuelleCoordinateFormula}) in Lemma~\ref{lmSplitRuelleCoordinateFormula}, Properties~(1), (2), (3), and Theorem~\ref{thmEquilibriumState}~(iii), we have
\begin{align*}
            \int_{X^0_\c}\! \AbsBig{  \RR_{\wt{\minus s\phi}, \c, \b}^{(n\iota)} (u_\b) 
                                                        + \RR_{\wt{\minus s\phi}, \c, \w}^{(n\iota)} (u_\w) }^2 \,\mathrm{d}\mu_{\minus s_0\phi}
&  =    \int_{X^0_\c}\! \AbsBig{ \pi_\c \Bigl( \RRR_{\wt{\minus s\phi}}^{n\iota} (u_\b, u_\w) \Bigr) }^2 \, \mathrm{d}\mu_{\minus s_0\phi} \\
&  =    \int_{X^0_\c}\! \abs{ u_{\c,n} }^2 \, \mathrm{d}\mu_{\minus s_0\phi} \\
&\leq  \int_{X^0_\c}\! h_{\c,n} ^2 \, \mathrm{d}\mu_{\minus s_0\phi} \\
&\leq  \rho^n \biggl( \int_{X^0_\b} \!   h_{\b,0}^2 \,\mathrm{d}\mu_{\minus s_0\phi} + \int_{X^0_\w} \!  h_{\w,0}^2 \,\mathrm{d}\mu_{\minus s_0\phi} \biggr) \\
&\leq  \rho^n,
\end{align*}
for all $\c\in\{\b, \, \w\}$ and $n\in\N$.
\end{proof}

\section{Latt\`{e}s maps and smooth potentials}   \label{sctLattes}

\subsection{Non-local integrability}   \label{subsctNLI_Def}

We briefly recall the notion of non-local integrability discussed in \cite[Subsection~7.1]{LZ24a}.

Let $f\: S^2\rightarrow S^2$ be an expanding Thurston map, $d$ be a visual metric on $S^2$ for $f$, and $\CC\subseteq S^2$ be a Jordan curve satisfying $f(\CC)\subseteq \CC$ and $\post f\subseteq \CC$. We define
\begin{equation}   \label{eqDefSigma-}
\Sigma_{f,\,\CC}^- \coloneqq \bigl\{\{X_{\minus i}\}_{i\in\N_0} : X_{\minus i}\in \X^1(f,\CC) \text{ and } f \bigl(X_{\minus (i+1)}\bigr) \supseteq X_{\minus i},\text{ for } i\in\N_0 \bigr\}.
\end{equation}
For each $X\in\X^1(f,\CC)$, since $f$ is injective on $X$ (see Proposition~\ref{propCellDecomp}~(i)), we denote the inverse branch of $f$ restricted on $X$ by $f_X^{-1}\: f(X) \rightarrow X$, i.e., $f_X^{-1} \coloneqq (f|_X)^{-1}$.

Let $\psi\in \Holder{\alpha}((S^2,d),\C)$ be a complex-valued H\"{o}lder continuous function with an exponent $\alpha\in (0,1]$. For each $\xi=\{ \xi_{\minus i} \}_{i\in\N_0} \in  \Sigma_{f,\,\CC}^-$, we define the function
\begin{equation}   \label{eqDelta}
\Delta^{f,\,\CC}_{\psi,\,\xi} (x,y) \coloneqq \sum_{i=0}^{+\infty} \bigl( \bigl(\psi \circ f^{-1}_{\xi_{\minus i}} \circ \cdots \circ f^{-1}_{\xi_{0}}\bigr) (x) -  \bigl( \psi \circ f^{-1}_{\xi_{\minus i}} \circ \cdots \circ f^{-1}_{\xi_{0}}\bigr) (y)   \bigr)
\end{equation}
for each $(x,y)\in \bigcup\limits_{\substack{X\in\X^1(f,\CC) \\ X\subseteq f(\xi_0)}}X \times X$.  

The following lemma is verified in \cite[Subsection~7.1]{LZ24a}.

\begin{lemma}  \label{lmDeltaHolder}
Let $f$, $\CC$, $d$, $\psi$, $\alpha$ satisfy the Assumptions in Section~\ref{sctAssumptions}. We assume, in addition, that $f(\CC)\subseteq \CC$. Let $\xi=\{ \xi_{\minus i} \}_{i\in\N_0} \in  \Sigma_{f,\,\CC}^-$. Then for each $X\in\X^1(f,\CC)$ with $X\subseteq f(\xi_0)$, we get that $\Delta^{f,\,\CC}_{\psi,\,\xi} (x,y)$ as a series defined in (\ref{eqDelta}) converges absolutely and uniformly in $x, \, y\in X$, and moreover, for each triple of $x, \, y,z\in X$, the identity 
\begin{equation}   \label{eqDeltaDifference}
\Delta^{f,\,\CC}_{\psi,\,\xi} (x,y)   = \Delta^{f,\,\CC}_{\psi,\,\xi} (z,y) - \Delta^{f,\,\CC}_{\psi,\,\xi} (z,x)
\end{equation}
holds with
$
\Absbig{ \Delta^{f,\,\CC}_{\psi,\,\xi} (x,y) }   \leq C_1 d(x,y)^\alpha,
$
where $C_1=C_1(f,\CC,d,\psi,\alpha)$ is the constant depending on $f$, $\CC$, $d$, $\psi$, and $\alpha$ from Lemma~\ref{lmSnPhiBound}.
\end{lemma}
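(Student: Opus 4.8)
The plan is to write $g_i \coloneqq f^{-1}_{\xi_{\minus i}} \circ \cdots \circ f^{-1}_{\xi_0}$ for $i \in \N_0$, so that the $i$-th summand in $(\ref{eqDelta})$ is exactly $\psi(g_i(x)) - \psi(g_i(y))$, and to reduce everything to two facts about the maps $g_i$ restricted to a fixed $1$-tile $X \in \X^1(f,\CC)$ with $X \subseteq f(\xi_0)$: that $g_i$ is well defined on $X$ with $g_i(X) \in \X^{i+2}(f,\CC)$, and that $f^{i+1} \circ g_i = \id_X$.

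First I would verify these two facts by induction on $i$. For well-definedness, using that $\xi \in \Sigma_{f,\,\CC}^-$ gives $f(\xi_{\minus(j+1)}) \supseteq \xi_{\minus j}$, one shows $g_j(X) \subseteq \xi_{\minus j} \subseteq f(\xi_{\minus(j+1)})$, which is precisely the domain of $f^{-1}_{\xi_{\minus(j+1)}}$, so the next composition is legitimate; the telescoping identity $f \circ f^{-1}_{\xi_{\minus j}} = \id$ on $f(\xi_{\minus j})$ then yields $f^{i+1} \circ g_i = \id_X$. To identify $g_i(X)$ with a tile I would use that the restriction of $f$ to a $1$-tile $\xi_{\minus j}$ is a cellular homeomorphism onto the $0$-tile $f(\xi_{\minus j})$ (Proposition~\ref{propCellDecomp}), so it carries the $(m+1)$-tiles inside $\xi_{\minus j}$ bijectively onto the $m$-tiles inside $f(\xi_{\minus j})$; hence $f^{-1}_{\xi_{\minus j}}$ lifts an $m$-tile lying in $f(\xi_{\minus j})$ to an $(m+1)$-tile, and starting from the $1$-tile $X$ we get $g_i(X) \in \X^{i+2}(f,\CC)$ (this is also forced by $f^{i+1}(g_i(X)) = X$ and Proposition~\ref{propCellDecomp}(i)).

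Next I would feed this into Lemma~\ref{lmMetricDistortion}, applicable since $f(\CC)\subseteq\CC$: applying it to the $(i+2)$-tile $g_i(X)$ with $n \coloneqq i+1$ and $k \coloneqq 1$, together with $f^{i+1}(g_i(x)) = x$ and $f^{i+1}(g_i(y)) = y$, gives $d(g_i(x),g_i(y)) \leq C_0 \Lambda^{-(i+1)} d(x,y)$ for all $x, y \in X$, where $C_0 > 1$ depends only on $f$, $\CC$, $d$. Then Hölder continuity of $\psi$ yields the termwise bound
\[
\Absbig{\psi(g_i(x)) - \psi(g_i(y))} \leq \Hseminorm{\alpha,\,(S^2,d)}{\psi}\, C_0^\alpha\, \Lambda^{-\alpha(i+1)}\, d(x,y)^\alpha,
\]
whose right-hand side is dominated by the summable quantity $\Hseminorm{\alpha,\,(S^2,d)}{\psi}\, C_0^\alpha\, \Lambda^{-\alpha(i+1)}\, (\diam_d(S^2))^\alpha$; the Weierstrass $M$-test then gives absolute and uniform convergence of $(\ref{eqDelta})$ on $X \times X$. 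The identity $(\ref{eqDeltaDifference})$ follows at once, since absolute convergence of $\Delta^{f,\,\CC}_{\psi,\,\xi}(z,y)$ and $\Delta^{f,\,\CC}_{\psi,\,\xi}(z,x)$ lets me subtract the two series termwise, cancelling the $\psi(g_i(z))$ contributions. Finally, summing the termwise bound and using $C_0^\alpha \Lambda^{-\alpha} \leq C_0$,
\[
\Absbig{\Delta^{f,\,\CC}_{\psi,\,\xi}(x,y)} \leq \Hseminorm{\alpha,\,(S^2,d)}{\psi}\, C_0^\alpha\, d(x,y)^\alpha \sum_{i=0}^{+\infty}\Lambda^{-\alpha(i+1)} \leq \frac{\Hseminorm{\alpha,\,(S^2,d)}{\psi}\, C_0}{1-\Lambda^{-\alpha}}\, d(x,y)^\alpha = C_1\, d(x,y)^\alpha,
\]
with $C_1$ the analogue of $(\ref{eqC1C2})$ in Lemma~\ref{lmSnPhiBound} (with $\psi$ in place of $\phi$).

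The argument is essentially routine, and the only step needing genuine care is the first one: correctly tracking which cell decomposition we are in and confirming that the inverse-branch compositions $g_i$ are defined on all of $X$ and map it onto honest $(i+2)$-tiles of $\DD^{i+2}(f,\CC)$, so that Lemma~\ref{lmMetricDistortion} applies with the correct power of $\Lambda$. Once that bookkeeping is in place, the convergence, the cocycle identity, and the Hölder bound are all immediate from the $M$-test and a geometric series.
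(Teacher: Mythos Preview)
Your argument is correct. The paper itself does not prove this lemma here; it simply records that ``the following lemma is verified in \cite[Section~7]{LZhe23a},'' so there is no in-paper proof to compare against. Your approach---establishing that $g_i(X)\in\X^{i+2}(f,\CC)$ with $f^{i+1}\circ g_i=\id_X$, then invoking Lemma~\ref{lmMetricDistortion} to get the contraction $d(g_i(x),g_i(y))\leq C_0\Lambda^{-(i+1)}d(x,y)$, and finishing with the $M$-test and a geometric sum---is exactly the natural route and matches the structure one would expect in the companion paper. One small remark: instead of summing termwise, you could also recognize the $n$-th partial sum of $(\ref{eqDelta})$ as $S_{n+1}\psi(g_n(x))-S_{n+1}\psi(g_n(y))$ (since $g_i=f^{n-i}\circ g_n$) and apply Lemma~\ref{lmSnPhiBound} directly to get the bound $C_1 d(x,y)^\alpha$ in one stroke; your termwise estimate has the advantage of making absolute convergence transparent, which you need anyway for $(\ref{eqDeltaDifference})$.
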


\begin{definition}[Temporal distance]  \label{defTemporalDist}
Let $f$, $\CC$, $d$, $\psi$, $\alpha$ satisfy the Assumptions in Section~\ref{sctAssumptions}. We assume, in addition, that $f(\CC)\subseteq \CC$. For $\xi=\{ \xi_{\minus i} \}_{i\in\N_0} \in  \Sigma_{f,\,\CC}^-$ and $\eta=\{ \eta_{\minus i} \}_{i\in\N_0} \in  \Sigma_{f,\,\CC}^-$ with $f(\xi_0) = f(\eta_0)$, we define the \defn{temporal distance} $\psi^{f,\,\CC}_{\xi,\,\eta}$ as 
\begin{equation*}
\psi^{f,\,\CC}_{\xi,\,\eta}(x,y) \coloneqq \Delta^{f,\,\CC}_{\psi,\,\xi} (x,y) - \Delta^{f,\,\CC}_{\psi,\,\eta} (x,y)
\end{equation*}
for each
$
(x,y)\in \bigcup\limits_{\substack{X\in\X^1(f,\CC) \\ X\subseteq f(\xi_0)}}X \times X.
$
\end{definition}

Recall that $f^n$ is an expanding Thurston map with $\post f^n = \post f$ for each expanding Thurston map $f\: S^2\rightarrow S^2$ and each $n\in\N$.

\begin{definition}[Local integrability]   \label{defLI}
Let $f\: S^2\rightarrow S^2$ be an expanding Thurston map and $d$ a visual metric on $S^2$ for $f$. A complex-valued H\"{o}lder continuous function $\psi \in \Holder{\alpha}((S^2,d),\C)$ is \defn{locally integrable} (with respect to $f$ and $d$) if for each natural number $n\in\N$, and each Jordan curve $\CC\subseteq S^2$ satisfying $f^n(\CC)\subseteq \CC$ and $\post f \subseteq \CC$, we have 
\begin{equation*}
\bigl(S_n^f \psi\bigr)^{f^n,\,\CC}_{\xi,\,\eta}(x,y)=0
\end{equation*}
for all $\xi=\{ \xi_{\minus i} \}_{i\in\N_0} \in  \Sigma_{f^n,\,\CC}^-$ and $\eta=\{ \eta_{\minus i} \}_{i\in\N_0} \in  \Sigma_{f^n,\,\CC}^-$ satisfying $f^n(\xi_0) = f^n(\eta_0)$, and all
$
(x,y)\in \bigcup\limits_{\substack{X\in\X^1(f^n,\CC) \\ X\subseteq f^n(\xi_0)}}X \times X.
$

The function $\psi$ is \defn{non-locally integrable} if it is not locally integrable.
\end{definition}

\subsection{Characterizations} 

In this subsection, we show that for Latt\`{e}s maps, in the class of continuously differentiable real-valued potentials, the weaker condition of non-local integrability implies the (stronger) $1$-strong non-integrability for some visual metric $d$ for $f$. This leads to a characterization in the Prime Orbit Theorem in this context (Theorem~\ref{thmLattesPOT}). The proof relies on the geometric properties of various metrics in this setting and does not generalize to other rational expanding Thurston maps. However, we are able to show the genericity of the $\alpha$-strong non-integrability condition in $\Holder{\alpha}(S^2,d)$ in the next paper \cite{LZ23c} of this series.

In order to carry out the cancellation argument in Section~\ref{sctDolgopyat}, it is crucial to have both the lower bound and the upper bound in (\ref{eqSNIBounds}). As seen in the proof of Proposition~\ref{propSNI}, the upper bound in (\ref{eqSNIBounds}) is guaranteed automatically by the H\"{o}lder continuity of the potential $\phi$ with the right exponent $\alpha$. If we could assume in addition that the identity map on $S^2$ is a bi-Lipschitz equivalence (or more generally, snowflake equivalence) from a visual metric $d$ to the Euclidean metric on $S^2$, and the temporal distance $\phi^{f,\,\CC}_{\xi,\,\xi'}$ is nonconstant and continuously differentiable, then we could expect a lower bound with the same exponent as that in the upper bound in (\ref{eqSNIBounds}) near the same point. 

However, for a rational expanding Thurston map $f\: \widehat{\C}  \rightarrow \widehat{\C}$, the chordal metric $\sigma$ (see Remark~\ref{rmChordalVisualQSEquiv} for the definition), which is bi-Lipschitz equivalent to the Euclidean metric away from the infinity, is never a visual metric for $f$ (see \cite[Lemma~8.12]{BM17}). In fact, $(S^2,d)$ is snowflake equivalent to $\bigl(\widehat{\C}, \sigma \bigr)$ if and only if $f$ is topologically conjugate to a Latt\`{e}s map (see \cite[Theorem~18.1~(iii)]{BM17} and Definition~\ref{defLattesMap} below). 

Recall that we call two metric spaces $(X_1,d_1)$ and $(X_2,d_2)$ are \defn{bi-Lipschitz}, \defn{snowflake}, or \defn{quasisymmetrically equivalent} if there exists a homeomorphism from $(X_1,d_1)$ to $(X_2,d_2)$ with the corresponding property (see Definition~\ref{defQuasiSymmetry}).

We recall a version of the definition of Latt\`{e}s maps.

\begin{definition}   \label{defLattesMap}
Let $f\: \widehat{\C}  \rightarrow \widehat{\C}$ be a rational Thurston map on the Riemann sphere $\widehat{\C}$. If $f$ is expanding and the orbifold $\mathcal{O}_f = (S^2, \alpha_f)$ associated to $f$ is parabolic, then it is called a \defn{Latt\`{e}s map}.
\end{definition}

See \cite[Chapter~3]{BM17} and \cite{Mi06} for other equivalent definitions and more properties of Latt\`{e}s maps.

The special phenomenon mentioned above is not common in the study of Prime Orbit Theorems for smooth dynamical systems, as we are endeavoring out of Riemannian settings into general self-similar metric spaces. We content ourselves with the smooth examples of strongly non-integrable potentials for Latt\`{e}s maps in Proposition~\ref{propLattes} below.

\begin{rem}  \label{rmCanonicalOrbifoldMetric}
For a Latt\`{e}s map $f\: \widehat{\C} \rightarrow \widehat{\C}$, the universal orbifold covering map $\Theta \: \C \rightarrow \widehat{\C}$ of the orbifold $\mathcal{O}_f = \bigl( \widehat{\C}, \alpha_f \bigr)$ associated to $f$ is holomorphic (see \cite[Theorem~A.26, Definition~A.27, and Corollary~A.29]{BM17}). Let $d_0$ be the Euclidean metric on $\C$. Then the \defn{canonical orbifold metric} $\omega_f$ of $f$ is the pushforward of $d_0$ by $\Theta$, more precisely,
\begin{equation*}
\omega_f(p,q)  \coloneqq \inf \bigl\{ d_0(z,w)  :  z\in \Theta^{-1}(p), \, w\in \Theta^{-1}(q)  \bigr\} 
\end{equation*}
for $p, \, q\in\widehat{\C}$ (see Section~2.5 and Appendices~A.9 and~A.10 in \cite{BM17} for more details on the canonical orbifold metric). Let $\sigma$ be the chordal metric on $\widehat{\C}$ as recalled in Remark~\ref{rmChordalVisualQSEquiv}. By \cite[Proposition~8.5]{BM17}, $\omega_f$ is a visual metric for $f$. By \cite[Lemma~A.34]{BM17}, $\bigl( \widehat{\C}, \omega_f \bigr)$ and $\bigl( \widehat{\C}, \sigma \bigr)$ are bi-Lipschitz equivalent, i.e., there exists a bi-Lipschitz homeomorphism $h \:  \widehat{\C}  \rightarrow  \widehat{\C}$ from $\bigl( \widehat{\C}, \omega_f \bigr)$ to $\bigl( \widehat{\C}, \sigma \bigr)$. Moreover, by the discussion in \cite[Appendix~A.10]{BM17}, $h$ cannot be the identity map.
\end{rem}

\begin{prop}   \label{propLattes}
Let $f\: \widehat{\C} \rightarrow \widehat{\C}$ be a Latt\`{e}s map, and $d\coloneqq \omega_f$ be the canonical orbifold metric of $f$ on $\widehat{\C}$ (which is, in particular, a visual metric for $f$, as recalled in Remark~\ref{rmCanonicalOrbifoldMetric}). Let $\phi \: \widehat{\C} \rightarrow \R$ be a continuously differentiable real-valued function on the Riemann sphere $\widehat{\C}$. Then $\phi \in \Holder{1} \bigl( \widehat{\C}, d \bigr)$, and the following statements are equivalent:
\begin{enumerate}
\smallskip
\item[(i)] $\phi$ is not cohomologous to a constant in $\CCC\bigl(\widehat{\C},\C \bigr)$.

\smallskip
\item[(ii)] $\phi$ is non-locally integrable with respect to $f$ and $d$ (in the sense of Definition~\ref{defLI}).

\smallskip
\item[(iii)] $\phi$ satisfies the $1$-strong non-integrability condition with respect to $f$ and $d$ (in the sense of Definition~\ref{defStrongNonIntegrability}).
\end{enumerate} 
\end{prop}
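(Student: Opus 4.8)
The plan is to prove Proposition~\ref{propLattes} by a cyclic chain of implications, exploiting the rigidity of the canonical orbifold metric. The smoothness statement $\phi\in\Holder{1}\bigl(\widehat{\C},d\bigr)$ is immediate: a continuously differentiable function on the compact sphere is Lipschitz for the chordal metric $\sigma$, and by Remark~\ref{rmCanonicalOrbifoldMetric} the identity map $\bigl(\widehat{\C},\sigma\bigr)\to\bigl(\widehat{\C},\omega_f\bigr)$ is bi-Lipschitz, so $\phi$ is Lipschitz, hence $1$-H\"older, for $d=\omega_f$. The implication (iii)$\implies$(ii) is Proposition~\ref{propSNI2NLI} applied with $\alpha=1$, and (ii)$\implies$(i) is the contrapositive of the trivial direction: if $\phi=K+\beta\circ f-\beta$ is co-homologous to a constant with $\beta$ continuous, then $\beta$ is automatically H\"older by the standard Liv\v{s}ic-type regularity (cf.\ \cite[Theorem~F]{LZhe23a}, as used in the proof of Proposition~\ref{propSNI2NLI}), and one checks directly from (\ref{eqDelta}) and Definition~\ref{defTemporalDist} that all temporal distances of $S_n^f\phi$ vanish, so $\phi$ is locally integrable. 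Thus the entire content of the proposition is the implication (i)$\implies$(iii): non-triviality of the cohomology class forces the quantitative strong non-integrability with the \emph{same} exponent $\alpha=1$ in both bounds of (\ref{eqSNIBounds}).

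For (i)$\implies$(iii) the strategy is to lift the whole picture through the universal orbifold covering map $\Theta\:\C\to\widehat{\C}$, under which $f$ becomes an affine map $z\mapsto az+b$ (or a quotient thereof by a crystallographic group), the metric $d=\omega_f$ becomes the flat Euclidean metric $d_0$, and the inverse branches $f^{-1}_{\xi_{-i}}\circ\cdots\circ f^{-1}_{\xi_0}$ become genuine contracting affine maps. In this lifted chart the temporal distance $\phi^{f,\CC}_{\xi,\eta}$ becomes (up to the covering) an honest continuously differentiable function of two real variables, and the negation of (i) is exactly the statement that every such temporal distance vanishes identically. So, assuming (i), there exist $\xi,\eta\in\Sigma^-_{f,\CC}$ with $f(\xi_0)=f(\eta_0)$ and a point at which $\phi^{f,\CC}_{\xi,\eta}(x,y)$ is nonzero; since this function is $C^1$ and vanishes on the diagonal, its mixed difference over a genuine two-dimensional cell is comparable to the area of the cell, i.e.\ to $(\diam)^2$ in the Euclidean chart, which after transporting back is exactly $d(\cdot,\cdot)^{\alpha}$ with $\alpha=1$ because $d$ is (bi-Lipschitz to) the flat metric. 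One then fixes $M_0$, $N_0$, tiles $Y^{M_0}_\b,Y^{M_0}_\w$, and for each deep tile $X$ picks $x_1(X),x_2(X)$ realizing the bi-Lipschitz-controlled separation in Property~(i) of Definition~\ref{defStrongNonIntegrability}, while the two families of $(N+M_0)$-tiles come from the two prescribed backward itineraries $\xi,\eta$; the key estimate (\ref{eqSNIBoundsDefn}) is precisely the lower bound on the mixed second difference of the $C^1$ temporal distance, uniform over $X$ and $N$ by a compactness/self-similarity argument using Lemma~\ref{lmMetricDistortion} and Lemma~\ref{lmCellBoundsBM}.

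I would carry out the steps in this order: first record $\phi\in\Holder{1}\bigl(\widehat{\C},d\bigr)$ and dispose of (iii)$\implies$(ii)$\implies$(i) by citing Proposition~\ref{propSNI2NLI} and the easy cohomology computation; then set up the lift through $\Theta$, identifying $f$ with an affine expansion, $\omega_f$ with $d_0$, and the functions $\Delta^{f,\CC}_{\phi,\xi}$ from (\ref{eqDelta}) as uniformly convergent $C^1$-series in the flat chart (the series converges in $C^1$ because the affine inverse branches contract derivatives by a definite factor $|a|^{-1}<1$); next, translate ``not co-homologous to a constant'' into ``some temporal distance is not identically zero'' via Lemma~\ref{lmSNIwoC} and the non-local-integrability characterization, pin down a base point and a scale where the mixed $C^1$-difference is bounded below; and finally run the self-similarity/compactness argument to propagate this single bound, with a fixed exponent, to all sufficiently deep tiles $X\subseteq Y^{M_0}_\c$ and all $N\geq N_0$, thereby verifying Definition~\ref{defStrongNonIntegrability}. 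The main obstacle I anticipate is precisely this last propagation: one must show that the lower bound on the mixed second difference of the temporal distance does not degrade as the tile $X$ shrinks and as $N\to\infty$. The $C^1$ regularity gives a bound of the form ``mixed difference $\geq c\,(\diam X)^{2}-(\text{error from higher-order Taylor terms and from truncating the series (\ref{eqDelta})})$'', and one needs the error, which a priori could also be of order $(\diam X)^2$, to be a small \emph{fraction} of the main term; this forces a careful choice of the constants $m_0$, $M_0$ analogous to (\ref{eqDef_m0}) and an exploitation of the exact affine self-similarity (so that the Hessian of the temporal distance, evaluated in rescaled coordinates, is genuinely constant and nonzero rather than merely continuous), which is exactly where the Latt\`es hypothesis — flatness of $\omega_f$ and affineness of the lift — is indispensable and where the argument fails for general rational expanding Thurston maps.
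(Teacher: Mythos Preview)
Your overall architecture is right: the easy implications go through Proposition~\ref{propSNI2NLI} and \cite[Theorem~F]{LZhe23a}, and the real content is producing the quantitative lower bound (\ref{eqSNIBoundsDefn}) from mere non-triviality of the temporal distance, using that for a Latt\`es map the visual metric $\omega_f$ is (locally, away from $\supp(\alpha_f)$) bi-Lipschitz to the Euclidean metric so that the temporal distance inherits $C^1$ regularity from $\phi$. Two corrections, one minor and one substantive.

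Minor: the identity map $\bigl(\widehat{\C},\sigma\bigr)\to\bigl(\widehat{\C},\omega_f\bigr)$ is \emph{not} globally bi-Lipschitz (it degenerates at the orbifold points; cf.\ the last sentence of Remark~\ref{rmCanonicalOrbifoldMetric}). What is true, and sufficient for $\phi\in\Holder{1}\bigl(\widehat{\C},d\bigr)$, is the one-sided bound $\sigma\leq C\,\omega_f$ globally (this is \cite[(A.43)]{BM17}, recorded in the paper's proof as (\ref{eqPfrmCanonicalOrbifoldMetric_Chordal<Orbifold})), together with local bi-Lipschitz equivalence on compact sets avoiding $\post f$.

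Substantive: your scaling analysis is off by one order. The expression in (\ref{eqSNIBoundsDefn}) is not a spatial second difference; writing $h(x)\coloneqq S_N\phi(\varsigma_1(x))-S_N\phi(\varsigma_2(x))$, it is the \emph{first} difference $h(x_1(X))-h(x_2(X))$, and the required bound is $|h(x_1)-h(x_2)|\geq\varepsilon\,d(x_1,x_2)$. By (\ref{eqDeltaDifference}) and Definition~\ref{defTemporalDist}, the infinite-series version of $h$ is $\Phi(z)\coloneqq\phi^{f,\CC}_{\xi,\,\eta}(u_0,z)$, a function of a \emph{single} spatial variable. The paper shows $\Phi\in C^1$ on a neighbourhood $V$ (the series (\ref{eqDelta}) converges in $C^1$ because $\phi\circ\tau_i$ has Lipschitz constant $\lesssim\Lambda^{-i}$), finds a point where $\nabla\Phi\neq0$, chooses $Y^{M_0}_\b,Y^{M_0}_\w$ in a region where one partial derivative is bounded below, and then for each deep tile $X$ picks $u_1(X),u_2(X)$ along that coordinate direction and applies the \emph{mean value theorem}: no Hessian, no Taylor remainder, no $(\diam X)^2$. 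The only error comes from truncating the infinite series to $S_N$, and that error, after dividing by $d(u_1,u_2)$, is $\lesssim\diam_d(\tau_N(X))/\diam_d(X)\lesssim\Lambda^{-N}$, which is independent of the depth of $X$ and is absorbed by choosing $N_0$ large. So the obstacle you anticipate---a competition between two terms of the same order $(\diam X)^2$---does not arise, and there is no need for the Hessian of the temporal distance to be constant or for exact affine self-similarity beyond what is already encoded in the bi-Lipschitz relation between $\omega_f$ and the Euclidean metric. The paper carries all of this out directly in a local chart on $\widehat{\C}$, without ever invoking the lift through~$\Theta$.
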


See Definition~\ref{defCohomologous} for the notion of cohomologous functions.

\begin{proof}
We denote the Euclidean metric on $\C$ by $d_0$. Let $\sigma$ be the chordal metric on $\C$ as recalled in Remark~\ref{rmChordalVisualQSEquiv}. By \cite[Proposition~8.5]{BM17}, the canonical orbifold metric $d=\omega_f$ is a visual metric for $f$. Let $\Lambda>1$ be the expansion factor of $d$ for $f$. 

Let $\mathcal{O}_f=(S^2,\alpha_f)$ be the orbifold associated to $f$ (see Subsection~7.2 in \cite{LZ24a}). Since $f$ has no periodic critical points, the ramification function $\alpha_f(z) < +\infty$ for all $z\in\widehat{\C}$ (see Definition~7.4 in \cite{LZ24a}).

By inequality (A.43) in \cite[Appendix~A.10]{BM17}, 
\begin{equation}   \label{eqPfrmCanonicalOrbifoldMetric_Chordal<Orbifold}
\sup \bigl\{  \sigma (z_1,z_2) / d (z_1,z_2)  :  z_1, \, z_2 \in \widehat{\C}, \, z_1\neq z_2 \bigr\} < +\infty.
\end{equation}
By (\ref{eqPfrmCanonicalOrbifoldMetric_Chordal<Orbifold}) and the assumption that $\phi$ is continuously differentiable, we get $\phi \in \Holder{1} \bigl( \widehat{\C}, \sigma \bigr) \subseteq \Holder{1} \bigl( \widehat{\C}, d \bigr)$.

\smallskip

We establish the equivalence of statements~(i) through (iii) as follows.

\smallskip

(i) $\Longleftrightarrow$ (ii): The equivalence follows immediately from Theorem~F in \cite{LZ24a}.

\smallskip

(ii) $\Longleftrightarrow$ (iii). The backward implication follows from Proposition~\ref{propSNI2NLI}. To show the forward implication, we assume that $\phi$ is non-locally integrable. We observe from Lemma~\ref{lmCexistsL}, Theorem~F in \cite{LZ24a}, and Lemma~\ref{lmSNIwoC} that by replacing $f$ with an iterate of $f$ if necessary, we can assume without loss of generality that there exists a Jordan curve $\CC \subseteq S^2$ such that $\post f \subseteq \CC$, $f(\CC) \subseteq \CC$, and that there exist $\xi=\{ \xi_{\minus i} \}_{i\in\N_0} \in  \Sigma_{f,\,\CC}^-$ and $\eta=\{ \eta_{\minus i} \}_{i\in\N_0} \in  \Sigma_{f,\,\CC}^-$, $X^1\in\X^1 (f,\CC)$, and $u_0, \, v_0 \in X^1$ with $X^1 \subseteq f (\xi_0) = f(\eta_0)$, and
\begin{equation}   \label{eqPfrmCanonicalOrbifoldMetric_NLI}
\phi^{f,\,\CC}_{\xi,\,\eta} (u_0, v_0) \neq 0.
\end{equation}
By the continuity of $\phi^{f,\,\CC}_{\xi,\,\eta}$ (see Lemma~\ref{lmDeltaHolder} and Definition~\ref{defTemporalDist}), we can assume that $u_0, \, v_0\in \inte(X^1)$. Without loss of generality, we can assume that $\infty \notin  X^1$. We use the usual coordinate $z = (x,y) \in \R^2$ on $X^1$. We fix a constant $C_{15} \geq 1$ depending only on $f$ and $\CC$ such that 
\begin{equation}   \label{eqPfrmCanonicalOrbifoldMetric_ChordalEuclLocalBiLip}
 C_{15}^{-1} \sigma(z_1,z_2)   \leq d_0 (z_1,z_2)  \leq C_{15} d(z_1,z_2) \qquad \text{ for all } z_1, \, z_2 \in X^1.
\end{equation}

Note that $\alpha_f (z) = 1$ for all $z\in\widehat{\C}\setminus \post f$ (see Definition~7.4 in \cite{LZ24a}). Recall the notion of \emph{singular conformal metrics} from \cite[Appendix~A.1]{BM17}. By Proposition~A.33 and the discussion proceeding it in \cite[Appendix~A.10]{BM17}, the following statements hold:
\begin{enumerate}
\smallskip
\item[(1)] The canonical orbifold metric $d$ is a singular conformal metric with a conformal factor $\rho$ that is continuous and positive everywhere except at the points in $\supp(\alpha_f) \subseteq \post f$.

\smallskip
\item[(2)] $d(z_1,z_2) = \inf\limits_{\gamma} \int_\gamma \! \rho \,\mathrm{d} \sigma$, where the infimum is taken over all $\sigma$-rectifiable paths $\gamma$ in $\widehat{\C}$ joining $z_1$ and $z_2$.  

\smallskip
\item[(3)] For each $z\in\widehat{\C} \setminus \supp(\alpha_f)$, there exists a neighborhood $U_z \subseteq \widehat{\C}$ containing $z$ and a constant $C_z \geq 1$ such that $C_z^{-1} \leq \rho(u) \leq C_z$ for all $u \in U_z$. 
\end{enumerate}

Choose connected open sets $V$ and $U$ such that $u_0, \, v_0 \in V \subseteq \overline{V} \subseteq U \subseteq \overline{U} \subseteq \inte(X^1)$. By compactness and statement~(3) above, there exists a constant $C_{16} \geq 1$ such that
\begin{equation} \label{eqPfrmCanonicalOrbifoldMetric_ConformalDensity}
C_{16}^{-1} \leq \rho(z) \leq C_{16} \qquad \text{ for all } z\in\overline{U}. 
\end{equation}
Thus, by (\ref{eqPfrmCanonicalOrbifoldMetric_ChordalEuclLocalBiLip}), (\ref{eqPfrmCanonicalOrbifoldMetric_Chordal<Orbifold}), and a simple covering argument using statement~(2) above, inequality~(\ref{eqPfrmCanonicalOrbifoldMetric_ConformalDensity}), and the fact that $\overline{V}\subseteq U$, there exists a constant $C_{17} \geq 1$ depending only on $f$, $\CC$, $d$, $\phi$, and the choices of $U$ and $V$ such that
\begin{equation} \label{eqPfrmCanonicalOrbifoldMetric_BiLipMetrics}
 C_{17}^{-1} d(z_1,z_2)   \leq d_0 (z_1,z_2)  \leq C_{17} d(z_1,z_2)  \qquad \text{ for all } z_1, \, z_2\in \overline{V}.
\end{equation}

We denote, for each $i\in\N$,
\begin{equation}  \label{eqPfrmCanonicalOrbifoldMetric_tau}
\tau_i \coloneqq  ( f|_{\xi_{1-i}}  )^{-1} \circ \cdots \circ ( f|_{\xi_{ \minus 1}} )^{-1} \circ ( f|_{\xi_{0}} )^{-1} \text{ and }
\tau'_i \coloneqq  ( f|_{\eta_{1-i}}  )^{-1} \circ \cdots \circ ( f|_{\eta_{ \minus 1}} )^{-1} \circ ( f|_{\eta_{0}}  )^{-1}.
\end{equation}
We define a function $\Phi \: X^1 \rightarrow \R$ by $\Phi(z)\coloneqq \phi^{f,\,\CC}_{\xi,\,\eta} (u_0, z)$ for $z\in X^1$ (see Definition~\ref{defTemporalDist} and Lemma~\ref{lmDeltaHolder}).

\smallskip

\emph{Claim.} $\Phi$ is continuously differentiable on $V$.

\smallskip

By Definition~\ref{defTemporalDist}, it suffices to show that the function $\mathcal{D}(\cdot) \coloneqq \Delta^{f,\,\CC}_{\phi,\, \xi} (u_0, \cdot)$ is continuously differentiable on $V$. By Lemma~\ref{lmDeltaHolder}, the function $\mathcal{D}(z) = \sum_{i=0}^{+\infty} ( (\phi \circ \tau_i) (u_0) - (\phi \circ \tau_i) (z) )$ is the uniform limit of a series of continuous functions on $V$. Since $V \subseteq \inte(X^1)$, by (\ref{eqPfrmCanonicalOrbifoldMetric_tau}) and Proposition~\ref{propCellDecomp}~(i), the function $\phi \circ \tau_i$ is differentiable on $V$ for each $i\in\N$.

We fix an arbitrary integer $i\in\N$. For each pair of distinct points $z_1, \, z_2 \in \inte(X^1)$, we choose the maximal integer $m\in\N$ with the property that there exist two $m$-tiles $X^m_1,\, X^m_2 \in \X^m(f,\CC)$ such that $z_1 \in X^m_1$, $z_2 \in X^m_2$, and $X^m_1 \cap X^m_2 \neq \emptyset$. Then by Proposition~\ref{propCellDecomp}~(i) and Lemma~\ref{lmCellBoundsBM}~(i) and (ii),
\begin{align*}
            \frac{  \abs{ (\phi\circ \tau_i) (z_1) - (\phi\circ \tau_i) (z_2)  }  }  { d(z_1,z_2) }
& \leq \frac{  \Hnorm{1}{\phi}{ ( \widehat{\C}, d ) }  \diam_d ( \tau_i ( X^m_1 \cup X^m_2 ) ) }  { C^{-1} \Lambda^{-(m+1)} }  \\
& \leq \Hnorm{1}{\phi}{ ( \widehat{\C}, d ) }   \frac{ 2 C \Lambda^{- (m+i)} }  { C^{-1} \Lambda^{-(m+1)} } 
\leq  2 C^2\Hnorm{1}{\phi}{ ( \widehat{\C}, d ) }   \Lambda^{1-i},
\end{align*}
where $C\geq 1$ is the constant from Lemma~\ref{lmCellBoundsBM} depending only on $f$, $\CC$, and $d$. Thus, by (\ref{eqPfrmCanonicalOrbifoldMetric_BiLipMetrics}),
\begin{align*}
&                        \sup \biggl\{    \Absbigg{  \frac{\partial}{\partial x}  (\phi\circ\tau_i) (z)  }     :  z\in V  \biggr\} \\
&\qquad \leq  \sup \{                        \abs{ (\phi\circ \tau_i) (z_1) - (\phi\circ \tau_i) (z_2)  }  /  d_0(z_1,z_2)        :  z_1,  \, z_2 \in V ,\, z_1 \neq z_2  \} \\    
&\qquad \leq  C_{17} \sup \{          \abs{ (\phi\circ \tau_i) (z_1) - (\phi\circ \tau_i) (z_2)  }  / d     (z_1,z_2)         :  z_1,  \, z_2 \in V ,\, z_1 \neq z_2  \}      \\                
&\qquad \leq   2 C_{17} C^2\Hnorm{1}{\phi}{ ( \widehat{\C}, d ) }   \Lambda^{1-i}.
\end{align*}

Hence, $\frac{\partial}{\partial x }  \mathcal{D}$ exists and is continuous on $V$. Similarly, $\frac{\partial}{\partial y }  \mathcal{D}$ exists and is continuous on $V$. Therefore, $\mathcal{D}$ is continuously differentiable on $V$, establishing the claim.

\smallskip

By the claim, (\ref{eqPfrmCanonicalOrbifoldMetric_NLI}), and the simple observation that $\phi^{f,\,\CC}_{\xi,\,\eta} (u_0, u_0) =0$, there exist numbers $M_0 \in \N$, $\varepsilon \in (0,1)$, and $C_{18} > 1$, and $M_0$-tiles $Y^{M_0}_\b \in \X^{M_0}_\b(f,\CC)$ and $Y^{M_0}_\w \in \X^{M_0}_\w(f,\CC)$ such that $C_{18} \geq C_{17}$, $Y^{M_0}_\b \cup Y^{M_0}_\w \subseteq V \subseteq \inte(X^1)$, and at least one of the following two inequalities holds:
\begin{enumerate}
\smallskip
\item[(a)] $\inf \bigl\{ \Absbig{\frac{\partial}{\partial x}  \Phi(z) }    :   z\in h^{-1} \bigl( Y^{M_0}_\b \cup Y^{M_0}_\w \bigr)   \bigr\}  \geq 2 C_{18} \varepsilon$,

\smallskip
\item[(b)] $\inf \bigl\{ \Absbig{\frac{\partial}{\partial y}  \Phi(z) }    :   z\in h^{-1} \bigl( Y^{M_0}_\b \cup Y^{M_0}_\w \bigr)   \bigr\}  \geq 2 C_{18} \varepsilon$.
\end{enumerate}

We assume now that inequality (a) holds and remark that the proof in the other case is similar.

Without loss of generality, we can assume that $\varepsilon \in \bigl(0,(2C_{18}C)^{-2} \bigr)$.

Then by Lemma~\ref{lmCellBoundsBM}~(v), for each $\c\in\{\b, \, \w\}$, each integer $M \geq M_0$, and each $M$-tile $X\in \X^M(f,\CC)$ with $X\subseteq  Y^{M_0}_\c$, there exists a point $u_1(X) = (x_1(X), y_0(X)) \in X$ such that $B_d \bigl( u_1(X), C^{-1}\Lambda^{-M} \bigr) \subseteq X$. We choose $x_2(X)\in\R$ such that $\abs{x_1(X) - x_2(X) } = (4C_{18}C)^{-1} \Lambda^{-M}$. Then by (\ref{eqPfrmCanonicalOrbifoldMetric_BiLipMetrics}) and $C_{18} \geq C_{17}$, we get
\begin{align}   \label{eqPfrmCanonicalOrbifoldMetric_u1u2Def}
u_2(X) \coloneqq (x_2(X), y_0(X)) \in & B_{d_0} \bigl(u_1(X), (2C_{18}C)^{-1} \Lambda^{-M} \bigr)  \notag \\
                                                                 &     \subseteq B_d  \bigl(u_1(X), (  2    C)^{-1} \Lambda^{-M} \bigr)  
                                                                      \subseteq B_d  \bigl(u_1(X),  C^{-1} \Lambda^{-M} \bigr)   \subseteq X. 
\end{align}
In particular, the entire horizontal line segment connecting $u_1(X)$ and $u_2(X)$ is contained in $\inte (X)$. By (\ref{eqPfrmCanonicalOrbifoldMetric_u1u2Def}), Lemma~\ref{lmCellBoundsBM}~(ii), (\ref{eqPfrmCanonicalOrbifoldMetric_BiLipMetrics}), and $C_{18} \geq C_{17}$, we get
\begin{align}  \label{eqPfrmCanonicalOrbifoldMetric_u1u2}
&                       \min \bigl\{  d \bigl( u_1(X), \widehat{\C} \setminus X \bigr), \, d \bigl( u_2(X),  \widehat{\C} \setminus X \bigr), \, d( u_1(X), u_2(X) ) \bigr\}  \\
&\qquad \geq \min \bigl\{ (  2    C)^{-1} \Lambda^{-M} ,  \,  C_{18}^{-1} (4C_{18}C)^{-1} \Lambda^{-M}   \bigr\}
                 \geq \varepsilon \diam_d(X).   \notag
\end{align}
On the other hand, by (\ref{eqPfrmCanonicalOrbifoldMetric_BiLipMetrics}), $C_{18} \geq C_{17}$, Definition~\ref{defTemporalDist}, inequality~(a) above, and the mean value theorem, 
\begin{equation*}
           \frac{ \Absbig{  \phi^{f,\,\CC}_{\xi,\,\eta} ( u_1(X), u_2(X) ) } } { d ( u_1(X), u_2(X) )}
\geq   \frac{ \Absbig{  \phi^{f,\,\CC}_{\xi,\,\eta} ( u_1(X), u_2(X) ) } } { C_{18} d_0( u_1(X), u_2(X) )}
 =        \frac{ \abs{ \Phi( u_1(X) ) - \Phi( u_2(X) )}  }   {  C_{18}  \abs{ x_1(X) - x_2(X) }} 
 \geq   2 \varepsilon.
\end{equation*}

We choose
\begin{equation}  \label{eqPfrmCanonicalOrbifoldMetric_N0}  
N_0    \coloneqq 
 \bigl\lceil  \log_\Lambda  \bigl( 2 C^2 \varepsilon^{-2}\Hseminorm{1, (\widehat{\C},d)}{\phi} C_0 \big/ \bigl(1-\Lambda^{-1} \bigr) \bigr)   \bigr\rceil,  
\end{equation}
where $C_0 > 1$ is the constant depending only on $f$, $\CC$, and $d$ from Lemma~\ref{lmMetricDistortion}.

Fix arbitrary $N\geq N_0$. Define $X^{N+M_0}_{\c,1} \coloneqq \tau_{N} \bigl( Y^{M_0}_\c \bigr)$ and $X^{N+M_0}_{\c,2} \coloneqq \tau'_{N} \bigl( Y^{M_0}_\c \bigr)$ (see (\ref{eqPfrmCanonicalOrbifoldMetric_tau})). Note that $\varsigma_1 = \tau_{N}|_{ Y^{M_0}_\c }$ and $\varsigma_2 = \tau'_{N}|_{ Y^{M_0}_\c }$.

Then by Definition~\ref{defTemporalDist}, (\ref{eqPfrmCanonicalOrbifoldMetric_u1u2}), Lemmas~\ref{lmDeltaHolder},~\ref{lmSnPhiBound},~\ref{lmCellBoundsBM}~(i) and~(ii), and Proposition~\ref{propCellDecomp}~(i), 
\begin{align*}
&                       \frac{ \abs{  S_{N }\phi ( \varsigma_1 (u_1(X)) ) -  S_{N }\phi ( \varsigma_2 (u_1(X)) )  -S_{N }\phi ( \varsigma_1 (u_2(X)) ) +  S_{N }\phi ( \varsigma_2 (u_2(X)) )   }  } 
                                  {  d(u_1(X),u_2(X))  }  \\
&\qquad \geq  \frac{ \Absbig{ \phi^{f,\,\CC}_{\xi,\,\eta} (u_1(X)) , u_2(X))  }   } { d(u_1(X),u_2(X)) }
                                 -  \limsup_{n\to+\infty}       \frac{ \abs{  S_{n - N }\phi ( \tau_n (u_1(X)) ) -  S_{n - N }\phi ( \tau_n  (u_2(X)) )  }  }   {  \varepsilon \diam_d (X) }  \\
&\qquad\qquad    -  \limsup_{n\to+\infty}       \frac{ \abs{  S_{n - N }\phi ( \tau'_n (u_1(X)) ) -  S_{n - N }\phi ( \tau'_n (u_2(X)) )  }  }   {  \varepsilon \diam_d (X) }              \\
&\qquad \geq 2 \varepsilon -  \frac{\Hseminorm{1, (\widehat{\C},d)}{\phi} C_0}{1-\Lambda^{-1}} \cdot
                                                     \frac{    d ( \tau_{N} (u_1(X)) , \tau_{N} (u_2(X)))  +  d ( \tau'_{N} (u_1(X)) , \tau'_{N} (u_2(X))) }   {  \varepsilon   \diam_d (X) }   \\
&\qquad \geq 2 \varepsilon -  \frac{\Hseminorm{1, (\widehat{\C},d)}{\phi} C_0}{1-\Lambda^{-1}} \cdot
                                                     \frac{  \diam_d ( \tau_{N} (X) )   +    \diam_d ( \tau'_{N} (X) ) }   {  \varepsilon   \diam_d (X) }           \\
&\qquad \geq 2 \varepsilon -   \frac{\Hseminorm{1, (\widehat{\C},d)}{\phi} C_0}{1-\Lambda^{-1}} \cdot
                                                     \frac{  2 C  \Lambda^{ -  (M  + N )}  }   {  \varepsilon  C^{-1} \Lambda^{ - M} }           \\
&\qquad \geq 2 \varepsilon -    2 C^2 \varepsilon^{-1}\Hseminorm{1, (\widehat{\C},d)}{\phi} C_0     \Lambda^{ - N_0}  \bigl(1-\Lambda^{-1} \bigr)^{-1}\\
&\qquad \geq \varepsilon,                                                                                                 
\end{align*}
where the last inequality follows from (\ref{eqPfrmCanonicalOrbifoldMetric_N0}). 

Therefore, $\phi$ satisfies the $1$-strong non-integrability condition with respect to $f$ and $d$.
\end{proof}

\begin{proof}[Proof of Theorem~\ref{thmLattesPOT}]
By Proposition~\ref{propLattes}, $\phi \in \Holder{\alpha}\bigl( \widehat{\C},d \bigr)$. So the existence and uniqueness of $s_0>0$ follows from Corollary~\ref{corS0unique}.

The implication (i)$\implies$(iii) follows from Proposition~\ref{propLattes} and Theorem~\ref{thmPrimeOrbitTheorem}. The implication (iii)$\implies$(ii) is trivial. The implication (ii)$\implies$(i) follows immediately from \cite[Theorem~B]{LZ24a}.
\end{proof}

\end{document}